\documentclass[a4paper]{amsart}
\usepackage[utf8]{inputenc}

\usepackage{amsmath,amsthm,amssymb,bbm,comment, mathrsfs, mathabx}
\usepackage{thmtools,mathtools}
\usepackage{graphicx,enumitem,stmaryrd}
\usepackage{multicol}
\usepackage{xparse,hyperref}
\usepackage{anyfontsize}
\SetSymbolFont{stmry}{bold}{U}{stmry}{m}{n}
\allowdisplaybreaks[4]
\usepackage[all]{xy}
\usepackage[table]{xcolor}
\usepackage[normalem]{ulem}
\usepackage{xcolor, bm, tensor}
\usepackage{tikz,tikz-cd}
\usetikzlibrary{matrix}
\usetikzlibrary{fit}
\usetikzlibrary{calc}

\makeatletter
\def\slashedarrowfill@#1#2#3#4#5{%
  $\m@th\thickmuskip0mu\medmuskip\thickmuskip\thinmuskip\thickmuskip
   \relax#5#1\mkern-7mu%
   \cleaders\hbox{$#5\mkern-2mu#2\mkern-2mu$}\hfill
   \mathclap{#3}\mathclap{#2}%
   \cleaders\hbox{$#5\mkern-2mu#2\mkern-2mu$}\hfill
   \mkern-7mu#4$%
}
\def\rightslashedarrowfill@{%
  \slashedarrowfill@\relbar\relbar\mapstochar\rightarrow}
\newcommand\xslashedrightarrow[2][]{%
  \ext@arrow 0055{\rightslashedarrowfill@}{#1}{#2}}
\makeatother

\makeatletter
\newcommand{\ostar}{\mathbin{\mathpalette\make@circled\star}}
\newcommand{\make@circled}[2]{%
  \ooalign{$\m@th#1\smallbigcirc{#1}$\cr\hidewidth$\m@th#1#2$\hidewidth\cr}%
}
\newcommand{\smallbigcirc}[1]{%
  \vcenter{\hbox{\scalebox{0.77778}{$\m@th#1\bigcirc$}}}%
}
\makeatother

\usepackage{adjustbox}

\newcommand{\yo}{\text{\usefont{U}{min}{m}{n}\symbol{'110}}}
\newcommand{\ta}{\text{\usefont{U}{min}{m}{n}\symbol{'037}}}
\newcommand{\na}{\text{\usefont{U}{min}{m}{n}\symbol{'052}}}

\DeclareFontFamily{U}{min}{}
\DeclareFontShape{U}{min}{m}{n}{<-> dmjhira}{}

\tikzcdset{scale cd/.style={every label/.append style={scale=#1},
    cells={nodes={scale=#1}}}}

\setlength\parindent{0pt}
\setlength{\parskip}{2pt}

\usepackage{kantlipsum} 

\setlength{\textwidth}{\paperwidth}
\addtolength{\textwidth}{-1.5in}
\calclayout

\usepackage[bb=dsserif]{mathalpha}

\usetikzlibrary{calc}
\usetikzlibrary{decorations.pathmorphing}
\tikzset{curve/.style={settings={#1},to path={(\tikztostart)
    .. controls ($(\tikztostart)!\pv{pos}!(\tikztotarget)!\pv{height}!270:(\tikztotarget)$)
    and ($(\tikztostart)!1-\pv{pos}!(\tikztotarget)!\pv{height}!270:(\tikztotarget)$)
    .. (\tikztotarget)\tikztonodes}},
    settings/.code={\tikzset{quiver/.cd,#1}
        \def\pv##1{\pgfkeysvalueof{/tikz/quiver/##1}}},
    quiver/.cd,pos/.initial=0.35,height/.initial=0}

\tikzset{tail reversed/.code={\pgfsetarrowsstart{tikzcd to}}}
\tikzset{2tail/.code={\pgfsetarrowsstart{Implies[reversed]}}}
\tikzset{2tail reversed/.code={\pgfsetarrowsstart{Implies}}}
\tikzset{no body/.style={/tikz/dash pattern=on 0 off 1mm}}

\overfullrule=1mm
\vbadness=10001
\hbadness=10001

\definecolor{blue(pigment)}{rgb}{0.2, 0.2, 0.6}
\definecolor{americanrose}{rgb}{1.0, 0.01, 0.24}
\definecolor{nicegreen}{rgb}{0.0, 0.5, 0.0}
\definecolor{deepmagenta}{rgb}{0.8, 0.0, 0.8}
\definecolor{deepcarrotorange}{rgb}{0.91, 0.41, 0.17}
\definecolor{cadetgrey}{rgb}{0.57, 0.64, 0.69}

\newtheorem{theoremm}{Theorem}[section]
\newtheorem{theoremmm}{Theorem}
\declaretheorem[style=plain,name=Theorem,numberlike=theoremmm]{theoremat}

\declaretheorem[style=plain,name=Theorem,numberlike=theoremm]{theorem}
\declaretheorem[style=plain,name=Theorem,numbered=no]{theorem*}

\declaretheorem[style=plain,name=Lemma,numberlike=theoremm]{lemma}
\declaretheorem[style=plain,name=Proposition,numberlike=theoremm]{proposition}
\declaretheorem[style=plain,name=Corollary,numberlike=theoremm]{corollary}

\declaretheorem[style=definition,name=Definition,numberlike=theorem]{definition}
\declaretheorem[style=remark,name=Example,numberlike=theorem]{example}
\declaretheorem[style=remark,name=Remark,numberlike=theorem]{remark}
\declaretheorem[style=remark,name=Notation,numberlike=theorem]{notation}

\font\sc=rsfs10
\newcommand{\csym}[1]{\sc\mbox{#1}\hspace{1.0pt}}

\font\scc=rsfs7
\newcommand{\ccf}[1]{\scc\mbox{#1}\hspace{0.5pt}}

\font\sccc=rsfs5
\newcommand{\cccsym}[1]{\sccc\mbox{#1}\hspace{0.2pt}}

\newcommand{\on}[1]{\operatorname{#1}}
\newcommand{\setj}[1]{\left\{ #1 \right\}}
\newcommand{\hcomp}{\circ_{h}}

\newcommand{\xiso}{\xrightarrow{\sim}}

\DeclareMathSymbol{\blackdiamond}{\mathbin}{mathb}{"0C}

\DeclareMathAlphabet\EuRoman{U}{eur}{m}{n}
\SetMathAlphabet\EuRoman{bold}{U}{eur}{b}{n}
\newcommand{\euler}{\EuRoman}

\newcommand{\Hom}[1]{\left\langle #1 \right\rangle}
\newcommand{\Homint}[1]{\left\llbracket #1 \right\rrbracket}

\newcommand{\tam}{\csym{C}\!\on{-Tamb}(\csym{C},\csym{C})}

\newcommand{\cplus}[1]{\prescript{\ccf{C}\!}{+\!}{#1}}
\newcommand{\cotimes}{\overset{\cccsym{C}}{\otimes}}
\newcommand{\kotimes}{\otimes_{\Bbbk}}
\newcommand{\ktotimes}[1]{\overset{#1}{\otimes}}

\begin{document}
\title{Module categories, internal bimodules and Tambara modules}
\author{Mateusz Stroi{\' n}ski}

\begin{abstract}
 We use the theory of Tambara modules to extend and generalize the reconstruction theorem for module categories over a rigid monoidal category to the non-rigid case. We show a biequivalence between the $2$-category of cyclic module categories over a monoidal category $\ccf{C}$ and the bicategory of algebra and bimodule objects in the category of Tambara modules on $\ccf{C}$. Using it, we prove that a cyclic module category can be reconstructed as the category of certain free module objects in the category of Tambara modules on $\ccf{C}$, and give a sufficient condition for its reconstructability as module objects in $\ccf{C}$. To that end, we extend the definition of the Cayley functor to the non-closed case, and show that Tambara modules give a proarrow equipment for $\ccf{C}$-module categories, in which $\ccf{C}$-module functors are characterized as $1$-morphisms admitting a right adjoint. Finally, we show that the $2$-category of all $\ccf{C}$-module categories embeds into the $2$-category of categories enriched in Tambara modules on $\ccf{C}$, giving an ``action via enrichment'' result.

\vspace{2mm}
{Mathematics Subject Classification (2020): 18N10, 18N25, 18M10 (primary), 18M20 (secondary)}
\end{abstract}

\maketitle

\tableofcontents

\section{Introduction}

Just like the main application of representation theory is describing linear algebraic symmetries controlled by an algebraic object, higher representation theory can be viewed as the study of categorical symmetries controlled by a monoidal category.
This representation theoretic perspective has been prominent ever since the concept of categorification was first formulated in \cite{Cr}, \cite{CF}.
Indeed, the development of higher representation theory also parallels that of classical representation theory: the study of categorical symmetries such as those exhibited by the Jones polynomial and described via Khovanov homology in \cite{Kh}, or those exhibited by the category $\mathcal{O}$ associated to a complex semisimple Lie algebra, and described via projective functors in \cite{BGG}, \cite{So}, has produced a number of monoidal categories defined to capture the observed categorical action.

This has deeply influenced the abstract study of monoidal categories suitable for categorification (thus, in particular, $\Bbbk$-linear over a ground field $\Bbbk$), and axiomatizations of their properties in notions such as tensor categories or fusion categories, both studied extensively in \cite{EGNO}.
This further led to the development of theories aiming at understanding abstractly the actions of a given monoidal category, or, again similarly to representation theory, understanding the monoidal category via its actions.
Notable examples of such theories include the study of $2$-Kac-Moody algebras of \cite{Rou}, the study of finitary $2$-representations initiated by Mazorchuk-Miemietz in \cite{MM1}, and the study of module categories over tensor categories, as described in \cite[Chapter~7]{EGNO}.

One of the main tools of the latter two is {\it internalization} of categorical actions - given a module category $\mathbf{M}$ over a monoidal category $\csym{C}$, one looks for an algebra object (which in this document is synonymous with a monoid object) $\mathrm{A}$ in $\csym{C}$ such that there is an equivalence, or embedding, of $\csym{C}$-module categories, from $\mathbf{M}$ to the category $\on{mod}_{\ccf{C}}\!\on{--}\!\!\mathrm{A}$ of right $\mathrm{A}$-module objects in $\csym{C}$.
The action of $\csym{C}$ on $\on{mod}_{\ccf{C}}\!\on{--}\!\!\mathrm{A}$ here is simply by tensoring over $\csym{C}$: the most elementary example is $\mathbf{Vec}_{\Bbbk}$ acting on $\on{mod}A$, for any $\Bbbk$-algebra $A$, where each vector space $V$ acts on $\on{mod}A$ as the functor sending a module $M$ to the module $V \otimes_{\Bbbk} M$.
Internalizing a module category gives it a much more explicit form which often greatly facilitates its study, and also closely ties the problems of understanding the structure of $\mathbf{M}$ with that of understanding the structure of $\mathrm{A}$.
Essentially all general results about internalizing module categories require three ingredients:
\begin{enumerate}
 \item\label{Ingredient1} some finiteness assumptions on $\csym{C}$ and $\mathbf{M}$;
 \item\label{Ingredient2} the presence of a generator $X \in \mathbf{M}$, i.e. an object such that the collection $\setj{\mathbf{M}\mathrm{F}X \; | \mathrm{F} \in \csym{C}}$ generates the category in some sense, e.g. under taking direct sums and direct summands;
 \item\label{Ingredient3} the assumption that $\csym{C}$ is rigid.
\end{enumerate}

Combining assumption \ref{Ingredient1} and  assumption \ref{Ingredient3} one concludes that, for any objects $Y,Z \in \mathbf{M}$, there is an object $\setj{Y,Z}$ representing the functor
\begin{equation}\label{Kopreschiff}
 \on{Hom}_{\mathbf{M}}(\mathbf{M}(-)Y,Z): \csym{C}^{\on{opp}} \rightarrow \mathbf{Vec}_{\Bbbk}; \text{ alternatively the functor }
 \on{Hom}_{\mathbf{M}}(Y,\mathbf{M}(-)Z): \csym{C} \rightarrow \mathbf{Vec}_{\Bbbk}.
\end{equation}
One then shows that the object $\setj{X,X}$ admits the structure of an algebra object and so we can define $\mathrm{A}$ as $\setj{X,X}$.

One of the earliest results of this kind is \cite[Theorem~1]{Os}:
\begin{theorem*}
 Let $\csym{C}$ be a semisimple rigid monoidal category with finitely many irreducible objects and irreducible unit object. Let $\mathbf{M}$ be a semisimple $\csym{C}$-module category. There is a semisimple algebra object $\mathrm{A} \in \csym{C}$ such that the $\csym{C}$-module categories $\mathbf{M}$ and $\on{mod}_{\ccf{C}}\!\on{--}\!\!\mathrm{A}$ are equivalent.
\end{theorem*}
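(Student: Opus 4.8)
The plan is to reconstruct $\mathbf{M}$ as the Eilenberg--Moore category of a monad on $\csym{C}$ coming from an internal-hom adjunction, and then to use rigidity to identify that monad with tensoring by an algebra object. Since $\mathbf{M}$ is semisimple and finite, fix a generator $X\in\mathbf{M}$ as in ingredient \ref{Ingredient2} above --- for instance the direct sum of representatives of the isomorphism classes of simple objects --- so that every object of $\mathbf{M}$ is a direct summand of a finite direct sum of copies of $X$. Because $\csym{C}$ is semisimple with finitely many simple objects, every additive functor $\csym{C}^{\on{opp}}\rightarrow\mathbf{Vec}_{\Bbbk}$ with finite-dimensional values is representable, so the internal hom $\setj{Y,Z}$ of \eqref{Kopreschiff} exists for all $Y,Z\in\mathbf{M}$; concretely, $\mathbf{M}(-)X\colon\csym{C}\rightarrow\mathbf{M}$ is left adjoint to $\setj{X,-}\colon\mathbf{M}\rightarrow\csym{C}$. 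Set $\mathrm{A}:=\setj{X,X}$. The monad $T:=\setj{X,\mathbf{M}(-)X}$ on $\csym{C}$ attached to this adjunction has $T(\mathbb{1})=\mathrm{A}$, and its monad structure endows $\mathrm{A}$ with the algebra-object structure described before \eqref{Kopreschiff} (unit from $\on{id}_X$, multiplication from composing evaluations).

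The crucial step --- and the only place rigidity enters --- is the projection formula: for all $\mathrm{F}\in\csym{C}$ and $Z\in\mathbf{M}$ there is a natural isomorphism $\setj{X,\mathbf{M}\mathrm{F}Z}\cong\mathrm{F}\otimes\setj{X,Z}$. I would prove it by Yoneda, via
\begin{align*}
 \on{Hom}_{\csym{C}}\bigl(\mathrm{G},\mathrm{F}\otimes\setj{X,Z}\bigr)
 &\cong\on{Hom}_{\csym{C}}\bigl({}^{\vee}\mathrm{F}\otimes\mathrm{G},\setj{X,Z}\bigr)
 \cong\on{Hom}_{\mathbf{M}}\bigl(\mathbf{M}({}^{\vee}\mathrm{F}\otimes\mathrm{G})X,Z\bigr)\\
 &\cong\on{Hom}_{\mathbf{M}}\bigl(\mathbf{M}\mathrm{G}X,\mathbf{M}\mathrm{F}Z\bigr)
 \cong\on{Hom}_{\csym{C}}\bigl(\mathrm{G},\setj{X,\mathbf{M}\mathrm{F}Z}\bigr),
\end{align*}
where the first isomorphism uses the left dual ${}^{\vee}\mathrm{F}$ (hence rigidity), the second and fourth are the defining property of the internal hom, and the third combines the module-associativity isomorphism $\mathbf{M}({}^{\vee}\mathrm{F}\otimes\mathrm{G})X\cong\mathbf{M}({}^{\vee}\mathrm{F})(\mathbf{M}\mathrm{G}X)$ with the adjunction $\mathbf{M}({}^{\vee}\mathrm{F})(-)\dashv\mathbf{M}\mathrm{F}(-)$ induced on $\mathbf{M}$ by duality in $\csym{C}$. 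Taking $Z=X$ and checking compatibility with the monad multiplications --- a diagram chase from the definition of the multiplication of $\mathrm{A}$ --- identifies $T$ with the monad $-\otimes\mathrm{A}$, so that its Eilenberg--Moore category is $\on{mod}_{\ccf{C}}\mathrm{A}$; and, being natural and coherent, the projection formula upgrades the comparison functor $\mathbf{F}:=\setj{X,-}\colon\mathbf{M}\rightarrow\on{mod}_{\ccf{C}}\mathrm{A}$ to a $\csym{C}$-module functor.

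It then remains to prove that $\mathbf{F}$ is an equivalence, for which I would apply Beck's monadicity theorem to $\setj{X,-}\colon\mathbf{M}\rightarrow\csym{C}$. This functor has the left adjoint $\mathbf{M}(-)X$; it is conservative because $X$ is a generator --- an isomorphism $\setj{X,f}$ forces $\on{Hom}_{\mathbf{M}}(\mathbf{M}\mathrm{G}X,f)$ to be an isomorphism for every $\mathrm{G}$, hence $\on{Hom}_{\mathbf{M}}(Y,f)$ for every $Y\in\mathbf{M}$, hence $f$ itself by Yoneda; and, since $\mathbf{M}$ and $\csym{C}$ are both semisimple abelian, $\mathbf{M}$ has all coequalizers and the additive functor $\setj{X,-}$ is exact, hence preserves them. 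Thus $\mathbf{F}$ is an equivalence, and by the previous paragraph an equivalence of $\csym{C}$-module categories $\mathbf{M}\simeq\on{mod}_{\ccf{C}}\mathrm{A}$; in particular $\on{mod}_{\ccf{C}}\mathrm{A}$ is semisimple, so $\mathrm{A}$ is a separable --- equivalently semisimple --- algebra object, as required. The main obstacle is the projection formula: it is exactly what converts the purely formal reconstruction of $\mathbf{M}$ as an Eilenberg--Moore category into a reconstruction as module objects \emph{in} $\csym{C}$, and it is this step that fails without rigidity --- which is precisely why the generalization developed here replaces internal homs in $\csym{C}$ by algebra and bimodule objects among Tambara modules.
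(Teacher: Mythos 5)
Your argument is correct, but it is worth noting that the paper itself does not prove this theorem: it is quoted from \cite[Theorem~1]{Os} as motivation for the generalization developed in the body of the text. What you have reconstructed is, in essence, Ostrik's original proof strategy: fix a generator $X$, use semisimplicity and finiteness to represent $\Hom{\mathbf{M}(-)X,-}$ by an internal hom $\setj{X,-}$, establish the projection formula $\setj{X,\mathbf{M}\mathrm{F}Z}\cong\mathrm{F}\cotimes\setj{X,Z}$ via the duality adjunction $\prescript{\vee}{}{\mathrm{F}}\cotimes- \dashv \mathrm{F}\cotimes-$, identify the resulting monad on $\csym{C}$ with $-\cotimes\setj{X,X}$, and conclude by Beck monadicity (which is nearly automatic in the semisimple setting, since all coequalizers split). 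The paper, by contrast, arrives at this statement as a degenerate case of a very different machine: the biequivalence $\na$ between cyclic module categories (as objects of $\csym{C}\!\on{-Tamb}_{c}$) and algebras/bimodules internal to $\tam$ (Theorem~\ref{MainThm}), combined with the observation that the one-sided Ostrik algebra $\Hom{-X,X}$ approximates the two-sided Tambara module $[X,X]$ via the comparison morphism $\omega_{X,X}$, which is invertible exactly under rigidity (Corollary~\ref{RigidMonoids}); Theorem~\ref{Formulae} and Corollary~\ref{ExplicitEquations} then give $\mathbf{M}\simeq(\on{frmod}_{\ccf{C}}\!\setj{X,X})^{\mathsf{c}}$, and semisimplicity and finiteness upgrade the free-module category to all modules. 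Your route is shorter and more elementary when rigidity is available, and it makes the role of the projection formula transparent; the paper's route is much heavier, but its dividend is precisely that it survives the removal of rigidity --- the projection formula, and hence the monad identification on which your proof turns, is exactly what one loses in general, and is replaced by the passage from presheaf algebras to Tambara-module algebras. You correctly flag this in your closing sentence. One small point of emphasis: the conclusion that $\setj{X,X}$ is a \emph{semisimple} algebra object is, in Ostrik's convention, definitionally equivalent to $\on{mod}_{\ccf{C}}\!\setj{X,X}$ being semisimple, so there is nothing further to check once the equivalence $\mathbf{M}\simeq\on{mod}_{\ccf{C}}\!\setj{X,X}$ is in hand; your phrasing (``separable --- equivalently semisimple'') is right in this setting, though the equivalence of those two notions uses hom-finiteness and an algebraically closed ground field, which one should take as part of Ostrik's standing hypotheses.
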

Relaxing the semisimplicity assumption, we may instead assume that $\csym{C}$ is {\it finitary} i.e. that, as a category, it is equivalent to $A\!\on{--proj}$ for some finite-dimensional $\Bbbk$-algebra $A$. In this setting, we have the following result:
\begin{theorem*}[{\cite[Theorem~4.7]{MMMT}}]
 Let $\csym{C}$ be a finitary rigid monoidal category with an indecomposable unit object. Let $\mathbf{M}$ be a finitary $\csym{C}$-module category. There is a coalgebra object $\mathrm{C} \in \underline{\csym{C}}$ such that the $\csym{C}$-module categories $\mathbf{M}$ and $\on{comod}_{\ccf{C}}\mathrm{C}$ are equivalent.
\end{theorem*}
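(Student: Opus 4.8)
\emph{Proof plan.} The plan is to pass to the abelianizations $\underline{\csym{C}}$ and $\underline{\mathbf{M}}$, to build $\mathrm{C}$ there as a ``co-internal endomorphism object'' of a generator by means of an adjoint functor, and then to descend to the finitary level using rigidity. To carry out the abelianization: since $\csym{C}$ is rigid, tensoring with any object is exact on both sides, so the monoidal structure extends to a biexact one on $\underline{\csym{C}}$, with $\csym{C}\hookrightarrow\underline{\csym{C}}$ a monoidal embedding onto the subcategory of projective objects --- the hypothesis that $\mathbb{1}$ is indecomposable being what keeps $\underline{\csym{C}}$ a well-behaved (finite, $\Bbbk$-linear) abelian category. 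Likewise every $\mathbf{M}\mathrm{F}(-)$ is exact, so the $\csym{C}$-action extends to a $\underline{\csym{C}}$-action on $\underline{\mathbf{M}}$, under which $\mathbf{M}$ is exactly the subcategory of projectives and remains a $\csym{C}$-submodule category.

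Next I would fix a generator $X\in\mathbf{M}$ --- say the direct sum of one representative of each indecomposable object, so that $\mathbf{M}\mathbb{1}X=X$ forces $\mathbf{M}=\on{add}\{\mathbf{M}\mathrm{F}X\mid\mathrm{F}\in\csym{C}\}$ --- and consider the exact, $\underline{\csym{C}}$-linear evaluation functor $\mathbb{E}\colon\underline{\csym{C}}\to\underline{\mathbf{M}}$, $\mathbb{E}(Z)=Z\cdot X$. Using that each $\mathbf{M}\mathrm{F}(-)$ has the left adjoint $\mathbf{M}({}^{*}\mathrm{F})(-)$ afforded by rigidity, together with the finiteness hypotheses, $\mathbb{E}$ admits a left adjoint $\mathbb{E}^{\mathrm{L}}\colon\underline{\mathbf{M}}\to\underline{\csym{C}}$; and because $\csym{C}$ is rigid the canonical colax $\underline{\csym{C}}$-linear structure on $\mathbb{E}^{\mathrm{L}}$ is invertible, yielding a projection formula $\mathbb{E}^{\mathrm{L}}(Z\cdot Y)\cong Z\otimes\mathbb{E}^{\mathrm{L}}(Y)$. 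Hence the comonad $\mathbb{E}^{\mathrm{L}}\mathbb{E}$ on $\underline{\csym{C}}$ is naturally isomorphic to $(-)\otimes\mathbb{E}^{\mathrm{L}}(X)$, and transporting the comonad structure equips $\mathrm{C}:=\mathbb{E}^{\mathrm{L}}(X)\in\underline{\csym{C}}$ with the structure of a coalgebra object --- this already produces the coalgebra claimed by the theorem.

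Finally I would promote this to the stated equivalence. The comparison functor $\underline{\mathbf{M}}\to\on{comod}_{\underline{\csym{C}}}\mathrm{C}$ into $\mathrm{C}$-comodules in $\underline{\csym{C}}$ sends $Y$ to $\mathbb{E}^{\mathrm{L}}(Y)$ with its induced coaction, and so sends $\mathbf{M}\mathrm{F}X$ to the cofree comodule $\mathrm{F}\otimes\mathrm{C}$; restricted to $\mathbf{M}=\on{add}\{\mathbf{M}\mathrm{F}X\}$ it is fully faithful --- a purely formal consequence of the adjunctions $\mathbb{E}^{\mathrm{L}}\dashv\mathbb{E}$ and cofree $\dashv$ forgetful for $\mathrm{C}$-comodules, together with the projection formula --- with essential image the full subcategory generated under direct summands by the cofree comodules, which one identifies with $\on{comod}_{\ccf{C}}\mathrm{C}$. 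The projection-formula isomorphisms make this functor a $\csym{C}$-module functor, giving the equivalence $\mathbf{M}\simeq\on{comod}_{\ccf{C}}\mathrm{C}$ of $\csym{C}$-module categories.

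The hard part will be the interface between the abelianized and the finitary pictures, which is exactly where rigidity does non-formal work: it is what makes $\underline{\csym{C}}$ monoidal, what forces $\mathbb{E}$ to be exact and to admit a left adjoint $\mathbb{E}^{\mathrm{L}}$ landing in $\underline{\csym{C}}$ itself rather than in some larger completion, and --- most crucially --- what makes the colax $\underline{\csym{C}}$-linear structure on $\mathbb{E}^{\mathrm{L}}$ invertible; without this last point $\mathbb{E}^{\mathrm{L}}\mathbb{E}$ need not be ``tensoring with a coalgebra'' and the construction collapses. One then has to check that the finiteness and indecomposable-unit hypotheses are precisely what make these adjoints and abelianizations behave, that the essential image of the restricted comparison functor really is the category $\on{comod}_{\ccf{C}}\mathrm{C}$ as defined, and that the equivalence respects the $\csym{C}$-module structures and not merely the underlying additive categories.
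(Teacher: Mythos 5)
Your plan is essentially correct, but it takes a genuinely different route from the one the paper uses. What you describe is, in all essentials, the original argument of \cite{MMMT}: pass to the injective abelianization, use rigidity and finiteness to show that the evaluation $\mathbb{E}(Z)=Z\cdot X$ is exact and admits a left adjoint $\mathbb{E}^{L}$ which, again by rigidity, is a \emph{strong} $\underline{\csym{C}}$-module functor; the resulting projection formula identifies the comonad $\mathbb{E}^{L}\mathbb{E}$ with $-\otimes\mathrm{C}$ for $\mathrm{C}=\mathbb{E}^{L}(X)$, and the Barr--Beck comparison $\underline{\mathbf{M}}\to\on{comod}_{\underline{\ccf{C}}}\mathrm{C}$ restricts to the stated equivalence on the relevant additive subcategories. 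The paper, by contrast, obtains the theorem as a corollary of its Tambara-module machinery: it proves the biequivalence $\na:\csym{C}\!\on{-Tamb}_{c}\to\on{Bimod}(\tam)$ (Theorem~\ref{MainThm}), identifies the abelianizations with categories of finite-dimensional (co)presheaves under Day convolution (Corollary~\ref{Abelianizations}), and shows that rigidity makes the comparison morphism $\omega_{X,X}:\mathbb{W}(\Hom{-X,X})\to[X,X]$ invertible (Proposition~\ref{RigidMonoids} and Corollary~\ref{RigidMonoids}); the reconstruction $\mathbf{M}\simeq\on{comod}_{\ccf{C}}\mathrm{C}$ then follows from Theorem~\ref{Formulae}, Corollary~\ref{ExplicitEquations} and Lemma~\ref{MXCPlus}, and the Morita-classification half is Corollary~\ref{ReproveMMMT}. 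Your approach is shorter and more self-contained; the paper's is heavier but uniform across the rigid and non-rigid cases, and it isolates exactly where rigidity is used --- the invertibility of $\omega_{X,X}$ --- which is precisely what fails in the non-rigid counterexamples of Section~\ref{s104}. Two small imprecisions in your account: the indecomposability of $\mathbb{1}$ is not what makes $\underline{\csym{C}}$ a finite abelian $\Bbbk$-linear category (finitariness of $\csym{C}$ already does), it is used elsewhere in the argument; and identifying your essential image with $\on{comod}_{\ccf{C}}\mathrm{C}$ tacitly appeals to the convention that the latter is the additive closure of the cofree comodules $\mathrm{F}\otimes\mathrm{C}$, $\mathrm{F}\in\csym{C}$, i.e.\ the injective comodule objects, which you should say explicitly.
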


The two differences between these two results is that we obtain a coalgebra object rather than an algebra object, and that this coalgebra object lies in the {\it injective abelianization} $\underline{\csym{C}}$ of $\csym{C}$, rather than in $\csym{C}$ itself.
The injective abelianization is a diagrammatic construction introduced in \cite[Section~3.2]{MMMT}, which, as we show in this document, is monoidally equivalent to the finite completion of $\csym{C}$ endowed with the monoidal structure given by Day convolution.
The latter difference immediately indicates a strategy for removing the finiteness assumptions: in Section~\ref{s102} we show that the functor $\on{Hom}_{\mathbf{M}}(\mathbf{M}(-)X,X)$ is an algebra object in the presheaf category $[\csym{C}^{\on{opp}},\mathbf{Vec}_{\Bbbk}]$, again using Day convolution.
In the presence of finiteness assumptions ensuring the representability of this functor, the algebra object structure on $\setj{X,X}$ of \cite{Os} is obtained by pulling back the one we define on $\on{Hom}_{\mathbf{M}}(\mathbf{M}(-)X,X)$ along the Yoneda embedding. As we show in Theorem~\ref{Formulae}, if $\csym{C}$ is rigid, this allows us to realize $\mathbf{M}$ as the category of certain $\on{Hom}_{\mathbf{M}}(\mathbf{M}(-)X,X)$-modules in $[\csym{C}^{\on{opp}},\mathbf{Vec}_{\Bbbk}]$, namely those module objects which could be called {\it $\csym{C}$-projective}.

The fact that the object obtained in \cite{MMMT} is a coalgebra object comes from the fact that the injective abelianization is monoidally equivalent to the Isbell dual of the category of finite-dimensional presheaves on $\csym{C}$, i.e. $\underline{\csym{C}} \simeq [\csym{C},\mathbf{vec}_{\Bbbk}]^{\on{op}}$, and similarly to $\on{Hom}_{\mathbf{M}}(\mathbf{M}(-)X,X)$, the functor $\on{Hom}_{\mathbf{M}}(X,\mathbf{M}(-)X)$ used in \cite{MMMT} is an algebra object in the copresheaf category $[\csym{C},\mathbf{Vec}_{\Bbbk}]$. Thus, allowing the algebra and coalgebra objects to live in various cocompletions and completions of $\csym{C}$, we extend the results of \cite{Os}, \cite{MMMT} in the absence of finiteness/representability conditions, as long as we keep the rigidity assumption on $\csym{C}$.

The main aim of this document is to extend the internalization results beyond the rigid case. In order to do that, we need to embed $\csym{C}$ in yet another monoidal category, in which the monoid objects for the $\csym{C}$-module categories will live. This category is the category of (classical) Tambara modules over $\csym{C}$.

Tambara modules (under the name of {\it distributors of tensor categories}) were first introduced in \cite{Ta}.
A (classical) Tambara module is a profunctor $\csym{C} \xslashedrightarrow{} \csym{C}$ with a $\csym{C}$-action, satisfying certain coherence axioms.
In other words, a Tambara module is a functor $\mathtt{\Phi}: \csym{C}^{\on{opp}} \kotimes \csym{C} \rightarrow \mathbf{Vec}_{\Bbbk}$ together with maps $\ta_{\mathrm{H;F,G}}^{\mathtt{\Phi}}: \mathtt{\Phi}(\mathrm{F,G}) \rightarrow \mathtt{\Phi}(\mathrm{H}\cotimes \mathrm{F}, \mathrm{H}\cotimes \mathrm{G})$, for all $\mathrm{F,G,H} \in \csym{C}$, natural in $\mathrm{F,G}$ and extranatural in $\mathrm{H}$. Tambara modules can be composed similarly to profunctors, so we obtain a monoidal category $\csym{C}\!\on{-Tamb}(\csym{C},\csym{C})$.
In the introduction to \cite{Ta}, Tambara states:

``Such distributors arise in studying extensions of a tensor category.
Given a tensor functor $\mathcal{A} \rightarrow \mathcal{B}$, set
\[
L(X,Y) = \on{Hom}_{\mathcal{B}}(X,Y) \text{ for } X,Y \in \mathcal{A}\text{''. }
\]

Here, the tensor functor is assumed to be the identity on objects; for a general monoidal functor $\mathbb{F}: \csym{A} \rightarrow \csym{B}$, we write $\mathtt{\Phi}(\mathrm{K,L}) = \on{Hom}_{\ccf{B}}(\mathbb{F}K, \mathbb{F}L)$.

Observe that $\mathbb{F}$ endows $\csym{B}$ with the structure of an $\csym{A}$-module category via $\mathrm{A} \blackdiamond_{\ccf{B}} \mathrm{B} := \mathbb{F}(\mathrm{A}) \ktotimes{\cccsym{B}} \mathrm{B}$, for $\mathrm{A} \in \csym{A}$ and $\mathrm{B} \in \csym{B}$.
If $\mathbb{F}$ is essentially surjective, then the unit object $\mathbb{1}_{\ccf{B}}$ of $\csym{B}$ generates $\csym{B}$ under the $\csym{A}$-action.
We may thus write $\mathtt{\Phi}(\mathrm{K,L}) = \on{Hom}_{\ccf{B}}(\mathrm{K}\blackdiamond_{\ccf{B}} \mathbb{1}_{\ccf{B}}, \mathrm{L}\blackdiamond_{\ccf{B}} \mathbb{1}_{\ccf{B}})$.
Generalizing to an arbitrary $\csym{C}$-module category $\mathbf{M}$ with a generator $X$, we obtain the Tambara module $[X,X]$ given by $[X,X]({K,L}) = \on{Hom}_{\mathbf{M}}(\mathbf{M}(\mathrm{K})X,\mathbf{M}(\mathrm{L})X)$.
Observe that this is a ``two-sided'' version of the (co)presheaf considered in Equation~\eqref{Kopreschiff}.
This observation not only correctly indicates that $[X,X]$ is the monoid object we want, but in a sense serves as a guiding principle for this document.

Indeed, for $\csym{C}$-module categories $\mathbf{M}, \mathbf{N}$ with generators $X \in \mathbf{M}, Y \in \mathbf{N}$ and a $\csym{C}$-module functor $\euler{\Phi}: \mathbf{M} \rightarrow \mathbf{N}$, the Tambara module $\on{Hom}_{\mathbf{N}}(\mathbf{N}(-)Y,\mathbf{N}(-)\euler{\Phi}(X))$ is naturally a $[Y,Y]$-$[X,X]$-bimodule and, similarly, $\csym{C}$-module transformations give rise to bimodule morphisms.
However, not every bimodule gives a $\csym{C}$-module functor. From the category theoretic point of view, bimodules should correspond to ``$\csym{C}$-module profunctors''.
We argue that the correct notion of a $\csym{C}$-module profunctor is again that of a Tambara module, in the generalized sense introduced in \cite{CEGLMPR}.
A (generalized) Tambara module $\mathbf{M} \xslashedrightarrow{} \mathbf{N}$ is a profunctor $\mathbf{N}^{\on{op}} \kotimes \mathbf{M} \rightarrow \mathbf{Vec}_{\Bbbk}$ together with a $\csym{C}$-action similar to the classical case.

It should be observed here that the main source of current interest in Tambara modules lies in the study of the categorical aspects of functional programming, as for instance in \cite{CEGLMPR}.

Before describing the paper in greater detail, we list our main results, most of which we have already indicated.
\begin{theoremat}[{Theorem~\ref{Section4Main}}]\label{IntroThm1}
 The proarrow equipment $\mathbb{P}:\csym{C}\!\on{-Mod}\rightarrow \csym{C}\!\on{-Tamb}$ is a map equipment. A Tambara module between Cauchy complete $\csym{C}$-module categories has a right adjoint if and only if it is representable.
\end{theoremat}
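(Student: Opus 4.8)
The plan is to establish the two implications of the equivalence separately and then to deduce the map equipment statement, using that $\mathbb{P}$ is the identity on objects, locally fully faithful, and sends every $1$-morphism to a Tambara module admitting a right adjoint --- all of which is part of its being a proarrow equipment, established in the preceding section. For the implication \emph{representable $\Rightarrow$ admits a right adjoint}, given a $\csym{C}$-module functor $\euler{\Phi}\colon\mathbf{M}\to\mathbf{N}$ write $\euler{\Phi}_{*}=\mathbb{P}(\euler{\Phi})\colon\mathbf{M}\xslashedrightarrow{}\mathbf{N}$ for the associated representable Tambara module, with underlying profunctor $(n,m)\mapsto\on{Hom}_{\mathbf{N}}(n,\euler{\Phi}(m))$ and Tambara structure induced by the module constraint $\mathrm{H}\blackdiamond\euler{\Phi}(m)\xiso\euler{\Phi}(\mathrm{H}\blackdiamond m)$. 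I would exhibit as its right adjoint the Tambara module $\euler{\Phi}^{*}\colon\mathbf{N}\xslashedrightarrow{}\mathbf{M}$ with underlying profunctor $(m,n)\mapsto\on{Hom}_{\mathbf{N}}(\euler{\Phi}(m),n)$, again with structure induced by the constraint of $\euler{\Phi}$. At the level of underlying profunctors, $\euler{\Phi}_{*}\dashv\euler{\Phi}^{*}$ in $\mathbf{Prof}$ is classical, the unit and counit being built from $\euler{\Phi}$ and identities. The only real work is to check that this unit and counit intertwine the $\csym{C}$-actions, i.e. are morphisms of Tambara modules; this is a diagram chase for which the coherence of the module functor structure of $\euler{\Phi}$ is exactly what is needed. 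The triangle identities are then inherited from $\mathbf{Prof}$, the forgetful functor $\csym{C}\!\on{-Tamb}\to\mathbf{Prof}$ being locally faithful. In particular the image of $\mathbb{P}$ consists of maps.

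For the implication \emph{admits a right adjoint $\Rightarrow$ representable}, let $P\colon\mathbf{M}\xslashedrightarrow{}\mathbf{N}$ be a Tambara module with right adjoint $Q\colon\mathbf{N}\xslashedrightarrow{}\mathbf{M}$ in $\csym{C}\!\on{-Tamb}$, with $\mathbf{M}$ and $\mathbf{N}$ Cauchy complete. Applying the forgetful functor to $\mathbf{Prof}$, the underlying profunctor $\underline{P}$ has $\underline{Q}$ as a right adjoint; composing with representables shows that for each $m\in\mathbf{M}$ the presheaf $\underline{P}(-,m)$ on $\mathbf{N}$ admits a right adjoint in $\mathbf{Prof}$, hence is representable because $\mathbf{N}$ is Cauchy complete. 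Uniqueness of representing objects upgrades $m\mapsto\euler{\Phi}(m)$ to a functor $\euler{\Phi}\colon\mathbf{M}\to\mathbf{N}$ equipped with a natural isomorphism $\underline{P}\cong\underline{\euler{\Phi}}_{*}$, whence $\underline{Q}\cong\underline{\euler{\Phi}}^{*}$ by uniqueness of right adjoints. It remains to promote $\euler{\Phi}$ to a $\csym{C}$-module functor and the latter isomorphism to one of Tambara modules. Transporting the $\csym{C}$-action of $P$ across $P(n,m)\cong\on{Hom}_{\mathbf{N}}(n,\euler{\Phi}(m))$ and evaluating at identity morphisms yields natural morphisms $\psi_{\mathrm{H},m}\colon\mathrm{H}\blackdiamond\euler{\Phi}(m)\to\euler{\Phi}(\mathrm{H}\blackdiamond m)$; the unit and multiplicativity axioms of a Tambara module translate into the unit and associativity coherence of a lax $\csym{C}$-module structure on $\euler{\Phi}$. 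Running the same construction with $Q$ produces morphisms in the opposite direction, i.e. an oplax structure, and the triangle identities --- now read off at the level of the $\csym{C}$-actions rather than merely the underlying profunctors --- force the two families to be mutually inverse. Hence $\euler{\Phi}$ is a (strong) $\csym{C}$-module functor, the isomorphism $\underline{P}\cong\underline{\euler{\Phi}}_{*}$ is by construction compatible with the actions, and $P\cong\mathbb{P}(\euler{\Phi})$ is representable.

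The map equipment statement now follows: $\mathbb{P}$ is identity on objects and locally fully faithful, and by the two implications its local essential image is exactly the class of Tambara modules admitting a right adjoint (over Cauchy complete $\csym{C}$-module categories). Thus the induced $2$-functor from $\csym{C}\!\on{-Mod}$ to the sub-bicategory $\on{Map}(\csym{C}\!\on{-Tamb})$ of maps is essentially surjective on objects and $1$-cells and fully faithful on $2$-cells, hence a biequivalence --- which is precisely the assertion that $\mathbb{P}$ is a map equipment.

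The step I expect to be the main obstacle is the end of the second implication: extracting a coherent and, crucially, invertible module-functor structure on $\euler{\Phi}$ from the Tambara actions of $P$ and $Q$, and checking that the representability isomorphism respects the Tambara structure and not merely the underlying profunctors. It is the invertibility of $\psi$ that distinguishes a genuine two-sided adjoint in $\csym{C}\!\on{-Tamb}$ from a mere representable underlying profunctor, and establishing it requires carefully matching each triangle identity with the corresponding Tambara structure maps.
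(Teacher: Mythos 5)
Your proposal follows essentially the same route as the paper's proof. The first implication is proved exactly as you describe: the classical profunctor adjunction $\Hom{-,\euler{\Phi}-}\dashv\Hom{\euler{\Phi}-,-}$ lifts to $\csym{C}\!\on{-Tamb}$ because the unit and counit are Tambara morphisms, a check that uses the coherence of the module-functor structure of $\euler{\Phi}$. For the converse, the paper likewise first represents the underlying profunctor using Cauchy completeness and transports the Tambara structure, then extracts the lax constraint $\euler{\psi}_{\mathrm{F},X}:=\ta^{\mathtt{\Phi}}_{\mathrm{F};\euler{\Phi}X,X}(\on{id}_{\euler{\Phi}X})$ and verifies the lax coherence axioms exactly as you propose. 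Two small caveats. First, the paper does not quite say ``the lax and oplax families are mutually inverse'': what one actually gets from $\eta$ and $\varepsilon$ being Tambara morphisms, together with the endotransformation $\widehat{\eta}$ (which is invertible by the triangle identities), is that $\euler{\psi}$ is simultaneously a split mono and a split epi, with $\ta^{\mathtt{\Phi}^{\ast}}(\on{id})$ being its inverse only up to conjugation by $\widehat{\eta}$; your phrasing is a bit too optimistic but involves the same data. Second, your last paragraph mischaracterizes the definition of a map equipment: in Definition~\ref{ProArrowDef} this only asks that each $\mathbb{P}(\euler{\Phi})$ admit a right adjoint, which your first implication already establishes with no Cauchy completeness hypothesis, so the map-equipment claim holds unconditionally. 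The biequivalence onto the locally full sub-bicategory of maps (over Cauchy complete module categories) is a further consequence, but tying the map-equipment conclusion to Cauchy completeness would state a weaker theorem than the one actually proved.
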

Here, $\mathbb{P}$ is the pseudofunctor from the bicategory of $\csym{C}$-module categories, $\csym{C}$-module functors and $\csym{C}$-module transformations, to the bicategory of $\csym{C}$-module categories, (generalized) Tambara modules and their morphisms. It being a map equipment means that it is locally full and faithful, and that each $\csym{C}$-module functor admits a right adjoint in $\csym{C}\!\on{-Tamb}$. The latter part of the claim shows that a Tambara module admitting a right adjoint is necessarily isomorphic to a Tambara module given by a $\csym{C}$-module functor. This is completely analogous to the map equipment $\mathbf{Cat}\rightarrow \mathbf{Prof}$, making precise the claim that Tambara modules are to $\csym{C}$-module functors as profunctors are to functors.
\begin{theoremat}[{Theorem~\ref{MainThm}}]\label{IntroThm2}
 The pseudofunctor $\na: \csym{C}\!\on{-Tamb}_{\euler{0}} \rightarrow \on{Bimod}(\csym{C}\!\on{-Tamb}(\csym{C},\csym{C}))$ is a biequivalence.
\end{theoremat}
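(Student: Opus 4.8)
The plan is to verify that $\na$ is a biequivalence by the standard criterion: it suffices to check that $\na$ is essentially surjective on objects and that, for all cyclic $\csym{C}$-module categories $(\mathbf{M},X)$ and $(\mathbf{N},Y)$, the induced functor
\[
\na_{(\mathbf{M},X),(\mathbf{N},Y)}\colon\csym{C}\!\on{-Tamb}_{\euler{0}}\bigl((\mathbf{M},X),(\mathbf{N},Y)\bigr)\longrightarrow\on{Bimod}(\tam)\bigl([X,X],[Y,Y]\bigr)
\]
is an equivalence of categories. Since $\na$ is already constructed, nothing needs to be built: we only verify essential surjectivity on objects, on $1$-morphisms, and full faithfulness on $2$-morphisms, which also spares us having to exhibit a pseudo-inverse together with its coherence data.

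\textbf{Objects.} Given an algebra $\mathrm{A}$ in $\tam$, unpacking the definition of an algebra with respect to the composition tensor product of $\tam$ yields precisely a $\csym{C}$-module category $\mathbf{M}_{\mathrm{A}}$: its objects are those of $\csym{C}$, its hom-spaces are $\on{Hom}_{\mathbf{M}_{\mathrm{A}}}(\mathrm{F},\mathrm{G})=\mathrm{A}(\mathrm{F},\mathrm{G})$ with composition supplied by the multiplication of $\mathrm{A}$, the $\csym{C}$-action sends $\mathrm{H}$ and $\mathrm{F}$ to $\mathrm{H}\cotimes\mathrm{F}$ and acts on hom-spaces through the Tambara structure maps $\ta^{\mathrm{A}}_{\mathrm{H};\mathrm{F},\mathrm{G}}$, and the coherences defining a Tambara module are exactly what make this a (strict) $\csym{C}$-action. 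With the generator $X_{\mathrm{A}}=\mathbb{1}$ one has $\mathbf{M}_{\mathrm{A}}(\mathrm{F})X_{\mathrm{A}}=\mathrm{F}$ and hence $[X_{\mathrm{A}},X_{\mathrm{A}}]=\mathrm{A}$ on the nose, algebra structure included, so $\na(\mathbf{M}_{\mathrm{A}},X_{\mathrm{A}})\cong\mathrm{A}$. (If ``cyclic'' is understood up to Cauchy completion, one replaces $\mathbf{M}_{\mathrm{A}}$ by its completion; this leaves $[X_{\mathrm{A}},X_{\mathrm{A}}]$ unchanged.) This is the ``reconstruction as free module objects'' advertised in the introduction, here essentially tautological at the object level.

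\textbf{$1$- and $2$-morphisms.} Since $\mathbf{M}$ is cyclic it is equivalent, as a $\csym{C}$-module category, to $\mathbf{M}_{[X,X]}$ (the full subcategory on the objects $\mathbf{M}(\mathrm{F})X$, which exhausts $\mathbf{M}$ by cyclicity), and likewise $\mathbf{N}\simeq\mathbf{N}_{[Y,Y]}$, so we may assume both categories have object set $\on{ob}\csym{C}$ with hom-spaces $[X,X]$ and $[Y,Y]$. A profunctor $\mathbf{M}\xslashedrightarrow{}\mathbf{N}$ is then the same datum as a $[Y,Y]$-$[X,X]$-bimodule valued in $\mathbf{Vec}_{\Bbbk}$-profunctors $\csym{C}\xslashedrightarrow{}\csym{C}$, and equipping it with a $\csym{C}$-module (generalized Tambara) structure amounts exactly to upgrading this to a bimodule in $\tam$; the functor $\na_{(\mathbf{M},X),(\mathbf{N},Y)}$ implements this dictionary and is therefore full, faithful, and essentially surjective once the dictionary is checked. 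For $2$-cells: a $\csym{C}$-module transformation $\theta\colon\mathtt{\Psi}\Rightarrow\mathtt{\Psi}'$ between $\csym{C}$-module profunctors $\mathbf{M}\xslashedrightarrow{}\mathbf{N}$ is determined by its components at objects $\bigl(\mathbf{N}(\mathrm{F})Y,\mathbf{M}(\mathrm{G})X\bigr)$, and by naturality and $\csym{C}$-equivariance -- again using that $X$ generates $\mathbf{M}$ and $Y$ generates $\mathbf{N}$ -- these reduce to the single map $\mathtt{\Psi}(Y,X)\to\mathtt{\Psi}'(Y,X)$; conversely, naturality and $\csym{C}$-equivariance of such a map are precisely compatibility with the left $[Y,Y]$- and right $[X,X]$-actions, that is, a morphism of bimodules $\na\mathtt{\Psi}\to\na\mathtt{\Psi}'$.

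\textbf{Main obstacle.} I expect the bulk of the work to sit at the object level: showing that an arbitrary algebra in $\tam$ arises as some $[X,X]$ requires checking that the full list of axioms for a $\csym{C}$-module category unpacks bijectively from those of a Tambara-module algebra (this is the step where the Tambara coherences are genuinely consumed), and that the resulting $\mathbf{M}_{\mathrm{A}}$ is literally $\csym{C}$-module equivalent to the free-module model. The steps on $1$- and $2$-morphisms are then largely formal once one reduces to the strict models $\mathbf{M}_{[X,X]},\mathbf{N}_{[Y,Y]}$, the only genuine subtlety being that in the uncompleted description a generalized Tambara structure on a profunctor between cyclic module categories must be recovered \emph{freely} from its restriction to the chosen generators -- a density-type statement that uses cyclicity essentially and in which one transports the structure maps $\ta$ from the generator block to all of $\mathbf{N}^{\on{op}}\kotimes\mathbf{M}$. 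Once this is settled the biequivalence follows, and as a byproduct one obtains the reconstruction theorem announced in the introduction, since $\mathbf{M}\simeq\mathbf{M}_{[X,X]}$ already exhibits a cyclic $\csym{C}$-module category as the category of free module objects over $[X,X]$ in $\tam$.
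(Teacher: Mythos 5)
Your high-level strategy is the same as the paper's: verify essential surjectivity on objects, local essential surjectivity, and local full-faithfulness, then invoke the standard criterion (the paper cites \cite[Theorem~2.25]{SP}). The object-level construction you sketch — realizing a monoid $\mathrm{A}$ in $\tam$ as the enriched-category-style $\mathbf{M}_{\mathrm{A}}$ with $\on{Ob}\mathbf{M}_{\mathrm{A}}=\on{Ob}\csym{C}$, hom-spaces $\mathrm{A}(\mathrm{F},\mathrm{G})$, and $\csym{C}$-action by $\mathrm{H}\cdot\mathrm{F}=\mathrm{H}\cotimes\mathrm{F}$ with Tambara structure on morphisms — is a legitimate Kleisli-type alternative to the paper's $\mathtt{T}_+^{\ccf{C}}$ of Definition~\ref{CTPlus}, and the two are equivalent. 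The paper's version has the practical advantage that the $\csym{C}$-module coherence is inherited from the restriction action on $\tam$ constructed in Section~\ref{s4}, whereas your direct construction needs those axioms checked by hand.

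There are, however, real gaps. First, the claim that $[X_{\mathrm{A}},X_{\mathrm{A}}]=\mathrm{A}$ ``on the nose, algebra structure included'' is false. The $\csym{C}$-action on $\mathbf{M}_{\mathrm{A}}$ is not strict (the associators and unitors of $\csym{C}$ feed into the module coherence data $\mathbf{m}$), so $[X_{\mathrm{A}},X_{\mathrm{A}}](\mathrm{F},\mathrm{G})=\mathrm{A}(\mathrm{F}\cotimes\mathbb{1},\mathrm{G}\cotimes\mathbb{1})$ and the Tambara structure of $[X_{\mathrm{A}},X_{\mathrm{A}}]$, per Equation~\eqref{TambaraOnRestriction}, is twisted by those coherence cells; one only gets an isomorphism of monoids, not an equality. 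Establishing that isomorphism carefully (the map $\euler{J}$ and the long chain of identities showing it is a Tambara and monoid isomorphism) is essentially all of Section~\ref{s7}. Second, your reduction of the local argument to ``the strict models $\mathbf{M}_{[X,X]},\,\mathbf{N}_{[Y,Y]}$'' is not free. The full subcategory $\csym{C}\ostar X$ on objects $\mathbf{M}\mathrm{F}X$ is not closed under the $\csym{C}$-action, $\mathbf{M}\star X$ has more objects and is still not strictly cyclic, and comparing them requires choices whose compatibility across the module structure is exactly the coherence issue that the paper addresses by passing to fmo monoidal and module categories in Section~\ref{s81} before running Propositions~\ref{MainLocEss} and~\ref{MainLocFF}. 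You flag this as a ``density-type statement'' but give no mechanism for it; it is the substantive content of Section~\ref{s8}, not a formality. As written, your proof would collapse into correctness only for strict monoidal $\csym{C}$ with strict actions, which is not the setting of the theorem.
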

This is arguably {\it the} main theorem of the paper, showing not only an internalization statement, but a full biequivalence between the bicategory of $\csym{C}$-module categories with a cyclic generator, Tambara modules between these and their morphisms, and the bicategory of algebra objects in $\csym{C}\!\on{-Tamb}(\csym{C},\csym{C})$, bimodule objects between them and bimodule morphisms.

Combining Theorem~\ref{IntroThm1} with Theorem~\ref{IntroThm2}, we find the following:
\begin{theoremat} [{Theorem~\ref{MainConsequence}}]\label{IntroThm3}
 Let $\mathbf{M,N}$ be cyclic $\csym{C}$-module categories. Choose a cyclic generator $X$ for $\mathbf{M}$ and a cyclic generator $Y$ for $\mathbf{N}$. The module categories $\mathbf{M,N}$ are equivalent if and only if the algebra objects $[X,X], [Y,Y]$ in $\csym{C}\!\on{-Tamb}(\csym{C},\csym{C})$ are Morita equivalent.
\end{theoremat}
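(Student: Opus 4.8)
The plan is to deduce the statement from the two preceding theorems together with the elementary remark that Morita equivalence is, by definition, nothing but an internal equivalence in a bicategory of bimodules. Precisely, for algebra objects $\mathrm{A},\mathrm{B}$ in a monoidal category $\csym{V}$, a Morita equivalence between them is a pair consisting of an $\mathrm{A}$-$\mathrm{B}$-bimodule $\mathrm{P}$ and a $\mathrm{B}$-$\mathrm{A}$-bimodule $\mathrm{Q}$ together with bimodule isomorphisms $\mathrm{P}\otimes_{\mathrm{B}}\mathrm{Q}\cong\mathrm{A}$ and $\mathrm{Q}\otimes_{\mathrm{A}}\mathrm{P}\cong\mathrm{B}$; this is exactly the data exhibiting $\mathrm{A}$ and $\mathrm{B}$ as equivalent objects of $\on{Bimod}(\csym{V})$. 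With $\csym{V}=\csym{C}\!\on{-Tamb}(\csym{C},\csym{C})$, what must be shown is that $\mathbf{M}$ and $\mathbf{N}$ are equivalent as $\csym{C}$-module categories if and only if $[X,X]$ and $[Y,Y]$ are equivalent objects of $\on{Bimod}(\csym{C}\!\on{-Tamb}(\csym{C},\csym{C}))$.

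By Theorem~\ref{IntroThm2} the pseudofunctor $\na$ is a biequivalence, and it sends the object $\mathbf{M}$ of $\csym{C}\!\on{-Tamb}_{\euler{0}}$, equipped with its chosen cyclic generator $X$, to $[X,X]$, and likewise $\mathbf{N}$ to $[Y,Y]$. Since a biequivalence both preserves and reflects internal equivalences of objects (reflection uses that $\na$ is locally an equivalence of hom-categories: an equivalence $1$-morphism $[X,X]\to[Y,Y]$ and a pseudo-inverse lift to $\csym{C}\!\on{-Tamb}_{\euler{0}}$, and local fullness and faithfulness turns the isomorphisms witnessing the two composites into isomorphisms there), we get that $[X,X]\simeq[Y,Y]$ in $\on{Bimod}(\csym{C}\!\on{-Tamb}(\csym{C},\csym{C}))$ if and only if $\mathbf{M}\simeq\mathbf{N}$ in $\csym{C}\!\on{-Tamb}_{\euler{0}}$, i.e. via Tambara modules. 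It remains to match this with equivalence in $\csym{C}\!\on{-Mod}$, i.e. via honest $\csym{C}$-module functors. One direction is immediate, as $\mathbb{P}\colon\csym{C}\!\on{-Mod}\rightarrow\csym{C}\!\on{-Tamb}$ sends a $\csym{C}$-module equivalence between cyclic module categories to an equivalence in $\csym{C}\!\on{-Tamb}_{\euler{0}}$.

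For the converse, let $\mathtt{T}\colon\mathbf{M}\xslashedrightarrow{}\mathbf{N}$ be an equivalence of Tambara modules; arranging it as an adjoint equivalence with pseudo-inverse $\mathtt{S}\colon\mathbf{N}\xslashedrightarrow{}\mathbf{M}$, the $1$-morphism $\mathtt{T}$ is in particular a right adjoint, so Theorem~\ref{IntroThm1} identifies it with a representable Tambara module $\mathbb{P}(\euler{\Phi})$ for a $\csym{C}$-module functor $\euler{\Phi}\colon\mathbf{M}\rightarrow\mathbf{N}$, and symmetrically $\mathtt{S}\cong\mathbb{P}(\euler{\Psi})$ for $\euler{\Psi}\colon\mathbf{N}\rightarrow\mathbf{M}$. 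Since $\mathbb{P}$ is a pseudofunctor, $\mathbb{P}(\euler{\Psi}\euler{\Phi})\cong\mathtt{S}\mathtt{T}\cong\mathbb{P}(\on{id}_{\mathbf{M}})$ and $\mathbb{P}(\euler{\Phi}\euler{\Psi})\cong\mathtt{T}\mathtt{S}\cong\mathbb{P}(\on{id}_{\mathbf{N}})$; as $\mathbb{P}$ is locally fully faithful (being a map equipment by Theorem~\ref{IntroThm1}) and fully faithful functors reflect isomorphisms, this forces $\euler{\Psi}\euler{\Phi}\cong\on{id}_{\mathbf{M}}$ and $\euler{\Phi}\euler{\Psi}\cong\on{id}_{\mathbf{N}}$, so $\euler{\Phi}$ is the desired equivalence of $\csym{C}$-module categories.

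The one point that requires genuine care — and the step I expect to be the main obstacle — is the appeal to Theorem~\ref{IntroThm1} just made: that theorem is stated for Cauchy complete $\csym{C}$-module categories, whereas a cyclic module category need not be Cauchy complete, and a completeness hypothesis of this kind cannot be dropped outright (already for $\mathbf{Cat}\rightarrow\mathbf{Prof}$, a profunctor out of a non-Cauchy-complete category may have a right adjoint without being representable). Resolving this is precisely where cyclicity enters: the cyclic generator $X$ presents every object of $\mathbf{M}$ as $\mathbf{M}(\mathrm{F})X$ with $\mathrm{F}\in\csym{C}$, so a Tambara module out of $\mathbf{M}$ is completely controlled by its values at $X$, and one should check directly that having a right adjoint forces such a module to be representable by a $\csym{C}$-module functor — the representability criterion of Theorem~\ref{IntroThm1} then applying in the cyclic setting without Cauchy completeness. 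The remaining ingredients — unwinding Morita equivalence as an equivalence in $\on{Bimod}$ and reflecting equivalences along the biequivalence $\na$ — are formal.
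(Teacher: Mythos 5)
Your overall structure tracks the paper's proof, but the worry you flag at the end is a red herring that traces to a misread definition, and it obscures a real gap elsewhere. In this paper ``cyclic'' is defined in Definition~\ref{CyclicCauchyDefinition} to mean a \emph{Cauchy complete} $\csym{C}$-module category in which some $X$ is a cyclic generator (every object being a summand of a finite direct sum of objects $\mathbf{M}\mathrm{F}X$). So the hypotheses of Theorem~\ref{Section4Main} are automatically met, your final paragraph is unnecessary, and the direct argument you give there --- factoring an adjoint Tambara equivalence through $\mathbb{P}$ via representability and then reflecting the invertible $2$-cells along the locally fully faithful $\mathbb{P}$ --- is exactly the content of Corollary~\ref{CauchyTrick}, which the paper invokes in place of redoing that computation. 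Your ``the cyclic generator $X$ presents every object as $\mathbf{M}(\mathrm{F})X$'' describes the stronger notion ``very cyclic,'' not ``cyclic.''

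The genuine gap is in your first step, where you feed $(\mathbf{M},X)$ directly into $\na$. The biequivalence of Theorem~\ref{MainThm} is the pseudofunctor $\na:\csym{C}\!\on{-Tamb}_{c}\rightarrow \on{Bimod}(\tam)$ constructed in Theorem~\ref{LaxPseudoFunctorialityNa}, whose domain consists of \emph{very cyclic} module categories; on the larger $\csym{C}\!\on{-Tamb}_{\euler{0}}$ one only has a lax functor $\widehat{\na}$ whose compositor is not invertible in general, so it does not preserve or reflect internal equivalences. A cyclic $\mathbf{M}$ is typically not very cyclic, so $\na(\mathbf{M},X)$ is not directly meaningful. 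The paper's proof of Theorem~\ref{MainConsequence} handles this by passing to the very cyclic subcategory $\mathbf{M}\star X$ (whose hom-spaces between the $\mathbf{M}\mathrm{F}X$ agree with those of $\mathbf{M}$, so $[X,X]$ is unchanged), applying $\na$ there, and then using $\mathbf{M}\simeq(\mathbf{M}\star X)^{\mathsf{c}}$ together with Corollary~\ref{CauchyTrick} to transport the resulting equivalence back to $\csym{C}\!\on{-Mod}$. Once you insert the detour through $\mathbf{M}\star X$ and drop the Cauchy-completeness worry, your argument becomes essentially the paper's.
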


Using the proof of Theorem~\ref{MainThm}, one easily concludes that we may reconstruct a cyclic $\csym{C}$-module category $\mathbf{M}$ as the category of ``$\csym{C}$-projective'' $[X,X]$-module objects in $\csym{C}\!\on{-Tamb}(\csym{C},\csym{C})$; this is Lemma~\ref{MXCPlus}. We also find a canonical morphism $\omega_{X,X}$, ``from $\on{Hom}_{\mathbf{M}}(\mathbf{M}(-)X,X)$ to $\on{Hom}_{\mathbf{M}}(\mathbf{M}(-)X,\mathbf{M}(-)X)$'', which, in some sense, measures how well the one-sided construction in the presheaf category approximates the full, two-sided Tambara module. If $\omega_{X,X}$ is invertible, we may again reconstruct $\mathbf{M}$ as the category of free $\setj{X,X}$-modules in $\csym{C}$:
\begin{theoremat}[{Theorem~\ref{Formulae} and Corollary~\ref{ExplicitEquations}}]\label{IntroThm4}
 If $\omega_{X,X}$ is an isomorphism, then there is an equivalence of \mbox{$\csym{C}$-module} categories between $\mathbf{M}$ and the category of ``$\csym{C}$-projective'' $\on{Hom}_{\mathbf{M}}(\mathbf{M}(-)X,X)$-modules.
 If $\on{Hom}_{\mathbf{M}}(\mathbf{M}(-)X,X)$ is representable, then $\omega_{X,X}$ is invertible if and only if
 \[
 \begin{aligned}
  \csym{C}(\mathrm{F,G}\cotimes \setj{X,X}) &\rightarrow \Hom{\mathbf{M}\mathrm{F}X,\mathbf{M}\mathrm{G}X} \\
  \mathrm{f} &\mapsto \mathbf{M}\mathrm{G}\on{ev}_{X} \circ \mathbf{m}_{\mathrm{G},\setj{X,X}} \circ \mathbf{M}\mathrm{f}_{X}
 \end{aligned}
 \]
 is invertible, for all $\mathrm{F,G} \in \csym{C}$. In that case, there is an equivalence of $\csym{C}$-module categories between $\mathbf{M}$ and the Cauchy completion of the category of free $\setj{X,X}$-module objects in $\csym{C}$.
\end{theoremat}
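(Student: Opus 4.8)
The plan is to reduce both assertions to the two-sided reconstruction already proved, Lemma~\ref{MXCPlus}, which presents $\mathbf{M}$ as the category of ``$\csym{C}$-projective'' $[X,X]$-module objects in $\csym{C}\!\on{-Tamb}(\csym{C},\csym{C})$, where $[X,X](\mathrm{F},\mathrm{G}) = \Hom{\mathbf{M}\mathrm{F}X,\mathbf{M}\mathrm{G}X}$. The bridge to the one-sided presheaf is the adjunction $L \dashv U$ relating $[\csym{C}^{\on{opp}},\mathbf{Vec}_{\Bbbk}]$, with Day convolution, to $\csym{C}\!\on{-Tamb}(\csym{C},\csym{C})$: here $U$ restricts a Tambara module to its value at the monoidal unit in the second variable, so $U[X,X] \cong \on{Hom}_{\mathbf{M}}(\mathbf{M}(-)X,X)$, while $L$ sends a presheaf $P$ to the Tambara module $(\mathrm{F},\mathrm{G}) \mapsto \int^{\mathrm{H}}\csym{C}(\mathrm{F},\mathrm{G}\cotimes\mathrm{H})\otimes P(\mathrm{H})$; a density computation gives $UL \cong \on{id}$, so $L$ is fully faithful and strong monoidal, it carries the algebra object $\on{Hom}_{\mathbf{M}}(\mathbf{M}(-)X,X) = U[X,X]$ to a Tambara algebra $L(\on{Hom}_{\mathbf{M}}(\mathbf{M}(-)X,X))$, and $\omega_{X,X}$ is the counit $L(\on{Hom}_{\mathbf{M}}(\mathbf{M}(-)X,X)) = LU[X,X] \to [X,X]$, a morphism of algebra objects. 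This machinery is set up before the statement; I only record the features of $L$ the argument uses.

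For the first assertion, suppose $\omega_{X,X}$ is invertible. Then it is an isomorphism of algebra objects, so restriction of scalars along it is a $\csym{C}$-module equivalence between the categories of $[X,X]$-modules and of $L(\on{Hom}_{\mathbf{M}}(\mathbf{M}(-)X,X))$-modules in $\csym{C}\!\on{-Tamb}(\csym{C},\csym{C})$; since ``$\csym{C}$-projectivity'' is intrinsic --- being a Cauchy colimit of free modules on objects in the image of $\csym{C}$ --- it is preserved, and Lemma~\ref{MXCPlus} yields $\mathbf{M} \simeq$ ``$\csym{C}$-projective'' $L(\on{Hom}_{\mathbf{M}}(\mathbf{M}(-)X,X))$-modules. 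Writing $A := \on{Hom}_{\mathbf{M}}(\mathbf{M}(-)X,X)$, strong monoidality of $L$ identifies the free $L(A)$-module on $L(\csym{C}(-,\mathrm{F}))$ with $L$ of the free $A$-module on $\csym{C}(-,\mathrm{F})$, and $L$ takes representable presheaves to representable Tambara modules; being also fully faithful and cocontinuous (a left adjoint), $L$ thus restricts to a $\csym{C}$-module equivalence from the ``$\csym{C}$-projective'' $A$-modules in $[\csym{C}^{\on{opp}},\mathbf{Vec}_{\Bbbk}]$ onto the ``$\csym{C}$-projective'' $L(A)$-modules. Composing with the previous equivalence gives the stated $\csym{C}$-module equivalence between $\mathbf{M}$ and the category of ``$\csym{C}$-projective'' $\on{Hom}_{\mathbf{M}}(\mathbf{M}(-)X,X)$-modules.

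For the equivalence in the second assertion, assume in addition that $A = \on{Hom}_{\mathbf{M}}(\mathbf{M}(-)X,X)$ is representable, say by $\setj{X,X}\in\csym{C}$ with counit $\on{ev}_{X}:\setj{X,X}\blackdiamond X \to X$, so $A \cong \csym{C}(-,\setj{X,X})$. Substituting this into the coend defining $L(A)$ and reducing by co-Yoneda collapses it to $L(A)(\mathrm{F},\mathrm{G}) \cong \csym{C}(\mathrm{F},\mathrm{G}\cotimes\setj{X,X})$, and tracing $\omega_{X,X}$ through this collapse rewrites its $(\mathrm{F},\mathrm{G})$-component as precisely $\mathrm{f} \mapsto \mathbf{M}\mathrm{G}\on{ev}_{X}\circ\mathbf{m}_{\mathrm{G},\setj{X,X}}\circ\mathbf{M}\mathrm{f}_{X}$, the displayed assignment. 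A morphism of Tambara modules is invertible exactly when all its components are, so $\omega_{X,X}$ is invertible if and only if this map is invertible for all $\mathrm{F},\mathrm{G}$. The step within this part that needs the most care is checking that the co-Yoneda collapse respects the Tambara and algebra structures --- i.e. that under representability $L(A)$ really is the Tambara algebra $(\mathrm{F},\mathrm{G}) \mapsto \csym{C}(\mathrm{F},\mathrm{G}\cotimes\setj{X,X})$ with its evident structure maps --- a routine Fubini/co-Yoneda manipulation, but exactly where an error would most easily creep in.

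Finally, assume both hypotheses. By the first assertion $\mathbf{M}$ is $\csym{C}$-module-equivalent to the category of ``$\csym{C}$-projective'' $\csym{C}(-,\setj{X,X})$-modules in $[\csym{C}^{\on{opp}},\mathbf{Vec}_{\Bbbk}]$. The Yoneda embedding $\csym{C}\to[\csym{C}^{\on{opp}},\mathbf{Vec}_{\Bbbk}]$ is fully faithful and strong monoidal for Day convolution, so it gives a fully faithful $\csym{C}$-module functor from $\on{mod}_{\ccf{C}}\setj{X,X}$ into the category of $\csym{C}(-,\setj{X,X})$-modules in $[\csym{C}^{\on{opp}},\mathbf{Vec}_{\Bbbk}]$, sending the free module $\mathrm{F}\cotimes\setj{X,X}$ to $\csym{C}(-,\mathrm{F})\otimes_{\mathrm{Day}}\csym{C}(-,\setj{X,X}) \cong \csym{C}(-,\mathrm{F}\cotimes\setj{X,X})$. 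By definition the ``$\csym{C}$-projective'' $\csym{C}(-,\setj{X,X})$-modules are the Cauchy colimits of free modules on representables, hence form the Cauchy completion of the essential image of the free $\setj{X,X}$-modules; as this image is a full subcategory $\csym{C}$-module-equivalent to the category of free $\setj{X,X}$-module objects in $\csym{C}$, we obtain the final $\csym{C}$-module equivalence. Throughout, the genuine difficulty is not any single step but keeping the ``$\csym{C}$-projectivity''/Cauchy-completion conditions and the $\csym{C}$-linearity of every comparison functor aligned across the three ambient categories $\mathbf{M}$, $\csym{C}\!\on{-Tamb}(\csym{C},\csym{C})$ and $[\csym{C}^{\on{opp}},\mathbf{Vec}_{\Bbbk}]$.
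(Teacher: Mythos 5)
Your proposal is correct and tracks the paper's proof closely, but you organize it through the adjunction $L = \mathbb{W} \dashv U$ (with $U$ evaluation at $\mathbb{1}$ in the second variable and $UL \cong \on{id}$) rather than through the paper's chain of explicit constructions. The paper never states this adjunction or that $\omega_{X,X}$ is its counit: it defines $\omega_{X,Y}$ by hand and proves directly that it is a monoid/bimodule morphism (Corollary~\ref{RigidMonoids}), establishes full faithfulness of $\mathbb{W}$ only on morphisms out of representables (Proposition~\ref{YonedaFullFaithful}), then proves by hand that the induced functor $\overline{\mathbb{W}}_{\ast,\Hom{-X,X}}$ is an equivalence $\cplus{\Hom{-X,X}} \xiso \mathbb{W}(\Hom{-X,X})_{+}^{\ccf{C}}$ (Proposition~\ref{WEquivalence}), and finishes via Lemma~\ref{AuxiliaryLemma} (restriction of scalars along the monoid isomorphism $\omega_{X,X}$), Corollary~\ref{AuxiliaryCorollary}, and for the representable case Lemma~\ref{Yo-YoDa} plus a co-Yoneda reduction, exactly as you sketch. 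Your route is conceptually cleaner --- once you know $\omega_{X,X}$ is the counit, that it is a monoid morphism comes for free from doctrinal adjunction rather than by a direct diagram chase --- but it does rely on two facts you take as given that the paper does not record: that the cocontinuous extension $\mathbb{W}$ admits $U$ as a right adjoint with invertible unit, and that the explicitly-defined $\omega_{X,X}$ of Corollary~\ref{RigidMonoids} coincides (up to the unitor $\mathbf{m}_{\mathbb{1}}$ identifying $U[X,X]$ with $\Hom{-X,X}$) with the counit of that adjunction. Both are true and routine to check, so this is not a gap, but they are load-bearing and should be verified in the same way you flag the co-Yoneda collapse. One further point worth making explicit: $\mathbb{W}$ is strong monoidal with source $[\csym{C}^{\on{opp}},\mathbf{Vec}_{\Bbbk}]^{\on{rev}}$, so it converts right $A$-modules into left $\mathbb{W}(A)$-modules; the paper is careful about this variance, and your phrase ``$\csym{C}$-projectivity is intrinsic'' quietly absorbs it.
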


Finally, if we do not want to assume the existence of a generator $X$, we may follow the approach considered in much greater generality in the category theoretic setting by \cite{GP}. In our case it specializes to the observation that, if $\csym{C}$ is rigid and for all objects $Y,Z \in \mathbf{M}$ the internal $\on{Hom}$ object $\setj{Y,Z}$ of $\csym{C}$ exists, then we obtain a $\csym{C}$-enriched category $\mathcal{M}$ with $\on{Ob}\mathbf{M} = \on{Ob}\mathcal{M}$ and $\on{Hom}_{\mathcal{M}}(Y,Z) = \setj{Y,Z}$. This gives a $2$-functor $\csym{C}\!\on{-IntMod} \rightarrow \csym{C}\!\mathbf{-Cat}$ from the $2$-ca\-te\-gory of internalizable $\csym{C}$-module categories to that of $\csym{C}$-enriched categories. This $2$-functor is a $2$-equivalence onto its essential image (which is given by $\csym{C}$-tensored $\csym{C}$-categories). We generalize this beyond the rigid or internalizable case:
\begin{theoremat}[{Theorem~\ref{ActionViaEnrichment}}]\label{IntroThm5}
 There is a $2$-functor $\mathbb{S}: \csym{C}\!\on{-Mod} \rightarrow \csym{C}\!\on{-Tamb}(\csym{C},\csym{C})^{\otimes\!\on{opp}}\!\mathbf{-Cat}$ which is a $2$-equivalence onto its essential image.
\end{theoremat}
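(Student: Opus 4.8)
The plan is to construct $\mathbb{S}$ by hand and then verify that it is a strict $2$-functor that is locally an equivalence of categories; this last property is exactly the assertion that $\mathbb{S}$ is a $2$-equivalence onto its essential image. Writing $\mathcal{V}:=\csym{C}\!\on{-Tamb}(\csym{C},\csym{C})^{\on{rev}}$, I would send a $\csym{C}$-module category $\mathbf{M}$ to the $\mathcal{V}$-category $\mathbb{S}\mathbf{M}=:\mathcal{M}$ with $\on{Ob}\mathcal{M}:=\on{Ob}\mathbf{M}$ and with hom-object the Tambara module $\mathcal{M}(m,n)$ given by $\mathcal{M}(m,n)(\mathrm{K},\mathrm{L}):=\on{Hom}_{\mathbf{M}}(\mathbf{M}\mathrm{K}\,m,\mathbf{M}\mathrm{L}\,n)$, with strength obtained by applying $\mathbf{M}\mathrm{H}$ and conjugating by the associativity constraint of the action; composition $\mathcal{M}(n,p)\otimes\mathcal{M}(m,n)\to\mathcal{M}(m,p)$ is induced, via the coend $\int^{\mathrm{M}}\mathcal{M}(m,n)(\mathrm{K},\mathrm{M})\otimes\mathcal{M}(n,p)(\mathrm{M},\mathrm{L})$ computing the monoidal product of $\csym{C}\!\on{-Tamb}(\csym{C},\csym{C})$, by composition in $\mathbf{M}$, and the unit $\mathbb{1}\to\mathcal{M}(m,m)$ is $\csym{C}(\mathrm{K},\mathrm{L})\to\on{Hom}_{\mathbf{M}}(\mathbf{M}\mathrm{K}\,m,\mathbf{M}\mathrm{L}\,m)$, $\phi\mapsto\mathbf{M}\phi_m$. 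Conceptually this is nothing but the canonical enriched category carried by the cospan of maps $\csym{C}\xrightarrow{q_m}\mathbf{M}\xleftarrow{q_n}\csym{C}$ in the proarrow equipment $\mathbb{P}$ of Theorem~\ref{Section4Main}, where $q_m\colon\mathrm{K}\mapsto\mathbf{M}\mathrm{K}\,m$ classifies $m$ and $\mathcal{M}(m,n)=\mathbb{P}(q_n)^{\mathrm{R}}\odot\mathbb{P}(q_m)$, the right adjoint existing because $\mathbb{P}$ is a map equipment and the $\on{rev}$ being an artefact of the convention for the monoidal product; from that point of view associativity and unitality of $\mathcal{M}$ are automatic, and the only nonformal check at this stage is that composition respects the Tambara strengths, which unwinds to the coherence of the $\csym{C}$-action.

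On a $\csym{C}$-module functor $\euler{\Phi}\colon\mathbf{M}\to\mathbf{N}$ I would let $\mathbb{S}\euler{\Phi}$ be $\euler{\Phi}$ on objects and, on hom-objects, the Tambara morphism sending $f\colon\mathbf{M}\mathrm{K}\,m\to\mathbf{M}\mathrm{L}\,n$ to the conjugate of $\euler{\Phi}f$ by the structure isomorphisms $\euler{\Phi}\mathbf{M}(-)\cong\mathbf{N}(-)\euler{\Phi}$; this is a Tambara morphism because those isomorphisms are compatible with the strengths, and it respects composition and units because $\euler{\Phi}$ does. On a $\csym{C}$-module transformation $\alpha$ I would take the $\mathcal{V}$-natural transformation whose $m$-component $\mathbb{1}\to\mathcal{N}(\euler{\Phi}m,\euler{\Psi}m)$ sends $\phi\colon\mathrm{K}\to\mathrm{L}$ to $\mathbf{N}\mathrm{L}\,\alpha_m\circ\mathbf{N}\phi_{\euler{\Phi}m}$, noting that $\mathcal{V}$-naturality of this family is precisely ordinary naturality of $\alpha$ together with its compatibility with the action. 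Strictness of $\mathbb{S}$ on identities and composites of $1$- and $2$-cells is immediate, since the structure isomorphism of a composite module functor is the composite of the structure isomorphisms. For local full faithfulness I would invoke the Yoneda lemma for Tambara modules: the monoidal unit of $\csym{C}\!\on{-Tamb}(\csym{C},\csym{C})$ is the free Tambara module on the $(\mathbb{1}_{\csym{C}},\mathbb{1}_{\csym{C}})$-representable, so $\csym{C}\!\on{-Tamb}(\csym{C},\csym{C})(\mathbb{1},\mathtt{\Phi})\cong\mathtt{\Phi}(\mathbb{1}_{\csym{C}},\mathbb{1}_{\csym{C}})$ naturally; hence a $\mathcal{V}$-natural transformation $\mathbb{S}\euler{\Phi}\Rightarrow\mathbb{S}\euler{\Psi}$ is the same datum as a family of morphisms $\euler{\Phi}m\to\euler{\Psi}m$ in $\mathbf{N}$ (using also the unit constraint $\mathbf{N}\mathbb{1}_{\csym{C}}\cong\on{id}$), with $\mathcal{V}$-naturality corresponding, as above, to naturality plus $\csym{C}$-linearity, so $\mathbb{S}$ is bijective on $2$-cells.

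The main obstacle is local essential surjectivity: given a $\mathcal{V}$-functor $G\colon\mathbb{S}\mathbf{M}\to\mathbb{S}\mathbf{N}$, I must produce a $\csym{C}$-module functor $\euler{\Phi}$ with $G=\mathbb{S}\euler{\Phi}$. The underlying ordinary category of $\mathbb{S}\mathbf{M}$ is, via the unit constraint, $\mathbf{M}$ itself, since $\mathcal{V}(\mathbb{1},\mathcal{M}(m,n))=\mathcal{M}(m,n)(\mathbb{1}_{\csym{C}},\mathbb{1}_{\csym{C}})\cong\on{Hom}_{\mathbf{M}}(m,n)$; so $G$ has an underlying ordinary functor, which I take to be $\euler{\Phi}$. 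For the module structure I would set $\theta_{\mathrm{F},m}\colon\euler{\Phi}\mathbf{M}\mathrm{F}\,m\to\mathbf{N}\mathrm{F}\,\euler{\Phi}m$ to be the image under $G$, in Tambara-degree $(\mathbb{1}_{\csym{C}},\mathrm{F})$, of $\on{id}_{\mathbf{M}\mathrm{F}\,m}$ regarded as an element of $\mathcal{M}(\mathbf{M}\mathrm{F}\,m,m)(\mathbb{1}_{\csym{C}},\mathrm{F})$, and dually $\theta'_{\mathrm{F},m}$ from $\on{id}_{\mathbf{M}\mathrm{F}\,m}$ regarded in $\mathcal{M}(m,\mathbf{M}\mathrm{F}\,m)(\mathrm{F},\mathbb{1}_{\csym{C}})$. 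The load-bearing observation is that these two copies of $\on{id}_{\mathbf{M}\mathrm{F}\,m}$ compose, inside the enriched category $\mathbb{S}\mathbf{M}$, to an identity in two ways --- once landing in $\mathcal{M}(\mathbf{M}\mathrm{F}\,m,\mathbf{M}\mathrm{F}\,m)$ in degree $(\mathbb{1}_{\csym{C}},\mathbb{1}_{\csym{C}})$ and once in $\mathcal{M}(m,m)$ in degree $(\mathrm{F},\mathrm{F})$ --- so that, since $G$ preserves enriched composition and identities, a short computation yields $\theta'_{\mathrm{F},m}\theta_{\mathrm{F},m}=\on{id}$ and $\theta_{\mathrm{F},m}\theta'_{\mathrm{F},m}=\on{id}$; thus $\theta$ is invertible. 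Naturality of $\theta$ and the two module-functor coherence axioms follow in the same way from $\mathcal{V}$-functoriality of $G$ and the coherence of $\mathbb{S}\mathbf{M}$ and $\mathbb{S}\mathbf{N}$, so $(\euler{\Phi},\theta)$ is a $\csym{C}$-module functor. Finally, writing any $f\colon\mathbf{M}\mathrm{K}\,m\to\mathbf{M}\mathrm{L}\,n$ as the enriched composite in $\mathbb{S}\mathbf{M}$ of $\on{id}_{\mathbf{M}\mathrm{L}\,n}$ (degree $(\mathbb{1}_{\csym{C}},\mathrm{L})$), of $f$ itself (degree $(\mathbb{1}_{\csym{C}},\mathbb{1}_{\csym{C}})$) and of $\on{id}_{\mathbf{M}\mathrm{K}\,m}$ (degree $(\mathrm{K},\mathbb{1}_{\csym{C}})$), and applying $G$, one gets $G_{m,n}(f)=\theta_{\mathrm{L},n}\circ\euler{\Phi}(f)\circ\theta_{\mathrm{K},m}^{-1}$, which is exactly $(\mathbb{S}\euler{\Phi})_{m,n}(f)$; hence $G=\mathbb{S}\euler{\Phi}$.

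Thus each hom-functor of $\mathbb{S}$ is fully faithful and essentially surjective, i.e. an equivalence, so $\mathbb{S}$ restricts to a biequivalence onto the full sub-$2$-category of $\csym{C}\!\on{-Tamb}(\csym{C},\csym{C})^{\on{rev}}\!\mathbf{-Cat}$ spanned by its essential image (in particular $\mathbb{S}\mathbf{M}\simeq\mathbb{S}\mathbf{N}$ forces $\mathbf{M}\simeq\mathbf{N}$). I expect almost all of the labour beyond the conceptual skeleton above to be routine: checking that composition in $\mathbb{S}\mathbf{M}$ is a morphism of Tambara modules, and the pervasive bookkeeping of Tambara-degrees and unit-coherence isomorphisms; the single genuinely load-bearing idea is the zigzag producing the inverse of the constraint $\theta$ in the essential-surjectivity step.
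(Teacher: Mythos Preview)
Your construction of $\mathbb{S}$ coincides with the paper's (Definition~\ref{DefiningS}), and your argument for local full faithfulness via the isomorphism $\tam(\mathbb{1},\mathtt{\Phi})\cong\mathtt{\Phi}(\mathbb{1},\mathbb{1})$ is essentially the paper's as well. Where you diverge is in establishing local essential surjectivity. The paper does not argue directly: instead it builds an explicit $2$-functor $\mathbb{R}\colon\tam^{\on{rev}}\!\on{-Cat}\to\csym{C}\!\on{-Mod}$ (Definition~\ref{DefiningR}), sending a $\mathcal{V}$-category $\mathcal{M}$ to the $\csym{C}$-module category with objects $\on{Ob}\mathcal{M}\times\on{Ob}\csym{C}$ and hom-spaces $\on{Hom}((X,\mathrm{F}),(Y,\mathrm{G})):=\mathcal{M}(X,Y)(\mathrm{F},\mathrm{G})$, and then exhibits $2$-natural transformations $\mathbb{u}\colon\mathbb{1}\to\mathbb{RS}$ and $\mathbb{n}\colon\mathbb{1}\to\mathbb{SR}$ whose components are equivalences on the relevant objects (Propositions~\ref{OtherUnit}, \ref{UnitProposition}, \ref{LastStep}).

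Your route is more elementary and arguably cleaner: you extract the module functor $(\euler{\Phi},\theta)$ directly from a given $\mathcal{V}$-functor $G$ by evaluating $G$ on the elements $\on{id}_{\mathbf{M}\mathrm{F}m}$ placed in the two ``off-diagonal'' Tambara degrees $(\mathbb{1},\mathrm{F})$ and $(\mathrm{F},\mathbb{1})$, and your zigzag argument for invertibility of $\theta$ is exactly the isomorphism $(X,\mathrm{F})\simeq(\mathbf{M}\mathrm{F}X,\mathbb{1})$ that the paper establishes inside $\mathbb{SRS}(\mathbf{M})$ in Proposition~\ref{LastStep}, transported to the target of $G$ rather than to an auxiliary category. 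The paper's approach buys an explicit partial quasi-inverse $\mathbb{R}$ and the standalone statement that $\mathbb{n}_{\mathcal{M}}$ is fully faithful for \emph{every} $\mathcal{V}$-category $\mathcal{M}$; yours avoids constructing and verifying a second $2$-functor and gives the sharper conclusion that $G$ equals (not merely is isomorphic to) $\mathbb{S}\euler{\Phi}$. Both are correct; one small caveat is that the $(\euler{\Phi},\theta)$ you produce is a strong but not necessarily strict $\csym{C}$-module functor, so in the strict setting adopted in Section~\ref{s12} you should either note that $\mathbb{S}$ is defined on strong module functors with the evident conjugation formula, or strictify $(\euler{\Phi},\theta)$ before comparing.
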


The paper is organized as follows.
\begin{itemize}
 \item
Section~\ref{s2} consists of a summary of notational conventions as well as the necessary preliminaries on module categories and Tambara modules.
\item In \cite{PS}, it is shown that the category of (classical) Tambara modules is an Eilenberg-Moore category for a monad on $[\csym{C} \kotimes \csym{C}^{\on{opp}}, \mathbf{Vec}_{\Bbbk}]$. Section~\ref{s3} recalls the description of free objects in this Eilenberg-Moore category.
\item Section~\ref{s4} establishes Theorem~\ref{IntroThm1}, by first showing that $\csym{C}\!\on{-Mod}$ acts on $\csym{C}\!\on{-Tamb}$ via restrictions, similarly to the bicategory of $\Bbbk$-algebras acting on the bicategory of bimodules over $\Bbbk$-algebras by restrictions which ``twist'' the module structures.
It also shows that categories of Tambara modules are {\it tame}, i.e. satisfy cocompleteness conditions necessary for the categories of internal bimodules to be well-behaved.
\item Section~\ref{DefiningNa} defines the pseudofunctor $\na$ of Theorem~\ref{IntroThm2}, in particular showing its pseudofunctoriality, and showing that the definition of $\na$ does not depend on the choice of generators for our module categories - all choices give equivalent pseudofunctors.
\item Section~\ref{s6} shows that $\na$ is compatible with restrictions, i.e. that $\na$ is ``natural in $\csym{C}$''.
\item Section~\ref{s7} shows that $\na$ is essentially surjective.
\item Section~\ref{s8} shows that $\na$ is locally an equivalence.

It should be remarked that, with the exception of Section~\ref{s12}, we do not pass to strictifications to omit the coherence cells in our computations. Some of the applications we have in mind crucially use these cells, so our proofs rely on explicit calculations inevitably involving coherence cells. As a consequence of this approach, we need to tackle a minor coherence problem in Section~\ref{s81}, using a strategy similar to MacLane's proof of the coherence theorem for monoidal categories.
 \item Section~\ref{s9} states and shows Theorem~\ref{IntroThm3}.
 \item Section~\ref{s101} recalls the Cayley functor $[\csym{C},\mathbf{Vec}_{\Bbbk}] \rightarrow \csym{C}\!\on{-Tamb}(\csym{C},\csym{C})$ defined by \cite{PS} in the case of right-closed $\csym{C}$. We give a different interpretation of this functor, which allows us to easily extend it to the general case (relaxing the closedness assumption).
 \item In Section~\ref{s102}, we prove Corollary~\ref{OstrikMonoidStructure}, stating that the algebra and coalgebra structures of \cite{Os} and \cite{MMMT} can be defined already on the level of (co)presheaf categories. After that, we define the morphism $\omega_{X,X}$ of Theorem~\ref{IntroThm4} and, in \ref{RigidMonoids}, we show that it is a monoid morphism which is invertible if $\csym{C}$ is rigid.
 \item In Section~\ref{s103} we show that the central notion of $2$-representation theory, that is, the notion of a simple transitive $2$-repre\-sentation, corresponds precisely to the simplicity of the monoid associated to the $2$-representation. This is Corollary~\ref{NaSimpleSimple}. We then show that the projective and injective abelianizations are given by finite cocompletion and completion, and, in Corollary~\ref{ReproveMMMT} reprove the main results of \cite{MMMT} using the biequivalence $\na$.
 \item
Section~\ref{s104} uses the theory of monoidal posets to provide an example of an infinite family of module categories over a fixed monoidal category which cannot be distinguished using their Ostrik algebras or MMMTZ coalgebras, showing that these do not provide complete invariants of module categories, and that in the non-rigid case our results are strictly stronger than those of \cite{Os} and \cite{MMMT}.
\item
Section~\ref{s11} shows that our results can be used to reconstruct, or even internalize, cyclic module categories - its main result is Theorem~\ref{IntroThm4}.
\item
Finally, Section~\ref{s12} shows that (classical) Tambara modules can be used to give a ``action via enrichment'' result in the non-rigid case, namely Theorem~\ref{IntroThm5}.
\end{itemize}

\vspace{5mm}
\textbf{Acknowledgements.}
This research is partially supported by Göran Gustafssons Stiftelse. The author would like to thank his advisor Volodymyr Mazorchuk for many helpful comments and discussions.
Section~\ref{s12} and Section~\ref{s104} were heavily influenced by the ``Bicategories'' course given at Research School on Bicategories, Categorification and Quantum Theory at University of Leeds in July, 2022. The author would thus also like to thank the organizers of this research school, as well as Richard Garner and Christina Vasilakopoulou who taught the course. Finally, the author would like to thank the referee for very valuable remarks.

\section{Preliminaries}\label{s2}

Throughout, we fix a field $\Bbbk$. All the categories, functors, transformations, monoidal categories, bicategories etc. are assumed to be $\Bbbk$-linear. In other words, we implicitly work in the $\mathcal{V}$-enriched setting for $\mathcal{V} = \mathbf{Vec}_{\Bbbk}$. Our arguments apply for similarly for other familiar choices of $\mathcal{V}$ where one can prove the commutativity of diagrams on the level of elements, such as $\mathbf{Ab}, \mathbf{Set}$ or $R\!\on{-Mod}$ for $R$ a commutative domain.

\subsection{Notational conventions}

The notational conventions specified below will be followed throughout the document, unless otherwise stated. The definitions of some of the notions below are recalled later in this section.

\begin{itemize}
  \item Categories are denoted by calligraphic capital letters, $\mathcal{A,B,C}$ and the like.
 \item Functors are denoted by $\euler{F,G}$ and the like.
 \item Natural transformations are denoted by $\euler{t,d,g,k}$ and the like.
 \item Objects in an ordinary category are denoted by $X,Y,Z$, and the like.
 \item Morphisms in an ordinary category are denoted by $f,g,h$, and the like. The identity morphism of an object $X \in \mathcal{C}$ is denoted by $\on{id}_{X}$.
 \item Monoidal categories are denoted denoted by $\csym{C},\csym{D}$ and the like.
 \item Objects of a monoidal category $\csym{C}$ are denoted by roman capital letters, $\mathrm{F,G,H}$ and the like.
 \item Morphisms of a monoidal category $\csym{C}$ are denoted by roman lowercase letters, $\mathrm{f,g,h}$ and the like.
 \item We denote the tensor product functor of a monoidal category $\csym{C}$ by $-\cotimes -$. Thus, for any objects $\mathrm{F,G}$, we obtain objects $\mathrm{F}\cotimes \mathrm{G}$ and $\mathrm{G} \cotimes \mathrm{F}$. If there is no risk of ambiguity, we may omit the superscript or the tensor product symbol itself, thus denoting $\mathrm{F} \cotimes \mathrm{G}$ by $\mathrm{FG}$, and $\mathrm{G} \cotimes \mathrm{F}$ simply by $\mathrm{GF}$.
 If $\csym{C}$ is the bicategory $\on{Bimod}(\mathrm{A})$ of bimodule objects over a monoid object $\mathrm{A}$ in a monoidal category $\csym{D}$, we may alternatively write $\mathrm{F} \cotimes \mathrm{G}$ as $\mathrm{F} \otimes_{\mathrm{A}} \mathrm{G}$.
 \item The unit object of a monoidal category $\csym{C}$ is denoted by $\mathbb{1}_{\!\ccf{C}}$. If there is no risk of ambiguity, the superscript will be omitted.
 \item The left unitor, right unitor and associator transformations for a monoidal category $\csym{C}$ are denoted by $\mathsf{l}^{\ccf{C}},\mathsf{r}^{\!\ccf{C}},\mathsf{a}^{\!\ccf{C}}$. If there is no risk of ambiguity, the superscript will be omitted. For any $\mathrm{F,G,H} \in \on{Ob}\csym{C}$, we obtain the component isomorphisms $\mathsf{l}^{\ccf{C}}_{\mathrm{F}}: \mathbb{1}_{\!\ccf{C}} \cotimes \mathrm{F} \xiso \mathrm{F}, \; \mathsf{r}^{\!\ccf{C}}_{\mathrm{F}}: \mathrm{F}\mathbb{1} \xiso \mathrm{F}$ and $\mathsf{a}^{\!\ccf{C}}_{\mathrm{F,G,H}}: (\mathrm{FG})\mathrm{H} \xiso \mathrm{F}(\mathrm{GH})$.
 \item More generally, we use superscript decorations to denote the structures the decorated objects belong to, and subscript decorations to denote the components of the decorated objects.
 Identity objects, functors and morphisms are an exception to this.
 \item The identity morphisms of a $\csym{C}$-enriched category $\mathcal{A}$ are denoted by $\mathsf{e}^{\mathcal{A}}_{X}: \mathbb{1}_{\ccf{C}} \rightarrow \mathcal{A}(X,X)$, for any $X \in \mathcal{A}$. The composition morphisms of $\mathcal{A}$ are denoted by $\mathsf{c}^{\mathcal{A}}_{Y;X,Z}: \mathcal{A}(Y,Z) \cotimes \mathcal{A}(X,Y) \rightarrow \mathcal{A}(X,Z)$.
 \item $\csym{C}$-module categories are denoted by boldface capital letters, $\mathbf{K,M,N}$ and the like.
 \item Structure morphisms of a $\csym{C}$-module category $\mathbf{M}$ are denoted by $\mathbf{m}$; the unitality morphisms and the multiplicativity morphisms are distinguished by the number of subscripts. We thus have the invertible transformation $\mathbf{m}_{\mathbb{1}}: \mathbb{1}_{\mathbf{M}} \xiso \mathbf{M}\mathbb{1}_{\ccf{C}}$, and, for any $\mathrm{F,G} \in \csym{C}$, we obtain an invertible transformation $\mathbf{m}_{\mathrm{F,G}}: \mathbf{M}\mathrm{G} \mathbf{M}\mathrm{F} \xiso \mathbf{M}\mathrm{GF}$.
 \item $\csym{C}$-module functors are denoted by capital Greek letters, $\euler{\Phi, \Psi, \Sigma}$ and the like.
 \item $\csym{C}$-module transformations are denoted by $\euler{t,d,g,k}$ and the like. This notation coincides with our notation for ordinary natural transformations; we will specify the $\csym{C}$-module property explicitly.
 \item Tambara modules are denoted by uppercase typewriter Greek letters, $\mathtt{\Phi, \Psi, \Sigma}$ and the like. Composition of Tambara modules is denoted by $- \diamond -$.
 \item Tambara morphisms are denoted by typewriter lowercase letters, $\mathtt{t,d,g,k}$ and the like.
 \item We apply the same conventions for profunctors without Tambara structure as for Tambara modules.
 \item Bicategories are denoted by $\csym{A},\csym{B}$ and the like. This coincides with our notation for monoidal categories; in case of ambiguity we will explicitly specify which kind of structure is considered.
 \item Pseudofunctors are denoted by $\mathbb{F,G}$ and the like.
 \item Pseudonatural transformations are denoted by $\mathbb{t,d,g,k}$ and the like.
 \item We denote by $\mathbf{Cat}_{\Bbbk}$ the closed monoidal bicategory of ($\Bbbk$-linear) categories, functors and natural transformations. We denote the tensor product of ($\Bbbk$-linear) categories $\mathcal{A,B}$ by $\mathcal{A} \kotimes \mathcal{B}$.
 \item Given objects $X,Y$ of a category $\mathcal{A}$, we will interchangeably denote the space of morphisms in $\mathcal{A}$ from $X$ to $Y$ by $\on{Hom}_{\mathcal{A}}(X,Y), \mathcal{A}(X,Y)$ and $\Hom{X,Y}^{\mathcal{A}}$.
\end{itemize}

The usual notation for oppositization of categories may cause confusion. Viewing a monoidal category $\csym{C}$ as a category with additional structure, the opposite category $\csym{C}^{\on{op}}$ oppositizes the morphisms of $\csym{C}$. If we instead identify $\csym{C}$ with its delooping bicategory, $\csym{C}^{\on{op}}$ would usually correspond to oppositizing the $1$-morphisms of $\csym{C}$, which corresponds to oppositizing the tensor product, but not the morphisms of $\csym{C}$. In view of this, we make the following conventions:
\begin{itemize}
 \item The opposite of a category $\mathcal{C}$ is denoted by $\mathcal{C}^{\on{op}}$.
 \item The monoidal category obtained by oppositizing the tensor product of a monoidal category $\csym{C}$ is denoted by $\csym{C}^{\otimes\!\on{opp}}$; the monoidal category obtained by oppositizing the morphisms of $\csym{C}$ is denoted by $\csym{C}^{\on{opp}}$.
 \item The $1$-cell opposite of a bicategory $\csym{A}$ is denoted by $\csym{A}^{\on{op}}$; the $2$-cell opposite of $\csym{A}$ is denoted by $\csym{A}^{\on{co}}$.
\end{itemize}

For the rest of the document, we fix a monoidal category $\csym{C}$.

\subsection{Closed and rigid monoidal categories}
We say that $\csym{C}$ is {\it right-closed} if, for any $\mathrm{H} \in \csym{C}$, the functor $- \cotimes \mathrm{H}$ has a right adjoint $\Homint{ \mathrm{H}, - }_{r}$, yielding a natural isomorphism $\Hom{\mathrm{F} \cotimes \mathrm{H}, \mathrm{G}} \simeq \Hom{\mathrm{F}, \Homint{\mathrm{H},\mathrm{G}}_{r}}$. Similarly, we say that $\csym{C}$ is {\it left-closed} if the functor $\mathrm{H} \cotimes -$ has a right adjoint $\Homint{\mathrm{H},-}_{l}$, yielding a natural isomorphism $\Hom{\mathrm{H} \cotimes \mathrm{F}, \mathrm{G}} \simeq \Hom{\mathrm{F}, \Homint{\mathrm{H},\mathrm{G}}_{l}}$.

Note that \cite{PS} follow the opposite convention, so the notions of left- and right-closed in this document and in \cite{PS} are swapped.

Given $\mathrm{F} \in \csym{C}$, a {\it right dual} of $\mathrm{F}$ is an object $\mathrm{F}^{\vee}$ which admits morphisms $\eta_{\mathrm{F},\mathrm{F}^{\vee}}: \mathbb{1} \rightarrow \mathrm{F}^{\vee}\cotimes \mathrm{F}$ and $\varepsilon_{\mathrm{F,F}^{\vee}}: \mathrm{F} \cotimes \mathrm{F}^{\vee} \rightarrow \mathbb{1}$ satisfying zigzag equations.
Similarly, a {\it left dual} $\prescript{\vee}{}{\mathrm{F}}$ is an object such that there are morphisms $\eta_{\prescript{\vee}{}{\mathrm{F}},\mathrm{F}}: \mathbb{1} \rightarrow \mathrm{F} \cotimes \prescript{\vee}{}{\mathrm{F}}$ and $\varepsilon: \prescript{\vee}{}{\mathrm{F}} \cotimes \mathrm{F} \rightarrow \mathbb{1}$ satisfying zigzag equations.

Following \cite[Proposition~2.10.8]{EGNO}, if $\mathrm{F}$ has a left dual $\prescript{\vee}{}{\mathrm{F}}$, then $- \cotimes \prescript{\vee}{}{\mathrm{F}}$ is right adjoint to $- \cotimes \mathrm{F}$. Thus, if $\csym{C}$ has left duals, it is right-closed. Similarly, if $\csym{C}$ has right duals, it is left-closed. Further, if $\csym{C}$ is right-closed and $\mathrm{F}$ has a left dual $\prescript{\vee}{}{\mathrm{F}}$, then we necessarily have $\prescript{\vee}{}{\mathrm{F}} \simeq \Homint{\mathrm{F},\mathbb{1}}_{r}$. Similarly, if $\csym{C}$ is left-closed and $\mathrm{F}$ has a right dual $\mathrm{F}^{\vee}$, then we necessarily have $\mathrm{F}^{\vee} \simeq \Homint{\mathrm{F},\mathbb{1}}_{l}$. If $\csym{C}$ has left and right duals, we say that $\csym{C}$ is {\it rigid.}

\subsection{Module categories}

\begin{definition}
 A {\it $\csym{C}$-module category} is a category $\mathbf{M}$ together with a strong monoidal functor $\csym{C} \rightarrow \on{End}_{\mathbf{Cat}_{\Bbbk}}(\mathbf{M})$. As such, it sends an object $\mathrm{F}$ of $\csym{C}$ to an endofunctor $\mathbf{M}\mathrm{F}$ of $\mathbf{M}$, and a morphism $\mathrm{F} \rightarrow \mathrm{G}$ of $\csym{C}$ to a natural transformation $\mathbf{M}\alpha: \mathbf{M}\mathrm{F} \Rightarrow \mathbf{M}\mathrm{G}$. Further, it is equipped with natural isomorphisms $\mathbf{m}_{\mathrm{F,G}}: \mathbf{M}\mathrm{G}\mathbf{M}\mathrm{F} \xRightarrow{\simeq} \mathbf{M}\mathrm{GF}$, for all $\mathrm{F,G} \in \csym{C}$, and a natural isomorphism $\mathbf{m}_{\mathbb{1}}: \mathbb{1}_{\mathbf{M}} \xRightarrow{\simeq} \mathbf{M}\mathbb{1}_{\ccf{C}}$. The explicit definition specifying the coherence conditions the collection $\setj{\mathbf{m}_{\mathrm{F,G}} \; | \; \mathrm{F,G} \in \csym{C}} \sqcup \setj{\mathbf{m}_{\mathbb{1}}}$ of isomorphisms needs to satisfy can be found in \cite[Definition~7.1.1]{EGNO}.
\end{definition}

\begin{remark}
 Observe that we pose no further requirements on $\mathbf{M}$ beyond it being $\Bbbk$-linear; in particular, we do not assume it to be additive, idempotent split, abelian, hom-finite, or Krull-Schmidt. Whenever we require any of these, we will state that explicitly.
\end{remark}

\begin{definition}
 Given $\csym{C}$-module categories $\mathbf{M,N}$, a {\it $\csym{C}$-module functor} $\euler{\Phi}: \mathbf{M} \rightarrow \mathbf{N}$ consists of an underlying functor $\euler{\Phi}: \mathbf{M} \rightarrow \mathbf{N}$ together with a natural isomorphism
 \[\begin{tikzcd}[row sep = scriptsize]
	{\csym{C}} & {\mathbf{Cat}_{\Bbbk}(\mathbf{M,M})} \\
	{\mathbf{Cat}_{\Bbbk}(\mathbf{N,N})} & {\mathbf{Cat}_{\Bbbk}(\mathbf{M,N})}
	\arrow["{\mathbf{M}}", from=1-1, to=1-2]
	\arrow["{\mathbf{Cat}_{\Bbbk}(\mathbf{M},\euler{\Phi})}", from=1-2, to=2-2]
	\arrow["{\mathbf{N}}"', from=1-1, to=2-1]
	\arrow["{\mathbf{Cat}_{\Bbbk}(\euler{\Phi},\mathbf{N})}"', shift right=1, from=2-1, to=2-2]
	\arrow["\phi"', shorten <=6pt, shorten >=6pt, Rightarrow, from=2-1, to=1-2]
\end{tikzcd}\]
i.e. a collection of isomorphisms $\euler{\phi}_{\mathrm{F}}: \mathbf{N}\mathrm{F}\euler{\Phi} \xRightarrow{\simeq} \euler{\Phi}\mathbf{M}\mathrm{F}$, satisfying coherence conditions specified e.g. in \cite[Definition~7.2.1]{EGNO}.
\end{definition}

\begin{definition}
 Let
 $\begin{tikzcd}[sep = small]
	{\mathbf{M}} & {\mathbf{N}}
	\arrow["\euler{\Phi}", shift left=1, from=1-1, to=1-2]
	\arrow["\euler{\Psi}"', shift right=1, from=1-1, to=1-2]
\end{tikzcd}$ be $\csym{C}$-module functors.
A {$\csym{C}$-module transformation} $\euler{m}: \euler{\Phi} \Rightarrow \euler{\Psi}$ consists of an underlying natural transformation $\euler{m}: \euler{\Phi} \Rightarrow \euler{\Psi}$ such that
\[\begin{tikzcd}
	{\mathbf{N}\mathrm{F}\euler{\Phi}} & {\mathbf{N}\mathrm{F}\euler{\Psi}} \\
	{\euler{\Phi}\mathbf{M}\mathrm{F}} & {\euler{\Psi}\mathbf{M}\mathrm{F}}
	\arrow[from=1-1, to=1-2, "\mathbf{N}\mathrm{F}\bullet \euler{m}"]
	\arrow[from=1-1, to=2-1, "\phi_{\mathrm{F}}"]
	\arrow[from=1-2, to=2-2, "\psi_{\mathrm{F}}"]
	\arrow[from=2-1, to=2-2, "\euler{m}\bullet \mathbf{M}\mathrm{F}"]
\end{tikzcd}\]
commutes for all $\mathrm{F}$.
\end{definition}

\begin{definition}
 We denote by $\csym{C}\!\on{-Mod}$ the bicategory formed by $\csym{C}$-module categories, functors and transformations.
\end{definition}

\subsection{Profunctors and Tambara modules}

We give a brief summary of elementary definitions of profunctors and Tambara modules. For more detailed accounts, see \cite{Lo}, \cite{CEGLMPR}.

Recall that given a category $\mathcal{A}$ and a functor $\euler{F}: \mathcal{A}^{\on{op}} \kotimes \mathcal{A} \rightarrow \mathbf{Vec}_{\Bbbk}$, the {\it coend} $\int^{Z \in \mathcal{A}} \euler{F}(Z,Z)$ of $\euler{F}$ is the coequalizer of
\[\begin{tikzcd}[ampersand replacement=\&, column sep = huge]
	{\coprod_{X,Y \in \mathcal{A}}\euler{F}(Y,X) \kotimes \Hom{X,Y}^{\mathcal{A}}} \& {\coprod_{Z \in \mathcal{A}} \euler{F}(Z,Z)}
	\arrow["{v \otimes f \mapsto \euler{F}(Y,f)(v)}", shift left=1, from=1-1, to=1-2]
	\arrow["{v \otimes f \mapsto \euler{F}(f,X)(v)}"', shift right=1, from=1-1, to=1-2]
\end{tikzcd}\]
Thus, a linear map $\int^{Z \in \mathcal{A}} \euler{F}(Z,Z) \xrightarrow{\gamma} V$ is the same as a collection of maps $\setj{\gamma_{Z}: \euler{F}(Z,Z) \rightarrow V}$ such that for any $f: X \rightarrow Y$, we have $\gamma_{Y}\circ \euler{F}(Y,f) = \gamma_{X} \circ \euler{F}(f,X)$. We refer to such a collection as an {\it extranatural transformation} from $\euler{F}$ to $V$.

In particular, coends satisfy a Fubini rule: given a functor $\euler{T}$ from $(\mathcal{A} \kotimes \mathcal{B})^{\on{op}} \kotimes (\mathcal{A} \kotimes \mathcal{B}) \simeq (\mathcal{A}^{\on{op}} \kotimes \mathcal{A}) \kotimes (\mathcal{B}^{\on{op}} \kotimes \mathcal{B})$ to $\mathbf{Vec}_{\Bbbk}$, we have canonical isomorphisms
\[
 \int^{X \in \mathcal{A}} \int^{Y \in \mathcal{B}} \euler{T}(X,Y,X,Y) \simeq \int^{(X,Y) \in \mathcal{A} \kotimes \mathcal{B}} \euler{T}(X,Y,X,Y) \simeq \int^{Y \in \mathcal{B}} \int^{X \in \mathcal{A}} \euler{T}(X,Y,X,Y)
\]
which we generally omit in our computations. Dually to coends, one may define ends using equalizers and products, which yields the appropriate notion of an extranatural transformation from $V$ to $\euler{F}$.

\begin{definition}
 Given categories $\mathcal{A,B}$, a {\it profunctor} $\mathtt{\Phi}$ from $\mathcal{A}$ to $\mathcal{B}$, denoted $\mathtt{\Phi}: \mathcal{A} \xslashedrightarrow{} \mathcal{B}$, is given by a presheaf $\mathtt{\Phi}: \mathcal{B}^{\on{op}} \kotimes \mathcal{A} \rightarrow \mathbf{Vec}_{\Bbbk}$.
 A morphism of profunctors is a morphism of presheaves, thus a transformation natural in both arguments.

 Given another profunctor $\mathtt{\Psi}: \mathcal{B} \xslashedrightarrow{} \mathcal{C}$, the composite profunctor $\mathtt{\Psi} \diamond \mathtt{\Phi}: \mathcal{A} \rightarrow \mathcal{C}$ is defined by the coend
 \begin{equation}\label{ProfunctorComposition}
  (\mathtt{\Psi} \diamond \mathtt{\Phi})(C,A) = \int^{B \in \mathcal{B}} \mathtt{\Psi}(C,B) \kotimes \mathtt{\Phi}(B,A),
 \end{equation}
 on objects, and by maps induced on cocones for coends on morphisms. Similarly one can define horizontal composition of profunctor morphisms by inducing from cocones, yielding the bicategory $\mathbf{Prof}$.
 \end{definition}

 In particular, a morphism of profunctors $\mathtt{t}: \mathtt{\Psi} \diamond \mathtt{\Phi} \Rightarrow \mathtt{\Sigma}$ consists of a collection
 \begin{equation}\label{ExtranaturalCollection}
 \setj{\mathtt{\Psi}(C,B) \kotimes \mathtt{\Phi}(B,A) \xrightarrow{\mathtt{t}_{B;C,A}} \mathtt{\Sigma}(C,A) \; | \; A \in \mathcal{A}, B \in \mathcal{B}, C \in \mathcal{C}}
 \end{equation}
 natural in $A,C$ and extranatural in $B$. More generally, for a functor $\euler{F}: \mathcal{C}^{\on{op}} \kotimes \mathcal{B} \kotimes \mathcal{B}^{\on{op}} \kotimes \mathcal{A} \rightarrow \mathbf{Vec}_{\Bbbk}$ and a profunctor $\mathtt{\Sigma}: \mathcal{A} \xslashedrightarrow{} \mathcal{C}$, we refer to a profunctor morphism $\mathtt{t}: \int^{B \in \mathcal{B}} \euler{F}(-,B,B,-) \Rightarrow \mathtt{\Sigma}$, thus a collection $\setj{\mathtt{t}_{B;C,A}}$ similar to that in~\eqref{ExtranaturalCollection}, as an {\it extranatural collection}. As a convention, we list the extranatural indices for the collection on the left and the natural indices on the right, separating with a semicolon. Dualizing, we obtain the notion of an extranatural collection from $\mathtt{\Sigma}$ to $\euler{F}$, equivalently given by a profunctor morphism $\mathtt{\Sigma} \Rightarrow \int_{B \in \mathcal{B}} \euler{F}(-,B,B,-)$.

 The following lemma is necessary to show in order to determine the unit $1$-morphisms and left and right unitors in $\mathbf{Prof}$. A complete account is given in \cite[Proposition~7.8.2]{Bo}:
 \begin{lemma}[Yoneda Lemma]\label{ProYoneda}
  For any categories $\mathcal{A},\mathcal{C}$ and any profunctor $\mathtt{\Sigma}: \mathcal{A} \xslashedrightarrow{} \mathcal{C}$, we have isomorphisms
  \[
    \mathtt{y}^{\mathcal{C},\mathtt{\Sigma}}: \mathcal{C}(-,-) \diamond \mathtt{\Sigma} \xiso \mathtt{\Sigma},
  \]
 given by the extranatural collection
 \[
  \begin{aligned}
  \mathtt{y}^{\mathcal{C},\mathtt{\Sigma}}_{C;C',A}: \mathcal{C}(C',C) \kotimes \mathtt{\Sigma}(C,A) &\rightarrow \mathtt{\Sigma}(C',A) \\
  f \otimes v &\mapsto \mathtt{\Sigma}(f,A)(v),
  \end{aligned}
 \]
 and
  \[
   \mathtt{y}^{\mathtt{\Sigma},\mathcal{A}}: \mathtt{\Sigma} \diamond \mathcal{A}(-,-) \xiso \mathtt{\Sigma},
  \]
  given by the extranatural collection
 \[
 \begin{aligned}
  \mathtt{y}^{\mathtt{\Sigma},\mathcal{A}}_{A;A',C}: \mathtt{\Sigma}(C,A) \kotimes \mathcal{A}(A,A') &\rightarrow \mathtt{\Sigma}(C,A') \\
  v \otimes g &\mapsto \mathtt{\Sigma}(C,g)(v).
 \end{aligned}
 \]
 In particular, for any $v \in \mathtt{\Sigma}(C,A)$, the element $(\mathtt{y}^{\mathcal{C},\mathtt{\Sigma}}_{C',A})^{-1}(v) \in (\mathcal{C}(-,-)\diamond \mathtt{\Sigma})(C,A) = \int^{C'} \mathcal{C}(C,C') \kotimes \mathtt{\Sigma}(C',A)$ is represented by the equivalence class of $\on{id}_{C} \otimes v \in \coprod_{C'} \mathcal{C}(C,C') \kotimes \mathtt{\Sigma}(C',A)$, viewing the former coend as a quotient of the latter coproduct.
 \end{lemma}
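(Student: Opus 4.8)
The plan is to exhibit an explicit two-sided inverse for the collection $\mathtt{y}^{\mathcal{C},\mathtt{\Sigma}}$; the statement for $\mathtt{y}^{\mathtt{\Sigma},\mathcal{A}}$ then follows by the evident symmetric argument, or, if one wants to avoid invoking ``symmetry'', by applying the first claim to $\mathtt{\Sigma}$ regarded as a profunctor $\mathcal{C}^{\on{op}} \xslashedrightarrow{} \mathcal{A}^{\on{op}}$ — a presheaf on $\mathcal{C}^{\on{op}} \kotimes \mathcal{A}$ is the same datum as a presheaf on $(\mathcal{A}^{\on{op}})^{\on{op}} \kotimes \mathcal{C}^{\on{op}}$, and this reindexing carries $\mathtt{\Sigma} \diamond \mathcal{A}(-,-)$ to $\mathcal{A}^{\on{op}}(-,-) \diamond \mathtt{\Sigma}$ up to the canonical flip of the $\kotimes$-factors.

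First I would check that $f \otimes v \mapsto \mathtt{\Sigma}(f,A)(v)$ does define a morphism of profunctors $\mathcal{C}(-,-) \diamond \mathtt{\Sigma} \Rightarrow \mathtt{\Sigma}$. This splits into two routine verifications. The first is that, for fixed $C'$ and $A$, the maps $\mathcal{C}(C',C) \kotimes \mathtt{\Sigma}(C,A) \to \mathtt{\Sigma}(C',A)$ form an extranatural collection in $C$, so that by the universal property of the coend they factor through
\[
(\mathcal{C}(-,-) \diamond \mathtt{\Sigma})(C',A) = \int^{C} \mathcal{C}(C',C) \kotimes \mathtt{\Sigma}(C,A);
\]
here one must keep track of the variances ($\mathcal{C}(C',-)$ is covariant, $\mathtt{\Sigma}(-,A)$ contravariant), after which extranaturality collapses to the functoriality identity $\mathtt{\Sigma}(f_1,A) \circ \mathtt{\Sigma}(g,A) = \mathtt{\Sigma}(g \circ f_1, A)$ for composable $f_1,g$. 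The second is that the induced maps $\mathtt{y}^{\mathcal{C},\mathtt{\Sigma}}_{C',A}$ on coends are natural in $C'$ and in $A$, which is immediate from bifunctoriality of $\mathtt{\Sigma}$ and of the hom-bifunctor of $\mathcal{C}$.

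Next I would define, for $C' \in \mathcal{C}$ and $A \in \mathcal{A}$, the map $\iota_{C',A}: \mathtt{\Sigma}(C',A) \to \int^{C} \mathcal{C}(C',C) \kotimes \mathtt{\Sigma}(C,A)$ sending $v$ to the class of $\on{id}_{C'} \otimes v$, and verify it is inverse to $\mathtt{y}^{\mathcal{C},\mathtt{\Sigma}}_{C',A}$. One composite sends $v$ to $\mathtt{\Sigma}(\on{id}_{C'},A)(v) = v$. For the other, the coend is linearly spanned by classes $[f \otimes v]$ with $f \in \mathcal{C}(C',C)$, $v \in \mathtt{\Sigma}(C,A)$, and applying the defining coequalizer relation (as recalled in the excerpt) to the morphism $f: C' \to C$ and the element $\on{id}_{C'} \otimes v$ yields $[f \otimes v] = [\on{id}_{C'} \otimes \mathtt{\Sigma}(f,A)(v)]$ in the coend; hence $\iota_{C',A}(\mathtt{y}^{\mathcal{C},\mathtt{\Sigma}}_{C',A}([f \otimes v])) = [f \otimes v]$. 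Thus $\mathtt{y}^{\mathcal{C},\mathtt{\Sigma}}$ is a componentwise isomorphism, hence an isomorphism of profunctors, and its inverse collection $\{\iota_{C',A}\}$ is automatically natural in $C'$ and $A$; the final clause of the lemma is precisely the formula defining $\iota$.

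I do not expect a genuine obstacle: the content is the co-Yoneda (density) identity, and the only places that reward care are the variance bookkeeping in the extranaturality check and making the reduction of $\mathtt{y}^{\mathtt{\Sigma},\mathcal{A}}$ to $\mathtt{y}^{\mathcal{C},\mathtt{\Sigma}}$ precise via the $(-)^{\on{op}}$ reindexing rather than a bare appeal to symmetry. A fully detailed treatment is \cite[Proposition~7.8.2]{Bo}.
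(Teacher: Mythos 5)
The paper does not actually provide a proof of this lemma; it simply cites \cite[Proposition~7.8.2]{Bo}. Your proof is correct and follows the standard explicit-inverse argument for the co-Yoneda (density) lemma that Borceux gives: verify well-definedness via extranaturality, exhibit $\iota_{C',A}(v) = [\on{id}_{C'}\otimes v]$, check both composites using the coequalizer relation, and reduce the second isomorphism to the first by the $(-)^{\on{op}}$ reindexing. Your details — the variance bookkeeping, the reduction $[f\otimes v] = [\on{id}_{C'}\otimes \mathtt{\Sigma}(f,A)(v)]$, and the identification of $\mathtt{\Sigma}\diamond\mathcal{A}(-,-)$ with $\mathcal{A}^{\on{op}}(-,-)\diamond\tilde{\mathtt{\Sigma}}$ — all check out.
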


\begin{definition}[{\cite[Definition 4.1]{CEGLMPR}}]
 Let $\mathbf{M,N} \in \csym{C}\!\on{-Mod}$. A {\it Tambara module} $\mathtt{\Phi}$ from $\mathbf{M}$ to $\mathbf{N}$ consists of an underlying profunctor $\mathbf{M} \xslashedrightarrow{} \mathbf{N}$ together with an extranatural collection $\setj{\ta_{\mathrm{H}; Y,X}: \mathtt{\Phi}(Y,X) \rightarrow \mathtt{\Phi}(\mathbf{N}\mathrm{H}Y, \mathbf{M}\mathrm{H}X)}$ from $\mathtt{\Phi}$ to the functor given by
 \[
  (X,\mathrm{F,G}, Y) \mapsto \mathtt{\Phi}(\mathbf{N}\mathrm{F}X, \mathbf{M}\mathrm{G}Y)
 \]
 satisfying the following axioms:
 \begin{enumerate}[label = (\roman*)]
  \item multiplicativity:
  \[\begin{tikzcd}
	{\mathtt{\Phi}(Y,X)} & {\mathtt{\Phi}(\mathbf{N}\mathrm{F}Y,\mathbf{M}\mathrm{F}X)} && {\mathtt{\Phi}(\mathbf{N}\mathrm{G}\mathbf{N}\mathrm{F}Y,\mathbf{M}\mathrm{G}\mathbf{M}\mathrm{F}X)} \\
	{\mathtt{\Phi}(\mathbf{N}\mathrm{GF}Y,\mathbf{M}\mathrm{GF}X)} &&& {\mathtt{\Phi}(\mathbf{N}\mathrm{G}\mathbf{N}\mathrm{F}Y,\mathbf{M}\mathrm{GF}X)}
	\arrow["{\ta_{\mathrm{F};Y,X}}", from=1-1, to=1-2]
	\arrow["{\ta_{\mathrm{G};\mathbf{N}\mathrm{F}Y, \mathbf{M}\mathrm{F}X}}", from=1-2, to=1-4]
	\arrow["{\ta_{\mathrm{GF};Y,X}}"', from=1-1, to=2-1]
	\arrow["{\mathtt{\Phi}((\mathbf{n}_{\mathrm{G,F}})_{Y},\mathbf{M}\mathrm{GF}X)}"', from=2-1, to=2-4]
	\arrow["{\mathtt{\Phi}(\mathbf{N}\mathrm{G}\mathbf{N}\mathrm{F}Y,(\mathbf{m}_{\mathrm{G,F}})_{X})}", from=1-4, to=2-4]
\end{tikzcd}\]
commutes for all $X,Y, \mathrm{F,G}$.
\item unitality:
\[\begin{tikzcd}[row sep = scriptsize]
	{\mathtt{\Phi}(Y,X)} && {\mathtt{\Phi}(\mathbf{N}\mathbb{1}Y, \mathbf{M}\mathbb{1}X)} \\
	& {\mathtt{\Phi}(Y,X)}
	\arrow["{\ta_{\mathrm{F};Y,X}}", from=1-1, to=1-3]
	\arrow["{\on{id}}"', from=1-1, to=2-2]
	\arrow["{\mathtt{\Phi}(\mathbf{n}_{Y}, \mathbf{m}_{X}^{-1})}", from=1-3, to=2-2]
\end{tikzcd}\]
commutes for all $Y,X$.
 \end{enumerate}
\end{definition}

\begin{remark}
 The objects we refer to as Tambara modules are called generalized Tambara modules in \cite{CEGLMPR}.
\end{remark}

\begin{example}\label{HomCK}
 Viewing $\csym{C}$ as a module category over itself, the $\on{Hom}$-profunctor $\csym{C}(-,-): \csym{C} \xslashedrightarrow{} \csym{C}$ together with Tambara structure given by $\ta^{\ccf{C}(-,-)}_{\mathrm{H;F,G}} = (\mathrm{H} \cotimes -)_{\mathrm{F,G}}$ gives a Tambara module.
 More generally, for an object $\mathrm{K} \in \csym{C}$, the Tambara module $\csym{C}(-,-\mathrm{K}): \csym{C} \xslashedrightarrow{} \csym{C}$ is given by the profunctor sending $(\mathrm{F,G})$ to $\csym{C}(\mathrm{F},\mathrm{GK})$, with Tambara structure given by composites
 \[
  \csym{C}(\mathrm{F},\mathrm{GK}) \xrightarrow{(\mathrm{H}\cotimes -)_{\mathrm{F},\mathrm{GK}}} \csym{C}(\mathrm{HF},\mathrm{H}(\mathrm{GK})) \xrightarrow{\ccf{C}(\mathrm{HF},\euler{a}_{\mathrm{H},\mathrm{G},\mathrm{K})}} \csym{C}(\mathrm{HF},(\mathrm{H}\mathrm{G})\mathrm{K}).
 \]
\end{example}

\begin{example}
 Let $\mathbf{M}$ be a $\csym{C}$-module category. The $\on{Hom}$-profunctor $\mathbf{M}(-,-): \mathbf{M} \xslashedrightarrow{} \mathbf{M}$ together with Tambara structure given by $\ta_{H;X,Y}^{\mathbf{M}(-,-)} = (\mathbf{M}\mathrm{H})_{X,Y}$ gives a Tambara module.
\end{example}

\begin{definition}
 Given Tambara modules $\mathtt{\Phi}, \mathtt{\Psi}: \mathbf{M} \xslashedrightarrow{} \mathbf{N}$, a morphism of Tambara modules $\mathtt{t}: \mathtt{\Phi} \Rightarrow \mathtt{\Psi}$ is a morphism of underlying profunctors satisfying the {\it Tambara axiom,} requiring the following diagram to commute for all $\mathrm{F},X,Y$:
  \[\begin{tikzcd}[row sep = scriptsize]
	{\mathtt{\Phi}(X,Y)} && {\mathtt{\Phi}(\mathbf{N}\mathrm{F}X, \mathbf{M}\mathrm{F}Y)} \\
	{\mathtt{\Psi}(X,Y)} && {\mathtt{\Psi}(\mathbf{N}\mathrm{F}X,\mathbf{M}\mathrm{F}Y)}
	\arrow["{\mathtt{t}_{X,Y}}"', from=1-1, to=2-1]
	\arrow["{\ta_{\mathrm{F};X,Y}^{\mathtt{\Phi}}}", from=1-1, to=1-3]
	\arrow["{\mathtt{t}_{\mathbf{M}\mathrm{F}Y,\mathbf{N}\mathrm{F}X}}", from=1-3, to=2-3]
	\arrow["{\ta_{\mathrm{F};X,Y}^{\mathtt{\Psi}}}", from=2-1, to=2-3]
\end{tikzcd}.\]
\end{definition}

\begin{definition}\label{CompositeTambara}
 Given Tambara modules $\mathtt{\Phi}: \mathbf{K} \xslashedrightarrow{} \mathbf{M}$ and $\mathtt{\Psi}: \mathbf{M} \xslashedrightarrow{} \mathbf{N}$, we define the composite $\mathtt{\Psi} \diamond \mathtt{\Phi}: \mathbf{K} \xslashedrightarrow{} \mathbf{N}$ as the underlying composite profunctor, together with Tambara structure $\setj{\ta^{\mathtt{\Psi} \diamond \mathtt{\Phi}}_{\mathrm{F},Y;X,Z}}$, where the component $\ta^{\mathtt{\Psi} \diamond \mathtt{\Phi}}_{\mathrm{F},Y;X,Z}$ is given by
\[\begin{tikzcd}[ampersand replacement=\&]
	{\mathtt{\Psi}(X,Y) \kotimes \mathtt{\Phi}(Y,Z)} \& {\mathtt{\Psi}(\mathbf{N}\mathrm{F}X,\mathbf{M}\mathrm{F}Y) \kotimes \mathtt{\Phi}(\mathbf{M}\mathrm{F}Y,\mathbf{K}\mathrm{F}Z)} \\
	{\coprod_{W \in \mathbf{M}} \mathtt{\Psi}(\mathbf{N}\mathrm{F}X,W) \kotimes \mathtt{\Phi}(W,\mathbf{K}\mathrm{F}Z)} \& {\int^{W \in \mathbf{M}} \mathtt{\Psi}(\mathbf{N}\mathrm{F}X,W) \kotimes \mathtt{\Phi}(W,\mathbf{K}\mathrm{F}Z)}
	\arrow["{\ta_{\mathrm{F};X,Y}^{\mathtt{\Psi}} \otimes \ta_{\mathrm{F};Y,Z}^{\mathtt{\Phi}}}", from=1-1, to=1-2]
	\arrow["\iota"', from=1-2, to=2-1]
	\arrow["{\pi^{\mathtt{\Psi}\diamond \mathtt{\Phi}}}", from=2-1, to=2-2]
\end{tikzcd},\]
where, $\iota$ is a component for the cocone of the coproduct, and $\pi^{\mathtt{\Psi} \diamond \mathtt{\Phi}}$ is the projection map defining the coend.

It is easy to verify that horizontal and vertical compositions of Tambara morphisms, defined as respective compositions of profunctor morphisms, satisfy the Tambara axiom. Further, for any $\csym{C}$-module category $\mathbf{M}$, the profunctor morphisms $\mathtt{y}^{\mathbf{M},-}, \mathtt{y}^{-,\mathbf{M}}$ defined in Lemma~\ref{ProYoneda} give Tambara morphisms satisfying coherence conditions for unitors, so we obtain a bicategory, denoted by $\csym{C}\!\on{-Tamb}$, consisting of $\csym{C}$-module categories, Tambara modules and Tambara morphisms.
\end{definition}

\section{Free Tambara modules}\label{s3}

\begin{theorem}[{\cite[Proposition 5.1]{PS}}]\label{LeftAdjTambaraPastro}
 The forgetful functor $\tam \rightarrow \mathbf{Prof}(\csym{C},\csym{C})$ admits a left adjoint $\euler{F}_{l}$, sending a profunctor $\mathtt{\Sigma}: \csym{C} \xslashedrightarrow{} \csym{C}$ to the profunctor
 \[
  (\mathrm{F,G}) \mapsto \int^{\mathrm{H,B,C}} \csym{C}(\mathrm{F, H \cotimes B}) \kotimes \mathtt{\Sigma}(\mathrm{B,C}) \kotimes \csym{C}(\mathrm{H \cotimes C, G})
 \]
with Tambara structure induced by the collection
\[
 \resizebox{.99\hsize}{!}{$
 \ta^{\ccf{C}}_{\mathrm{K;F,HB}} \kotimes \mathtt{\Sigma}(\mathrm{B,C}) \kotimes \ta^{\ccf{C}}_{\mathrm{K;HC,G}}: \csym{C}(\mathrm{F, H \cotimes B}) \kotimes \mathtt{\Sigma}(\mathrm{B,C}) \kotimes \csym{C}(\mathrm{H \cotimes C, G}) \rightarrow \csym{C}(\mathrm{K\cotimes F, K \cotimes H \cotimes B}) \kotimes \mathtt{\Sigma}(\mathrm{B,C}) \kotimes \csym{C}(\mathrm{K \cotimes H \cotimes C, K \cotimes G})
 $}
\]
similarly to the Tambara structure described in Definition~\ref{CompositeTambara}.
\end{theorem}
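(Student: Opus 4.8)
The plan is to exhibit $\euler{F}_{l}$ as the left adjoint of the forgetful functor $U: \tam \to \mathbf{Prof}(\csym{C},\csym{C})$ directly, by constructing the unit and verifying its universal property with coend calculus. The Tambara structure on $\euler{F}_{l}(\mathtt{\Sigma})$ is, as the statement indicates, obtained by transporting the Tambara structures of Example~\ref{HomCK} on the two copies of the $\on{Hom}$-profunctor through the coend, exactly as the composite Tambara structure is built in Definition~\ref{CompositeTambara}: for $\mathrm{K} \in \csym{C}$ the map $\ta^{\euler{F}_{l}(\mathtt{\Sigma})}_{\mathrm{K};\mathrm{F},\mathrm{G}}$ sends the class of $\mathrm{f} \otimes v \otimes \mathrm{g}$ (with $\mathrm{f}: \mathrm{F} \to \mathrm{H}\cotimes\mathrm{B}$, $v \in \mathtt{\Sigma}(\mathrm{B,C})$, $\mathrm{g}: \mathrm{H}\cotimes\mathrm{C} \to \mathrm{G}$) in the $(\mathrm{H,B,C})$-summand to the class of $\bigl(\mathsf{a}_{\mathrm{K,H,B}}\circ(\mathrm{K}\cotimes\mathrm{f})\bigr) \otimes v \otimes \bigl((\mathrm{K}\cotimes\mathrm{g})\circ\mathsf{a}^{-1}_{\mathrm{K,H,C}}\bigr)$ in the $(\mathrm{K}\cotimes\mathrm{H},\mathrm{B},\mathrm{C})$-summand. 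First I would check that this is extranatural in $\mathrm{H,B,C}$, so descends to the coend, and that it satisfies the multiplicativity and unitality axioms of a Tambara module --- both reduce to the pentagon/triangle coherences for $\mathsf{a},\mathsf{l},\mathsf{r}$ --- and observe that $\euler{F}_{l}$ becomes a functor $\mathbf{Prof}(\csym{C},\csym{C}) \to \tam$ by functoriality of the coend in $\mathtt{\Sigma}$, a profunctor morphism inducing a Tambara morphism.

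For the unit, the (co)Yoneda Lemma~\ref{ProYoneda} gives $\int^{\mathrm{B,C}}\csym{C}(\mathrm{F},\mathrm{B})\kotimes\mathtt{\Sigma}(\mathrm{B,C})\kotimes\csym{C}(\mathrm{C},\mathrm{G}) \xiso \mathtt{\Sigma}(\mathrm{F,G})$, and including the $\mathrm{H} = \mathbb{1}$-summand, corrected by unitors, defines a profunctor morphism $\eta_{\mathtt{\Sigma}}: \mathtt{\Sigma} \to U\euler{F}_{l}(\mathtt{\Sigma})$, $v \mapsto [\mathbb{1},\mathrm{F},\mathrm{G};\mathsf{l}^{-1}_{\mathrm{F}} \otimes v \otimes \mathsf{l}_{\mathrm{G}}]$, evidently natural in $\mathtt{\Sigma}$. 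To prove the universal property, given a Tambara module $\mathtt{\Phi}$ and a profunctor morphism $\mathtt{s}: \mathtt{\Sigma} \to U\mathtt{\Phi}$, I would define $\hat{\mathtt{s}}: \euler{F}_{l}(\mathtt{\Sigma}) \to \mathtt{\Phi}$ on the $(\mathrm{H,B,C})$-summand by
\[
 \hat{\mathtt{s}}_{\mathrm{F,G}}(\mathrm{f}\otimes v\otimes\mathrm{g}) \;=\; \mathtt{\Phi}(\mathrm{f},\mathrm{g})\bigl(\ta^{\mathtt{\Phi}}_{\mathrm{H};\mathrm{B},\mathrm{C}}(\mathtt{s}_{\mathrm{B,C}}(v))\bigr),
\]
using that $\ta^{\mathtt{\Phi}}_{\mathrm{H};\mathrm{B},\mathrm{C}}(\mathtt{s}_{\mathrm{B,C}}(v)) \in \mathtt{\Phi}(\mathrm{H}\cotimes\mathrm{B},\mathrm{H}\cotimes\mathrm{C})$ and that the profunctor action $\mathtt{\Phi}(\mathrm{f},\mathrm{g})$ lands in $\mathtt{\Phi}(\mathrm{F,G})$. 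Extranaturality in $\mathrm{H}$ is extranaturality of $\ta^{\mathtt{\Phi}}$ in its subscript; extranaturality in $\mathrm{B,C}$ combines naturality of $\mathtt{s}$ with naturality of $\ta^{\mathtt{\Phi}}$; naturality of $\hat{\mathtt{s}}$ in $\mathrm{F,G}$ is functoriality of $\mathtt{\Phi}$; and the Tambara axiom for $\hat{\mathtt{s}}$ reduces to multiplicativity of $\ta^{\mathtt{\Phi}}$. That $U\hat{\mathtt{s}} \circ \eta_{\mathtt{\Sigma}} = \mathtt{s}$ falls out of the unitality axiom of $\mathtt{\Phi}$, which identifies $\ta^{\mathtt{\Phi}}_{\mathbb{1}}$ with $\mathtt{\Phi}$ applied to the unitors. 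For uniqueness, one observes that every class $[\mathrm{H,B,C};\mathrm{f}\otimes v\otimes\mathrm{g}]$ equals $\euler{F}_{l}(\mathtt{\Sigma})(\mathrm{f},\mathrm{g})\bigl(\ta^{\euler{F}_{l}(\mathtt{\Sigma})}_{\mathrm{H};\mathrm{B},\mathrm{C}}(\eta_{\mathtt{\Sigma}}(v))\bigr)$ up to unitors, so any Tambara morphism out of $\euler{F}_{l}(\mathtt{\Sigma})$ restricting to $\mathtt{s}$ along $\eta_{\mathtt{\Sigma}}$ must coincide with $\hat{\mathtt{s}}$, being forced to commute with the profunctor actions and with $\ta$. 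The triangle identities for $\eta$ and the induced counit are then routine.

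I expect the only real work to be coherence bookkeeping: verifying that $\ta^{\euler{F}_{l}(\mathtt{\Sigma})}$ genuinely satisfies the Tambara axioms, and that $\hat{\mathtt{s}}$ is well defined on the coend and Tambara-natural, both of which involve tracking associators and unitors carefully (the excerpt's phrase ``similarly to \ldots Definition~\ref{CompositeTambara}'' is doing real work here). A structurally cleaner route that bypasses much of this is to recognize $\tam$ as the category of modules over the promonad on $\csym{C}$ whose underlying profunctor sends $(\mathrm{B,C})$ to $\int^{\mathrm{H}}\csym{C}(-,\mathrm{H}\cotimes\mathrm{B})\kotimes\csym{C}(\mathrm{H}\cotimes\mathrm{C},-)$: the forgetful functor from modules over any promonad to $\mathbf{Prof}(\csym{C},\csym{C})$ automatically has a left adjoint given by composition with the promonad, and that composite is precisely the displayed coend.
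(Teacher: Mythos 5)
The paper does not prove this statement at all: it is quoted verbatim with the attribution \cite[Proposition~5.1]{PS}, and the surrounding text of Section~\ref{s3} only says that \cite{PS} realizes $\tam$ as the Eilenberg--Moore category of a monad on $[\csym{C}\kotimes\csym{C}^{\on{opp}},\mathbf{Vec}_{\Bbbk}]$. So there is no in-paper proof to compare against; your task here is really to reconstruct the Pastro--Street argument, and you have done so correctly. Your primary route --- write down the Tambara structure on $\euler{F}_{l}(\mathtt{\Sigma})$ by transporting $\ta^{\csym{C}}$ through the coend, take $\eta_{\mathtt{\Sigma}}(v) = [\mathbb{1},\mathrm{F},\mathrm{G};\mathsf{l}^{-1}_{\mathrm{F}}\otimes v\otimes\mathsf{l}_{\mathrm{G}}]$ as unit, and verify the universal property by exhibiting $\hat{\mathtt{s}}(\mathrm{f}\otimes v\otimes\mathrm{g}) = \mathtt{\Phi}(\mathrm{f},\mathrm{g})\bigl(\ta^{\mathtt{\Phi}}_{\mathrm{H}}(\mathtt{s}(v))\bigr)$ --- is a sound, elementary verification; the key checks (descent to the coend via extranaturality of $\ta^{\mathtt{\Phi}}$ and naturality of $\mathtt{s}$, the Tambara axiom for $\hat{\mathtt{s}}$ via multiplicativity of $\ta^{\mathtt{\Phi}}$ and naturality of $\ta^{\mathtt{\Phi}}_{\mathrm{K}}$, and $U\hat{\mathtt{s}}\circ\eta = \mathtt{s}$ via unitality) are all correctly identified, and I verified the associator bookkeeping in the extranaturality and multiplicativity steps. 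The alternative route you sketch in your final paragraph --- recognizing $\tam$ as modules over the promonad $\Theta(\mathrm{B,C}) = \int^{\mathrm{H}}\csym{C}(-,\mathrm{H}\cotimes\mathrm{B})\kotimes\csym{C}(\mathrm{H}\cotimes\mathrm{C},-)$ so that the free functor is $\Theta\diamond(-)$ --- is the one actually taken in \cite{PS} and referenced in the opening sentence of Section~\ref{s3}; it outsources the coherence bookkeeping to the general Eilenberg--Moore/Kleisli adjunction. Both routes are valid; the direct one is more self-contained, the monadic one is conceptually cleaner and what the rest of the paper relies on (e.g.\ cocompleteness of $\tam$).
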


\begin{corollary}\label{FreeTambara}
 Given $\mathrm{L,K} \in \csym{C}$, the free Tambara module $\mathtt{\Theta}_{\mathrm{K,L}}$ from $\csym{C}$ to $\csym{C}$ associated to the representable profunctor
 \[
(\mathrm{F,G}) \mapsto \csym{C}(\mathrm{F,L}) \kotimes \csym{C}(\mathrm{K,G})
 \]
 has the profunctor
 \[
  (\mathrm{F,G}) \mapsto \int^{\mathrm{H}} \csym{C}\mathrm{(F, H \cotimes L)} \kotimes \csym{C}\mathrm{(H \cotimes K, G)}
 \]
 as its underlying profunctor, with its Tambara structure induced from the family
 \[
  \setj{\csym{C}(\mathrm{F,H \cotimes L}) \kotimes \csym{C}(\mathrm{H \cotimes K, G}) \xrightarrow{\mathrm{(D \cotimes -) \kotimes (D \cotimes -)}} \csym{C}(\mathrm{D \cotimes F,D \cotimes H \cotimes L}) \kotimes \csym{C}(\mathrm{D \cotimes H \cotimes K,D \cotimes G})}
 \]
Further, for any $\mathtt{R} \in \tam$, we have
\[
 \on{Hom}_{\ccf{C}\!\on{-Tamb}(\ccf{C},\ccf{C})}(\mathtt{\Theta}_{\mathrm{K,L}},\mathtt{R}) \simeq \mathtt{R}(\mathrm{L,K}).
\]
\end{corollary}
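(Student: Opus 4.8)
The plan is to deduce all three assertions from Theorem~\ref{LeftAdjTambaraPastro}, specialized to the representable profunctor $\mathtt{\Sigma} = \csym{C}(-,\mathrm{L})\kotimes\csym{C}(\mathrm{K},-)$, i.e. $\mathtt{\Sigma}(\mathrm{B},\mathrm{C}) = \csym{C}(\mathrm{B},\mathrm{L})\kotimes\csym{C}(\mathrm{K},\mathrm{C})$. Substituting this into the coend of Theorem~\ref{LeftAdjTambaraPastro}, the underlying profunctor of $\mathtt{\Theta}_{\mathrm{K,L}} = \euler{F}_{l}(\mathtt{\Sigma})$ sends $(\mathrm{F},\mathrm{G})$ to
\[
\int^{\mathrm{H,B,C}}\csym{C}(\mathrm{F},\mathrm{H}\cotimes\mathrm{B})\kotimes\csym{C}(\mathrm{B},\mathrm{L})\kotimes\csym{C}(\mathrm{K},\mathrm{C})\kotimes\csym{C}(\mathrm{H}\cotimes\mathrm{C},\mathrm{G}).
\]
By the Fubini rule the $\mathrm{B}$- and $\mathrm{C}$-coends can be evaluated first and independently of each other; the co-Yoneda (density) lemma then identifies $\int^{\mathrm{B}}\csym{C}(\mathrm{B},\mathrm{L})\kotimes\csym{C}(\mathrm{F},\mathrm{H}\cotimes\mathrm{B})$ with $\csym{C}(\mathrm{F},\mathrm{H}\cotimes\mathrm{L})$ and $\int^{\mathrm{C}}\csym{C}(\mathrm{H}\cotimes\mathrm{C},\mathrm{G})\kotimes\csym{C}(\mathrm{K},\mathrm{C})$ with $\csym{C}(\mathrm{H}\cotimes\mathrm{K},\mathrm{G})$, leaving $\int^{\mathrm{H}}\csym{C}(\mathrm{F},\mathrm{H}\cotimes\mathrm{L})\kotimes\csym{C}(\mathrm{H}\cotimes\mathrm{K},\mathrm{G})$, as claimed.

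Next I would transport the Tambara structure of Theorem~\ref{LeftAdjTambaraPastro} along the two density isomorphisms above. By construction that structure acts on the two outer $\on{Hom}$-factors by $\ta^{\ccf{C}}_{\mathrm{D};-,-} = (\mathrm{D}\cotimes-)$ and does nothing to the $\mathtt{\Sigma}$-factor $\csym{C}(\mathrm{B},\mathrm{L})\kotimes\csym{C}(\mathrm{K},\mathrm{C})$; since the density isomorphisms are natural in $\mathrm{L}$ and $\mathrm{K}$ respectively, they intertwine these maps with the stated family $\csym{C}(\mathrm{F},\mathrm{H}\cotimes\mathrm{L})\kotimes\csym{C}(\mathrm{H}\cotimes\mathrm{K},\mathrm{G})\to\csym{C}(\mathrm{D}\cotimes\mathrm{F},\mathrm{D}\cotimes\mathrm{H}\cotimes\mathrm{L})\kotimes\csym{C}(\mathrm{D}\cotimes\mathrm{H}\cotimes\mathrm{K},\mathrm{D}\cotimes\mathrm{G})$. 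The only subtlety is that applying $\mathrm{D}\cotimes-$ to $\csym{C}(\mathrm{F},\mathrm{H}\cotimes\mathrm{B})$ produces $\mathrm{D}\cotimes(\mathrm{H}\cotimes\mathrm{B})$, which one must reparenthesize as $(\mathrm{D}\cotimes\mathrm{H})\cotimes\mathrm{B}$ before applying density in $\mathrm{B}$; this is handled by the associator, and commutativity of the resulting square is verified by chasing representatives through the coprojections defining the coend over $\mathrm{H}$. This bookkeeping is, I expect, the only genuinely laborious point; the rest is a direct application of Theorem~\ref{LeftAdjTambaraPastro} together with Fubini and Yoneda.

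Finally, for the Hom-isomorphism, I would combine the description $\mathtt{\Theta}_{\mathrm{K,L}} = \euler{F}_{l}(\mathtt{\Sigma})$ with the adjunction $\euler{F}_{l}\dashv(\text{forgetful})$ of Theorem~\ref{LeftAdjTambaraPastro}, which gives a natural isomorphism between $\on{Hom}_{\ccf{C}\!\on{-Tamb}(\ccf{C},\ccf{C})}(\mathtt{\Theta}_{\mathrm{K,L}},\mathtt{R})$ and the space of profunctor morphisms $\csym{C}(-,\mathrm{L})\kotimes\csym{C}(\mathrm{K},-)\Rightarrow\mathtt{R}$. Writing the latter as the end $\int_{\mathrm{F,G}}[\csym{C}(\mathrm{F},\mathrm{L})\kotimes\csym{C}(\mathrm{K},\mathrm{G}),\mathtt{R}(\mathrm{F},\mathrm{G})]$ and applying tensor-hom adjunction, Fubini for ends, and the Yoneda lemma twice — first in $\mathrm{G}$, collapsing $\int_{\mathrm{G}}[\csym{C}(\mathrm{K},\mathrm{G}),\mathtt{R}(\mathrm{F},\mathrm{G})]$ to $\mathtt{R}(\mathrm{F},\mathrm{K})$, then in $\mathrm{F}$, collapsing $\int_{\mathrm{F}}[\csym{C}(\mathrm{F},\mathrm{L}),\mathtt{R}(\mathrm{F},\mathrm{K})]$ to $\mathtt{R}(\mathrm{L},\mathrm{K})$ — yields the asserted isomorphism $\on{Hom}_{\ccf{C}\!\on{-Tamb}(\ccf{C},\ccf{C})}(\mathtt{\Theta}_{\mathrm{K,L}},\mathtt{R})\simeq\mathtt{R}(\mathrm{L},\mathrm{K})$.
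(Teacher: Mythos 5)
Your proposal is correct and matches the paper's proof in both structure and substance: the profunctor description via Fubini and two co-Yoneda reductions, and the Hom-isomorphism via the free-forgetful adjunction followed by Yoneda (the paper applies Yoneda once on $\csym{C}^{\on{opp}}\kotimes\csym{C}$, which is precisely your two iterated ends collapsed). Your extra care about transporting the Tambara structure and the associator bookkeeping is detail the paper leaves implicit, not a different argument.
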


\begin{proof}
 The first assertion follows by two applications of the Yoneda lemma:
 \[
  \int^{\mathrm{H,B,C}} \csym{C}(\mathrm{F, H \cotimes B}) \kotimes \csym{C}(\mathrm{B,L}) \kotimes \csym{C}(\mathrm{K,C}) \kotimes \csym{C}(\mathrm{H\cotimes C, G}) \simeq \int^{\mathrm{H}} \csym{C}(\mathrm{F,H \cotimes L}) \kotimes \csym{C}(\mathrm{H \cotimes K, G})
 \]
 while the second follows by first using the adjunction of Theorem \ref{LeftAdjTambaraPastro}, and then Yoneda Lemma for  $\csym{C}^{\on{opp}} \kotimes \csym{C}$:
 \[
  \on{Hom}_{\ccf{C}\!\on{-Tamb}(\ccf{C},\ccf{C})}(\mathtt{\Theta}_{\mathrm{K,L}}, \mathtt{R}) \simeq \on{Hom}_{\mathbf{Prof}(\ccf{C},\ccf{C})}(\csym{C}(-,\mathrm{L}) \kotimes \csym{C}(\mathrm{K},-), \mathtt{R}) \simeq \mathtt{R}(\mathrm{L,K}).
 \]
\end{proof}

\begin{remark}\label{TambaraElements}
 Using Corollary \ref{FreeTambara}, we observe that the monoidal unit of $\tam$, given by the hom profunctor $\csym{C}(-,-)$ together with the Tambara structure $\ta_{\mathrm{H;F,G}} = (\mathrm{H} \cotimes -)_{\mathrm{F,G}}$, is isomorphic to the free Tambara module $\mathtt{\Theta}_{\mathbb{1,1}}$. In particular,
 \[
\on{Hom}_{\ccf{C}\!\on{-Tamb}(\ccf{C},\ccf{C})}(\csym{C}(-,-), \mathtt{R}) \simeq \mathtt{R}(\mathbb{1,1}).
 \]
 Further, the isomorphism
 \[
 \begin{aligned}
  &\int^{\mathrm{H,B,C}} \csym{C}(\mathrm{F, H \cotimes B}) \kotimes \csym{C}(\mathrm{B,K}) \kotimes \csym{C}(\mathrm{\mathbb{1},C}) \kotimes \csym{C}(\mathrm{H\cotimes C, G}) \simeq \int^{\mathrm{H}} \csym{C}(\mathrm{F,H \cotimes K}) \kotimes \csym{C}(\mathrm{H \cotimes \mathbb{1}, G}) \\
  &\simeq \int^{\mathrm{H}} \csym{C}(\mathrm{F,H \cotimes K}) \kotimes \csym{C}(\mathrm{H, G}) \simeq \csym{C}(\mathrm{F},\mathrm{G}\mathrm{K})
 \end{aligned}
 \]
 shows that the Tambara module $\csym{C}(-,-\mathrm{K})$ of Example~\ref{HomCK} is isomorphic to $\mathtt{\Theta}_{\mathrm{K},\mathbb{1}}$. As a consequence,
 \begin{equation}\label{UniPropCK}
  \on{Hom}_{\ccf{C}\!\on{-Tamb}(\ccf{C},\ccf{C})}(\csym{C}(-,-\mathrm{K}), \mathtt{R}) \simeq \on{Hom}_{\on{Prof}(\ccf{C},\ccf{C})}(\csym{C}(\mathrm{-,K}) \kotimes \csym{C}(\mathbb{1},-),\mathtt{R}) \simeq \mathtt{R}(\mathrm{K},\mathbb{1}).
 \end{equation}
\end{remark}

\section{Structure of the bicategory \texorpdfstring{$\csym{C}\!\on{-Tamb}$}{C-Tamb}}\label{s4}

\subsection{Restriction and corestriction of Tambara modules}

Recall that, given categories $\mathcal{C},\mathcal{C}', \mathcal{D}, \mathcal{D}'$, together with functors $\euler{F}: \mathcal{C} \rightarrow \mathcal{C}'$ and $\euler{G}: \mathcal{D} \rightarrow \mathcal{D}'$, for any profunctor $\mathtt{\Psi}: \mathcal{D}' \xslashedrightarrow{} \mathcal{C}'$, we may consider the {\it restriction of $\mathtt{\Psi}$ along $\euler{F} \kotimes \euler{G}$,} which is the profunctor given by the composite
\[
 \mathcal{D}^{\on{op}} \kotimes \mathcal{C} \xrightarrow{\euler{G}^{\on{op}} \kotimes \euler{F}} \mathcal{D}'^{\on{op}} \kotimes \mathcal{C}' \xrightarrow{\mathtt{\Psi}} \mathbf{Vec}_{\Bbbk}.
\]
For the purposes of this document, we will consider the restrictions along $\mathbb{1}_{\mathcal{D}^{\on{op}}} \kotimes \euler{F}$ and along $\euler{G}^{\on{op}} \kotimes \mathbb{1}_{\mathcal{C}}$ as separate, commuting operations, which we will refer to as restriction of $\mathtt{\Psi}$ along $\euler{F}$ and corestriction of $\mathtt{\Psi}$ along $\euler{G}$, respectively. Below, we give an explicit description of restrictions and corestrictions for Tambara modules and module functors.

Let $\mathbf{M},\mathbf{N}$ be $\csym{C}$-module categories, let $\mathtt{\Psi} \in \csym{C}\!\on{-Tamb}(\mathbf{M},\mathbf{N})$ and let
$\euler{\Phi} \in \csym{C}\!\on{-Mod}(\mathbf{K},\mathbf{M})$.

\begin{definition}\label{ResDef}
 The {\it restriction of $\mathtt{\Psi}$ along $\euler{\Phi}$} is the Tambara module $\mathtt{\Psi} \smalltriangleup \euler{\Phi} \in \csym{C}\!\on{-Tamb}(\mathbf{K,N})$,
 whose underlying profunctor is given by
 $
  (Z,X) \mapsto \mathtt{\Psi}(Z, \euler{\Phi} X)
 $
 and whose Tambara structure is given by
  \[\begin{tikzcd}[column sep = huge]
	{\mathtt{\Psi}(Z,\euler{\Phi} X)} & {\mathtt{\Psi}(\mathbf{N}\mathrm{F}Z, \mathbf{M}\mathrm{F}\euler{\Phi} X)} & {\mathtt{\Psi}(\mathbf{N}\mathrm{F}Z, \euler{\Phi}\mathbf{K}\mathrm{F}X)}
	\arrow["{\ta_{\mathrm{F};Z,\euler{\Phi} X}^{\mathtt{\Psi}}}", from=1-1, to=1-2]
	\arrow["{\mathtt{\Psi}(\mathbf{N}\mathrm{F}Z, \euler{\phi}_{\mathrm{F},X})}", from=1-2, to=1-3]
\end{tikzcd}.\]
\end{definition}

Let $\mathtt{t} \in \csym{C}\!\on{-Tamb}(\mathbf{M},\mathbf{N})(\mathtt{\Psi}, \mathtt{\Psi}')$.
\begin{lemma}\label{ResNat}
 The transformation $\mathtt{t} \circ (\mathbb{1}_{\mathbf{N}^{\on{op}}} \kotimes \euler{\Phi})$ gives a morphism $\mathtt{t} \smalltriangleup \euler{\Phi}: \mathtt{\Psi} \smalltriangleup \euler{\Phi} \Rightarrow \mathtt{\Psi}' \smalltriangleup \euler{\Phi}$ of Tambara modules.
\end{lemma}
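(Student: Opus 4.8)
The plan is to verify directly that the profunctor morphism $\mathtt{t} \circ (\mathbb{1}_{\mathbf{N}^{\on{op}}} \kotimes \euler{\Phi})$ — whose component at $(Z,X)$ is $\mathtt{t}_{Z,\euler{\Phi}X}: \mathtt{\Psi}(Z,\euler{\Phi}X) \to \mathtt{\Psi}'(Z,\euler{\Phi}X)$ — satisfies the Tambara axiom for the restricted Tambara modules $\mathtt{\Psi} \smalltriangleup \euler{\Phi}$ and $\mathtt{\Psi}' \smalltriangleup \euler{\Phi}$. First I would note that $\mathtt{t} \circ (\mathbb{1}_{\mathbf{N}^{\on{op}}} \kotimes \euler{\Phi})$ is a morphism of underlying profunctors essentially by construction (restriction along a functor is functorial on profunctor morphisms), so the only thing requiring argument is compatibility with the Tambara structures of Definition~\ref{ResDef}.

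The Tambara structure on $\mathtt{\Psi} \smalltriangleup \euler{\Phi}$ at index $\mathrm{F}$ and objects $Z \in \mathbf{N}$, $X \in \mathbf{K}$ is the composite $\mathtt{\Psi}(\mathbf{N}\mathrm{F}Z, \euler{\phi}_{\mathrm{F},X}) \circ \ta^{\mathtt{\Psi}}_{\mathrm{F};Z,\euler{\Phi}X}$, and similarly for $\mathtt{\Psi}'$. So the square to be checked decomposes as two squares stacked side by side: the left square is the Tambara axiom for $\mathtt{t}$ itself, applied at the objects $Z$ and $\euler{\Phi}X$ (which holds by hypothesis, since $\mathtt{t} \in \csym{C}\!\on{-Tamb}(\mathbf{M},\mathbf{N})(\mathtt{\Psi},\mathtt{\Psi}')$), and the right square is naturality of $\mathtt{t}$ in its second variable, applied to the morphism $\euler{\phi}_{\mathrm{F},X}: \mathbf{M}\mathrm{F}\euler{\Phi}X \to \euler{\Phi}\mathbf{K}\mathrm{F}X$. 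Concretely: the left square reads $\mathtt{t}_{\mathbf{N}\mathrm{F}Z,\mathbf{M}\mathrm{F}\euler{\Phi}X} \circ \ta^{\mathtt{\Psi}}_{\mathrm{F};Z,\euler{\Phi}X} = \ta^{\mathtt{\Psi}'}_{\mathrm{F};Z,\euler{\Phi}X} \circ \mathtt{t}_{Z,\euler{\Phi}X}$, and the right square reads $\mathtt{t}_{\mathbf{N}\mathrm{F}Z,\euler{\Phi}\mathbf{K}\mathrm{F}X} \circ \mathtt{\Psi}(\mathbf{N}\mathrm{F}Z,\euler{\phi}_{\mathrm{F},X}) = \mathtt{\Psi}'(\mathbf{N}\mathrm{F}Z,\euler{\phi}_{\mathrm{F},X}) \circ \mathtt{t}_{\mathbf{N}\mathrm{F}Z,\mathbf{M}\mathrm{F}\euler{\Phi}X}$. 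Pasting them gives exactly the Tambara axiom for $\mathtt{t}\smalltriangleup\euler{\Phi}$.

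There is no real obstacle here; the statement is essentially bookkeeping, and the only mild subtlety is keeping straight which variable of $\mathtt{\Psi}$ is being restricted (the covariant $\mathbf{M}$-variable, via $\euler{\Phi}$ and the module-functor structure cells $\euler{\phi}_{\mathrm{F}}$) versus which is left untouched (the contravariant $\mathbf{N}$-variable). The extranaturality in $\mathrm{F}$ required of the collection $\{\ta^{\mathtt{t}\smalltriangleup\euler{\Phi}}\}$ — in the sense of the extranatural collection framework of Section~\ref{s2} — is inherited from that of $\{\ta^{\mathtt{\Psi}}\}$ and $\{\ta^{\mathtt{\Psi}'}\}$ together with naturality of $\mathtt{t}$ and of $\euler{\phi}$, and needs no separate verification beyond the one square above. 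I would therefore present the proof simply as: $\mathtt{t}\smalltriangleup\euler{\Phi}$ is a profunctor morphism by functoriality of restriction, and the Tambara axiom for it is the pasting of the Tambara axiom for $\mathtt{t}$ with the naturality square of $\mathtt{t}$ in the morphism $\euler{\phi}_{\mathrm{F},X}$.
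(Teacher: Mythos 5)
Your proposal is correct and matches the paper's proof: the paper verifies the Tambara axiom for $\mathtt{t}\smalltriangleup\euler{\Phi}$ by pasting exactly the two squares you describe, the left commuting because $\mathtt{t}$ is a Tambara morphism and the right by naturality of $\mathtt{t}$ in the second variable applied to $\euler{\phi}_{\mathrm{F},X}$. No gap.
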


\begin{proof}
 The diagram
 \[\begin{tikzcd}[column sep = huge, row sep = scriptsize]
	{\mathtt{\Psi} (Z , \euler{\Phi} X)} & {\mathtt{\Psi} (\mathbf{N}\mathrm{F}Z , \mathbf{M}\mathrm{F}\euler{\Phi} X)} & {\mathtt{\Psi} (\mathbf{N}\mathrm{F}Z , \euler{\Phi} \mathbf{K}\mathrm{F}X)} \\
	{\mathtt{\Psi}' (Z , \euler{\Phi} X)} & {\mathtt{\Psi}' (\mathbf{N}\mathrm{F}Z , \mathbf{M}\mathrm{F}\euler{\Phi} X)} & {\mathtt{\Psi}' (\mathbf{N}\mathrm{F}Z , \euler{\Phi} \mathbf{K}\mathrm{F}X)}
	\arrow["{\mathtt{\Psi}(\mathbf{N}\mathrm{F}Z, \phi_{\mathrm{F},X})}", from=1-2, to=1-3]
	\arrow["{\ta^{\mathtt{\Psi}}_{\mathrm{F}; Z, \euler{\Phi} X}}", from=1-1, to=1-2]
	\arrow["{\mathtt{\Psi}'(\mathbf{N}\mathrm{F}Z, \euler{\phi}_{\mathrm{F},X})}", from=2-2, to=2-3]
	\arrow["{\ta^{\mathtt{\Psi}'}_{\mathrm{F}; Z, \euler{\Phi} X}}", from=2-1, to=2-2]
	\arrow["{\mathtt{t}_{Z,\euler{\Phi} X}}"', from=1-1, to=2-1]
	\arrow["{\mathtt{t}_{\mathbf{N}\mathrm{F}Z,\mathbf{M}\mathrm{F}\euler{\Phi} X}}"', from=1-2, to=2-2]
	\arrow["{\mathtt{t}_{\mathbf{N}\mathrm{F}Z, \euler{\Phi} \mathbf{K}\mathrm{F}X}}", from=1-3, to=2-3]
\end{tikzcd}\]
commutes: the left square since $\euler{t}$ is a morphism of Tambara modules, the right square since $\euler{t}$ is natural in the right variable.
\end{proof}

\begin{lemma}\label{ResFunc}
Let $\euler{d} \in \csym{C}\!\on{-Mod}(\euler{\Phi}, \euler{\Phi}')$.
 The transformation $\mathtt{\Psi} \circ (\mathbb{1}_{\mathbf{N}^{\on{op}}} \kotimes \euler{d})$ gives a morphism $\mathtt{\Psi} \smalltriangleup \euler{d}: \mathtt{\Psi} \smalltriangleup \euler{\Phi} \Rightarrow \mathtt{\Psi}' \smalltriangleup \euler{\Phi}'$ of Tambara modules.
\end{lemma}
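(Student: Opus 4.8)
The argument is parallel to that of Lemma~\ref{ResNat}: unwind Definition~\ref{ResDef} and the definition of a morphism of Tambara modules, and check the two requirements — naturality of the underlying profunctor transformation, and the Tambara axiom. By construction, $\mathtt{\Psi} \smalltriangleup \euler{d}$ is the collection $\setj{\mathtt{\Psi}(Z,\euler{d}_X)\colon \mathtt{\Psi}(Z,\euler{\Phi} X)\to\mathtt{\Psi}(Z,\euler{\Phi}' X)}$, i.e. the whiskering of the natural transformation $\euler{d}\colon\euler{\Phi}\Rightarrow\euler{\Phi}'$ by $\mathbb{1}_{\mathbf{N}^{\on{op}}}\kotimes(-)$ on one side and by the functor $\mathtt{\Psi}$ on the other; hence it is automatically natural in both variables and so is a morphism of the underlying profunctors of $\mathtt{\Psi}\smalltriangleup\euler{\Phi}$ and $\mathtt{\Psi}\smalltriangleup\euler{\Phi}'$. (As in Lemma~\ref{ResNat} this should be stated as a morphism $\mathtt{\Psi}\smalltriangleup\euler{\Phi}\Rightarrow\mathtt{\Psi}\smalltriangleup\euler{\Phi}'$.)

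It remains to verify the Tambara axiom, namely that for all $\mathrm{F}$, $X\in\mathbf{K}$, $Z\in\mathbf{N}$ the square with top arrow $\ta^{\mathtt{\Psi}\smalltriangleup\euler{\Phi}}_{\mathrm{F};Z,X}$, bottom arrow $\ta^{\mathtt{\Psi}\smalltriangleup\euler{\Phi}'}_{\mathrm{F};Z,X}$, left arrow $\mathtt{\Psi}(Z,\euler{d}_X)$ and right arrow $\mathtt{\Psi}(\mathbf{N}\mathrm{F}Z,\euler{d}_{\mathbf{K}\mathrm{F}X})$ commutes. Recalling from Definition~\ref{ResDef} that $\ta^{\mathtt{\Psi}\smalltriangleup\euler{\Phi}}_{\mathrm{F};Z,X}=\mathtt{\Psi}(\mathbf{N}\mathrm{F}Z,\euler{\phi}_{\mathrm{F},X})\circ\ta^{\mathtt{\Psi}}_{\mathrm{F};Z,\euler{\Phi} X}$ and likewise for $\euler{\Phi}'$, I would factor this square through the objects $\mathtt{\Psi}(\mathbf{N}\mathrm{F}Z,\mathbf{M}\mathrm{F}\euler{\Phi} X)$ and $\mathtt{\Psi}(\mathbf{N}\mathrm{F}Z,\mathbf{M}\mathrm{F}\euler{\Phi}' X)$ in the top and bottom rows respectively, joined by the vertical $\mathtt{\Psi}(\mathbf{N}\mathrm{F}Z,\mathbf{M}\mathrm{F}\euler{d}_X)$, producing two subsquares whose horizontal edges are respectively the $\ta^{\mathtt{\Psi}}$-part and the $\mathtt{\Psi}(\mathbf{N}\mathrm{F}Z,\euler{\phi}_{\mathrm{F},X})$-part of the restricted Tambara structures.

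The left subsquare commutes by naturality of the Tambara structure $\ta^{\mathtt{\Psi}}_{\mathrm{F};Z,-}$ in its $\mathbf{M}$-variable, applied to the morphism $\euler{d}_X\colon\euler{\Phi} X\to\euler{\Phi}' X$ of $\mathbf{M}$. The right subsquare is obtained by applying the functor $\mathtt{\Psi}(\mathbf{N}\mathrm{F}Z,-)$ to the square of morphisms of $\mathbf{M}$ with top $\mathbf{M}\mathrm{F}\bullet\euler{d}_X$, bottom $\euler{d}_{\mathbf{K}\mathrm{F}X}$, left $\euler{\phi}_{\mathrm{F},X}$, right $\euler{\phi}'_{\mathrm{F},X}$; that square is precisely the $(\mathrm{F},X)$-component of the defining coherence condition for the $\csym{C}$-module transformation $\euler{d}\colon\euler{\Phi}\Rightarrow\euler{\Phi}'$, hence commutes, and a functor preserves commutativity. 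Pasting the two subsquares yields the Tambara axiom, and hence the claim.

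\emph{Expected main obstacle.} There is no genuine difficulty here; the only point demanding care is the bookkeeping of variances — confirming that "naturality in the right ($\mathbf{M}$-)variable" of $\ta^{\mathtt{\Psi}}$ is exactly what kills the left subsquare, and that the right subsquare is literally the $\csym{C}$-module transformation axiom for $\euler{d}$ evaluated at $X$ and not a twisted variant of it. Everything else is a direct unwinding entirely analogous to the proof of Lemma~\ref{ResNat}.
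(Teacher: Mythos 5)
Your proof is correct and follows essentially the same route as the paper's: both decompose the Tambara-axiom square for $\mathtt{\Psi}\smalltriangleup\euler{d}$ through $\mathtt{\Psi}(\mathbf{N}\mathrm{F}Z,\mathbf{M}\mathrm{F}\euler{\Phi} X)$ and $\mathtt{\Psi}(\mathbf{N}\mathrm{F}Z,\mathbf{M}\mathrm{F}\euler{\Phi}' X)$, kill the left subsquare by naturality of $\ta^{\mathtt{\Psi}}$ in the $\mathbf{M}$-variable, and kill the right subsquare by applying $\mathtt{\Psi}(\mathbf{N}\mathrm{F}Z,-)$ to the $\csym{C}$-module transformation axiom for $\euler{d}$. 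Your phrasing of the reason for the left subsquare ("naturality of the Tambara structure in its $\mathbf{M}$-variable") is in fact more precise than the paper's wording, which loosely calls it "the Tambara axiom for $\ta^{\mathtt{\Psi}}$"; and your observation that the stated codomain $\mathtt{\Psi}'\smalltriangleup\euler{\Phi}'$ should read $\mathtt{\Psi}\smalltriangleup\euler{\Phi}'$ is correct — it is a typo in the statement.
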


\begin{proof}
 The diagram
 \begin{equation}\label{FFGeneral}
 \begin{tikzcd}[column sep = huge, row sep = scriptsize]
	{\mathtt{\Psi} (Z , \euler{\Phi} X)} & {\mathtt{\Psi} (\mathbf{N}\mathrm{F}Z , \mathbf{M}\mathrm{F}\euler{\Phi} X)} & {\mathtt{\Psi} (\mathbf{N}\mathrm{F}Z , \euler{\Phi} \mathbf{K}\mathrm{F}X)} \\
	{\mathtt{\Psi} (Z , \euler{\Phi}' X)} & {\mathtt{\Psi} (\mathbf{N}\mathrm{F}Z , \mathbf{M}\mathrm{F}\euler{\Phi}' X)} & {\mathtt{\Psi} (\mathbf{N}\mathrm{F}Z , \euler{\Phi}' \mathbf{K}\mathrm{F}X)}
	\arrow["{\mathtt{\Psi}(Z,\euler{d}_{X})}"', from=1-1, to=2-1]
	\arrow["{\ta^{\mathtt{\Psi}}_{\mathrm{F}; Z, \euler{\Phi} X}}", from=1-1, to=1-2]
	\arrow["{\mathtt{\Psi}(\mathbf{N}\mathrm{F}Z, \phi_{\mathrm{F},X})}", from=1-2, to=1-3]
	\arrow["{\ta^{\mathtt{\Psi}}_{\mathrm{F}; Z, \euler{\Phi}' X}}", from=2-1, to=2-2]
	\arrow["{\mathtt{\Psi}(\mathbf{N}\mathrm{F}Z, \phi'_{\mathrm{F},X})}", from=2-2, to=2-3]
	\arrow["{\mathtt{\Psi}(\mathbf{N}\mathrm{F}Z,\euler{d}_{\mathbf{K}\mathrm{F}X})}", from=1-3, to=2-3]
	\arrow["{\mathtt{\Psi}(\mathbf{N}\mathrm{F}Z,\mathbf{M}\mathrm{F}\euler{d}_{X})}"', from=1-2, to=2-2]
\end{tikzcd}
 \end{equation}
commutes: the left square by the Tambara axiom for $\ta^{\mathtt{\Psi}}$, the right square since $\euler{d}$ is a $\csym{C}$-module transformation.
\end{proof}

\begin{proposition}\label{ResPseudoOneSide}
 Given diagrams
\[
\begin{tikzcd}[scale cd = 0.9]
	{\mathbf{J}} && {\mathbf{K}} && {\mathbf{M}}
	\arrow[""{name=0, anchor=center, inner sep=0}, "{\euler{\Omega}''}"', curve={height=14pt}, from=1-1, to=1-3]
	\arrow[""{name=1, anchor=center, inner sep=0}, "{\euler{\Omega}'}"{description}, from=1-1, to=1-3]
	\arrow[""{name=2, anchor=center, inner sep=0}, "\euler{\Phi}", curve={height=-14pt}, from=1-3, to=1-5]
	\arrow[""{name=3, anchor=center, inner sep=0}, "{\euler{\Phi}''}"', curve={height=14pt}, from=1-3, to=1-5]
	\arrow[""{name=4, anchor=center, inner sep=0}, "{\euler{\Phi}'}"{description}, from=1-3, to=1-5]
	\arrow[""{name=5, anchor=center, inner sep=0}, "\euler{\Omega}", curve={height=-14pt}, from=1-1, to=1-3]
	\arrow["{\euler{d}}", shorten <=2pt, shorten >=2pt, Rightarrow, from=2, to=4]
	\arrow["{\euler{g}}", shorten <=2pt, shorten >=2pt, Rightarrow, from=5, to=1]
	\arrow["{\euler{d}'}", shorten <=2pt, shorten >=2pt, Rightarrow, from=4, to=3]
	\arrow["{\euler{g}'}", shorten <=2pt, shorten >=2pt, Rightarrow, from=1, to=0]
\end{tikzcd} \text{ in } \csym{C}\!\on{-Mod} \text{ and }
\begin{tikzcd}[column sep = huge, scale cd = 0.9]
	{\mathbf{M}} & {\mathbf{N}} & {\mathbf{Q}}
	\arrow[""{name=0, anchor=center, inner sep=0}, "{\mathtt{\Psi}}", curve={height=-14pt}, from=1-1, to=1-2]
	\arrow[""{name=1, anchor=center, inner sep=0}, "{\mathtt{\Psi}''}"', curve={height=14pt}, from=1-1, to=1-2]
	\arrow[""{name=2, anchor=center, inner sep=0}, "{\mathtt{\Psi}'}"{description}, from=1-1, to=1-2]
	\arrow[""{name=3, anchor=center, inner sep=0}, "{\mathtt{\Sigma}}", curve={height=-14pt}, from=1-2, to=1-3]
	\arrow[""{name=4, anchor=center, inner sep=0}, "{\mathtt{\Sigma}''}"', curve={height=14pt}, from=1-2, to=1-3]
	\arrow[""{name=5, anchor=center, inner sep=0}, "{\mathtt{\Sigma}}"{description}, from=1-2, to=1-3]
	\arrow["{\mathtt{t}}", shorten <=2pt, shorten >=2pt, Rightarrow, from=0, to=2]
	\arrow["{\mathtt{t}'}", shorten <=2pt, shorten >=2pt, Rightarrow, from=2, to=1]
	\arrow["{\mathtt{k}}", shorten <=2pt, shorten >=2pt, Rightarrow, from=3, to=5]
	\arrow["{\mathtt{k}'}", shorten <=2pt, shorten >=2pt, Rightarrow, from=5, to=4]
\end{tikzcd} \text{ in } \csym{C}\!\on{-Tamb},
\]
the following equations hold:
\begin{multicols}{2}
\begin{enumerate}
 \item \label{prl1} $\on{id}_{\mathtt{\Psi}} \smalltriangleup \euler{\Phi} = \on{id}_{\mathtt{\Psi} \smalltriangleup \euler{\Phi}}$;
 \item \label{prl2} $(\mathtt{t}' \circ \mathtt{t}) \smalltriangleup \euler{\Phi} = (\mathtt{t}' \smalltriangleup \euler{\Phi}) \circ (\mathtt{t} \smalltriangleup \euler{\Phi})$;
  \item \label{prl3} $(\mathtt{t} \smalltriangleup \euler{\Phi}) \circ (\mathtt{\Psi} \smalltriangleup \euler{d}) = (\mathtt{\Psi} \smalltriangleup \euler{d}) \circ (\mathtt{t} \smalltriangleup \euler{\Phi}) =: \mathtt{t} \smalltriangleup \euler{d}$;
 \item \label{prl4} $\mathtt{\Psi} \smalltriangleup \on{id}_{\euler{\Phi}} = \on{id}_{\mathtt{\Psi} \smalltriangleup \euler{\Phi}}$;
 \item \label{prl5} $\mathtt{\Psi} \smalltriangleup (\euler{d}' \circ \euler{d}) = (\mathtt{\Psi} \smalltriangleup \euler{d}') \circ (\mathtt{\Psi} \smalltriangleup \euler{d})$;
 \item \label{prl6} $\mathtt{\Psi} \smalltriangleup \mathbb{1}_{\mathbf{M}} = \mathtt{\Psi}$;
 \item \label{prl7} $(\mathtt{\Psi} \smalltriangleup \euler{\Phi}) \smalltriangleup \euler{\Omega} = \mathtt{\Psi} \smalltriangleup (\euler{\Phi \circ \Omega})$;
 \item \label{prl8} $(\mathtt{\Sigma} \circ \mathtt{\Psi}) \smalltriangleup \euler{\Phi} = \mathtt{\Sigma} \circ (\mathtt{\Psi} \smalltriangleup \euler{\Phi})$;
 \item \label{prl9} $(\mathtt{k} \diamond \mathtt{t}) \smalltriangleup \euler{d} = \mathtt{k} \diamond (\mathtt{t} \smalltriangleup \euler{d})$.
 \item[\vspace{\fill}]
\end{enumerate}
\end{multicols}
\end{proposition}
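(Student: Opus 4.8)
The plan is to check each of \ref{prl1}--\ref{prl9} directly against Definition~\ref{ResDef} and Lemmas~\ref{ResNat} and~\ref{ResFunc}. The observation that makes almost everything mechanical is that, on underlying profunctors, the operation $-\smalltriangleup\euler{\Phi}$ is just whiskering by the functor $\mathbb{1}_{\mathbf{N}^{\on{op}}}\kotimes\euler{\Phi}$, and the operation $\mathtt{\Psi}\smalltriangleup-$ is just whiskering by the transformation $\mathbb{1}_{\mathbf{N}^{\on{op}}}\kotimes\euler{d}$; the only extra datum is the post-composition with the ``correcting'' factor $\mathtt{\Psi}(\mathbf{N}\mathrm{F}Z,\euler{\phi}_{\mathrm{F},X})$ built into the Tambara structure in Definition~\ref{ResDef}. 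Since a Tambara morphism is uniquely determined by its underlying profunctor morphism, for the items involving $2$-cells it suffices to check the underlying equalities, and these then follow from standard properties of whiskering in $\mathbf{Cat}_{\Bbbk}$ together with the remark that the correcting factors compose as expected.

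I would first dispose of the formal identities. Items~\ref{prl1} and~\ref{prl2} hold because $\mathtt{t}\smalltriangleup\euler{\Phi}$ is, by definition, the whiskering of $\mathtt{t}$ by $\mathbb{1}_{\mathbf{N}^{\on{op}}}\kotimes\euler{\Phi}$, and whiskering preserves identities and vertical composites; dually, \ref{prl4} and~\ref{prl5} follow from Lemma~\ref{ResFunc} and functoriality of $\mathtt{\Psi}\ast(-)$ in its transformation argument. For~\ref{prl3}, evaluating both composites at an object $(Z,X)$ yields the two composites around the naturality square of $\mathtt{t}\colon\mathtt{\Psi}\Rightarrow\mathtt{\Psi}'$ at the morphism $\euler{d}_{X}\colon\euler{\Phi}X\to\euler{\Phi}'X$; commutativity of that square is the asserted interchange and simultaneously defines $\mathtt{t}\smalltriangleup\euler{d}$, the Tambara property being automatic by Lemmas~\ref{ResNat} and~\ref{ResFunc}.

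Next come the two ``module-theoretic'' identities. For~\ref{prl6} one uses that the identity $\csym{C}$-module functor $\mathbb{1}_{\mathbf{M}}$ has underlying functor $\on{id}_{\mathbf{M}}$ and identity coherence cell, so its correcting factor is an identity and $\mathtt{\Psi}\smalltriangleup\mathbb{1}_{\mathbf{M}}$ agrees with $\mathtt{\Psi}$ on the nose, both as a profunctor and in its Tambara structure. For~\ref{prl7}, the two sides obviously have the same underlying profunctor $(Z,X)\mapsto\mathtt{\Psi}(Z,\euler{\Phi}\euler{\Omega}X)$; expanding the Tambara structure of $(\mathtt{\Psi}\smalltriangleup\euler{\Phi})\smalltriangleup\euler{\Omega}$ gives $\ta^{\mathtt{\Psi}}$ followed by $\mathtt{\Psi}(\mathbf{N}\mathrm{F}Z,-)$ applied to $\euler{\Phi}(\euler{\omega}_{\mathrm{F},X})\circ\euler{\phi}_{\mathrm{F},\euler{\Omega}X}$, and this composite is by definition the component at $X$ of the coherence cell of the composite module functor $\euler{\Phi}\circ\euler{\Omega}$ --- that is, the correcting factor of $\mathtt{\Psi}\smalltriangleup(\euler{\Phi}\circ\euler{\Omega})$.

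Finally, \ref{prl8} and~\ref{prl9} are handled through the coend description of Tambara composition in Definition~\ref{CompositeTambara}. In~\ref{prl8} both sides have underlying profunctor $(Z,X)\mapsto\int^{W\in\mathbf{N}}\mathtt{\Sigma}(Z,W)\kotimes\mathtt{\Psi}(W,\euler{\Phi}X)$, and the two candidate Tambara structures agree once precomposed with the coproduct injections $\iota$ of Definition~\ref{CompositeTambara}, since the correcting factor $\euler{\phi}_{\mathrm{F},X}$ enters only the $\mathtt{\Psi}$-tensorand --- exactly where it enters in $\mathtt{\Psi}\smalltriangleup\euler{\Phi}$ --- so they coincide by the universal property of the coend; the same cocone computation, carried out for the map induced on coends, yields~\ref{prl9}. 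The only step requiring genuine attention rather than bookkeeping is keeping the coherence cells straight in~\ref{prl7}, \ref{prl8} and~\ref{prl9}, since we do not pass to strictifications; but in each case the identity needed is precisely the definition of $1$-cell or $2$-cell composition in $\csym{C}\!\on{-Mod}$ or $\csym{C}\!\on{-Tamb}$, so no new coherence input is required, and I expect~\ref{prl8} --- matching the two coend-level Tambara structures --- to be the most laborious, though still routine.
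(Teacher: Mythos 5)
Your proposal is correct and follows essentially the same route as the paper: split the nine identities into equalities of Tambara $2$-morphisms (items 1--5, 9), which reduce to the underlying profunctor level because a Tambara morphism is determined by its underlying profunctor morphism, and equalities of Tambara $1$-morphisms (items 6, 7, 8), where one must additionally verify that the Tambara structures coincide. Your explicit computations for items 7 and 8 match the paper's — item 7 reducing to the definition of the coherence cell of a composite module functor, item 8 checked on the generating cocone so that the correcting factor lands only in the $\mathtt{\Psi}$-tensorand — and your whiskering observation is exactly what the paper summarizes as ``elementary properties of functor composition.''
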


\begin{proof}
 Two morphisms of Tambara modules are equal if and only if their underlying morphisms of profunctors are equal. Since the underlying profunctors and profunctor morphisms of restrictions for Tambara modules are restrictions for the underlying profunctors, the stated equations for morphisms of Tambara modules follow from analogous equations for profunctors. For the latter, the validity of the equations is a consequence of elementary properties of functor composition (viewing profunctors as functors to $\mathbf{Vec}_{\Bbbk}$). Equations~\eqref{prl1}, \eqref{prl2}, \eqref{prl3}, \eqref{prl4}, \eqref{prl5} and \eqref{prl9} follow.

 Two Tambara modules are equal if and only if their underlying profunctors and their Tambara structures coincide. For the remaining equations, the equality of underlying profunctors again follows from elementary properties of functor composition. Thus, it suffices to exhibit equality of Tambara structures in each case. And indeed, for Equation~\eqref{prl6}:
 \[
  \mathtt{\Psi}(\mathbf{N}\mathrm{F}Y, \on{id}_{\mathbf{M}\mathrm{F}X}) \circ \ta^{\mathtt{\Psi}}_{\mathrm{F}; Y, \mathbb{1}_{\mathbf{M}}X} = \on{id}_{\mathtt{\Psi}(\mathbf{N}\mathrm{F}Y, \mathbf{M}\mathrm{F}X)} \circ \ta^{\mathtt{\Psi}}_{\mathrm{F}; Y, \mathbb{1}_{\mathbf{M}}X} = \ta^{\mathtt{\Psi}}_{\mathrm{F}; Y, \mathbb{1}_{\mathbf{M}}X},
 \]
 for Equation~\eqref{prl7}:
    \[
    \begin{aligned}
     &\ta^{(\mathtt{\Psi} \smalltriangleup \euler{\Phi}) \smalltriangleup \euler{\Phi}'}_{\mathrm{F};Z,V} \qquad & \text{ by definition of } - \smalltriangleup \Phi' \\
     &=(\mathtt{\Psi} \smalltriangleup \euler{\Phi})(\mathbf{N}\mathrm{F}Z, \euler{\phi}'_{\mathrm{F},V}) \circ \ta^{\mathtt{\Psi} \smalltriangleup \euler{\Phi}}_{\mathrm{F};Z,\euler{\Phi}'V}&\text{ by definition of } - \smalltriangleup \Phi \\
     &=\mathtt{\Psi}(\mathbf{N}\mathrm{F}Z, \euler{\Phi} \euler{\phi}'_{\mathrm{F},V}) \circ \big(\mathtt{\Psi}(\mathbf{N}\mathrm{F}Z,\euler{\phi}_{\mathrm{F},\euler{\Phi} ' V}) \circ \ta^{\mathtt{\Psi}}_{\mathrm{F}; Z, (\euler{\Phi} \circ \euler{\Phi}') V}\big)&\text{ by functoriality of } \mathtt{\Psi} \\
     &=\mathtt{\Psi}(\mathbf{N}\mathrm{F}Z,\euler{\Phi} \euler{\phi}'_{\mathrm{F},V}\circ \euler{\phi}_{\mathrm{F},\euler{\Phi}' V}) \circ \ta^{\mathtt{\Psi}}_{\mathrm{F}; Z, (\euler{\Phi} \circ \euler{\Phi}') V} &\text{by definition of structure maps for } \euler{\Phi}\circ \euler{\Phi}'  \\
     &= \ta^{\mathtt{\Psi} \smalltriangleup (\euler{\Phi} \circ \euler{\Phi}')}_{\mathrm{F};Z,V}
    \end{aligned}
   \]
 and for Equation~\eqref{prl8}, we observe that the Tambara structure maps
 \[
\int^{Y} \mathtt{\Sigma}(Z,Y) \kotimes \mathtt{\Psi}(Y, \euler{\Phi} W) \rightarrow \int^{Y} \mathtt{\Sigma}(\mathbf{N}\mathrm{F}Z, Y) \kotimes \mathtt{\Psi}(Y, \euler{\Phi} \mathbf{K}\mathrm{F}W)
 \]
 of the respective Tambara modules are given by the extranatural families
\[
 \setj{\Big(\mathtt{\Sigma}(\mathbf{N}\mathrm{F}Z, \mathbf{M}\mathrm{F}Y) \kotimes \mathtt{\Psi}(\mathbf{M}\mathrm{F}Y, \euler{\phi}_{\mathrm{F},W})\Big) \circ \Big(\ta^{\mathtt{\Sigma}}_{\mathrm{F};Z,Y} \kotimes \ta^{\mathtt{\Psi}}_{\mathrm{F};Z,Y}\Big)} \text{ and }
 \setj{\ta^{\mathtt{\Sigma}}_{\mathrm{F}; Z, Y} \kotimes \Big(\mathtt{\Psi}(\mathbf{M}\mathrm{F}Y, \euler{\phi}_{\mathrm{F},W}) \circ \ta^{\mathtt{\Psi}}_{Y,\euler{\Phi} W}\Big)}
\]
 for the left-hand side and the right-hand side respectively. These families coincide due to naturality of the tensor product.
\end{proof}

Similarly to Definition~\ref{ResDef}, Definition~\ref{ResNat} and Definition~\ref{ResFunc}, there is a notion of {\it corestriction} of Tambara modules:
\begin{definition}\label{CoresPseudoOneSide}
 Given
 \[
\mathtt{\Sigma} \in \csym{C}\!\on{-Tamb}(\mathbf{N,Q}) \text{ and } \euler{\Phi} \in \csym{C}\!\on{-Mod}(\mathbf{K,Q}),
 \]
 define the {\it corestriction} $\mathtt{\Psi} \smalltriangledown \euler{\Phi}$ as the Tambara module with underlying profunctors $(V,Y) \mapsto \mathtt{\Psi}(\euler{\Phi}V,Y)$ and the evident Tambara structure, and also define Tambara morphisms $\mathtt{t} \smalltriangledown \euler{\Phi}: \mathtt{\Psi} \smalltriangledown \euler{\Phi} \Rightarrow \mathtt{\Psi'} \smalltriangledown \euler{\Phi}$ and $\mathtt{\Psi} \smalltriangledown \euler{b}: \mathtt{\Psi} \smalltriangledown \euler{\Phi}' \Rightarrow \mathtt{\Psi} \smalltriangledown \euler{\Phi}$, for any $\mathtt{t}: \mathtt{\Psi} \Rightarrow \mathtt{\Psi}'$ and $\euler{b}: \euler{\Phi} \Rightarrow \euler{\Phi}'$.
\end{definition}
Observe that $\mathtt{\Psi} \smalltriangledown -$ is contravariant.

The proof of the following proposition is completely analogous to the proof of Proposition~\ref{ResPseudoOneSide}:
\begin{proposition}\label{CoResPseudoOneSide}
 Given diagrams
\[
\begin{tikzcd}[scale cd = 0.9]
	{\mathbf{J}} && {\mathbf{K}} && {\mathbf{Q}}
	\arrow[""{name=0, anchor=center, inner sep=0}, "{\euler{\Omega}''}"', curve={height=14pt}, from=1-1, to=1-3]
	\arrow[""{name=1, anchor=center, inner sep=0}, "{\euler{\Omega}'}"{description}, from=1-1, to=1-3]
	\arrow[""{name=2, anchor=center, inner sep=0}, "\euler{\Phi}", curve={height=-14pt}, from=1-3, to=1-5]
	\arrow[""{name=3, anchor=center, inner sep=0}, "{\euler{\Phi}''}"', curve={height=14pt}, from=1-3, to=1-5]
	\arrow[""{name=4, anchor=center, inner sep=0}, "{\euler{\Phi}'}"{description}, from=1-3, to=1-5]
	\arrow[""{name=5, anchor=center, inner sep=0}, "\euler{\Omega}", curve={height=-14pt}, from=1-1, to=1-3]
	\arrow["{\euler{d}}", shorten <=2pt, shorten >=2pt, Rightarrow, from=2, to=4]
	\arrow["{\euler{g}}", shorten <=2pt, shorten >=2pt, Rightarrow, from=5, to=1]
	\arrow["{\euler{d}'}", shorten <=2pt, shorten >=2pt, Rightarrow, from=4, to=3]
	\arrow["{\euler{g}'}", shorten <=2pt, shorten >=2pt, Rightarrow, from=1, to=0]
\end{tikzcd} \text{ in } \csym{C}\!\on{-Mod} \text{ and }
\begin{tikzcd}[column sep = huge, scale cd = 0.9]
	{\mathbf{M}} & {\mathbf{N}} & {\mathbf{Q}}
	\arrow[""{name=0, anchor=center, inner sep=0}, "{\mathtt{\Psi}}", curve={height=-14pt}, from=1-1, to=1-2]
	\arrow[""{name=1, anchor=center, inner sep=0}, "{\mathtt{\Psi}''}"', curve={height=14pt}, from=1-1, to=1-2]
	\arrow[""{name=2, anchor=center, inner sep=0}, "{\mathtt{\Psi}'}"{description}, from=1-1, to=1-2]
	\arrow[""{name=3, anchor=center, inner sep=0}, "{\mathtt{\Sigma}}", curve={height=-14pt}, from=1-2, to=1-3]
	\arrow[""{name=4, anchor=center, inner sep=0}, "{\mathtt{\Sigma}''}"', curve={height=14pt}, from=1-2, to=1-3]
	\arrow[""{name=5, anchor=center, inner sep=0}, "{\mathtt{\Sigma}}"{description}, from=1-2, to=1-3]
	\arrow["{\mathtt{t}}", shorten <=2pt, shorten >=2pt, Rightarrow, from=0, to=2]
	\arrow["{\mathtt{t}'}", shorten <=2pt, shorten >=2pt, Rightarrow, from=2, to=1]
	\arrow["{\mathtt{k}}", shorten <=2pt, shorten >=2pt, Rightarrow, from=3, to=5]
	\arrow["{\mathtt{k}'}", shorten <=2pt, shorten >=2pt, Rightarrow, from=5, to=4]
\end{tikzcd} \text{ in } \csym{C}\!\on{-Tamb},
\]
the following equations hold:
\begin{multicols}{2}
\begin{enumerate}
 \item $\on{id}_{\mathtt{\Sigma}} \smalltriangledown \euler{\Phi} = \on{id}_{\mathtt{\Sigma} \smalltriangledown \euler{\Phi}}$;
 \item $(\mathtt{k}' \circ \mathtt{k}) \smalltriangledown \euler{\Phi} = (\mathtt{k}' \smalltriangledown \euler{\Phi}) \circ (\mathtt{k} \smalltriangledown \euler{\Phi})$;
  \item $(\mathtt{k} \smalltriangledown \euler{\Phi}) \circ (\mathtt{\Sigma} \smalltriangledown \euler{d}) = (\mathtt{\Sigma} \smalltriangledown \euler{d}) \circ (\mathtt{k} \smalltriangledown \euler{\Phi}) =: \mathtt{k} \smalltriangledown \euler{d}$;
 \item $\mathtt{\Sigma} \smalltriangledown \on{id}_{\euler{\Phi}} = \on{id}_{\mathtt{\Sigma} \smalltriangledown \euler{\Phi}}$;
 \item $\mathtt{\Sigma} \smalltriangledown (\euler{d}' \circ \euler{d}) = (\mathtt{\Sigma} \smalltriangledown \euler{d}) \circ (\mathtt{\Sigma} \smalltriangledown \euler{d}')$;
 \item $\mathtt{\Sigma} \smalltriangledown \mathbb{1}_{\mathbf{Q}} = \mathtt{\Sigma}$;
 \item $(\mathtt{\Sigma} \smalltriangledown \euler{\Phi}) \smalltriangledown \euler{\Omega} = \mathtt{\Psi} \smalltriangledown (\euler{\Phi \circ \Omega})$;
 \item $(\mathtt{\Sigma} \diamond \mathtt{\Psi}) \smalltriangledown \euler{\Phi} = (\mathtt{\Sigma} \smalltriangledown \euler{\Phi}) \diamond \mathtt{\Psi}$;
 \item $(\mathtt{k} \diamond \mathtt{t}) \smalltriangledown \euler{d} = (\mathtt{k} \smalltriangledown \euler{d}) \diamond \mathtt{t}$.
 \item[\vspace{\fill}]
\end{enumerate}
\end{multicols}
\end{proposition}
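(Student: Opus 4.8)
The plan is to run the proof of Proposition~\ref{ResPseudoOneSide} essentially verbatim, the only structural difference being that corestriction applies $\euler{\Phi}$ in the \emph{contravariant} slot of the underlying profunctor rather than the covariant one. Concretely, where Definition~\ref{ResDef} postcomposes $\ta^{\mathtt{\Psi}}$ with $\mathtt{\Psi}(\mathbf{N}\mathrm{F}Z,\euler{\phi}_{\mathrm{F},X})$, the ``evident'' Tambara structure of the corestriction $\mathtt{\Sigma}\smalltriangledown\euler{\Phi}$ postcomposes $\ta^{\mathtt{\Sigma}}_{\mathrm{F};\euler{\Phi}V,Y}$ with the map induced in the contravariant argument by $\euler{\phi}_{\mathrm{F},V}^{-1}\colon\euler{\Phi}\mathbf{K}\mathrm{F}V\to\mathbf{Q}\mathrm{F}\euler{\Phi}V$. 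The inverse appears precisely because $\euler{\phi}_{\mathrm{F}}$ points the ``wrong way'' for a contravariant slot; this is also why $\mathtt{\Sigma}\smalltriangledown-$ is contravariant, as recorded in Definition~\ref{CoresPseudoOneSide}, and is the source of the reversed composition orders in items~(2),(3),(5),(7),(8),(9).

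For the morphism-level identities~(1)--(5) and~(9) I would argue exactly as in Proposition~\ref{ResPseudoOneSide}: two Tambara morphisms agree if and only if their underlying profunctor morphisms do, and on underlying profunctors the operations $-\smalltriangledown\euler{\Phi}$ and $\mathtt{\Sigma}\smalltriangledown-$ are just precomposition with $\euler{\Phi}^{\on{op}}\kotimes\mathbb{1}$ and whiskering with $\euler{b}^{\on{op}}\kotimes\mathbb{1}$, so the identities reduce to instances of the elementary calculus of functors, natural transformations and their horizontal and vertical composites. For the object-level identities~(6),(7),(8) the underlying profunctors coincide for the same trivial reason, so it only remains to match Tambara structures. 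Item~(6) follows since the coherence cell of $\mathbb{1}_{\mathbf{Q}}$ is the identity, hence so is its inverse. Item~(7) would be handled by the same chain of rewrites used for \eqref{prl7} of Proposition~\ref{ResPseudoOneSide}: expand both two-fold corestrictions via the formula above, then use functoriality of $\mathtt{\Sigma}$ together with the description of the coherence cell of the composite module functor $\euler{\Phi}\circ\euler{\Omega}$ (and the fact that its inverse is obtained by composing, suitably whiskered, the inverse cells of $\euler{\Phi}$ and $\euler{\Omega}$), keeping in mind the reversed order $(\mathtt{\Sigma}\smalltriangledown\euler{\Phi})\smalltriangledown\euler{\Omega}=\mathtt{\Sigma}\smalltriangledown(\euler{\Phi}\circ\euler{\Omega})$ (the ``$\mathtt{\Psi}$'' printed in that item should read ``$\mathtt{\Sigma}$''). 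For item~(8), observe that corestriction by $\euler{\Phi}$ affects only the outer, $\mathtt{\Sigma}$-leg of the coend $\int^{Y}\mathtt{\Sigma}(-,Y)\kotimes\mathtt{\Psi}(Y,-)$ computing $\mathtt{\Sigma}\diamond\mathtt{\Psi}$ and leaves both the $\mathtt{\Psi}$-leg and the coend variable untouched; hence the two extranatural families presenting the Tambara structures of $(\mathtt{\Sigma}\diamond\mathtt{\Psi})\smalltriangledown\euler{\Phi}$ and $(\mathtt{\Sigma}\smalltriangledown\euler{\Phi})\diamond\mathtt{\Psi}$ coincide by naturality of the tensor product, exactly as for \eqref{prl8} of Proposition~\ref{ResPseudoOneSide}.

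The one point requiring genuine care---and hence the main obstacle---is the variance bookkeeping: in each of the nine items one must track in which slot $\euler{\Phi}$ (respectively $\euler{d}$, $\euler{b}$) acts, which composites are consequently reversed relative to the restriction case, and where an inverse coherence cell $\euler{\phi}^{-1}$ replaces $\euler{\phi}$. Once these conventions are fixed there is no content beyond Proposition~\ref{ResPseudoOneSide}: no coherence axiom of a Tambara module or of a module functor is used that was not already used there, and the contravariance is a purely formal matter.
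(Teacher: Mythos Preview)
Your proposal is correct and matches the paper's approach exactly: the paper proves this proposition by a single sentence asserting that the proof is ``completely analogous to the proof of Proposition~\ref{ResPseudoOneSide}'', and your plan is precisely to carry out that analogy with the appropriate variance bookkeeping. Your remark that the ``$\mathtt{\Psi}$'' in item~(7) should read ``$\mathtt{\Sigma}$'' is also right; note however that your list of items with ``reversed composition orders'' is a bit loose (e.g.\ item~(2) is not reversed, and items~(8),(9) concern which factor is affected rather than a reversal), though this does not affect the validity of the argument.
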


Let $\mathtt{\Psi} \in \csym{C}\!\on{-Tamb}(\mathbf{M,N})$, $\euler{\Phi,\Phi'} \in \csym{C}\!\on{-Mod}(\mathbf{K,M})$ and $\euler{\Lambda,\Lambda'} \in \csym{C}\!\on{-Mod}(\mathbf{L,N})$. Further, let $\euler{d}: \euler{\Phi} \Rightarrow \euler{\Phi'}$ and $\euler{b}: \euler{\Lambda}' \Rightarrow \euler{\Lambda}$.
\begin{lemma}\label{TensorComm}
 We have $(\mathtt{\Psi} \smalltriangleup \euler{\Phi}) \smalltriangledown \euler{\Lambda} = (\mathtt{\Psi} \smalltriangledown \euler{\Lambda}) \smalltriangleup \euler{\Phi}$, and, similarly, $(\mathtt{\Psi} \smalltriangleup \euler{d}) \smalltriangledown \euler{b} = (\mathtt{\Psi} \smalltriangledown \euler{b}) \smalltriangleup \euler{d}$.
\end{lemma}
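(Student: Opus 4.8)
The plan is to push both equalities down to the level of underlying profunctors, where the statement becomes a bare instance of the interchange law, and then to check separately that the Tambara structures agree. Recall that a Tambara module is determined by its underlying profunctor together with its Tambara structure, and that a Tambara morphism is determined by its underlying profunctor morphism (this is exactly how equality of Tambara morphisms was reduced to equality of profunctor morphisms in the proof of Proposition~\ref{ResPseudoOneSide}). So I would first settle the underlying data and then, for the module equality only, the Tambara structures.

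On underlying data, restriction along $\euler{\Phi}$ is precomposition of the functor $\mathtt{\Psi}\colon \mathbf{N}^{\on{op}}\kotimes\mathbf{M}\to\mathbf{Vec}_{\Bbbk}$ with $\mathbb{1}_{\mathbf{N}^{\on{op}}}\kotimes\euler{\Phi}$, while corestriction along $\euler{\Lambda}$ is precomposition with $\euler{\Lambda}^{\on{op}}\kotimes\mathbb{1}_{\mathbf{M}}$; likewise $\mathtt{\Psi}\smalltriangleup\euler{d}$ and $\mathtt{\Psi}\smalltriangledown\euler{b}$ are the whiskerings of $\mathtt{\Psi}$ by $\euler{d}$ in the second tensor factor and by $\euler{b}$ in the first. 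Since these operations act in disjoint tensor factors, they commute by the interchange law in $\mathbf{Cat}_{\Bbbk}$: both $(\mathtt{\Psi}\smalltriangleup\euler{\Phi})\smalltriangledown\euler{\Lambda}$ and $(\mathtt{\Psi}\smalltriangledown\euler{\Lambda})\smalltriangleup\euler{\Phi}$ have underlying profunctor $(V,X)\mapsto\mathtt{\Psi}(\euler{\Lambda}V,\euler{\Phi}X)$, and both $(\mathtt{\Psi}\smalltriangleup\euler{d})\smalltriangledown\euler{b}$ and $(\mathtt{\Psi}\smalltriangledown\euler{b})\smalltriangleup\euler{d}$ have underlying profunctor morphism whose $(V,X)$-component is $\mathtt{\Psi}(\euler{b}_{V},\euler{d}_{X})$, computed in either order. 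This already proves the second (morphism) equality, and reduces the first (module) equality to comparing Tambara structures.

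For the Tambara structures I would unwind Definition~\ref{ResDef} together with Definition~\ref{CoresPseudoOneSide}. Writing $\euler{\phi}$, resp.\ $\euler{\lambda}$, for the module-functor structure isomorphism of $\euler{\Phi}$, resp.\ $\euler{\Lambda}$, the map $\ta^{(\mathtt{\Psi}\smalltriangleup\euler{\Phi})\smalltriangledown\euler{\Lambda}}_{\mathrm{F};V,X}$ comes out as $\ta^{\mathtt{\Psi}}_{\mathrm{F};\euler{\Lambda}V,\euler{\Phi}X}$ followed by the second-variable correction $\mathtt{\Psi}(-,\euler{\phi}_{\mathrm{F},X})$ and then the first-variable correction $\mathtt{\Psi}(\euler{\lambda}_{\mathrm{F},V}^{-1},-)$, whereas $\ta^{(\mathtt{\Psi}\smalltriangledown\euler{\Lambda})\smalltriangleup\euler{\Phi}}_{\mathrm{F};V,X}$ comes out as the \emph{same} map $\ta^{\mathtt{\Psi}}_{\mathrm{F};\euler{\Lambda}V,\euler{\Phi}X}$ followed by $\mathtt{\Psi}(\euler{\lambda}_{\mathrm{F},V}^{-1},-)$ and then $\mathtt{\Psi}(-,\euler{\phi}_{\mathrm{F},X})$. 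The leading factors are literally equal, and each of the two remaining pairs of factors is one of the two ways of forming $\mathtt{\Psi}(\euler{\lambda}_{\mathrm{F},V}^{-1},\euler{\phi}_{\mathrm{F},X})$; they coincide by bifunctoriality of $\mathtt{\Psi}$. Hence the two Tambara structures agree, and the module equality follows.

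The only step that needs any care is the bookkeeping in the last paragraph: the corestriction Tambara structure is merely called ``evident'' in Definition~\ref{CoresPseudoOneSide}, so one first has to commit to writing it out explicitly, with the structure isomorphism of $\euler{\Lambda}$ inserted in the contravariant variable, and then keep track of which slot each of the two correction maps occupies in order to recognise the interchange. There is no real obstacle beyond this: no coherence cells of $\csym{C}$-module functor composition enter, since the argument composes only a single restriction with a single corestriction, so none of the associativity subtleties of iterated restriction (as in Proposition~\ref{ResPseudoOneSide}) are relevant.
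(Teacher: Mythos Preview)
Your proposal is correct and follows essentially the same approach as the paper: the morphism equality is reduced to the underlying profunctor level, and for the module equality the underlying profunctors trivially agree while the Tambara structures coincide because the first-variable and second-variable corrections commute by bifunctoriality of $\mathtt{\Psi}$. Your account is in fact slightly more careful than the paper's, since you explicitly spell out the ``evident'' corestriction Tambara structure from Definition~\ref{CoresPseudoOneSide} before invoking bifunctoriality.
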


\begin{proof}
 Again, the latter equation reduces to familiar properties of functors and natural transformations, while for the former we only need to verify that the Tambara structures coincide. And indeed:
     \[
    \begin{aligned}
     &\ta^{(\mathtt{\Psi} \smalltriangleup \euler{\Phi}) \smalltriangledown \euler{\Lambda}}_{\mathrm{F};V,X} \qquad & \text{ by definition } - \smalltriangleup \euler{\Phi} \text{ and } -\smalltriangledown \euler{\Lambda} \\
     &=\mathtt{\Psi}(\lambda_{\mathrm{F},V},\euler{\Phi} \mathbf{K}\mathrm{F}X) \circ \big(\mathtt{\Psi}(\mathbf{N}\mathrm{F}\euler{\Lambda} V, \phi_{\mathrm{F},X}) \circ \ta^{\mathtt{\Psi}}_{\mathrm{F}; \euler{\Lambda} V, \euler{\Phi} X}\big)&\text{ by functoriality of } \mathtt{\Psi} \\
     &=\mathtt{\Psi}(\euler{\Lambda} \mathbf{L}\mathrm{F}V, \phi_{\mathrm{F},X}) \circ \mathtt{\Psi}(\lambda_{\mathrm{F},V},\mathbf{M}\mathrm{F}\euler{\Phi} X) \circ  \ta^{\mathtt{\Psi}}_{\mathrm{F}; \euler{\Lambda} V, \euler{\Phi} X}&\text{ by definition of } -\smalltriangledown \euler{\Lambda} \text{ and } -\smalltriangleup \euler{\Phi} \\
     &= \ta^{(\mathtt{\Psi} \smalltriangledown \euler{\Lambda}) \smalltriangleup \euler{\Phi} }_{\mathrm{F};V,X}.
    \end{aligned}
   \]
\end{proof}

\begin{corollary}\label{CoresResStructure}
 The assignments
 \[
  \begin{aligned}
  (\mathbf{M},\mathbf{N}) &\rightarrow \csym{C}\!\on{-Tamb}(\mathbf{M},\mathbf{N}) \\
  (\euler{\Lambda, \Phi}) &\mapsto \Big(((-)\smalltriangleup \euler{\Phi}) \smalltriangledown \euler{\Lambda}: \csym{C}\!\on{-Tamb}(\mathbf{M},\mathbf{N}) \rightarrow \csym{C}\!\on{-Tamb}(\mathbf{K},\mathbf{Q})\Big) \\
  \euler{b} \otimes \euler{d} &\mapsto ((-)\smalltriangleup \euler{d}) \smalltriangledown \euler{b}
  \end{aligned}
 \]
 define a $2$-functor $\mathbb{H}: \csym{C}\!\on{-Mod}^{\on{op,co}} \kotimes \csym{C}\!\on{-Mod}^{\on{op}} \rightarrow \mathbf{Cat}_{\Bbbk}$. Thus, restriction and corestriction give two commuting, strict, right actions of $\csym{C}\!\on{-Mod}$ on $\csym{C}\!\on{-Tamb}$.
\end{corollary}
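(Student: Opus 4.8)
The plan is to verify the $2$-functor axioms for $\mathbb{H}$ one by one, reducing each to the one-sided statements already proved in Proposition~\ref{ResPseudoOneSide}, Proposition~\ref{CoResPseudoOneSide}, and Lemma~\ref{TensorComm}, which between them record essentially all the bookkeeping. First I would fix the data: on objects $\mathbb{H}$ sends $(\mathbf{M},\mathbf{N})$ to the category $\csym{C}\!\on{-Tamb}(\mathbf{M},\mathbf{N})$; on $1$-cells it sends $(\euler{\Lambda},\euler{\Phi})$ to the endofunctor $((-)\smalltriangleup \euler{\Phi})\smalltriangledown \euler{\Lambda}$; on $2$-cells $\euler{b}\otimes \euler{d}$ it sends to the natural transformation $((-)\smalltriangleup \euler{d})\smalltriangledown \euler{b}$. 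Because $\csym{C}\!\on{-Mod}^{\on{op,co}}\kotimes \csym{C}\!\on{-Mod}^{\on{op}}$ has its $1$- and $2$-cell compositions built from those of $\csym{C}\!\on{-Mod}$ (with the appropriate reversals), every axiom splits into a ``$\smalltriangleup$-part'' in the second factor and a ``$\smalltriangledown$-part'' in the first factor, and the two parts commute by Lemma~\ref{TensorComm}.

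Concretely, I would proceed as follows. \emph{Functoriality on composable $1$-cells:} given $\euler{\Omega},\euler{\Phi}$ composable in $\csym{C}\!\on{-Mod}$, the image of their composite under $\mathbb{H}$ is $((-)\smalltriangleup(\euler{\Phi}\circ\euler{\Omega}))\smalltriangledown(\euler{\Lambda}\circ\euler{\Xi})$; using Proposition~\ref{ResPseudoOneSide}\eqref{prl7} and its corestriction analogue from Proposition~\ref{CoResPseudoOneSide}, together with Lemma~\ref{TensorComm} to slide the two restrictions past one another, this equals the composite of the images — note the order reversal in the first factor is exactly what $\on{op}$ on $\csym{C}\!\on{-Mod}$ supplies, matching the contravariance of $\mathtt{\Psi}\smalltriangledown -$ observed in Definition~\ref{CoresPseudoOneSide}. \emph{Identities:} $\mathbb{H}(\mathbb{1}_{\mathbf{M}},\mathbb{1}_{\mathbf{M}}) = ((-)\smalltriangleup \mathbb{1}_{\mathbf{M}})\smalltriangledown \mathbb{1}_{\mathbf{M}}$ is the identity endofunctor by parts \eqref{prl6} and \eqref{prl4} of Proposition~\ref{ResPseudoOneSide} and their corestriction counterparts. \emph{Functoriality on $2$-cells:} vertical composition of $2$-cells in the domain bicategory corresponds, in each factor, to vertical composition as handled by \eqref{prl5} (and the $\smalltriangleup$-on-$2$-cells case \eqref{prl2}) plus their corestriction analogues; horizontal composition (interchange) is governed by \eqref{prl3} and the mixed cases, again with Lemma~\ref{TensorComm} separating the two factors. \emph{Whiskering / interchange between $1$- and $2$-cells:} this is the content of \eqref{prl9} and its corestriction version, which say restriction along a functor commutes with horizontal composition of the Tambara $2$-cells. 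Finally, to obtain the last sentence, observe that currying $\mathbb{H}$ in its first argument gives, for each fixed $\euler{\Phi}$, a strict right action of $\csym{C}\!\on{-Mod}$ (in the guise of a $2$-functor $\csym{C}\!\on{-Mod}^{\on{op}}\to \mathbf{Cat}_{\Bbbk}$, i.e. a strict right action), and symmetrically in the second argument; Lemma~\ref{TensorComm} is precisely the statement that these two actions commute.

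The only genuine subtlety — and the step I would be most careful with — is bookkeeping the opposites: one must check that the $2$-cell-reversal $(-)^{\on{co}}$ on the \emph{first} factor (the corestriction slot) is forced by the fact that $\mathtt{\Psi}\smalltriangledown \euler{b}$ runs $\mathtt{\Psi}\smalltriangledown \euler{\Phi}'\Rightarrow \mathtt{\Psi}\smalltriangledown \euler{\Phi}$ when $\euler{b}:\euler{\Phi}\Rightarrow\euler{\Phi}'$ (Definition~\ref{CoresPseudoOneSide}), whereas on the \emph{second} factor no $2$-cell reversal occurs because $\mathtt{\Psi}\smalltriangleup \euler{d}$ is covariant in $\euler{d}$ (Lemma~\ref{ResFunc}); correspondingly the $1$-cell reversal $(-)^{\on{op}}$ is needed in \emph{both} slots, since both $-\smalltriangleup -$ and $-\smalltriangledown -$ reverse the order of composition of functors (parts \eqref{prl7}). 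Once the variances are matched against the decorations $\on{op,co}$ and $\on{op}$ in the source bicategory, every remaining verification is a formal consequence of the already-established one-sided propositions and Lemma~\ref{TensorComm}, so no new computation involving coends or Tambara structure maps is required. The concluding remark about ``two commuting, strict, right actions'' then follows immediately by restricting $\mathbb{H}$ to each factor.
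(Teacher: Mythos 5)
Your proposal is correct and follows essentially the same approach as the paper: both reduce every verification to the one-sided Propositions~\ref{ResPseudoOneSide} and \ref{CoResPseudoOneSide} plus Lemma~\ref{TensorComm}, with identical care about the variance decorations $\on{op}$ and $\on{op,co}$. The only presentational difference is that the paper organizes the argument by first constructing each $\mathbb{H}(-,\mathbf{N})$ as a $2$-functor in one variable, then showing $2$-naturality in the other variable and assembling via Lemma~\ref{TensorComm}, whereas you verify the two-variable $2$-functor axioms directly and split each into a $\smalltriangleup$-part and a $\smalltriangledown$-part; these are the same argument repackaged, drawing on exactly the same equations.
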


\begin{proof}
 Equations~\eqref{prl1} and \eqref{prl2} in Proposition~\ref{ResPseudoOneSide} give a functor $- \smalltriangleup \euler{\Phi} \in \mathbf{Cat}_{\Bbbk}(\csym{C}\!\on{-Tamb}(\mathbf{M},\mathbf{N}),\csym{C}\!\on{-Tamb}(\mathbf{K},\mathbf{N}))$, for any $\euler{\Phi}$. Equation~\eqref{prl3} yields a natural transformation $- \smalltriangleup \euler{d}: -\smalltriangleup \euler{\Phi} \Rightarrow - \smalltriangleup \euler{\Phi}'$, for any $\euler{d}$. Equations \eqref{prl4}, \eqref{prl5} show that the assignment $\euler{d} \mapsto - \smalltriangleup \euler{d}$ is functorial, so that for every $\mathbf{N}$ we obtain a functor
 \[
\mathbb{H}(-,\mathbf{N})_{\mathbf{K},\mathbf{M}}: \csym{C}\!\on{-Mod}(\mathbf{K},\mathbf{M}) \rightarrow \mathbf{Cat}_{\Bbbk}(\csym{C}\!\on{-Tamb}(\mathbf{M},\mathbf{N}), \csym{C}\!\on{-Tamb}(\mathbf{K},\mathbf{N})).
 \]
 Equations~\eqref{prl6} and \eqref{prl7} show that $\mathbb{H}(-,\mathbf{N})$ is a $2$-functor from $\csym{C}\!\on{-Mod}^{\on{op}}$ to $\mathbf{Cat}_{\Bbbk}$. Finally, Equations~\eqref{prl8} and \eqref{prl9} show the $2$-naturality of $\mathbb{H}(-,\mathbf{N})$ in $\mathbf{N}$.
 Similarly, using Proposition~\ref{CoResPseudoOneSide}, we find a family $\setj{\mathbb{H}(\mathbf{M},-) \; | \; \mathbf{M} \in \csym{C}\!\on{-Mod}}$ of $2$-functors from $\csym{C}\!\on{-Mod}^{\on{op,co}}$ which is $2$-natural in $\mathbf{M}$. Due to Lemma~\ref{TensorComm}, these two families together assemble to the above described $2$-functor $\mathbb{H}$.
\end{proof}

\begin{notation}
 For computational purposes it is often useful to denote the Tambara module $\mathtt{\Psi} \smalltriangleup \euler{\Phi}$ by $\mathtt{\Psi}(-,\euler{\Phi} -)$. Similarly, in such settings we will denote the Tambara module $\mathtt{\Psi} \smalltriangledown \euler{\Lambda}$ by $\mathtt{\Psi}(\euler{\Lambda} -,-)$. In case $\mathtt{\Psi} = \Hom{-,-}^{\mathbf{M}}$, we may omit the superscript in $\Hom{-,\euler{\Phi}-}^{\mathbf{M}}$, since it is clear that the domain and codomain of this Tambara module coincide with those of $\euler{\Phi}$.
\end{notation}

\subsection{\texorpdfstring{$\csym{C}\!\on{-Tamb}$}{C-Tamb} as a proarrow equipment}

The following is a central observation in the study of profunctors:
\begin{theorem}[{\cite[Chapter 7]{Bo}}]
 There is a locally full and faithful pseudofunctor $\mathbb{E}: \mathbf{Cat} \rightarrow \mathbf{Prof}$ which is the identity on objects, and sends a functor $\euler{F}: \mathcal{A} \rightarrow \mathcal{B}$ to the profunctor $\on{Hom}_{\mathcal{B}}(-,\euler{F}-)$; the definition extends to natural transformations in the evident way.
 Further, the profunctor $\on{Hom}_{\mathcal{B}}(-,\euler{F}-)$ admits a right adjoint, given by $\on{Hom}_{\mathcal{B}}(\euler{F}-,-)$.
\end{theorem}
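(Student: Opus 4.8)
The plan is to build $\mathbb{E}$ by hand and then reduce the two substantive claims --- local full faithfulness and the existence of the right adjoint --- to the (co)Yoneda lemma. On objects $\mathbb{E}$ is the identity. On a functor $\euler{F}:\mathcal{A}\to\mathcal{B}$ put $\mathbb{E}(\euler{F}):=\on{Hom}_{\mathcal{B}}(-,\euler{F}-)$, which is a functor $\mathcal{B}^{\on{op}}\kotimes\mathcal{A}\to\mathbf{Vec}_{\Bbbk}$, hence a profunctor $\mathcal{A}\xslashedrightarrow{}\mathcal{B}$. On a natural transformation $\alpha:\euler{F}\Rightarrow\euler{G}$ let $\mathbb{E}(\alpha)$ be postcomposition, $f\mapsto\alpha_{A}\circ f$; naturality in the source variable is automatic and naturality in the target variable is precisely the naturality square of $\alpha$, so $\mathbb{E}(\alpha)$ is a morphism of profunctors. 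The compositor $\mathbb{E}(\euler{G})\diamond\mathbb{E}(\euler{F})\xiso\mathbb{E}(\euler{G}\euler{F})$ comes from the coend computation $\int^{B}\on{Hom}_{\mathcal{C}}(C,\euler{G}B)\kotimes\on{Hom}_{\mathcal{B}}(B,\euler{F}A)\cong\on{Hom}_{\mathcal{C}}(C,\euler{G}\euler{F}A)$, an instance of the co-Yoneda lemma, while the unitor is the equality $\mathbb{E}(\on{id}_{\mathcal{A}})=\on{Hom}_{\mathcal{A}}(-,-)$; checking the pseudofunctor coherence axioms then reduces to the Fubini rule for coends and the Yoneda isomorphisms of Lemma~\ref{ProYoneda}.

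For local full faithfulness, a morphism $\mathbb{E}(\euler{F})\Rightarrow\mathbb{E}(\euler{G})$ is a natural transformation $\on{Hom}_{\mathcal{B}}(-,\euler{F}-)\Rightarrow\on{Hom}_{\mathcal{B}}(-,\euler{G}-)$ of functors on $\mathcal{B}^{\on{op}}\kotimes\mathcal{A}$. Fixing $A$ and applying the ordinary Yoneda lemma in $\mathcal{B}$, its $A$-component is uniquely determined by its value on $\on{id}_{\euler{F}A}$, a morphism $\euler{F}A\to\euler{G}A$; naturality in $A$ of the profunctor morphism then translates exactly into naturality in $A$ of the resulting family. This yields a bijection, inverse to $\mathbb{E}$ on hom-categories, between $\mathbf{Cat}(\mathcal{A},\mathcal{B})(\euler{F},\euler{G})$ and $\mathbf{Prof}(\mathcal{A},\mathcal{B})(\mathbb{E}\euler{F},\mathbb{E}\euler{G})$, so $\mathbb{E}$ is locally full and faithful.

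For the right adjoint, write $\euler{F}_{*}:=\mathbb{E}(\euler{F})=\on{Hom}_{\mathcal{B}}(-,\euler{F}-)$ and $\euler{F}^{*}:=\on{Hom}_{\mathcal{B}}(\euler{F}-,-):\mathcal{B}\xslashedrightarrow{}\mathcal{A}$. The co-Yoneda lemma identifies $\euler{F}^{*}\diamond\euler{F}_{*}$ with $\on{Hom}_{\mathcal{B}}(\euler{F}-,\euler{F}-)$, and I take the unit $\eta:\mathbb{1}_{\mathcal{A}}\Rightarrow\euler{F}^{*}\diamond\euler{F}_{*}$ to be the action of $\euler{F}$ on morphisms, $\on{Hom}_{\mathcal{A}}(-,-)\to\on{Hom}_{\mathcal{B}}(\euler{F}-,\euler{F}-)$. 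Since $(\euler{F}_{*}\diamond\euler{F}^{*})(B',B)=\int^{A}\on{Hom}_{\mathcal{B}}(B',\euler{F}A)\kotimes\on{Hom}_{\mathcal{B}}(\euler{F}A,B)$, I take the counit $\varepsilon:\euler{F}_{*}\diamond\euler{F}^{*}\Rightarrow\mathbb{1}_{\mathcal{B}}$ to be the extranatural collection induced by composition in $\mathcal{B}$, $(g,h)\mapsto h\circ g$. Both triangle identities then unwind, using the Yoneda isomorphisms of Lemma~\ref{ProYoneda} and the associator of $\mathbf{Prof}$, to the unit laws $h\circ\on{id}=h=\on{id}\circ h$ for composition in $\mathcal{B}$. (Equivalently, one can avoid triangle identities altogether by checking directly that $\mathbf{Prof}(\mathcal{X},\mathcal{B})(\euler{F}_{*}\diamond\mathtt{\Phi},\mathtt{\Psi})$ and $\mathbf{Prof}(\mathcal{X},\mathcal{A})(\mathtt{\Phi},\euler{F}^{*}\diamond\mathtt{\Psi})$ are both naturally identified, via Yoneda, with extranatural families $\mathtt{\Phi}(A,X)\to\mathtt{\Psi}(\euler{F}A,X)$.)

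The main obstacle is purely bookkeeping: every displayed isomorphism above is a coend identification, and both the pseudofunctor coherence axioms and the triangle identities require carrying these through the associators and unitors of $\mathbf{Prof}$ (described via the coend projections as in Definition~\ref{CompositeTambara} and the maps of Lemma~\ref{ProYoneda}). Fixing once and for all a concrete representative for each coend class --- in the style of Lemma~\ref{ProYoneda}, representing a class by a pure tensor $g\otimes h$ in the defining coproduct --- turns every verification into an instance of the unit laws of $\mathcal{B}$ and of functoriality of $\euler{F}$, so no idea beyond the (co)Yoneda lemma is needed.
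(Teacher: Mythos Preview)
Your proof is correct and follows the standard approach. The paper does not actually prove this theorem itself---it is stated as a citation from Borceux---so there is no in-paper proof to compare against. That said, the explicit description of the unit and counit you give (action of $\euler{F}$ on morphisms composed with the Yoneda isomorphism, and composition in $\mathcal{B}$, respectively) matches exactly the description the paper records at the beginning of Section~\ref{Adjunctions} when it later uses this adjunction.
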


Pseudofunctors admitting similar properties have been studied abstractly, initially in \cite{Wo}, under the name {\it proarrow equipments}. Our terminology below follows \cite{GS}:
\begin{definition}\label{ProArrowDef}
 A {\it proarrow equipment} is a locally full and faithful pseudofunctor $\mathbb{F}: \csym{A} \rightarrow \csym{B}$, which is the identity on objects. If for every $1$-morphism $\mathrm{A}$ of $\csym{A}$, the $1$-morphism $\mathbb{F}(\mathrm{A})$ admits a right adjoint in $\csym{B}$, we say that $\mathbb{F}$ is a {\it map equipment.} Given proarrow equipments $\mathbb{F}: \csym{A} \rightarrow \csym{B}$ and $\mathbb{F'}: \csym{A}\, ' \rightarrow \csym{B}'$, a (strict) morphism of proarrow equipments from $\mathbb{F}$ to $\mathbb{F}'$ is a pair of pseudofunctors $\mathbb{A}: \csym{A} \rightarrow \csym{A}\, '$ and $\mathbb{B}: \csym{B} \rightarrow \csym{B}'$ such that the following diagram commutes:
 \[
  \begin{tikzcd}
   {\csym{A}} \arrow[r, "\mathbb{F}"] \arrow[d, "\mathbb{A}"] & {\csym{B}} \arrow[d, "\mathbb{B}"] \\
   {\csym{A}\, '} \arrow[r, "\mathbb{F}'"] & {\csym{B}'}
  \end{tikzcd}.
 \]
\end{definition}

\begin{definition}\label{PsPDef}
 We define the pseudofunctor $\mathbb{P}: \csym{C}\!\on{-Mod} \rightarrow \csym{C}\!\on{-Tamb}$ as follows:
 \begin{itemize}
  \item It is the identity on objects;
  \item Given $\euler{\Phi} \in \csym{C}\!\on{-Mod}(\mathbf{M},\mathbf{N})$, we let $\mathbb{P}(\euler{\Phi}) := \Hom{-,\euler{\Phi}-}$.
  \item Given $\euler{d}: \euler{\Phi} \Rightarrow \euler{\Phi}'$, we let $\mathbb{P}(\euler{d}) = \Hom{-,\euler{d}-}$.
  \item The coherence maps are given by
  \begin{equation}\label{CoherenceProarrow}
   \mathbb{P}(\euler{\Phi}) \diamond \mathbb{P}(\euler{\Omega}) = \Hom{-,\euler{\Phi} -} \diamond (\Hom{-,-}_{\mathbf{M}} \smalltriangleup \euler{\Omega}) = (\Hom{-,\euler{\Phi} -} \diamond \Hom{-,-}_{\mathbf{M}}) \smalltriangleup \euler{\Omega} \xiso \Hom{-,\euler{\Phi}-}\smalltriangleup \euler{\Omega} = \mathbb{P}(\euler{\Phi} \circ \euler{\Omega}),
  \end{equation}
 \item Equation~\eqref{prl6} in Proposition~\ref{ResPseudoOneSide} shows that $\mathbb{P}(\mathbb{1}_{\mathbf{M}}) = \mathbb{1}_{\mathbb{P}(\mathbf{M})}$, so $\mathbb{P}$ is strictly unital.
 \end{itemize}
\end{definition}
Functoriality of $\mathbb{P}$ on $2$-morphisms follows from Equations~\eqref{prl4} and \eqref{prl5} in Proposition~\ref{ResPseudoOneSide}. All equalities in~\eqref{CoherenceProarrow} follow from Proposition~\ref{ResPseudoOneSide}, and the isomorphism $(\Hom{-,\euler{\Phi} -} \diamond \Hom{-,-}_{\mathbf{M}}) \xiso \Hom{-,\euler{\Phi}-}$ is the unitor associated to $\Hom{-,-}_{\mathbf{M}}$ as the identity Tambara module for $\mathbf{M}$. These are also the coherence maps for the pseudofunctor $\mathbb{E}$, so the coherence conditions follow from pseudofunctoriality of $\mathbb{E}$.

In particular, the diagram
\begin{equation}\label{ProArrowMorphism}
\begin{tikzcd}[row sep = scriptsize]
	{\csym{C}\!\on{-Mod}} & {\csym{C}\!\on{-Tamb}} \\
	{\mathbf{Cat}_{\Bbbk}} & {\mathbf{Prof}_{\Bbbk}}
	\arrow[from=1-2, to=2-2]
	\arrow[from=1-1, to=2-1]
	\arrow["{\mathbb{P}}", from=1-1, to=1-2]
	\arrow["{\mathbb{E}}", from=2-1, to=2-2]
\end{tikzcd}
\end{equation}
commutes strictly, where the vertical arrows are the forgetful pseudofunctors.
\begin{proposition}\label{PisFF}
 $\mathbb{P}$ is locally full and faithful.
\end{proposition}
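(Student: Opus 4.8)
The plan is to push everything down to the underlying-profunctor level, where local full-and-faithfulness of $\mathbb{E}$ is already available from \cite[Chapter 7]{Bo}, and then to verify that the only extra constraint the Tambara data imposes is precisely the $\csym{C}$-module transformation axiom. Faithfulness is immediate: a $\csym{C}$-module transformation is determined by its underlying natural transformation, and a Tambara morphism is determined by its underlying profunctor morphism, so by commutativity of \eqref{ProArrowMorphism} the functor $\mathbb{P}_{\mathbf{M},\mathbf{N}}$ becomes $\mathbb{E}_{\mathbf{M},\mathbf{N}}$ after composing with the (faithful) forgetful functors, and the latter is faithful.

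For fullness, I would fix $\euler{\Phi},\euler{\Phi}' \in \csym{C}\!\on{-Mod}(\mathbf{M},\mathbf{N})$ and a Tambara morphism $\mathtt{t}: \mathbb{P}(\euler{\Phi}) \Rightarrow \mathbb{P}(\euler{\Phi}')$, i.e. a family $\setj{\mathtt{t}_{Z,X}: \mathbf{N}(Z,\euler{\Phi}X) \rightarrow \mathbf{N}(Z,\euler{\Phi}'X)}$ natural in $Z$ and $X$ and satisfying the Tambara axiom. Since the underlying profunctors of $\mathbb{P}(\euler{\Phi}),\mathbb{P}(\euler{\Phi}')$ are $\mathbb{E}$ applied to the underlying functors of $\euler{\Phi},\euler{\Phi}'$, local fullness of $\mathbb{E}$ yields a unique natural transformation $\euler{d}: \euler{\Phi} \Rightarrow \euler{\Phi}'$ of underlying functors whose image under $\mathbb{E}$ is the underlying profunctor morphism of $\mathtt{t}$; concretely $\euler{d}_X = \mathtt{t}_{\euler{\Phi}X,X}(\on{id}_{\euler{\Phi}X})$, and naturality of $\mathtt{t}$ in the first variable gives $\mathtt{t}_{Z,X}(f) = \euler{d}_X \circ f$ for all $f: Z \rightarrow \euler{\Phi}X$. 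Once $\euler{d}$ is known to be a $\csym{C}$-module transformation, $\mathbb{P}(\euler{d})$ has the same underlying profunctor morphism as $\mathtt{t}$, hence $\mathbb{P}(\euler{d}) = \mathtt{t}$, establishing fullness.

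The main point is thus to extract the $\csym{C}$-module transformation axiom for $\euler{d}$ from the Tambara axiom for $\mathtt{t}$. By Definition~\ref{ResDef}, the Tambara structure of $\mathbb{P}(\euler{\Phi})$ sends $f: Z \rightarrow \euler{\Phi}X$ to $\ta^{\mathbb{P}(\euler{\Phi})}_{\mathrm{F};Z,X}(f) = \euler{\phi}_{\mathrm{F},X} \circ \mathbf{N}\mathrm{F}(f)$ (as $\ta^{\mathbf{N}(-,-)}_{\mathrm{F}}$ is just $\mathbf{N}\mathrm{F}$ on morphisms), and likewise for $\euler{\Phi}'$. Instantiating the Tambara axiom square for $\mathtt{t}$ at $Z = \euler{\Phi}X$ and evaluating it on $\on{id}_{\euler{\Phi}X}$: the ``first $\ta$, then $\mathtt{t}$'' leg gives $\mathtt{t}_{\mathbf{N}\mathrm{F}\euler{\Phi}X,\,\mathbf{M}\mathrm{F}X}(\euler{\phi}_{\mathrm{F},X}) = \euler{d}_{\mathbf{M}\mathrm{F}X} \circ \euler{\phi}_{\mathrm{F},X}$ (using the formula $\mathtt{t}_{-,-}(g) = \euler{d}_{-}\circ g$), while the ``first $\mathtt{t}$, then $\ta$'' leg gives $\euler{\phi}'_{\mathrm{F},X} \circ \mathbf{N}\mathrm{F}(\euler{d}_X)$. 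Their equality, $\euler{d}_{\mathbf{M}\mathrm{F}X}\circ\euler{\phi}_{\mathrm{F},X} = \euler{\phi}'_{\mathrm{F},X}\circ\mathbf{N}\mathrm{F}(\euler{d}_X)$, is componentwise in $X$ precisely the square defining a $\csym{C}$-module transformation $\euler{\Phi}\Rightarrow\euler{\Phi}'$, and we are done. I do not expect a serious obstacle here; the only point calling for care is keeping track of the module-functor coherence cells $\euler{\phi}_{\mathrm{F}},\euler{\phi}'_{\mathrm{F}}$ while unwinding the restricted Tambara structures, and since these are exactly the isomorphisms occurring in the module-transformation axiom the identification is forced.
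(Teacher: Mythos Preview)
Your proposal is correct and follows essentially the same approach as the paper: both reduce to the underlying profunctor level via the commuting square \eqref{ProArrowMorphism} and the known full-and-faithfulness of $\mathbb{E}$, then extract the $\csym{C}$-module transformation axiom from the Tambara axiom by evaluating at $Z=\euler{\Phi}X$ on $\on{id}_{\euler{\Phi}X}$. The paper packages this last step by specializing Diagram~\eqref{FFGeneral} of Lemma~\ref{ResFunc} to $\mathtt{\Psi}=\Hom{-,-}^{\mathbf{N}}$, but the computation is identical to yours.
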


\begin{proof}
 Since $\mathbb{E}$ is locally full and faithful, and both the forgetful pseudofunctors in Diagram~\eqref{ProArrowMorphism} are locally faithful (two modifications are equal if and only if their underlying transformations are equal, and similarly for morphisms of Tambara modules), it suffices to show that for a natural transformation $\euler{d}$ between two module functors, the profunctor morphism $\Hom{-,\euler{d}-}$ is a morphism of Tambara modules if and only if $\euler{d}$ is a $\csym{C}$-module transformation. Specializing Diagram~\eqref{FFGeneral} of Lemma~\ref{ResFunc} to the case $\mathtt{\Psi} = \Hom{-,-}^{\mathbf{N}}$, we find the diagram
  \begin{equation}\label{ModificationsTambara}
 \begin{tikzcd}[column sep = huge, row sep = scriptsize]
	{\Hom{Z , \euler{\Phi} X }} & {\Hom{\mathbf{N}\mathrm{F}Z , \mathbf{N}\mathrm{F}\euler{\Phi} X}} & {\Hom{\mathbf{N}\mathrm{F}Z , \euler{\Phi} \mathbf{M}\mathrm{F}X}} \\
	{\Hom{Z , \euler{\Phi}' X}} & {\Hom{\mathbf{N}\mathrm{F}Z , \mathbf{N}\mathrm{F}\euler{\Phi}' X}} & {\Hom{\mathbf{N}\mathrm{F}Z , \euler{\Phi}' \mathbf{M}\mathrm{F}X}}
	\arrow["{\Hom{Z,\euler{d}_{X}}}"', from=1-1, to=2-1]
	\arrow["{(\mathbf{N}\mathrm{F})_{Z,\euler{\Phi} X}}", from=1-1, to=1-2]
	\arrow["{\Hom{\mathbf{N}\mathrm{F}Z, \phi_{\mathrm{F},X}}}", from=1-2, to=1-3]
	\arrow["{(\mathbf{N}\mathrm{F})_{Z,\euler{\Phi}'X}}", from=2-1, to=2-2]
	\arrow["{\Hom{\mathbf{N}\mathrm{F}Z, \phi'_{\mathrm{F},X}}}", from=2-2, to=2-3]
	\arrow["{\Hom{\mathbf{N}\mathrm{F}Z,\euler{d}_{\mathbf{M}\mathrm{F}X}}}", from=1-3, to=2-3]
	\arrow["{\Hom{\mathbf{N}\mathrm{F}Z,\mathbf{N}\mathrm{F}\euler{d}_{X}}}"', from=1-2, to=2-2]
\end{tikzcd}
 \end{equation}
 The left square of this diagram commutes by functoriality of $\mathbf{N}\mathrm{F}$. As observed in Lemma~\ref{ResFunc}, if $\euler{d}$ is a $\csym{C}$-module transformation, then the right square also commutes, and thus so does the outer square, which shows that $\Hom{-,\euler{d}-}$ is a morphism of Tambara modules. Conversely, if $\Hom{-,\euler{d}-}$ is a morphism of Tambara modules, then the outer diagram commutes; letting $Z = \euler{\Phi} X$ and chasing $\on{id}_{\euler{\Phi} X}$ shows that $\euler{d}$ is a $\csym{C}$-module transformation.
\end{proof}

Considering corestrictions rather than restrictions, we obtain an analogue of Definition~\ref{PsPDef} and Proposition~\ref{PisFF}:
\begin{proposition}\label{ContravariantProArrow}
 There is a proarrow equipment $\mathbb{B}: \csym{C}\!\on{-Mod}^{\on{op,co}} \rightarrow \csym{C}\!\on{-Tamb}$, which is given by $\euler{\Lambda} \mapsto \Hom{\euler{\Lambda}-,-}$ on $1$-morphisms and by $\euler{b} \mapsto \Hom{\euler{b}-,-}$ on $2$-morphisms.
\end{proposition}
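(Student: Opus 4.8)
The plan is to transcribe the construction of $\mathbb{P}$ in Definition~\ref{PsPDef} and the argument of Proposition~\ref{PisFF}, systematically replacing every appeal to restriction (and to Proposition~\ref{ResPseudoOneSide}) by the corresponding appeal to corestriction (and to Proposition~\ref{CoResPseudoOneSide}). Concretely, I would set $\mathbb{B}$ to be the identity on objects; on a $\csym{C}$-module functor $\euler{\Lambda}$ I put $\mathbb{B}(\euler{\Lambda}) = \Hom{\euler{\Lambda}-,-}$, i.e. the corestriction of the identity Tambara module along $\euler{\Lambda}$, and on a $\csym{C}$-module transformation $\euler{b}$ I put $\mathbb{B}(\euler{b}) = \Hom{\euler{b}-,-}$, the corestriction of the identity Tambara morphism along $\euler{b}$. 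As recorded after Definition~\ref{CoresPseudoOneSide}, corestriction is contravariant both in module functors (equation (7) of Proposition~\ref{CoResPseudoOneSide}) and in module transformations (equation (5) of the same proposition); precomposing its domain with $(-)^{\on{op,co}}$ turns both assignments covariant and source/target-preserving. Functoriality of $\mathbb{B}$ on $2$-cells is then exactly equations (4) and (5) of Proposition~\ref{CoResPseudoOneSide}, and strict unitality on $1$-cells is equation (6).

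For the compositor of $\mathbb{B}$ I would argue as in Definition~\ref{PsPDef}: equation (8) of Proposition~\ref{CoResPseudoOneSide} pulls the outer corestriction out of a composite of Tambara modules, the unitor of $\csym{C}\!\on{-Tamb}$ associated to the identity Tambara module (Lemma~\ref{ProYoneda}, Definition~\ref{CompositeTambara}) absorbs the remaining copy of $\Hom{-,-}$, and equation (7) fuses the two successive corestrictions into a single one; the net effect is a natural isomorphism $\mathbb{B}(\euler{\Lambda}) \diamond \mathbb{B}(\euler{\Omega}) \xiso \mathbb{B}(\euler{\Lambda}\circ\euler{\Omega})$, where $\circ$ denotes composition in $\csym{C}\!\on{-Mod}^{\on{op,co}}$. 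Under the forgetful pseudofunctor $\csym{C}\!\on{-Tamb} \rightarrow \mathbf{Prof}$ these are precisely the coherence cells of the classical pseudofunctor $\mathbf{Cat}^{\on{op,co}} \rightarrow \mathbf{Prof}$ sending a functor $\euler{F}$ to the right adjoint $\Hom{\euler{F}-,-}$ of $\mathbb{E}(\euler{F})$, so the pentagon and triangle identities follow from pseudofunctoriality there together with local faithfulness of the forgetful pseudofunctor, just as for $\mathbb{P}$. Combined with the previous paragraph, this exhibits $\mathbb{B}$ as a pseudofunctor that is the identity on objects.

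It remains to check that $\mathbb{B}$ is locally full and faithful. As in Proposition~\ref{PisFF}, I would use the commuting square of $\mathbb{B}$ with the locally faithful forgetful pseudofunctors to $\mathbf{Cat}_{\Bbbk}$ and $\mathbf{Prof}$ (the analogue of Diagram~\eqref{ProArrowMorphism}) together with local full-and-faithfulness of $\euler{F}\mapsto\Hom{\euler{F}-,-}$ on $\mathbf{Cat}$; it then suffices to show that, for a natural transformation $\euler{b}\colon \euler{\Lambda} \Rightarrow \euler{\Lambda}'$ of $\csym{C}$-module functors, the profunctor morphism $\Hom{\euler{b}-,-}$ is a morphism of Tambara modules if and only if $\euler{b}$ is a $\csym{C}$-module transformation. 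The ``if'' direction is the corestriction counterpart of Lemma~\ref{ResFunc}, already subsumed in Definition~\ref{CoresPseudoOneSide}. For ``only if'', I would specialize the relevant compatibility square — the analogue of Diagram~\eqref{ModificationsTambara} obtained by taking the ambient Tambara module to be the identity one, whose left square commutes by functoriality of $\mathbf{M}\mathrm{F}$ — to the object $\euler{\Lambda}X$ in the free variable and chase $\on{id}_{\euler{\Lambda}X}$; this returns exactly the defining commuting square of a $\csym{C}$-module transformation for $\euler{b}$.

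The one genuine point of care throughout is the variance bookkeeping imposed by $(-)^{\on{op,co}}$: one must verify that the contravariance of corestriction in $1$-cells is cancelled by ``$\on{op}$'' and its contravariance in $2$-cells is cancelled by ``$\on{co}$'', so that $\mathbb{B}$ ends up covariant and source/target-preserving at both levels and the compositor has the correct domain and codomain. Apart from this, every step is a mechanical transcription of what was already done for $\mathbb{P}$, with no new computation required.
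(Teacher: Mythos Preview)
Your proposal is correct and matches the paper's approach exactly: the paper does not give an explicit proof of this proposition, introducing it instead with ``Considering corestrictions rather than restrictions, we obtain an analogue of Definition~\ref{PsPDef} and Proposition~\ref{PisFF}'', and your write-up is a faithful unpacking of precisely that analogy, including the variance bookkeeping for $(-)^{\on{op,co}}$ and the Yoneda-style chase for local fullness.
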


\begin{remark}
 The isomorphisms
 \[
  (\mathbb{B}\kotimes \mathbb{P})(\euler{\Lambda,\Phi})(\mathtt{\Psi}) = (\Hom{\euler{\Lambda}-,-} \diamond \mathtt{\Psi}) \diamond \Hom{-,\euler{\Phi}-} \xiso \mathtt{\Psi}(\euler{\Lambda}-,\euler{\Phi}-) = \mathbb{H}(\euler{\Lambda},\euler{\Phi})(\mathtt{\Psi})
 \]
 provide a pseudonatural equivalence
\[\begin{tikzcd}[sep = small]
	{\csym{C}\!\on{-Mod}^{\on{op,co}} \kotimes \csym{C}\!\on{-Mod}^{\on{op}}} \\
	\\
	{\csym{C}\!\on{-Tamb} \kotimes \csym{C}\!\on{-Tamb}^{\on{op}}} && {\mathbf{Cat}_{\Bbbk}}
	\arrow["{\mathbb{B} \kotimes \mathbb{P}}", from=1-1, to=3-1, swap]
	\arrow[""{name=0, anchor=center, inner sep=0}, "{\mathbb{H}}"', from=1-1, to=3-3, swap]
	\arrow["{\ccf{C}\!\on{-Tamb}(-,-)}"', from=3-1, to=3-3]
	\arrow["\simeq", shift right=1, shorten >=11pt, Rightarrow, from=3-1, to=0]
\end{tikzcd}\]
\end{remark}

\subsection{\texorpdfstring{$\csym{C}\!\on{-Tamb}$}{C-Tamb} as a map equipment}\label{Adjunctions}

Following \cite[Proposition~7.9.1]{Bo}, for any $\euler{\Phi} \in \mathbf{Prof}(\mathbf{M,N})$, the profunctor morphism $\Hom{-,\euler{\Phi}} \diamond \Hom{\euler{\Phi}-,-} \xslashedrightarrow{} \Hom{-,-}^{\mathbf{N}}$ given by the extranatural collection
\[
 \setj{\mathtt{\varepsilon}_{Y;Z,Z'}: \Hom{Z,\euler{\Phi} Y} \kotimes \Hom{\euler{\Phi} Y, Z'} \rightarrow \Hom{Z,Z'} \; | \; Z }, \text{ where } \mathtt{\varepsilon}_{Y;Z,Z'}(f \otimes g) = g \circ f
\]
is the counit of an adjunction $(\Hom{-,\euler{\Phi} -}, \Hom{\euler{\Phi} -,-}, \mathtt{\eta}, \mathtt{\varepsilon})$ in $\mathbf{Prof}_{\Bbbk}$.
Its unit $\mathtt{\eta}$ is given by the composite map
\[
 \mathtt{\eta}_{X,X'}: \Hom{X,X'} \xrightarrow{\euler{\Phi}_{X,X'}} \Hom{\euler{\Phi} X, \euler{\Phi} X'} \xiso \int^{Y} \Hom{\euler{\Phi} X, Y} \kotimes \Hom{Y, \euler{\Phi} X},
\]
where the isomorphism is that coming from Yoneda lemma.

\begin{proposition}
 The adjunction just described, $(\Hom{-,\euler{\Phi} -}, \Hom{\euler{\Phi} -,-}, \mathtt{\eta, \varepsilon})$ in $\mathbf{Prof}_{\Bbbk}$, gives an adjunction in $\csym{C}\!\on{-Tamb}$. In particular, the proarrow equipment $\mathbb{P}$ is a map equipment.
\end{proposition}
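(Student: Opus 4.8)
The plan is to check that the two profunctor morphisms $\mathtt{\eta}$ and $\mathtt{\varepsilon}$ are in fact morphisms of Tambara modules, and then to observe that the triangle identities transport from $\mathbf{Prof}_{\Bbbk}$ to $\csym{C}\!\on{-Tamb}$ along the forgetful pseudofunctor of Diagram~\eqref{ProArrowMorphism}, which is the identity on objects, strictly compatible with horizontal composition, unitors and associators (Definition~\ref{CompositeTambara} and Lemma~\ref{ProYoneda}), and locally faithful. The relevant objects already carry Tambara structures: $\Hom{-,\euler{\Phi}-} = \mathbb{P}(\euler{\Phi})$ and $\Hom{\euler{\Phi}-,-} = \mathbb{B}(\euler{\Phi})$ by Definition~\ref{PsPDef} and Proposition~\ref{ContravariantProArrow}, the identity profunctors are the unit Tambara modules, and the two composites $\Hom{-,\euler{\Phi}-}\diamond\Hom{\euler{\Phi}-,-}$ and $\Hom{\euler{\Phi}-,-}\diamond\Hom{-,\euler{\Phi}-}$ inherit Tambara structures via Definition~\ref{CompositeTambara}. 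Unwinding Definitions~\ref{ResDef}, \ref{CoresPseudoOneSide} and \ref{CompositeTambara}, the Tambara structure of $\Hom{-,\euler{\Phi}-}$ sends $f\colon Z\to\euler{\Phi}X$ to $\euler{\phi}_{\mathrm{F},X}\circ\mathbf{N}\mathrm{F}f$, that of $\Hom{\euler{\Phi}-,-}$ sends $g\colon\euler{\Phi}Y\to Z$ to $\mathbf{N}\mathrm{F}g\circ\euler{\phi}_{\mathrm{F},Y}^{-1}$, and the structures of the composites are the maps induced on coends by tensoring these.

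For the counit I would verify the Tambara-morphism axiom for $\mathtt{\varepsilon}$ on representatives $f\otimes g$ of classes in the coend $\int^{W\in\mathbf{M}}\Hom{-,\euler{\Phi}W}\kotimes\Hom{\euler{\Phi}W,-}$: the Tambara structure of $\Hom{-,\euler{\Phi}-}\diamond\Hom{\euler{\Phi}-,-}$ sends the class of $f\otimes g$ to the class of $(\euler{\phi}_{\mathrm{F},Y}\circ\mathbf{N}\mathrm{F}f)\otimes(\mathbf{N}\mathrm{F}g\circ\euler{\phi}_{\mathrm{F},Y}^{-1})$, and postcomposing with $\mathtt{\varepsilon}$ gives $\mathbf{N}\mathrm{F}g\circ\euler{\phi}_{\mathrm{F},Y}^{-1}\circ\euler{\phi}_{\mathrm{F},Y}\circ\mathbf{N}\mathrm{F}f=\mathbf{N}\mathrm{F}(g\circ f)=(\mathbf{N}\mathrm{F})(\mathtt{\varepsilon}(f\otimes g))$, so the axiom square commutes once the $\euler{\phi}_{\mathrm{F},Y}$ and $\euler{\phi}_{\mathrm{F},Y}^{-1}$ cancel. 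For the unit I would factor $\mathtt{\eta}$ as $\euler{\Phi}_{-,-}\colon\Hom{-,-}^{\mathbf{M}}\Rightarrow\Hom{\euler{\Phi}-,\euler{\Phi}-}$ followed by the Yoneda isomorphism $\Hom{\euler{\Phi}-,\euler{\Phi}-}\xiso\Hom{\euler{\Phi}-,-}\diamond\Hom{-,\euler{\Phi}-}$ of Lemma~\ref{ProYoneda}; the latter is a (co)restriction of a unitor of $\csym{C}\!\on{-Tamb}$, hence a Tambara morphism, while the former is a Tambara morphism precisely because, by Corollary~\ref{CoresResStructure}, the Tambara structure of $\Hom{\euler{\Phi}-,\euler{\Phi}-}$ carries $h\colon\euler{\Phi}X\to\euler{\Phi}X'$ to $\euler{\phi}_{\mathrm{F},X'}\circ\mathbf{N}\mathrm{F}h\circ\euler{\phi}_{\mathrm{F},X}^{-1}$, so the Tambara axiom for $\euler{\Phi}_{-,-}$ at $\alpha\colon X\to X'$ reads $\euler{\Phi}(\mathbf{M}\mathrm{F}\alpha)=\euler{\phi}_{\mathrm{F},X'}\circ\mathbf{N}\mathrm{F}(\euler{\Phi}\alpha)\circ\euler{\phi}_{\mathrm{F},X}^{-1}$, which is exactly the naturality square of $\euler{\phi}_{\mathrm{F}}\colon\mathbf{N}\mathrm{F}\euler{\Phi}\Rightarrow\euler{\Phi}\mathbf{M}\mathrm{F}$.

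With $\mathtt{\eta}$ and $\mathtt{\varepsilon}$ promoted to Tambara morphisms, the triangle identities --- equalities among $2$-cells built from horizontal composites and unitors, hence determined by their underlying profunctor data --- hold in $\csym{C}\!\on{-Tamb}$ because they hold in $\mathbf{Prof}_{\Bbbk}$ and a Tambara morphism is determined by its underlying profunctor morphism. Thus $(\Hom{-,\euler{\Phi}-},\Hom{\euler{\Phi}-,-},\mathtt{\eta},\mathtt{\varepsilon})$ is an adjunction in $\csym{C}\!\on{-Tamb}$, so every $1$-morphism $\mathbb{P}(\euler{\Phi})$ admits a right adjoint there; together with Proposition~\ref{PisFF} and Definition~\ref{ProArrowDef} this says that $\mathbb{P}$ is a map equipment. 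The only genuinely fiddly point is the coend bookkeeping: one must present the Tambara structures of the composite profunctors on representatives and use di-naturality to slide the $\euler{\phi}$-isomorphisms between the two tensor factors, after which both axioms collapse to the cancellation above and to naturality of $\euler{\phi}$ respectively.
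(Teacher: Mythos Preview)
Your proposal is correct and follows essentially the same approach as the paper: both verify that $\mathtt{\varepsilon}$ is a Tambara morphism via the cancellation $\euler{\phi}_{\mathrm{F},Y}^{-1}\circ\euler{\phi}_{\mathrm{F},Y}=\on{id}$ inside $\mathbf{N}\mathrm{F}(g\circ f)$, and that $\mathtt{\eta}$ is a Tambara morphism via the naturality square of $\euler{\phi}_{\mathrm{F}}$. Your explicit justification of why the triangle identities transport along the locally faithful forgetful pseudofunctor is a small expository addition over the paper's bare ``it suffices to show''.
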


\begin{proof}
 It suffices to show that $\eta$ and $\varepsilon$ are Tambara morphisms.
 For the counit $\varepsilon$, we need to show that the diagram
 \[\begin{tikzcd}
	{\int^{Y} \Hom{Z,\euler{\Phi} Y} \kotimes \Hom{\euler{\Phi} Y, Z'}} && {\int^{Y} \Hom{\mathbf{N}\mathrm{F}Z,\euler{\Phi} Y} \kotimes \Hom{\euler{\Phi} Y, \mathbf{N}\mathrm{F}Z'}} \\
	{\Hom{Z,Z'}} && {\Hom{\mathbf{N}\mathrm{F}Z,\mathbf{N}\mathrm{F}Z'}}
	\arrow["{\varepsilon_{\mathbf{N}\mathrm{F}Z,\mathbf{N}\mathrm{F}Z'}}", from=1-3, to=2-3]
	\arrow["{\ta^{\Hom{-,\euler{\Phi}-} \diamond \Hom{\euler{\Phi}-,-}}_{\mathrm{F};Z,Z'}}", shift left=1, from=1-1, to=1-3]
	\arrow["{\ta^{\Hom{-,-}_{\mathbf{N}}}_{\mathrm{F},Z,Z'}}"', from=2-1, to=2-3]
	\arrow["{\varepsilon_{Z,Z'}}"', from=1-1, to=2-1]
\end{tikzcd}\]
commutes.
This is the case if the diagram
\[
 \resizebox{.99\hsize}{!}{$
\begin{tikzcd}[column sep = huge, ampersand replacement = \&]
	{\Hom{Z,\euler{\Phi} Y} \kotimes \Hom{\euler{\Phi} Y,Z'}} \& {\Hom{\mathbf{N}\mathrm{F}Z,\mathbf{N}\mathrm{F}\euler{\Phi} Y} \kotimes \Hom{\mathbf{N}\mathrm{F}\euler{\Phi} Y,\mathbf{N}\mathrm{F}Z'}} \& {\Hom{\mathbf{N}\mathrm{F}Z,\euler{\Phi} \mathbf{M}\mathrm{F} Y} \kotimes \Hom{\euler{\Phi} \mathbf{M}\mathrm{F} Y,\mathbf{N}\mathrm{F}Z'}} \\
	{\Hom{Z,Z'}} \& \& {\Hom{\mathbf{N}\mathrm{F}Z,\mathbf{N}\mathrm{F}Z'}}
	\arrow["{\mathbf{N}\mathrm{F}_{Z,\euler{\Phi} Y} \otimes \mathbf{N}\mathrm{F}_{\euler{\Phi} Y,Z'}}", shift left=1, from=1-1, to=1-2]
	\arrow["{\mathbf{N}\mathrm{F}_{Z,Z'}}", from=2-1, to=2-3]
	\arrow[from=1-1, to=2-1, "{\varepsilon_{Y;Z,Z'}}"]
	\arrow["{\Hom{\mathbf{N}\mathrm{F}Z, (\phi_{\mathrm{F}})_{Y}} \otimes \Hom{(\phi_{\mathrm{F}}^{-1})_{Y}, \mathbf{N}\mathrm{F}Z'}}", shift left=1, from=1-2, to=1-3]
	\arrow[from=1-3, to=2-3, "{\varepsilon_{\mathbf{M}\mathrm{F}Y;\mathbf{N}\mathrm{F}Z,\mathbf{N}\mathrm{F}Z'}}"]
\end{tikzcd}$}
\]
commutes. And indeed, for any $f: Z \rightarrow \euler{\Phi} Y$ and $g: \euler{\Phi} Y \rightarrow Z'$, chasing $f \otimes g$ around the above diagram we obtain
\[\begin{tikzcd}
	{f \otimes g} & {\mathbf{N}\mathrm{F}(f) \otimes \mathbf{N}\mathrm{F}(g)} & {(\phi_{\mathrm{F}})_{Y} \circ \mathbf{N}\mathrm{F}(f) \otimes \mathbf{N}\mathrm{F}(g) \circ (\phi_{\mathrm{F}}^{-1})_{Y}} \\
	{g \circ f} & {\mathbf{N}\mathrm{F}(g \circ f)} & { \mathbf{N}\mathrm{F}(g) \circ (\phi_{\mathrm{F}}^{-1})_{Y} \circ (\phi_{\mathrm{F}})_{Y} \circ \mathbf{N}\mathrm{F}(f)}
	\arrow[from=2-2, to=2-3, equal]
	\arrow[maps to, from=1-3, to=2-3]
	\arrow[maps to, from=1-2, to=1-3]
	\arrow[maps to, from=1-1, to=1-2]
	\arrow[maps to, from=1-1, to=2-1]
	\arrow[maps to, from=2-1, to=2-2]
\end{tikzcd}\]

Further, the commutativity of the following diagram shows that also $\eta$ is a morphism of Tambara modules:
\[\begin{tikzcd}
	{\Hom{Y,Y'}} && {\Hom{\mathbf{M}\mathrm{F}Y, \mathbf{M}\mathrm{F}Y'}} \\
	{\Hom{\euler{\Phi} Y,\euler{\Phi} Y'}} & {\Hom{\mathbf{N}\mathrm{F}\euler{\Phi} Y, \mathbf{N}\mathrm{F}\euler{\Phi} Y}} & {\Hom{\euler{\Phi} \mathbf{M}\mathrm{F}Y, \euler{\Phi} \mathbf{M}\mathrm{F}Y'}}
	\arrow["{\euler{\Phi}_{Y,Y'}}"', from=1-1, to=2-1]
	\arrow["{\mathbf{M}\mathrm{F}_{Y,Y'}}", from=1-1, to=1-3]
	\arrow["{\mathbf{N}\mathrm{F}_{\euler{\Phi} Y, \euler{\Phi} Y'}}"', from=2-1, to=2-2, shift right = 1]
	\arrow["{\Hom{(\phi_{\mathrm{F}}^{-1})_{Y}, (\phi_{\mathrm{F}})_{Y}}}"', from=2-2, to=2-3, shift right = 1]
	\arrow["{\euler{\Phi}_{\mathbf{M}\mathrm{F}Y,\mathbf{M}\mathrm{F}Y'}}", from=1-3, to=2-3]
\end{tikzcd}\]
\end{proof}

  \begin{lemma}\label{StructureTransport}
  Let $\mathtt{\Psi} \in \csym{C}\!\on{-Tamb}(\mathbf{M,N})$ and let $\mathtt{\Psi}' \in \mathbf{Prof}_{\Bbbk}(\mathbf{M,N})$ be such that there is an isomorphism of profunctors $\mathtt{s}: \mathtt{\Psi} \Rightarrow \mathtt{\Psi}'$. There is a unique Tambara module structure with which the profunctor $\mathtt{\Psi}'$ can be endowed so that $\mathtt{s}$ becomes an isomorphism of Tambara modules.
 \end{lemma}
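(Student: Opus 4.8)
The plan is to transport the Tambara structure along $\mathtt{s}$ by conjugation, after which both existence and uniqueness follow essentially formally.

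First I would define, for all $\mathrm{H}, X, Y$, the candidate structure maps
\[
\ta^{\mathtt{\Psi}'}_{\mathrm{H};Y,X} := \mathtt{s}_{\mathbf{N}\mathrm{H}Y,\,\mathbf{M}\mathrm{H}X} \circ \ta^{\mathtt{\Psi}}_{\mathrm{H};Y,X} \circ (\mathtt{s}_{Y,X})^{-1}.
\]
Since $\mathtt{s}$ is an isomorphism of profunctors, $\mathtt{s}$ and $\mathtt{s}^{-1}$ are natural in both arguments; hence the (extra)naturality squares for $\ta^{\mathtt{\Psi}'}$ — naturality in $Y,X$ and extranaturality in $\mathrm{H}$ — paste together from the corresponding squares for $\ta^{\mathtt{\Psi}}$ and the naturality squares for $\mathtt{s}^{\pm 1}$. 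So $\{\ta^{\mathtt{\Psi}'}_{\mathrm{H};Y,X}\}$ is an extranatural collection of the required shape.

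Next I would verify the two Tambara axioms (multiplicativity and unitality) for $\ta^{\mathtt{\Psi}'}$. In each case the relevant diagram for $\mathtt{\Psi}'$ is obtained from the one for $\mathtt{\Psi}$ by conjugating every vertex with the appropriate component of $\mathtt{s}$: the interior factors $\mathtt{s}$ and $\mathtt{s}^{-1}$ cancel, the (commuting, by hypothesis) diagram for $\mathtt{\Psi}$ is pasted in the middle, and functoriality of $\mathtt{\Psi}'$ together with naturality of $\mathtt{s}$ moves the remaining $\mathbf{m}, \mathbf{n}$ (and unitor) morphisms past $\mathtt{s}$; the outer diagram then commutes. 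By construction the square expressing that $\mathtt{s}$ is a morphism of Tambara modules, namely $\ta^{\mathtt{\Psi}'}_{\mathrm{H};Y,X} \circ \mathtt{s}_{Y,X} = \mathtt{s}_{\mathbf{N}\mathrm{H}Y,\mathbf{M}\mathrm{H}X} \circ \ta^{\mathtt{\Psi}}_{\mathrm{H};Y,X}$, holds — it is the definition of $\ta^{\mathtt{\Psi}'}$ after cancelling $(\mathtt{s}_{Y,X})^{-1}\circ \mathtt{s}_{Y,X}$ — so $\mathtt{s}: \mathtt{\Psi} \Rightarrow \mathtt{\Psi}'$ is an isomorphism of Tambara modules.

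For uniqueness, suppose $\widetilde{\ta}$ is any Tambara structure on $\mathtt{\Psi}'$ for which $\mathtt{s}$ is a morphism of Tambara modules. The Tambara axiom for $\mathtt{s}$ gives $\widetilde{\ta}_{\mathrm{H};Y,X} \circ \mathtt{s}_{Y,X} = \mathtt{s}_{\mathbf{N}\mathrm{H}Y,\mathbf{M}\mathrm{H}X} \circ \ta^{\mathtt{\Psi}}_{\mathrm{H};Y,X}$, and composing on the right with $(\mathtt{s}_{Y,X})^{-1}$ yields $\widetilde{\ta}_{\mathrm{H};Y,X} = \ta^{\mathtt{\Psi}'}_{\mathrm{H};Y,X}$. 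I do not expect a genuine obstacle: this is a routine transport of structure along an isomorphism, of the same flavour as transporting an algebra or module structure. The only mildly lengthy point is the multiplicativity axiom, whose pasting diagram is somewhat larger, but the verification is entirely mechanical.
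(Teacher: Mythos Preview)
Your proposal is correct and follows exactly the same route as the paper: define $\ta^{\mathtt{\Psi}'}$ by conjugating $\ta^{\mathtt{\Psi}}$ with $\mathtt{s}$, then observe that the Tambara axioms and uniqueness are formal. The paper's proof is a single sentence (``It is easy to verify\ldots''), and its displayed formula appears to contain a typo in the subscripts; your version $\ta^{\mathtt{\Psi}'}_{\mathrm{H};Y,X} = \mathtt{s}_{\mathbf{N}\mathrm{H}Y,\mathbf{M}\mathrm{H}X} \circ \ta^{\mathtt{\Psi}}_{\mathrm{H};Y,X} \circ \mathtt{s}_{Y,X}^{-1}$ is the intended one.
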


 \begin{proof}
  It is easy to verify that the assignment $\ta^{\mathtt{\Psi}'}_{\mathrm{F};Y,X} := \euler{s}_{Y,X}^{-1} \circ \ta^{\mathtt{\Psi}}_{\mathrm{F};Y,X} \circ \mathtt{s}_{Y,X}$ gives a well-defined Tambara structure which also satisfies the uniqueness property.
 \end{proof}

  Whenever we endow a profunctor with a Tambara structure using the construction of Lemma \ref{StructureTransport}, we say that we {\it transport} the Tambara module structure of $\mathtt{\Psi}$ to $\mathtt{\Psi}'$, along the isomorphism $\mathtt{s}$.

 Recall that beyond module functors, we may also consider {\it lax module functors}, whose structure morphisms satisfy all the coherence conditions but are not necessarily invertible.
 \begin{lemma}\label{RepresentableGivesLax}
  Let $\mathtt{\Phi}: \mathbf{M} \xslashedrightarrow{} \mathbf{N}$ be a Tambara module whose underlying profunctor is isomorphic to $\Hom{-,\euler{\Phi} -}$, for some $\euler{\Phi} \in \mathbf{Cat}_{\Bbbk}(\mathbf{M,N})$.
  Then $\euler{\Phi}$ can be endowed with the structure of a lax $\csym{C}$-module functor, by setting
  \[
(\euler{\phi}_{\mathrm{F}})_{X} := \ta_{\mathrm{F},\euler{\Phi} X, X}^{\mathtt{\Phi}}(\on{id}_{\euler{\Phi} X}) \in \Hom{\mathbf{N}\mathrm{F}\euler{\Phi} X, \euler{\Phi} \mathbf{M}\mathrm{F}X}.
  \]
 \end{lemma}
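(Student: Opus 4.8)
The plan is to reduce everything to a single co-Yoneda manipulation. First, by Lemma~\ref{StructureTransport} we may transport the Tambara structure of $\mathtt{\Phi}$ along the given profunctor isomorphism, so without loss of generality the underlying profunctor of $\mathtt{\Phi}$ is literally $\Hom{-,\euler{\Phi}-}$; the formula $(\euler{\phi}_{\mathrm{F}})_X := \ta^{\mathtt{\Phi}}_{\mathrm{F};\euler{\Phi} X,X}(\on{id}_{\euler{\Phi} X})$ is then read with respect to this transported structure, and a short chase through Lemma~\ref{StructureTransport} identifies it with the formula in the statement.

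Next, for each fixed $\mathrm{F}\in\csym{C}$ the component maps $\ta^{\mathtt{\Phi}}_{\mathrm{F};Y,X}\colon \Hom{Y,\euler{\Phi} X} \to \Hom{\mathbf{N}\mathrm{F}Y,\euler{\Phi}\mathbf{M}\mathrm{F}X}$ are natural in the contravariant variable $Y\in\mathbf{N}$; evaluating that naturality square on an arbitrary $g\colon Y'\to \euler{\Phi} X$ at $\on{id}_{\euler{\Phi} X}$ gives the presentation
\[
 \ta^{\mathtt{\Phi}}_{\mathrm{F};Y,X}(f) = (\euler{\phi}_{\mathrm{F}})_X \circ \mathbf{N}\mathrm{F}(f), \qquad f\colon Y\to \euler{\Phi} X .
\]
So the Tambara structure is completely determined by the data $\{(\euler{\phi}_{\mathrm{F}})_X\}$, and, because of this naturality in $Y$, each Tambara axiom for $\ta^{\mathtt{\Phi}}$ is equivalent to its instance at $Y=\euler{\Phi} X$ (or the relevant representing object) evaluated at an identity, which is an identity among morphisms of $\mathbf{N}$. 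Carrying this out one axiom at a time: naturality of $\ta^{\mathtt{\Phi}}_{\mathrm{F};-,-}$ in the covariant variable $X\in\mathbf{M}$ becomes the assertion that $\euler{\phi}_{\mathrm{F}}\colon \mathbf{N}\mathrm{F}\euler{\Phi}\Rightarrow \euler{\Phi}\mathbf{M}\mathrm{F}$ is a natural transformation; extranaturality of the collection $\{\ta^{\mathtt{\Phi}}_{\mathrm{H};Y,X}\}$ in $\mathrm{H}$ becomes naturality of $\{\euler{\phi}_{\mathrm{F}}\}$ in $\mathrm{F}$; the multiplicativity axiom becomes the associativity hexagon of \cite[Definition~7.2.1]{EGNO} relating $\euler{\phi}_{\mathrm{GF}}$, $\euler{\phi}_{\mathrm{G}}$, $\euler{\phi}_{\mathrm{F}}$ and the coherence cells $\mathbf{m}_{\mathrm{G},\mathrm{F}}$, $\mathbf{n}_{\mathrm{G},\mathrm{F}}$; and the unitality axiom becomes the unit coherence, forcing $(\euler{\phi}_{\mathbb{1}})_X = \euler{\Phi}(\mathbf{m}_{\mathbb{1},X})\circ \mathbf{n}_{\mathbb{1},\euler{\Phi} X}^{-1}$. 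Only the forward implications are needed, so each step is one application of the corresponding Tambara axiom followed by the $Y=\euler{\Phi} X$, identity substitution above. (In fact the argument is reversible and exhibits a bijection between Tambara structures on $\Hom{-,\euler{\Phi}-}$ and lax $\csym{C}$-module functor structures on $\euler{\Phi}$, but this is not required here.)

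I expect essentially all of the effort to be in the multiplicativity step: one takes the defining commuting square of that axiom, specializes its bottom-right object to $Y=\euler{\Phi} X$, feeds in $\on{id}_{\euler{\Phi} X}$, and rewrites each of its four edges using functoriality of $\Hom{-,\euler{\Phi}-}$ together with the presentation above, checking that the resulting equality of composites in $\mathbf{N}$ is precisely the module-functor associativity constraint — keeping track of the directions of $\mathbf{m}$ and $\mathbf{n}$ and of the associator and unitors that occur, since we are not strictifying. The naturality-in-$X$, naturality-in-$\mathrm{F}$ and unitality steps are then short diagram chases of the same shape. No invertibility of the $\euler{\phi}_{\mathrm{F}}$ is asserted, so nothing further needs to be verified.
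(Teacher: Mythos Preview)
Your proposal is correct and follows essentially the same approach as the paper: transport the Tambara structure so the underlying profunctor is literally $\Hom{-,\euler{\Phi}-}$, use Yoneda in the contravariant variable to obtain the presentation $\ta^{\mathtt{\Phi}}_{\mathrm{F};Y,X}(f)=(\euler{\phi}_{\mathrm{F}})_X\circ\mathbf{N}\mathrm{F}(f)$, and then specialize each Tambara axiom at $Y=\euler{\Phi} X$ and $\on{id}_{\euler{\Phi} X}$ to read off naturality in $X$, naturality in $\mathrm{F}$, multiplicativity, and unitality for $\euler{\phi}$. The paper carries out exactly these four diagram chases explicitly, while you describe them at a slightly higher level; your parenthetical observation that the argument is reversible (giving a bijection between Tambara structures on $\Hom{-,\euler{\Phi}-}$ and lax module structures on $\euler{\Phi}$) is a nice addition not stated in the paper.
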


  \begin{proof}
  Transporting the Tambara module structure from $\mathtt{\Phi}$ along a profunctor isomorphism $\mathtt{\Phi} \simeq \Hom{-,\euler{\Phi} -}$, we may assume that the underlying profunctor of $\mathtt{\Phi}$ equals $\Hom{-,\euler{\Phi} -}$.

  We need to verify that the collection $\setj{(\phi_{\mathrm{F}})_{X} \; | \; X \in \mathbf{M}, \mathrm{F} \in \csym{C}}$ is natural in $\mathrm{F}$ and $X$, and that it satisfies the associativity and unitality conditions.

  For any $\mathrm{F} \in \csym{C}$ and $X \in \mathbf{M}$, the collection
 \[
\ta^{\mathtt{\Phi}}_{\mathrm{F};-,X} = \setj{\ta^{\mathtt{\Phi}}_{\mathrm{F};Y,X}: \Hom{Y,\euler{\Phi} X} \rightarrow \Hom{\mathbf{N}\mathrm{F}Y,\euler{\Phi} \mathbf{M}\mathrm{F}X} \; | \; Y \in \mathbf{N} }
 \]
 gives a natural transformation $\Hom{-,\euler{\Phi} X} \rightarrow \Hom{\mathbf{N}\mathrm{F}-,\euler{\Phi} \mathbf{M}\mathrm{F}X}$. Thus, by Yoneda lemma, for any $f \in \Hom{Y,\euler{\Phi} X}$ we have
 \begin{equation}\label{YonedaFormula}
\ta^{\mathtt{\Phi}}_{\mathrm{F};Y,X}(f) = \ta_{\mathrm{F},\euler{\Phi} X, X}^{\mathtt{\Phi}}(\on{id}_{\Phi X}) \circ \mathbf{N}\mathrm{F}(f).
 \end{equation}
 Since $\ta^{\mathtt{\Phi}}_{\mathrm{F};Y,X}$ is natural in $X$, the diagram
 \[\begin{tikzcd}
	{\Hom{\euler{\Phi} X, \euler{\Phi} X}} && {\Hom{\mathbf{N}\mathrm{F}\euler{\Phi} X, \euler{\Phi} \mathbf{M}\mathrm{F}X}} \\
	{\Hom{\euler{\Phi} X, \euler{\Phi} X'}} && {\Hom{\mathbf{N}\mathrm{F}\euler{\Phi} X, \euler{\Phi} \mathbf{M}\mathrm{F}X'}}
	\arrow["{\ta_{\mathrm{F}; \euler{\Phi} X, X}}", from=1-1, to=1-3]
	\arrow["{\Hom{\euler{\Phi} X, \euler{\Phi} g}}"', from=1-1, to=2-1]
	\arrow["{\ta_{\mathrm{F}; \euler{\Phi} X, X'}}"', from=2-1, to=2-3]
	\arrow["{\Hom{\mathbf{N}\mathrm{F}\euler{\Phi} X, \euler{\Phi}\mathbf{M}\mathrm{F}g}}", from=1-3, to=2-3]
\end{tikzcd}\]
 commutes. Hence so does
 \[\begin{tikzcd}[column sep = huge]
	{\mathbf{N}\mathrm{F}\euler{\Phi} X} & {\euler{\Phi}\mathbf{M}\mathrm{F}X} \\
	{\mathbf{N}\mathrm{F}\euler{\Phi} X'} & {\euler{\Phi}\mathbf{M}\mathrm{F}X'}
	\arrow["{\ta_{\mathrm{F}; \Phi X, X}(\on{id}_{\Phi X})}", from=1-1, to=1-2]
	\arrow["{\mathbf{N}\mathrm{F}\Phi(g)}"', from=1-1, to=2-1]
	\arrow["{\Phi\mathbf{M}\mathrm{F}(g)}", from=1-2, to=2-2]
	\arrow["{\ta_{\mathrm{F};\Phi X', X'}(\on{id}_{\Phi X'})}"', from=2-1, to=2-2]
\end{tikzcd}\]
proving naturality in $X$.

By extranaturality of $\ta^{\mathtt{\Phi}}$ in $\mathrm{F}$, the diagram
\[\begin{tikzcd}[sep = small]
	& {\Hom{\euler{\Phi} X, \euler{\Phi} X}} \\
	{\Hom{\mathbf{N}\mathrm{F}\euler{\Phi} X, \euler{\Phi} \mathbf{M}\mathrm{F}X}} && {\Hom{\mathbf{N}\mathrm{F'}\euler{\Phi} X, \euler{\Phi} \mathbf{M}\mathrm{F'}X}} \\
	& {\Hom{\mathbf{N}\mathrm{F}\euler{\Phi} X, \euler{\Phi} \mathbf{M}\mathrm{F'}X}}
	\arrow["{\ta_{\mathrm{F}; \euler{\Phi} X,X}}"', from=1-2, to=2-1]
	\arrow["{\ta_{\mathrm{F'}; \euler{\Phi} X,X}}", from=1-2, to=2-3]
	\arrow["{\Hom{\mathbf{N}\mathrm{F}\euler{\Phi} X, \euler{\Phi} \mathbf{M}\alpha_{X}}}"'{pos=0.3}, from=2-1, to=3-2]
	\arrow["{\Hom{\mathbf{N}\alpha_{\euler{\Phi} X}, \euler{\Phi} \mathbf{M}\mathrm{F'}X}}", from=2-3, to=3-2]
\end{tikzcd}\]
commutes, for $\alpha \in \on{Hom}_{\ccf{C}}(\mathrm{F,F}')$. In particular, evaluating the two composite maps in the above diagram at $\on{id}_{\euler{\Phi} X}$ and using its commutativity, we find that
\[
 \ta_{\mathrm{F}';\euler{\Phi}X,X}(\on{id}_{\euler{\Phi}X}) \circ \mathbf{N}\alpha_{\euler{\Phi}X} = \euler{\Phi}\mathbf{M}\alpha_{X} \circ \ta_{\mathrm{F};\euler{\Phi}X,X}(\on{id}_{\euler{\Phi}X}),
\]
 which shows that our candidate collection is natural in $\mathrm{F}$.

The diagram
\[\begin{tikzcd}[column sep = huge]
	{\Hom{\euler{\Phi} X, \euler{\Phi} X}} && {\Hom{\mathbf{N}\mathrm{GF}\euler{\Phi} X, \euler{\Phi} \mathbf{M}\mathrm{GF}X}} \\
	{\Hom{\mathbf{N}\mathrm{F}\euler{\Phi} X, \euler{\Phi} \mathbf{M}\mathrm{F}X}} \\
	{\Hom{\mathbf{N}\mathrm{G}\mathbf{N}\mathrm{F}\euler{\Phi} X, \euler{\Phi} \mathbf{M}\mathrm{G}\mathbf{M}\mathrm{F}X}} && {\Hom{\mathbf{N}\mathrm{G}\mathbf{N}\mathrm{F}\euler{\Phi} X, \euler{\Phi} \mathbf{M}\mathrm{GF}X}}
	\arrow["{\ta_{\mathrm{F};\euler{\Phi} X,X}}", from=1-1, to=2-1]
	\arrow["{\ta_{\mathrm{G};\mathbf{N}\mathrm{F}\euler{\Phi} X, \mathbf{M}\mathrm{F} X}}", from=2-1, to=3-1]
	\arrow["{\ta_{\mathrm{GF};\euler{\Phi} X,X}}", from=1-1, to=1-3]
	\arrow["{\Hom{\mathbf{N}\mathrm{G}\mathbf{N}\mathrm{F}X, (\euler{\Phi} \mathbf{m}_{\mathrm{G,F}})_{X}}}", from=3-1, to=3-3]
	\arrow["{\Hom{(\mathbf{n}_{\mathrm{G,F}})_{\euler{\Phi} X}, \euler{\Phi} \mathbf{M}\mathrm{GF}X}}", from=1-3, to=3-3]
\end{tikzcd}\]
commutes by the associativity condition for the Tambara module $\mathtt{\Phi}$. We again evaluate the maps in the above diagram at $\on{id}_{\Phi X}$, via the following diagram chase:
\[\begin{tikzcd}[sep = small]
	{\on{id}_{\euler{\Phi} X}} && {\ta_{\mathrm{GF},\euler{\Phi} X, X}(\on{id}_{\euler{\Phi} X})} \\
	{\ta_{\mathrm{F};\euler{\Phi} X, X}(\on{id}_{\euler{\Phi} X})} \\
	{\ta_{\mathrm{G};\euler{\Phi}\mathbf{M}\mathrm{F} X, \mathbf{M}\mathrm{F} X}(\on{id}_{\euler{\Phi}\mathbf{M}\mathrm{F}X}) \circ \mathbf{N}\mathrm{G}\ta_{\mathrm{F};\euler{\Phi} X, X}(\on{id}_{\euler{\Phi} X})} \\
	{\euler{\Phi} (\mathbf{m}_{G,F})_{X} \circ \ta_{\mathrm{G};\euler{\Phi}\mathbf{M}\mathrm{F} X, \mathbf{M}\mathrm{F} X}(\on{id}_{\euler{\Phi}\mathbf{M}\mathrm{F}X}) \circ \mathbf{N}\mathrm{G}\ta_{\mathrm{F};\euler{\Phi} X, X}(\on{id}_{\euler{\Phi} X})} && {\ta_{\mathrm{GF},\euler{\Phi} X, X}(\on{id}_{\euler{\Phi} X}) \circ (\mathbf{n}_{\mathrm{G,F}})_{\euler{\Phi} X}}
	\arrow[maps to, from=1-1, to=2-1]
	\arrow[maps to, from=2-1, to=3-1]
	\arrow[from=4-1, to=4-3, equal]
	\arrow[maps to, from=3-1, to=4-1]
	\arrow[maps to, from=1-1, to=1-3]
	\arrow[maps to, from=1-3, to=4-3]
\end{tikzcd}\]
which is precisely the multiplicativity condition:
\[\begin{tikzcd}[column sep = huge, row sep = small]
	{\mathbf{N}\mathrm{G}\mathbf{N}\mathrm{F}\euler{\Phi}X} && {\mathbf{N}\mathrm{G}\euler{\Phi}\mathbf{M}\mathrm{F}X} && {\euler{\Phi}\mathbf{M}\mathrm{G}\mathbf{M}\mathrm{F}X} \\
	\\
	{\mathbf{N}\mathrm{GF}\euler{\Phi}X} &&&& {\euler{\Phi}\mathbf{M}\mathrm{GF}X}
	\arrow["{\mathbf{N}\mathrm{G}\ta_{\mathrm{F};\euler{\Phi}X,X}(\on{id}_{\euler{\Phi}X})}", from=1-1, to=1-3]
	\arrow["{\ta_{\mathrm{G};\euler{\Phi}\mathbf{M}\mathrm{F}X,\mathbf{M}\mathrm{F}X}(\on{id}_{\euler{\Phi}\mathbf{M}\mathrm{F}X})}", from=1-3, to=1-5]
	\arrow["{\euler{\Phi}(\mathbf{m}_{\mathrm{G,F}})_{X}}", from=1-5, to=3-5]
	\arrow["{(\mathbf{n}_{\mathrm{G,F}})_{\euler{\Phi}X}}"', from=1-1, to=3-1]
	\arrow["{\ta_{\mathrm{GF},\euler{\Phi}X,X}(\on{id}_{\euler{\Phi}X})}"', from=3-1, to=3-5]
\end{tikzcd}\]
Similarly, using the commutativity of
\[\begin{tikzcd}
	& {\Hom{\euler{\Phi} X, \euler{\Phi} X}} \\
	& {\Hom{\mathbf{N}\mathbb{1}\euler{\Phi} X, \euler{\Phi} \mathbf{M}\mathbb{1}X}} && {\Hom{\euler{\Phi} X, \euler{\Phi} \mathbf{M}\mathbb{1} X}}
	\arrow["{\ta_{\mathbb{1}; \euler{\Phi} X, X}}", from=1-2, to=2-2]
	\arrow["{\Hom{\euler{\Phi} X, \euler{\Phi} \mathbf{m}_{X}}}", from=1-2, to=2-4]
	\arrow["{\Hom{\mathbf{n}_{X},\euler{\Phi} \mathbf{M}\mathbb{1}X}}"', shift right=1, from=2-2, to=2-4]
\end{tikzcd}\]
  and evaluating the maps in the diagram at $\on{id}_{\euler{\Phi} X}$, we find that our candidate collection also satisfies the unitality condition for a lax $\csym{C}$-module functor.
 \end{proof}

 The following definitions were initially introduced in \cite[Section~3]{La} in the more general case of categories enriched in a symmetric monoidal category $\mathcal{V}$; as before, we specialize to the case $\mathcal{V} = \mathbf{Vec}_{\Bbbk}$. More detailed accounts are found in \cite[Section~4]{BD}, \cite[Section~7.9]{Bo}.
 \begin{definition}
  A $\Bbbk$-linear category $\mathcal{C}$ is said to be {\it Cauchy complete} if any $\Bbbk$-linear profunctor $\mathcal{B} \xslashedrightarrow{} \mathcal{C}$ admitting a right adjoint in $\mathbf{Prof}$ is {\it representable}, i.e. isomorphic to a profunctor of the form $\Hom{-,\Phi -}$ for some functor $\Phi: \mathcal{B} \rightarrow \mathcal{C}$.

  The {\it Cauchy completion} $\mathcal{C}^{\mathsf{c}}$ of a $\Bbbk$-linear category $\mathcal{C}$ is the subcategory of its presheaf category given by retracts of finite biproducts of representable presheaves.
 \end{definition}
 A $\Bbbk$-linear category is Cauchy complete if and only if it is idempotent split and additive.

As described in \ref{Adjunctions}, the embedding $\iota_{\mathcal{C}}^{\mathsf{c}}: \mathcal{C} \rightarrow \mathcal{C}^{\mathsf{c}}$ gives an adjunction $(\Hom{-,\iota_{\mathcal{C}}^{\mathsf{c}}-},\Hom{\iota_{\mathcal{C}}^{\mathsf{c}}-,-},\eta, \varepsilon)$ in $\mathbf{Prof}_{\Bbbk}$. This adjunction is an adjoint equivalence. An analogous statement holds in the bicategory $\csym{C}\!\on{-Tamb}$: using the universal property of Cauchy completion, given a $\csym{C}$-module category $\mathbf{M}$, there is a canonical way to extend the $\csym{C}$-module category structure to $\mathbf{M}^{\mathsf{c}}$; see e.g. \cite[Section~3.2]{Str}. The embedding $\iota_{\mathcal{C}}^{\mathsf{c}}$ then becomes a $\csym{C}$-module functor.
Since a morphism of Tambara modules is an isomorphism if and only if its underlying morphism of profunctors is an isomorphism, we find that the adjunction $(\Hom{-,\iota_{\mathcal{C}}^{\mathsf{c}}-},\Hom{\iota_{\mathcal{C}}^{\mathsf{c}}-,-},\eta, \varepsilon)$ in $\csym{C}\!\on{-Tamb}$ becomes an adjoint equivalence. We thus find an analogue of \cite[Theorem~7.9.4]{Bo}:

\begin{corollary}\label{CauchyTrick}
 Let $\mathbf{M},\mathbf{N}$ be a pair of $\csym{C}$-module categories. We have an equivalence $\mathbf{M} \simeq \mathbf{N}$ in $\csym{C}\!\on{-Tamb}$ if and only if there is an equivalence $\mathbf{M}^{\mathsf{c}} \simeq \mathbf{N}^{\mathsf{c}}$ in $\csym{C}\!\on{-Mod}$. In particular, if $\mathbf{M}$ and $\mathbf{N}$ are Cauchy complete, an equivalence $\mathbf{M} \simeq \mathbf{N}$ exists in $\csym{C}\!\on{-Tamb}$ if and only if it exists in $\csym{C}\!\on{-Mod}$.
\end{corollary}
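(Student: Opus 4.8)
The plan is to mirror the classical proof for profunctors, \cite[Theorem~7.9.4]{Bo}, using the structure assembled above: that $\mathbb{P}\colon\csym{C}\!\on{-Mod}\to\csym{C}\!\on{-Tamb}$ is a locally full and faithful pseudofunctor (Proposition~\ref{PisFF}), that $\mathbb{P}(\iota_{\mathbf{M}}^{\mathsf{c}})$ is an adjoint equivalence in $\csym{C}\!\on{-Tamb}$, and the representability facts from Lemma~\ref{RepresentableGivesLax} and Lemma~\ref{StructureTransport}. I record two preliminary points. First, the adjoint equivalence just mentioned gives $\mathbf{M}\simeq\mathbf{M}^{\mathsf{c}}$ in $\csym{C}\!\on{-Tamb}$ for every $\csym{C}$-module category $\mathbf{M}$, and if $\mathbf{M}$ is Cauchy complete then $\iota_{\mathbf{M}}^{\mathsf{c}}$ is an equivalence in $\csym{C}\!\on{-Mod}$ as well. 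Consequently the ``in particular'' clause is a special case of the main assertion, so it suffices to prove that $\mathbf{M}\simeq\mathbf{N}$ in $\csym{C}\!\on{-Tamb}$ if and only if $\mathbf{M}^{\mathsf{c}}\simeq\mathbf{N}^{\mathsf{c}}$ in $\csym{C}\!\on{-Mod}$.

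The direction from $\csym{C}\!\on{-Mod}$ to $\csym{C}\!\on{-Tamb}$ is immediate: apply the pseudofunctor $\mathbb{P}$, which preserves equivalences, to an equivalence $\mathbf{M}^{\mathsf{c}}\simeq\mathbf{N}^{\mathsf{c}}$ in $\csym{C}\!\on{-Mod}$ and compose the result with the equivalences $\mathbf{M}\simeq\mathbf{M}^{\mathsf{c}}$ and $\mathbf{N}^{\mathsf{c}}\simeq\mathbf{N}$ in $\csym{C}\!\on{-Tamb}$.

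For the converse, assume $\mathbf{M}\simeq\mathbf{N}$ in $\csym{C}\!\on{-Tamb}$; composing with the equivalences above we obtain an equivalence $\mathtt{\Phi}\colon\mathbf{M}^{\mathsf{c}}\xslashedrightarrow{}\mathbf{N}^{\mathsf{c}}$ in $\csym{C}\!\on{-Tamb}$ with $\mathbf{M}^{\mathsf{c}},\mathbf{N}^{\mathsf{c}}$ Cauchy complete. Promote $\mathtt{\Phi}$ to an adjoint equivalence $\mathtt{\Phi}\dashv\mathtt{\Psi}$ in $\csym{C}\!\on{-Tamb}$, and push it through the forgetful pseudofunctor to $\mathbf{Prof}$ to obtain an adjoint equivalence of the underlying profunctors. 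Then $\mathtt{\Phi}$ has a right adjoint in $\mathbf{Prof}$, so Cauchy completeness of $\mathbf{N}^{\mathsf{c}}$ forces its underlying profunctor to be representable, say isomorphic to $\Hom{-,\euler{F}-}$ for a functor $\euler{F}\colon\mathbf{M}^{\mathsf{c}}\to\mathbf{N}^{\mathsf{c}}$; symmetrically $\mathtt{\Psi}$ has underlying profunctor $\Hom{-,\euler{G}-}$ for some $\euler{G}$. Using local fullness and faithfulness of the pseudofunctor $\mathbf{Cat}\to\mathbf{Prof}$ and the fact that it carries composites of functors to composites of the associated representable profunctors, the invertible unit and counit of $\mathtt{\Phi}\dashv\mathtt{\Psi}$ yield isomorphisms $\euler{G}\euler{F}\cong\on{id}_{\mathbf{M}^{\mathsf{c}}}$ and $\euler{F}\euler{G}\cong\on{id}_{\mathbf{N}^{\mathsf{c}}}$ in $\mathbf{Cat}$; thus $\euler{F}$ is an equivalence of the underlying categories with quasi-inverse $\euler{G}$.

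It remains to promote $\euler{F}$ to an equivalence in $\csym{C}\!\on{-Mod}$, which I expect to be the main obstacle. Transporting the Tambara structure of $\mathtt{\Phi}$ onto $\Hom{-,\euler{F}-}$ by Lemma~\ref{StructureTransport} and then invoking Lemma~\ref{RepresentableGivesLax} equips $\euler{F}$ with a lax $\csym{C}$-module structure $\euler{\phi}$ whose associated Tambara module is isomorphic to $\mathtt{\Phi}$; likewise $\euler{G}$ acquires a lax structure. One must then check that $\euler{\phi}$ is strong, i.e.\ that each $\euler{\phi}_{\mathrm{F}}\colon\mathbf{N}\mathrm{F}\euler{F}\Rightarrow\euler{F}\mathbf{M}\mathrm{F}$ is invertible; granting this, $\euler{F}$ is a $\csym{C}$-module functor which is an equivalence on underlying categories, hence an equivalence in $\csym{C}\!\on{-Mod}$, and the proof is complete. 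Invertibility of $\euler{\phi}_{\mathrm{F}}$ follows from the standard principle that a lax $\csym{C}$-module functor which is an equivalence of underlying categories is automatically strong; concretely, since $\mathtt{\Psi}$ is a right adjoint of $\mathtt{\Phi}$ in $\csym{C}\!\on{-Tamb}$ its underlying profunctor is $\Hom{\euler{F}-,-}$, so the corestriction analogue of Lemma~\ref{RepresentableGivesLax} furnishes $\euler{F}$ with a colax $\csym{C}$-module structure, and the triangle identities of $\mathtt{\Phi}\dashv\mathtt{\Psi}$ identify it as a two-sided inverse of $\euler{\phi}$. If the representability criterion of Theorem~\ref{Section4Main} is available at this point, this last step is instead immediate: an equivalence in $\csym{C}\!\on{-Tamb}$ has a right adjoint, hence is isomorphic to $\mathbb{P}$ of a $\csym{C}$-module functor, and one concludes exactly as in the first direction, using pseudofunctoriality and local faithfulness of $\mathbb{P}$ to invert the composite with the quasi-inverse.
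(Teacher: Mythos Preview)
Your overall strategy is correct and matches the paper's, which gives essentially no proof: the corollary is stated as the Tambara-module ``analogue of \cite[Theorem~7.9.4]{Bo}'' immediately after observing that the Cauchy embedding $\iota_{\mathbf{M}}^{\mathsf{c}}$ yields an adjoint equivalence $\mathbf{M}\simeq\mathbf{M}^{\mathsf{c}}$ in $\csym{C}\!\on{-Tamb}$. Your write-up spells out what that analogy means, and the technical core of your hard direction --- representability of the equivalence $\mathtt{\Phi}$ by a \emph{strong} $\csym{C}$-module functor --- is exactly the content of the Proposition that the paper proves \emph{immediately after} this corollary (and packages into Theorem~\ref{Section4Main}). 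Your sketch using the colax structure coming from $\mathtt{\Psi}\simeq\Hom{\euler{F}-,-}$ and the triangle identities is precisely that argument in compressed form: in the paper's notation, the fact that $\eta$ and $\varepsilon$ are Tambara morphisms yields Equations~\eqref{EqFromCounit} and~\eqref{EqFromUnit}, which exhibit $\ta^{\mathtt{\Phi}}_{\mathrm{F};\euler{F}X,X}(\on{id}_{\euler{F}X})$ as both a split monomorphism and a split epimorphism.

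One correction: the ``standard principle'' you invoke --- that a lax $\csym{C}$-module functor which is an equivalence of underlying categories is automatically strong --- is not true in general and should be deleted. The invertibility of $\euler{\phi}$ genuinely uses that the lax structure arises from a Tambara module that is part of an adjoint pair in $\csym{C}\!\on{-Tamb}$, not merely that $\euler{F}$ is an equivalence of categories; your concrete argument with the colax structure is what actually establishes it. Also note the ordering issue you already flagged: invoking Theorem~\ref{Section4Main} here is a forward reference in the paper's layout, so if you want a self-contained proof at this point you must carry out the triangle-identity computation (which you do, modulo detail), rather than cite it.
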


 We now show that, similarly to profunctors, also Tambara modules with a Cauchy complete codomain and a right adjoint are representable:
\begin{proposition}
 Let $\mathbf{M},\mathbf{N}$ be $\csym{C}$-module categories, with $\mathbf{N}$ Cauchy complete. Let $\mathtt{\Phi}: \mathbf{M} \xslashedrightarrow{} \mathbf{N}$ be a Tambara module which admits a right adjoint in $\csym{C}\!\on{-Tamb}$. There is a (strong) $\csym{C}$-module functor $\euler{\Phi} \in \csym{C}\!\on{-Mod}(\mathbf{M,N})$ such that $\mathtt{\Phi} \simeq \Hom{-,\euler{\Phi} -}$.
\end{proposition}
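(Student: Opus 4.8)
The plan is to pass to underlying profunctors -- where $\mathtt{\Phi}$ becomes representable because $\mathbf{N}$ is Cauchy complete -- promote the representing functor to a \emph{lax} $\csym{C}$-module functor via Lemma~\ref{RepresentableGivesLax}, and then use the right adjoint of $\mathtt{\Phi}$ in $\csym{C}\!\on{-Tamb}$ to force the lax structure cells to be invertible. Concretely: the forgetful pseudofunctor $\csym{C}\!\on{-Tamb}\rightarrow\mathbf{Prof}_{\Bbbk}$ (the right-hand vertical arrow of Diagram~\eqref{ProArrowMorphism}) preserves adjunctions, so the underlying profunctor of $\mathtt{\Phi}$ has a right adjoint in $\mathbf{Prof}_{\Bbbk}$, and since $\mathbf{N}$ is Cauchy complete it is representable, say isomorphic to $\Hom{-,\euler{\Phi}-}$ for some $\euler{\Phi}\in\mathbf{Cat}_{\Bbbk}(\mathbf{M,N})$. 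By Lemma~\ref{StructureTransport} we may replace $\mathtt{\Phi}$ by an isomorphic Tambara module and assume its underlying profunctor is exactly $\Hom{-,\euler{\Phi}-}$. Lemma~\ref{RepresentableGivesLax} then equips $\euler{\Phi}$ with a lax $\csym{C}$-module functor structure with $(\euler{\phi}_{\mathrm{F}})_{X}=\ta^{\mathtt{\Phi}}_{\mathrm{F};\euler{\Phi}X,X}(\on{id}_{\euler{\Phi}X})$, and Equation~\eqref{YonedaFormula} gives $\ta^{\mathtt{\Phi}}_{\mathrm{F};Y,X}(f)=(\euler{\phi}_{\mathrm{F}})_{X}\circ\mathbf{N}\mathrm{F}(f)$ for all $f:Y\rightarrow\euler{\Phi}X$. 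Hence, once we show each $\euler{\phi}_{\mathrm{F}}$ is invertible, $\euler{\Phi}$ is a genuine $\csym{C}$-module functor, $\mathbb{P}(\euler{\Phi})=\Hom{-,-}^{\mathbf{N}}\smalltriangleup\euler{\Phi}=\Hom{-,\euler{\Phi}-}$ is defined, and the last displayed formula says precisely that the Tambara structure of $\mathbb{P}(\euler{\Phi})$ from Definition~\ref{ResDef} coincides with that of $\mathtt{\Phi}$; undoing the transport, $\mathtt{\Phi}\simeq\Hom{-,\euler{\Phi}-}$ as required.

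To prove invertibility, recall from \cite[Proposition~7.9.1]{Bo} that $\Hom{\euler{\Phi}-,-}$ is a right adjoint of $\Hom{-,\euler{\Phi}-}$ in $\mathbf{Prof}_{\Bbbk}$, with unit $\mathtt{\eta}$ and counit $\mathtt{\varepsilon}$ as recalled at the start of Section~\ref{Adjunctions}. A right adjoint $\mathtt{\Phi}^{*}$ of $\mathtt{\Phi}$ in $\csym{C}\!\on{-Tamb}$ has underlying profunctor a right adjoint of $\Hom{-,\euler{\Phi}-}$ in $\mathbf{Prof}_{\Bbbk}$, hence isomorphic to $\Hom{\euler{\Phi}-,-}$; transporting its Tambara structure along such an isomorphism (Lemma~\ref{StructureTransport}) and using that right adjoints in a bicategory are unique up to compatible isomorphism, we may assume that the underlying profunctor of $\mathtt{\Phi}^{*}$ is $\Hom{\euler{\Phi}-,-}$ and that the unit and counit of the adjunction with left adjoint $\mathtt{\Phi}$ and right adjoint $\mathtt{\Phi}^{*}$ in $\csym{C}\!\on{-Tamb}$ are exactly $\mathtt{\eta}$ and $\mathtt{\varepsilon}$; in particular $\mathtt{\eta}$ and $\mathtt{\varepsilon}$ are morphisms of Tambara modules. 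Arguing as in Lemma~\ref{RepresentableGivesLax}, but now in the covariant variable, naturality of $\ta^{\mathtt{\Phi}^{*}}_{\mathrm{F};X,-}$ and the Yoneda lemma give, for $(\euler{\psi}_{\mathrm{F}})_{X}:=\ta^{\mathtt{\Phi}^{*}}_{\mathrm{F};X,\euler{\Phi}X}(\on{id}_{\euler{\Phi}X}):\euler{\Phi}\mathbf{M}\mathrm{F}X\rightarrow\mathbf{N}\mathrm{F}\euler{\Phi}X$, the formula $\ta^{\mathtt{\Phi}^{*}}_{\mathrm{F};X,Y}(g)=\mathbf{N}\mathrm{F}(g)\circ(\euler{\psi}_{\mathrm{F}})_{X}$ for all $g:\euler{\Phi}X\rightarrow Y$.

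Finally combine these two formulas with the description of the Tambara structure on a composite from Definition~\ref{CompositeTambara}. Spelling out that $\mathtt{\varepsilon}:\mathtt{\Phi}\diamond\mathtt{\Phi}^{*}\Rightarrow\Hom{-,-}^{\mathbf{N}}$ is a Tambara morphism and evaluating the resulting identity on the class of $\on{id}_{\euler{\Phi}W}\otimes\on{id}_{\euler{\Phi}W}$ in the $W$-component of $(\mathtt{\Phi}\diamond\mathtt{\Phi}^{*})(\euler{\Phi}W,\euler{\Phi}W)$, using $\mathtt{\varepsilon}_{W;Z,Z'}(f\otimes g)=g\circ f$, yields $(\euler{\psi}_{\mathrm{F}})_{W}\circ(\euler{\phi}_{\mathrm{F}})_{W}=\on{id}_{\mathbf{N}\mathrm{F}\euler{\Phi}W}$. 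Dually, spelling out that $\mathtt{\eta}:\Hom{-,-}^{\mathbf{M}}\Rightarrow\mathtt{\Phi}^{*}\diamond\mathtt{\Phi}$ is a Tambara morphism, evaluating on $\on{id}_{X}$, and using $\mathtt{\eta}_{X,X'}(h)=[\on{id}_{\euler{\Phi}X}\otimes\euler{\Phi}(h)]$ together with the composition isomorphism $\int^{Y}\Hom{A,Y}\kotimes\Hom{Y,B}\cong\Hom{A,B}$, yields $(\euler{\phi}_{\mathrm{F}})_{X}\circ(\euler{\psi}_{\mathrm{F}})_{X}=\on{id}_{\euler{\Phi}\mathbf{M}\mathrm{F}X}$. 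Thus $\euler{\phi}_{\mathrm{F}}$ is invertible with inverse $\euler{\psi}_{\mathrm{F}}$, which by the first paragraph finishes the proof. The main obstacle is the bookkeeping in the second paragraph -- arranging that $\mathtt{\Phi}^{*}$ is literally $\Hom{\euler{\Phi}-,-}$ with the standard unit and counit, via the comparison isomorphisms of adjoints and iterated use of Lemma~\ref{StructureTransport} -- together with carrying out the two coend computations in the third paragraph without mixing up variances; neither step is deep, but both are where errors would most easily creep in.
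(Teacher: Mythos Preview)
Your proof is correct and follows essentially the same route as the paper: represent the underlying profunctor using Cauchy completeness, apply Lemma~\ref{RepresentableGivesLax} to get a lax structure, and then exploit the Tambara morphism conditions on the unit and counit to force invertibility of $(\euler{\phi}_{\mathrm{F}})_{X}$. The one tactical difference is that you first normalize the adjunction so that its unit and counit are literally the standard $\mathtt{\eta},\mathtt{\varepsilon}$ (this is legitimate, via uniqueness of adjoints in $\mathbf{Prof}_\Bbbk$ and Lemma~\ref{StructureTransport}), which lets you read off $\euler{\psi}_{\mathrm{F}}=\euler{\phi}_{\mathrm{F}}^{-1}$ directly; the paper instead works with an arbitrary unit, extracts the associated natural automorphism $\widehat{\eta}$ of $\euler{\Phi}$, and carries it through Equations~\eqref{EqFromCounit}--\eqref{EqFromUnit} to conclude that $(\euler{\phi}_{\mathrm{F}})_{X}$ is both split epi and split mono.
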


\begin{proof}
 Let $(\mathtt{\Phi}, \mathtt{\Phi}^{\ast})$ be an adjoint pair in $\csym{C}\!\on{-Tamb}$. Passing it under the forgetful pseudofunctor, we obtain an underlying adjoint pair of profunctors. By \cite[Theorem~7.9.3]{Bo}, there is a functor $\euler{\Phi}$ such that the underlying profunctor of $\mathtt{\Phi}$ is isomorphic to $\Hom{-,\euler{\Phi} -}$. Transporting structures, we may assume that the underlying profunctors of $\mathtt{\Phi}$ and $\mathtt{\Phi}^{\ast}$ equal $\Hom{-,\euler{\Phi} -}$ and $\Hom{\euler{\Phi} -,-}$, respectively. Under this assumption, let $\eta$ and $\varepsilon$ be a unit and a counit giving an adjunction $(\mathtt{\Phi}, \mathtt{\Phi}^{\ast},\eta, \varepsilon)$ in $\csym{C}\!\on{-Tamb}$.

 In view of Lemma \ref{RepresentableGivesLax}, it suffices to show that the morphism $\ta_{\mathrm{F};\euler{\Phi} X, X}^{\mathtt{\Phi}}(\on{id}_{\euler{\Phi} X})$ is invertible for all $\mathrm{F}, X$.

From the isomorphism
\[
\mathbf{Prof}(\mathbf{M},\mathbf{M})(\Hom{-,-}_{\mathbf{M}}, \Hom{\euler{\Phi}-,\euler{\Phi}-}) \simeq \int_{\mathbf{M}} \Hom{\euler{\Phi}-,\euler{\Phi} -} \simeq \mathbf{Cat}_{\Bbbk}(\euler{\Phi},\euler{\Phi})
\]
we conclude that the underlying profunctor morphism of $\eta$ corresponds to an endotransformation $\widehat{\eta}$ of $\euler{\Phi}$. The components of this transformation are $\widehat{\eta}_{X} := \eta_{X,X}(\on{id}_{X})$.

On the other hand, the counit $\varepsilon$ is given by a family $\setj{\varepsilon_{Y,Y'}^{X}: \Hom{Y, \euler{\Phi} X} \kotimes \Hom{\euler{\Phi} X,Y'} \rightarrow \Hom{Y,Y'}}$, extranatural in $X$ and natural in $Y,Y'$. We conclude that for any $f \in \Hom{Y, \euler{\Phi} X}$ and $g \in \Hom{\euler{\Phi} X, Y'}$ we have
\[
 \varepsilon_{Y,Y'}^{X}(f \otimes g) = g \circ \varepsilon_{\euler{\Phi} X, \euler{\Phi} X}^{X}(\on{id}_{\euler{\Phi} X} \otimes \on{id}_{\euler{\Phi} X}) \circ f.
\]
Similarly to the proof of \cite[Proposition~7.9.2]{Bo}, the triangle equations coming from the adjunction yield
\[
 \begin{aligned}
  g = \varepsilon(\on{id}_{\euler{\Phi} X} \otimes g) \circ \widehat{\eta}_{X} ; \quad f = \widehat{\eta}_{X} \circ \varepsilon(f \otimes \on{id}_{\euler{\Phi} X}),
 \end{aligned}
\]
for $f,g$ as above. Setting $f = \on{id}_{\euler{\Phi} X} = g$, we find that
\begin{equation}\label{UnitCounitIso}
\varepsilon(\on{id}_{\euler{\Phi} X} \otimes \on{id}_{\euler{\Phi} X}) = \widehat{\eta}_{X}^{-1}.
\end{equation}

Since $\varepsilon$ is a morphism of Tambara modules, the diagram
\[\begin{tikzcd}
	{\Hom{Y,\euler{\Phi} X} \kotimes \Hom{\euler{\Phi} X, Y'}} & {\Hom{Y, Y'}} \\
	{\Hom{\mathbf{N}\mathrm{F}Y,\euler{\Phi} \mathbf{M}\mathrm{F} X} \kotimes \Hom{\euler{\Phi} \mathbf{M}\mathrm{F} X, \mathbf{N}\mathrm{F} Y'}} & {\Hom{\mathbf{N}\mathrm{F}Y,\mathbf{N}\mathrm{F}Y'}}
	\arrow["{\ta_{\mathrm{F};Y,X}^{\mathtt{\Phi}} \otimes \ta_{\mathrm{F};Y,X}^{\mathtt{\Phi}^{\ast}}}", from=1-1, to=2-1]
	\arrow["{\varepsilon_{Y,Y'}^{X}}", from=1-1, to=1-2]
	\arrow["{\mathbf{N}\mathrm{F}_{Y,Y'}}", from=1-2, to=2-2]
	\arrow["{\varepsilon_{\mathbf{N}\mathrm{F}Y,\mathbf{N}\mathrm{F}Y'}^{\mathbf{M}\mathrm{F}X}}"', from=2-1, to=2-2]
\end{tikzcd}\]
commutes. Setting $Y = \euler{\Phi} X = Y'$ and chasing $\on{id}_{\euler{\Phi} X} \otimes \on{id}_{\euler{\Phi} X}$, we obtain
\[
{\varepsilon_{\mathbf{N}\mathrm{F}\euler{\Phi}X, \mathbf{N}\mathrm{F}\euler{\Phi}X}\big(\ta_{\mathrm{F}; \euler{\Phi} X, X}^{\mathtt{\Phi}}(\on{id}_{\euler{\Phi} X}) \otimes \ta_{\mathrm{F}; X, \euler{\Phi} X}^{\mathtt{\Phi}^{\!\ast}}(\on{id}_{\euler{\Phi} X})\big)} = {\mathbf{N}\mathrm{F}\big(\varepsilon_{\euler{\Phi} X, \euler{\Phi} X}(\on{id}_{\euler{\Phi} X} \otimes \on{id}_{\euler{\Phi} X})\big)}
\]
which, using equations \eqref{YonedaFormula} and \eqref{UnitCounitIso}, can be rewritten as
\begin{equation}\label{EqFromCounit}
 {\ta_{\mathrm{F}; X, \euler{\Phi} X}^{\mathtt{\Phi}^{\!\ast}}(\on{id}_{\euler{\Phi} X}) \circ \widehat{\eta}_{\mathbf{M}\mathrm{F}X}^{-1} \circ \ta_{\mathrm{F}; \euler{\Phi} X, X}^{\mathtt{\Phi}}(\on{id}_{\euler{\Phi} X})} = {\mathbf{N}\mathrm{F}\widehat{\eta}_{X}^{-1}}.
\end{equation}

On the other hand, $\eta$ being a morphism of Tambara modules makes the diagram
\[\begin{tikzcd}
	{\Hom{X,X'}} && {\int^{Y}\Hom{\euler{\Phi} X, Y} \kotimes \Hom{Y, \euler{\Phi} X'}} \\
	{\Hom{\mathbf{M}\mathrm{F}X, \mathbf{M}\mathrm{F}X'}} && {\int^{Y}\Hom{\euler{\Phi} \mathbf{M}\mathrm{F}X, Y} \kotimes \Hom{Y, \euler{\Phi} \mathbf{M}\mathrm{F} X'}}
	\arrow["{\mathbf{M}\mathrm{F}_{X,X'}}"', from=1-1, to=2-1]
	\arrow["{\eta_{X,X'}}", from=1-1, to=1-3]
	\arrow["{\eta_{\mathbf{M}\mathrm{F}X, \mathbf{M}\mathrm{F}X'}}"', from=2-1, to=2-3]
	\arrow["{\ta_{\euler{\Phi} X, \euler{\Phi} X'}^{\mathtt{\Phi}^{\ast}\circ \mathtt{\Phi}}}", from=1-3, to=2-3]
\end{tikzcd}\]
commute. Applying Yoneda lemma to the right half of the above diagram, we may rewrite it as
\[\begin{tikzcd}[row sep = scriptsize]
	{\Hom{X,X'}} && {\Hom{\euler{\Phi} X, \euler{\Phi} X'}} \\
	&& {\Hom{\mathbf{N}\mathrm{F}\euler{\Phi} X,\mathbf{N}\mathrm{F}\euler{\Phi} X'}} \\
	{\Hom{\mathbf{M}\mathrm{F}X, \mathbf{M}\mathrm{F}X'}} && {\Hom{\euler{\Phi} \mathbf{M}\mathrm{F}X,\euler{\Phi} \mathbf{M}\mathrm{F}X'}}
	\arrow["{\mathbf{M}\mathrm{F}_{X,X'}}"', from=1-1, to=3-1]
	\arrow["{\mathbf{N}\mathrm{F}_{\euler{\Phi} X, \euler{\Phi} X'}}", from=1-3, to=2-3]
	\arrow["{\Hom{\ta^{\mathtt{\Phi}^{\!\ast}}_{\mathrm{F};X,\euler{\Phi} X}(\on{id}_{\euler{\Phi} X}), \ta^{\mathtt{\Phi}}_{\mathrm{F};\euler{\Phi} X', X'}(\on{id}_{\euler{\Phi} X'})}}", from=2-3, to=3-3]
	\arrow["{\eta_{X,X'}}", shift left=1, from=1-1, to=1-3]
	\arrow["{\eta_{\mathbf{M}\mathrm{F}X, \mathbf{M}\mathrm{F}X'}}"', shift right=1, from=3-1, to=3-3]
\end{tikzcd}\]
Setting $X = X'$ and chasing $\on{id}_{X}$ we find
\begin{equation}\label{EqFromUnit}
   \widehat{\eta}_{\mathbf{M}\mathrm{F}X} = \ta_{\mathrm{F};\euler{\Phi} X, X}^{\mathtt{\Phi}}(\on{id}_{\euler{\Phi} X}) \circ \mathbf{N}\mathrm{F}(\widehat{\eta}_{X}) \circ \ta_{\mathrm{F};X,\euler{\Phi} X}^{\mathtt{\Phi}^{\!\ast}}(\on{id}_{\euler{\Phi} X})
\end{equation}
Since $\widehat{\eta}$ is an isomorphism, the equations \eqref{EqFromCounit} and \eqref{EqFromUnit} imply that $\ta^{\mathtt{\Phi}}_{\mathrm{F};\euler{\Phi} X, X}(\on{id}_{\euler{\Phi} X})$ is simultaneously both a split epimorphism and a split monomorphism, and thus also an isomorphism.
\end{proof}

Combining the results of this section, we find the following:
\begin{theorem}\label{Section4Main}
 The proarrow equipment $\mathbb{P}$ is a map equipment. A Tambara module between Cauchy complete $\csym{C}$-module categories has a right adjoint if and only if it is representable.
\end{theorem}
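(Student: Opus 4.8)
The plan is to assemble the statement from the results already established in this section, since the theorem is essentially a repackaging of what precedes it. First, the assertion that $\mathbb{P}$ is a proarrow equipment combines Definition~\ref{PsPDef}, in which $\mathbb{P}$ is constructed as a pseudofunctor that is the identity on objects, with Proposition~\ref{PisFF}, which establishes local fullness and faithfulness. For the map-equipment property, I would invoke Definition~\ref{ProArrowDef}: it remains to check that every $1$-morphism in the image of $\mathbb{P}$ --- i.e.\ every Tambara module of the form $\Hom{-,\euler{\Phi}-}$ for a $\csym{C}$-module functor $\euler{\Phi}$ --- admits a right adjoint in $\csym{C}\!\on{-Tamb}$, and this is exactly what the preceding proposition supplies, by lifting the $\mathbf{Prof}_{\Bbbk}$-adjunction $(\Hom{-,\euler{\Phi}-},\Hom{\euler{\Phi}-,-},\eta,\varepsilon)$ to $\csym{C}\!\on{-Tamb}$.

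For the second sentence I would argue the two implications separately. The ``if'' direction is immediate and uses no completeness hypothesis: a representable Tambara module $\mathtt{\Phi}\simeq\Hom{-,\euler{\Phi}-}$ has $\Hom{\euler{\Phi}-,-}$ as a right adjoint by the adjunction just recalled. The ``only if'' direction is precisely the content of the proposition stated immediately before this theorem, which shows that a Tambara module $\mathbf{M}\xslashedrightarrow{}\mathbf{N}$ with $\mathbf{N}$ Cauchy complete that admits a right adjoint in $\csym{C}\!\on{-Tamb}$ is isomorphic to $\Hom{-,\euler{\Phi}-}$ for a strong $\csym{C}$-module functor $\euler{\Phi}$; since the hypothesis of the theorem --- both $\mathbf{M}$ and $\mathbf{N}$ Cauchy complete --- is stronger, this applies verbatim.

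I expect the proof itself to require no genuinely new argument; the real content has already been absorbed into the preceding proposition, whose main obstacle was the upgrade from a \emph{lax} $\csym{C}$-module functor structure on the representing functor $\euler{\Phi}$, furnished by Lemma~\ref{RepresentableGivesLax}, to a \emph{strong} one. That step turns on showing the lax coherence maps $\ta^{\mathtt{\Phi}}_{\mathrm{F};\euler{\Phi}X,X}(\on{id}_{\euler{\Phi}X})$ are invertible, which one extracts from the triangle identities for the adjoint pair together with the Tambara-morphism property of both unit and counit, forcing these maps to be simultaneously split mono and split epi. Once that is in hand, the present theorem follows by citing the assembled facts.
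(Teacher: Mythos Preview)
Your proposal is correct and matches the paper's approach exactly: the theorem is stated with no separate proof beyond the sentence ``Combining the results of this section, we find the following,'' and you have correctly identified and cited the ingredients (Definition~\ref{PsPDef} and Proposition~\ref{PisFF} for the equipment, the lifted adjunction for the map property, and the immediately preceding proposition for the converse direction under Cauchy completeness). Your additional paragraph recapitulating the lax-to-strong upgrade is accurate but not needed for the proof itself.
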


\subsection{\texorpdfstring{$\csym{C}\!\on{-Tamb}$}{C-Tamb} as a tame bicategory}

Recall that for any monoidal category $\csym{A}$ we may consider monoids, modules and bimodules internal to $\csym{A}$, see e.g. \cite[Section~4.1]{GJ}. If $\csym{A}$ admits reflective coequalizers, we may define the balanced tensor product of a right module with a left module: if $A$ is a monoid in $\csym{A}$, $M$ is a right $A$-module and $N$ is a left $A$-module, we let $M \otimes_{A} N$ be the coequalizer of
\[\begin{tikzcd}
	{M \otimes A \otimes N} && {M \otimes N}
	\arrow["{M \otimes \mathtt{la}_{N}}", shift left=1, from=1-1, to=1-3]
	\arrow["{\mathtt{ra}_{M} \otimes N}"', shift right=1, from=1-1, to=1-3]
\end{tikzcd}\]
If the tensor product in $\csym{A}$ preserves reflexive coequalizers and $M$ and $N$ are instead taken to be a $B$-$A$-bimodule and an $A$-$C$-bimodule respectively (for further monoids $B,C \in \csym{A}$), then $M \otimes_{A} N$ can be endowed with the structure of a $B$-$C$-bimodule in the evident way - see \cite[Section~4.2]{GJ} for a detailed account. In particular, we obtain a bicategory $\on{Bimod}(\csym{A})$. In that case, following \cite[Definition~4.2.1]{GJ}, we say that $\csym{A}$ is {\it tame}. More generally, if $\csym{A}$ is a bicategory, we say that $\csym{A}$ is tame if $\csym{A}(\mathtt{i,j})$ admits reflexive coequalizers for all $\mathtt{i,j} \in \on{Ob}\csym{A}$, and if horizontal composition in $\csym{A}$ preserves coequalizers in each variable. In particular, if $\csym{A}$ is tame, then so is the monoidal category $\csym{A}(\mathtt{i,i})$, for every $\mathtt{i} \in \on{Ob}\csym{A}$.

\begin{proposition}\label{Tameness}
 $\csym{C}\!\on{-Tamb}$ is tame.
\end{proposition}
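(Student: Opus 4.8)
The plan is to verify the two conditions in the definition of a tame bicategory: first, that every hom-category $\csym{C}\!\on{-Tamb}(\mathbf{M},\mathbf{N})$ admits reflexive coequalizers, and second, that horizontal composition $-\diamond-$ preserves reflexive coequalizers in each variable. The organizing principle is that colimits of Tambara modules are computed pointwise on their underlying profunctors, so that both conditions reduce to the corresponding well-known facts for $\mathbf{Prof}$: that $\mathbf{Prof}(\mathbf{M},\mathbf{N})$ is a cocomplete category, and that composition of profunctors, being given by the coend in Equation~\eqref{ProfunctorComposition}, preserves colimits in each variable.

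\textbf{Step 1: reflexive coequalizers in $\csym{C}\!\on{-Tamb}(\mathbf{M},\mathbf{N})$.} Given a reflexive parallel pair $\mathtt{f},\mathtt{g}\colon \mathtt{\Phi} \rightrightarrows \mathtt{\Psi}$ of Tambara morphisms, I would first form the coequalizer $\mathtt{q}\colon \mathtt{\Psi} \Rightarrow \mathtt{\Xi}$ of the underlying profunctor morphisms; it exists and is computed pointwise in $\mathbf{Vec}_{\Bbbk}$. For all $\mathrm{F} \in \csym{C}$, $X \in \mathbf{M}$ and $Y \in \mathbf{N}$, the composite $\mathtt{q}_{\mathbf{N}\mathrm{F}Y,\mathbf{M}\mathrm{F}X}\circ\ta^{\mathtt{\Psi}}_{\mathrm{F};Y,X}$ coequalizes $\mathtt{f}_{Y,X}$ and $\mathtt{g}_{Y,X}$, since $\mathtt{f},\mathtt{g}$ are Tambara morphisms and $\mathtt{q}\mathtt{f}=\mathtt{q}\mathtt{g}$; hence it factors uniquely through $\mathtt{q}_{Y,X}$, which defines the candidate structure maps $\ta^{\mathtt{\Xi}}_{\mathrm{F};Y,X}$. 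One then checks that this collection is natural in $X,Y$, extranatural in $\mathrm{F}$, and satisfies the multiplicativity and unitality axioms: each such identity follows by precomposing with the epimorphism $\mathtt{q}_{Y,X}$ (or with a tensor product, resp.\ a coend, of such epimorphisms) and invoking the corresponding identity for $\ta^{\mathtt{\Psi}}$, using that epimorphisms in $\mathbf{Vec}_{\Bbbk}$ are cancellable and are preserved by $-\kotimes V$ and by the coends defining composite profunctors. Then $\mathtt{q}$ is a Tambara morphism by construction, and it is the coequalizer of $\mathtt{f},\mathtt{g}$ in $\csym{C}\!\on{-Tamb}(\mathbf{M},\mathbf{N})$: any Tambara morphism $\mathtt{h}$ out of $\mathtt{\Psi}$ coequalizing the pair factors uniquely through $\mathtt{q}$ as a profunctor morphism, and that factorization is again a Tambara morphism because $\mathtt{q}$ is a pointwise epimorphism. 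Alternatively, one may invoke the evident extension of Theorem~\ref{LeftAdjTambaraPastro} to arbitrary $\mathbf{M},\mathbf{N}$: the resulting forgetful functor to $\mathbf{Prof}(\mathbf{M},\mathbf{N})$ is monadic, and its monad --- built from coends and from tensoring with hom-spaces --- preserves all small colimits, so the forgetful functor creates all small colimits; in particular $\csym{C}\!\on{-Tamb}(\mathbf{M},\mathbf{N})$ has all small colimits, computed pointwise on underlying profunctors.

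\textbf{Step 2: $-\diamond-$ preserves reflexive coequalizers in each variable.} By Definition~\ref{CompositeTambara}, the underlying profunctor of $\mathtt{\Psi}\diamond\mathtt{\Phi}$ is the coend $(Z,A)\mapsto\int^{W}\mathtt{\Psi}(Z,W)\kotimes\mathtt{\Phi}(W,A)$. Since colimits commute with colimits and $-\kotimes V$ preserves colimits for every $V\in\mathbf{Vec}_{\Bbbk}$, this coend-composition of profunctors preserves all small colimits in each variable, in particular reflexive coequalizers. By Step~1 the forgetful functor $\csym{C}\!\on{-Tamb}(\mathbf{M},\mathbf{N})\to\mathbf{Prof}(\mathbf{M},\mathbf{N})$ creates (hence preserves and reflects) reflexive coequalizers, and composition of Tambara modules has underlying composition of profunctors; chaining these facts shows that $\mathtt{\Sigma}\diamond(-)$ and $(-)\diamond\mathtt{\Phi}$ preserve reflexive coequalizers in $\csym{C}\!\on{-Tamb}$. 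Together with Step~1 this gives tameness.

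\textbf{Expected main obstacle.} The only genuinely nontrivial step is the verification in Step~1 that the pointwise coequalizer of the underlying profunctors carries a (necessarily unique) compatible Tambara structure --- that is, the routine but somewhat lengthy diagram chases establishing multiplicativity, unitality and extranaturality of $\ta^{\mathtt{\Xi}}$. Each of these reduces to cancelling the epimorphisms $\mathtt{q}$ and their images under $-\kotimes V$ and under the coends in play; everything else is formal, resting only on the cocompleteness of $\mathbf{Vec}_{\Bbbk}$ and the exactness of $-\kotimes V$.
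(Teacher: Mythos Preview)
Your proposal is correct and follows essentially the same route as the paper: both construct the coequalizer in $\csym{C}\!\on{-Tamb}(\mathbf{M},\mathbf{N})$ from the pointwise coequalizer of underlying profunctors by inducing the Tambara structure via the universal property, and both deduce preservation of coequalizers under $-\diamond-$ from cocontinuity of $\kotimes$ in $\mathbf{Vec}_{\Bbbk}$ together with commutation of colimits. Your alternative monadicity argument in Step~1 is a legitimate shortcut the paper alludes to elsewhere but does not invoke in this proof.
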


\begin{proof}
 Let $\mathbf{M},\mathbf{N} \in \csym{C}\!\on{-Tamb}$. Forgetting the $\csym{C}$-module structures, we may consider the category $\mathbf{Prof}_{\Bbbk}(\mathbf{M},\mathbf{N})$. Since $\mathbf{Prof}_{\Bbbk}(\mathbf{M},\mathbf{N}) = [\mathbf{N}^{\on{op}} \kotimes \mathbf{M},\mathbf{Vec}_{\Bbbk}]$, this category is cocomplete, and the colimits are constructed pointwise in $\mathbf{Vec}_{\Bbbk}$. Let $\mathtt{p,b} \in \csym{C}\!\on{-Tamb}(\mathbf{M},\mathbf{N})(\mathtt{\Psi},\mathtt{\Psi'})$. To construct the coequalizer of $\mathtt{p}$ and $\mathtt{b}$, it suffices to show that the evident assignment
 \[\begin{tikzcd}
	{\mathtt{\Psi}(Y,X)} & {\mathtt{\Psi}'(Y,X)} & {\on{coeq}(\mathtt{p},\mathtt{b})_{Y,X}} \\
	{\mathtt{\Psi}(\mathbf{N}\mathrm{F}Y,\mathbf{M}\mathrm{F}X)} & {\mathtt{\Psi}'(\mathbf{N}\mathrm{F}Y,\mathbf{M}\mathrm{F}X)} & {\on{coeq}(\mathtt{p},\mathtt{b})_{\mathbf{N}\mathrm{F}Y,\mathbf{N}\mathrm{F}X}}
	\arrow["{\ta^{\mathtt{\Psi}}_{\mathrm{F};Y,X}}", from=1-1, to=2-1]
	\arrow["{\ta^{\mathtt{\Psi}'}_{\mathrm{F};Y,X}}", from=1-2, to=2-2]
	\arrow["{\mathtt{p}_{Y,X}}", shift left=1, from=1-1, to=1-2]
	\arrow["{\mathtt{b}_{Y,X}}"', shift right=1, from=1-1, to=1-2]
	\arrow["{\mathtt{p}_{\mathbf{N}\mathrm{F}Y,\mathbf{M}\mathrm{F}X}}", shift left=1, from=2-1, to=2-2]
	\arrow["{\mathtt{b}_{\mathbf{N}\mathrm{F}Y,\mathbf{M}\mathrm{F}X}}"', shift right=1, from=2-1, to=2-2]
	\arrow[from=1-2, to=1-3]
	\arrow[from=2-2, to=2-3]
	\arrow[dashed, from=1-3, to=2-3]
\end{tikzcd}\]
for $\mathrm{F} \in \csym{C}, Y \in \mathbf{N}$ and $X \in \mathbf{M}$, gives a well-defined Tambara structure, which makes the profunctor morphisms from $\on{coeq}(\mathtt{p},\mathtt{b})$, induced by Tambara morphisms from $\mathtt{\Psi}'$, coequalizing $\euler{p}$ and $\euler{b}$, into Tambara morphisms. For the first claim, observe that since $\mathtt{\Psi}$ and $\mathtt{\Psi}'$ both satisfy the Tambara axiom, and since $\mathtt{p},\mathtt{b}$ are Tambara morphisms, the Tambara axiom diagrams for $\mathtt{\Psi},\mathtt{\Psi}'$ assemble to a commutative diagram of coequalizer diagrams. Taking the colimit of each, and the resulting morphisms between the colimits, shows that the Tambara axiom holds for the assignment above.

To see that also the latter claim holds, let $\mathtt{g}: \mathtt{\Psi}' \Rightarrow \mathtt{\Sigma}$ be a Tambara morphism coequalizing $\mathtt{p}$ and $\mathtt{b}$. We let $\overline{\mathtt{g}}: \on{coeq}(\mathtt{p},\mathtt{b}) \Rightarrow \mathtt{\Sigma}$ be the profunctor morphism coming from the universal property of $\on{coeq}(\mathtt{p},\mathtt{b})$ as the coequalizer in $\mathbf{Prof}_{\Bbbk}(\mathbf{M},\mathbf{N})$. It remains to observe that the right-back wall of the diagram
\[\begin{tikzcd}[ampersand replacement=\&]
	\&\& {\on{coeq}(\mathtt{p},\mathtt{b})_{Y,X}} \\
	{\mathtt{\Psi}(Y,X)} \& {\mathtt{\Psi}'(Y,X)} \&\& {\mathtt{\Sigma}(Y,X)} \\
	\&\& {\on{coeq}(\mathtt{p},\mathtt{b})_{\mathbf{N}\mathrm{F}Y,\mathbf{N}\mathrm{F}X}} \\
	{\mathtt{\Psi}(\mathbf{N}\mathrm{F}Y,\mathbf{M}\mathrm{F}X)} \& {\mathtt{\Psi}'(\mathbf{N}\mathrm{F}Y,\mathbf{M}\mathrm{F}X)} \&\& {\mathtt{\Sigma}(\mathbf{N}\mathrm{F}Y,\mathbf{M}\mathrm{F}X)}
	\arrow["{\ta^{\mathtt{\Psi}}_{\mathrm{F};Y,X}}", from=2-1, to=4-1]
	\arrow["{\ta^{\mathtt{\Psi}'}_{\mathrm{F};Y,X}}", from=2-2, to=4-2]
	\arrow["{\mathtt{p}_{Y,X}}", shift left=1, from=2-1, to=2-2]
	\arrow["{\mathtt{b}_{Y,X}}"', shift right=1, from=2-1, to=2-2]
	\arrow["{\mathtt{p}_{\mathbf{N}\mathrm{F}Y,\mathbf{M}\mathrm{F}X}}", shift left=1, from=4-1, to=4-2]
	\arrow["{\mathtt{b}_{\mathbf{N}\mathrm{F}Y,\mathbf{M}\mathrm{F}X}}"', shift right=1, from=4-1, to=4-2]
	\arrow[from=2-2, to=1-3]
	\arrow[from=4-2, to=3-3]
	\arrow[dashed, from=1-3, to=3-3]
	\arrow["{\ta^{\mathtt{\Sigma}}_{\mathrm{F};Y,X}}", from=2-4, to=4-4]
	\arrow["{\mathtt{g}_{Y,X}}"{description, pos=0.6}, from=2-2, to=2-4]
	\arrow["{\mathtt{g}_{\mathbf{N}\mathrm{F}Y,\mathbf{M}\mathrm{F}X}}"', from=4-2, to=4-4]
	\arrow["{\overline{\mathtt{g}}_{Y,X}}", from=1-3, to=2-4]
	\arrow["{\overline{\mathtt{g}}_{\mathbf{N}\mathrm{F}Y,\mathbf{M}\mathrm{F}X}}"{pos=0.6}, from=3-3, to=4-4]
\end{tikzcd}\]
commutes. Since the unlabelled maps are coequalizer maps, said commutativity follows from the fact that all paths in the diagram with $\mathtt{\Psi}'(Y,X)$ as domain and $\mathtt{\Sigma}(\mathbf{N}\mathrm{F}Y,\mathbf{M}\mathrm{F}X)$ as codomain give the same morphism, which in turn follows from the commutativity of the bottom, top, front, and left-back walls of the diagram.

To see that the horizontal composition in $\csym{C}\!\on{-Tamb}$ preserves the above constructed coequalizers, let $\mathtt{\Phi}: \mathbf{N} \xslashedrightarrow{} \mathbf{Q}$. We then have
\[
\begin{aligned}
 &\int^{Y \in \mathbf{N}} \mathtt{\Phi}(Z,Y) \kotimes \on{coeq}(\mathtt{p},\mathtt{b})_{Y,X} \simeq \int^{Y \in \mathbf{N}}\on{coeq}\big(\mathtt{\Phi}(Z,Y) \kotimes \mathtt{p}_{Y,X}, \mathtt{\Phi}(Z,Y) \kotimes \mathtt{b}_{Y,X}\big) \\
 &\simeq \on{coeq}\Bigg(
 \begin{tikzcd}[ampersand replacement=\&,column sep = huge]
	{\int^{Y} \mathtt{\Phi}(Z,Y) \kotimes \mathtt{\Psi}(Y,X)} \& {\int^{Y} \mathtt{\Phi}(Z,Y) \kotimes \mathtt{\Psi}'(Y,X)}
	\arrow["{\setj{\mathtt{\Phi}(Z,Y) \otimes \mathtt{p}_{Y,X}}}", shift left=3, from=1-1, to=1-2]
	\arrow["{\setj{\mathtt{\Phi}(Z,Y) \otimes \mathtt{b}_{Y,X}}}"', shift right=2, from=1-1, to=1-2]
\end{tikzcd}\Bigg),
\end{aligned}
\]
where the first isomorphism follows from the cocontinuity of the tensor product in $\mathbf{Vec}_{\Bbbk}$, and the latter from the commutativity of colimits. Functoriality of each side in $X,Z$ follows from functoriality of colimits, and thus the isomorphisms are natural in $X$ and $Z$. The Tambara structure on the left-hand side is induced by the collections $\setj{\ta^{\mathtt{\Phi}}_{\mathrm{F};Z,Y} \otimes \on{coeq}\big(\ta^{\mathtt{\Psi}}_{\mathrm{F};Y,X},\ta^{\mathtt{\Psi}'}_{\mathrm{F};Y,X}\big)}$, while the Tambara structure on the right-hand side is induced by the collections $\setj{\on{coeq}(\ta^{\mathtt{\Phi}}_{\mathrm{F};Y,Z} \otimes \ta^{\mathtt{\Psi}}_{\mathrm{F};Y,X},\ta^{\mathtt{\Phi}}_{\mathrm{F};Y,Z} \otimes \ta^{\mathtt{\Psi}'}_{\mathrm{F};Y,X})}$.  The left-hand side collection is mapped to the right-hand side collection under the above isomorphisms, showing that we have an isomorphism of Tambara modules.
\end{proof}

\section{Defining the pseudofunctor \texorpdfstring{$\na$}{na}}\label{DefiningNa}

Recall that by bicategorical Yoneda lemma, for any $\mathbf{M} \in \csym{C}\!\on{-Mod}$, we have an equivalence
\begin{equation}\label{BicatYo}
\mathbf{M}\xiso \csym{C}\!\on{-Mod}(\csym{C},\mathbf{M}),
\end{equation}
sending $X \in \mathbf{M}$ to $\euler{\Phi}_{X}$ satisfying $\euler{\Phi}_{X}(\mathrm{F}) = \mathbf{M}\mathrm{F}(X)$, for $\mathrm{F} \in \csym{C}$. For a detailed account, see \cite[Section~8.3]{JY}

Given $\mathtt{\Psi} \in \csym{C}\!\on{-Tamb}(\mathbf{M},\mathbf{N})$ and objects $X \in \mathbf{M}, Y \in \mathbf{N}$, we denote the Tambara module $\mathtt{\Psi} \smalltriangleup \euler{\Phi}_{X} \smalltriangledown \euler{\Phi}_{Y} \in \tam$ by $\mathtt{\Psi}[Y,X]$.
Its underlying profunctor is given by $(\mathrm{F},\mathrm{G}) \mapsto \mathtt{\Psi}(\mathbf{N}\mathrm{F}Y, \mathbf{M}\mathrm{G}X)$. If $\mathbf{M} = \mathbf{N}$ and $\mathtt{\Psi} = \Hom{-,-}_{\mathbf{M}}$, we simply write $[Y,X]$.
By definition, the Tambara structure of $\mathtt{\Psi}[Y,X]$ is given by
\begin{equation}\label{TambaraOnRestriction}
\begin{tikzcd}[column sep = huge]
	{\mathtt{\Psi}(\mathbf{N}\mathrm{F}Y, \mathbf{M}\mathrm{G}X)} & {\mathtt{\Psi}(\mathbf{N}\mathrm{H}\mathbf{N}\mathrm{F}Y, \mathbf{M}\mathrm{H}\mathbf{M}\mathrm{G}X)} & {\mathtt{\Psi}(\mathbf{N}\mathrm{HF}Y, \mathbf{M}\mathrm{HG}X)}
	\arrow["{\ta^{\mathtt{\Psi}}_{\mathrm{H;\mathbf{N}\mathrm{F}Y, \mathbf{M}\mathrm{G}X}}}", from=1-1, to=1-2]
	\arrow["{\mathtt{\Psi}(\mathbf{n}_{\mathrm{H},\mathrm{F}}^{-1},\mathbf{m}_{\mathrm{H},\mathrm{G}})}", from=1-2, to=1-3]
\end{tikzcd}
\end{equation}

\begin{proposition}\label{EndMonoid}
 Given a module category $\mathbf{M}$ and an object $X \in \mathbf{M}$, we endow $[X,X] \in \tam$ with the structure of a monoid object as follows:
 \begin{enumerate}
  \item
The unit morphism $\mathtt{e}: \csym{C}(-,-) \rightarrow [X,X]$ is that corresponding to $(\mathbf{M}\on{id}_{\mathbb{1}})_{X}$ under the isomorphism
\[
 \on{Hom}_{\ccf{C}\!\on{-Tamb}(\ccf{C},\ccf{C})}(\csym{C}(-,-), [X,X]) \simeq [X,X](\mathbb{1},\mathbb{1}) = \Hom{X,X}
\]
observed in Remark \ref{TambaraElements}. It is thus given by
 \[
 \begin{aligned}
  \mathtt{e}_{\mathrm{F,G}}: \csym{C}(\mathrm{F,G}) &\rightarrow [X,X](\mathrm{F,G}) = \Hom{\mathbf{M}\mathrm{F}X, \mathbf{M}\mathrm{G}X} \\
  \mathrm{b} &\mapsto (\mathbf{M}\mathrm{b})_{X}
 \end{aligned}.
 \]
 \item
 The multiplication morphism $\mathtt{m}$ is given by
 \[
 \begin{aligned}
  \int^{\mathrm{H}} \Hom{\mathbf{M}\mathrm{F}X, \mathbf{M}\mathrm{H}X} \kotimes \Hom{\mathbf{M}\mathrm{H}X, \mathbf{M}\mathrm{G}X} &\rightarrow \Hom{\mathbf{M}\mathrm{F}X,\mathbf{M}\mathrm{G}X} \\
  \mathrm{b} \otimes \mathrm{c} &\mapsto \mathrm{c \circ b}
 \end{aligned}
 \]
  \end{enumerate}
\end{proposition}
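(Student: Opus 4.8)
The plan is to recognize $[X,X]$ as the composite $f^{\ast}\diamond f$ of a $1$-morphism of $\csym{C}\!\on{-Tamb}$ with its right adjoint, for which a monoid structure is automatic, and then to match that structure with the maps $\mathtt{e}$ and $\mathtt{m}$ written in the statement. First I would use the bicategorical Yoneda equivalence~\eqref{BicatYo}: the assignment $\mathrm{F}\mapsto\mathbf{M}\mathrm{F}X$ underlies a $\csym{C}$-module functor $\euler{\Phi}_{X}\colon\csym{C}\rightarrow\mathbf{M}$, so $f:=\mathbb{P}(\euler{\Phi}_{X})=\Hom{-,\euler{\Phi}_{X}-}\colon\csym{C}\xslashedrightarrow{}\mathbf{M}$ is a $1$-morphism in $\csym{C}\!\on{-Tamb}$. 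By Theorem~\ref{Section4Main} together with Proposition~\ref{ContravariantProArrow} it has a right adjoint $f^{\ast}:=\mathbb{B}(\euler{\Phi}_{X})=\Hom{\euler{\Phi}_{X}-,-}\colon\mathbf{M}\xslashedrightarrow{}\csym{C}$. Evaluating the pseudonatural equivalence $\mathbb{B}\kotimes\mathbb{P}\simeq\mathbb{H}$ from the remark following Proposition~\ref{ContravariantProArrow} at the identity Tambara module $\Hom{-,-}_{\mathbf{M}}$, and using that the latter is a unit for $\diamond$, yields an isomorphism of Tambara modules $f^{\ast}\diamond f\xiso[X,X]$; on underlying profunctors this is the co-Yoneda isomorphism $\int^{W\in\mathbf{M}}\Hom{\mathbf{M}\mathrm{F}X,W}\kotimes\Hom{W,\mathbf{M}\mathrm{G}X}\xiso\Hom{\mathbf{M}\mathrm{F}X,\mathbf{M}\mathrm{G}X}$.

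Next I would invoke the standard fact that an adjunction $f\dashv f^{\ast}$ in a bicategory equips $f^{\ast}\diamond f$ with the structure of a monoid in the monoidal category of endo-$1$-cells, with unit the unit $\mathtt{\eta}$ of the adjunction and multiplication $f^{\ast}\diamond\mathtt{\varepsilon}\diamond f$ (suitably composed with unitor and associator cells), $\mathtt{\varepsilon}$ the counit. Transporting this monoid structure along the isomorphism $f^{\ast}\diamond f\xiso[X,X]$ makes $[X,X]$ a monoid in $\tam$, and it remains to identify the transported unit and multiplication with $\mathtt{e}$ and $\mathtt{m}$. For the unit, the description of the adjunction in Section~\ref{Adjunctions} gives $\mathtt{\eta}$ as $(\euler{\Phi}_{X})_{\mathrm{F},\mathrm{G}}$ followed by a Yoneda isomorphism; since $(\euler{\Phi}_{X})_{\mathrm{F},\mathrm{G}}(\mathrm{b})=(\mathbf{M}\mathrm{b})_{X}$, after identifying the coend with the hom-space this becomes $\mathrm{b}\mapsto(\mathbf{M}\mathrm{b})_{X}$, which by Remark~\ref{TambaraElements} is exactly the Tambara morphism $\mathtt{e}$ corresponding to $(\mathbf{M}\on{id}_{\mathbb{1}})_{X}=\on{id}_{\mathbf{M}\mathbb{1}X}$. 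For the multiplication, $\mathtt{\varepsilon}$ sends $a\otimes b$ to $b\circ a$, and chasing a class $\mathrm{b}\otimes\mathrm{c}$ through $f^{\ast}\diamond\mathtt{\varepsilon}\diamond f$ and the co-Yoneda identifications produces $\mathrm{c}\circ\mathrm{b}$, which is $\mathtt{m}$.

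I expect the main obstacle to be precisely this last identification: one has to carry the coherence data — the Fubini isomorphisms making up the associator of $\tam$, the Yoneda unitors, and the module constraints of $\mathbf{M}$ entering the Tambara structures~\eqref{TambaraOnRestriction} and Definition~\ref{CompositeTambara} — through the chain of isomorphisms and check that nothing obstructs the collapse to plain composition in $\mathbf{M}$. Equivalently, one can skip the abstract route and argue directly, which is what I would actually write out: (i) $\mathtt{e}$ is a Tambara morphism because, expanding~\eqref{TambaraOnRestriction}, the required square reduces to naturality of the module constraint $\mathbf{m}$ of $\mathbf{M}$ together with functoriality of $\mathbf{M}$; (ii) $\mathtt{m}$ is well defined on the coend by associativity of composition in $\mathbf{M}$, and it is a Tambara morphism because the module constraints of $\mathbf{M}$ inserted at the middle object $\mathbf{M}\mathrm{H}X$ by the composite Tambara structure of Definition~\ref{CompositeTambara} cancel against their inverses, leaving precisely the Tambara structure~\eqref{TambaraOnRestriction} of $[X,X]$; (iii) the left and right unit laws are the Yoneda unitors of Lemma~\ref{ProYoneda} written out, and associativity is associativity of composition in $\mathbf{M}$ read across the Fubini associator. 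Either way, the only genuinely computational point is the cancellation of coherence cells, which should be spelled out rather than suppressed.
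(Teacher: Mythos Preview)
Your proposal is correct. The direct verification you outline in (i)--(iii) is exactly what the paper does: it checks that $\mathtt{m}$ is a Tambara morphism via a diagram reducing to functoriality of $\mathbf{M}\mathrm{K}$ (the cancellation you describe), observes that associativity is associativity of composition in $\mathbf{M}$, and spells out left unitality by unpacking the Yoneda unitor (right unitality being analogous). The paper does not separately verify that $\mathtt{e}$ is a Tambara morphism, since this is built into its description via Remark~\ref{TambaraElements}.

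Your abstract framing via the adjunction $f\dashv f^{\ast}$ in $\csym{C}\!\on{-Tamb}$ is a genuine alternative that the paper does not take at this point, although all the ingredients are in place from Section~\ref{Adjunctions} and the remark after Proposition~\ref{ContravariantProArrow}. It buys a conceptual explanation of \emph{why} the monoid structure exists (the monoid axioms come for free from the triangle identities) and connects the construction to the proarrow-equipment picture developed just before. The cost is that one still has to transport along the co-Yoneda isomorphism $f^{\ast}\diamond f\xiso[X,X]$ and identify the resulting unit and multiplication with the explicit $\mathtt{e}$ and $\mathtt{m}$, which is precisely the coherence chase you flag. The paper's direct route sidesteps that chase and is accordingly shorter; your route is more structural and would make the subsequent bimodule structures (Proposition~\ref{HomBimod}) equally automatic.
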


\begin{proof}
 It is clear that $\mathtt{m}$ is a well-defined morphism of profunctors. It is also a morphism of Tambara modules, since the diagram
 \[\begin{tikzcd}
	{\Hom{\mathbf{M}\mathrm{F}X, \mathbf{M}\mathrm{H}X} \kotimes \Hom{\mathbf{M}\mathrm{H}X, \mathbf{M}\mathrm{G}X}} && {\Hom{\mathbf{M}\mathrm{F}X, \mathbf{M}\mathrm{G}X}} \\
	{\Hom{\mathbf{M}\mathrm{K}\mathbf{M}\mathrm{F}X, \mathbf{M}\mathrm{K}\mathbf{M}\mathrm{H}X} \kotimes \Hom{\mathbf{M}\mathrm{K}\mathbf{M}\mathrm{H}X, \mathbf{M}\mathrm{K}\mathbf{M}\mathrm{G}X}} && {\Hom{\mathbf{M}\mathrm{K}\mathbf{M}\mathrm{F}X, \mathbf{M}\mathrm{K}\mathbf{M}\mathrm{G}X}} \\
	{\Hom{\mathbf{M}\mathrm{KF}X, \mathbf{M}\mathrm{KH}X} \kotimes \Hom{\mathbf{M}\mathrm{KH}X, \mathbf{M}\mathrm{KG}X}} && {\Hom{\mathbf{M}\mathrm{KF}X, \mathbf{M}\mathrm{KG}X}}
	\arrow["{\mathbf{M}\mathrm{K}_{\mathbf{M}\mathrm{F}X, \mathbf{M}\mathrm{H}X} \kotimes \mathbf{M}\mathrm{K}_{\mathbf{M}\mathrm{H}X, \mathbf{M}\mathrm{G}X}}"', from=1-1, to=2-1]
	\arrow["{\Hom{\mathbf{m}_{\mathrm{K,F}}^{-1}, \mathbf{m}_{\mathrm{K,H}}} \otimes \Hom{\mathbf{m}_{\mathrm{K,H}}^{-1}, \mathbf{m}_{\mathrm{K,G}}}}"', from=2-1, to=3-1]
	\arrow["{\mathtt{m}_{\mathrm{H;F,G}}}", from=1-1, to=1-3]
	\arrow["{\mathbf{M}\mathrm{K}_{\mathbf{M}\mathrm{F}X, \mathbf{M}\mathrm{G}X}}", from=1-3, to=2-3]
	\arrow["{\Hom{\mathbf{m}_{\mathrm{K,F}}^{-1}, \mathbf{m}_{\mathrm{K,G}}}}", from=2-3, to=3-3]
	\arrow["{\mathtt{m}_{\mathrm{KH;KF,KG}}}", from=3-1, to=3-3]
\end{tikzcd}\]
 commutes by functoriality of $\mathbf{M}\mathrm{K}$.
  Associativity of the multiplication morphism follows from the associativity of composition in $\mathbf{M}$.
 Left unitality follows by noting that the unit map $\int^{\mathrm{H}} \csym{C}(\mathrm{F,H}) \kotimes \Hom{\mathbf{M}\mathrm{H}X, \mathbf{M}\mathrm{G}X} \rightarrow \Hom{\mathbf{M}\mathrm{F}X, \mathbf{M}\mathrm{G}X}$
 is induced by the maps $\csym{C}(\mathrm{F,H}) \kotimes \Hom{\mathbf{M}\mathrm{H}X, \mathbf{M}\mathrm{G}X} \rightarrow \Hom{\mathbf{M}\mathrm{F}X, \mathbf{M}\mathrm{G}X}$
 given by $\alpha \otimes f \mapsto f \circ [X,X](\alpha, \mathrm{G}) = f \circ \mathbf{M}\alpha_{X}$, i.e. the same map as the morphism $\mathtt{e}_{\mathrm{F,H}} \otimes \Hom{\mathbf{M}\mathrm{H}X,\mathbf{M}\mathrm{G}X}$, followed by composition. Right unitality is analogous.
\end{proof}

\begin{proposition}\label{HomBimod}
 Given $\csym{C}$-module categories $\mathbf{M},\mathbf{N}$, a Tambara module $\mathtt{\Psi}: \mathbf{M} \xslashedrightarrow{} \mathbf{N}$ and objects $X \in \mathbf{M}$ and $Y \in \mathbf{N}$, the Tambara module $\mathtt{\Psi}[Y,X]$ can be endowed with the structure of a $[Y,Y]$-$[X,X]$-bimodule as follows:
  \begin{enumerate}
  \item The left action $\mathtt{la}: [Y,Y] \diamond \mathtt{\Psi}[Y,X] \rightarrow \mathtt{\Psi}[Y,X]$ is given by
  \[
  \begin{aligned}
   \mathtt{la}_{\mathrm{F,G}}: \int^{\mathrm{H}} \Hom{\mathbf{N}\mathrm{F}Y, \mathbf{N}\mathrm{H}Y} \kotimes \mathtt{\Psi}(\mathbf{N}\mathrm{H}Y, \mathbf{M}\mathrm{G}X) &\rightarrow \mathtt{\Psi}(\mathbf{N}\mathrm{F}Y, \mathbf{M}\mathrm{G}X) \\
   f \otimes v &\mapsto \mathtt{\Psi}(f, \mathbf{M}\mathrm{G}X)(v)
  \end{aligned}
  \]
    \item The right action $\mathtt{ra}: \mathtt{\Psi}[Y,X] \diamond [X,X] \rightarrow \mathtt{\Psi}[Y,X]$ is given by
  \[
  \begin{aligned}
   \mathtt{ra}_{\mathrm{F,G}}: \int^{\mathrm{H}} \mathtt{\Psi}(\mathbf{N}\mathrm{F}Y, \mathbf{M}\mathrm{H}X) \kotimes \Hom{\mathbf{M}\mathrm{H}X,\mathbf{M}\mathrm{G}X} &\rightarrow \mathtt{\Psi}(\mathbf{N}\mathrm{F}Y, \mathbf{M}\mathrm{G}X) \\
   v \otimes g &\mapsto \mathtt{\Psi}(\mathbf{N}\mathrm{F}Y, g)(v)
  \end{aligned}
  \]
 \end{enumerate}
\end{proposition}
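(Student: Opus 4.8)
The plan is to verify, in turn, that $\mathtt{la}$ and $\mathtt{ra}$ are well-defined morphisms of Tambara modules, that each satisfies the associativity and unitality axioms with respect to the monoid structure of Proposition~\ref{EndMonoid}, and that the two actions are compatible. Conceptually this is an instance of a general principle: by the results of~\ref{Adjunctions}, $\mathbb{P}(\euler{\Phi}_{Y})$ admits the right adjoint $\mathbb{B}(\euler{\Phi}_{Y})$ in $\csym{C}\!\on{-Tamb}$, so $\mathbb{B}(\euler{\Phi}_{Y}) \diamond \mathbb{P}(\euler{\Phi}_{Y})$ is a monad on $\csym{C}$ there, i.e.\ a monoid in $\tam$; by the Yoneda Lemma~\ref{ProYoneda} this monoid is $[Y,Y]$, and likewise $\mathtt{\Psi}[Y,X] \simeq \mathbb{B}(\euler{\Phi}_{Y}) \diamond \mathtt{\Psi}(-,\euler{\Phi}_{X}-) \simeq \mathtt{\Psi}(\euler{\Phi}_{Y}-,-) \diamond \mathbb{P}(\euler{\Phi}_{X})$, so that the counits of $\mathbb{P}(\euler{\Phi}_{Y}) \dashv \mathbb{B}(\euler{\Phi}_{Y})$ and $\mathbb{P}(\euler{\Phi}_{X}) \dashv \mathbb{B}(\euler{\Phi}_{X})$ induce a left $[Y,Y]$-action and a right $[X,X]$-action on $\mathtt{\Psi}[Y,X]$, these commuting by the interchange law. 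As elsewhere in this document we instead give the direct verification, which also pins down the coherence data; the argument just sketched only serves as a guide.

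\textbf{Well-definedness and the Tambara-morphism property.} For $\mathtt{la}$, the assignment $f \otimes v \mapsto \mathtt{\Psi}(f, \mathbf{M}\mathrm{G}X)(v)$ on $\coprod_{\mathrm{H}} \Hom{\mathbf{N}\mathrm{F}Y, \mathbf{N}\mathrm{H}Y} \kotimes \mathtt{\Psi}(\mathbf{N}\mathrm{H}Y, \mathbf{M}\mathrm{G}X)$ coequalizes the two maps defining the coend over $\mathrm{H}$ --- this is exactly the dinaturality computation underlying Lemma~\ref{ProYoneda}, using contravariant functoriality of $\mathtt{\Psi}$ in its first argument --- and the induced map is natural in $\mathrm{F}$ and $\mathrm{G}$ by functoriality of $\mathtt{\Psi}$ together with that of the action functors $\mathbf{N}(-)$ and $\mathbf{M}(-)$. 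To see $\mathtt{la}$ is a Tambara morphism, one unwinds the Tambara structure on $\mathtt{\Psi}[Y,X]$ from Equation~\eqref{TambaraOnRestriction} and on $[Y,Y] \diamond \mathtt{\Psi}[Y,X]$ from Definition~\ref{CompositeTambara}, and checks commutativity of the resulting square after passing to the defining coproducts, just as for the multiplication map in the proof of Proposition~\ref{EndMonoid}: it commutes by extranaturality of $\ta^{\mathtt{\Psi}}$ in the $\csym{C}$-variable, naturality of $\ta^{\mathtt{\Psi}}$ in the two module-category variables and functoriality of $\mathtt{\Psi}$, the coherence cells $\mathbf{n}$, $\mathbf{m}$ and the associator of $\csym{C}$ entering symmetrically on both sides and cancelling. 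The arguments for $\mathtt{ra}$ are the mirror image, using covariant functoriality of $\mathtt{\Psi}$ in its second argument.

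\textbf{Module and bimodule axioms.} Evaluating on representatives, associativity of $\mathtt{la}$ reduces to associativity of composition in $\mathbf{N}$ (and functoriality of $\mathtt{\Psi}$), and that of $\mathtt{ra}$ to associativity of composition in $\mathbf{M}$. For left unitality, recall from Proposition~\ref{EndMonoid} that the unit $\mathtt{e}\colon \csym{C}(-,-) \to [Y,Y]$ sends $\mathrm{b}$ to $(\mathbf{N}\mathrm{b})_{Y}$; hence $\mathtt{la} \circ (\mathtt{e} \diamond \on{id}_{\mathtt{\Psi}[Y,X]})$ sends $\mathrm{b} \otimes v$ to $\mathtt{\Psi}((\mathbf{N}\mathrm{b})_{Y}, \mathbf{M}\mathrm{G}X)(v)$, which is precisely the left unitor of $\mathtt{\Psi}[Y,X]$ in $\tam$ under the Yoneda identification of Lemma~\ref{ProYoneda}; right unitality is the mirror statement, using that the unit of $[X,X]$ sends $\mathrm{b}$ to $(\mathbf{M}\mathrm{b})_{X}$. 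Finally, both composites $\mathtt{la} \circ (\on{id}_{[Y,Y]} \diamond \mathtt{ra})$ and $\mathtt{ra} \circ (\mathtt{la} \diamond \on{id}_{[X,X]})$ send a representative $f \otimes v \otimes g$ to $\mathtt{\Psi}(f, g)(v)$, the two readings agreeing by bifunctoriality of $\mathtt{\Psi}$; thus the actions commute and $\mathtt{\Psi}[Y,X]$ is a $[Y,Y]$-$[X,X]$-bimodule.

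\textbf{Main obstacle.} Everything above is a routine consequence of bifunctoriality of $\mathtt{\Psi}$ and the Yoneda Lemma, except the Tambara-morphism check: since we do not strictify, it is a diagram chase through several layers of coherence cells (the isomorphisms $\mathbf{n}$ and $\mathbf{m}$, the associator of $\csym{C}$, and the coend projections), whose only real content is to see that these cells cancel in pairs, as in Proposition~\ref{EndMonoid}.
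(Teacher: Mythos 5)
Your proof is correct and takes essentially the same route as the paper: a direct element-wise verification of the bimodule axioms, with the well-definedness and Tambara-morphism checks deferred (as the paper also does) to the analogous and more detailed verifications in Proposition~\ref{EndMonoid}, unitality handled via the unitor description of Lemma~\ref{ProYoneda}, and the interchange of actions falling out of bifunctoriality of $\mathtt{\Psi}$. The opening paragraph sketching the left $[Y,Y]$-action and right $[X,X]$-action as induced by the counits of the adjunctions $\mathbb{P}(\euler{\Phi}_{Y}) \dashv \mathbb{B}(\euler{\Phi}_{Y})$ and $\mathbb{P}(\euler{\Phi}_{X}) \dashv \mathbb{B}(\euler{\Phi}_{X})$ is a correct and illuminating conceptual gloss that the paper does not offer, though, as you note yourself, one would still have to match those actions against the explicit formulas to turn the sketch into a complete argument.
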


\begin{proof}
 The verification of the left, respectively right module axioms for $\mathtt{la}$ and $\mathtt{ra}$ is analogous to the verifications of Proposition~\ref{EndMonoid}. The multiplicativity axioms follow from functoriality of $\mathtt{\Psi}$ in the respective variables.

 Since the unitors in $\tam$ are given by evaluating the respective actions of $\csym{C}$, the commutativity of the diagram
 \[\begin{tikzcd}[column sep = huge, row sep = scriptsize]
	{\int^{\mathrm{H}}\csym{C}(\mathrm{F,H}) \kotimes \mathtt{\Psi}(\mathbf{N}\mathrm{H}Y, \mathbf{M}\mathrm{G}X)} & {\int^{\mathrm{H}} \Hom{\mathbf{N}\mathrm{F}Y, \mathbf{N}\mathrm{H}Y} \kotimes \mathtt{\Psi}(\mathbf{N}\mathrm{H}Y, \mathbf{M}\mathrm{G}X)} \\
	& {\mathtt{\Psi}(\mathbf{N}\mathrm{F}Y,\mathbf{M}\mathrm{G}X)}
	\arrow["{(\mathtt{e}^{[Y,Y]}\diamond \mathtt{\Psi}[Y,X])_{\mathrm{F,G}}}", shift left=2, from=1-1, to=1-2]
	\arrow["{(\mathsf{l}^{-1}_{\mathtt{\Psi}[Y,X]})_{\mathrm{F,G}}}"', from=1-1, to=2-2]
	\arrow["{\mathtt{la}_{\mathrm{F,G}}}", from=1-2, to=2-2]
\end{tikzcd}\]
 follows from the commutativity of
 \[\begin{tikzcd}[row sep = scriptsize]
	{\mathrm{f} \otimes x} && {(\mathbf{N}\mathrm{f})_{Y} \otimes x} \\
	& {\mathtt{\Psi}[Y,X](\mathrm{f},\mathrm{G})(x)} & {\mathtt{\Psi}((\mathbf{N}\mathrm{f})_{Y},\mathbf{M}\mathrm{G}X)(x)},
	\arrow[maps to, from=1-1, to=1-3]
	\arrow[maps to, from=1-3, to=2-3]
	\arrow[maps to, from=1-1, to=2-2]
	\arrow[from=2-2, to=2-3, equal]
\end{tikzcd}.\]
 This shows unitality for the left module structure. Unitality for the right module structure is similar.
 Finally, we have $\mathtt{\Psi}(f,\mathbf{M}\mathrm{G}X)\mathtt{\Psi}(\mathbf{N}\mathrm{F}Y, g) = \mathtt{\Psi}(\mathbf{N}\mathrm{F}Y, g)\mathtt{\Psi}(f,\mathbf{M}\mathrm{G}X)$ for all $f$ and $g$, which shows that $\mathtt{la}$ and $\mathtt{ra}$ commute.
\end{proof}

Given a $\csym{C}$-module category $\mathbf{M}$ and an object $X \in \mathbf{M}$, let $\csym{C}\ostar X$ be the full subcategory of $\mathbf{M}$ whose collection of objects is given by $\setj{\mathbf{M}\mathrm{F}X \; | \; \mathrm{F} \in \csym{C}}$. In particular, given $\csym{C}$-module categories $\mathbf{M},\mathbf{N}$ and objects $X \in \mathbf{M}, Y \in \mathbf{N}$, the objects of $(\csym{C}^{\on{opp}}\kotimes \csym{C})\ostar (Y,X)$ are given by $\setj{(\mathbf{N}\mathrm{F}Y,\mathbf{M}\mathrm{G}X) \; | \; \mathrm{F},\mathrm{G} \in \csym{C}}$.

\begin{lemma}\label{BimoduleBinaturality}
 A morphism of Tambara modules $\mathtt{s} = (\mathtt{s}_{\mathrm{F},\mathrm{G}})_{\mathrm{F,G} \in \ccf{C}} \in \tam (\mathtt{\Psi}[Y,X],\mathtt{\Psi}'[Y,X])$ is a morphism of $[Y,Y]$-$[X,X]$-bimodules if and only if the assignment
 \begin{equation}\label{OtherNaturality}
\widetilde{\mathtt{s}}_{\mathbf{N}\mathrm{F}Y,\mathbf{M}\mathrm{G}X} := \mathtt{s}_{\mathrm{F},\mathrm{G}}
 \end{equation}
 defines a natural transformation from $\mathtt{\Psi}_{|(\ccf{C}^{\on{opp}}\kotimes \ccf{C})\ostar (Y,X)}$ to $\mathtt{\Psi}'_{|(\ccf{C}^{\on{opp}}\kotimes\ccf{C})\ostar (Y,X)}$.
\end{lemma}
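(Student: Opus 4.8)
The plan is to decompose the $[Y,Y]$-$[X,X]$-bimodule morphism condition on $\mathtt{s}$ into its ``right'' part (compatibility with the right $[X,X]$-action $\mathtt{ra}$) and its ``left'' part (compatibility with the left $[Y,Y]$-action $\mathtt{la}$), and then to identify these two parts with the two ``axis-parallel'' naturality conditions whose conjunction is the naturality of $\widetilde{\mathtt{s}}$. Since a morphism of $[Y,Y]$-$[X,X]$-bimodules is exactly a morphism of the underlying Tambara modules, which $\mathtt{s}$ is assumed to be, intertwining both actions, it suffices to translate the two squares ``$\mathtt{ra}^{\mathtt{\Psi}'}\circ(\mathtt{s}\diamond\on{id}_{[X,X]}) = \mathtt{s}\circ\mathtt{ra}^{\mathtt{\Psi}}$'' and ``$\mathtt{la}^{\mathtt{\Psi}'}\circ(\on{id}_{[Y,Y]}\diamond\mathtt{s}) = \mathtt{s}\circ\mathtt{la}^{\mathtt{\Psi}}$'' into elementwise statements about the family $(\mathtt{s}_{\mathrm{F},\mathrm{G}})$.

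First I would treat the right-hand condition. Recalling from Proposition~\ref{HomBimod} that $\mathtt{ra}^{\mathtt{\Psi}}$ has component at $(\mathrm{F},\mathrm{G})$ the map out of the coend $\int^{\mathrm{H}}\mathtt{\Psi}(\mathbf{N}\mathrm{F}Y,\mathbf{M}\mathrm{H}X)\kotimes\Hom{\mathbf{M}\mathrm{H}X,\mathbf{M}\mathrm{G}X}$ induced by $v\otimes g\mapsto\mathtt{\Psi}(\mathbf{N}\mathrm{F}Y,g)(v)$, and using that two maps out of a coend agree iff their precompositions with all coend projections agree (so that chasing the representatives $v\otimes g$ through the square for $\mathtt{ra}$ detects its commutativity), the equation $\mathtt{ra}^{\mathtt{\Psi}'}\circ(\mathtt{s}\diamond\on{id}_{[X,X]}) = \mathtt{s}\circ\mathtt{ra}^{\mathtt{\Psi}}$ is equivalent to requiring that
\[
\mathtt{s}_{\mathrm{F},\mathrm{G}}\circ\mathtt{\Psi}(\mathbf{N}\mathrm{F}Y,g) = \mathtt{\Psi}'(\mathbf{N}\mathrm{F}Y,g)\circ\mathtt{s}_{\mathrm{F},\mathrm{H}}
\]
for all $\mathrm{F},\mathrm{H},\mathrm{G}\in\csym{C}$ and all $g\in\Hom{\mathbf{M}\mathrm{H}X,\mathbf{M}\mathrm{G}X}$. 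As $\mathrm{H},\mathrm{G}$ vary, these $g$ are exactly the morphisms of the full subcategory $\csym{C}\ostar X$ with arbitrary source and target, so this is precisely the naturality of $\widetilde{\mathtt{s}}$ with respect to all morphisms of $(\csym{C}^{\on{opp}}\kotimes\csym{C})\ostar(Y,X)$ of the form $(\on{id}_{\mathbf{N}\mathrm{F}Y},g)$. Symmetrically, chasing representatives $f\otimes v$ through the square for $\mathtt{la}$ shows that $\mathtt{la}^{\mathtt{\Psi}'}\circ(\on{id}_{[Y,Y]}\diamond\mathtt{s}) = \mathtt{s}\circ\mathtt{la}^{\mathtt{\Psi}}$ is equivalent to $\mathtt{s}_{\mathrm{F},\mathrm{G}}\circ\mathtt{\Psi}(f,\mathbf{M}\mathrm{G}X) = \mathtt{\Psi}'(f,\mathbf{M}\mathrm{G}X)\circ\mathtt{s}_{\mathrm{H},\mathrm{G}}$ for all $f\colon\mathbf{N}\mathrm{F}Y\to\mathbf{N}\mathrm{H}Y$, which is exactly the naturality of $\widetilde{\mathtt{s}}$ against the morphisms $(f,\on{id}_{\mathbf{M}\mathrm{G}X})$ of $(\csym{C}^{\on{opp}}\kotimes\csym{C})\ostar(Y,X)$ (read in $\mathbf{N}^{\on{op}}$, hence against the variance).

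It then remains to assemble the two pieces and to dispatch a well-definedness point. Any morphism $(f,g)$ of $(\csym{C}^{\on{opp}}\kotimes\csym{C})\ostar(Y,X)$ factors as $(\on{id},g)\circ(f,\on{id})$, so $\widetilde{\mathtt{s}}$ is natural iff it is natural against morphisms of these two special forms; combined with the previous paragraph this yields that $\mathtt{s}$ is a bimodule morphism iff $\widetilde{\mathtt{s}}$ is natural, \emph{provided} the assignment $\widetilde{\mathtt{s}}_{\mathbf{N}\mathrm{F}Y,\mathbf{M}\mathrm{G}X}:=\mathtt{s}_{\mathrm{F},\mathrm{G}}$ is well defined, i.e.\ depends on $(\mathrm{F},\mathrm{G})$ only through the pair $(\mathbf{N}\mathrm{F}Y,\mathbf{M}\mathrm{G}X)$. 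In the direction that assumes $\widetilde{\mathtt{s}}$ is a natural transformation, this is built into the hypothesis; in the converse direction it follows from the displayed equations by taking $g=\on{id}_{\mathbf{M}\mathrm{H}X}$ when $\mathbf{M}\mathrm{G}X=\mathbf{M}\mathrm{H}X$ (and $f=\on{id}$ when $\mathbf{N}\mathrm{F}Y=\mathbf{N}\mathrm{H}Y$), since then $\mathtt{\Psi}(\mathbf{N}\mathrm{F}Y,g)=\on{id}$ forces $\mathtt{s}_{\mathrm{F},\mathrm{G}}=\mathtt{s}_{\mathrm{F},\mathrm{H}}$. The only genuinely delicate part of carrying this out is the coend bookkeeping in the first step --- being careful about which maps are induced on coends from cocones, and verifying that chasing the representatives $v\otimes g$ and $f\otimes v$ really detects commutativity of the action squares --- but this is the same mechanism already used in Definition~\ref{CompositeTambara} and in the proof of Proposition~\ref{EndMonoid}, so no new computation is needed.
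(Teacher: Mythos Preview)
Your argument is correct and follows essentially the same route as the paper: both reduce the bimodule condition to the two elementwise equations $\mathtt{s}_{\mathrm{F},\mathrm{G}}\circ\mathtt{\Psi}(f,\mathbf{M}\mathrm{G}X)=\mathtt{\Psi}'(f,\mathbf{M}\mathrm{G}X)\circ\mathtt{s}_{\mathrm{H},\mathrm{G}}$ and $\mathtt{s}_{\mathrm{F},\mathrm{G}}\circ\mathtt{\Psi}(\mathbf{N}\mathrm{F}Y,g)=\mathtt{\Psi}'(\mathbf{N}\mathrm{F}Y,g)\circ\mathtt{s}_{\mathrm{F},\mathrm{H}}$ by passing through the coend description of $\mathtt{la}$ and $\mathtt{ra}$, and then identify these with axis-parallel naturality of $\widetilde{\mathtt{s}}$. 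You are somewhat more explicit than the paper in two places: you spell out the factorisation $(f,g)=(\on{id},g)\circ(f,\on{id})$ to reduce general naturality to the two special cases, and you address the well-definedness of the assignment $\widetilde{\mathtt{s}}_{\mathbf{N}\mathrm{F}Y,\mathbf{M}\mathrm{G}X}:=\mathtt{s}_{\mathrm{F},\mathrm{G}}$, which the paper leaves implicit (and later sidesteps entirely by passing to fmo module categories in Section~\ref{s81}).
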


\begin{proof}
 The latter condition holds if and only if for any $\mathrm{F,G,H} \in \csym{C}, f \in \Hom{\mathbf{N}\mathrm{F}Y, \mathbf{N}\mathrm{H}Y}$ and $g \in \Hom{\mathbf{M}\mathrm{H}X, \mathbf{M}\mathrm{G}X}$ we have
 \begin{equation}\label{fcb1}
 \mathtt{\Psi}'(f,\mathbf{M}\mathrm{G}X)\circ \mathtt{s}_{\mathrm{\mathrm{H},\mathrm{G}}} = \mathtt{s}_{\mathrm{F},\mathrm{G}}\circ \mathtt{\Psi}(f,\mathbf{M}\mathrm{G}X)
 \end{equation}
 and
 \begin{equation}\label{fcb2}
 \mathtt{\Psi}'(\mathbf{N}\mathrm{F}Y,g)\circ \mathtt{s}_{\mathrm{\mathrm{F},\mathrm{H}}} = \mathtt{s}_{\mathrm{F},\mathrm{G}}\circ \mathtt{\Psi}(\mathbf{N}\mathrm{F}Y,g).
 \end{equation}
 The equality $\setj{\mathtt{\Psi}'(f,\mathbf{M}\mathrm{G}X)\circ \mathtt{s}_{\mathrm{\mathrm{H},\mathrm{G}}} \; | \; \mathrm{H} \in \csym{C}} = \setj{\mathtt{s}_{\mathrm{F},\mathrm{G}}\circ \mathtt{\Psi}(f,\mathbf{M}\mathrm{G}X)\; | \; \mathrm{H} \in \csym{C}}$ of extranatural families is precisely the commutativity of the square
 \[\begin{tikzcd}[row sep = scriptsize]
	{\int^{\mathrm{H}} \Hom{\mathbf{N}\mathrm{F}Y,\mathbf{N}\mathrm{H}Y} \kotimes \mathtt{\Psi}(\mathbf{N}\mathrm{H}Y,\mathbf{M}\mathrm{G}X)} & {\mathtt{\Psi}(\mathbf{N}\mathrm{F}Y,\mathbf{M}\mathrm{G}X)} \\
	{\int^{\mathrm{H}} \Hom{\mathbf{N}\mathrm{F}Y,\mathbf{N}\mathrm{H}Y} \kotimes \mathtt{\Psi}'(\mathbf{N}\mathrm{H}Y,\mathbf{M}\mathrm{G}X)} & {\mathtt{\Psi}'(\mathbf{N}\mathrm{F}Y,\mathbf{M}\mathrm{G}X)}
	\arrow["{([Y,Y]\diamond\mathtt{s})_{\mathrm{F,G}}}", from=1-1, to=2-1]
	\arrow["{\mathtt{la}_{\mathrm{F,G}}^{\mathtt{\Psi}}}", from=1-1, to=1-2]
	\arrow["{\mathtt{la}_{\mathrm{F,G}}^{\mathtt{\Psi}'}}", from=2-1, to=2-2]
	\arrow["{\mathtt{s}_{\mathrm{F},\mathrm{G}}}", from=1-2, to=2-2]
\end{tikzcd}\]
which, by definition, holds for all $\mathrm{F,G}$ if and only if $\mathtt{s}$ is a morphism of left $[Y,Y]$-modules. We conclude that Equations~\eqref{fcb1} and~\eqref{fcb2} hold for all $\mathrm{F,G,H}$ and $f,g$ if and only if $\mathtt{s}$ is a morphism of $[X,X]$-$[Y,Y]$-bimodules.
\end{proof}

\begin{corollary}\label{LocalFunctors}
 For any $\mathtt{t}\in \csym{C}\!\on{-Tamb}(\mathbf{M},\mathbf{N})(\mathtt{\Psi},\mathtt{\Psi}')$, the morphism $\mathtt{t}[Y,X]\in \tam( \mathtt{\Psi}[Y,X] , \mathtt{\Psi}'[Y,X])$ is a morphism of $[Y,Y]$-$[X,X]$-bimodules.
 Functoriality of restrictions and corestrictions (Equations~\eqref{prl1} and \eqref{prl2} of Proposition~\ref{ResPseudoOneSide}) thus yields functors
\[
\widehat{\na}_{(\mathbf{N},Y),(\mathbf{M},X)}: \csym{C}\!\on{-Tamb}(\mathbf{M},\mathbf{N}) \rightarrow \on{Bimod}(\tam)
\]
for any choice of objects $Y \in \mathbf{N}$ and $X \in \mathbf{M}$, sending $\mathtt{\Psi}$ to $\mathtt{\Psi}[Y,X]$ and $\mathtt{t}$ to $\mathtt{t}[Y,X]$.

\end{corollary}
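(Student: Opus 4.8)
The plan is to deduce the claim directly from Lemma~\ref{BimoduleBinaturality}, which converts the condition ``$\mathtt{t}[Y,X]$ is a bimodule morphism'' into a naturality statement that holds automatically.

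First I would recall that $\mathtt{t}[Y,X]$ is, by definition, the Tambara morphism $(\mathtt{t}\smalltriangleup\euler{\Phi}_X)\smalltriangledown\euler{\Phi}_Y$ --- a morphism of Tambara modules by Lemma~\ref{ResNat} and its corestriction analogue in Definition~\ref{CoresPseudoOneSide} --- whose component indexed by $(\mathrm{F},\mathrm{G})$ is the map $\mathtt{t}_{\mathbf{N}\mathrm{F}Y,\mathbf{M}\mathrm{G}X}$. By Lemma~\ref{BimoduleBinaturality}, $\mathtt{t}[Y,X]$ is a morphism of $[Y,Y]$-$[X,X]$-bimodules precisely when the family $\widetilde{\mathtt{t}[Y,X]}$ with $\widetilde{\mathtt{t}[Y,X]}_{\mathbf{N}\mathrm{F}Y,\mathbf{M}\mathrm{G}X}=\mathtt{t}_{\mathbf{N}\mathrm{F}Y,\mathbf{M}\mathrm{G}X}$ is natural for the morphisms of the full subcategory $(\csym{C}^{\on{opp}}\kotimes\csym{C})\ostar(Y,X)$ of $\mathbf{N}^{\on{op}}\kotimes\mathbf{M}$. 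But $\mathtt{t}$, being a morphism of profunctors, is already a natural transformation of functors $\mathbf{N}^{\on{op}}\kotimes\mathbf{M}\rightarrow\mathbf{Vec}_{\Bbbk}$, and restricting a natural transformation along the inclusion of a full subcategory yields a natural transformation; so the required condition holds with no extra work.

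The second assertion is then bookkeeping. The bicategory $\on{Bimod}(\tam)$ is available because $\csym{C}\!\on{-Tamb}$, hence $\tam$, is tame by Proposition~\ref{Tameness}; Proposition~\ref{HomBimod} equips every $\mathtt{\Psi}[Y,X]$ with a $[Y,Y]$-$[X,X]$-bimodule structure; and the first part shows $\mathtt{t}\mapsto\mathtt{t}[Y,X]$ takes Tambara $2$-cells to bimodule morphisms. Writing $\mathtt{t}[Y,X]=(\mathtt{t}\smalltriangleup\euler{\Phi}_X)\smalltriangledown\euler{\Phi}_Y$ and applying Equations~\eqref{prl1} and~\eqref{prl2} of Proposition~\ref{ResPseudoOneSide} together with their corestriction counterparts in Proposition~\ref{CoResPseudoOneSide}, this assignment preserves identities and composition; so $\mathtt{\Psi}\mapsto\mathtt{\Psi}[Y,X]$, $\mathtt{t}\mapsto\mathtt{t}[Y,X]$ is the desired functor $\widehat{\na}_{(\mathbf{N},Y),(\mathbf{M},X)}$ from $\csym{C}\!\on{-Tamb}(\mathbf{M},\mathbf{N})$ into $\on{Bimod}(\tam)$, landing in the hom-category on the monoid objects $[X,X]$ and $[Y,Y]$.

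I do not anticipate any real obstacle. The single point that deserves a moment's attention is that the relevant subcategory $(\csym{C}^{\on{opp}}\kotimes\csym{C})\ostar(Y,X)$ is \emph{full}, so that the naturality hypothesis of Lemma~\ref{BimoduleBinaturality} is simply inherited from naturality of $\mathtt{t}$; everything else is a direct application of results already established.
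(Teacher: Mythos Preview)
Your proof is correct and follows essentially the same approach as the paper: both invoke Lemma~\ref{BimoduleBinaturality} and observe that the resulting family $\widetilde{\mathtt{t}[Y,X]}$ is just the restriction of the natural transformation $\mathtt{t}$ to the full subcategory $(\csym{C}^{\on{opp}}\kotimes\csym{C})\ostar(Y,X)$, hence automatically natural. Your write-up is somewhat more explicit about the functoriality bookkeeping (citing Proposition~\ref{Tameness} and the corestriction analogues in Proposition~\ref{CoResPseudoOneSide}), but the substance is the same.
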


\begin{proof}
 The assignment described in Equation~\eqref{OtherNaturality} of Lemma~\ref{BimoduleBinaturality} produces the restriction of $\mathtt{t}$ to $(\csym{C}^{\on{opp}}\kotimes \csym{C})\ostar (Y,X)$, which clearly is natural since $\mathtt{t}$ is natural in $\mathbf{N}^{\on{op}} \kotimes \mathbf{M}$.
\end{proof}

\begin{definition}
 A $\csym{C}$-module category $\mathbf{M}$ is {\it very cyclic} if there is an essentially surjective $\csym{C}$-module functor $\csym{C} \xrightarrow{\euler{\Phi}_{X}} \mathbf{M}$. Equivalently, $\mathbf{M}$ is very cyclic if there is an object $X \in \mathbf{M}$ such that for every $Y \in \mathbf{M}$, there is $\mathrm{F} \in \csym{C}$ such that $Y \simeq \mathbf{M}\mathrm{F}(X)$.
\end{definition}

The reason we don't refer to a very cyclic $\mathbf{M}$ simply as cyclic is that in the sequel we will also consider the additive closure of the full image of $\euler{\Phi}_{X}$, and say that $\mathbf{M}$ is {\it cyclic} if that subcategory coincides with all of $\mathbf{M}$. This is the notion considered in \cite{MMMTZ1}.

\begin{definition}\label{CStarX}
 Given a $\csym{C}$-module category $\mathbf{M}$ and an object $X \in \mathbf{M}$, we let $\mathbf{M}\!\star\! X$ denote the full $\csym{C}$-module subcategory whose objects are of the form $\mathbf{M}\mathrm{F}_{n}\mathbf{M}\mathrm{F}_{n-1}\cdots \mathbf{M}\mathrm{F}_{1}(X)$, for $n \geq 0$ and $\mathrm{F}_{1},\ldots, \mathrm{F}_{n} \in \csym{C}$.
\end{definition}
The $\csym{C}$-module category $\mathbf{M}\!\star\! X$ is very cyclic, with a very cyclic generator $X$.

Clearly, $\mathbf{M}$ is very cyclic if and only if and only if there is $X \in \mathbf{M}$ such that the inclusion of $\mathbf{M}\star X$ into $\mathbf{M}$ is an equivalence of $\csym{C}$-module categories. This is equivalent to the inclusion of $\csym{C}\ostar X$ into $\mathbf{M}$ being an equivalence of categories.

\begin{definition}
 We define the bicategory $\csym{C}\!\on{-Tamb}_{\euler{0}}$ as follows:
 \begin{itemize}
  \item objects of $\csym{C}\!\on{-Tamb}_{\euler{0}}$ are pairs $(\mathbf{M},X)$, where $\mathbf{M}$ is a $\csym{C}$-module category and $X \in \mathbf{M}$;
  \item $\csym{C}\!\on{-Tamb}_{\euler{0}}((\mathbf{M},X),(\mathbf{N},Y)) := \csym{C}\!\on{-Tamb}(\mathbf{M},\mathbf{N})$, and similarly the composition and coherence cells are inherited from $\csym{C}\!\on{-Tamb}$.
 \end{itemize}
\end{definition}

Clearly, there is a biequivalence $\csym{C}\!\on{-Tamb}_{\euler{0}} \xiso \csym{C}\!\on{-Tamb}$ which is identity on $1$-morphisms and $2$-morphisms, and sends an object $(\mathbf{M},X)$ of $\csym{C}\!\on{-Tamb}_{\euler{0}}$ to $\mathbf{M}$. Any choice assigning an object $X$ to every $\csym{C}$-module category $\mathbf{M}$ gives a quasi-inverse to the biequivalence just described.

\begin{definition}
 We define the bicategory $\csym{C}\!\on{-Tamb}_{c}$ as the $1,2$-full subcategory of $\csym{C}\!\on{-Tamb}$ whose objects are very cyclic $\csym{C}$-module categories.
\end{definition}

\begin{definition}\label{ChooseGenerators}
 A choice $\mathtt{W}: \on{Ob}\csym{C}\!\on{-Tamb}_{c} \rightarrow \on{Ob}\csym{C}\!\on{-Tamb}_{\euler{0}}$ associating a very cyclic generator to each $\mathbf{M} \in \csym{C}\!\on{-Tamb}_{c}$, defines a pseudofunctor $\mathbb{G}_{\mathtt{W}}: \csym{C}\!\on{-Tamb}_{c} \rightarrow \csym{C}\!\on{-Tamb}_{\euler{0}}$ by $\on{Ob}\mathbb{G}_{\mathtt{W}} = \mathtt{W}$ and $(\mathbb{G}_{\mathtt{W}})_{\mathbf{M,N}} = \mathbb{1}_{\ccf{C}\!\on{-Tamb}(\mathbf{M,N})}$.
\end{definition}

\begin{theorem}\label{LaxPseudoFunctorialityNa}
 Let
 $\mathtt{\Psi}: \mathbf{K} \xslashedrightarrow{} \mathbf{M}$ and $\mathtt{\Sigma}: \mathbf{M} \xslashedrightarrow{} \mathbf{N}$ be $\csym{C}$-Tambara modules, and let $X \in \mathbf{K}, Y \in \mathbf{M}, Z \in \mathbf{N}$.

 There are canonical morphisms $\mathtt{c}_{\mathtt{\Sigma}[Z,Y],\mathtt{\Psi}[Y,X]}: \mathtt{\Sigma}[Z,Y] \otimes_{[Y,Y]} \mathtt{\Psi}[Y,X] \rightarrow (\mathtt{\Sigma}\diamond \mathtt{\Psi})[Z,X]$ of $[Z,Z]$-$[X,X]$-bimodules, which, together with the assignment $(\mathbf{M},X) \mapsto [X,X]$ and the functors $\widehat{\na}_{(\mathbf{N},Y),(\mathbf{M},X)}$ of Corollary~\ref{LocalFunctors} define a strictly unital lax functor $\widehat{\na}: \csym{C}\!\on{-Tamb}_{\euler{0}} \rightarrow \on{Bimod}(\tam)$.

 If $\mathbf{M}$ is very cyclic and $Y$ is a very cyclic generator for $\mathbf{M}$, the coherence morphism $\mathtt{c}_{\mathtt{\Sigma}[Z,Y],\mathtt{\Psi}[Y,X]}$ is an isomorphism.
\end{theorem}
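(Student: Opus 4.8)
The plan is to exhibit the coherence morphism $\mathtt{c}$ as the canonical comparison between two coends, check that it is a natural morphism of bimodules, verify the lax-functor axioms, and finally observe that this comparison is an isomorphism exactly when $Y$ is a very cyclic generator. First, by Proposition~\ref{Tameness} the bicategory $\csym{C}\!\on{-Tamb}$ is tame, so colimits in its hom-categories are created by the forgetful functor to $\mathbf{Prof}_{\Bbbk}$ and computed pointwise in $\mathbf{Vec}_{\Bbbk}$. Thus at $(\mathrm{F},\mathrm{G})$ the underlying profunctor of $\mathtt{\Sigma}[Z,Y]\otimes_{[Y,Y]}\mathtt{\Psi}[Y,X]$ is the coequalizer of the parallel pair given by the right $[Y,Y]$-action on $\mathtt{\Sigma}[Z,Y]$ and the left $[Y,Y]$-action on $\mathtt{\Psi}[Y,X]$ of Proposition~\ref{HomBimod}, acting on $\int^{\mathrm{H}}\mathtt{\Sigma}(\mathbf{N}\mathrm{F}Z,\mathbf{M}\mathrm{H}Y)\kotimes\mathtt{\Psi}(\mathbf{M}\mathrm{H}Y,\mathbf{K}\mathrm{G}X)$. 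Since $[Y,Y](\mathrm{H},\mathrm{H}')=\Hom{\mathbf{M}\mathrm{H}Y,\mathbf{M}\mathrm{H}'Y}$ is the full space of $\mathbf{M}$-morphisms between objects of $\csym{C}\ostar Y$, this coequalizer is exactly the coend over the full subcategory $\csym{C}\ostar Y\subseteq\mathbf{M}$, i.e.
\[
\bigl(\mathtt{\Sigma}[Z,Y]\otimes_{[Y,Y]}\mathtt{\Psi}[Y,X]\bigr)(\mathrm{F},\mathrm{G})\;\cong\;\int^{W\in\csym{C}\ostar Y}\mathtt{\Sigma}(\mathbf{N}\mathrm{F}Z,W)\kotimes\mathtt{\Psi}(W,\mathbf{K}\mathrm{G}X),
\]
the outer coend over $\mathrm{H}\in\csym{C}$ contributing the coproduct over the objects of $\csym{C}\ostar Y$ and the $[Y,Y]$-coequalizer imposing the remaining relations coming from $\mathbf{M}$-morphisms. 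On the other hand, by definition of profunctor composition and of restriction and corestriction, $(\mathtt{\Sigma}\diamond\mathtt{\Psi})[Z,X](\mathrm{F},\mathrm{G})=\int^{W\in\mathbf{M}}\mathtt{\Sigma}(\mathbf{N}\mathrm{F}Z,W)\kotimes\mathtt{\Psi}(W,\mathbf{K}\mathrm{G}X)$. The inclusion $\iota_{Y}:\csym{C}\ostar Y\hookrightarrow\mathbf{M}$ then induces a cocone, hence a comparison map
\[
\mathtt{c}_{\mathrm{F},\mathrm{G}}:\int^{W\in\csym{C}\ostar Y}\mathtt{\Sigma}(\mathbf{N}\mathrm{F}Z,W)\kotimes\mathtt{\Psi}(W,\mathbf{K}\mathrm{G}X)\longrightarrow(\mathtt{\Sigma}\diamond\mathtt{\Psi})[Z,X](\mathrm{F},\mathrm{G}),
\]
natural in $\mathrm{F},\mathrm{G}$ by functoriality of coends; this is the underlying profunctor morphism of the desired $\mathtt{c}_{\mathtt{\Sigma}[Z,Y],\mathtt{\Psi}[Y,X]}$.

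Next I would check that $\mathtt{c}$ is a morphism of $[Z,Z]$-$[X,X]$-bimodules and assemble the lax functor. By Equation~\eqref{TambaraOnRestriction} and Definition~\ref{CompositeTambara} (and Propositions~\ref{ResPseudoOneSide} and~\ref{CoResPseudoOneSide}, which let one build the Tambara structure of $(\mathtt{\Sigma}\diamond\mathtt{\Psi})[Z,X]$ from those of $\mathtt{\Sigma}$ and $\mathtt{\Psi}$), the Tambara structure at $\mathrm{H}\in\csym{C}$ on each of the two coends above is induced on cocones by the same recipe, namely conjugating $\ta^{\mathtt{\Sigma}}_{\mathrm{H};-,-}\kotimes\ta^{\mathtt{\Psi}}_{\mathrm{H};-,-}$ by the module-coherence cells $\mathbf{n},\mathbf{m}$ in the outer arguments; since $\mathtt{c}$ is itself induced by $\iota_{Y}$ on cocones, it intertwines these, so $\mathtt{c}$ is a morphism of Tambara modules. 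That it is a bimodule morphism is immediate from the explicit formulas of Proposition~\ref{HomBimod}: the left and right actions touch only the first argument of $\mathtt{\Sigma}$ and the second argument of $\mathtt{\Psi}$ respectively, which $\mathtt{c}$ leaves untouched; and naturality of $\mathtt{c}$ in $\mathtt{\Psi},\mathtt{\Sigma}$ with respect to Tambara morphisms $\mathtt{t},\mathtt{k}$ holds for the same cocone-induced reason, the comparison square commuting already on underlying profunctors. Together with the object assignment $(\mathbf{M},X)\mapsto[X,X]$ (a monoid by Proposition~\ref{EndMonoid}) and the local functors $\widehat{\na}_{(\mathbf{N},Y),(\mathbf{M},X)}$ of Corollary~\ref{LocalFunctors}, these $\mathtt{c}$ give the data of a lax functor $\widehat{\na}:\csym{C}\!\on{-Tamb}_{\euler{0}}\to\on{Bimod}(\tam)$. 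It is strictly unital because $\widehat{\na}(\mathbb{1}_{(\mathbf{M},X)})=\Hom{-,-}_{\mathbf{M}}[X,X]$, with the structure of Proposition~\ref{HomBimod}, is literally the regular $[X,X]$-bimodule, i.e. the identity $1$-morphism at $[X,X]$, and the unit coherence cells reduce via the profunctor Yoneda lemma (Lemma~\ref{ProYoneda}) to the unitors of $\on{Bimod}(\tam)$. For the associativity coherence one adds a third Tambara module $\mathtt{\Omega}:\mathbf{N}\xslashedrightarrow{}\mathbf{P}$ with an object $V\in\mathbf{P}$ and compares the two ways of re-associating $\mathtt{\Omega}[V,Z]\otimes_{[Z,Z]}\mathtt{\Sigma}[Z,Y]\otimes_{[Y,Y]}\mathtt{\Psi}[Y,X]$ before mapping to $(\mathtt{\Omega}\diamond\mathtt{\Sigma}\diamond\mathtt{\Psi})[V,X]$; on underlying profunctors this is a diagram of coend comparison maps induced by the inclusions $\csym{C}\ostar Y\hookrightarrow\mathbf{M}$, $\csym{C}\ostar Z\hookrightarrow\mathbf{N}$ together with the Fubini isomorphisms, which commutes by functoriality of coends — and since bimodule morphisms into a common target are detected on underlying profunctors, this is enough.

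Finally, for the very cyclic case: if $\mathbf{M}$ is very cyclic with very cyclic generator $Y$ then, as recorded just after Definition~\ref{CStarX}, the inclusion $\iota_{Y}:\csym{C}\ostar Y\hookrightarrow\mathbf{M}$ is an equivalence of categories; an equivalence of indexing categories induces an isomorphism on coends, so each $\mathtt{c}_{\mathrm{F},\mathrm{G}}$ is invertible, hence $\mathtt{c}_{\mathtt{\Sigma}[Z,Y],\mathtt{\Psi}[Y,X]}$ is an isomorphism of profunctors, therefore of Tambara modules (a Tambara morphism is invertible iff its underlying profunctor morphism is), therefore of bimodules. A strictly unital lax functor whose coherence $2$-cells are all invertible is a pseudofunctor; restricting $\widehat{\na}$ to the full subbicategory $\csym{C}\!\on{-Tamb}_{c}$ and using a choice of very cyclic generator for each object, every occurring $\mathtt{c}$ becomes invertible, yielding the pseudofunctor $\na:\csym{C}\!\on{-Tamb}_{c}\to\on{Bimod}(\tam)$. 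I expect the main obstacle to be the bookkeeping of the first two steps — pinning down the balanced tensor product as the coend over $\csym{C}\ostar Y$, and checking that $\mathtt{c}$ intertwines the composite Tambara structures — since both require threading the module coherence cells $\mathbf{m},\mathbf{n}$ through iterated coends without passing to strictifications.
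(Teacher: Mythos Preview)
Your proposal is correct and follows essentially the same strategy as the paper: identify the balanced tensor product $\mathtt{\Sigma}[Z,Y]\otimes_{[Y,Y]}\mathtt{\Psi}[Y,X]$ with the coend over the full subcategory $\csym{C}\ostar Y\subseteq\mathbf{M}$, define $\mathtt{c}$ as the comparison map induced by the inclusion $\csym{C}\ostar Y\hookrightarrow\mathbf{M}$, verify associativity via Fubini and unitality via Yoneda, and observe that the comparison map is invertible precisely when this inclusion is an equivalence. The paper carries out the first identification via an explicit commutative diagram of coproducts and coequalizers rather than your more conceptual phrasing, but the content is the same.
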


\begin{proof}
 We have
 \[
 \resizebox{.99\hsize}{!}{$
 \begin{aligned}
  &\big(\mathtt{\Sigma}[Z,Y] \otimes_{[Y,Y]} \mathtt{\Psi}[Y,X]\big)(\mathrm{F,G}) = \on{coeq}\Big(
  \begin{tikzcd}[ampersand replacement = \&]
	{\mathtt{\Sigma}[Z,Y] \diamond [Y,Y] \diamond \mathtt{\Psi}[Y,X]} \&\& {\mathtt{\Sigma}[Z,Y] \diamond \mathtt{\Psi}[Y,X]}
	\arrow["{\mathtt{\Sigma}[Z,Y] \diamond \mathtt{la}^{\mathtt{\Psi}[Y,X]}}", shift left=1, from=1-1, to=1-3]
	\arrow["{\mathtt{ra}^{\mathtt{\Sigma}[Z,Y]} \diamond \mathtt{\Psi}[Y,X]}"', shift right=1, from=1-1, to=1-3]
\end{tikzcd}\Big)(\mathrm{F,G}) \\
 &= \on{coeq}\Big(
 \begin{tikzcd}[ampersand replacement=\&]
	{\int^{\mathrm{H,H'}}\mathtt{\Sigma}(\mathbf{N}\mathrm{F}Z,\mathbf{M}\mathrm{H}Y) \kotimes \Hom{\mathbf{M}\mathrm{H}Y, \mathbf{M}\mathrm{H}'Y} \kotimes \Psi(\mathbf{M}\mathrm{H}'Y, \mathbf{K}\mathrm{G}X)} \&\& {\int^{\mathrm{K}} \mathtt{\Sigma}(\mathbf{N}\mathrm{F}Z,\mathbf{M}\mathrm{K}Y) \kotimes \mathtt{\Psi}(\mathbf{M}\mathrm{K}Y,\mathbf{K}\mathrm{G}X)}
	\arrow["{\setj{\mathtt{\Sigma}(\mathbf{N}\mathrm{F}Z,\mathbf{M}\mathrm{H}Y)\otimes \mathtt{la}_{\mathrm{H';H,G}}^{\mathtt{\Psi}[Y,X]}}}", shift left=2.5, from=1-1, to=1-3]
	\arrow["{\setj{\mathtt{ra}_{\mathrm{H;F,H'}}^{\mathtt{\Psi}[Z,Y]} \otimes \mathtt{\Psi}(\mathbf{M}\mathrm{H}'Y,\mathbf{K}\mathrm{G}X)}}"', shift right=2.5, from=1-1, to=1-3]
\end{tikzcd}\Big)
 \end{aligned}
 $}
 \]
 Recall that by definition of a coend as a colimit, $\int^{\mathrm{K}} \mathtt{\Sigma}(\mathbf{N}\mathrm{F}Z,\mathbf{M}\mathrm{K}Y) \kotimes \mathtt{\Psi}(\mathbf{M}\mathrm{K}Y,\mathbf{K}\mathrm{G}X)$ is a coequalizer of a pair of maps into $\coprod_{\mathrm{K}} \mathtt{\Sigma}(\mathbf{N}\mathrm{F}Z,\mathbf{M}\mathrm{K}Y) \kotimes \mathtt{\Psi}(\mathbf{M}\mathrm{K}Y,\mathbf{K}\mathrm{G}X)$, and similarly, other coends in our computation are by definition coequalizers of maps into the corresponding coproduct.
 We may thus lift the morphisms $\mathtt{la},\mathtt{ra},\mathtt{e}$ to the corresponding coproducts. We denote the lifts by overlining. Using the definition of coends, together with properties of $\mathtt{la,ra,e}$, we find the following commutative diagram:
\[
 \resizebox{.99\hsize}{!}{$
\begin{tikzcd}[ampersand replacement=\&]
	\& {\coprod_{\mathrm{H,H'}}\mathtt{\Sigma}(\mathbf{N}\mathrm{F}Z, \mathbf{M}\mathrm{H}Y) \kotimes \Hom{\mathrm{H},\mathrm{H}'} \kotimes \mathtt{\Psi}(\mathbf{M}\mathrm{H}'Y, \mathbf{K}\mathrm{G}X)} \\
	\\
	{\coprod_{\mathrm{H,H'}}\mathtt{\Sigma}(\mathbf{N}\mathrm{F}Z, \mathbf{M}\mathrm{H}Y) \kotimes \Hom{\mathbf{M}\mathrm{H}Y, \mathbf{M}\mathrm{H}'Y} \kotimes \mathtt{\Psi}(\mathbf{M}\mathrm{H}'Y, \mathbf{K}\mathrm{G}X)} \& {\coprod_{\mathrm{K}}\mathtt{\Sigma}(\mathbf{N}\mathrm{F}Z,\mathbf{M}\mathrm{K}Y) \kotimes \mathtt{\Psi}(\mathbf{M}\mathrm{K}Y,\mathbf{K}\mathrm{G}X)} \& {\int^{U \in \ccf{C}\ostar Y} \mathtt{\Sigma}(\mathbf{N}\mathrm{F}Z, U) \kotimes \mathtt{\Psi}(U, \mathbf{K}\mathrm{G}X)} \\
	{\int^{\mathrm{H,H'}}\mathtt{\Sigma}(\mathbf{N}\mathrm{F}Z, \mathbf{M}\mathrm{H}Y) \kotimes \Hom{\mathbf{M}\mathrm{H}Y, \mathbf{M}\mathrm{H}'Y} \kotimes \mathtt{\Psi}(\mathbf{M}\mathrm{H}'Y, \mathbf{K}\mathrm{G}X)} \\
	{\int^{\mathrm{K}}\mathtt{\Sigma}(\mathbf{N}\mathrm{F}Z,\mathbf{M}\mathrm{K}Y) \kotimes \mathtt{\Psi}(\mathbf{M}\mathrm{K}Y,\mathbf{K}\mathrm{G}X)} \\
	{(\mathtt{\Sigma}[Z,Y]\otimes_{[Y,Y]}\mathtt{\Psi}[Y,X])(\mathrm{F,G})} \&\& {\int^{U \in \mathbf{M}} \mathtt{\Sigma}(\mathbf{N}\mathrm{F}Z, U) \kotimes \mathtt{\Psi}(U, \mathbf{K}\mathrm{G}X)}
	\arrow[two heads, from=3-1, to=4-1]
	\arrow["{\overline{\mathtt{ra}^{\mathtt{\Sigma}}\otimes \mathtt{\Psi}[Y,X]}}"', shift right=2, from=3-1, to=3-2]
	\arrow["{\overline{\mathtt{\Sigma}[Z,Y] \otimes \mathtt{ra}^{\mathtt{\Psi}}}}", shift left=2, from=3-1, to=3-2]
	\arrow["{\mathtt{\Sigma}[Z,Y] \otimes \mathtt{ra}^{\mathtt{\Psi}}}", shift left=2, from=4-1, to=5-1]
	\arrow["{\mathtt{la}^{\mathtt{\Sigma}}\otimes \mathtt{\Psi}[Y,X]}"', shift right=2, from=4-1, to=5-1]
	\arrow[curve={height=-18pt}, two heads, from=3-2, to=5-1]
	\arrow["{\overline{\mathsf{r}^{\mathtt{\Sigma}} \otimes \mathtt{\Psi}[Y,X]}}", shift left=2, from=1-2, to=3-2]
	\arrow["{\overline{\mathtt{\Sigma}[Z,Y] \otimes \mathsf{l}^{\mathtt{\Psi}[Y,X]}}}"', shift right=2, from=1-2, to=3-2]
	\arrow["{\overline{\mathtt{\Sigma}\otimes \mathtt{e}^{\Hom{-,-}_{\mathbf{M}}} \otimes \mathtt{\Psi}}}"'{pos=0.6}, curve={height=18pt}, from=1-2, to=3-1]
	\arrow[from=3-2, to=3-3]
	\arrow[from=5-1, to=6-1]
	\arrow["\simeq"{description, pos=0.6}, shift left=1, from=6-1, to=3-3]
	\arrow["\simeq"{description, pos=0.6}, shift left=1, from=3-3, to=6-1]
	\arrow[shift right=1, hook, from=3-3, to=6-3, "{\mathtt{i}_{\mathtt{\Sigma}[Z,Y],\mathtt{\Psi}[Y,X]}}"]
	\arrow[dashed, from=6-1, to=6-3, "{\mathtt{c}_{\mathtt{\Sigma}[Z,Y],\mathtt{\Psi}[Y,X]})_{\mathrm{F,G}}}"]
\end{tikzcd}$}
\]
 The morphism ${\mathtt{i}_{\mathtt{\Sigma}[Z,Y],\mathtt{\Psi}[Y,X]}}$ is obtained from the inclusion of $\setj{\mathtt{\Sigma}(\mathbf{N}\mathrm{F}Z,\mathbf{M}\mathrm{K}Y) \kotimes \mathtt{\Psi}(\mathbf{M}\mathrm{K}Y,\mathbf{K}\mathrm{G}X) \; | \; \mathrm{K} \in \csym{C}}$ into the collection $\setj{\mathtt{\Sigma}(\mathbf{N}\mathrm{F}Z,U) \kotimes \mathtt{\Psi}(U,\mathbf{K}\mathrm{G}X) \; | \; U \in \mathbf{M}}$.

 The objects
 $(\mathtt{\Sigma}[Z,Y]\otimes_{[Y,Y]}\mathtt{\Psi}[Y,X])(\mathrm{F,G})$ and $\int^{\ccf{C}\ostar Y} \mathtt{\Sigma}(\mathbf{N}\mathrm{F}Z, \mathbf{M}\mathrm{H}Y) \kotimes \mathtt{\Psi}(\mathbf{M}\mathrm{H}'Y, \mathbf{K}\mathrm{G}X)$ are by definition the coequalizers of their column and row in the diagram, respectively.

 The indicated, mutually inverse isomorphisms follow by applying the universal properties of their respective domains (in particular, they represent the same functor). Since all the morphisms in the diagram are (lifts of) morphisms of $[Z,Z]$-$[X,X]$-bimodules, so is the obtained morphism ${\mathtt{c}_{\mathtt{\Sigma}[Z,Y],\mathtt{\Psi}[Y,X]}}$. Similarly, we find naturality in $\mathtt{\Sigma},\mathtt{\Psi}$.

 Since ${\mathtt{c}_{\mathtt{\Sigma}[Z,Y],\mathtt{\Psi}[Y,X]}}$ is induced from ${\mathtt{i}_{\mathtt{\Sigma}[Z,Y],\mathtt{\Psi}[Y,X]}}$ via the unique isomorphism making the above diagram commute, it suffices to show that the morphisms ${\mathtt{i}_{\mathtt{\Sigma}[Z,Y],\mathtt{\Psi}[Y,X]}}$ satisfy the coherence axioms for a lax functor. The associativity
 \[
  \resizebox{.99\hsize}{!}{$
\begin{tikzcd}[ampersand replacement=\&]
	{\int^{U \in \ccf{C}\ostar Y} \big(\int^{V \in \ccf{C}\ostar Z} \mathtt{\Upsilon}(\mathbf{Q}\mathrm{F}W,V) \kotimes \mathtt{\Sigma}(V,U)\big) \kotimes \mathtt{\Psi}(U,\mathbf{K}\mathrm{G}X)} \& {\int^{U \in \ccf{C}\ostar Y} \big(\int^{V \in \mathbf{N}} \mathtt{\Upsilon}(\mathbf{Q}\mathrm{F}W,V) \kotimes \mathtt{\Sigma}(V,U)\big) \kotimes \mathtt{\Psi}(U,\mathbf{K}\mathrm{G}X)} \\
	{\int^{V \in \ccf{C}\ostar Z}  \mathtt{\Upsilon}(\mathbf{Q}\mathrm{F}W,V) \kotimes \big(\int^{U \in \ccf{C}\ostar Y}\mathtt{\Sigma}(V,U) \kotimes \mathtt{\Psi}(U,\mathbf{K}\mathrm{G}X)\big)} \& {\int^{V \in \mathbf{N}}  \mathtt{\Upsilon}(\mathbf{Q}\mathrm{F}W,V) \kotimes \big(\int^{U \in \ccf{C}\ostar Y}\mathtt{\Sigma}(V,U) \kotimes \mathtt{\Psi}(U,\mathbf{K}\mathrm{G}X)\big)} \\
	{\int^{V \in \ccf{C}\ostar Z}  \mathtt{\Upsilon}(\mathbf{Q}\mathrm{F}W,V) \kotimes \big(\int^{U \in \mathbf{M}}\mathtt{\Sigma}(V,U) \kotimes \mathtt{\Psi}(U,\mathbf{K}\mathrm{G}X)\big)} \& {\int^{V \in \mathbf{N}}  \mathtt{\Upsilon}(\mathbf{Q}\mathrm{F}W,V) \kotimes \big(\int^{U \in \mathbf{M}}\mathtt{\Sigma}(V,U) \kotimes \mathtt{\Psi}(U,\mathbf{K}\mathrm{G}X)\big)}
	\arrow["\simeq", from=1-1, to=2-1]
	\arrow["{\mathtt{\Upsilon}\diamond \mathtt{i}_{\mathtt{\Sigma},\mathtt{\Psi}}}"', from=2-1, to=3-1]
	\arrow["\simeq"', from=1-2, to=2-2]
	\arrow["{\mathtt{i}_{\mathtt{\Upsilon},\mathtt{\Sigma}} \diamond \mathtt{\Psi}}", shift left=1, from=1-1, to=1-2]
	\arrow["{\mathtt{\Upsilon}\diamond \mathtt{i}_{\mathtt{\Sigma},\mathtt{\Psi}}}", from=2-2, to=3-2]
	\arrow["{\mathtt{i}_{\mathtt{\Upsilon},\mathtt{\Sigma}\diamond\mathtt{\Psi}}}"', shift right=1, from=3-1, to=3-2]
\end{tikzcd}$}
\]
 follows from the Fubini rule for coends (which gives the isomorphisms in the diagram) together with the commutativity of
 \[
 \resizebox{.99\hsize}{!}{$
\begin{tikzcd}[ampersand replacement=\&]
	{\setj{\mathtt{\Upsilon}(\mathbf{Q}\mathrm{F}W,\mathbf{N}\mathrm{L}Z)\kotimes \mathtt{\Sigma}(\mathbf{N}\mathrm{L}Z,\mathbf{M}\mathrm{K}Y) \kotimes \mathtt{\Psi}(\mathbf{M}\mathrm{K}Y,\mathbf{K}\mathrm{G}X) \; | \; \mathrm{K,L} \in \csym{C}} } \& {\setj{\mathtt{\Upsilon}(\mathbf{Q}\mathrm{F}W,\mathbf{N}\mathrm{L}Z)\kotimes \mathtt{\Sigma}(\mathbf{N}\mathrm{L}Z,U) \kotimes \mathtt{\Psi}(U,\mathbf{K}\mathrm{G}X) \; | \; \mathrm{L} \in \csym{C},U \in \mathbf{M}} } \\
	{\setj{\mathtt{\Upsilon}(\mathbf{Q}\mathrm{F}W,V)\kotimes \mathtt{\Sigma}(V,\mathbf{M}\mathrm{K}Y) \kotimes \mathtt{\Psi}(\mathbf{M}\mathrm{K}Y,\mathbf{K}\mathrm{G}X) \; | \; \mathrm{K} \in \csym{C}, V \in \mathbf{N}} } \& {\setj{\mathtt{\Upsilon}(\mathbf{Q}\mathrm{F}W,V)\kotimes \mathtt{\Sigma}(V,U) \kotimes \mathtt{\Psi}(U,\mathbf{K}\mathrm{G}X) \; | \; V\in\mathbf{N},U \in \mathbf{M}} }
	\arrow["\subseteq", hook', from=1-1, to=2-1]
	\arrow["\subseteq"', hook, from=1-1, to=1-2]
	\arrow["\subseteq"', hook', from=1-2, to=2-2]
	\arrow["\subseteq", hook, from=2-1, to=2-2]
\end{tikzcd}
$}.
\]
Since $\widehat{\na}_{(\mathbf{M},X),(\mathbf{M},X)}(\Hom{-,-}_{\mathbf{M}}) = \Hom{-X,-X} = [X,X] = \mathbb{1}_{\on{Bimod}(\ccf{C}\!\on{-Tamb}(\ccf{C},\ccf{C}))([X,X],[X,X])}$, the lax functor $\widehat{\na}$ is strictly unital. To show right unitality of $\mathtt{c}$, we verify that the diagram
\[\begin{tikzcd}[ampersand replacement=\&]
	{\mathtt{\Psi}[Y,X] \otimes_{[X,X]}[X,X]} \&\& {(\mathtt{\Psi}\diamond \Hom{-,-}_{\mathbf{K}})[Y,X]} \\
	{\mathtt{\Psi}[Y,X]}
	\arrow["{\mathtt{c}_{\mathtt{\Psi}[Y,X],[X,X]}}", from=1-1, to=1-3]
	\arrow["{\mathsf{r}_{\mathtt{\Psi}[Y,X]}}"', from=1-1, to=2-1]
	\arrow["{\mathsf{r}_{\mathtt{\Psi}}[Y,X]}", from=1-3, to=2-1]
\end{tikzcd}\]
commutes. This is a consequence of the commutativity of
\[\begin{tikzcd}[ampersand replacement=\&]
	{\int^{U \in \ccf{C}\ostar X} \mathtt{\Psi}(\mathbf{M}\mathrm{K}Y,U) \kotimes \Hom{U, \mathbf{K}\mathrm{G}X}} \&\& {\int^{U \in \mathbf{K}} \mathtt{\Psi}(\mathbf{M}\mathrm{K}Y,U) \kotimes \Hom{U, \mathbf{K}\mathrm{G}X}} \\
	{\mathtt{\Psi}(\mathbf{M}\mathrm{K}Y,\mathbf{K}\mathrm{G}X)}
	\arrow["{\mathtt{i}_{\mathtt{\Psi}[Y,X],[X,X]}}", shift left=1, from=1-1, to=1-3]
	\arrow["{\underline{\mathtt{ra}}^{\mathtt{\Psi}[Y,X]}}"', shift right=1, from=1-1, to=2-1]
	\arrow["{\mathtt{\Psi}(\mathbf{M}\mathrm{K}Y,-)}", shift left=1, from=1-3, to=2-1]
\end{tikzcd},\]
where $\underline{\mathtt{ra}}^{\mathtt{\Psi}[Y,X]}$ is the morphism induced by $\mathtt{ra}[Y,X]$, using the fact that $\mathtt{ra}[Y,X]$ coequalizes $\mathtt{\Psi}[Y,X] \diamond \mathtt{m}^{[X,X]}$ and $\mathtt{ra}^{\mathtt{\Psi}[Y,X]} \diamond [X,X]$. This latter diagram commutes, since for any $x \in \mathtt{\Psi}(\mathbf{M}\mathrm{K}Y,\mathbf{K}\mathrm{L}X)$ and $g \in \Hom{\mathbf{K}\mathrm{L}X,\mathbf{K}\mathrm{G}X}$, we by definition have
\[
\mathtt{ra}^{\mathtt{\Psi}}_{\mathrm{L};\mathrm{K},\mathrm{G}}(x \otimes g) = \mathtt{\Psi}(\mathbf{M}\mathrm{K},g)(x).
\]

Left unitality of $\mathtt{c}$ is similar.

To see that $\mathtt{c}_{\mathtt{\Sigma}[Z,Y],\mathtt{\Psi}[Y,X]}$ becomes an isomorphism whenever $\mathbf{M}$ is very cyclic and $Y$ is a very cyclic generator, it suffices to show that in that case $\mathtt{i}_{\mathtt{\Sigma}[Z,Y],\mathtt{\Psi}[Y,X]}$ is an isomorphism.
This follows from the fact that $\mathtt{i}_{\mathtt{\Sigma}[Z,Y],\mathtt{\Psi}[Y,X]}$ is the comparison map induced by the change of weight: the coend $\int^{U \in \mathbf{M}} \mathtt{\Sigma}(\mathbf{N}\mathrm{F}Z, U) \kotimes \mathtt{\Psi}(U, \mathbf{K}\mathrm{G}X)$ can be written as the weighted colimit $\Hom{-,-}_{\mathbf{M}} \otimes_{(\mathbf{M}^{\on{op}}\kotimes \mathbf{M})} \Big(\mathtt{\Sigma}(\mathbf{N}\mathrm{F}Z,-)\kotimes \mathtt{\Psi}(-,\mathbf{K}\mathrm{G}X)\Big)$.
Denoting the inclusion of $\csym{C}\ostar Y$ into $\mathbf{M}$ by $\euler{I}_{Y}$ and using the fact that $\Hom{-,-}_{\mathbf{M}}\circ (\euler{I}_{Y}^{\on{op}}\kotimes \euler{I}_{Y}) = \Hom{-,-}_{\ccf{C}\ostar Y}$, we may write $\mathtt{i}_{\mathtt{\Sigma}[Z,Y],\mathtt{\Psi}[Y,X]}$ as the comparison map
\[
\begin{aligned}
 &\int^{U \in \mathbf{M}} \mathtt{\Sigma}(\mathbf{N}\mathrm{F}Z, U) \kotimes \mathtt{\Psi}(U, \mathbf{K}\mathrm{G}X) = \Hom{-,-}_{\mathbf{M}} \otimes_{(\mathbf{M}^{\on{op}}\kotimes \mathbf{M})} \Big(\mathtt{\Sigma}(\mathbf{N}\mathrm{F}Z,-)\kotimes \mathtt{\Psi}(-,\mathbf{K}\mathrm{G}X)\Big) \\
 &\rightarrow (\Hom{-,-}_{\mathbf{M}}\circ \euler{I}_{Y}) \otimes_{((\ccf{C}\ostar Y)^{\on{op}}\kotimes (\ccf{C}\ostar Y))} \Big(\mathtt{\Sigma}(\mathbf{N}\mathrm{F}Z,-)\kotimes \mathtt{\Psi}(-,\mathbf{K}\mathrm{G}X)\Big) \\
 &= \int^{U \in \ccf{C}\ostar Y} \mathtt{\Sigma}(\mathbf{N}\mathrm{F}Z, U) \kotimes \mathtt{\Psi}(U, \mathbf{K}\mathrm{G}X)
\end{aligned}
\]
induced by precomposition with the inclusion functor. If $Y$ is a very cyclic generator, then $\euler{I}_{Y}$ is an equivalence, and so the comparison map is invertible in that case.
\end{proof}

\begin{corollary}
 For every choice $\mathtt{W}$ of cyclic generators as in Definition~\ref{ChooseGenerators}, and the resulting pseudofunctor $\mathbb{G}_{\mathtt{W}}$, we obtain a pseudofunctor $\widehat{\na} \circ \mathbb{G}_{\mathtt{W}}: \csym{C}\!\on{-Tamb}_{c} \rightarrow \on{Bimod}(\tam)$. We denote this pseudofunctor by $\na_{\mathtt{W}}$. In the presence of a fixed choice $\mathtt{W}$ and no risk of ambiguity, we denote $\na_{\mathtt{W}}$ simply by $\na$.
\end{corollary}

Let $\mathtt{W},\mathtt{W}'$ be two choices assigning a very cyclic generator to every very cyclic $\csym{C}$-module category, as described in Definition~\ref{ChooseGenerators}.
\begin{lemma}
  The pseudofunctors $\mathbb{G}_{\mathtt{W}}$ and $\mathbb{G}_{\mathtt{W}'}$ are pseudonaturally isomorphic.
\end{lemma}

\begin{proof}
 The assignment $(\mathbb{g}_{\mathtt{W,W}'})_{\mathbf{M}} = \Hom{-,-}_{\mathbb{M}} = \mathbb{1}_{\mathbf{M}}$ defines a pseudonatural isomorphism $\mathbb{g}_{\mathtt{W,W}'}: \mathbb{G}_{\mathtt{W}} \rightarrow \mathbb{G}_{\mathtt{W}'}$. The same assignment defines its inverse.
\end{proof}

\begin{corollary}
 The pseudofunctors $\na_{\mathtt{W}}$ and $\na_{\mathtt{W}'}$ are pseudonaturally isomorphic.
\end{corollary}

\begin{proof}
 We have $\na_{\mathtt{W}} = \widehat{\na} \circ \mathbb{G}_{\mathtt{W}} \xrightarrow[\simeq]{\widehat{\na} \circ \mathbb{g}_{\mathtt{W,W}'}} \widehat{\na} \circ \mathbb{G}_{\mathtt{W}} = \na_{\mathtt{W}'}$
\end{proof}

Explicitly, given a $\csym{C}$-module category $\mathbf{M}$, the component $(\widehat{\na} \circ \mathbb{g}_{\mathtt{W,W}'})_{\mathbf{M}}$ is given by the bimodule $[\mathtt{W}'(\mathbf{M}),\mathtt{W}(\mathbf{M})]$, and given $\mathtt{\Sigma} \in \csym{C}\!\on{-Tamb}(\mathbf{M,N})$, the coherence $2$-morphism $(\widehat{\na} \circ \mathbb{g}_{\mathtt{W,W}'})_{\mathtt{\Sigma}}$ is given by
\[
\begin{aligned}
 &\mathtt{\Sigma}[\mathtt{W}'(\mathbf{N}),\mathtt{W}'(\mathbf{M})] \otimes_{[\mathtt{W}'(\mathbf{M}),\mathtt{W}'(\mathbf{M})]} [\mathtt{W}'(\mathbf{M}),\mathtt{W}(\mathbf{M})] \xrightarrow[\simeq]{\mathsf{c}_{\mathtt{\Sigma}[\mathtt{W}'(\mathbf{N}),\mathtt{W}'(\mathbf{M})], [\mathtt{W}'(\mathbf{M}),\mathtt{W}(\mathbf{M})]}} (\mathtt{\Sigma}\diamond \Hom{-,-}_{\mathbf{M}})[\mathtt{W}'(\mathbf{N}),\mathtt{W}(\mathbf{M})] \\
 & \xrightarrow{(\mathsf{l}^{-1}\circ \mathsf{r})_{\mathtt{\Sigma}}[\mathtt{W}'(\mathbf{N}),\mathtt{W}(\mathbf{M})]} (\Hom{-,-}_{\mathbf{N}}\diamond \mathtt{\Sigma})[\mathtt{W}'(\mathbf{N}),\mathtt{W}(\mathbf{M})] \xrightarrow{\mathsf{c}^{-1}_{[\mathtt{W}'(\mathbf{N}),\mathtt{W}(\mathbf{N})],\mathtt{\Sigma}[\mathtt{W}(\mathbf{N}),\mathtt{W}(\mathbf{M})]}}  [\mathtt{W}'(\mathbf{N}),\mathtt{W}(\mathbf{N})] \otimes_{[\mathtt{W}(\mathbf{N}),\mathtt{W}(\mathbf{N})]} \mathtt{\Sigma}[\mathtt{W}(\mathbf{N}),\mathtt{W}(\mathbf{M})].
\end{aligned}
\]

\section{Compatibility of \texorpdfstring{$\na$}{na} with restrictions}\label{s6}

Let $\mathbb{F}: \csym{C} \rightarrow \csym{D}$ be a (strong) monoidal functor. Given $\mathbf{M} \in \csym{D}\!\on{-Mod}$, denote by $\mathbb{F}^{\ast}\mathbf{M}$ the $\csym{C}$-module category obtained via the composition $\csym{C} \kotimes \mathbf{M} \rightarrow \csym{D} \kotimes \mathbf{M} \rightarrow \mathbf{M}$.
Given $\mathtt{\Psi} \in \csym{D}\!\on{-Tamb}(\mathbf{M},\mathbf{N})$, using the extranaturality of $\ta^{\mathtt{\psi}}$ in $\mathrm{F} \in \csym{D}$, one may verify that the collection $\setj{\ta^{\mathtt{\Psi}}_{\mathbb{F}(\mathrm{H});Y,X} \; | \; \mathrm{H} \in \csym{C}, Y \in \mathbf{N}, X \in \mathbf{M}}$ defines a Tambara module
$
\mathbb{F}^{\ast}\mathtt{\Psi}$ from $\mathbb{F}^{\ast}\mathbf{M}$ to $\mathbb{F}^{\ast}\mathbf{N}
$
whose underlying profunctor is identical to that of $\mathtt{\Psi}$.

Given a morphism $\mathtt{t} \in \csym{D}\!\on{-Tamb}(\mathbf{M},\mathbf{N})(\mathtt{\Psi},\mathtt{\Psi}')$, the collection $\setj{\mathtt{t}_{Y,X} \; | \; Y \in \mathbf{N}, X \in \mathbf{M}}$ defines a morphism
\[
\mathbb{F}^{\ast}\mathtt{t} \in \csym{C}\!\on{-Tamb}(\mathbb{F}^{\ast}\mathbf{M},\mathbb{F}^{\ast}\mathbf{N})(\mathbb{F}^{\ast}\mathtt{\Psi},\mathbb{F}^{\ast}\mathtt{\Psi}').
\]

We thus obtain a faithful functor $\mathbb{F}^{\ast}_{\mathbf{M},\mathbf{N}}$ from $\csym{D}\!\on{-Tamb}(\mathbf{M},\mathbf{N})$ to $\csym{C}\!\on{-Tamb}(\mathbb{F}^{\ast}\mathbf{M},\mathbb{F}^{\ast}\mathbf{N})$. This assignments extends to pseudofunctors
\[
\mathbb{F}^{\ast}: \csym{D}\!\on{-Tamb} \rightarrow \csym{C}\!\on{-Tamb}\qquad \text{ and } \qquad \mathbb{F}^{\ast}_{\euler{0}}: \csym{D}\!\on{-Tamb}_{\euler{0}} \rightarrow \csym{C}\!\on{-Tamb}_{\euler{0}}.
\]
whose coherence cells are identities. Given $\mathtt{\Sigma} \in \csym{D}\!\on{-Tamb}(\mathbf{M,N})$ and $\mathtt{\Psi} \in \csym{D}\!\on{-Tamb}(\mathbf{K,M})$, both $\mathbb{F}^{\ast}(\mathtt{\Sigma} \diamond \mathtt{\Psi})$ and $\mathbb{F}^{\ast}\mathtt{\Sigma} \diamond \mathbb{F}^{\ast}\mathtt{\Psi}$ give the Tambara module whose underlying profunctor is the composition of the underlying profunctors $\mathtt{\Sigma} \diamond \mathtt{\Psi}$; the (coinciding) Tambara structures and the isomorphism between them are depicted in the following diagram:
\[\begin{tikzcd}[ampersand replacement=\&]
	{(\mathbb{F}^{\ast}\mathtt{\Sigma} \diamond \mathbb{F}^{\ast}\mathtt{\Psi})(Z,X)} \& {(\mathbb{F}^{\ast}\mathtt{\Sigma} \diamond \mathbb{F}^{\ast}\mathtt{\Psi})((\mathbb{F}^{\ast}\mathbf{N})(\mathrm{G})Z,(\mathbb{F}^{\ast}\mathbf{K})(\mathrm{G})X)} \\
	{\int^{Y}\mathtt{\Sigma}(Z,Y) \diamond \mathtt{\Psi}(Y,X)} \& {\int^{Y}\mathtt{\Sigma}(\mathbf{N}(\mathbb{F}(\mathrm{H}))Z,Y) \diamond \mathtt{\Psi}(Y,\mathbf{K}(\mathbb{F}(\mathrm{H}))X)} \\
	{\mathbb{F}^{\ast}(\mathtt{\Sigma} \diamond \mathtt{\Psi})(Z,X)} \& {\mathbb{F}^{\ast}(\mathtt{\Sigma} \diamond \mathtt{\Psi})((\mathbb{F}^{\ast}\mathbf{N})(\mathrm{G})Z,(\mathbb{F}^{\ast}\mathbf{K})(\mathrm{G})X)}
	\arrow[from=2-1, to=3-1, equal]
	\arrow[from=2-1, to=1-1, equal]
	\arrow[from=2-1, to=2-2]
	\arrow[from=2-2, to=1-2,equal]
	\arrow[from=2-2, to=3-2, equal]
	\arrow[dashed, from=1-1, to=1-2]
	\arrow[dashed, from=3-1, to=3-2]
\end{tikzcd}\]
Similarly, the unitality coherence cells are identity morphisms: the identity Tambara module of $\mathbf{M}$ is given by the $\on{Hom}$-profunctor $\Hom{-,-}$ of $\mathbf{M}$, together with its $\csym{D}$-action inherited from the $\csym{D}$-module category structure on $\mathbf{M}$. The identity Tambara module of $\mathbb{F}^{\ast}\mathbf{M}$ is given by the same profunctor, with restricted action. But this is precisely $\mathbb{F}^{\ast}\Hom{-,-}_{\mathbf{M}}$.

Observe that $\mathbb{F}$ induces a $\csym{C}$-module functor, $\mathbb{F}: {}_{\ccf{C}}\csym{C} \rightarrow \mathbb{F}^{\ast}\csym{D}$.
The following statement is simpler, but analogous to the lax functoriality part of Theorem~\ref{LaxPseudoFunctorialityNa}:

\begin{lemma}
 The functor $\euler{Res}_{\mathbb{F}}: \csym{C}\!\on{-Tamb}(\mathbb{F}^{\ast}\csym{D}, \mathbb{F}^{\ast}\csym{D}) \rightarrow \csym{C}\!\on{-Tamb}(\csym{C}, \csym{C})$ obtained by restricting and corestricting along $\mathbb{F}$, is lax monoidal.
\end{lemma}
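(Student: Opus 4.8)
The plan is to produce the two pieces of structure of a lax monoidal functor $(\csym{C}\!\on{-Tamb}(\mathbb{F}^{\ast}\csym{D},\mathbb{F}^{\ast}\csym{D}),\diamond)\to(\csym{C}\!\on{-Tamb}(\csym{C},\csym{C}),\diamond)$ — a unit morphism and a laxator — and then to verify the unit and associativity coherence axioms, all of which is a ``single object'' version of the lax-functoriality half of Theorem~\ref{LaxPseudoFunctorialityNa}. First I would record that $\euler{Res}_{\mathbb{F}}$ is exactly $\mathbb{H}(\mathbb{F},\mathbb{F})$ applied to the $\csym{C}$-module functor $\mathbb{F}\colon{}_{\ccf{C}}\csym{C}\to\mathbb{F}^{\ast}\csym{D}$, so that by Corollary~\ref{CoresResStructure} it is a well-defined functor; by Definition~\ref{ResDef} and its corestriction dual, $\euler{Res}_{\mathbb{F}}(\mathtt{\Psi})$ has underlying profunctor $(\mathrm{F},\mathrm{G})\mapsto\mathtt{\Psi}(\mathbb{F}\mathrm{F},\mathbb{F}\mathrm{G})$ and Tambara structure obtained from $\ta^{\mathtt{\Psi}}_{\mathbb{F}\mathrm{H}}$ by pre- and post-composing with the structure cells of $\mathbb{F}$ regarded as a module functor, which are precisely the (invertible) monoidal constraints $\mathbb{F}_{2}$ of $\mathbb{F}$.

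For the unit, I would take $\mathtt{e}^{\mathbb{F}}\colon\csym{C}(-,-)\to\euler{Res}_{\mathbb{F}}(\csym{D}(-,-))=\csym{D}(\mathbb{F}-,\mathbb{F}-)$ to be the collection of maps $\csym{C}(\mathrm{F},\mathrm{G})\to\csym{D}(\mathbb{F}\mathrm{F},\mathbb{F}\mathrm{G})$ given by the action of $\mathbb{F}$ on morphisms. Naturality is immediate, and the Tambara-morphism condition for $\mathtt{e}^{\mathbb{F}}$ unwinds, using the explicit Tambara structures recalled above, to the naturality square of the constraint $\mathbb{F}_{2}$; hence $\mathtt{e}^{\mathbb{F}}$ is a morphism of Tambara modules. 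For the laxator I would take, componentwise at $(\mathrm{F},\mathrm{G})$, the canonical comparison map
\[
 \int^{\mathrm{H}\in\ccf{C}}\mathtt{\Sigma}(\mathbb{F}\mathrm{F},\mathbb{F}\mathrm{H})\kotimes\mathtt{\Psi}(\mathbb{F}\mathrm{H},\mathbb{F}\mathrm{G})\;\longrightarrow\;\int^{\mathrm{W}\in\ccf{D}}\mathtt{\Sigma}(\mathbb{F}\mathrm{F},\mathrm{W})\kotimes\mathtt{\Psi}(\mathrm{W},\mathbb{F}\mathrm{G})
\]
of coends induced by precomposition with $\mathbb{F}\colon\csym{C}\to\csym{D}$; this is the morphism $\euler{Res}_{\mathbb{F}}(\mathtt{\Sigma})\diamond\euler{Res}_{\mathbb{F}}(\mathtt{\Psi})\Rightarrow\euler{Res}_{\mathbb{F}}(\mathtt{\Sigma}\diamond\mathtt{\Psi})$, and it is precisely the comparison map $\mathtt{i}$ from the proof of Theorem~\ref{LaxPseudoFunctorialityNa} with $\csym{C}\ostar Y$ replaced by the image of $\mathbb{F}$. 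That it is a morphism of Tambara modules, and natural in $\mathtt{\Sigma}$ and $\mathtt{\Psi}$, follows from functoriality of colimits together with the formula for the Tambara structure on composites in Definition~\ref{CompositeTambara} and the interaction of restriction and corestriction with composition in Propositions~\ref{ResPseudoOneSide} and~\ref{CoResPseudoOneSide}.

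It then remains to verify the coherence axioms. Associativity of the laxator is the commutativity of a hexagon of comparison maps between iterated coends; invoking the Fubini rule for coends to identify the two bracketings, it collapses — exactly as in the associativity diagram in the proof of Theorem~\ref{LaxPseudoFunctorialityNa} — to the obvious commutativity of a square of inclusions of the $\csym{C}$-indexed diagonals into the $\csym{D}$-indexed ones. Left and right unitality require unwinding the unitors of $\csym{C}\!\on{-Tamb}$, which by Definition~\ref{CompositeTambara} are the Yoneda isomorphisms $\mathtt{y}^{-,-}$ of Lemma~\ref{ProYoneda}; composing the laxator with $\mathtt{e}^{\mathbb{F}}\diamond(-)$, respectively $(-)\diamond\mathtt{e}^{\mathbb{F}}$, and using $\mathtt{y}^{\mathcal{C},\mathtt{\Sigma}}_{C;C',A}(f\otimes v)=\mathtt{\Sigma}(f,A)(v)$, one checks directly that the composite is the corresponding unitor of $\csym{C}\!\on{-Tamb}(\csym{C},\csym{C})$, just as right and left unitality of the coherence morphism $\mathtt{c}$ is verified at the end of the proof of Theorem~\ref{LaxPseudoFunctorialityNa}. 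The main obstacle will be the bookkeeping in the two ``middle'' verifications — that the laxator respects Tambara structures and that the unitality triangles commute — since the Tambara structure on $\euler{Res}_{\mathbb{F}}(\mathtt{\Psi})$ drags along the module-functor constraints of $\mathbb{F}$ (the isomorphisms $\mathbb{F}_{2}$ together with the monoidal coherences of $\mathbb{F}$), and these must be threaded through the coend comparison maps; as elsewhere in the paper we do not strictify, so this is a careful but routine diagram chase, kept tractable by the strongness of $\mathbb{F}$.
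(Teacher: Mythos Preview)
Your proposal is correct and follows essentially the same approach as the paper: the unit is $\mathbb{F}_{-,-}\colon\csym{C}(-,-)\to\csym{D}(\mathbb{F}-,\mathbb{F}-)$, the laxator is the canonical coend comparison map induced by precomposition with $\mathbb{F}$, associativity is reduced (via Fubini) to a commuting square of inclusions exactly as in Theorem~\ref{LaxPseudoFunctorialityNa}, and unitality is checked by unwinding the Yoneda-type unitors of $\csym{C}\!\on{-Tamb}$. If anything, you are slightly more explicit than the paper in flagging the Tambara-morphism checks for the unit and laxator; the paper simply writes down the structure maps and proceeds directly to the coherence axioms.
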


\begin{proof}
 Given $\mathtt{\Sigma},\mathtt{\Psi} \in \csym{C}\!\on{-Tamb}(\mathbb{F}^{\ast}\csym{D}, \mathbb{F}^{\ast}\csym{D})$, we have, for any $\mathrm{C,C}' \in \csym{C}$:
 \[
  \euler{Res}_{\mathbb{F}}(\mathtt{\Sigma} \diamond \mathtt{\Psi})(\mathrm{C},\mathrm{C}') = \int^{\mathrm{D} \in \ccf{D}} \mathtt{\Sigma}(\mathbb{F}\mathrm{C},\mathrm{D}) \kotimes \mathtt{\Psi}(\mathrm{D},\mathbb{F}\mathrm{C}')
 \]
 and
 \[
  (\euler{Res}_{\mathbb{F}}(\mathtt{\Sigma})\diamond \euler{Res}_{\mathbb{F}}(\mathtt{\Psi}))(\mathrm{C},\mathrm{C}') = \int^{\mathrm{C}''\in \ccf{C}} \mathtt{\Sigma}(\mathbb{F}\mathrm{C},\mathbb{F}\mathrm{C}'') \kotimes \mathtt{\Psi}(\mathbb{F}\mathrm{C}'',\mathbb{F}\mathrm{C}').
 \]
 We define the multiplicativity morphisms for $\euler{Res}_{\mathbb{F}}$ as those induced by
 \[\resizebox{.99\hsize}{!}{$
 \begin{tikzcd}[ampersand replacement=\&]
	{\coprod_{\mathrm{D,D}'} \mathtt{\Sigma}(\mathbb{F}\mathrm{C},\mathrm{D}) \kotimes \Hom{\mathrm{D,D}'} \kotimes \mathtt{\Psi}(\mathrm{D}',\mathbb{F}\mathrm{C}')} \& {\coprod_{\mathrm{D''}} \mathtt{\Sigma}(\mathbb{F}\mathrm{C},\mathrm{D}'') \kotimes \mathtt{\Psi}(\mathrm{D}'',\mathbb{F}\mathrm{C}')} \& {\int^{\mathrm{D}} \mathtt{\Sigma}(\mathbb{F}\mathrm{C},\mathrm{D}) \kotimes \mathtt{\Psi}(\mathrm{D},\mathbb{F}\mathrm{C}')} \\
	{\coprod_{\mathrm{C}'',\mathrm{C}'''} \mathtt{\Sigma}(\mathbb{F}\mathrm{C},\mathbb{F}\mathrm{C}'') \kotimes \Hom{\mathrm{C}'',\mathrm{C}'''} \kotimes \mathtt{\Psi}(\mathbb{F}\mathrm{C}''',\mathbb{F}\mathrm{C}')} \& {\coprod_{\mathrm{C}_{'}} \mathtt{\Sigma}(\mathbb{F}\mathrm{C},\mathbb{F}\mathrm{C}_{'}) \kotimes \mathtt{\Psi}(\mathbb{F}\mathrm{C}_{'},\mathbb{F}\mathrm{C}')} \& {\int^{\mathrm{C''}} \mathtt{\Sigma}(\mathbb{F}\mathrm{C},\mathbb{F}\mathrm{C}'')\kotimes \mathtt{\Psi}(\mathbb{F}\mathrm{C}'',\mathbb{F}\mathrm{C}')}
	\arrow[shift left=2, from=1-1, to=1-2]
	\arrow[shift right=2, from=1-1, to=1-2]
	\arrow[from=1-2, to=1-3]
	\arrow["{\coprod_{\mathrm{C}'',\mathrm{C}'''} \mathtt{\Sigma}(\mathbb{F}\mathrm{C},\mathbb{F}\mathrm{C}'') \otimes \mathbb{F}_{\mathrm{C}'',\mathrm{C}'''} \otimes \mathtt{\Psi}(\mathbb{F}\mathrm{C}''',\mathbb{F}\mathrm{C}')}", from=2-1, to=1-1]
	\arrow[hook, from=2-2, to=1-2]
	\arrow[shift left=2, from=2-1, to=2-2]
	\arrow[shift right=2, from=2-1, to=2-2]
	\arrow[from=2-2, to=2-3]
	\arrow[dashed, from=2-3, to=1-3]
\end{tikzcd}$}
\]
and the unitality structure morphisms as
\[
 \csym{C}(-,-) \xrightarrow{\mathbb{F}_{-,-}} \csym{D}(\mathbb{F}-,\mathbb{F}-) = \mathbb{F}^{\ast}\csym{D}(-,-).
\]
The multiplicative axiom follows analogously to the proof of Theorem~\ref{LaxPseudoFunctorialityNa}. The right unitality axiom is satisfied, since the diagram
\[\begin{tikzcd}[ampersand replacement=\&]
	{\int^{\mathrm{C}''} \mathtt{\Psi}(\mathbb{F}\mathrm{C},\mathbb{F}\mathrm{C}'') \kotimes \csym{C}(\mathrm{C}'',\mathrm{C}')} \& {\int^{\mathrm{C}''} \mathtt{\Psi}(\mathbb{F}\mathrm{C},\mathbb{F}\mathrm{C}'') \kotimes \csym{D}(\mathbb{F}\mathrm{C}'',\mathbb{F}\mathrm{C}')} \\
	{\mathtt{\Psi}(\mathbb{F}\mathrm{C},\mathbb{F}\mathrm{C}')} \& {\int^{\mathrm{D}} \mathtt{\Psi}(\mathbb{F}\mathrm{C},\mathrm{D}) \kotimes \csym{D}(\mathrm{D},\mathbb{F}\mathrm{C}')}
	\arrow["{\euler{r}^{\mathbb{F}^{\ast}\mathtt{\Psi}}}", from=1-1, to=2-1]
	\arrow["{\mathtt{\Psi} \otimes \mathbb{F}}", shift left=1, from=1-1, to=1-2]
	\arrow[hook, from=1-2, to=2-2]
	\arrow["{\euler{r}^{\mathtt{\Psi}}}"', from=2-2, to=2-1]
\end{tikzcd}\]
commutes: both the maps in it correspond to the extranatural collection
\[
 \begin{aligned}
  \mathtt{\Psi}(\mathbb{F}\mathrm{C},\mathbb{F}\mathrm{C}'') \kotimes \csym{C}(\mathrm{C}'',\mathrm{C}') \rightarrow \mathtt{\Psi}(\mathbb{F}\mathrm{C},\mathbb{F}\mathrm{C}') \\
  x \otimes \mathrm{f} \mapsto \mathtt{\Psi}(\mathbb{F}\mathrm{C},\mathbb{F}\mathrm{f})(x).
 \end{aligned}
\]
Left unitality is similar.
\end{proof}

Given a lax monoidal functor $\mathbb{G}: \csym{A} \rightarrow \csym{B}$, we obtain an induced lax functor
\[
\on{Bimod}(\mathbb{G}): \on{Bimod}(\csym{A}) \rightarrow \on{Bimod}(\csym{B}),
\]
which, for example, given a monoid object $A$ in $\csym{A}$, endows the object $\mathbb{G}(A)$ with the structure of a monoid via maps \[
\mathbb{1}_{\ccf{B}} \xrightarrow{\mathbf{g}_{\mathbb{1}}} \mathbb{G}(\mathbb{1}_{\ccf{A}}) \xrightarrow{\mathbb{G}(\mathtt{e}_{A})} \mathbb{G}(A)
\]
and
\[
 \mathbb{G}(A) \ktotimes{\cccsym{B}} \mathbb{G}(A) \xrightarrow{\mathbf{g}_{A,A}} \mathbb{G}(A \ktotimes{\cccsym{A}} A) \xrightarrow{\mathbb{G}(\mathtt{m}_{A})} \mathbb{G}(A).
\]
The multiplicative coherence maps of $\on{Bimod}(\mathbb{G})$ are given by
\[\begin{tikzcd}[ampersand replacement=\&]
	{\mathbb{G}(M) \ktotimes{\cccsym{B}} \mathbb{G}(A) \ktotimes{\cccsym{B}} \mathbb{G}(N)} \& {\mathbb{G}(M) \ktotimes{\cccsym{B}} \mathbb{G}(N)} \& {\mathbb{G}(M) \otimes_{\mathbb{G}(A)}\mathbb{G}(N)} \\
	{\mathbb{G}(M \ktotimes{\cccsym{A}} A \ktotimes{\cccsym{A}} N)} \& {\mathbb{G}(M \ktotimes{\cccsym{A}} N)} \& {\mathbb{G}(M \otimes_{A} N)}
	\arrow[shift left=1, from=1-1, to=1-2]
	\arrow["{\mathbf{g}}"', from=1-1, to=2-1]
	\arrow["{\mathbf{g}_{M,N}}", from=1-2, to=2-2]
	\arrow[shift right=1, from=1-1, to=1-2]
	\arrow[shift left=1, from=2-1, to=2-2]
	\arrow[shift right=1, from=2-1, to=2-2]
	\arrow[from=1-2, to=1-3]
	\arrow[dashed, from=1-3, to=2-3]
	\arrow[from=2-2, to=2-3]
\end{tikzcd}\]
from which it clearly follows that if $\mathbb{G}$ is strong monoidal, then $\on{Bimod}(\mathbb{G})$ is a pseudofunctor. The unitality coherence maps are the identities, since for any monoid $A \in \csym{A}$, the identity bimodule ${}_{A}A_{A}$ is mapped under $\mathbb{G}$ to the identity bimodule ${}_{\mathbb{G}(A)}\mathbb{G}(A)_{\mathbb{G}(A)}$ of $\mathbb{G}(A)$.

\begin{proposition}\label{HighNaturality}
 Given a strong monoidal functor $\mathbb{F}: \csym{C} \rightarrow \csym{D}$, the diagram
\begin{equation}\label{HighlyNatural}
 \begin{tikzcd}[ampersand replacement=\&]
	{\csym{D}\!\on{-Tamb}_{\euler{0}}} \& {\on{Bimod}(\csym{D}\!\on{-Tamb}(\csym{D},\csym{D}))} \\
	\& {\on{Bimod}(\csym{C}\!\on{-Tamb}(\csym{D},\csym{D}))} \\
	{\csym{C}\!\on{-Tamb}_{\euler{0}}} \& {\on{Bimod}(\csym{C}\!\on{-Tamb}(\csym{C},\csym{C}))}
	\arrow["{\mathbb{F}^{\ast}_{\euler{0}}}", from=1-1, to=3-1]
	\arrow["{\widehat{\na}_{\cccsym{D}}}", shift left=1, from=1-1, to=1-2]
	\arrow["{\widehat{\na}_{\cccsym{C}}}", shift left=1, from=3-1, to=3-2]
	\arrow["{\on{Bimod}(\mathbb{F}^{\ast}_{\cccsym{D},\cccsym{D}})}", from=1-2, to=2-2]
	\arrow["{\on{Bimod}(\euler{Res}_{\mathbb{F}})}", shift right=1, from=2-2, to=3-2]
\end{tikzcd}
\end{equation}
commutes strictly.
\end{proposition}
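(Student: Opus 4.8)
The plan is to verify that the two composites $\csym{D}\!\on{-Tamb}_{\euler{0}} \rightarrow \on{Bimod}(\csym{C}\!\on{-Tamb}(\csym{C},\csym{C}))$ agree on objects, on $1$-morphisms, and on $2$-morphisms, and then that they carry the same coherence cells. Since all the pseudofunctors involved have been arranged to have identity coherence data wherever possible ($\mathbb{F}^{\ast}_{\euler{0}}$ by construction, $\on{Bimod}(\mathbb{F}^{\ast}_{\cccsym{D},\cccsym{D}})$ since $\mathbb{F}^{\ast}_{\cccsym{D},\cccsym{D}}$ is strong monoidal, and $\widehat{\na}_{\cccsym{C}},\widehat{\na}_{\cccsym{D}}$ are strictly unital lax functors), the verification is largely a matter of unwinding definitions and matching underlying profunctors and their Tambara structures. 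This is where I would start: fix $(\mathbf{M},X)$ in $\csym{D}\!\on{-Tamb}_{\euler{0}}$ and chase it both ways.

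First I would handle objects. Going right then down, $\widehat{\na}_{\cccsym{D}}$ sends $(\mathbf{M},X)$ to the monoid $[X,X]_{\cccsym{D}} = \Hom{-,-}^{\mathbf{M}}\smalltriangleup\euler{\Phi}_X\smalltriangledown\euler{\Phi}_X$ in $\csym{D}\!\on{-Tamb}(\csym{D},\csym{D})$, then $\on{Bimod}(\euler{Res}_{\mathbb{F}}\circ\mathbb{F}^{\ast}_{\cccsym{D},\cccsym{D}})$ applies $\euler{Res}_{\mathbb{F}}\circ\mathbb{F}^{\ast}_{\cccsym{D},\cccsym{D}}$ to this monoid. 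Going down then right, $\mathbb{F}^{\ast}_{\euler{0}}$ sends $(\mathbf{M},X)$ to $(\mathbb{F}^{\ast}\mathbf{M},X)$, and $\widehat{\na}_{\cccsym{C}}$ produces $\Hom{-,-}^{\mathbb{F}^{\ast}\mathbf{M}}\smalltriangleup\euler{\Phi}_X\smalltriangledown\euler{\Phi}_X$ in $\csym{C}\!\on{-Tamb}(\csym{C},\csym{C})$. The underlying profunctor in both cases is $(\mathrm{C},\mathrm{C}')\mapsto \on{Hom}_{\mathbf{M}}(\mathbf{M}\mathbb{F}(\mathrm{C})X,\mathbf{M}\mathbb{F}(\mathrm{C}')X)$, so the point is to check the Tambara structures and the monoid structures coincide — the Tambara structure of $[X,X]_{\cccsym{D}}$ restricted along $\mathbb{F}$ at an object $\mathrm{H}\in\csym{C}$ uses $\ta^{\mathbf{M}(-,-)}_{\mathbb{F}(\mathrm{H})}$, which is exactly the Tambara structure coming from the $\csym{C}$-module category $\mathbb{F}^{\ast}\mathbf{M}$ by definition of $\mathbb{F}^{\ast}$; the multiplication and unit of the monoid are transported through $\euler{Res}_{\mathbb{F}}$ precisely by the prescription in $\on{Bimod}(\euler{Res}_{\mathbb{F}})$, which I would match against the monoid structure of Proposition~\ref{EndMonoid} for $\mathbb{F}^{\ast}\mathbf{M}$ using the lax structure maps of $\euler{Res}_{\mathbb{F}}$ built from $\mathbb{F}_{-,-}$. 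The $1$- and $2$-morphism cases are analogous but easier: a Tambara module $\mathtt{\Psi}\colon\mathbf{M}\xslashedrightarrow{}\mathbf{N}$ over $\csym{D}$ produces the bimodule $\mathtt{\Psi}[Y,X]$, whose underlying profunctor is unchanged by $\mathbb{F}^{\ast}$, and whose bimodule actions are again transported through $\euler{Res}_{\mathbb{F}}$ exactly as $\on{Bimod}(\euler{Res}_{\mathbb{F}})$ prescribes; one compares this against the bimodule $(\mathbb{F}^{\ast}\mathtt{\Psi})[Y,X]$ of Proposition~\ref{HomBimod}.

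Finally I would verify the coherence cells match. The lax-functoriality cells $\mathtt{c}$ of $\widehat{\na}_{\cccsym{C}}$ and $\widehat{\na}_{\cccsym{D}}$ are induced from the inclusion maps $\mathtt{i}$ comparing coends over $\csym{C}\ostar Y$ (resp.\ $\csym{D}\ostar Y$) with coends over the whole module category; since $\euler{Res}_{\mathbb{F}}$ and $\on{Bimod}(\mathbb{F}^{\ast}_{\cccsym{D},\cccsym{D}})$ act on underlying profunctors by identity or by restriction and their multiplicativity cells are themselves induced from the analogous coproduct-level maps built from $\mathbb{F}_{-,-}$, the two composite cells are induced from the same universal property, hence equal. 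The strict unitality on both sides makes the unit comparison trivial. \textbf{The main obstacle} I anticipate is bookkeeping: there are three layers of $\on{Bimod}(-)$ and $\euler{Res}$ and $\mathbb{F}^{\ast}$ wrapped around the monoid structures, and making the identification of the transported multiplication of $[X,X]_{\cccsym{D}}$ with the intrinsic multiplication of $[X,X]$ for $\mathbb{F}^{\ast}\mathbf{M}$ fully explicit — tracking how the lax structure map of $\euler{Res}_{\mathbb{F}}$, which is $\mathbb{F}_{-,-}$ on hom-objects, interacts with the coend defining composition in $\csym{C}\!\on{-Tamb}(\csym{C},\csym{C})$ versus $\csym{C}\!\on{-Tamb}(\csym{D},\csym{D})$ — requires care, though each individual step reduces to naturality of tensor products and the universal property of coends, with no genuinely new idea needed.
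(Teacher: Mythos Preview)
Your proposal is correct and follows essentially the same route as the paper: verify that both composites agree on underlying profunctors, Tambara structures, monoid/bimodule structures, and then on the multiplicative coherence cells (which on both sides are induced from the same inclusion-of-collections maps $\mathtt{i}$), with strict unitality making the unit comparison trivial. The paper's proof is exactly this definitional unwinding, and your anticipated ``main obstacle'' of bookkeeping with the lax structure of $\euler{Res}_{\mathbb{F}}$ is the only subtlety there as well.
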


\begin{proof}
 First, for any Tambara module $\mathtt{\Psi} \in \csym{D}\!\on{-Tamb}((\mathbf{K},X),(\mathbf{M},Y))$, the underlying Tambara module of $(\widehat{\na}_{\ccf{C}} \circ \mathbb{F}^{\ast}_{\euler{0}})(\mathtt{\Psi})$ sends $(\mathrm{C},\mathrm{C}')$ to $\mathtt{\Psi}(\mathbf{M}\mathbb{F}(\mathrm{C})Y, \mathbf{K}\mathbb{F}(\mathrm{C}')X)$, and its Tambara structure is given by
 \[
 \begin{aligned}
 &\mathtt{\Psi}(\mathbf{M}\mathbb{F}(\mathrm{C})Y, \mathbf{K}\mathbb{F}(\mathrm{C}')X) \xrightarrow{\ta^{\mathtt{\Psi}}_{\mathbb{F}(\mathrm{C}''); \mathbf{M}\mathbb{F}(\mathrm{C})Y,\mathbf{K}\mathbb{F}(\mathrm{C}')X}} \mathtt{\Psi}(\mathbf{M}\mathbb{F}(\mathrm{C}'')\mathbf{M}\mathbb{F}(\mathrm{C})Y, \mathbf{K}\mathbb{F}(\mathrm{C}'')\mathbf{K}\mathbb{F}(\mathrm{C}')X)
 \xrightarrow{\mathtt{\Psi}\big(\mathbf{m}_{\mathbb{F}(\mathrm{C}''),\mathbb{F}(\mathrm{C})}^{-1},\mathbf{k}_{\mathbb{F}(\mathrm{C}''),\mathbb{F}(\mathrm{C}')}\big)} \\
 &\mathtt{\Psi}\big(\mathbf{M}\big(\mathbb{F}(\mathrm{C}'')\mathbb{F}(\mathrm{C})\big)Y, \mathbf{K}\big(\mathbb{F}(\mathrm{C}'')\mathbf{K}\mathbb{F}(\mathrm{C}')\big)X\big)
 \xrightarrow{\mathtt{\Psi}\big(\mathbf{M}\mathbf{f}_{\mathrm{C}'',\mathrm{C}}^{-1},\mathbf{K}\mathbf{f}_{\mathrm{C}'',\mathrm{C}'}\big)} \mathtt{\Psi}\big(\mathbf{M}\big(\mathbb{F}(\mathrm{C}''\mathrm{C})\big)Y, \mathbf{K}\big(\mathbb{F}(\mathrm{C}''\mathrm{C}')\big)X\big)
 \end{aligned}
 \]
 which coincides with the Tambara module obtained by chasing $\mathtt{\Psi}$ along the other path in Diagram~\eqref{HighlyNatural}. Similarly, chasing a morphism of Tambara modules gives equal morphisms of underlying Tambara modules, and since morphisms of bimodules do not require additional structure beyond the morphisms of underlying Tambara modules, we see that the diagram commutes on the level of $2$-morphisms.

 By definition, the multiplication map for the monoid $(\widehat{\na}_{\ccf{C}}\circ \mathbb{F}_{\euler{0}}^{\ast})((\mathbf{K},X))$ is given on components by the maps
 \[
  \int^{\mathrm{C}''} \Hom{\mathbf{K}\mathbb{F}(\mathrm{C})X,\mathbf{K}\mathbb{F}\mathrm{C}''X} \kotimes \Hom{\mathbf{K}\mathbb{F}(\mathrm{C}'')X, \mathbf{K}\mathbb{F}(\mathrm{C}')X} \rightarrow \Hom{\mathbf{K}\mathbb{F}(\mathrm{C})X,\mathbf{K}\mathbb{F}(\mathrm{C}')X}
 \]
 induced by composition. This map coincides with the composite
 \[
 \resizebox{.99\hsize}{!}{$
  \int^{\mathrm{C}''} \Hom{\mathbf{K}\mathbb{F}(\mathrm{C})X,\mathbf{K}\mathbb{F}(\mathrm{C}'')X} \kotimes \Hom{\mathbf{K}\mathbb{F}(\mathrm{C}'')X, \mathbf{K}\mathbb{F}(\mathrm{C}')X} \rightarrow \int^{\mathrm{D}} \Hom{\mathbf{K}\mathbb{F}(\mathrm{C})X,\mathbf{K}\mathrm{D}X} \kotimes \Hom{\mathbf{K}\mathrm{D}X, \mathbf{K}\mathbb{F}(\mathrm{C}')X} \rightarrow \Hom{\mathbf{K}\mathbb{F}(\mathrm{C})X,\mathbf{K}\mathbb{F}(\mathrm{C}')X},
  $}
 \]
 which gives the multiplication map of the monoid $(\on{Bimod}(\euler{Res}_{\mathbb{F}}) \circ \on{Bimod}(\mathbb{F}^{\ast}_{\cccsym{D},\cccsym{D}}) \circ \widehat{\na}_{\cccsym{D}})((\mathbf{K},X))$. Similarly, both paths in the diagram define the unit map $\csym{C}(\mathrm{C},\mathrm{C}') \rightarrow \Hom{\mathbf{K}\mathbb{F}(\mathrm{C})X,\mathbf{K}\mathbb{F}(\mathrm{C})'X}$ as the one sending $\mathrm{f}$ to $(\mathbf{K}\mathbb{F}(\mathrm{f}))_{X}$. Since the left and right action maps for the $[Y,Y]$-$[X,X]$-bimodule structure of $\mathtt{\Psi}[Y,X]$ are defined using the composition maps in $\mathbf{M}$ and $\mathbf{K}$ respectively, the coincidence of bimodule structures follows analogously to the coincidence of monoid structures.

 Let $\mathtt{\Sigma} \in \csym{D}\!\on{-Tamb}((\mathbf{M},Y),(\mathbf{N},Z))$. By the construction given in the proof of Theorem~\ref{LaxPseudoFunctorialityNa}, the multiplicative coherence morphism $\euler{c}_{\mathtt{\Sigma}[Z,Y],\mathtt{\Psi}[Y,X]}: \mathtt{\Sigma}[Z,Y] \otimes_{[Y,Y]} \mathtt{\Psi}[Y,X] \rightarrow (\mathtt{\Sigma} \diamond \mathtt{\Psi})[Z,X]$ of $\widehat{\na}_{\ccf{D}}$ is determined uniquely by the inclusions of
 $\setj{\mathtt{\Sigma}(\mathbf{N}\mathrm{D}Z,\mathbf{M}\mathrm{D}'Y) \kotimes \mathtt{\Psi}(\mathbf{M}\mathrm{D}'Y,\mathbf{K}\mathrm{D}''X) \; | \; \mathrm{D}' \in \csym{D}}$ into $\setj{\mathtt{\Sigma}(\mathbf{N}\mathrm{D}Z,U) \kotimes \mathtt{\Psi}(U,\mathbf{K}\mathrm{D}''X) \; | \; U \in \mathbf{M}}$, for $\mathrm{D,D}'$ and $\mathrm{D}''$ in $\csym{D}$. The multiplicative coherence morphism $\mathbb{F}^{\ast}\mathtt{\Sigma}[Z,Y] \otimes_{\mathbb{F}^{\ast}[Y,Y]} \mathbb{F}^{\ast}\mathtt{\Psi}[Y,X] \rightarrow (\mathbb{F}^{\ast}(\mathtt{\Sigma}\circ \mathtt{\Psi}))[Z,X]$ for either of the lax functors in Diagram~\eqref{HighlyNatural} is obtained by the same construction, from the inclusions of
 \[
\setj{\mathtt{\Sigma}(\mathbf{N}\mathbb{F}(\mathrm{C})Z,\mathbf{M}\mathbb{F}(\mathrm{C}')Y) \kotimes \mathtt{\Psi}(\mathbf{M}\mathbb{F}(\mathrm{C}')Y,\mathbf{K}\mathbb{F}(\mathrm{C}'')X) \; | \; \mathrm{C}' \in \csym{C}} \text{ into }
 \setj{\mathtt{\Sigma}(\mathbf{N}\mathbb{F}(\mathrm{C})Z,U) \kotimes \mathtt{\Psi}(U,\mathbf{K}\mathbb{F}(\mathrm{C})''X) \; | \; U \in \mathbf{M}}.
 \]

 Finally, observe that all the lax functors in Diagram~\eqref{HighlyNatural} are strictly unital, and hence both the paths in the diagram define strictly unital lax functors.
\end{proof}

\section{\texorpdfstring{$\na$}{na} is essentially surjective}\label{s7}

In the case $\mathbf{M} = \csym{C}$, the equivalence given by Equation~\eqref{BicatYo} becomes a monoidal equivalence $\csym{C}^{\otimes\!\on{opp}} \xiso \csym{C}\!\on{-Mod}(\csym{C},\csym{C})$. Composing this with the contravariant monoidal functor
\[
\mathbb{H}(\csym{C},-)_{\ccf{C},\ccf{C}}: \csym{C}\!\on{-Mod}(\csym{C},\csym{C})^{\on{op}} \rightarrow \mathbf{Cat}_{\Bbbk}(\tam, \tam)
\]
of Proposition~\ref{CoresResStructure}, we get a monoidal functor
\[
\csym{C} \rightarrow \mathbf{Cat}_{\Bbbk}(\tam, \tam),
\]
 which endows $\tam$ with the structure of a $\csym{C}$-module category. The action of $\mathrm{K} \in \csym{C}$ is by restriction - it maps a Tambara module $\mathtt{\Psi}$ to the Tambara module $\mathtt{\Psi}(-,-\mathrm{K})$. It should be noted that this action is not strict: the Tambara modules $\mathtt{\Psi}(-,(-\mathrm{L})\mathrm{K})$ and $\mathtt{\Psi}(-,-(\mathrm{LK}))$ are canonically isomorphic but not equal. If $\mathtt{T}$ is a monoid object in $\tam$, then $\mathtt{T}(-,-\mathrm{K})$ is a $\mathtt{T}$-module, with structure map $\mathtt{la}: \mathtt{T} \circ \mathtt{T}(-,-\mathrm{K}) \rightarrow \mathtt{T}(-,-\mathrm{K})$ given by
 \[
 \mathtt{la}_{\mathrm{F},\mathrm{G}} = \mathtt{m}_{\mathrm{F},\mathrm{GK}}: \int^{\mathrm{H}}\mathtt{T}(\mathrm{F},\mathrm{H}) \kotimes \mathtt{T}(\mathrm{H},\mathrm{GK}) \rightarrow \mathtt{T}(\mathrm{F},\mathrm{GK}).
 \]
 Transporting from the evident $\mathtt{T}$-module structure on $\mathtt{T} \circ \csym{C}(-,-\mathrm{K})$ under the isomorphism coming from Yoneda lemma gives the same $\mathtt{T}$-module structure on $\mathtt{T}(-,-\mathrm{K})$.

\begin{definition}\label{CTPlus}
 The $\csym{C}$-module category $\mathtt{T}^{\ccf{C}}_{+}$ is defined as the full subcategory of $\mathtt{T}\!\on{--mod}$ whose objects are the objects of $\tam \ast \mathtt{T}$, under the above described $\csym{C}$-module structure of $\tam$.
 As such, a general object of $\mathtt{T}^{\ccf{C}}_{+}$ is of the form $\mathtt{T}\big(-,((\ldots((-\mathrm{F}_{n})\mathrm{F}_{n-1})\ldots)\mathrm{F}_{1})\big)$, for $n \geq 0$ and $\mathrm{F}_{1},\ldots, \mathrm{F}_{n} \in \csym{C}$. Clearly, $\mathtt{T}_{+}^{\ccf{C}}$ is very cyclic, with a very cyclic generator given by $\mathtt{T}$.
\end{definition}

Using Proposition~\ref{EndMonoid}, the object $\mathtt{T} \in \mathtt{T}^{\ccf{C}}_{+}$ gives a monoid $[\mathtt{T},\mathtt{T}]$ in $\tam$.

\begin{proposition}
 There is an isomorphism $\mathtt{T} \simeq [\mathtt{T},\mathtt{T}]$ of monoid objects in $\tam$.
\end{proposition}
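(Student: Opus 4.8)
The plan is to identify, for each $\mathrm{F}\in\csym{C}$, the object $\mathbf{M}\mathrm{F}(\mathtt{T})$ of $\mathbf{M}=\mathtt{T}^{\ccf{C}}_{+}$ with a free $\mathtt{T}$-module and then feed this into the universal property of free Tambara modules. As noted just before Definition~\ref{CTPlus}, the co-Yoneda isomorphism $\mathtt{T}\diamond\csym{C}(-,-\mathrm{F})\simeq\mathtt{T}(-,-\mathrm{F})$ is one of $\mathtt{T}$-modules, exhibiting $\mathbf{M}\mathrm{F}(\mathtt{T})=\mathtt{T}(-,-\mathrm{F})$ as the free left $\mathtt{T}$-module on the Tambara module $\csym{C}(-,-\mathrm{F})$ of Example~\ref{HomCK}; for $\mathrm{F}=\mathbb{1}$ this specializes to $\mathbf{M}\mathbb{1}(\mathtt{T})\simeq\mathtt{T}$ being free on the monoidal unit $\csym{C}(-,-)$. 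Using the free/forgetful adjunction for left $\mathtt{T}$-modules together with \eqref{UniPropCK}, we obtain isomorphisms
\[
 \begin{aligned}
  {}[\mathtt{T},\mathtt{T}](\mathrm{F},\mathrm{G}) &= \on{Hom}_{\mathtt{T}\!\on{-mod}}\bigl(\mathtt{T}\diamond\csym{C}(-,-\mathrm{F}),\,\mathtt{T}(-,-\mathrm{G})\bigr) \\
  &\simeq \on{Hom}_{\tam}\bigl(\csym{C}(-,-\mathrm{F}),\,\mathtt{T}(-,-\mathrm{G})\bigr) \simeq \mathtt{T}(\mathrm{F},\mathbb{1}\mathrm{G}) \simeq \mathtt{T}(\mathrm{F},\mathrm{G}),
 \end{aligned}
\]
the last isomorphism being induced by the left unitor of $\csym{C}$, and the whole chain natural in $\mathrm{F}$ (contravariantly) and $\mathrm{G}$ (covariantly); this yields an isomorphism $\mathtt{s}\colon\mathtt{T}\Rightarrow[\mathtt{T},\mathtt{T}]$ of the underlying profunctors. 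Unwinding the identifications, $\mathtt{s}_{\mathrm{F},\mathrm{G}}(t)$ is the left-$\mathtt{T}$-linear endomorphism of $\mathtt{T}(-,-\mathrm{F})$ that at $(\mathrm{A},\mathrm{B})$ sends $v\in\mathtt{T}(\mathrm{A},\mathrm{B}\mathrm{F})$ to $\mathtt{m}^{\mathtt{T}}$ applied to $v$ and the element of $\mathtt{T}(\mathrm{B}\mathrm{F},\mathrm{B}\mathrm{G})$ obtained from $t$ via $\ta^{\mathtt{T}}_{\mathrm{B};\mathrm{F},\mathrm{G}}$ and the associator of $\csym{C}$; that is, $\mathtt{s}_{\mathrm{F},\mathrm{G}}(t)$ is ``right multiplication by the $\ta^{\mathtt{T}}$-image of $t$''.

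I would then check that $\mathtt{s}$ is a morphism of Tambara modules. The Tambara structure of $[\mathtt{T},\mathtt{T}]$ is the one described in Equation~\eqref{TambaraOnRestriction} with $\mathbf{M}=\mathtt{T}^{\ccf{C}}_{+}$; here the endofunctors $\mathbf{M}\mathrm{H}$ act by restriction and the coherence cells $\mathbf{m}_{\mathrm{H},\mathrm{F}}$ are associators of $\csym{C}$ (because the $\csym{C}$-module structure on $\tam$ is given by the restriction functors of Proposition~\ref{CoresResStructure}). Consequently $\mathbf{M}\mathrm{H}$ carries the right multiplication $\mathtt{s}_{\mathrm{F},\mathrm{G}}(t)$ again to a right multiplication, now by the $\ta^{\mathtt{T}}_{\mathrm{H}}$-image of the previous element, and the identity $\ta^{[\mathtt{T},\mathtt{T}]}_{\mathrm{H};\mathrm{F},\mathrm{G}}\circ\mathtt{s}_{\mathrm{F},\mathrm{G}}=\mathtt{s}_{\mathrm{H}\mathrm{F},\mathrm{H}\mathrm{G}}\circ\ta^{\mathtt{T}}_{\mathrm{H};\mathrm{F},\mathrm{G}}$ follows from the multiplicativity axiom of $\ta^{\mathtt{T}}$ together with the fact that the monoid multiplication $\mathtt{m}^{\mathtt{T}}$ is itself a morphism of Tambara modules, so that $\ta^{\mathtt{T}}$ commutes with $\mathtt{m}^{\mathtt{T}}$. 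Alternatively one transports the Tambara structure of $\mathtt{T}$ along $\mathtt{s}$ as in Lemma~\ref{StructureTransport} and checks, using these same axioms, that the transported structure coincides with \eqref{TambaraOnRestriction}.

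Finally I would verify that $\mathtt{s}$ respects the monoid structures. For the unit, the morphism $\mathtt{e}^{[\mathtt{T},\mathtt{T}]}$ of Proposition~\ref{EndMonoid} sends $\mathrm{b}\in\csym{C}(\mathrm{F},\mathrm{G})$ to the $\mathtt{T}$-module map $(\mathbf{M}\mathrm{b})_{\mathtt{T}}\colon\mathtt{T}(-,-\mathrm{F})\to\mathtt{T}(-,-\mathrm{G})$; chasing through the adjunction isomorphism above this corresponds to $\mathtt{e}^{\mathtt{T}}_{\mathrm{F},\mathrm{G}}(\mathrm{b})\in\mathtt{T}(\mathrm{F},\mathrm{G})$, so $\mathtt{s}\circ\mathtt{e}^{\mathtt{T}}=\mathtt{e}^{[\mathtt{T},\mathtt{T}]}$. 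For the multiplication, recall that $\mathtt{m}^{[\mathtt{T},\mathtt{T}]}$ is composition of left-$\mathtt{T}$-linear endomorphisms taken in the order $\mathrm{b}\otimes\mathrm{c}\mapsto\mathrm{c}\circ\mathrm{b}$; since $\mathtt{s}_{\mathrm{F},\mathrm{H}}(t)$ and $\mathtt{s}_{\mathrm{H},\mathrm{G}}(t')$ are right multiplications, associativity of $\mathtt{m}^{\mathtt{T}}$ yields $\mathtt{s}_{\mathrm{H},\mathrm{G}}(t')\circ\mathtt{s}_{\mathrm{F},\mathrm{H}}(t)=\mathtt{s}_{\mathrm{F},\mathrm{G}}\bigl(\mathtt{m}^{\mathtt{T}}(t\otimes t')\bigr)$, and passing to the coends over $\mathrm{H}$ (using the extranaturality already built into $\mathtt{s}$ and the definition of $\diamond$) identifies $\mathtt{m}^{[\mathtt{T},\mathtt{T}]}\circ(\mathtt{s}\diamond\mathtt{s})$ with $\mathtt{s}\circ\mathtt{m}^{\mathtt{T}}$.

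I expect the genuinely non-routine part to be this last verification carried out honestly, i.e. without passing to strictifications: one must insert and cancel the associators and unitors of $\csym{C}$ and of the module category $\mathtt{T}^{\ccf{C}}_{+}$ in the expression for $\mathtt{s}_{\mathrm{H},\mathrm{G}}(t')\circ\mathtt{s}_{\mathrm{F},\mathrm{H}}(t)$. The content is that ``right multiplications on $\mathtt{T}$ compose like $\mathtt{T}$'' --- the Cayley/Yoneda phenomenon --- now realized inside the non-strict bicategorical setting, and the precise inputs that make the bookkeeping close up are exactly the associativity and unitality axioms of the monoid $\mathtt{T}$ in $\tam$ together with the Tambara-module axioms relating $\ta^{\mathtt{T}}$ to the structure cells $\mathbf{m}$ and $\mathbf{n}$.
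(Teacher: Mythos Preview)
Your proposal is correct and follows essentially the same approach as the paper: both obtain the underlying profunctor isomorphism via the free/forgetful adjunction for $\mathtt{T}$-modules combined with the universal property \eqref{UniPropCK}, then verify the Tambara axiom and the monoid axioms using that $\mathtt{m}^{\mathtt{T}}$ and $\mathtt{e}^{\mathtt{T}}$ are Tambara morphisms together with the Tambara axioms for $\mathtt{T}$. The only cosmetic difference is that the paper works with the inverse map $\euler{J}\colon[\mathtt{T},\mathtt{T}]\to\mathtt{T}(-,\mathbb{1}-)$, given by evaluation $\varphi\mapsto\varphi_{\mathrm{K},\mathbb{1}}(\mathtt{e}^{\mathtt{T}}(\mathsf{l}_{\mathrm{K}}))$, rather than your ``right multiplication'' map $\mathtt{s}$, and carries out the coherence bookkeeping you anticipate as two explicit chains of equalities.
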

\begin{proof}
 We have
 \[
 \begin{aligned}
  &[\mathtt{T,T}](\mathrm{K,L}) = \on{Hom}_{\mathtt{T}\!\on{--mod}}(\mathtt{T}(-,-\mathrm{K}),\mathtt{T}(-,-\mathrm{L})) \xiso \on{Hom}_{\mathtt{T}\!\on{--mod}}(\mathtt{T} \circ \csym{C}(-,-\mathrm{K}), \mathtt{T}(-,-\mathrm{L}))\\
  &\xiso \on{Hom}_{\ccf{C}\!\on{-Tamb}(\ccf{C},\ccf{C})}(\csym{C}(-,-\mathrm{K}), \mathtt{T}(-,-\mathrm{L}))
  \xiso \mathtt{T}(-,-\mathrm{L})(\mathrm{K},\mathbb{1}) = \mathtt{T}(\mathrm{K},\mathbb{1}\mathrm{L}).
 \end{aligned}
 \]
 The first isomorphism is obtained by passing under the isomorphisms $\mathtt{T}(-,-\mathrm{K}) \simeq \mathtt{T}\circ \csym{C}(-,-\mathrm{K})$; the second is a general fact about monoids and modules in monoidal categories; the third follows from Equation~\eqref{UniPropCK} in Remark~\ref{TambaraElements}.

 All the isomorphisms in the construction are natural in $\mathrm{K,L}$, so we obtain an isomorphism $[\mathtt{T},\mathtt{T}] \simeq \mathtt{T}(-,\mathbb{1}-)$ of profunctors. Explicitly, the composite isomorphism, which we will from now on denote by $\euler{J}$, sends a morphism $\varphi$ to $\overline{\varphi} := \varphi_{\mathrm{K},\mathbb{1}}(\mathtt{e}_{\mathrm{K},\mathbb{1}\mathrm{K}}^{\mathtt{T}}(\euler{l}_{\mathrm{K}}))$. We show that this is an isomorphism of Tambara modules, where $\mathtt{T}(-,\mathbb{1}-)$ is endowed with Tambara structure transported from $\mathtt{T}$. We thus need to show that
 \[
  (\mathtt{T}(\mathrm{HK},\euler{l}_{\mathrm{HL}})\circ \ta^{\mathtt{T}}_{\mathrm{H;K,L}} \circ \mathtt{T}(\mathrm{K},\euler{l}_{\mathrm{L}}^{-1}) \circ \euler{J}_{\mathrm{K,L}})(\varphi) = (\euler{J}_{\mathrm{HK},\mathrm{HL}}\circ \ta^{[\mathtt{T},\mathtt{T}]}_{\mathrm{H};\mathrm{K,L}})(\varphi).
 \]
  And indeed, we have
   \[
    \begin{aligned}
     &(\mathtt{T}(\mathrm{HK},\euler{l}_{\mathrm{HL}}) \ta^{\mathtt{T}}_{\mathrm{H;K,L}}  \mathtt{T}(\mathrm{K},\euler{l}_{\mathrm{L}}^{-1})  \euler{J}_{\mathrm{K,L}})(\varphi) \qquad & \text{ by definition of }\euler{J}_{\mathrm{K,L}} \\
     &=\mathtt{T}(\mathrm{HK},\euler{l}_{\mathrm{HL}})\ta^{\mathtt{T}}_{\mathrm{H};\mathrm{K,L}}\mathtt{T}(\mathrm{K},\euler{l}_{L}^{-1})\varphi_{\mathrm{K},\mathbb{1}} \mathtt{e}_{\mathrm{K},\mathbb{1}\mathrm{K}}(\euler{l}_{\mathrm{K}}) &\text{Tambara structure of }\mathtt{T} \\
     &=\mathtt{T}(\mathrm{HK},\euler{l}_{\mathrm{HL}})\mathtt{T}(\mathrm{HK},\mathrm{H}\euler{l}_{L}^{-1})\ta^{\mathtt{T}}_{\mathrm{H};\mathrm{K},\mathbb{1}\mathrm{L}}\varphi_{\mathrm{K},\mathbb{1}} \mathtt{e}_{\mathrm{K},\mathbb{1}\mathrm{K}}(\euler{l}_{\mathrm{K}}) &\text{definition of }\ta^{\mathtt{T}(-,-\mathrm{L})}\\
     &=\mathtt{T}(\mathrm{HK},\euler{l}_{\mathrm{HL}})\mathtt{T}(\mathrm{HK},\mathrm{H}\euler{l}_{L}^{-1})\mathtt{T}(\mathrm{HK},\euler{a}_{\mathrm{H},\mathbb{1},\mathrm{L}})\ta^{\mathtt{T}(-,-\mathrm{L})}_{\mathrm{H};\mathrm{K},\mathbb{1}}\varphi_{\mathrm{K},\mathbb{1}} \mathtt{e}_{\mathrm{K},\mathbb{1}\mathrm{K}}(\euler{l}_{\mathrm{K}}) &\varphi \text{ is a Tambara morphism} \\
     &=\mathtt{T}(\mathrm{HK},\euler{l}_{\mathrm{HL}})\mathtt{T}(\mathrm{HK},\mathrm{H}\euler{l}_{L}^{-1})\mathtt{T}(\mathrm{HK},\euler{a}_{\mathrm{H},\mathbb{1},\mathrm{L}})\varphi_{\mathrm{HK},\mathrm{H}\mathbb{1}}\ta^{\mathtt{T}(-,-\mathrm{K})}_{\mathrm{H};\mathrm{K},\mathbb{1}} \mathtt{e}_{\mathrm{K},\mathbb{1}\mathrm{K}}(\euler{l}_{\mathrm{K}}) &\text{definition of }\ta^{\mathtt{T}(-,-\mathrm{K})} \\
     &=\mathtt{T}(\mathrm{HK},\euler{l}_{\mathrm{HL}})\mathtt{T}(\mathrm{HK},\mathrm{H}\euler{l}_{L}^{-1})\mathtt{T}(\mathrm{HK},\euler{a}_{\mathrm{H},\mathbb{1},\mathrm{L}})\varphi_{\mathrm{HK},\mathrm{H}\mathbb{1}}\mathtt{T}(\mathrm{HK},\euler{a}_{\mathrm{H},\mathbb{1},\mathrm{K}}^{-1})\ta^{\mathtt{T}}_{\mathrm{H};\mathrm{K},\mathbb{1}\mathrm{K}} \mathtt{e}_{\mathrm{K},\mathbb{1}\mathrm{K}}(\euler{l}_{\mathrm{K}}) &\mathtt{e} \text{ is a Tambara morphism} \\
     &=\mathtt{T}(\mathrm{HK},\euler{l}_{\mathrm{HL}})\mathtt{T}(\mathrm{HK},\mathrm{H}\euler{l}_{L}^{-1})\mathtt{T}(\mathrm{HK},\euler{a}_{\mathrm{H},\mathbb{1},\mathrm{L}})\varphi_{\mathrm{HK},\mathrm{H}\mathbb{1}}\mathtt{T}(\mathrm{HK},\euler{a}_{\mathrm{H},\mathbb{1},\mathrm{K}}^{-1})\mathtt{e}_{\mathrm{HK},\mathrm{H}(\mathbb{1}\mathrm{K})}(\mathrm{H}\euler{l}_{\mathrm{K}}) &\mathrm{H}\euler{l}_{L}^{-1}\circ \euler{a}_{\mathrm{H},\mathbb{1},\mathrm{L}} = \euler{r}_{\mathrm{H}}^{-1}\mathrm{L} \\
     &=\mathtt{T}(\mathrm{HK},\euler{l}_{\mathrm{HL}})\mathtt{T}(\mathrm{HK},\euler{r}_{\mathrm{H}}^{-1}\mathrm{L})\varphi_{\mathrm{HK},\mathrm{H}\mathbb{1}}\mathtt{T}(\mathrm{HK},\euler{a}_{\mathrm{H},\mathbb{1},\mathrm{K}}^{-1})\mathtt{e}_{\mathrm{HK},\mathrm{H}(\mathbb{1}\mathrm{K})}(\mathrm{H}\euler{l}_{\mathrm{K}}) &\varphi \text{ is a Tambara morphism} \\
     &=\mathtt{T}(\mathrm{HK},\euler{l}_{\mathrm{HL}})\varphi_{\mathrm{HK},\mathrm{H}}\mathtt{T}(\mathrm{HK},\euler{r}_{\mathrm{H}}^{-1}\mathrm{K})\mathtt{T}(\mathrm{HK},\euler{a}_{\mathrm{H},\mathbb{1},\mathrm{K}}^{-1})\mathtt{e}_{\mathrm{HK},\mathrm{H}(\mathbb{1}\mathrm{K})}(\mathrm{H}\euler{l}_{\mathrm{K}}) &\euler{r}_{\mathrm{H}}^{-1}\mathrm{K}\circ \euler{a}_{\mathrm{H},\mathbb{1},\mathrm{K}}^{-1} = \mathrm{H}\euler{l}_{\mathrm{K}}^{-1} \\
     &=\mathtt{T}(\mathrm{HK},\euler{l}_{\mathrm{HL}})\varphi_{\mathrm{HK},\mathrm{H}}\mathtt{T}(\mathrm{HK},\mathrm{H}\euler{l}_{\mathrm{K}}^{-1})\mathtt{e}_{\mathrm{HK},\mathrm{H}(\mathbb{1}\mathrm{K})}(\mathrm{H}\euler{l}_{\mathrm{K}}) &\mathtt{e} \text{ is a Tambara morphism} \\
     &=\mathtt{T}(\mathrm{HK},\euler{l}_{\mathrm{HL}})\varphi_{\mathrm{HK},\mathrm{H}}\mathtt{e}_{\mathrm{HK},\mathrm{HK}}(\on{id}_{\mathrm{HK}}) &\euler{l}_{\mathrm{HL}} = \euler{a}_{\mathbb{1},\mathrm{H},\mathrm{L}} \circ \euler{l}_{\mathrm{H}}\mathrm{L} \\
     &=\mathtt{T}(\mathrm{HK},\euler{a}_{\mathbb{1},\mathrm{H},\mathrm{L}})\mathtt{T}(\mathrm{HK}, \euler{l}_{\mathrm{H}}\mathrm{L})\varphi_{\mathrm{HK},\mathrm{H}}\mathtt{e}_{\mathrm{HK},\mathrm{HK}}(\on{id}_{\mathrm{HK}}) &\varphi \text{ is a Tambara morphism} \\
     &=\mathtt{T}(\mathrm{HK},\euler{a}_{\mathbb{1},\mathrm{H},\mathrm{L}})\varphi_{\mathrm{HK},\mathbb{1}\mathrm{H}}\mathtt{T}(\mathrm{HK}, \euler{l}_{\mathrm{H}}\mathrm{K})\mathtt{e}_{\mathrm{HK},\mathrm{HK}}(\on{id}_{\mathrm{HK}}) &\euler{l}_{\mathrm{H}}\mathrm{K}=\euler{a}_{\mathbb{1},\mathrm{H},\mathrm{K}}^{-1}\circ \euler{l}_{\mathrm{HK}} \\
     &=\mathtt{T}(\mathrm{HK},\euler{a}_{\mathbb{1},\mathrm{H},\mathrm{L}})\varphi_{\mathrm{HK},\mathbb{1}\mathrm{H}}\mathtt{T}(\mathrm{HK}, \euler{a}_{\mathbb{1},\mathrm{H},\mathrm{K}}^{-1})\mathtt{T}(\mathrm{HK},\euler{l}_{\mathrm{HK}})\mathtt{e}_{\mathrm{HK},\mathrm{HK}}(\on{id}_{\mathrm{HK}}) &\mathtt{e} \text{ is a Tambara morphism} \\
     &=\mathtt{T}(\mathrm{HK},\euler{a}_{\mathbb{1},\mathrm{H},\mathrm{L}})\varphi_{\mathrm{HK},\mathbb{1}\mathrm{H}}\mathtt{T}(\mathrm{HK}, \euler{a}_{\mathbb{1},\mathrm{H},\mathrm{K}}^{-1})\mathtt{e}_{\mathrm{HK},\mathrm{HK}}(\euler{l}_{\mathrm{HK}}) &\text{ definition of} \ta^{[\mathtt{T},\mathtt{T}]} \\
     &=(\euler{J}_{\mathrm{HK},\mathrm{HL}} \ta^{[\mathtt{T},\mathtt{T}]}_{\mathrm{H};\mathrm{K,L}})(\varphi).
    \end{aligned}
   \]
 By definition of $\mathtt{T}(-,\mathbb{1}-)$, the maps $\mathtt{T}(\mathrm{F},\mathbb{1}\mathrm{G}) \xrightarrow{\mathtt{T}(\mathrm{F}, \euler{l}_{\mathrm{G}}^{-1})} \mathtt{T}(\mathrm{F},\mathrm{G})$, for all $\mathrm{F,G} \in \csym{C}$, give an isomorphism $\mathtt{T}(-,\mathbb{1}-) \xiso \mathtt{T}$. We now show that the composite isomorphism $[\mathtt{T},\mathtt{T}] \xiso \mathtt{T}(-,\mathbb{1}-) \xiso \mathtt{T}$ is an isomorphism of monoid objects.

 To establish multiplicativity, we need to show that the diagram
 \[\begin{tikzcd}[row sep = scriptsize]
	{\int^{\mathrm{L}}[\mathtt{T},\mathtt{T}](\mathrm{K},\mathrm{L}) \kotimes [\mathtt{T},\mathtt{T}](\mathrm{L},\mathrm{M})} && {[\mathtt{T},\mathtt{T}](\mathrm{K},\mathrm{M})} \\
	{\int^{\mathrm{L}}\mathtt{T}(\mathrm{K},\mathbb{1}\mathrm{L}) \kotimes \mathtt{T}(\mathrm{L},\mathbb{1}\mathrm{M})} && {\mathtt{T}(\mathrm{K},\mathbb{1}\mathrm{M})} \\
	{\int^{\mathrm{L}}\mathtt{T}(\mathrm{K,L}) \kotimes \mathtt{T}(\mathrm{L,M})} && {\mathtt{T}(\mathrm{K,M})}
	\arrow["{-\circ -}", from=1-1, to=1-3]
	\arrow["{\int^{\mathrm{L}}\euler{J}_{\mathrm{K,L}} \otimes \euler{J}_{\mathrm{L,M}}}"', from=1-1, to=2-1]
	\arrow["{\int^{\mathrm{L}}\mathtt{T}(\mathrm{K},\euler{l}_{\mathrm{L}}^{-1}) \otimes \mathtt{T}(\mathrm{K},\euler{l}_{\mathrm{M}}^{-1})}"', from=2-1, to=3-1]
	\arrow["{\euler{J}_{\mathrm{K,M}}}", from=1-3, to=2-3]
	\arrow["{\mathtt{T}(\mathrm{K},\euler{l}_{\mathrm{M}}^{-1})}", from=2-3, to=3-3]
	\arrow["{\mathtt{m}_{\mathrm{K,M}}}"', from=3-1, to=3-3]
\end{tikzcd}\]
  commutes. And indeed, we have
 \[
 \begin{aligned}
  &(\mathtt{m}_{\mathrm{K,M}}\circ \mathtt{T}(\mathrm{K},\euler{l}_{\mathrm{L}}^{-1}) \otimes \mathtt{T}(\mathrm{K},\euler{l}_{\mathrm{M}}^{-1}) \circ \euler{J}_{\mathrm{K,L}} \otimes \euler{J}_{\mathrm{L,M}})(\varphi \otimes \psi) \qquad &\text{ by definition} \\
  &=\mathtt{m}_{\mathrm{K,M}}\big(\mathtt{T}(\mathrm{K},\euler{l}_{\mathrm{L}}^{-1})\varphi_{\mathrm{K},\mathbb{1}}\mathtt{e}_{\mathrm{K},\mathbb{1}\mathrm{K}}(\euler{l}_{\mathrm{K}}) \otimes \mathtt{T}(\mathrm{L},\euler{l}_{\mathrm{M}}^{-1}) \psi_{\mathrm{L},\mathbb{1}}\mathtt{e}_{\mathrm{L},\mathbb{1}\mathrm{L}}(\euler{l}_{\mathrm{L}})\big) &\text{definition of } \int^{\mathrm{L}} \euler{J}_{\mathrm{K,L}} \otimes \euler{J}_{\mathrm{L,M}} \\
  &=\mathtt{m}_{\mathrm{K,M}}\big(\varphi_{\mathrm{K},\mathbb{1}}\mathtt{e}_{\mathrm{K},\mathbb{1}\mathrm{K}}(\euler{l}_{\mathrm{K}}) \otimes \mathtt{T}(\euler{l}_{\mathrm{L}}^{-1},M)\mathtt{T}(\mathrm{L},\euler{l}_{\mathrm{M}}^{-1}) \psi_{\mathrm{L},\mathbb{1}}\mathtt{e}_{\mathrm{L},\mathbb{1}\mathrm{L}}(\euler{l}_{\mathrm{L}})\big) &\psi \text{ is a Tambara morphism} \\
  &=\mathtt{m}_{\mathrm{K,M}}\big(\varphi_{\mathrm{K},\mathbb{1}}\mathtt{e}_{\mathrm{K},\mathbb{1}\mathrm{K}}(\euler{l}_{\mathrm{K}}) \otimes \mathtt{T}(\mathrm{L},\euler{l}_{\mathrm{M}}^{-1}) \psi_{\mathbb{1}\mathrm{L},\mathbb{1}}\mathtt{T}(\euler{l}_{\mathrm{L}}^{-1},\mathbb{1}\mathrm{L})\mathtt{e}_{\mathrm{L},\mathbb{1}\mathrm{L}}(\euler{l}_{\mathrm{L}})\big) &\mathtt{e} \text{ is a Tambara morphism} \\
  &=\mathtt{m}_{\mathrm{K,M}}\big(\varphi_{\mathrm{K},\mathbb{1}}\mathtt{e}_{\mathrm{K},\mathbb{1}\mathrm{K}}(\euler{l}_{\mathrm{K}}) \otimes \mathtt{T}(\mathrm{L},\euler{l}_{\mathrm{M}}^{-1}) \psi_{\mathbb{1}\mathrm{L},\mathbb{1}}\mathtt{e}_{\mathbb{1}\mathrm{L},\mathbb{1}\mathrm{L}}(\on{id}_{\mathbb{1}\mathrm{L}})\big) &\mathtt{m} \text{ is a Tambara morphism} \\
  &=\mathtt{T}(\mathrm{K},\euler{l}_{\mathrm{M}}^{-1})\mathtt{m}_{\mathrm{K,\mathbb{1}M}}\big(\varphi_{\mathrm{K},\mathbb{1}}\mathtt{e}_{\mathrm{K},\mathbb{1}\mathrm{K}}(\euler{l}_{\mathrm{K}}) \otimes \psi_{\mathbb{1}\mathrm{L},\mathbb{1}}\mathtt{e}_{\mathbb{1}\mathrm{L},\mathbb{1}\mathrm{L}}(\on{id}_{\mathbb{1}\mathrm{L}})\big) &\mathtt{m} \text{ is a morphism of }\mathtt{T}\text{-modules} \\
  &=\mathtt{T}(\mathrm{K},\euler{l}_{\mathrm{M}}^{-1})\psi_{\mathbb{1}\mathrm{L},\mathbb{1}}\mathtt{m}_{\mathrm{K,\mathbb{1}\mathrm{L}}}\big(\varphi_{\mathrm{K},\mathbb{1}}\mathtt{e}_{\mathrm{K},\mathbb{1}\mathrm{K}}(\euler{l}_{\mathrm{K}}) \otimes \mathtt{e}_{\mathbb{1}\mathrm{L},\mathbb{1}\mathrm{L}}(\on{id}_{\mathbb{1}\mathrm{L}})\big) & \text{unitality of }\mathtt{m} \\
  &=\mathtt{T}(\mathrm{K},\euler{l}_{\mathrm{M}}^{-1})\psi_{\mathbb{1}\mathrm{L},\mathbb{1}}\varphi_{\mathrm{K},\mathbb{1}}\mathtt{e}_{\mathrm{K},\mathbb{1}\mathrm{K}}(\euler{l}_{\mathrm{K}}) & \text{by definition } \\
  &=\mathtt{T}(\mathrm{K},\euler{l}_{\mathrm{M}}^{-1})\euler{J}_{\mathrm{K,M}}(\psi \circ \varphi).
 \end{aligned}
 \]
 Recall that, by definition, the unitality map $\csym{C}(-,-) \rightarrow [\mathtt{T},\mathtt{T}]$ sends $\mathrm{k} \in \csym{C}(\mathrm{K,L})$ to $\mathtt{T} \smalltriangleup \mathrm{b}: \mathtt{T}(-,-\mathrm{K}) \rightarrow \mathtt{T}(-,-\mathrm{L})$. Using this, we verify unitality of $\mathtt{T}(-,\euler{l}^{-1}-)\euler{J}$:
 \[
  \begin{aligned}
   &(\mathtt{T}(-,\euler{l}^{-1})\circ \euler{J} \circ \mathtt{e}^{[\mathtt{T},\mathtt{T}]})_{\mathrm{K,L}}(\mathrm{k}) \qquad &\text{ by definition} \\
   &=\mathtt{T}(\mathrm{K},\euler{l}_{\mathrm{L}}^{-1})\mathtt{T}(\mathrm{K},\mathbb{1}\mathrm{k})\mathtt{e}_{\mathrm{K},\mathbb{1}\mathrm{K}}(\euler{l}_{\mathrm{K}}) &\text{ naturality of } \euler{l}_{-} \\
   &=\mathtt{T}(\mathrm{K},\mathrm{k})\mathtt{T}(\mathrm{K},\euler{l}_{\mathrm{K}}^{-1})\mathtt{e}_{\mathrm{K},\mathbb{1}\mathrm{K}}(\euler{l}_{\mathrm{K}}) &\mathtt{e} \text{ is a Tambara morphism} \\
   &=\mathtt{T}(\mathrm{K},\mathrm{k})\mathtt{e}_{\mathrm{K},\mathrm{K}}(\on{id}_{\mathrm{K}}) &\text{ Yoneda lemma} \\
   &=\mathtt{e}_{\mathrm{K},\mathrm{L}}(\mathrm{k})^.
  \end{aligned}
 \]
\end{proof}

\begin{corollary}\label{MainEssSurj}
 The pseudofunctor $\na: \csym{C}\!\on{-Tamb}_{c} \rightarrow \on{Bimod}(\tam)$ is essentially surjective.
\end{corollary}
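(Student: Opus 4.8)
The plan is to read the statement off the two results immediately preceding it. First I would recall that an object of $\on{Bimod}(\tam)$ is precisely a monoid object $\mathtt{T}$ in the monoidal category $\tam$, and that two monoid objects isomorphic as monoids are in particular equivalent as objects of $\on{Bimod}(\tam)$ (linked by the identity bimodule, which is then an equivalence $1$-morphism). Hence it suffices to produce, for every monoid $\mathtt{T} \in \tam$, an object of $\csym{C}\!\on{-Tamb}_{c}$ whose image under $\na$ is isomorphic to $\mathtt{T}$ as a monoid object.

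The candidate is $\mathtt{T}^{\ccf{C}}_{+}$ of Definition~\ref{CTPlus}, together with its distinguished object $\mathtt{T} \in \mathtt{T}^{\ccf{C}}_{+}$: it was observed there that $\mathtt{T}^{\ccf{C}}_{+}$ is very cyclic with $\mathtt{T}$ as a very cyclic generator, so it is an object of $\csym{C}\!\on{-Tamb}_{c}$. Since essential surjectivity on objects is invariant under pseudonatural equivalence, and since by the discussion of Section~\ref{s51} any two choices of very cyclic generators yield pseudonaturally equivalent pseudofunctors $\na$, I may assume that the chosen generator for $\mathtt{T}^{\ccf{C}}_{+}$ is $\mathtt{T}$ itself; then $\na(\mathtt{T}^{\ccf{C}}_{+}) = [\mathtt{T},\mathtt{T}]$. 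The Proposition preceding the corollary supplies an isomorphism $\mathtt{T} \simeq [\mathtt{T},\mathtt{T}]$ of monoid objects, so $\na(\mathtt{T}^{\ccf{C}}_{+})$ is equivalent to $\mathtt{T}$ in $\on{Bimod}(\tam)$, which is exactly what is required.

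I expect no real obstacle here: the substantive work — constructing the free-module $\csym{C}$-module category $\mathtt{T}^{\ccf{C}}_{+}$ and running the diagram chase identifying $[\mathtt{T},\mathtt{T}]$ with $\mathtt{T}$ as monoids through the chain of Yoneda-type isomorphisms ending in Equation~\eqref{UniPropCK} — has already been carried out. The only point that needs a word of care is the dependence of $\na$ on the choice of generators, which I would settle either by the invariance argument above, or, equivalently and more concretely, by invoking Lemma~\ref{Transitive2Reps}: for an arbitrary choice $\mathtt{F}$, the monoid $[\mathtt{F}(\mathtt{T}^{\ccf{C}}_{+}),\mathtt{F}(\mathtt{T}^{\ccf{C}}_{+})]$ is linked to $[\mathtt{T},\mathtt{T}]$ by the equivalence bimodule $\mathbb{e}_{\mathtt{T}^{\ccf{C}}_{+}}$, hence remains equivalent to $\mathtt{T}$ in $\on{Bimod}(\tam)$.
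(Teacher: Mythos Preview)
Your argument is correct and is exactly the intended one: the paper states this as an immediate corollary (with no separate proof) of the preceding Proposition, and your write-up spells out precisely the inference the reader is meant to make, including the care about the choice of generator via Section~\ref{s51}.
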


\section{\texorpdfstring{$\na$}{na} is locally an equivalence}\label{s8}

\subsection{Fmo monoidal and module categories}\label{s81}

Given a monoidal category $(\csym{C}, \cotimes)$, we define a monoidal category $(\csym{C}^{\#},\#)$ as follows:
\begin{itemize}
 \item $\on{Ob}\csym{C}^{\#}$ is the free magma on $\on{Ob}\csym{C}$; it thus consists of finite fully parenthesized expressions $\omega(\mathrm{F}_{1},\ldots, \mathrm{F}_{n})$, for $n \geq 1$ and $\mathrm{F}_{1},\ldots,\mathrm{F}_{n} \in \csym{C}$.
 \item Let $\euler{Ev}_{\ccf{C}}: \on{Ob}\csym{C}^{\#} \rightarrow \on{Ob}\csym{C}$ be the magma morphism extending the identity map on $\on{Ob}\csym{C}$.
 We let
 \[
\on{Hom}_{\ccf{C}^{\#}}(\omega(\mathrm{F}_{1},\ldots, \mathrm{F}_{n}), \omega'(\mathrm{F}_{1}',\ldots,\mathrm{F}_{m}')) :=  \on{Hom}_{\ccf{C}}(\euler{Ev}_{\ccf{C}}\,\omega(\mathrm{F}_{1},\ldots, \mathrm{F}_{n}),\euler{Ev}_{\ccf{C}} \, \omega'(\mathrm{F}_{1}',\ldots,\mathrm{F}_{m}')).
 \]
 \item The tensor product functor is given by the multiplication operation on $\on{Ob}\csym{C}^{\#}$ and on morphisms given by
\[\begin{tikzcd}[ampersand replacement=\&,cramped,row sep=tiny]
	{{\on{Hom}_{\ccf{C}^{\#}}(\omega_{0}(\mathrm{F}_{1},\ldots, \mathrm{F}_{n}), \omega_{0}'(\mathrm{F}_{1}',\ldots,\mathrm{F}_{m}')) \kotimes \on{Hom}_{\ccf{C}^{\#}}(\omega_{1}(\mathrm{G}_{1},\ldots, \mathrm{G}_{l}), \omega_{1}'(\mathrm{G}_{1}',\ldots,\mathrm{G}_{k}'))}} \\
	{{\on{Hom}_{\ccf{C}}\big(\euler{Ev}_{\ccf{C}}\big(\omega_{0}(\mathrm{F}_{1},\ldots, \mathrm{F}_{n})\big), \euler{Ev}_{\ccf{C}}\big( \omega_{0}'(\mathrm{F}_{1}',\ldots,\mathrm{F}_{m}')\big)\big) \kotimes \on{Hom}_{\ccf{C}}\big(\euler{Ev}_{\ccf{C}}\big(\omega_{1}(\mathrm{G}_{1},\ldots, \mathrm{G}_{l})\big), \euler{Ev}_{\ccf{C}} \big(\omega_{1}'(\mathrm{G}_{1}',\ldots,\mathrm{G}_{k}')\big)\big)}} \\
	{{\on{Hom}_{\ccf{C}}\big(\euler{Ev}_{\ccf{C}}\big(\omega_{0}(\mathrm{F}_{1},\ldots, \mathrm{F}_{n})\big) \cotimes \euler{Ev}_{\ccf{C}}\big(\omega_{1}(\mathrm{G}_{1},\ldots, \mathrm{G}_{l})\big), \euler{Ev}_{\ccf{C}}\big( \omega_{0}'(\mathrm{F}_{1}',\ldots,\mathrm{F}_{m}')\big)\cotimes \euler{Ev}_{\ccf{C}} \big(\omega_{1}'(\mathrm{G}_{1}',\ldots,\mathrm{G}_{k}')\big)\big)}} \\
	{{\on{Hom}_{\ccf{C}}\big(\euler{Ev}_{\ccf{C}}\big(\omega_{0}\# \omega_{1}(\mathrm{F}_{1},\ldots, \mathrm{F}_{n},\mathrm{G}_{1},\ldots,\mathrm{G}_{l})\big),\euler{Ev}_{\ccf{C}}\big(\omega_{0}'\# \omega_{1}'(\mathrm{F}_{1}',\ldots, \mathrm{F}_{m}',\mathrm{G}_{1}',\ldots,\mathrm{G}_{k}')\big)\big)}} \\
	{{\on{Hom}_{\ccf{C}^{\#}}\big(\omega_{0}\# \omega_{1}(\mathrm{F}_{1},\ldots, \mathrm{F}_{n},\mathrm{G}_{1},\ldots,\mathrm{G}_{l}),\omega_{0}'\# \omega_{1}'(\mathrm{F}_{1}',\ldots, \mathrm{F}_{m}',\mathrm{G}_{1}',\ldots,\mathrm{G}_{k}')\big)}}
	\arrow["{=}", from=1-1, to=2-1]
	\arrow["{- \cotimes -}", from=2-1, to=3-1]
	\arrow["{=}", from=3-1, to=4-1]
	\arrow["{=}", from=4-1, to=5-1]
\end{tikzcd}\]
where the first arrow is by definition of the $\on{Hom}$-sets in $\csym{C}^{\#}$, the second uses the monoidal structure on $\csym{C}$, the third uses the fact that $\euler{Ev}_{\ccf{C}}$ is a magma morphism, and the fourth again is by definition of $\on{Hom}$-sets in $\csym{C}^{\#}$.
\end{itemize}
Clearly, $(\csym{C}^{\#},\#)$ is well-defined, and the functor $\euler{EV}_{\ccf{C}}: \csym{C}^{\#} \rightarrow \csym{C}$ given by $\euler{Ev}_{\ccf{C}}$ on objects and identity on morphisms is a monoidal equivalence.

We say that a $\csym{C}$-module category $\mathbf{M}$ is {\it fmo} (free module on objects) if the action $\on{Ob}\csym{C} \times \on{Ob}\mathbf{M} \rightarrow \on{Ob}\mathbf{M}$ is free. We denote by $\csym{C}\!\on{-Mod}^{\#}$ the $1,2$-full subbicategory of $\csym{C}\!\on{-Mod}$ whose objects are fmo $\csym{C}$-module categories. Similarly we define the $1,2$-full subbicategory $\csym{C}\!\on{-Tamb}^{\#}$ of $\csym{C}\!\on{-Tamb}$.

Given a $\csym{C}$-module category $\mathbf{M}$, we define $\mathbf{M}^{\#} \in \csym{C}\!\on{-Mod}^{\#}$ analogously to $\csym{C}^{\#}$. We thus let $\on{Ob}\mathbf{M}^{\#}$ be the free $(\on{Ob}\csym{C})$-module on $\on{Ob}\mathbf{M}$, let $\euler{Ev}_{\mathbf{M}}$ be the $\on{Ob}\csym{C}$-module morphism extending the identity map on $\on{Ob}\mathbf{M}$, and set
\[
 \on{Hom}_{\mathbf{M}^{\#}}\big(\omega(\mathrm{F}_{1},\ldots,\mathrm{F}_{n},X), \omega'(\mathrm{F}_{1}',\ldots,\mathrm{F}_{n}',X')\big) := \on{Hom}_{\mathbf{M}}\big(\euler{Ev}_{\mathbf{M}}\omega(\mathrm{F}_{1},\ldots,\mathrm{F}_{n},X),\euler{Ev}_{\mathbf{M}}\omega'(\mathrm{F}_{1}',\ldots,\mathrm{F}_{n}',X')\big).
\]
Composition and $\csym{C}$-module structure are completely analogous to the definition of $\csym{C}^{\#}$. The functor $\mathbf{M}^{\#} \rightarrow \mathbf{M}$ given by $\euler{Ev}_{\mathbf{M}}$ on objects and identity on morphisms is an equivalence of module categories. Thus, the respective inclusions of $\csym{C}\!\on{-Mod}^{\#}$ in $\csym{C}\!\on{-Mod}$ and of $\csym{C}\!\on{-Tamb}^{\#}$ in $\csym{C}\!\on{-Tamb}$ are biequivalences.

As a consequence of the special case of $\mathsf{F} := \euler{EV}_{\ccf{C}}$ of Proposition~\ref{HighNaturality}, in order to prove that $\na_{\cccsym{C}}$ is a biequivalence, it suffices to show that $\na_{\cccsym{C}^{\#}}$ is a biequivalence. And since the inclusion of $\csym{C}\!\on{-Tamb}^{\#}$ in $\csym{C}\!\on{-Tamb}$ is a biequivalence, it suffices to show that the restriction of $\na_{\cccsym{C}^{\#}}$ is a biequivalence. In other words, for the remainder of this section, we may assume that both the monoidal category $\csym{C}$ as well as its module categories are fmo.

\subsection{\texorpdfstring{$\na$}{na} is locally essentially surjective}

Let $\mathbf{M},\mathbf{N}$ be very cyclic $\csym{C}$-module categories with very cyclic generators $X \in \mathbf{M}, Y \in \mathbf{N}$. Recall that, following the notation introduced in Section~\ref{DefiningNa}, we write $\csym{C}\ostar X$ for the subcategory of $\mathbf{M}$ with objects $\setj{\mathbf{M}\mathrm{F}X \; | \; \mathrm{F} \in \csym{C}}$ and we write $\mathbf{M} \star X$ for the
full $\csym{C}$-module subcategory of $\mathbf{M}$ whose objects are of the form $\mathbf{M}\mathrm{F}_{n}\mathbf{M}\mathrm{F}_{n-1}\cdots \mathbf{M}\mathrm{F}_{1}(X)$, for $n \geq 1$ and $\mathrm{F}_{1},\ldots, \mathrm{F}_{n} \in \csym{C}$. In particular, $\csym{C}\ostar X$ is a subcategory of $\mathbf{M}\star X$, and the two categories are equivalent. We recall an elementary result about constructing such an equivalence:

\begin{lemma}\label{AxiomOfChoice}
 Given a category $\mathcal{C}$ together with a subcategory $\mathcal{S} \xhookrightarrow{I} \mathcal{C}$ and, for every object $c \in \mathcal{C}$, a choice of an object $s(c) \in \mathcal{S}$ and an isomorphism $\sigma_{c}: c \xiso s(c)$, we define an equivalence $\Sigma: \mathcal{C} \xiso \mathcal{S}$ by sending $c$ to $s(c)$, and sending $f \in \on{Hom}_{\mathcal{C}}(c,c')$ to $\sigma_{c'} \circ f \circ \sigma_{c}^{-1} \in \on{Hom}_{\mathcal{S}}(s(c),s(c'))$. This yields an adjoint equivalence $(\Sigma, I, \sigma, \sigma^{-1}_{|\mathcal{S}})$.
\end{lemma}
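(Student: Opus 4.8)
\textbf{Proof plan for Lemma~\ref{AxiomOfChoice}.}
The statement is a completely elementary, standard fact; the plan is to verify each piece directly. First I would check that $\Sigma$ is a well-defined functor: for $c \in \mathcal{C}$ we set $\Sigma(c) = s(c) \in \mathcal{S}$, and for $f \in \on{Hom}_{\mathcal{C}}(c,c')$ we set $\Sigma(f) = \sigma_{c'} \circ f \circ \sigma_{c}^{-1}$, which indeed lands in $\on{Hom}_{\mathcal{C}}(s(c),s(c'))$; since $\mathcal{S}$ is a (full or non-full---it does not matter here because the composite already lies in $\mathcal{S}$ after applying $I$, and in fact I should read $\Sigma(f)$ as the unique morphism of $\mathcal{S}$ mapped to $\sigma_{c'} f \sigma_c^{-1}$ by $I$, using that $I$ is a subcategory inclusion) subcategory, this makes sense. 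Functoriality is immediate: $\Sigma(\on{id}_c) = \sigma_c \sigma_c^{-1} = \on{id}_{s(c)}$ and $\Sigma(g)\Sigma(f) = \sigma_{c''} g \sigma_{c'}^{-1}\sigma_{c'} f \sigma_c^{-1} = \sigma_{c''} (gf) \sigma_c^{-1} = \Sigma(gf)$, the middle cancellation being the one routine computation.

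Next I would exhibit $\sigma$ and $\sigma^{-1}_{|\mathcal{S}}$ as the unit and counit. The family $\sigma = \{\sigma_c : c \xiso s(c) = I\Sigma(c)\}_{c \in \mathcal{C}}$ is natural in $c$: for $f : c \to c'$ the naturality square commutes precisely because $I\Sigma(f) \circ \sigma_c = \sigma_{c'} f \sigma_c^{-1} \sigma_c = \sigma_{c'} \circ f$. For the counit I would take $\varepsilon = \sigma^{-1}_{|\mathcal{S}}$, i.e. the components $\varepsilon_s = \sigma_s^{-1} : \Sigma I (s) = s(s) \to s$ for $s \in \mathcal{S}$; here one must note that for $s \in \mathcal{S}$ one has $\Sigma I(s) = s(s)$ and $\sigma_s : s \xiso s(s)$, so $\sigma_s^{-1}$ has the right type. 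Naturality of $\varepsilon$ on $\mathcal{S}$ follows from naturality of $\sigma$ restricted along $I$, taking inverses. Then the two triangle identities $\varepsilon_{\Sigma c} \circ \Sigma(\sigma_c) = \on{id}_{\Sigma c}$ and $I(\varepsilon_s) \circ \sigma_{Is} = \on{id}_{Is}$ reduce, after unwinding the definitions of $\Sigma$ on the relevant morphisms, to cancellations of $\sigma$'s against their inverses; I would write these out in one line each. Since both $\sigma$ and $\varepsilon$ have invertible components, the resulting adjunction is an adjoint equivalence, so in particular $\Sigma$ and $I$ are quasi-inverse equivalences.

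There is no real obstacle here---the only thing demanding any care is bookkeeping of the types, namely remembering that $\Sigma I (s) = s(s)$ rather than $s$ for $s \in \mathcal{S}$ (the choice function $s(-)$ need not fix objects already in $\mathcal{S}$), so that the counit is genuinely $\sigma^{-1}$ evaluated at objects of $\mathcal{S}$ and not an identity. Everything else is a direct application of the definitions of functor, natural transformation, and adjoint equivalence.
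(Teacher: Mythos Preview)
The paper does not give a proof of this lemma at all; it is introduced with ``We recall an elementary result'' and then simply stated, so there is no approach to compare against. Your verification is correct and is exactly the routine argument one would expect; the only point worth tightening is that the assignment $\Sigma(f) = \sigma_{c'} f \sigma_c^{-1}$ lands in $\on{Hom}_{\mathcal{S}}(s(c),s(c'))$ only if $\mathcal{S}$ is a \emph{full} subcategory, which is implicit in the paper's usage (the lemma is applied to full subcategories such as $\csym{C}\ostar X \hookrightarrow \mathbf{M}\star X$) but not stated in the lemma itself.
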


Let $\mathbf{m}_{\mathrm{F}_{n},\ldots,\mathrm{F}_{1},X}^{r}$ be the, unique due to the coherence theorem for monoidal categories, isomorphism from the object $\mathbf{M}\mathrm{F}_{n}\cdots \mathbf{M}\mathrm{F}_{1}X$ to $\mathbf{M}\big(\mathrm{F}_{n}(\cdots(\mathrm{F}_{3}(\mathrm{F}_{2}\mathrm{F}_{1}))\!\cdots\!)\big)X$ obtained by composing coherence cells of $\mathbf{M}$ and associators for $\csym{C}$. Our choice of the ``right-first'' parenthesizing is not essential to the arguments, we adapt it as a convention to facilitate the presentation. From now on, we denote the object obtained by right-first parenthesizing of $\mathrm{F}_{n}\ldots\mathrm{F}_{1}$ by $\mathrm{r}(\mathrm{F}_{n},\ldots,\mathrm{F}_{1})$. The main benefit of this choice is the following:

\begin{equation}\label{RightFirst}
 \mathrm{H} \cotimes \mathrm{r}(\mathrm{F}_{m},\ldots, \mathrm{F}_{1}) = \mathrm{r}(\mathrm{H},\mathrm{F}_{m},\ldots, \mathrm{F}_{1}).
\end{equation}

\begin{definition}\label{UpsilonMX}
Following Lemma~\ref{AxiomOfChoice}, the collection $\setj{\mathbf{m}_{\mathrm{F}_{n},\ldots,\mathrm{F}_{1},X}^{r} \; | \; \mathrm{F}_{n},\ldots, \mathrm{F}_{1} \in \csym{C}, n \geq 1}$ gives rise to an equivalence from $\mathbf{M}\star X$ to $\csym{C}\ostar X$, which we denote by $\euler{\Upsilon}_{\!(\mathbf{M},X)}$.
\end{definition}

 Let $\mathtt{\Psi} \in \on{Bimod}(\csym{C}\!\on{-Tamb}(\csym{C},\csym{C}))([X,X],[Y,Y])$
  be a $[Y,Y]$-$[X,X]$-bimodule with action maps
 $\mathtt{la}$ and $\mathtt{ra}$, given by extranatural collections $\setj{\mathtt{la}_{\mathrm{H;F,G}} \; | \; \mathrm{F,G,H} \in \csym{C}}$ and $\setj{\mathtt{ra}_{\mathrm{H;F,G}} \; | \; \mathrm{F,G,H} \in \csym{C}}$
 Passing under the tensor-hom adjunction in $\mathbf{Vec}_{\Bbbk}$, we obtain collections
 \begin{equation}\label{TensorHomExtra}
 \resizebox{.99\hsize}{!}{$
 \setj{\overline{\mathtt{ra}}_{\mathrm{F};\mathrm{H},\mathrm{G}}: \Hom{\mathbf{M}\mathrm{H}X, \mathbf{M}\mathrm{G}X} \rightarrow \on{Hom}_{\Bbbk}(\mathtt{\Psi}(\mathrm{F},\mathrm{H}),\mathtt{\Psi}(\mathrm{F},\mathrm{G}))}
 \text{ and }
 \setj{\overline{\mathtt{la}}_{\mathrm{G};\mathrm{F},\mathrm{H}}: \Hom{\mathbf{N}\mathrm{F}Y, \mathbf{N}\mathrm{H}Y}  \rightarrow \on{Hom}_{\Bbbk}(\mathtt{\Psi}(\mathrm{H},\mathrm{G}),\mathtt{\Psi}(\mathrm{F},\mathrm{G}))}.$}
 \end{equation}
The notation in~\eqref{TensorHomExtra} implicitly indicates that the dualized collections give extranatural collections, where $\overline{\mathtt{ra}}$ is extranatural in $\mathrm{F}$ and $\overline{\mathtt{la}}$ is extranatural in $\mathrm{G}$.
This is a consequence of the following lemma:

 \begin{lemma}
  Let $\mathcal{C}$ be a category, let $\euler{P,P'}: \mathcal{C} \rightarrow \mathbf{Vec}_{\Bbbk}$ and $\euler{R}: \mathcal{C}^{\on{op}} \rightarrow \mathbf{Vec}_{\Bbbk}$ be functors and let $V \in \mathbf{Vec}_{\Bbbk}$.
  \begin{enumerate}
   \item
Given a collection $\setj{\tau_{c}: \euler{P}(c) \kotimes \euler{R}(c) \rightarrow V}$ satisfying
  \[\begin{tikzcd}[ampersand replacement=\&, row sep=small]
	\& {\euler{P}(c) \kotimes \euler{R}(c')} \\
	{\euler{P}(c') \kotimes \euler{R}(c')} \&\& {\euler{P}(c) \kotimes \euler{R}(c)} \\
	\& V
	\arrow["{\euler{P}(f)\otimes \euler{R}(c')}"'{pos=0.6}, from=1-2, to=2-1]
	\arrow["{\euler{P}(c)\otimes \euler{R}(f)}"{pos=0.6}, from=1-2, to=2-3]
	\arrow["{\tau_{c'}}"', from=2-1, to=3-2]
	\arrow["{\tau_{c}}", from=2-3, to=3-2]
\end{tikzcd}\]
for all $c,c', f\in \mathcal{C}(c,c')$. The collection $\setj{\overline{\tau}_{c}: \euler{P}(c) \rightarrow \on{Hom}_{\Bbbk}(\euler{R}(c), V)}$ is natural in $c$.
\item Given a collection $\setj{\gamma_{c}: V\kotimes \euler{P}(c) \rightarrow \euler{P}'(c)}$ natural in $c$, the collection $\setj{\overline{\gamma}_{c}: V \rightarrow \on{Hom}_{\Bbbk}(\euler{P}(c), \euler{P}'(c))}$ satisfies
\[\begin{tikzcd}[ampersand replacement=\&, row sep = scriptsize]
	\& V \\
	{\on{Hom}_{\Bbbk}(\euler{P}(c),\euler{P}'(c))} \&\& {\on{Hom}_{\Bbbk}(\euler{P}(c'),\euler{P}'(c'))} \\
	\& {\on{Hom}_{\Bbbk}(\euler{P}(c),\euler{P}'(c'))}
	\arrow["{\overline{\gamma}_{c}}"', from=1-2, to=2-1]
	\arrow["{\overline{\gamma}_{c'}}", from=1-2, to=2-3]
	\arrow["{\on{Hom}_{\Bbbk}(\euler{P}(c),\euler{P}'(f))}"'{pos=0.4}, from=2-1, to=3-2]
	\arrow["{\on{Hom}_{\Bbbk}(\euler{P}(f),\euler{P}'(c'))}"{pos=0.4}, from=2-3, to=3-2]
\end{tikzcd}\]
for all $c,c', f \in \mathcal{C}(c,c')$.
  \end{enumerate}
 \end{lemma}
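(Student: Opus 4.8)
The plan is to prove both items by the universal property of the tensor product of vector spaces (the tensor--hom adjunction $\on{Hom}_{\Bbbk}(P \kotimes R, V) \simeq \on{Hom}_{\Bbbk}(P, \on{Hom}_{\Bbbk}(R,V))$), applied pointwise in $\mathcal{C}$, and then translate the hypothesis/conclusion back and forth across this adjunction. Concretely, $\overline{\tau}_{c}$ is defined by $\overline{\tau}_{c}(p)(r) = \tau_{c}(p \otimes r)$ for $p \in \euler{P}(c)$, $r \in \euler{R}(c)$, and $\overline{\gamma}_{c}$ by $\overline{\gamma}_{c}(v)(p) = \gamma_{c}(v \otimes p)$. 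Both are clearly $\Bbbk$-linear, so only the stated coherence square needs checking, and in each case it suffices to check the image of an arbitrary simple tensor and then a pair of arbitrary elements, since all maps involved are linear.

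For item (1): fix $f \in \mathcal{C}(c,c')$. Naturality of $\setj{\overline{\tau}_{c}}$ means that the square with $\overline{\tau}_{c'}: \euler{P}(c') \to \on{Hom}_{\Bbbk}(\euler{R}(c'),V)$ and $\overline{\tau}_{c}: \euler{P}(c) \to \on{Hom}_{\Bbbk}(\euler{R}(c),V)$, the left arrow $\euler{P}(f)$, and the right arrow $\on{Hom}_{\Bbbk}(\euler{R}(f),V): \on{Hom}_{\Bbbk}(\euler{R}(c'),V) \to \on{Hom}_{\Bbbk}(\euler{R}(c),V)$, commutes. First I would evaluate both composites at an element $p' \in \euler{P}(c')$, obtaining two elements of $\on{Hom}_{\Bbbk}(\euler{R}(c),V)$; then evaluate those at $r \in \euler{R}(c)$. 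The top-right route gives $\overline{\tau}_{c'}(p')(\euler{R}(f)(r)) = \tau_{c'}(p' \otimes \euler{R}(f)(r))$, while the left-bottom route gives $\overline{\tau}_{c}(\euler{P}(f)(p'))(r) = \tau_{c}(\euler{P}(f)(p') \otimes r)$. These agree precisely by the given extranaturality square for $\tau$, applied to the element $p' \otimes r \in \euler{P}(c') \kotimes \euler{R}(c')$ (the two legs of that square send $p' \otimes r$ to $\euler{P}(f)(p') \otimes r$ and to $p' \otimes \euler{R}(f)(r)$ respectively, and the square asserts $\tau_{c}$ and $\tau_{c'}$ equalize them).

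For item (2): fix $f \in \mathcal{C}(c,c')$ and evaluate the two composites of the claimed square at $v \in V$, then at $p \in \euler{P}(c)$. The route through $\overline{\gamma}_{c'}$ and $\on{Hom}_{\Bbbk}(\euler{P}(f),\euler{P}'(c'))$ yields $\gamma_{c'}(v \otimes \euler{P}(f)(p))$; the route through $\overline{\gamma}_{c}$ and $\on{Hom}_{\Bbbk}(\euler{P}(c),\euler{P}'(f))$ yields $\euler{P}'(f)(\gamma_{c}(v \otimes p))$. Equality of these two is exactly the naturality square for $\setj{\gamma_{c}}$ evaluated at $v \otimes p$ (using that $\on{id}_{V} \kotimes \euler{P}(f)$ sends $v \otimes p$ to $v \otimes \euler{P}(f)(p)$). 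I would remark that (2) is literally (1) read in the dual direction — dinaturality becomes naturality and vice versa under the adjunction — so one could also deduce it formally, but writing the one-line element chase is cleaner. No genuine obstacle is expected here; the only mild care needed is to be explicit that it suffices to test linear maps on elements (legitimate since we work over $\mathbf{Vec}_{\Bbbk}$) and to keep straight which variable each collection is (extra)natural in. This lemma is then applied with $\euler{P} = \mathtt{\Psi}(\mathrm{F},-)$ precomposed appropriately, $\euler{R} = \Hom{\mathbf{M}-X,\mathbf{M}\mathrm{G}X}$ etc., to obtain the asserted extranaturality of $\overline{\mathtt{ra}}$ in $\mathrm{F}$ and of $\overline{\mathtt{la}}$ in $\mathrm{G}$.
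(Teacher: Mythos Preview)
Your approach matches the paper's exactly: both parts are element chases through the tensor--hom adjunction, and the paper's proof is literally your computation compressed into a single chain of equalities for each item. One bookkeeping slip in part (1): with $f \in \mathcal{C}(c,c')$, $\euler{P}$ covariant and $\euler{R}$ contravariant, the map $\on{Hom}_{\Bbbk}(\euler{R}(f),V)$ goes $\on{Hom}_{\Bbbk}(\euler{R}(c),V) \to \on{Hom}_{\Bbbk}(\euler{R}(c'),V)$ and the chase should start from $p \in \euler{P}(c)$ and $r' \in \euler{R}(c')$ --- with that relabeling your argument is correct and identical to the paper's.
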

\begin{proof}
\begin{enumerate}
 \item We have
 \[
   (\on{Hom}_{\Bbbk}(\euler{R}(f),V) \circ \overline{\tau}_{c})(x)(y) = \tau_{c}(x \otimes \euler{R}(f)(y)) = \tau_{c'}(\euler{P}(f)(x)\otimes y) = (\overline{\tau}_{c'}\circ \euler{P}(f))(x)(y).
 \]
 \item We have
 \[
  (\on{Hom}_{\Bbbk}(\euler{P}(c), \euler{P}'(f))\circ \overline{\gamma}_{c})(v)(x) = \euler{P}'(f)(\gamma_{c}(v \otimes x)) = \gamma_{c'}(v \otimes \euler{P}(f)(x)) = (\on{Hom}_{\Bbbk}(\euler{P}(f),\euler{P}'(c'))\circ \overline{\gamma}_{c'})(v)(x).
 \]
\end{enumerate}
\end{proof}

\begin{proposition}
 The assignments
 $
   (\mathbf{N}\mathrm{F}Y, \mathbf{M}\mathrm{G}X) \mapsto \mathtt{\Psi}(\mathrm{F},\mathrm{G})
 $
 and that given by the common value of the commutative diagram
\[\resizebox{.99\hsize}{!}{$
\begin{tikzcd}[ampersand replacement=\&]
	{\Hom{\mathbf{N}\mathrm{F}Y,\mathbf{N}\mathrm{F}'Y} \kotimes \Hom{\mathbf{M}\mathrm{G}X,\mathbf{M}\mathrm{G}'X}} \& {\on{Hom}_{\Bbbk}(\mathtt{\Psi}(\mathrm{F}',\mathrm{G}'),\mathtt{\Psi}(\mathrm{F,G'})) \kotimes \on{Hom}_{\Bbbk}(\mathtt{\Psi}(\mathrm{F}',\mathrm{G}),\mathtt{\Psi}(\mathrm{F',G'}))} \\
	{\on{Hom}_{\Bbbk}(\mathtt{\Psi}(\mathrm{F}',\mathrm{G}),\mathtt{\Psi}(\mathrm{F,G})) \kotimes \on{Hom}_{\Bbbk}(\mathtt{\Psi}(\mathrm{F},\mathrm{G}),\mathtt{\Psi}(\mathrm{F,G'}))} \\
	{\on{Hom}_{\Bbbk}(\mathtt{\Psi}(\mathrm{F},\mathrm{G}),\mathtt{\Psi}(\mathrm{F,G'})) \kotimes \on{Hom}_{\Bbbk}(\mathtt{\Psi}(\mathrm{F}',\mathrm{G}),\mathtt{\Psi}(\mathrm{F,G}))} \& {\on{Hom}_{\Bbbk}(\mathtt{\Psi}(\mathrm{F',G}),\mathtt{\Psi}(\mathrm{F,G'}))}
	\arrow["{\overline{\mathtt{la}}_{\mathrm{F,F'}}^{\mathrm{G'}} \otimes \overline{\mathtt{ra}}_{\mathrm{G,G'}}^{\mathrm{F'}}}", from=1-1, to=1-2]
	\arrow["{\overline{\mathtt{la}}_{\mathrm{F,F'}}^{\mathrm{G}} \otimes \overline{\mathtt{ra}}_{\mathrm{G,G'}}^{\mathrm{F}}}", from=1-1, to=2-1]
	\arrow["{\mathsf{c}^{\mathbf{Vec}_{\Bbbk}}_{\mathtt{\Psi}(\mathrm{F',G}),\mathtt{\Psi}(\mathrm{F',G'}),\mathtt{\Psi}(\mathrm{F,G'})}}", from=1-2, to=3-2]
	\arrow["{\mathsf{b}^{\mathbf{Vec}_{\Bbbk}}_{\on{Hom}_{\Bbbk}(\mathtt{\Psi}(\mathrm{F}',\mathrm{G}),\mathtt{\Psi}(\mathrm{F,G})),\on{Hom}_{\Bbbk}(\mathtt{\Psi}(\mathrm{F},\mathrm{G}),\mathtt{\Psi}(\mathrm{F,G'}))}}", from=2-1, to=3-1]
	\arrow["{\mathsf{c}^{\mathbf{Vec}_{\Bbbk}}_{{\mathtt{\Psi}(\mathrm{F',G}),\mathtt{\Psi}(\mathrm{F,G}),\mathtt{\Psi}(\mathrm{F,G'})}}}", from=3-1, to=3-2,swap]
\end{tikzcd}$}
\]
define a functor $\widetilde{\mathtt{\Psi}}: (\csym{C}\ostar Y)^{\on{op}} \kotimes (\csym{C}\ostar X) \rightarrow \mathbf{Vec}_{\Bbbk}$. Here, $\mathsf{b}^{\mathbf{Vec}_{\Bbbk}}$ denotes the standard symmetric braiding on $\mathbf{Vec}_{\Bbbk}$.
\end{proposition}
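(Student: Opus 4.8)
The plan is to check in turn that the object assignment is unambiguous, that the displayed square commutes so that the morphism assignment is a well-defined common value, and finally that $\widetilde{\mathtt{\Psi}}$ preserves identities and composition; the last two points reduce to a (bi)module axiom for $\mathtt{\Psi}$ transposed under the tensor--hom adjunction in $\mathbf{Vec}_{\Bbbk}$. First, because (by the reduction of Section~\ref{s81}) both $\csym{C}$ and its module categories are fmo, the maps $\mathrm{F}\mapsto\mathbf{N}\mathrm{F}Y$ and $\mathrm{G}\mapsto\mathbf{M}\mathrm{G}X$ are injective on objects, so each object of $\csym{C}\ostar Y$ (resp.\ $\csym{C}\ostar X$) determines a unique $\mathrm{F}$ (resp.\ $\mathrm{G}$); hence $(\mathbf{N}\mathrm{F}Y,\mathbf{M}\mathrm{G}X)\mapsto\mathtt{\Psi}(\mathrm{F},\mathrm{G})$ is well defined, and a morphism of $(\csym{C}\ostar Y)^{\on{op}}\kotimes(\csym{C}\ostar X)$ from $(\mathbf{N}\mathrm{F}Y,\mathbf{M}\mathrm{G}X)$ to $(\mathbf{N}\mathrm{F}'Y,\mathbf{M}\mathrm{G}'X)$ is exactly an element $f\otimes g$ with $f\in\Hom{\mathbf{N}\mathrm{F}Y,\mathbf{N}\mathrm{F}'Y}$ and $g\in\Hom{\mathbf{M}\mathrm{G}X,\mathbf{M}\mathrm{G}'X}$.

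Next I would unwind the adjunction: the top leg of the displayed square sends $f\otimes g$ to $\overline{\mathtt{la}}^{\mathrm{G}'}_{\mathrm{F},\mathrm{F}'}(f)\circ\overline{\mathtt{ra}}^{\mathrm{F}'}_{\mathrm{G},\mathrm{G}'}(g)$ and the left leg sends it to $\overline{\mathtt{ra}}^{\mathrm{F}}_{\mathrm{G},\mathrm{G}'}(g)\circ\overline{\mathtt{la}}^{\mathrm{G}}_{\mathrm{F},\mathrm{F}'}(f)$, the braiding $\mathsf{b}^{\mathbf{Vec}_{\Bbbk}}$ merely recording the order in which the two factors are fed into composition in $\mathbf{Vec}_{\Bbbk}$. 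These two composites — ``act on the left by $f$ after acting on the right by $g$'' and ``act on the right by $g$ after acting on the left by $f$'' — agree precisely because the left action $\mathtt{la}$ and the right action $\mathtt{ra}$ commute, that is, by the bimodule axiom for $\mathtt{\Psi}$; this is exactly the analogue of the final assertion in the proof of Proposition~\ref{HomBimod}. Hence the square commutes and $\widetilde{\mathtt{\Psi}}$ is defined on morphisms by either leg.

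Finally I would verify functoriality. For identities, $\on{id}_{\mathbf{N}\mathrm{F}Y}=\mathtt{e}^{[Y,Y]}_{\mathrm{F},\mathrm{F}}(\on{id}_{\mathrm{F}})$ by the description of the unit in Proposition~\ref{EndMonoid}, so left unitality of the $[Y,Y]$-action, combined with the fact (Lemma~\ref{ProYoneda}) that the left unitor of $\tam$ sends the class of $\on{id}_{\mathrm{F}}\otimes v$ to $v$, gives $\overline{\mathtt{la}}^{\mathrm{G}}_{\mathrm{F},\mathrm{F}}(\on{id}_{\mathbf{N}\mathrm{F}Y})=\on{id}_{\mathtt{\Psi}(\mathrm{F},\mathrm{G})}$; symmetrically for $\overline{\mathtt{ra}}$, so $\widetilde{\mathtt{\Psi}}(\on{id},\on{id})=\on{id}$. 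For composition, the multiplications of $[Y,Y]$ and $[X,X]$ are ordinary composition of morphisms in $\mathbf{N}$ and $\mathbf{M}$ (again Proposition~\ref{EndMonoid}), so the left- and right-module associativity axioms, evaluated on representable arguments, yield $\overline{\mathtt{la}}(f_2\circ f_1)=\overline{\mathtt{la}}(f_1)\circ\overline{\mathtt{la}}(f_2)$ and $\overline{\mathtt{ra}}(g_2\circ g_1)=\overline{\mathtt{ra}}(g_2)\circ\overline{\mathtt{ra}}(g_1)$; together with the commutation of $\mathtt{la}$ and $\mathtt{ra}$ from the previous paragraph this shows $\widetilde{\mathtt{\Psi}}$ respects composition in $(\csym{C}\ostar Y)^{\on{op}}\kotimes(\csym{C}\ostar X)$.

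The only genuinely delicate point is well-definedness on objects, which is exactly what the passage to fmo categories in Section~\ref{s81} was set up to secure: without freeness a single object of $\mathbf{N}$ could be written as $\mathbf{N}\mathrm{F}Y$ for several non-isomorphic $\mathrm{F}$ and the formula $(\mathbf{N}\mathrm{F}Y,\mathbf{M}\mathrm{G}X)\mapsto\mathtt{\Psi}(\mathrm{F},\mathrm{G})$ would be meaningless. Everything after that is a mechanical transcription of the (bi)module axioms through the tensor--hom adjunction and the symmetric monoidal structure of $\mathbf{Vec}_{\Bbbk}$.
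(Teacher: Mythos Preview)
Your proposal is correct and follows essentially the same approach as the paper: both derive preservation of identities and composition from the unitality and associativity axioms of the $[Y,Y]$- and $[X,X]$-actions, and the commutativity of the displayed square (the interchange condition) from the bimodule axiom that $\mathtt{la}$ and $\mathtt{ra}$ commute. The paper packages the argument via MacLane's bifunctor criterion \cite[Proposition~II.3.1]{MacL}, checking the two partial functors $\widetilde{\mathtt{\Psi}}(-,\mathrm{G})$ and $\widetilde{\mathtt{\Psi}}(\mathrm{F},-)$ separately, and is less explicit than you about the role of the fmo assumption from Section~\ref{s81} in making the object assignment unambiguous.
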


\begin{proof}
 If, for a fixed $\mathrm{G} \in \csym{C}$, the assignments $\mathbf{N}\mathrm{F}Y \mapsto \mathtt{\Psi}(\mathrm{F,G})$ and $\Hom{\mathbf{N}\mathrm{F}Y, \mathbf{N}\mathrm{F}'Y} \xrightarrow{\overline{\mathtt{la}}_{\mathrm{F,F'}}^{\mathrm{G}}} \on{Hom}_{\Bbbk}(\mathtt{\Psi}(\mathrm{F',G}), \mathtt{\Psi}(\mathrm{F,G}))$ define a functor $\widetilde{\mathtt{\Psi}}(-,\mathrm{G}): (\csym{C} \ostar Y)^{\on{op}} \rightarrow \mathbf{Vec}_{\Bbbk}$, and, similarly, for any fixed $\mathrm{F} \in \csym{C}$, the assignments $\mathbf{M}\mathrm{G}X \mapsto \mathtt{\Psi}(\mathrm{F,G})$ and $\Hom{\mathbf{M}\mathrm{G}X, \mathbf{M}\mathrm{G}'X} \xrightarrow{\overline{\mathtt{ra}}_{\mathrm{G,G'}}^{\mathrm{F}}} \on{Hom}_{\Bbbk}(\mathtt{\Psi}(\mathrm{F,G}), \mathtt{\Psi}(\mathrm{F,G'}))$ give a functor $\widetilde{\mathtt{\Psi}}(\mathrm{F},-): \csym{C}\ostar X \rightarrow \mathbf{Vec}_{\Bbbk}$, then $\mathtt{\Psi}$ is well-defined by \cite[Proposition~II.3.1]{MacL}. We show the first of the above two claims, proving that we obtain a functor $\widetilde{\mathtt{\Psi}}(-,\mathrm{G}):(\csym{C} \ostar Y)^{\on{op}} \rightarrow \mathbf{Vec}_{\Bbbk}$; the second claim is analogous.

 The unit axiom for the left module structure of $\mathtt{\Psi}$ implies that, for any $\mathrm{F,G} \in \csym{C}$, the diagram
 \[\begin{tikzcd}[ampersand replacement=\&, row sep = scriptsize]
	{\int^{\mathrm{H}} \csym{C}(\mathrm{F,H}) \kotimes \mathtt{\Psi}(\mathrm{H,G})} \& {\int^{\mathrm{H}} \Hom{\mathbf{N}\mathrm{F}Y,\mathbf{N}\mathrm{H}Y} \kotimes \mathtt{\Psi}(\mathrm{H,G})} \\
	\& {\mathtt{\Psi}(\mathrm{F,G})}
	\arrow["{\mathtt{e}^{[Y,Y]} \diamond \mathtt{\Psi}}", from=1-1, to=1-2]
	\arrow["{\mathsf{l}^{\mathtt{\Psi}}}"', from=1-1, to=2-2]
	\arrow["{\mathtt{la}_{\mathrm{F,G}}}", from=1-2, to=2-2]
\end{tikzcd}\]
commutes, which gives the equality
$
\setj{\mathtt{la}_{\mathrm{H;F,G}} \circ ((\mathbf{N}-)_{Y} \otimes \mathtt{\Psi}(\mathrm{H,G})) \; | \; \mathrm{H} \in \csym{C}} = \setj{\mathsf{l}^{\mathtt{\Psi}}_{\mathrm{H;F,G}} \; | \; \mathrm{H} \in \csym{C}}
$
of extranatural collections. Recall that for any $\mathrm{f} \in \csym{C}(\mathrm{F,H})$, we have $(\mathsf{l}^{\mathtt{\Psi}})_{\mathrm{H;F,G}}(\mathrm{f} \otimes -) = \mathtt{\Psi}(\mathrm{f},\mathrm{G})(-)$.
Evaluating at $\mathrm{H} = \mathrm{F}$ and $\on{id}_{\mathrm{F}} \in \csym{C}(\mathrm{F,F})$, we obtain
\[
\begin{aligned}
 &\widetilde{\mathtt{\Psi}}(-,\mathrm{G})(\on{id}_{\mathbf{N}\mathrm{F}Y}) = \widetilde{\mathtt{\Psi}}(-,\mathrm{G})((\mathbf{N}\on{id}_{\mathrm{F}})_{Y}) = \overline{\mathtt{la}}_{\mathrm{G;F,F}}((\mathbf{N}\on{id}_{\mathrm{F}})_{Y}) = \mathtt{la}_{\mathrm{F;F,G}}((\mathbf{N}\on{id}_{\mathrm{F}})_{Y} \otimes -)\\
 &= \mathsf{l}^{\mathtt{\Psi}}_{\mathrm{F;F,G}}(\on{id}_{\mathrm{F}} \otimes -)
 = \mathtt{\Psi}(\on{id}_{\mathrm{F}},\mathrm{G}) = \on{id}_{\mathtt{\Psi}(\mathrm{F,G})} = \on{id}_{\widetilde{\mathtt{\Psi}}(-,\mathrm{G})(\mathbf{N}\mathrm{F}Y)}.
\end{aligned}
\]
This shows that $\widetilde{\mathtt{\Psi}}(-,\mathrm{G})$ preserves identity morphisms.

Similarly, on the level of extranatural collections, the multiplicative axiom for $\mathtt{la}$ shows that, for any $\mathrm{F,G,H,H'} \in \csym{C}$, the diagram
\[\begin{tikzcd}[ampersand replacement=\&, column sep = huge]
	{\Hom{\mathbf{N}\mathrm{F}Y,\mathbf{N}\mathrm{H}Y} \kotimes \Hom{\mathbf{N}\mathrm{H}Y,\mathbf{N}\mathrm{H'}Y} \kotimes \mathtt{\Psi}(\mathrm{H',G})} \&\& {\Hom{\mathbf{N}\mathrm{F}Y,\mathbf{N}\mathrm{H'}Y} \kotimes \mathtt{\Psi}(\mathrm{H',G})} \\
	{\Hom{\mathbf{N}\mathrm{F}Y,\mathbf{N}\mathrm{H}Y}\kotimes \mathtt{\Psi}(\mathrm{H,G})} \&\& {\mathtt{\Psi}(\mathrm{F,G})}
	\arrow["{\Hom{\mathbf{N}\mathrm{F}Y,\mathbf{N}\mathrm{H}Y}\kotimes \mathtt{la}_{\mathrm{H';H,G}}}", from=1-1, to=2-1]
	\arrow["{\mathsf{c}^{\mathbf{N}}_{\mathrm{\mathbf{N}\mathrm{F}Y,\mathbf{N}\mathrm{H}Y,\mathbf{N}\mathrm{H'}Y'}} \otimes \mathtt{\Psi}(\mathrm{H',G})}", shift left=1, from=1-1, to=1-3]
	\arrow["{\mathtt{la}_{\mathrm{H;F,G}}}", from=2-1, to=2-3]
	\arrow["{\mathtt{la}_{\mathrm{H';F,G}}}", from=1-3, to=2-3]
\end{tikzcd}\]
commutes. Thus, for any $f \in \Hom{\mathbf{N}\mathrm{F}Y,\mathbf{N}\mathrm{H}Y}$ and $g \in \Hom{\mathbf{N}\mathrm{H}Y,\mathbf{N}\mathrm{H'}Y}$, we have
\[
\begin{aligned}
 &\widetilde{\mathtt{\Psi}}(-,\mathrm{G})(g \circ f) = \overline{\mathtt{la}}_{\mathrm{G;F,H'}}(g \circ f) = \mathtt{la}_{\mathrm{H';F,G}}(g \circ f \otimes -) = \mathtt{la}_{\mathrm{H;F,G}}(f \otimes \mathtt{la}_{\mathrm{H';H,G}}(g \otimes -)) = \\
 &\overline{\mathtt{la}}_{\mathrm{G;F,H}}(f)(\mathtt{la}_{\mathrm{H';H,G}}(g \otimes -)) = \overline{\mathtt{la}}_{\mathrm{G;F,H}}(f)(\overline{\mathtt{la}}_{\mathrm{G;H,H'}}(g)(-)) = \widetilde{\mathtt{\Psi}}(-,\mathrm{G})(g)\circ \widetilde{\mathtt{\Psi}}(-,\mathrm{G})(f),
\end{aligned}
\]
which concludes the proof.
\end{proof}

\begin{definition}\label{PsiHat}
 We denote the composite functor
 \[
  (\mathbf{N}^{\on{op}} \kotimes \mathbf{M})\star (Y,X) \xrightarrow[\simeq]{\euler{\Upsilon}_{\!(\mathbf{N}^{\on{op}}\kotimes \mathbf{M},(Y,X))}} (\mathbf{N}^{\on{op}} \kotimes \mathbf{M}) \ostar (Y,X) \xrightarrow{\widetilde{\mathtt{\Psi}}} \mathbf{Vec}_{\Bbbk}
 \]
 by $\widehat{\mathtt{\Psi}}$.
\end{definition}

\begin{proposition}
 The collection
 \[
 \begin{aligned}
 \setj{\ta^{\mathtt{\Psi}}_{\mathrm{H};\mathrm{r}(\mathrm{F}_{m},\ldots,\mathrm{F}_{1}),\mathrm{r}(\mathrm{G}_{n},\ldots, \mathrm{G}_{1})} \in \Hom{\mathtt{\Psi}\big(\mathrm{r}(\mathrm{F}_{m},\ldots,\mathrm{F}_{1}),\mathrm{r}(\mathrm{G}_{n},\ldots,\mathrm{G}_{1})\big),\mathtt{\Psi}\big(\mathrm{H}\cotimes \mathrm{r}(\mathrm{F}_{m},\ldots,\mathrm{F}_{1}),\mathrm{H}\cotimes \mathrm{r}(\mathrm{G}_{n},\ldots,\mathrm{G}_{1})\big)}}
 \end{aligned}
 \]
 endows $\widehat{\mathtt{\Psi}}$ with the structure of a Tambara module $(\mathbf{M}\star X) \xslashedrightarrow{\widehat{\mathtt{\Psi}}} (\mathbf{N} \star Y)$.
\end{proposition}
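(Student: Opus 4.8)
The plan is to verify directly the four requirements in the definition of a Tambara module for the profunctor $\widehat{\mathtt{\Psi}}$ equipped with the declared maps: that these maps have the correct domains and codomains, that they are extranatural in $\mathrm{H}\in\csym{C}$, that they are natural in the object variables coming from $\mathbf{M}\star X$ and from $\mathbf{N}\star Y$, and that they satisfy the multiplicativity and unitality axioms. The first point is immediate from Equation~\eqref{RightFirst}: since $\euler{\Upsilon}_{\!(\mathbf{M},X)}$ identifies $\mathbf{M}\mathrm{H}\mathbf{M}\mathrm{F}_{m}\cdots\mathbf{M}\mathrm{F}_{1}X$ with $\mathbf{M}\big(\mathrm{H}\cotimes\mathrm{r}(\mathrm{F}_{m},\ldots,\mathrm{F}_{1})\big)X = \mathbf{M}\mathrm{r}(\mathrm{H},\mathrm{F}_{m},\ldots,\mathrm{F}_{1})X$, the value of $\widehat{\mathtt{\Psi}}$ at $(\mathbf{N}\mathrm{H}-,\mathbf{M}\mathrm{H}-)$ is literally the value of $\widetilde{\mathtt{\Psi}}$ after prepending $\mathrm{H}$, which is the codomain of the corresponding $\ta^{\mathtt{\Psi}}$. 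The guiding principle is that on objects $\widehat{\mathtt{\Psi}}$ is nothing but $\mathtt{\Psi}\in\tam$ transported along the equivalences $\euler{\Upsilon}$, so extranaturality in $\mathrm{H}$, multiplicativity and unitality should be pulled back from the corresponding axioms for $\mathtt{\Psi}$ viewed as a classical Tambara module, whereas naturality in the object variables is the genuinely new content and will turn out to be precisely the assertion that the action morphisms $\mathtt{la},\mathtt{ra}$ of the bimodule $\mathtt{\Psi}$ are morphisms of Tambara modules.

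The bulk of the work is contained in two coherence observations. First, for any morphism $\mathrm{f}$ of $\csym{C}$ the functor $\widehat{\mathtt{\Psi}}$ sends $(\mathbf{N}\mathrm{f})_{Y}$ to $\mathtt{\Psi}(\mathrm{f},-)$ and $(\mathbf{M}\mathrm{f})_{X}$ to $\mathtt{\Psi}(-,\mathrm{f})$; this is exactly the computation already used in the construction of $\widetilde{\mathtt{\Psi}}$, where the unit $\mathtt{e}^{[Y,Y]}$ of $[Y,Y]$ sends $\mathrm{f}$ to $(\mathbf{N}\mathrm{f})_{Y}$ and the unit axiom for $\mathtt{la}$ forces $\mathtt{la}$ restricted along $\mathtt{e}^{[Y,Y]}$ to be the left unitor of $\mathtt{\Psi}$, which acts as $\mathtt{\Psi}(\mathrm{f},-)$; symmetrically for $\mathtt{ra}$. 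Second, by the coherence theorem (for $\csym{C}$ and its module categories) one has $\mathbf{m}^{r}_{\mathrm{H},\mathrm{F}_{m},\ldots,\mathrm{F}_{1},X} = (\mathbf{m}_{\mathrm{H},\mathrm{r}(\mathrm{F}_{m},\ldots,\mathrm{F}_{1})})_{X}\circ\mathbf{M}\mathrm{H}\big(\mathbf{m}^{r}_{\mathrm{F}_{m},\ldots,\mathrm{F}_{1},X}\big)$, whence $\euler{\Upsilon}_{\!(\mathbf{M},X)}(\mathbf{M}\mathrm{H}g) = \ta^{[X,X]}_{\mathrm{H}}\big(\euler{\Upsilon}_{\!(\mathbf{M},X)}(g)\big)$ for every morphism $g$ of $\mathbf{M}\star X$, using the explicit form of $\ta^{[X,X]}$ from Equation~\eqref{TambaraOnRestriction}; the analogous identity holds for $\mathbf{N}\star Y$ and $[Y,Y]$. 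Moreover, again by uniqueness of canonical isomorphisms and the right-first convention, $\euler{\Upsilon}$ carries each structure cell $(\mathbf{m}_{\mathrm{G},\mathrm{F}})_{A}$, $(\mathbf{n}_{\mathrm{G},\mathrm{F}})_{B}$ to precisely the associator-built isomorphism of $\csym{C}$ occurring in the classical multiplicativity square for $\mathtt{\Psi}$.

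With these identities the remaining verifications become bookkeeping. Naturality of the declared collection $\{\ta^{\mathtt{\Psi}}_{\mathrm{H}}\}$ in the $\mathbf{N}\star Y$-variable, after translating under $\euler{\Upsilon}$ and using the second observation together with the description of the composite Tambara structure on $[Y,Y]\diamond\mathtt{\Psi}$ from Definition~\ref{CompositeTambara}, becomes exactly the commuting square expressing that $\mathtt{la}: [Y,Y]\diamond\mathtt{\Psi}\to\mathtt{\Psi}$ is a morphism of Tambara modules; naturality in the $\mathbf{M}\star X$-variable becomes in the same way the Tambara-morphism property of $\mathtt{ra}$, and both hold by hypothesis since $\mathtt{\Psi}$ is a $[Y,Y]$-$[X,X]$-bimodule in $\tam$. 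Extranaturality in $\mathrm{H}$ follows from extranaturality of $\ta^{\mathtt{\Psi}}$ in its superscript index combined with the first observation and naturality of the coherence cells of $\csym{C}$. Finally, substituting the first and second observations into the multiplicativity and unitality squares required of $\widehat{\mathtt{\Psi}}$ transforms them, edge by edge, into the multiplicativity and unitality squares for $\mathtt{\Psi}$ as an object of $\tam$, which hold by assumption.

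The main obstacle is precisely the coherence bookkeeping of the second observation: matching the module structure cells $\mathbf{m}_{\mathrm{G},\mathrm{F}}$, $\mathbf{n}_{\mathrm{G},\mathrm{F}}$ of $\mathbf{M}$, $\mathbf{N}$ with associators of $\csym{C}$ under $\euler{\Upsilon}$, and making sure that the identifications $\mathrm{H}\cotimes\mathrm{r}(\mathrm{F}_{m},\ldots,\mathrm{F}_{1}) = \mathrm{r}(\mathrm{H},\mathrm{F}_{m},\ldots,\mathrm{F}_{1})$ render the domains and codomains of the candidate structure maps strictly equal rather than merely canonically isomorphic. This is the minor coherence problem anticipated in Section~\ref{s81}; I would settle it by exploiting the right-first parenthesization, so that every diagram entering the verification is a diagram of canonical isomorphisms built from associators and the module structure cells, to which a MacLane-style coherence argument applies.
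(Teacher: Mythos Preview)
Your proposal is correct and follows essentially the same approach as the paper: the paper likewise checks domains and codomains via Equation~\eqref{RightFirst}, reduces naturality in the $\mathbf{M}\star X$- and $\mathbf{N}\star Y$-variables to the Tambara-morphism property of $\mathtt{ra}$ and $\mathtt{la}$ respectively, and observes that extranaturality in $\mathrm{H}$ together with the multiplicativity and unitality axioms are immediate specializations of the corresponding axioms for $\mathtt{\Psi}$ as a classical Tambara module. Your coherence observation $\mathbf{m}^{r}_{\mathrm{H},\mathrm{F}_{m},\ldots,\mathrm{F}_{1},X} = (\mathbf{m}_{\mathrm{H},\mathrm{r}(\mathrm{F}_{m},\ldots,\mathrm{F}_{1})})_{X}\circ\mathbf{M}\mathrm{H}\big(\mathbf{m}^{r}_{\mathrm{F}_{m},\ldots,\mathrm{F}_{1},X}\big)$ is exactly what the paper uses (in the slightly abbreviated form $\mathbf{M}\mathrm{K}(\mathbf{m}^{r}_{\mathrm{G}_{m},\ldots,\mathrm{G}_{1}}) = \mathbf{m}^{r}_{\mathrm{K},\mathrm{G}_{m},\ldots,\mathrm{G}_{1}}$) to collapse the naturality diagram to the Tambara condition for the action maps.
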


\begin{proof}
 First, using Equation~\eqref{RightFirst} together with Definition~\ref{UpsilonMX} and Definition~\ref{PsiHat}, we find
 \[
 \begin{aligned}
 &\mathtt{\Psi}\big(\mathrm{H}\cotimes \mathrm{r}(\mathrm{F}_{m},\ldots,\mathrm{F}_{1}),\mathrm{H}\cotimes \mathrm{r}(\mathrm{G}_{n},\ldots,\mathrm{G}_{1})\big) =
 \mathtt{\Psi}\big(\mathrm{r}(\mathrm{H},\mathrm{F}_{m},\ldots,\mathrm{F}_{1}),\mathrm{r}(\mathrm{H},\mathrm{G}_{n},\ldots,\mathrm{G}_{1})\big) \\
 &=\euler{\widehat{\Psi}}(\mathbf{N}\mathrm{H}\mathbf{N}\mathrm{F}_{m}\cdots \mathbf{N}\mathrm{F}_{1}Y,\mathbf{M}\mathrm{H}\mathbf{M}\mathrm{G}_{n}\cdots \mathbf{M}\mathrm{G}_{1}X)\\
 &= \widehat{\mathtt{\Psi}}\big((\mathbf{N}\star Y)(\mathrm{H})(\mathbf{N}\mathrm{F}_{m}\cdots \mathbf{N}\mathrm{F}_{1}Y), (\mathbf{M}\star X)(\mathrm{H})(\mathbf{M}\mathrm{G}_{n}\cdots \mathbf{M}\mathrm{G}_{1}X)\big)
 \end{aligned}
 \]
showing that the maps in our candidate Tambara structure have correct domains and codomains.

Next, let $f \in \Hom{\mathbf{M}\mathrm{H}_{k}\ldots\mathbf{M}\mathrm{H}_{1}X,\mathbf{M}\mathrm{G}_{m}\ldots\mathbf{M}\mathrm{G}_{1}X}_{\mathbf{M}\star X}$. Naturality with respect to the morphisms of $\mathbf{M}\star X$ then follows from the commutativity of the following diagram:
\[\begin{tikzcd}
	{\mathtt{\Psi}(\mathrm{r}(\mathrm{F}_{n},\ldots,\mathrm{F}_{1}),\mathrm{r}(\mathrm{H}_{k},\ldots,\mathrm{H}_{1}))} &&& {\mathtt{\Psi}(\mathrm{r}(\mathrm{F}_{n},\ldots,\mathrm{F}_{1}),\mathrm{r}(\mathrm{G}_{m},\ldots,\mathrm{G}_{1}))} \\
	{\mathtt{\Psi}(\mathrm{r}(\mathrm{K},\mathrm{F}_{n},\ldots,\mathrm{F}_{1}),\mathrm{r}(\mathrm{K},\mathrm{H}_{k},\ldots,\mathrm{H}_{1}))} &&& {\mathtt{\Psi}(\mathrm{r}(\mathrm{K},\mathrm{F}_{n},\ldots,\mathrm{F}_{1}),\mathrm{r}(\mathrm{K},\mathrm{G}_{m},\ldots,\mathrm{G}_{1}))}
	\arrow["{\overline{\mathtt{ra}}_{\mathrm{r}(\mathrm{H}_{k},\ldots,\mathrm{H}_{1}),\mathrm{r}(\mathrm{G}_{m},\ldots,\mathrm{G}_{1})}^{\mathrm{r}(\mathrm{F}_{n},\ldots,\mathrm{F}_{1})}(\mathbf{m}_{\mathrm{G}_{m},\ldots,\mathrm{G}_{1},X}^{r}\circ f\circ (\mathbf{m}^{r})^{-1}_{\mathrm{H}_{k},\ldots,\mathrm{H}_{1},X})}", shift left=2, from=1-1, to=1-4]
	\arrow["{\ta^{\mathtt{\Psi}}_{\mathrm{K};\mathrm{r}(\mathrm{F}_{n},\ldots,\mathrm{F}_{1}),\mathrm{r}(\mathrm{H}_{k},\ldots,\mathrm{H}_{1})}}", shift left=1, from=1-1, to=2-1]
	\arrow["{\ta^{\mathtt{\Psi}}_{\mathrm{K};\mathrm{r}(\mathrm{F}_{n},\ldots,\mathrm{F}_{1}),\mathrm{r}(\mathrm{H}_{k},\ldots,\mathrm{H}_{1})}}"', shift right=1, from=1-4, to=2-4]
	\arrow["{\overline{\mathtt{ra}}_{\mathrm{r}(\mathrm{K},\mathrm{H}_{k},\ldots,\mathrm{H}_{1}),\mathrm{r}(\mathrm{K},\mathrm{G}_{m},\ldots,\mathrm{G}_{1})}^{\mathrm{r}(\mathrm{K},\mathrm{F}_{n},\ldots,\mathrm{F}_{1})}(\mathbf{m}^{r}_{\mathrm{K},\mathrm{G}_{m},\ldots,\mathrm{G}_{1},X}\circ \mathbf{M}\mathrm{K}f\circ (\mathbf{m}^{r})^{-1}_{\mathrm{K},\mathrm{H}_{k},\ldots,\mathrm{H}_{1},X})}"', shift right=2, from=2-1, to=2-4]
\end{tikzcd}\]
Observe that $\mathbf{M}\mathrm{K}(\mathbf{m}^{r}_{\mathrm{G}_{m},\ldots,\mathrm{G}_{1}}) = \mathbf{m}^{r}_{\mathrm{K},\mathrm{G}_{m},\ldots,\mathrm{G}_{1}}$. Thus, the above diagram commutes if so does
\[\begin{tikzcd}
	{\mathtt{\Psi}(\mathrm{r}(\mathrm{F}_{n},\ldots,\mathrm{F}_{1}),\mathrm{r}(\mathrm{H}_{k},\ldots,\mathrm{H}_{1}))} &&& {\mathtt{\Psi}(\mathrm{r}(\mathrm{F}_{n},\ldots,\mathrm{F}_{1}),\mathrm{r}(\mathrm{G}_{m},\ldots,\mathrm{G}_{1}))} \\
	{\mathtt{\Psi}(\mathrm{r}(\mathrm{K},\mathrm{F}_{n},\ldots,\mathrm{F}_{1}),\mathrm{r}(\mathrm{K},\mathrm{H}_{k},\ldots,\mathrm{H}_{1}))} &&& {\mathtt{\Psi}(\mathrm{r}(\mathrm{K},\mathrm{F}_{n},\ldots,\mathrm{F}_{1}),\mathrm{r}(\mathrm{K},\mathrm{G}_{m},\ldots,\mathrm{G}_{1}))}
	\arrow["{\overline{\mathtt{ra}}_{\mathrm{r}(\mathrm{H}_{k},\ldots,\mathrm{H}_{1}),\mathrm{r}(\mathrm{G}_{m},\ldots,\mathrm{G}_{1})}^{\mathrm{r}(\mathrm{F}_{n},\ldots,\mathrm{F}_{1})}(\mathbf{m}^{r}_{\mathrm{G}_{m},\ldots,\mathrm{G}_{1},X}\circ f\circ (\mathbf{m}^{r})^{-1}_{\mathrm{H}_{k},\ldots,\mathrm{H}_{1},X})}", shift left=2, from=1-1, to=1-4]
	\arrow["{\ta^{\mathtt{\Psi}}_{\mathrm{K};\mathrm{r}(\mathrm{F}_{n},\ldots,\mathrm{F}_{1}),\mathrm{r}(\mathrm{H}_{k},\ldots,\mathrm{H}_{1})}}", shift left=1, from=1-1, to=2-1]
	\arrow["{\ta^{\mathtt{\Psi}}_{\mathrm{K};\mathrm{r}(\mathrm{F}_{n},\ldots,\mathrm{F}_{1}),\mathrm{r}(\mathrm{H}_{k},\ldots,\mathrm{H}_{1})}}"', shift right=1, from=1-4, to=2-4]
	\arrow["{\overline{\mathtt{ra}}_{\mathrm{r}(\mathrm{K},\mathrm{H}_{k},\ldots,\mathrm{H}_{1}),\mathrm{r}(\mathrm{K},\mathrm{G}_{m},\ldots,\mathrm{G}_{1})}^{\mathrm{r}(\mathrm{K},\mathrm{F}_{n},\ldots,\mathrm{F}_{1})}( \mathbf{M}\mathrm{K}(\mathbf{m}^{r}_{\mathrm{G}_{m},\ldots,\mathrm{G}_{1},X}\circ f \circ (\mathbf{m}^{r})^{-1}_{\mathrm{H}_{k},\ldots,\mathrm{H}_{1},X}))}"', shift right=2, from=2-1, to=2-4]
\end{tikzcd}\]
The commutativity of the latter diagram is a consequence of $\mathtt{ra}$ being a Tambara morphism. Naturality with respect to the morphisms of $(\mathbf{N}\star Y)^{\on{op}}$ follows analogously.

The diagram describing the extranaturality of $\ta^{\mathtt{\Psi}}_{\mathrm{H};\mathrm{r}(\mathrm{F}_{m},\ldots,\mathrm{F}_{1}),\mathrm{r}(\mathrm{G}_{n},\ldots, \mathrm{G}_{1})}$ in $\mathrm{H}$ is the special case of that describing extranaturality of $\ta^{\mathtt{\Psi}}_{\mathrm{H};\mathrm{F},\mathrm{G}}$, for general $\mathrm{F}$ and $\mathrm{G}$. Similarly, the Tambara axiom is satisfied for the collection
 \[
 \begin{aligned}
 \setj{\ta^{\mathtt{\Psi}}_{\mathrm{H};\mathrm{r}(\mathrm{F}_{m},\ldots,\mathrm{F}_{1}),\mathrm{r}(\mathrm{G}_{n},\ldots, \mathrm{G}_{1})} \in \Hom{\mathtt{\Psi}\big(\mathrm{r}(\mathrm{F}_{m},\ldots,\mathrm{F}_{1}),\mathrm{r}(\mathrm{G}_{n},\ldots,\mathrm{G}_{1})\big),\mathtt{\Psi}\big(\mathrm{H}\cotimes \mathrm{r}(\mathrm{F}_{m},\ldots,\mathrm{F}_{1}),\mathrm{H}\cotimes \mathrm{r}(\mathrm{G}_{n},\ldots,\mathrm{G}_{1})\big)}}_{\Bbbk}
 \end{aligned}
 \]
and module categories $\mathbf{M}\star X$ and $\mathbf{N}\star Y$ since it is satisfied for the collection $\setj{\ta^{\mathtt{\Psi}}_{\mathrm{H};\mathrm{F},\mathrm{G}}\; | \; \mathrm{F,G,H} \in \csym{C}}$ and module categories $\mathbf{M},\mathbf{N}$.
\end{proof}

\begin{proposition}\label{MainLocEss}
 We have $\widehat{\mathtt{\Psi}}[Y,X] = \mathtt{\Psi}$. In particular, $\na$ is locally essentially surjective.
\end{proposition}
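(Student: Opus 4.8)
The plan is to prove the equality $\widehat{\mathtt{\Psi}}[Y,X] = \mathtt{\Psi}$ by unwinding definitions: both sides are $[Y,Y]$-$[X,X]$-bimodule objects in $\tam$ (the left one by Corollary~\ref{LocalFunctors}, the right one by hypothesis), so it suffices to check that their underlying profunctors, their Tambara structures, and their left and right $\csym{C}$-actions coincide, each on the nose. The single structural observation that drives everything is that the equivalence $\euler{\Upsilon}_{\!(\mathbf{N}^{\on{op}}\kotimes\mathbf{M},(Y,X))}$ of Definition~\ref{UpsilonMX} restricts to the \emph{identity} functor on the full subcategory $(\csym{C}^{\on{opp}}\kotimes\csym{C})\ostar(Y,X)$: an object there is a ``length one'' pair $(\mathbf{N}\mathrm{F}Y,\mathbf{M}\mathrm{G}X)$, for which $\mathrm{r}(\mathrm{F}) = \mathrm{F}$, $\mathrm{r}(\mathrm{G}) = \mathrm{G}$ and the coherence isomorphisms $\mathbf{m}^{r}_{\mathrm{F},Y}$, $\mathbf{m}^{r}_{\mathrm{G},X}$ are identities, so by Lemma~\ref{AxiomOfChoice} both the object- and the morphism-assignments of $\euler{\Upsilon}$ are trivial on this subcategory. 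The same computation shows that $\euler{\Upsilon}$ sends each module coherence cell $\mathbf{n}_{\mathrm{H},\mathrm{F}}$, $\mathbf{m}_{\mathrm{H},\mathrm{G}}$ -- that is, the ``length two'' isomorphisms $\mathbf{m}^{r}_{\mathrm{H},\mathrm{F},Y}$, $\mathbf{m}^{r}_{\mathrm{H},\mathrm{G},X}$ -- to an identity morphism, because these cells are precisely the chosen isomorphisms $\sigma_{c}$ used to build $\euler{\Upsilon}$; this is where the right-first parenthesization convention and Equation~\eqref{RightFirst} enter.

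Granting this, I would first treat the underlying profunctor. By definition of the $[Y,X]$-construction of Section~\ref{DefiningNa} and of $\widehat{\mathtt{\Psi}}$ (Definition~\ref{PsiHat}), $\widehat{\mathtt{\Psi}}[Y,X](\mathrm{F},\mathrm{G}) = \widehat{\mathtt{\Psi}}(\mathbf{N}\mathrm{F}Y,\mathbf{M}\mathrm{G}X) = \widetilde{\mathtt{\Psi}}(\mathbf{N}\mathrm{F}Y,\mathbf{M}\mathrm{G}X) = \mathtt{\Psi}(\mathrm{F},\mathrm{G})$, using that $\euler{\Upsilon}$ is the identity on length-one objects and then the defining formula for $\widetilde{\mathtt{\Psi}}$. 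On a morphism $(\mathrm{f},\mathrm{g})$ of $\csym{C}^{\on{opp}}\kotimes\csym{C}$, the restriction $\euler{\Phi}_{Y}^{\on{op}}\kotimes\euler{\Phi}_{X}$ produces $((\mathbf{N}\mathrm{f})_{Y},(\mathbf{M}\mathrm{g})_{X})$, and $\widehat{\mathtt{\Psi}} = \widetilde{\mathtt{\Psi}}$ acts on these through the dualized collections $\overline{\mathtt{la}}$, $\overline{\mathtt{ra}}$; evaluating the left- and right-module unit axioms for $\mathtt{\Psi}$ -- exactly as in the proof that $\widetilde{\mathtt{\Psi}}$ is a functor, but with an arbitrary morphism in place of an identity -- yields $\overline{\mathtt{la}}((\mathbf{N}\mathrm{f})_{Y}) = \mathtt{\Psi}(\mathrm{f},\mathrm{G})$ and $\overline{\mathtt{ra}}((\mathbf{M}\mathrm{g})_{X}) = \mathtt{\Psi}(\mathrm{F},\mathrm{g})$, so the morphism actions agree. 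For the Tambara structure, formula~\eqref{TambaraOnRestriction} gives $\ta^{\widehat{\mathtt{\Psi}}[Y,X]}_{\mathrm{H};\mathrm{F},\mathrm{G}} = \widehat{\mathtt{\Psi}}(\mathbf{n}^{-1}_{\mathrm{H},\mathrm{F}},\mathbf{m}_{\mathrm{H},\mathrm{G}})\circ\ta^{\widehat{\mathtt{\Psi}}}_{\mathrm{H};\mathbf{N}\mathrm{F}Y,\mathbf{M}\mathrm{G}X}$; by the observation above $\widehat{\mathtt{\Psi}}(\mathbf{n}^{-1}_{\mathrm{H},\mathrm{F}},\mathbf{m}_{\mathrm{H},\mathrm{G}}) = \widetilde{\mathtt{\Psi}}(\on{id},\on{id}) = \on{id}$, and the defining collection of the Tambara structure of $\widehat{\mathtt{\Psi}}$, specialized to length one, reads $\ta^{\widehat{\mathtt{\Psi}}}_{\mathrm{H};\mathbf{N}\mathrm{F}Y,\mathbf{M}\mathrm{G}X} = \ta^{\mathtt{\Psi}}_{\mathrm{H};\mathrm{r}(\mathrm{F}),\mathrm{r}(\mathrm{G})} = \ta^{\mathtt{\Psi}}_{\mathrm{H};\mathrm{F},\mathrm{G}}$; hence the Tambara structures coincide.

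Finally, the $[Y,Y]$-$[X,X]$-bimodule structure on $\widehat{\mathtt{\Psi}}[Y,X]$ is, by Proposition~\ref{HomBimod}, given on the left by $f\otimes v\mapsto\widehat{\mathtt{\Psi}}(f,\mathbf{M}\mathrm{G}X)(v)$ and on the right by $v\otimes g\mapsto\widehat{\mathtt{\Psi}}(\mathbf{N}\mathrm{F}Y,g)(v)$, where $f,g$ are morphisms between length-one objects; since $\widehat{\mathtt{\Psi}}$ acts on such morphisms via $\overline{\mathtt{la}}$, respectively $\overline{\mathtt{ra}}$, and $\overline{\mathtt{la}}$, $\overline{\mathtt{ra}}$ are by construction the transposes of the bimodule action maps $\mathtt{la}$, $\mathtt{ra}$ of $\mathtt{\Psi}$, the two bimodule structures are literally the same. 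This gives $\widehat{\mathtt{\Psi}}[Y,X] = \mathtt{\Psi}$. For the ``in particular'': after the fmo reduction of Section~\ref{s81} we may assume $\mathbf{M},\mathbf{N}$ are fmo, and transporting $\widehat{\mathtt{\Psi}}\in\csym{C}\!\on{-Tamb}(\mathbf{M}\star X,\mathbf{N}\star Y)$ along the $\csym{C}$-module equivalences $\mathbf{M}\star X\simeq\mathbf{M}$, $\mathbf{N}\star Y\simeq\mathbf{N}$ (valid since $\mathbf{M},\mathbf{N}$ are very cyclic) produces a Tambara module whose image under $\na$ is isomorphic to $\widehat{\mathtt{\Psi}}[Y,X] = \mathtt{\Psi}$, so $\na$ is locally essentially surjective. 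The only step requiring genuine care is the claim that $\euler{\Upsilon}$ collapses the coherence cells $\mathbf{n}_{\mathrm{H},\mathrm{F}}$, $\mathbf{m}_{\mathrm{H},\mathrm{G}}$ to identities; everything else is routine bookkeeping with the definitions of $\widetilde{\mathtt{\Psi}}$, $\widehat{\mathtt{\Psi}}$ and of restriction/corestriction.
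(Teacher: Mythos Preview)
Your proposal is correct and follows essentially the same approach as the paper: both arguments verify the equality by checking the underlying profunctor, the Tambara structure, and the bimodule actions separately, using that $\euler{\Upsilon}$ is the identity on length-one objects and sends the coherence cells $\mathbf{n}_{\mathrm{H,F}}$, $\mathbf{m}_{\mathrm{H,G}}$ to identities, together with the double-dualization observation $\overline{\overline{\mathtt{ra}}} = \mathtt{ra}$. Your treatment is in fact slightly more thorough than the paper's---you make the verification of the profunctor's morphism action explicit via the unit axioms, and you spell out the transport along $\mathbf{M}\star X \simeq \mathbf{M}$ for the ``in particular'' clause, both of which the paper leaves implicit.
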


\begin{proof}
 First, we have $\widehat{\mathtt{\Psi}}[Y,X](\mathrm{-,-}) = \widehat{\mathtt{\Psi}}(\mathbf{N}\mathrm{-}Y, \mathbf{M}\mathrm{-}X) = \mathtt{\Psi}(\mathrm{-,-})$, by definition of restriction and corestriction and by definition of $\widehat{\mathtt{\Psi}}$.
 Following Equation~\eqref{TambaraOnRestriction}, the component $\widehat{\mathtt{\Psi}}[Y,X]_{\mathrm{F,G}}^{\mathrm{H}}$ is given by

 \[
  \begin{aligned}
   &\widehat{\mathtt{\Psi}}(\mathbf{n}_{\mathrm{H,F}}^{-1}, \mathbf{m}_{\mathrm{H,G}}) \circ \ta^{\widehat{\mathtt{\Psi}}}_{\mathrm{H};\mathbf{N}\mathrm{F}Y, \mathbf{M}\mathrm{G}X} \qquad &\text{ by definition of } \ta^{\widehat{\mathtt{\Psi}}} \\
   =&\widehat{\mathtt{\Psi}}(\mathbf{n}_{\mathrm{H,F}}^{-1}, \mathbf{m}_{\mathrm{H,G}}) \circ \ta^{\mathtt{\Psi}}_{\mathrm{H};\mathrm{F},\mathrm{G}} &\text{ by definition of }\widehat{\mathtt{\Psi}} \\
   =&\mathtt{\Psi}(\mathbf{n}_{\mathrm{H,F}}^{-1} \circ \mathbf{n}_{\mathrm{H,F}}, \mathbf{m}_{\mathrm{H,G}}\circ \mathbf{m}_{\mathrm{H,G}}^{-1}) \circ \ta^{\widehat{\mathtt{\Psi}}}_{\mathrm{H};\mathrm{F}, \mathrm{G}} &\text{ by functoriality of } \mathtt{\Psi}
   &=\ta^{\widehat{\mathtt{\Psi}}}_{\mathrm{H};\mathrm{F}Y, \mathrm{G}X},
  \end{aligned}
 \]
  showing that also the Tambara structures of $\widehat{\mathtt{\Psi}}[Y,X]$ and $\mathtt{\Psi}$ coincide.

 Recall that the components $\mathtt{ra}^{\widehat{\mathtt{\Psi}}[Y,X]}: \int^{\mathrm{H}} \Hom{\mathbf{N}\mathrm{F}Y, \mathbf{N}\mathrm{H}Y} \kotimes \widehat{\mathtt{\Psi}}(\mathbf{N}\mathrm{H}Y,\mathbf{M}\mathrm{G}X) \rightarrow \widehat{\mathtt{\Psi}}(\mathbf{N}\mathrm{F}Y,\mathbf{M}\mathrm{G}X)$ of the left action on $\widehat{\mathtt{\Psi}}[Y,X]$ correspond to collections
 \[
 \setj{\overline{\widehat{\mathtt{\Psi}}(\mathbf{N}\mathrm{F}Y,\mathbf{M}(-)_{X})_{\mathrm{H,G}}}: \widehat{\mathtt{\Psi}}(\mathbf{N}\mathrm{F}Y,\mathbf{M}\mathrm{G}X) \kotimes \Hom{\mathbf{M}\mathrm{H}X, \mathbf{M}\mathrm{G}X} \rightarrow \widehat{\mathtt{\Psi}}(\mathbf{N}\mathrm{F}Y,\mathbf{M}\mathrm{G}X) \; | \; \mathrm{F,G,H} \in \csym{C}},
 \]
 obtained by applying the tensor-hom adjunction on the morphisms
 \[
  \widehat{\mathtt{\Psi}}(\mathbf{N}\mathrm{F}Y,\mathbf{M}(-)_{X})_{\mathrm{F,H}}: \Hom{\mathbf{M}\mathrm{H}X,\mathbf{M}\mathrm{G}X} \rightarrow \Hom{\widehat{\mathtt{\Psi}}(\mathbf{N}\mathrm{F}Y, \mathbf{M}\mathrm{H}X),\mathtt{\Psi}(\mathbf{N}\mathrm{F}Y, \mathbf{M}\mathrm{H}X)},
 \]
defining the profunctor structure on $\widehat{\mathtt{\Psi}}$. By definition of $\widehat{\mathtt{\Psi}}$, we have $\widehat{\mathtt{\Psi}}(\mathbf{N}\mathrm{F}Y,\mathbf{M}(-)_{X})_{\mathrm{F,H}} = \overline{\mathtt{ra}}_{\mathrm{F;H,G}}$. Since we have dualized twice, we find $\overline{\widehat{\mathtt{\Psi}}(\mathbf{N}\mathrm{F}Y,\mathbf{M}(-)_{X})_{\mathrm{H,G}}} = \mathtt{ra}_{\mathrm{H;F,G}}$, so the right module structures on $\mathtt{\Psi}$ and $\widehat{\mathtt{\Psi}}[Y,X]$ coincide. Analogously, the left module structures coincide, which concludes the proof.
\end{proof}

\subsection{\texorpdfstring{$\na$}{na} is locally full and faithful}

Let $\mathtt{\Psi}, \mathtt{\Psi}' \in \csym{C}\!\on{-Tamb}(\mathbf{M},\mathbf{N})$. Let $\mathtt{t}: \mathtt{\Psi}[Y,X] \Rightarrow \mathtt{\Psi}'[Y,X]$ be a bimodule morphism. Recall that by Lemma~\ref{BimoduleBinaturality}, we obtain a natural transformation $\widetilde{\mathtt{s}}: \mathtt{\Psi}_{|(\ccf{C}^{\on{opp}}\kotimes \ccf{C})\ostar (Y,X)} \rightarrow \mathtt{\Psi}'_{|(\ccf{C}^{\on{opp}}\kotimes\ccf{C})\ostar (Y,X)}$. Whiskering with the equivalence $\mathtt{\Upsilon}_{\mathbf{N}^{\on{op}}\kotimes \mathbf{M}}$, we obtain the natural transformation $\widehat{\mathtt{s}}: \widehat{\mathtt{\Psi}[Y,X]} \Rightarrow \widehat{\mathtt{\Psi}'[Y,X]}$. Conjugating with the unit of the adjoint equivalence $(\iota, \mathtt{\Upsilon}_{\mathbf{N}^{\on{op}}\kotimes \mathbf{M}}, (\mathbf{n}^{-1},\mathbf{m}), \varepsilon)$ obtained by the construction described in Lemma~\ref{AxiomOfChoice}, we obtain the natural transformation $\underline{\mathtt{s}}: \mathtt{\Psi}_{|(\mathbf{N}^{\on{op}} \kotimes \mathbf{M}) \star (Y,X)} \Rightarrow \mathtt{\Psi}'_{|(\mathbf{N}^{\on{op}} \kotimes \mathbf{M}) \star (Y,X)}$ given by
\[\resizebox{.99\hsize}{!}{$
\begin{tikzcd}[ampersand replacement=\&]
	{\mathtt{\Psi}(\mathbf{N}\mathrm{F}_{m}\cdots \mathbf{N}\mathrm{F}_{1}Y, \mathbf{M}\mathrm{G}_{n}\cdots\mathbf{M}\mathrm{G}_{1}X)} \&\& {\mathtt{\Psi}(\mathbf{N}\mathrm{r}(\mathrm{F}_{m},\ldots,\mathrm{F}_{1})Y, \mathbf{M}\mathrm{r}(\mathrm{G}_{n},\ldots,\mathrm{G}_{1})X)} \& {\mathtt{\Psi}[Y,X](\mathrm{r}(\mathrm{F}_{m},\ldots,\mathrm{F}_{1}), \mathrm{r}(\mathrm{G}_{n},\ldots,\mathrm{G}_{1}))} \\
	{\mathtt{\Psi}'(\mathbf{N}\mathrm{F}_{m}\cdots \mathbf{N}\mathrm{F}_{1}Y, \mathbf{M}\mathrm{G}_{n}\cdots\mathbf{M}\mathrm{G}_{1}X)} \&\& {\mathtt{\Psi}(\mathbf{N}\mathrm{r}(\mathrm{F}_{m},\ldots,\mathrm{F}_{1})Y, \mathbf{M}\mathrm{r}(\mathrm{G}_{n},\ldots,\mathrm{G}_{1})X)} \& {\mathtt{\Psi}'[Y,X](\mathrm{r}(\mathrm{F}_{m},\ldots,\mathrm{F}_{1}), \mathrm{r}(\mathrm{G}_{n},\ldots,\mathrm{G}_{1}))}
	\arrow["{\mathtt{\Psi}(\mathbf{n}^{r\, -1}_{\mathrm{F}_{m},\ldots,\mathrm{F}_{1},Y},\mathbf{m}^{r}_{\mathrm{G}_{n},\ldots,\mathrm{G}_{1},X})}", shift left=2, from=1-1, to=1-3]
	\arrow[equal, from=1-3, to=1-4]
	\arrow["{\mathtt{s}_{\mathrm{r}(\mathrm{F}_{m},\ldots,\mathrm{F}_{1}), \mathrm{r}(\mathrm{G}_{n},\ldots,\mathrm{G}_{1})}}", from=1-4, to=2-4]
	\arrow["{\mathtt{\Psi}'(\mathbf{n}^{r}_{\mathrm{F}_{m},\ldots,\mathrm{F}_{1},Y},\mathbf{m}^{r\, -1}_{\mathrm{G}_{n},\ldots,\mathrm{G}_{1},X})}"', from=2-3, to=2-1, shift right = 2]
	\arrow[equal, from=2-4, to=2-3]
\end{tikzcd}$}
\]

\begin{lemma}
 $\underline{\mathtt{s}}$ gives a Tambara morphism from $\mathtt{\Psi}$ to $\mathtt{\Psi}'$.
\end{lemma}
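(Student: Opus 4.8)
First I would observe that the naturality of $\underline{\mathtt{s}}$ as a profunctor morphism is automatic: it is produced from the natural transformation $\widetilde{\mathtt{s}}$ of Lemma~\ref{BimoduleBinaturality} by whiskering with the equivalence $\euler{\Upsilon}$ of Definition~\ref{UpsilonMX} and conjugating with the invertible unit of the adjoint equivalence of Lemma~\ref{AxiomOfChoice}, each of which is a natural operation. Equivalently, reading the defining diagram off, at the object $(\mathbf{N}\mathrm{F}_{m}\cdots\mathbf{N}\mathrm{F}_{1}Y,\,\mathbf{M}\mathrm{G}_{n}\cdots\mathbf{M}\mathrm{G}_{1}X)$ the component of $\underline{\mathtt{s}}$ is the conjugate
\[
 \mathtt{\Psi}'\big(\mathbf{n}^{r}_{\mathrm{F}_{m},\ldots,\mathrm{F}_{1},Y},\,\mathbf{m}^{r\,-1}_{\mathrm{G}_{n},\ldots,\mathrm{G}_{1},X}\big)\circ\mathtt{s}_{\mathrm{r}(\mathrm{F}_{m},\ldots,\mathrm{F}_{1}),\,\mathrm{r}(\mathrm{G}_{n},\ldots,\mathrm{G}_{1})}\circ\mathtt{\Psi}\big(\mathbf{n}^{r\,-1}_{\mathrm{F}_{m},\ldots,\mathrm{F}_{1},Y},\,\mathbf{m}^{r}_{\mathrm{G}_{n},\ldots,\mathrm{G}_{1},X}\big)
\]
of $\mathtt{s}$ by the profunctorial actions of the coherence isomorphisms, which is manifestly natural. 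Hence the only substantive point is the Tambara axiom for $\underline{\mathtt{s}}$.

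\textbf{Main step.} Fix $\mathrm{H}\in\csym{C}$ and objects $W_{\mathbf{N}}=\mathbf{N}\mathrm{F}_{m}\cdots\mathbf{N}\mathrm{F}_{1}Y$ and $W_{\mathbf{M}}=\mathbf{M}\mathrm{G}_{n}\cdots\mathbf{M}\mathrm{G}_{1}X$. I would substitute the displayed formula for the two occurrences of $\underline{\mathtt{s}}$ into the square with rows $\ta^{\mathtt{\Psi}}_{\mathrm{H};W_{\mathbf{N}},W_{\mathbf{M}}}$, $\ta^{\mathtt{\Psi}'}_{\mathrm{H};W_{\mathbf{N}},W_{\mathbf{M}}}$ and columns $\underline{\mathtt{s}}$, and then use naturality of $\ta^{\mathtt{\Psi}}$ and $\ta^{\mathtt{\Psi}'}$ in both variables to move the coherence cells $\mathbf{n}^{r}$ and $\mathbf{m}^{r}$ across the structure maps. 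Using Equation~\eqref{RightFirst} together with the normalization of iterated coherence cells already exploited in building $\widehat{\mathtt{\Psi}}$ (prepending $\mathrm{H}$ to a word corresponds, under $\mathbf{n}^{r}$ and $\mathbf{m}^{r}$, to composing with the single module cells $\mathbf{n}_{\mathrm{H},-}$ and $\mathbf{m}_{\mathrm{H},-}$), the square rewrites as precisely the instance, at the objects $\mathrm{r}(\mathrm{F}_{m},\ldots,\mathrm{F}_{1})$ and $\mathrm{r}(\mathrm{G}_{n},\ldots,\mathrm{G}_{1})$ of $\csym{C}$ and the element $\mathrm{H}\in\csym{C}$, of the Tambara-morphism square for $\mathtt{s}\colon\mathtt{\Psi}[Y,X]\Rightarrow\mathtt{\Psi}'[Y,X]$, once $\ta^{\mathtt{\Psi}[Y,X]}$ and $\ta^{\mathtt{\Psi}'[Y,X]}$ are expanded via Equation~\eqref{TambaraOnRestriction}. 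That square commutes by the hypothesis that $\mathtt{s}=\mathtt{t}$ is a morphism of Tambara modules over $\csym{C}$, and the argument is complete.

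\textbf{Expected obstacle.} The computation is an otherwise routine naturality chase; the single delicate point is making the bookkeeping of the iterated cells $\mathbf{n}^{r}_{\mathrm{H},\mathrm{F}_{m},\ldots,\mathrm{F}_{1},Y}$ and $\mathbf{m}^{r}_{\mathrm{H},\mathrm{G}_{n},\ldots,\mathrm{G}_{1},X}$ line up with the decomposition of $\ta^{\mathtt{\Psi}[Y,X]}$ in~\eqref{TambaraOnRestriction} in terms of $\ta^{\mathtt{\Psi}}$ and the single cells $\mathbf{n}_{\mathrm{H},-}$, $\mathbf{m}_{\mathrm{H},-}$; this is exactly the coherence issue treated in Section~\ref{s81}, and once it is granted nothing further is required. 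One may also observe that this Lemma is the morphism-level counterpart of Proposition~\ref{MainLocEss}, the two together underpinning local full faithfulness of $\na$.
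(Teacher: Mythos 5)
Your strategy is correct and matches the paper's proof: the paper verifies the Tambara square by splitting it into five faces, where the outer two faces (your ``delicate coherence bookkeeping,'' via Equation~\eqref{FirstStep} and extranaturality of $\ta$) slide the iterated cells $\mathbf{m}^{r},\mathbf{n}^{r}$ across the structure maps, two inner faces are the definitional expansion~\eqref{TambaraOnRestriction}, and the central face is exactly the Tambara-morphism square for $\mathtt{s}\colon\mathtt{\Psi}[Y,X]\Rightarrow\mathtt{\Psi}'[Y,X]$. Your description is a faithful verbal rendering of that diagram chase.
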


\begin{proof}

It suffices to show that the following diagram commutes:

\[\resizebox{.99\hsize}{!}{$
\begin{tikzcd}[ampersand replacement=\&]
	{\mathtt{\Psi}(\mathbf{N}\mathrm{F}_{m}\cdots \mathbf{N}\mathrm{F}_{1}Y, \mathbf{M}\mathrm{G}_{n}\cdots\mathbf{M}\mathrm{G}_{1}X)} \&\&\& {\mathtt{\Psi}(\mathbf{N}\mathrm{H}\mathbf{N}\mathrm{F}_{m}\cdots \mathbf{N}\mathrm{F}_{1}Y, \mathbf{M}\mathrm{H}\mathbf{M}\mathrm{G}_{n}\cdots\mathbf{M}\mathrm{G}_{1}X)} \\
	{\mathtt{\Psi}(\mathbf{N}\mathrm{r}(\mathrm{F}_{m},\ldots,\mathrm{F}_{1})Y, \mathbf{M}\mathrm{r}(\mathrm{G}_{n},\ldots,\mathrm{G}_{1})X)} \&\& {\mathtt{\Psi}(\mathbf{N}\mathrm{H}\mathbf{N}\mathrm{r}(\mathrm{F}_{m},\ldots,\mathrm{F}_{1})Y, \mathbf{M}\mathrm{H}\mathbf{M}\mathrm{r}(\mathrm{G}_{n},\ldots,\mathrm{G}_{1})X)} \& {\mathtt{\Psi}(\mathbf{N}\mathrm{r}(\mathrm{H},\mathrm{F}_{m},\ldots,\mathrm{F}_{1})Y, \mathbf{M}\mathrm{r}(\mathrm{H},\mathrm{G}_{n},\ldots,\mathrm{G}_{1})X)} \\
	{\mathtt{\Psi}[Y,X](\mathrm{r}(\mathrm{F}_{m},\ldots,\mathrm{F}_{1}), \mathrm{r}(\mathrm{G}_{n},\ldots,\mathrm{G}_{1}))} \&\&\& {\mathtt{\Psi}[Y,X](\mathrm{r}(\mathrm{H},\mathrm{F}_{m},\ldots,\mathrm{F}_{1}), \mathrm{r}(\mathrm{H},\mathrm{G}_{n},\ldots,\mathrm{G}_{1}))} \\
	{\mathtt{\Psi}'[Y,X](\mathrm{r}(\mathrm{F}_{m},\ldots,\mathrm{F}_{1}), \mathrm{r}(\mathrm{G}_{n},\ldots,\mathrm{G}_{1}))} \&\&\& {\mathtt{\Psi}'[Y,X](\mathrm{r}(\mathrm{H},\mathrm{F}_{m},\ldots,\mathrm{F}_{1}), \mathrm{r}(\mathrm{H},\mathrm{G}_{n},\ldots,\mathrm{G}_{1}))} \\
	{\mathtt{\Psi}'(\mathbf{N}\mathrm{r}(\mathrm{F}_{m},\ldots,\mathrm{F}_{1})Y, \mathbf{M}\mathrm{r}(\mathrm{G}_{n},\ldots,\mathrm{G}_{1})X)} \&\& {\mathtt{\Psi}'(\mathbf{N}\mathrm{H}\mathbf{N}\mathrm{r}(\mathrm{F}_{m},\ldots,\mathrm{F}_{1})Y, \mathbf{M}\mathrm{H}\mathbf{M}\mathrm{r}(\mathrm{G}_{n},\ldots,\mathrm{G}_{1})X)} \& {\mathtt{\Psi}'(\mathbf{N}\mathrm{r}(\mathrm{H},\mathrm{F}_{m},\ldots,\mathrm{F}_{1})Y, \mathbf{M}\mathrm{r}(\mathrm{H},\mathrm{G}_{n},\ldots,\mathrm{G}_{1})X)} \\
	{\mathtt{\Psi}'(\mathbf{N}\mathrm{F}_{m}\cdots \mathbf{N}\mathrm{F}_{1}Y, \mathbf{M}\mathrm{G}_{n}\cdots\mathbf{M}\mathrm{G}_{1}X)} \&\&\& {\mathtt{\Psi}'(\mathbf{N}\mathrm{H}\mathbf{N}\mathrm{F}_{m}\cdots \mathbf{N}\mathrm{F}_{1}Y, \mathbf{M}\mathrm{H}\mathbf{M}\mathrm{G}_{n}\cdots\mathbf{M}\mathrm{G}_{1}X)}
	\arrow["{\mathtt{\Psi}(\mathbf{n}^{r\, -1}_{\mathrm{F}_{m},\ldots,\mathrm{F}_{1},Y},\mathbf{m}^{r}_{\mathrm{G}_{n},\ldots,\mathrm{G}_{1},X})}", shift left=2, from=1-1, to=2-1]
	\arrow["{=}", from=2-1, to=3-1]
	\arrow["{\mathtt{s}_{\mathrm{r}(\mathrm{F}_{m},\ldots,\mathrm{F}_{1}), \mathrm{r}(\mathrm{G}_{n},\ldots,\mathrm{G}_{1})}}", from=3-1, to=4-1]
	\arrow["{\mathtt{\Psi}'(\mathbf{n}^{r}_{\mathrm{F}_{m},\ldots,\mathrm{F}_{1},Y},\mathbf{m}^{r\, -1}_{\mathrm{G}_{n},\ldots,\mathrm{G}_{1},X})}", from=5-1, to=6-1]
	\arrow["{=}", from=4-1, to=5-1]
	\arrow["{\ta^{\mathtt{\Psi}}_{\mathrm{H;\mathbf{N}\mathrm{F}_{m}\cdots \mathbf{N}\mathrm{F}_{1}Y, \mathbf{M}\mathrm{G}_{n}\cdots\mathbf{M}\mathrm{G}_{1}X}}}", from=1-1, to=1-4]
	\arrow["{\mathtt{\Psi}(\mathbf{n}^{r\, -1}_{\mathrm{H},\mathrm{F}_{m},\ldots,\mathrm{F}_{1},Y},\mathbf{m}^{r}_{\mathrm{H},\mathrm{G}_{n},\ldots,\mathrm{G}_{1},X})}", from=1-4, to=2-4]
	\arrow["{=}", from=2-4, to=3-4]
	\arrow["{\ta^{\mathtt{\Psi}[Y,X]}_{\mathrm{H};\mathrm{r}(\mathrm{F}_{m},\ldots,\mathrm{F}_{1}),\mathrm{r}(\mathrm{G}_{n},\ldots,\mathrm{G}_{1})}}", from=3-1, to=3-4]
	\arrow[""{name=0, anchor=center, inner sep=0}, "{\ta^{\mathtt{\Psi}}_{\mathrm{H};\mathrm{r}(\mathrm{F}_{m},\ldots,\mathrm{F}_{1}),\mathrm{r}(\mathrm{G}_{n},\ldots,\mathrm{G}_{1}),X}}"', shift right=1, from=2-1, to=2-3]
	\arrow["{\mathtt{\Psi}(\mathbf{n}_{\mathrm{H},\mathrm{r}(\mathrm{F}_{m},\ldots,\mathrm{F}_{1})}^{-1},\mathbf{m}_{\mathrm{H},\mathrm{r}(\mathrm{G}_{n},\ldots,\mathrm{G}_{1}),X})}"', shift right=1, from=2-3, to=2-4]
	\arrow["{\mathtt{s}_{\mathrm{r}(\mathrm{H},\mathrm{F}_{m},\ldots,\mathrm{F}_{1}), \mathrm{r}(\mathrm{H},\mathrm{G}_{n},\ldots,\mathrm{G}_{1})}}", from=3-4, to=4-4]
	\arrow["{\ta^{\mathtt{\Psi}'[Y,X]}_{\mathrm{H};\mathrm{r}(\mathrm{F}_{m},\ldots,\mathrm{F}_{1}),\mathrm{r}(\mathrm{G}_{n},\ldots,\mathrm{G}_{1})}}", from=4-1, to=4-4]
	\arrow["{\ta^{\mathtt{\Psi}'}_{\mathrm{H};\mathrm{r}(\mathrm{F}_{m},\ldots,\mathrm{F}_{1}),\mathrm{r}(\mathrm{G}_{n},\ldots,\mathrm{G}_{1}),X}}", shift left=1, from=5-1, to=5-3]
	\arrow["{\mathtt{\Psi}'(\mathbf{n}_{\mathrm{H},\mathrm{r}(\mathrm{F}_{m},\ldots,\mathrm{F}_{1})}^{-1},\mathbf{m}_{\mathrm{H},\mathrm{r}(\mathrm{G}_{n},\ldots,\mathrm{G}_{1}),X})}", shift left=1, from=5-3, to=5-4]
	\arrow["{=}", shift right=1, from=4-4, to=5-4]
	\arrow["{\mathtt{\Psi}'(\mathbf{n}^{r}_{\mathrm{H},\mathrm{F}_{m},\ldots,\mathrm{F}_{1},Y},\mathbf{m}^{r\, -1}_{\mathrm{H},\mathrm{G}_{n},\ldots,\mathrm{G}_{1},X})}", from=5-4, to=6-4]
	\arrow["{\ta^{\mathtt{\Psi}'}_{\mathrm{H;\mathbf{N}\mathrm{F}_{m}\cdots \mathbf{N}\mathrm{F}_{1}Y, \mathbf{M}\mathrm{G}_{n}\cdots\mathbf{M}\mathrm{G}_{1}X}}}", from=6-1, to=6-4]
	\arrow["1"{description}, draw=none, from=1-1, to=2-4]
	\arrow["3"{description}, draw=none, from=3-1, to=4-4]
	\arrow["4"{description}, draw=none, from=4-1, to=5-4]
	\arrow["5"{description}, draw=none, from=5-1, to=6-4]
	\arrow["2"{description}, shift right=1, draw=none, from=0, to=3-4]
\end{tikzcd}$}
\]
And indeed, face $1$ commutes since
\begin{equation}\label{FirstStep}
  \mathbf{m}_{\mathrm{H},\mathrm{r}(\mathrm{G}_{n},\ldots, \mathrm{G}_{1}),X}^{-1} \circ \mathbf{m}_{\mathrm{H},\mathrm{G}_{n},\ldots,\mathrm{G}_{1},X}^{r} = \mathbf{M}\mathrm{H}\mathbf{m}_{\mathrm{G}_{n},\ldots,\mathrm{G}_{1},X},
\end{equation}
 and similarly for $\mathbf{n}$, so we have
 \[
  \begin{aligned}
   &\begin{aligned}
   &{\ta^{\mathtt{\Psi}}_{\mathrm{H};\mathrm{r}(\mathrm{F}_{m},\ldots,\mathrm{F}_{1}),\mathrm{r}(\mathrm{G}_{n},\ldots,\mathrm{G}_{1}),X}}\circ {\mathtt{\Psi}(\mathbf{n}^{r\, -1}_{\mathrm{F}_{m},\ldots,\mathrm{F}_{1},Y},\mathbf{m}^{r}_{\mathrm{G}_{n},\ldots,\mathrm{G}_{1},X})}      &\qquad  &\text{Tambara structure of } \mathtt{\Psi}\\
   &=\mathtt{\Psi}(\mathbf{N}\mathrm{H}\mathbf{n}^{r\, -1}_{\mathrm{F}_{m},\ldots,\mathrm{F}_{1},Y},\mathbf{M}\mathrm{H}\mathbf{m}^{r}_{\mathrm{G}_{n},\ldots,\mathrm{G}_{1},X}) \circ {\ta^{\mathtt{\Psi}}_{\mathrm{H;\mathbf{N}\mathrm{F}_{m}\cdots \mathbf{N}\mathrm{F}_{1}Y, \mathbf{M}\mathrm{G}_{n}\cdots\mathbf{M}\mathrm{G}_{1}X}}} &       &\text{ by } \eqref{FirstStep}
   \end{aligned} \\
   &=\mathtt{\Psi}(\mathbf{n}_{\mathrm{H},\mathrm{F}_{m},\ldots,\mathrm{F}_{1},Y}^{r\, -1} \circ \mathbf{n}_{\mathrm{H},\mathrm{r}(\mathrm{F}_{m},\ldots, \mathrm{F}_{1}),Y},\mathbf{m}_{\mathrm{H},\mathrm{r}(\mathrm{G}_{n},\ldots, \mathrm{G}_{1}),X}^{-1} \circ \mathbf{m}_{\mathrm{H},\mathrm{G}_{n},\ldots,\mathrm{G}_{1},X}^{r}) \circ {\ta^{\mathtt{\Psi}}_{\mathrm{H;\mathbf{N}\mathrm{F}_{m}\cdots \mathbf{N}\mathrm{F}_{1}Y, \mathbf{M}\mathrm{G}_{n}\cdots\mathbf{M}\mathrm{G}_{1}X}}} \\
      &=\mathtt{\Psi}(\mathbf{n}_{\mathrm{H},\mathrm{r}(\mathrm{F}_{m},\ldots, \mathrm{F}_{1}),Y},\mathbf{m}_{\mathrm{H},\mathrm{r}(\mathrm{G}_{n},\ldots, \mathrm{G}_{1}),X}^{-1}) \circ \Psi(\mathbf{n}_{\mathrm{H},\mathrm{F}_{m},\ldots,\mathrm{F}_{1},Y}^{r\, -1},\mathbf{m}_{\mathrm{H},\mathrm{G}_{n},\ldots,\mathrm{G}_{1},X}^{r}) \circ {\ta^{\mathtt{\Psi}}_{\mathrm{H;\mathbf{N}\mathrm{F}_{m}\cdots \mathbf{N}\mathrm{F}_{1}Y, \mathbf{M}\mathrm{G}_{n}\cdots\mathbf{M}\mathrm{G}_{1}X}}} \\
   &=\mathtt{\Psi}(\mathbf{n}_{\mathrm{H},\mathrm{r}(\mathrm{F}_{m},\ldots, \mathrm{F}_{1}),Y}^{-1},\mathbf{m}_{\mathrm{H},\mathrm{r}(\mathrm{G}_{n},\ldots, \mathrm{G}_{1}),X})^{-1} \circ \Psi(\mathbf{n}_{\mathrm{H},\mathrm{F}_{m},\ldots,\mathrm{F}_{1},Y}^{r\, -1},\mathbf{m}_{\mathrm{H},\mathrm{G}_{n},\ldots,\mathrm{G}_{1},X}^{r}) \circ {\ta^{\mathtt{\Psi}}_{\mathrm{H;\mathbf{N}\mathrm{F}_{m}\cdots \mathbf{N}\mathrm{F}_{1}Y, \mathbf{M}\mathrm{G}_{n}\cdots\mathbf{M}\mathrm{G}_{1}X}}},
  \end{aligned}
 \]
 showing the commutativity of $1$. Face $2$ commutes by definition of $\ta^{\mathtt{\Psi}[Y,X]}$; face $3$ commutes since $\mathtt{s}$ is a Tambara morphism; face $4$ commutes by definition of $\ta^{\mathtt{\Psi}'}$ and the commutativity of face $5$ is analogous to that of face $1$.
\end{proof}

\begin{lemma}\label{LocalBijection}
\begin{enumerate}
 \item
 Given $\mathtt{\Psi}, \mathtt{\Psi}' \in \csym{C}\!\on{-Tamb}(\mathbf{M},\mathbf{N})$ and a Tambara morphism $\mathtt{t}: \mathtt{\Psi} \Rightarrow \mathtt{\Psi}'$, we have $\underline{\mathtt{t}[Y,X]} = \mathtt{t}$.\vspace{5pt}
 \item
 Given a bimodule morphism $\mathtt{s} \in \Hom{\mathtt{\Psi}[Y,X], \mathtt{\Psi}'[Y,X]}$, we have an equality of bimodule morphisms $\underline{\mathtt{s}}[Y,X] = \mathtt{s}$.
\end{enumerate}

\end{lemma}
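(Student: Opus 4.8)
My plan is to show that the assignment $\mathtt{s}\mapsto\underline{\mathtt{s}}$ constructed just above is a two-sided inverse to the local functor $\mathtt{t}\mapsto\mathtt{t}[Y,X]$ of Corollary~\ref{LocalFunctors}, treating (1) and (2) separately. In both cases I would start from the single composite formula for $\underline{\mathtt{s}}$ displayed immediately before the statement (the one obtained by feeding $\mathtt{s}$ through $\widetilde{\mathtt{s}}$ of Lemma~\ref{BimoduleBinaturality}, whiskering with the equivalence $\euler{\Upsilon}$ of Definition~\ref{UpsilonMX}, and conjugating by the unit of the adjoint equivalence of Lemma~\ref{AxiomOfChoice}), and then use only naturality of the underlying profunctor morphisms together with two elementary facts about the coherence isomorphisms $\mathbf{n}^{r}_{-,Y}$, $\mathbf{m}^{r}_{-,X}$: each cancels against its inverse under functoriality of $\mathtt{\Psi}$ and $\mathtt{\Psi}'$, and on a one-letter word $\mathrm{F}$ (where $\mathrm{r}(\mathrm{F})=\mathrm{F}$) it is, by the coherence theorem, the unique endomorphism-isomorphism of $\mathbf{N}\mathrm{F}Y$ resp.\ $\mathbf{M}\mathrm{F}X$ built from coherence data, hence the identity.

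For part (1) I would set $\mathtt{s}:=\mathtt{t}[Y,X]$, so $\mathtt{s}_{\mathrm{F},\mathrm{G}}=\mathtt{t}_{\mathbf{N}\mathrm{F}Y,\mathbf{M}\mathrm{G}X}$ by the definitions of restriction and corestriction. Evaluating the defining composite of $\underline{\mathtt{s}}$ at a typical object of $(\mathbf{N}^{\on{op}}\kotimes\mathbf{M})\star(Y,X)$ yields
\[
\underline{\mathtt{s}}_{\ldots}=\mathtt{\Psi}'(\mathbf{n}^{r}_{\mathrm{F}_m,\ldots,\mathrm{F}_1,Y},\mathbf{m}^{r\,-1}_{\mathrm{G}_n,\ldots,\mathrm{G}_1,X})\circ\mathtt{t}_{\mathbf{N}\mathrm{r}(\mathrm{F}_m,\ldots,\mathrm{F}_1)Y,\,\mathbf{M}\mathrm{r}(\mathrm{G}_n,\ldots,\mathrm{G}_1)X}\circ\mathtt{\Psi}(\mathbf{n}^{r\,-1}_{\mathrm{F}_m,\ldots,\mathrm{F}_1,Y},\mathbf{m}^{r}_{\mathrm{G}_n,\ldots,\mathrm{G}_1,X}).
\]
Since $\mathtt{t}$ is a morphism of the underlying profunctors, it is natural in $\mathbf{N}^{\on{op}}\kotimes\mathbf{M}$, so I can slide the rightmost factor past $\mathtt{t}$, turning the middle-plus-right part into $\mathtt{\Psi}'(\mathbf{n}^{r\,-1},\mathbf{m}^{r})\circ\mathtt{t}_{\mathbf{N}\mathrm{F}_m\cdots Y,\mathbf{M}\mathrm{G}_n\cdots X}$; the leftover $\mathtt{\Psi}'(\mathbf{n}^{r},\mathbf{m}^{r\,-1})\circ\mathtt{\Psi}'(\mathbf{n}^{r\,-1},\mathbf{m}^{r})=\mathtt{\Psi}'(\mathbf{n}^{r\,-1}\circ\mathbf{n}^{r},\,\mathbf{m}^{r\,-1}\circ\mathbf{m}^{r})=\on{id}$ then collapses by functoriality, leaving $\underline{\mathtt{s}}_{\ldots}=\mathtt{t}_{\ldots}$ on every object. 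Note that the Tambara compatibility of $\mathtt{t}$ plays no role here — it was already consumed in the preceding lemma, which guarantees $\underline{\mathtt{s}}$ is a Tambara morphism at all.

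For part (2) I would observe that, by the definitions of restriction and corestriction, $(\underline{\mathtt{s}}[Y,X])_{\mathrm{F},\mathrm{G}}=\underline{\mathtt{s}}_{\mathbf{N}\mathrm{F}Y,\mathbf{M}\mathrm{G}X}$, which is the instance of the composite above with $m=n=1$, $\mathrm{F}_1=\mathrm{F}$, $\mathrm{G}_1=\mathrm{G}$; there $\mathbf{n}^{r}_{\mathrm{F},Y}$ and $\mathbf{m}^{r}_{\mathrm{G},X}$ are identities, so the composite degenerates to $\mathtt{s}_{\mathrm{F},\mathrm{G}}$, and since a morphism of bimodules is determined by its underlying components, $\underline{\mathtt{s}}[Y,X]=\mathtt{s}$. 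Combined with Proposition~\ref{MainLocEss}, the two parts show that $\widehat{\na}_{(\mathbf{N},Y),(\mathbf{M},X)}$ is fully faithful, hence — being also essentially surjective — an equivalence of categories, so $\na$ is locally an equivalence. The only thing requiring care is the bookkeeping that packages the chain of constructions defining $\underline{\mathtt{s}}$ into the displayed composite formula; once that is unwound there is no genuine obstacle, and both equalities are immediate.
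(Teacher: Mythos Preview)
Your proposal is correct and matches the paper's proof closely. The only variation is in part~(1): the paper first reduces, via Lemma~\ref{AxiomOfChoice}, to components at objects of $(\csym{C}^{\on{opp}}\kotimes\csym{C})\ostar(Y,X)$ (one-letter words), where the coherence cells $\mathbf{n}^{r},\mathbf{m}^{r}$ are identities and the composite trivially collapses to $\mathtt{t}_{\mathbf{N}\mathrm{F}Y,\mathbf{M}\mathrm{G}X}$; you instead compute at an arbitrary object and use naturality of $\mathtt{t}$ to slide $\mathtt{\Psi}(\mathbf{n}^{r\,-1},\mathbf{m}^{r})$ past $\mathtt{t}$ and cancel against $\mathtt{\Psi}'(\mathbf{n}^{r},\mathbf{m}^{r\,-1})$ by functoriality. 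Both are immediate; your route avoids the reduction step at the cost of one naturality square, while the paper's route avoids invoking naturality of $\mathtt{t}$ at all. Part~(2) is identical in both.
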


\begin{proof}
 In both cases it suffices to show equality of the underlying natural transformations. Further, as a consequence of Lemma~\ref{AxiomOfChoice}, it suffices to show that $\underline{\mathtt{t}[Y,X]} = \mathtt{t}$ coincide on $(\csym{C}^{\on{opp}}\kotimes\csym{C})\ostar (Y,X)$. Thus, we only need to establish $\underline{\mathtt{t}[Y,X]}_{\mathbf{N}\mathrm{F}Y, \mathbf{M}\mathrm{G}X} = \mathtt{t}_{\mathbf{N}\mathrm{F}Y,\mathbf{M}\mathrm{G}X}$. And indeed,
 \[
 \begin{aligned}
&\underline{\mathtt{t}[Y,X]}_{\mathbf{N}\mathrm{F}Y,\mathbf{M}\mathrm{G}X} = \mathtt{\Psi}'(\mathbf{n}_{\mathrm{F}}^{r},\mathbf{m}_{\mathrm{G}}^{r \, -1}) \circ \mathtt{t}[Y,X]_{\mathrm{F,G}} \circ \mathtt{\Psi}(\mathbf{n}_{\mathrm{F}}^{r \, -1}, \mathbf{m}_{\mathrm{G}}^{r}) = \on{id}_{\mathtt{\Psi}'(\mathbf{N}\mathrm{F}Y,\mathbf{M}\mathrm{G}X)} \circ \mathtt{t}[Y,X]_{\mathrm{F,G}} \circ \on{id}_{\mathtt{\Psi}(\mathbf{N}\mathrm{F}Y,\mathbf{M}\mathrm{G}X)} \\
&=\mathtt{t}[Y,X] = \mathtt{t}_{\mathbf{N}\mathrm{F}Y,\mathbf{M}\mathrm{G}X}.
 \end{aligned}
 \]
 For the second claim, we have
 \[
  \underline{\mathtt{s}}[Y,X]_{\mathrm{F,G}} = \underline{\mathtt{s}}_{\mathbf{N}\mathrm{F}Y,\mathbf{M}\mathrm{G}X} = \mathtt{\Psi}'(\mathbf{n}_{\mathrm{F}}^{r},\mathbf{m}_{\mathrm{G}}^{r \, -1}) \circ \mathtt{s}_{\mathrm{F},\mathrm{G}} \circ \mathtt{\Psi}(\mathbf{n}_{\mathrm{F}}^{r \, -1}, \mathbf{m}_{\mathrm{G}}^{r}) = \on{id}_{\mathtt{\Psi}'(\mathbf{N}\mathrm{F}Y,\mathbf{M}\mathrm{G}X)} \circ \mathtt{s}_{\mathrm{F,G}} \circ \on{id}_{\mathtt{\Psi}(\mathbf{N}\mathrm{F}Y,\mathbf{M}\mathrm{G}X)} = \mathtt{s}_{\mathrm{F,G}}.
 \]
\end{proof}

\begin{proposition}\label{MainLocFF}
 $\na$ is locally full and faithful.
\end{proposition}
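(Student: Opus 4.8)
The statement to prove is that $\na$ is locally full and faithful, i.e.\ that for every pair of very cyclic $\csym{C}$-module categories $\mathbf{M},\mathbf{N}$ with chosen very cyclic generators $X\in\mathbf{M}$, $Y\in\mathbf{N}$, and every pair of Tambara modules $\mathtt{\Psi},\mathtt{\Psi}'\colon\mathbf{M}\xslashedrightarrow{}\mathbf{N}$, the map
\[
\csym{C}\!\on{-Tamb}(\mathbf{M},\mathbf{N})(\mathtt{\Psi},\mathtt{\Psi}')\longrightarrow\on{Bimod}(\tam)(\mathtt{\Psi}[Y,X],\mathtt{\Psi}'[Y,X]),\qquad\mathtt{t}\longmapsto\mathtt{t}[Y,X],
\]
is a bijection. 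By Corollary~\ref{LocalFunctors} this map is exactly the action on $2$-cells of the local functor $\widehat{\na}_{(\mathbf{N},Y),(\mathbf{M},X)}$, hence of $\na$, so its bijectivity is precisely local full-faithfulness. The plan is to assemble this from the material developed immediately above: we have an explicit map in the reverse direction, $\mathtt{s}\mapsto\underline{\mathtt{s}}$, and it suffices to check that the two are mutually inverse, which is exactly the content of Lemma~\ref{LocalBijection}.

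Concretely, I would first recall the construction of $\underline{\mathtt{s}}$ from a bimodule morphism $\mathtt{s}\colon\mathtt{\Psi}[Y,X]\Rightarrow\mathtt{\Psi}'[Y,X]$: by Lemma~\ref{BimoduleBinaturality} such an $\mathtt{s}$ is the same datum as a natural transformation between the restrictions of $\mathtt{\Psi},\mathtt{\Psi}'$ to $(\csym{C}^{\on{opp}}\kotimes\csym{C})\ostar(Y,X)$; whiskering with the equivalence $\euler{\Upsilon}_{\!(\mathbf{N}^{\on{op}}\kotimes\mathbf{M},(Y,X))}$ of Definition~\ref{UpsilonMX} transports this to the subcategory $(\mathbf{N}^{\on{op}}\kotimes\mathbf{M})\star(Y,X)$, and conjugating with the unit of the adjoint equivalence from Lemma~\ref{AxiomOfChoice} yields a natural transformation $\mathtt{\Psi}\Rightarrow\mathtt{\Psi}'$, which the preceding lemma shows is a Tambara morphism. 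This gives a well-defined map $\underline{(-)}\colon\on{Bimod}(\tam)(\mathtt{\Psi}[Y,X],\mathtt{\Psi}'[Y,X])\to\csym{C}\!\on{-Tamb}(\mathbf{M},\mathbf{N})(\mathtt{\Psi},\mathtt{\Psi}')$. Then Lemma~\ref{LocalBijection}(1), $\underline{\mathtt{t}[Y,X]}=\mathtt{t}$, shows that $\mathtt{t}\mapsto\mathtt{t}[Y,X]$ is injective, so $\na$ is locally faithful, while Lemma~\ref{LocalBijection}(2), $\underline{\mathtt{s}}[Y,X]=\mathtt{s}$, shows it is surjective, so $\na$ is locally full. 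Since $\mathbf{M},\mathbf{N},X,Y,\mathtt{\Psi},\mathtt{\Psi}'$ were arbitrary, and since by Section~\ref{s81} we are entitled to work in the fmo setting in which the equivalence $\euler{\Upsilon}$ and the adjoint-equivalence extension of $\underline{\mathtt{s}}$ are available, this proves local full-faithfulness of the $\na$ under consideration; by the independence-of-generators result of Section~\ref{s51} it then holds for any admissible choice in the definition of $\na$.

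The genuinely delicate point — although it is discharged by the lemmas invoked above rather than inside the proof of this proposition — is that $\underline{(-)}$ must land among Tambara morphisms defined on \emph{all} of $\mathbf{N}^{\on{op}}\kotimes\mathbf{M}$, not merely on the full subcategory $(\mathbf{N}^{\on{op}}\kotimes\mathbf{M})\star(Y,X)$. This is exactly where the very cyclicity hypothesis enters: it guarantees that the inclusions $(\csym{C}^{\on{opp}}\kotimes\csym{C})\ostar(Y,X)\hookrightarrow(\mathbf{N}^{\on{op}}\kotimes\mathbf{M})\star(Y,X)\hookrightarrow\mathbf{N}^{\on{op}}\kotimes\mathbf{M}$ are equivalences, so a natural transformation defined on the subcategory extends uniquely along the adjoint equivalence to the whole category, and the verification that this extension respects the Tambara structure rests on the compatibility of the transport with the coherence isomorphisms $\mathbf{m}^{r},\mathbf{n}^{r}$ established in the proof that $\underline{\mathtt{s}}$ is a Tambara morphism. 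Modulo this, the proof of Proposition~\ref{MainLocFF} is a purely formal consequence of Corollary~\ref{LocalFunctors}, the preceding lemma, and Lemma~\ref{LocalBijection}.
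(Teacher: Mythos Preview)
Your proposal is correct and takes essentially the same approach as the paper: the paper's proof is a one-line invocation of Lemma~\ref{LocalBijection} to conclude that $\mathtt{s}\mapsto\underline{\mathtt{s}}$ is inverse to $\mathtt{t}\mapsto\mathtt{t}[Y,X]$, and your argument unpacks this with additional (accurate) commentary on the role of Corollary~\ref{LocalFunctors}, the fmo reduction of Section~\ref{s81}, and the very-cyclicity hypothesis.
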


\begin{proof}
 Lemma~\ref{LocalBijection} shows that the map $\mathtt{s} \mapsto \underline{\mathtt{s}}$ is inverse to the map $(\na_{\mathbf{M},\mathbf{N}})_{\mathtt{\Psi},\mathtt{\Psi}'}$ sending a Tambara morphism $\mathtt{t}$ to the bimodule morphism $\mathtt{t}[Y,X]$.
\end{proof}

\begin{theorem}\label{MainThm}
 $\na$ is a biequivalence.
\end{theorem}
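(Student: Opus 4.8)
The plan is to assemble Theorem~\ref{MainThm} from the three structural properties already established for $\na$: essential surjectivity, local essential surjectivity, and local fullness and faithfulness. Recall that by the characterization of biequivalences of bicategories (see e.g. \cite[Section~8.3]{JY}), a pseudofunctor $\mathbb{F}$ is a biequivalence if and only if it is essentially surjective on objects, essentially surjective on $1$-morphisms for each pair of objects, and fully faithful on $2$-morphisms for each pair of parallel $1$-morphisms. Thus the proof is simply a matter of collecting the relevant statements proved in Sections~\ref{s7} and~\ref{s8} and invoking this criterion.

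First I would recall that, by Section~\ref{s81}, it suffices to prove the result under the standing assumption that $\csym{C}$ and all its module categories are fmo; this reduction was justified there using the special case $\mathbb{F} = \euler{EV}_{\ccf{C}}$ of Proposition~\ref{HighNaturality}, together with the fact that the inclusion $\csym{C}\!\on{-Tamb}^{\#} \hookrightarrow \csym{C}\!\on{-Tamb}$ is a biequivalence, so the biequivalence property of $\na_{\ccf{C}}$ is equivalent to that of $\na_{\ccf{C}^{\#}}$ restricted appropriately. Under that assumption, Corollary~\ref{MainEssSurj} gives that $\na$ is essentially surjective on objects (indeed every monoid $\mathtt{T}$ in $\tam$ is realized, up to isomorphism of monoids, as $[\mathtt{T},\mathtt{T}]$ for the very cyclic module category $\mathtt{T}^{\ccf{C}}_{+}$ with its canonical generator $\mathtt{T}$). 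Next, Proposition~\ref{MainLocEss} shows that for any very cyclic $\mathbf{M},\mathbf{N}$ with chosen very cyclic generators $X,Y$, every $[Y,Y]$-$[X,X]$-bimodule $\mathtt{\Psi}$ is isomorphic to $\widehat{\mathtt{\Psi}}[Y,X] = \na(\widehat{\mathtt{\Psi}})$ — in fact an equality — so $\na$ is locally essentially surjective. Finally, Proposition~\ref{MainLocFF} shows that $\na$ is locally full and faithful, i.e. each functor $(\na_{\mathbf{M},\mathbf{N}})_{\mathtt{\Psi},\mathtt{\Psi}'}$ sending a Tambara morphism $\mathtt{t}$ to $\mathtt{t}[Y,X]$ is a bijection on hom-sets, with explicit inverse $\mathtt{s} \mapsto \underline{\mathtt{s}}$ as furnished by Lemma~\ref{LocalBijection}.

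Combining these, $\na: \csym{C}\!\on{-Tamb}_{c} \to \on{Bimod}(\tam)$ satisfies all three clauses of the biequivalence criterion, hence is a biequivalence. I would close by noting the compatibility with the choice of generator: by the construction in Section~\ref{s51}, any two choices of very cyclic generators yield pseudofunctors $\na_{\mathtt{F}}, \na_{\mathtt{F}'}$ connected by a pseudonatural equivalence $\mathbb{e}$, so the biequivalence statement is independent of that choice; and by the biequivalence $\csym{C}\!\on{-Tamb}_{\euler{0}} \xiso \csym{C}\!\on{-Tamb}$ together with Corollary~\ref{MainEssSurj} one recovers precisely the formulation in Theorem~\ref{IntroThm2}, identifying $\csym{C}\!\on{-Tamb}_{c}$ with the cyclic module categories inside $\csym{C}\!\on{-Tamb}_{\euler{0}}$.

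The only genuine content beyond bookkeeping — and where I would be careful — is making sure the fmo reduction of Section~\ref{s81} is invoked correctly: one must check that a pseudofunctor into $\on{Bimod}(\tam)$ that becomes a biequivalence after precomposing with a biequivalence $\csym{C}\!\on{-Tamb}^{\#}_{c} \hookrightarrow \csym{C}\!\on{-Tamb}_{c}$ and after the identification coming from $\euler{EV}_{\ccf{C}}$ is itself a biequivalence. This is where Proposition~\ref{HighNaturality} does the real work, since it says the relevant square of pseudofunctors commutes strictly, so a two-out-of-three argument for biequivalences applies; everything else is a direct appeal to the already-proven local and global surjectivity and faithfulness statements. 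No long computation is needed here, since all the calculational burden was discharged in the preceding subsections.
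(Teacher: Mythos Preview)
Your proposal is correct and follows essentially the same approach as the paper: assemble essential surjectivity (Corollary~\ref{MainEssSurj}), local essential surjectivity (Proposition~\ref{MainLocEss}), and local full faithfulness (Proposition~\ref{MainLocFF}), then invoke the standard characterization of biequivalences. The paper's own proof is a two-line version of exactly this, citing \cite[Theorem~2.25]{SP} for the criterion where you cite \cite{JY}; your additional remarks on the fmo reduction and generator-independence are accurate but already absorbed into the setup of Section~\ref{s81} and Section~\ref{s51}, so they need not appear in the proof of Theorem~\ref{MainThm} itself.
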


\begin{proof}
 By Corollary~\ref{MainEssSurj}, $\na$ is essentially surjective; by Proposition~\ref{MainLocEss} and Proposition~\ref{MainLocFF}, $\na$ is locally an equivalence. The claim follows by \cite[Theorem~2.25]{SP}.
\end{proof}

\section{Classifying cyclic module categories}\label{s9}

We now turn to one of the main applications, if not the main application, of our results.

\begin{definition}[{\cite[Definition~2.19]{MMMTZ1}}]\label{CyclicCauchyDefinition}
 A Cauchy complete $\csym{C}$-module category is said to be {\it cyclic} if there is an object $X \in \mathbf{M}$ such that every $Y \in \mathbf{M}$ is a direct summand of a finite direct sum of objects of the form $\mathbf{M}\mathrm{F}X$. Equivalently, the Cauchy completion $(\mathbf{M}\star X)^{c}$ is equivalent to $\mathbf{M}$. We say that $X$ is a {\it cyclic generator for $\mathbf{M}$.}
\end{definition}

\begin{definition}
 Given monoid objects $\mathrm{A,B}$ in a tame monoidal category $\csym{C}$, we say that $\mathrm{A,B}$ are {\it Morita equivalent} if and only if there are bimodule objects $\prescript{}{\mathrm{A}}{\mathrm{M}}_{\mathrm{B}}$ and $\prescript{}{\mathrm{B}}{\mathrm{N}}_{\mathrm{A}}$ such that $\prescript{}{\mathrm{B}}{\mathrm{M \otimes_{B} N}}_{\mathrm{B}} \simeq \prescript{}{\mathrm{B}}{\mathrm{B}}_{\mathrm{B}}$ and $\prescript{}{\mathrm{A}}{\mathrm{N \otimes_{A} M}}_{\mathrm{A}} \simeq \prescript{}{\mathrm{A}}{\mathrm{A}}_{\mathrm{A}}$
\end{definition}

\begin{theorem}\label{MainConsequence}
 Let $\mathbf{M,N}$ be cyclic $\csym{C}$-module categories. Choose a cyclic generator $X$ for $\mathbf{M}$ and a cyclic generator $Y$ for $\mathbf{N}$. The module categories $\mathbf{M,N}$ are equivalent if and only if the monoids $[X,X], [Y,Y] \in \csym{C}\!\on{-Tamb}(\csym{C},\csym{C})$ are Morita equivalent.
\end{theorem}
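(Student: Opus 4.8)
The plan is to chain the biequivalence $\na$ of Theorem~\ref{MainThm} with the Cauchy-completion trick of Corollary~\ref{CauchyTrick}, so that the statement collapses to the tautology that Morita equivalence of two monoids is exactly equivalence of the corresponding objects in the bicategory of bimodules over them.

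First I would record that, since $X$ and $Y$ are cyclic generators in the sense of Definition~\ref{CyclicCauchyDefinition}, there are equivalences $\mathbf{M}\simeq (\mathbf{M}\star X)^{\mathsf{c}}$ and $\mathbf{N}\simeq (\mathbf{N}\star Y)^{\mathsf{c}}$ of $\csym{C}$-module categories, and that $\mathbf{M}\star X$ and $\mathbf{N}\star Y$ are very cyclic, with very cyclic generators $X$ and $Y$ respectively, by Definition~\ref{CStarX}. Applying Corollary~\ref{CauchyTrick} to the pair $\mathbf{M}\star X$, $\mathbf{N}\star Y$, we obtain that $\mathbf{M}\star X \simeq \mathbf{N}\star Y$ in $\csym{C}\!\on{-Tamb}$ if and only if $(\mathbf{M}\star X)^{\mathsf{c}} \simeq (\mathbf{N}\star Y)^{\mathsf{c}}$ in $\csym{C}\!\on{-Mod}$, that is, if and only if $\mathbf{M}\simeq\mathbf{N}$ in $\csym{C}\!\on{-Mod}$. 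Since $\csym{C}\!\on{-Tamb}_{c}$ is by definition a $1,2$-full subbicategory of $\csym{C}\!\on{-Tamb}$ whose objects include $\mathbf{M}\star X$ and $\mathbf{N}\star Y$, the equivalence $\mathbf{M}\star X \simeq \mathbf{N}\star Y$ holds in $\csym{C}\!\on{-Tamb}$ if and only if it holds in $\csym{C}\!\on{-Tamb}_{c}$.

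Next I would apply Theorem~\ref{MainThm}: the pseudofunctor $\na: \csym{C}\!\on{-Tamb}_{c}\rightarrow \on{Bimod}(\tam)$ is a biequivalence, hence admits a pseudo-inverse and therefore both preserves and reflects equivalences of objects. Thus $\mathbf{M}\star X \simeq \mathbf{N}\star Y$ in $\csym{C}\!\on{-Tamb}_{c}$ if and only if $\na(\mathbf{M}\star X) \simeq \na(\mathbf{N}\star Y)$ in $\on{Bimod}(\tam)$. By the independence of $\na$ on the choice of very cyclic generators established in Section~\ref{s51}, the monoid $\na(\mathbf{M}\star X)$ is equivalent, as an object of $\on{Bimod}(\tam)$, to $[X,X]$ formed using the generator $X\in\mathbf{M}\star X$; and this last monoid coincides with $[X,X]\in\tam$ formed inside $\mathbf{M}$, because $\csym{C}\ostar X$ is a full subcategory of both $\mathbf{M}\star X$ and $\mathbf{M}$, with $\csym{C}$-module structure inherited from either, so that the underlying profunctor $\Hom{\mathbf{M}\mathrm{F}X,\mathbf{M}\mathrm{G}X}$, its Tambara structure of Equation~\eqref{TambaraOnRestriction}, and the monoid structure of Proposition~\ref{EndMonoid} are all computed identically. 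The same applies to $Y$, giving $\na(\mathbf{N}\star Y)\simeq[Y,Y]$.

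Finally, $\tam$ is tame by Proposition~\ref{Tameness}, so $\on{Bimod}(\tam)$ is defined, and unwinding the definition of Morita equivalence shows that two monoid objects in a tame monoidal category are Morita equivalent precisely when they are equivalent as objects of the bicategory of bimodules over them; hence $[X,X]$ and $[Y,Y]$ are Morita equivalent if and only if $[X,X]\simeq[Y,Y]$ in $\on{Bimod}(\tam)$. Concatenating the chain of equivalences established above yields the theorem. I expect the only point requiring genuine care to be the bookkeeping of the second and third paragraphs---passing between equivalences in $\csym{C}\!\on{-Tamb}$ and in $\csym{C}\!\on{-Tamb}_{c}$, and verifying that $[X,X]$ is insensitive to whether it is formed in $\mathbf{M}$ or in $\mathbf{M}\star X$---both of which are immediate once the relevant full-subcategory inclusions are unwound; the remainder is a formal consequence of results already in place.
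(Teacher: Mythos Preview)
Your proposal is correct and follows essentially the same approach as the paper: both arguments chain the identification $\mathbf{M}\simeq(\mathbf{M}\star X)^{\mathsf{c}}$ from Definition~\ref{CyclicCauchyDefinition}, Corollary~\ref{CauchyTrick}, and the biequivalence $\na$ of Theorem~\ref{MainThm} to translate equivalence of $\mathbf{M}$ and $\mathbf{N}$ into equivalence of $[X,X]$ and $[Y,Y]$ in $\on{Bimod}(\tam)$. If anything, you are slightly more explicit than the paper about two points it leaves tacit---that equivalences in $\csym{C}\!\on{-Tamb}$ between very cyclic objects lie in the full subbicategory $\csym{C}\!\on{-Tamb}_{c}$, and that $[X,X]$ computed in $\mathbf{M}\star X$ agrees with $[X,X]$ computed in $\mathbf{M}$---but these are exactly the routine bookkeeping observations you flag.
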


\begin{proof}
 Assume $\mathbf{M,N}$ are equivalent. Then, since $\mathbf{M} \simeq (\mathbf{M}\star X)^{c}$ and $\mathbf{N} \simeq (\mathbf{N}\star Y)^{c}$, we find an equivalence $(\mathbf{M}\star X)^{c} \simeq (\mathbf{M}\star Y)^{c}$, which, by Corollary~\ref{CauchyTrick}, yields an equivalence $\mathbf{M}\star X \simeq \mathbf{N}\star Y$ in $\csym{C}\!\on{-Tamb}$. Thus we find an equivalence $[X,X] = \na(\mathbf{M}\star X) \simeq \na(\mathbf{N}\star Y) = [Y,Y]$ in $\on{Bimod}(\tam)$, i.e. a Morita equivalence between $[X,X]$ and $[Y,Y]$.

 Conversely, assume that $[X,X]$ and $[Y,Y]$ are Morita equivalent. Then, since $\na$ is a biequivalence, there is an equivalence $\mathbf{M}\star X \simeq \mathbf{N}\star X$ in $\csym{C}\!\on{-Tamb}$. Again applying Corollary~\ref{CauchyTrick}, we find an equivalence $(\mathbf{M}\star X)^{c} \simeq (\mathbf{N}\star Y)^{c}$ in $\csym{C}\!\on{-Mod}$. It follows that we also have $\mathbf{M} \simeq \mathbf{N}$ in $\csym{C}\!\on{-Mod}$.
\end{proof}

\section{Ostrik algebras and MMMTZ coalgebras}\label{s10}

\subsection{The Cayley functor and its variants}\label{s101}

In the case where $\csym{C}$ is rigid or right-closed, the Cayley functor defined in \cite[Section~4]{PS} will be of importance to our study of $\csym{C}$-module categories. We show that there are multiple ways to realize the Cayley functor, which allow us to define it also in the case when $\csym{C}$ is not rigid or closed, and to define a covariant variant thereof.

Recall that the category $[\csym{C}^{\on{opp}},\mathbf{Vec}_{\Bbbk}]$ of presheaves on the monoidal category $\csym{C}$ admits a canonical monoidal structure via so-called {\it Day convolution}, first defined in \cite{Day}. We denote the Day convolution tensor product by $\circledast$. By definition, given presheaves $\euler{P,Q} \in [\csym{C}^{\on{opp}},\mathbf{Vec}_{\Bbbk}]$, we have
\[
 (\euler{P} \circledast \euler{Q})(\mathrm{F}) = \int^{\mathrm{H,K}} \csym{C}(\mathrm{F}, \mathrm{H} \cotimes \mathrm{K}) \kotimes \euler{P}(\mathrm{H}) \kotimes \euler{Q}(\mathrm{K}).
\]

First, observe that the category $\tam$ is cocomplete: \cite[Section~5]{PS} shows that it can be realized as the Eilenberg-Moore category for a monad on $[\csym{C} \kotimes \csym{C}^{\on{opp}}, \mathbf{Vec}_{\Bbbk}]$, and we give an explicit construction of coequalizers in Proposition~\ref{Tameness}.

Recall that we have a monoidal equivalence $\csym{C}^{\otimes\!\on{opp}}\xiso \csym{C}\!\on{-Mod}(\csym{C},\csym{C})$, sending an object $\mathrm{F}$ to the module category endomorphism given by the functor $- \cotimes \mathrm{F}$. Postcomposing with the embedding of $\csym{C}\!\on{-Mod}(\csym{C},\csym{C})^{\otimes\!\on{opp},\on{opp}}$ in $\csym{C}\!\on{-Tamb}(\csym{C},\csym{C})$ obtained in Proposition~\ref{ContravariantProArrow}, we get a strong monoidal functor $\csym{C}^{\on{opp}} \rightarrow \csym{C}\!\on{-Tamb}(\csym{C},\csym{C})$, sending $\mathrm{H} \in \csym{C}$ to the Tambara module defined by
\[
 (\mathrm{F,G}) \mapsto \Hom{\mathrm{F} \cotimes \mathrm{H}, \mathrm{G}}.
\]

Using the universal property of Day convolution described in \cite[Theorem~5.1]{IK}, we may extend this to a strong monoidal functor
\begin{equation}
\begin{aligned}
 \mathbb{L}: [\csym{C},\mathbf{Vec}_{\Bbbk}] &\rightarrow \tam \\
 \euler{P} &\mapsto \Big((\mathrm{F,G}) \mapsto \int^{\mathrm{H}} \Hom{\mathrm{F} \cotimes \mathrm{H},\mathrm{G}} \otimes \euler{P}(\mathrm{H})\Big)
\end{aligned}
\end{equation}

The Tambara structure
\[
 \int^{\mathrm{H}} \Hom{\mathrm{F} \cotimes \mathrm{H},\mathrm{G}} \kotimes \euler{P}(\mathrm{H}) \rightarrow \int^{\mathrm{H}} \Hom{\mathrm{K}\mathrm{F} \cotimes \mathrm{H},\mathrm{K}\mathrm{G}} \kotimes \euler{P}(\mathrm{H})
\]
is given by the extranatural family $\setj{\ta^{\mathbb{L}(\euler{P})}_{\mathrm{K;\mathrm{F,G}}}: \Hom{\mathrm{F} \cotimes \mathrm{H},\mathrm{G}} \kotimes \euler{P}(\mathrm{H}) \rightarrow \Hom{\mathrm{K}\mathrm{F} \cotimes \mathrm{H},\mathrm{K}\mathrm{G}} \kotimes \euler{P}(\mathrm{H})}$ where
\begin{equation}\label{TambaraCayley}
\ta^{\mathbb{L}(\euler{P})}_{\mathrm{K;\mathrm{F,G}}} = (\mathrm{K} \cotimes -)_{\mathrm{F} \cotimes \mathrm{H},\mathrm{G}} \kotimes \euler{P}(\mathrm{H}).
\end{equation}

If $\csym{C}$ is left-closed, then the Cayley functor of \cite[Section~4]{PS} is defined as
\begin{equation}
\begin{aligned}
 \mathbb{K}: [\csym{C},\mathbf{Vec}_{\Bbbk}] &\rightarrow \tam \\
 \euler{P} &\mapsto \Big((\mathrm{F,G}) \mapsto \euler{P}(\Homint{\mathrm{F},\mathrm{G}}_{l})\Big),
\end{aligned}
\end{equation}
where the Tambara structure on $\mathbb{K}(\euler{P})$ is given by
\begin{equation}\label{WHomTamb}
 \euler{P}(\Homint{\mathrm{F},\mathrm{G}}_{l}) \xrightarrow{\euler{P}(\Homint{\mathrm{F},\on{coev}_{\mathrm{K},\mathrm{G}})}} \euler{P}(\Homint{\mathrm{F},\Homint{\mathrm{K},\mathrm{K}\cotimes \mathrm{G}}_{l}}_{l}) \xrightarrow[\sim]{\euler{P}(\phi)} \euler{P}(\Homint{\mathrm{K} \cotimes \mathrm{F}, \mathrm{K} \cotimes \mathrm{G}}).
\end{equation}
where $\phi$ is the isomorphism resulting from the coincidence of the respective universal properties of $\Homint{\mathrm{K} \cotimes \mathrm{F}, \mathrm{K} \cotimes \mathrm{G}}$ and $\Homint{\mathrm{F},\Homint{\mathrm{K},\mathrm{K}\cotimes \mathrm{G}}_{l}}_{l}$.

The Tambara structure in~\eqref{WHomTamb} is in fact also obtained by transporting the Tambara structure in~\label{TambaraCayley} along the following profunctor isomorphism:
\begin{equation}\label{ToCayley}
 \int^{\mathrm{H}} \Hom{\mathrm{F} \cotimes \mathrm{H},\mathrm{G}} \kotimes \euler{P}(\mathrm{H}) \xiso \int^{\mathrm{H}} \Hom{\mathrm{H},\Homint{\mathrm{F},\mathrm{G}}_{l}} \kotimes \euler{P}(\mathrm{H}) \xiso \euler{P}(\Homint{\mathrm{F},\mathrm{G}}_{l}),
\end{equation}

The isomorphism~\eqref{ToCayley} is natural in $\euler{P}$, and thus provides a natural isomorphism from $\mathbb{L}$ to the Cayley functor $\mathbb{K}$ defined in \cite[Section~4]{PS}. To see that $\mathbb{L}$ and $\mathbb{K}$ are isomorphic as monoidal functors, observe that both are cocontinuous and that they restrict to isomorphic monoidal functors on $\csym{C}^{\on{opp}}$.

An alternative way to construct the generalized Cayley functor $\mathbb{L}$ is as follows: we first embed $\csym{C}^{\on{opp}}$ in $[\csym{C}, \mathbf{Vec}_{\Bbbk}]$ via the Yoneda embedding $\yo_{\ccf{C}^{\on{opp}}}$, then postcompose with the embedding of
\[
[\csym{C}, \mathbf{Vec}_{\Bbbk}]^{\otimes\!\on{opp}} \hookrightarrow [\csym{C}, \mathbf{Vec}_{\Bbbk}]\!\on{-Mod}([\csym{C}, \mathbf{Vec}_{\Bbbk}],[\csym{C}, \mathbf{Vec}_{\Bbbk}])
\]
and further postcompose with the embedding of the latter into $[\csym{C}, \mathbf{Vec}_{\Bbbk}]\!\on{-Tamb}([\csym{C}, \mathbf{Vec}_{\Bbbk}],[\csym{C}, \mathbf{Vec}_{\Bbbk}])$, as given in Definition~\ref{PsPDef}.
Restricting to $\csym{C}^{\on{opp}}\!\on{-Tamb}(\csym{C}^{\on{opp}},\csym{C}^{\on{opp}})$
along $\yo_{\ccf{C}^{\on{opp}}}$, via the functor $\euler{Res}_{\yo_{\cccsym{C}^{\on{opp}}}} \circ (\yo_{\ccf{C}^{\on{opp}}})_{[\ccf{C},
\mathbf{Vec}_{\Bbbk}],[\ccf{C}, \mathbf{Vec}_{\Bbbk}]}^{*}$ defined in Section~\ref{s6}, we obtain a strong monoidal functor from $\csym{C}^{\otimes\!\on{opp},\on{opp}}$ to $\csym{C}^{\on{opp}}\!\on{-Tamb}(\csym{C}^{\on{opp}},\csym{C}^{\on{opp}})$.

The equality $((\csym{C})^{\on{opp}})^{\on{opp}} = \csym{C}$ gives a canonical equivalence $\csym{C}^{\on{opp}}\!\on{-Tamb}(\csym{C}^{\on{opp}},\csym{C}^{\on{opp}}) \simeq \csym{C}\!\on{-Tamb}(\csym{C},\csym{C})^{\otimes\!\on{opp}}$. The resulting strong monoidal functor from $\csym{C}^{\on{opp}}$ to $\csym{C}\!\on{-Tamb}(\csym{C},\csym{C})$ sends $\mathrm{H} \in \csym{C}$ to the Tambara module given by
\[
  (\mathrm{G,F}) \mapsto [\csym{C}, \mathbf{Vec}_{\Bbbk}](\Hom{\mathrm{F},-} , \Hom{\mathrm{G},-}\circledast \Hom{\mathrm{H},-}) \simeq \csym{C}^{\on{opp}}(\mathrm{F},\mathrm{G} \cotimes \mathrm{H}) = \csym{C}(\mathrm{G} \cotimes \mathrm{H},\mathrm{F}),
\]
 with the evident Tambara structure; the obtained functor
 coincides with the functor $\mathbb{L}$ defined above.

 Similarly, we may extend the composite monoidal functor
 \[
 \begin{aligned}
  \csym{C}^{\otimes\!\on{opp}} &\xiso \csym{C}\!\on{-Mod}(\csym{C},\csym{C}) \hookrightarrow \tam \\
  &
  \mathrm{H} \longmapsto \big((\mathrm{F,G}) \longmapsto \csym{C}(\mathrm{F}, \mathrm{G} \cotimes \mathrm{H})\big)
 \end{aligned}
 \]
 to a strong monoidal functor
 \begin{equation}\label{WIntroduced}
 \begin{aligned}
  \mathbb{W}: [\csym{C}^{\on{opp}}, \mathbf{Vec}_{\Bbbk}]^{\otimes\!\on{opp}} &\rightarrow \tam \\
  \euler{P} &\mapsto \Big( (\mathrm{F,G}) \mapsto \int^{\mathrm{H}} \csym{C}(\mathrm{F}, \mathrm{G} \cotimes \mathrm{H}) \kotimes \euler{P}(\mathrm{H}) \Big)
 \end{aligned}
 \end{equation}
 where the Tambara structure on $\mathbb{W}(\euler{P})$ is defined analogously to that given in Equation~\eqref{TambaraCayley}.

 Just like $\mathbb{L}$, $\mathbb{W}$ also admits an alternative, equivalent construction via the embedding of $[\csym{C}^{\on{opp}}, \mathbf{Vec}_{\Bbbk}]^{\otimes\!\on{opp}}$ in $[\csym{C}^{\on{opp}}, \mathbf{Vec}_{\Bbbk}]\!\on{-Tamb}([\csym{C}^{\on{opp}}, \mathbf{Vec}_{\Bbbk}],[\csym{C}^{\on{opp}}, \mathbf{Vec}_{\Bbbk}])$.

 If $\csym{C}$ has right duals, then, using the isomorphism $\csym{C}(\mathrm{G}^{\vee} \cotimes \mathrm{F}, \mathrm{H}) \simeq \csym{C}(\mathrm{F},\mathrm{G} \cotimes \mathrm{H})$, we find isomorphisms
 \begin{equation}\label{FutureReminder}
  \mathbb{W}(\euler{P}) = \int^{\mathrm{H}} \csym{C}(\mathrm{F}, \mathrm{G} \cotimes \mathrm{H}) \kotimes \euler{P}(\mathrm{H}) \simeq \int^{\mathrm{H}} \csym{C}(\mathrm{G}^{\vee} \cotimes \mathrm{F}, \mathrm{H}) \kotimes \euler{P}(\mathrm{H}) \simeq \euler{P}(\mathrm{G}^{\vee} \cotimes \mathrm{F}).
 \end{equation}
 If $\csym{C}$ is also closed, we have $\prescript{\vee}{}{\mathrm{G}} \cotimes \mathrm{F} \simeq \Homint{\prescript{\vee\vee}{}{\mathrm{G}},\mathrm{F}}_{l}$, and if $\csym{C}$ is pivotal, we find a further isomorphism to $\Homint{\mathrm{G},\mathrm{F}}_{l}$. In that case, transporting the Tambara structure to the profunctor sending $(\mathrm{F,G})$ to $\euler{P}(\Homint{\mathrm{G},\mathrm{F}}_{l})$ yields a Tambara structure analogous to that given in Equation~\eqref{WHomTamb}. Observe that this simplification of $\mathbb{W}$ in the pivotal case is due to $\Homint{\mathrm{G,-}}_{l}$ giving a left adjoint to $\mathrm{G} \cotimes -$, i.e. due to the closed structure being also the {\it coclosed} structure. In the non-pivotal case it is not clear to the author how one could define a Tambara structure on the profunctor sending $(\mathrm{F,G})$ to $\csym{C}(\Homint{\mathrm{G},\mathrm{F}}_{l},\mathrm{H})$.

 Finally, note that if $\csym{C}$ has right duals, we have
 $
  \csym{C}(\mathrm{F}, \mathrm{G} \cotimes \mathrm{H}) \simeq \csym{C}(\mathrm{F} \cotimes \mathrm{H}^{\vee}, \mathrm{G})
 $,
 and so $\mathbb{W} \simeq \mathbb{L} \circ (-)^{\vee}$.

The following result is crucial for us:
\begin{theorem}[{\cite[Proposition~4.2]{PS}}]
 If $\csym{C}$ is rigid, then the Cayley functor
 $
  \mathbb{K}: [\csym{C},\mathbf{Vec}_{\Bbbk}] \rightarrow \tam
 $
 is a monoidal equivalence. Further, we have $\mathbb{K}^{-1}(\mathtt{\Psi}) \simeq \mathtt{\Psi}(\mathbb{1},-)$.
\end{theorem}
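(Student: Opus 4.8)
The plan is to reduce everything to the functor $\mathbb{L}$ constructed above and to exhibit an explicit quasi-inverse. Since $\csym{C}$ is rigid it is in particular left-closed, so by \eqref{ToCayley} and the ensuing remarks $\mathbb{L}$ and $\mathbb{K}$ are isomorphic as monoidal functors; as a quasi-inverse of a strong monoidal equivalence is canonically monoidal, it suffices to prove that $\mathbb{L}\colon [\csym{C},\mathbf{Vec}_{\Bbbk}]\to\tam$ is an equivalence and to identify a quasi-inverse with the evaluation-at-$\mathbb{1}$ functor $\mathbb{R}\colon\tam\to[\csym{C},\mathbf{Vec}_{\Bbbk}]$, $\mathtt{\Psi}\mapsto\mathtt{\Psi}(\mathbb{1},-)$ (this is well defined and functorial, a Tambara morphism $\mathtt{t}$ restricting to the family $\mathtt{t}_{\mathbb{1},-}$). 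The formula $\mathbb{K}^{-1}(\mathtt{\Psi})\simeq\mathtt{\Psi}(\mathbb{1},-)$ then drops out. I would first record the adjunction $\mathbb{L}\dashv\mathbb{R}$: the functor $\mathbb{L}$ is cocontinuous by construction (being the Day-convolution-cocontinuous extension of a functor on $\csym{C}^{\on{opp}}$), it sends the representable presheaf $\csym{C}(\mathrm{H},-)$ to the free Tambara module $\mathtt{\Theta}_{\mathrm{H},\mathbb{1}}$ of Corollary~\ref{FreeTambara} (whose underlying profunctor is $(\mathrm{F},\mathrm{G})\mapsto\Hom{\mathrm{F}\cotimes\mathrm{H},\mathrm{G}}$, by co-Yoneda), and Corollary~\ref{FreeTambara} together with the ordinary Yoneda lemma identifies $\tam(\mathbb{L}\csym{C}(\mathrm{H},-),\mathtt{\Psi})\simeq\mathtt{\Psi}(\mathbb{1},\mathrm{H})\simeq[\csym{C},\mathbf{Vec}_{\Bbbk}](\csym{C}(\mathrm{H},-),\mathbb{R}\mathtt{\Psi})$ naturally; since both $\tam(\mathbb{L}-,\mathtt{\Psi})$ and $[\csym{C},\mathbf{Vec}_{\Bbbk}](-,\mathbb{R}\mathtt{\Psi})$ send colimits to limits and every presheaf is a colimit of representables, the adjunction follows. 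It then remains to check that the unit and counit of $\mathbb{L}\dashv\mathbb{R}$ are invertible.

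For the unit I compute, for $\euler{P}\in[\csym{C},\mathbf{Vec}_{\Bbbk}]$, that $(\mathbb{R}\mathbb{L}\euler{P})(\mathrm{H})=(\mathbb{L}\euler{P})(\mathbb{1},\mathrm{H})=\int^{\mathrm{K}}\Hom{\mathbb{1}\cotimes\mathrm{K},\mathrm{H}}\kotimes\euler{P}(\mathrm{K})\simeq\int^{\mathrm{K}}\Hom{\mathrm{K},\mathrm{H}}\kotimes\euler{P}(\mathrm{K})\simeq\euler{P}(\mathrm{H})$, using the left unitor and then the density formula for copresheaves; this is natural in $\mathrm{H}$ and $\euler{P}$ and is the unit, so $\mathbb{L}$ is fully faithful — and no rigidity was used here. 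Rigidity enters for the counit $\mathbb{L}\mathbb{R}\mathtt{\Psi}\to\mathtt{\Psi}$. Here $(\mathbb{L}\mathbb{R}\mathtt{\Psi})(\mathrm{F},\mathrm{G})=\int^{\mathrm{H}}\Hom{\mathrm{F}\cotimes\mathrm{H},\mathrm{G}}\kotimes\mathtt{\Psi}(\mathbb{1},\mathrm{H})$, and using the natural isomorphism $\Hom{\mathrm{F}\cotimes\mathrm{H},\mathrm{G}}\simeq\Hom{\mathrm{H},\mathrm{F}^{\vee}\cotimes\mathrm{G}}$ afforded by the right dual, the density formula gives $(\mathbb{L}\mathbb{R}\mathtt{\Psi})(\mathrm{F},\mathrm{G})\simeq\mathtt{\Psi}(\mathbb{1},\mathrm{F}^{\vee}\cotimes\mathrm{G})$. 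The remaining point — the crux of the proof — is a reconstruction statement: for any Tambara module $\mathtt{\Psi}$ over rigid $\csym{C}$ there is an isomorphism $\mathtt{\Psi}(\mathrm{F},\mathrm{G})\xiso\mathtt{\Psi}(\mathbb{1},\mathrm{F}^{\vee}\cotimes\mathrm{G})$, natural in $\mathrm{F},\mathrm{G}$ and intertwining the two Tambara structures, given from left to right by $w\mapsto\mathtt{\Psi}(\eta_{\mathrm{F},\mathrm{F}^{\vee}},\mathrm{F}^{\vee}\cotimes\mathrm{G})\big(\ta^{\mathtt{\Psi}}_{\mathrm{F}^{\vee};\mathrm{F},\mathrm{G}}(w)\big)$ and from right to left by $v\mapsto\mathtt{\Psi}\big(\mathsf{r}_{\mathrm{F}}^{-1},\,(\varepsilon_{\mathrm{F},\mathrm{F}^{\vee}}\cotimes\mathrm{G})\circ\mathsf{a}^{-1}\big)\big(\ta^{\mathtt{\Psi}}_{\mathrm{F};\mathbb{1},\mathrm{F}^{\vee}\cotimes\mathrm{G}}(v)\big)$, threaded with a left unitor. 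That these are mutually inverse follows by a diagram chase: use the multiplicativity axiom of $\ta^{\mathtt{\Psi}}$ to compose the two $\ta$'s into one $\ta^{\mathtt{\Psi}}_{\mathrm{F}\cotimes\mathrm{F}^{\vee};-,-}$, its extranaturality in the acting object to slide along the evaluation $\varepsilon_{\mathrm{F},\mathrm{F}^{\vee}}$ down to $\ta^{\mathtt{\Psi}}_{\mathbb{1};-,-}$, the unitality axiom to replace the latter by the identity up to coherence, and a zigzag identity to absorb the leftover coevaluation. Composing this with the density isomorphism above yields a natural isomorphism $\mathbb{L}\mathbb{R}\mathtt{\Psi}\xiso\mathtt{\Psi}$ of profunctors, and checking that it is the counit — equivalently, that it is a morphism of Tambara modules, the structure on $\mathbb{L}\mathbb{R}\mathtt{\Psi}$ being \eqref{TambaraCayley} — is exactly the intertwining clause of the reconstruction statement.

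Granting this, $\mathbb{L}$ is an equivalence with quasi-inverse $\mathbb{R}$, the monoidal structure of $\mathbb{L}$ transports to $\mathbb{R}\cong\mathbb{K}^{-1}$, and $\mathbb{K}^{-1}(\mathtt{\Psi})\simeq\mathtt{\Psi}(\mathbb{1},-)$ as claimed. The main obstacle is the reconstruction statement, and within it the intertwining-of-Tambara-structures clause: since the paper does not strictify, composing $\ta^{\mathtt{\Psi}}_{\mathrm{F}^{\vee};\mathrm{F},\mathrm{G}}$ with $\ta^{\mathtt{\Psi}}_{\mathrm{F};\mathbb{1},-}$ and reducing via the snake identities requires careful bookkeeping of associators and unitors, of the same flavour as (though simpler than) the coherence arguments of Section~\ref{s8}.
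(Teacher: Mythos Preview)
The paper does not supply its own proof of this statement: it is quoted verbatim from \cite[Proposition~4.2]{PS} and used as a black box, so there is no proof in the paper to compare against. Your argument is correct and is essentially the Pastro--Street proof: exhibit $\mathbb{R}(\mathtt{\Psi})=\mathtt{\Psi}(\mathbb{1},-)$ as right adjoint to $\mathbb{L}$ (equivalently $\mathbb{K}$), observe that the unit is invertible by the density formula, and use the adjunction $\mathrm{F}\cotimes - \dashv \mathrm{F}^{\vee}\cotimes -$ together with the Tambara structure to produce the reconstruction isomorphism $\mathtt{\Psi}(\mathrm{F},\mathrm{G})\simeq\mathtt{\Psi}(\mathbb{1},\mathrm{F}^{\vee}\cotimes\mathrm{G})$, which makes the counit invertible. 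One minor remark: your worry about coherence bookkeeping is somewhat overstated here --- the snake-identity manipulation involved is a single standard triangle, not of the same order of difficulty as the arguments in Section~\ref{s8}.
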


\subsection{Ostrik algebras and their variants}\label{s102}

Recall that, for the purposes of studying module categories, our main theorem, Theorem~\ref{MainThm}, can be interpreted as associating monoid objects in a suitable monoidal category to module categories, in such a way that Morita equivalence of said monoid objects corresponds to equivalence of module categories. An important result of this kind is due to Ostrik, appearing first in \cite{Os}. The following is a brief outline:

\begin{itemize}
 \item Let $X,Y,Z \in \mathbf{M}$. Consider the functor $\Hom{-X,Y}: \csym{C} \rightarrow \mathbf{Vec}_{\Bbbk}$. In the setting of \cite{Os}, it is (ind-)representable. Write $\setj{X,Y}$ for the representing object. (This notation is motivated by the observation that, as observed in \cite[Proposition~2.15]{DSPS}, $\setj{X,Y}$ gives the cotensor for an enrichment.)
 It is known as the {\it internal hom} from $X$ to $Y$ with respect to $\mathbf{M}$.
 \item The isomorphism $\Hom{\mathbf{M}\mathrm{F}X, Y} \simeq \Hom{\mathrm{F}, \setj{X,Y}}$ can be shown to be natural in $Y$, thus realizing the functor $\setj{X,-}: \mathbf{M} \rightarrow \csym{C}$ as right adjoint to $\mathbf{M}(-)X$.
 \item Denoting the counit of the above adjunction by $\on{ev}_{X}$, and passing under the above hom-bijection the composite morphism
 \begin{equation}\label{OstrikAlgMult}
  \mathbf{M}\big(\!\setj{Y,Z} \cotimes \setj{X,Y}\!\big) X \xrightarrow{(\mathbf{m}_{\setj{Y,Z},\setj{X,Y}}^{-1})_{X}} \mathbf{M}\!\setj{Y,Z}\!\mathbf{M}\!\setj{X,Y}) X \xrightarrow{\mathbf{M}\!\setj{Y,Z}\on{ev}_{X,Y}} \mathbf{M}\!\setj{Y,Z}Y \xrightarrow{\on{ev}_{Y,Z}} Z
 \end{equation}
 yields a composition for the internal hom bifunctor. It can be shown that this composition is associative, which endows $\setj{X,X}$ with the structure of a monoid object. The unit morphism is given by passing the morphism $\on{id}_{X}$ under the bijection $\Hom{X,X} \simeq \Hom{\mathbb{1}, \setj{X,X}}$.
\end{itemize}

We now show that the composition maps for internal homs can be constructed on the level of copresheaf categories, using Day convolution.
\begin{definition}\label{AbstractMonoid}
 We define the natural transformation $\euler{c}_{X,Y,Z}: \Hom{-Y,Z} \circledast \Hom{-X,Y} \rightarrow \Hom{-X,Z}$ by letting its component $(\euler{c}_{X,Y,Z})_{\mathrm{F}}$ be given by the map
 \[
 (\Hom{-Y,Z} \circledast \Hom{-X,Y})(\mathrm{F}) = \int^{\mathrm{K,L}} \Hom{\mathbf{M}\mathrm{K}Y,Z} \kotimes \Hom{\mathbf{M}\mathrm{L}X,Y} \kotimes \csym{C}(\mathrm{F},\mathrm{K}\cotimes \mathrm{L}) \rightarrow \Hom{\mathbf{M}\mathrm{F}X,Z}
 \]
 induced by the extranatural collection of maps $\setj{(\euler{c}_{X,Y,Z})_{\mathrm{F}})_{\mathrm{K,L}} \; | \; \mathrm{K,L} \in \csym{C}}$ defined as
 \[\begin{tikzcd}[ampersand replacement=\&]
	{  \Hom{\mathbf{M}\mathrm{K}Y,Z} \kotimes \Hom{\mathbf{M}\mathrm{L}X,Y} \kotimes \csym{C}(\mathrm{F},\mathrm{K}\cotimes \mathrm{L})} \& {  \Hom{\mathbf{M}\mathrm{K}Y,Z} \kotimes \Hom{\mathbf{M}\mathrm{K}\mathbf{M}\mathrm{L}X,\mathbf{M}\mathrm{K}Y} \kotimes \csym{C}(\mathrm{F},\mathrm{K}\cotimes \mathrm{L})} \\
	{\Hom{\mathbf{M}\mathrm{K}Y,Z} \kotimes \Hom{\mathbf{M}\mathrm{KL}X,\mathbf{M}\mathrm{K}Y}\kotimes \Hom{\mathbf{M}\mathrm{F}X,\mathbf{M}\mathrm{KL}X}} \& {\Hom{\mathbf{M}\mathrm{K}Y,Z} \kotimes \Hom{\mathbf{M}\mathrm{K}\mathrm{L}X,\mathbf{M}\mathrm{K}Y} \kotimes \csym{C}(\mathrm{F},\mathrm{K}\cotimes \mathrm{L})} \\
	{\Hom{\mathbf{M}\mathrm{F}X,Z}}
	\arrow["{\Hom{\mathbf{M}\mathrm{K}Y,Z} \otimes \mathbf{M}\mathrm{K}_{\mathbf{M}\mathrm{L}X,Y}\otimes \ccf{C}(\mathrm{F},\mathrm{K}\otimes \mathrm{L})}", shift left=1, from=1-1, to=1-2]
	\arrow["{  \Hom{\mathbf{M}\mathrm{K}Y,Z} \otimes \Hom{\mathbf{m}_{\mathrm{K,L},X}^{-1},\mathbf{M}\mathrm{K}Y} \otimes \ccf{C}(\mathrm{F},\mathrm{K}\otimes \mathrm{L})}"', from=1-2, to=2-2]
	\arrow["{\Hom{\mathbf{M}\mathrm{K}Y,Z} \otimes \Hom{\mathbf{M}\mathrm{K}\mathrm{L}X,\mathbf{M}\mathrm{K}Y} \otimes (\mathbf{M}_{\mathrm{F,KL}})_{X}}", shift left=1, from=2-2, to=2-1]
	\arrow["{(-\circ - \circ-)}"', from=2-1, to=3-1]
\end{tikzcd}\]
 Thus, on elements we have $b \otimes c \otimes \mathrm{f} \mapsto b\circ \mathbf{M}\mathrm{K}(c) \circ \mathbf{m}_{\mathrm{K,L},X}^{-1} \circ (\mathbf{M}\mathrm{f})_{X}$. Extranaturality follows from naturality of $\mathbf{M}\mathrm{g}$, for $\mathrm{g}$ a morphism of $\csym{C}$.
 Given $\mathrm{h}: \mathrm{F} \rightarrow \mathrm{F}'$, we have
 \[
  \resizebox{.99\hsize}{!}{$
 {\begin{aligned}
&\Big(
((\euler{c}_{X,Y,Z})_{\mathrm{F'}})_{\mathrm{K,L}}\circ (\Hom{-Y,Z} \circledast \Hom{-X,Y})(\mathrm{h})_{\mathrm{K,L}}\Big)(b \otimes c \otimes \mathrm{f}) \\
&=(\euler{c}_{X,Y,Z})_{\mathrm{F}})_{\mathrm{K,L}}(b \otimes c \otimes (\mathrm{f} \circ \mathrm{h})) = b\circ \mathbf{M}\mathrm{K}(c) \circ \mathbf{m}_{\mathrm{K,L},X}^{-1} \circ (\mathbf{M}(\mathrm{f}\circ \mathrm{h}))_{X} = b\circ \mathbf{M}\mathrm{K}(c) \circ \mathbf{m}_{\mathrm{K,L},X}^{-1} \circ (\mathbf{M}(\mathrm{f}))_{X} \circ (\mathbf{M}(\mathrm{h}))_{X} \\
&= (\euler{c}_{X,Y,Z})_{\mathrm{F'}})_{\mathrm{K,L}}(b \otimes c \otimes \mathrm{f}) \circ (\mathbf{M}\mathrm{h})_{X} =  \Hom{-X,Z}(\mathrm{h})\circ (\euler{c}_{X,Y,Z})_{\mathrm{F'}})_{\mathrm{K,L}}(b \otimes c \otimes \mathrm{f}),
 \end{aligned}}$}
 \]
 showing naturality in $\mathrm{F}$.
\end{definition}

\begin{proposition}\label{AbstractAssoc}
 The diagram
 \begin{equation}\label{OstrikAssoc}
 \begin{tikzcd}[ampersand replacement=\&,column sep=small]
	{(\Hom{-Z,W} \circledast \Hom{-Y,Z}) \circledast \Hom{-X,Y}} \& {\Hom{-Z,W} \circledast (\Hom{-Y,Z} \circledast \Hom{-X,Y})} \& {\Hom{-Z,W}\circledast \Hom{-X,Z}} \\
	{\Hom{-Y,W} \circledast \Hom{-X,Y}} \&\& {\Hom{-X,W}}
	\arrow["\simeq", from=1-1, to=1-2]
	\arrow["{\euler{c}_{Y,Z,W}\circledast \Hom{-X,Y}}"', from=1-1, to=2-1]
	\arrow["{\Hom{-Z,W} \circledast \euler{c}_{X,Y,Z}}", shift left=1, from=1-2, to=1-3]
	\arrow["{\euler{c}_{X,Y,W}}", from=2-1, to=2-3]
	\arrow["{\euler{c}_{X,Z,W}}"', from=1-3, to=2-3]
\end{tikzcd}
 \end{equation}
commutes.
\end{proposition}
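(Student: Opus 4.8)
The plan is to prove commutativity of Diagram~\eqref{OstrikAssoc} by reducing it, via the universal property of Day convolution, to an identity between two extranatural collections of maps, and then verifying that identity elementwise, where it follows from the associativity of composition in $\mathbf{M}$ together with the coherence axioms relating $\mathbf{m}$ to the associator of $\csym{C}$.

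\textbf{Step 1: Reduce to an elementwise identity.} Both composite natural transformations in the diagram have source $(\Hom{-Z,W} \circledast \Hom{-Y,Z}) \circledast \Hom{-X,Y}$. By iterated application of the defining coend formula for $\circledast$, the component at $\mathrm{F} \in \csym{C}$ of this source is a coend of the form $\int^{\mathrm{J,K,L}} \Hom{\mathbf{M}\mathrm{J}Z,W} \kotimes \Hom{\mathbf{M}\mathrm{K}Y,Z} \kotimes \Hom{\mathbf{M}\mathrm{L}X,Y} \kotimes \csym{C}(\mathrm{F},(\mathrm{J}\cotimes\mathrm{K})\cotimes\mathrm{L})$, and a map out of it is the same as an extranatural collection indexed by $\mathrm{J,K,L}$. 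So it suffices to check that the two paths agree on a representative element $a \otimes b \otimes c \otimes \mathrm{f}$, with $a \in \Hom{\mathbf{M}\mathrm{J}Z,W}$, $b \in \Hom{\mathbf{M}\mathrm{K}Y,Z}$, $c \in \Hom{\mathbf{M}\mathrm{L}X,Y}$ and $\mathrm{f}\colon \mathrm{F} \rightarrow (\mathrm{JK})\mathrm{L}$. I would use here that the associativity isomorphism $(\Hom{-Z,W} \circledast \Hom{-Y,Z}) \circledast \Hom{-X,Y} \xiso \Hom{-Z,W} \circledast (\Hom{-Y,Z} \circledast \Hom{-X,Y})$ of Day convolution sends the class of $a \otimes b \otimes c \otimes \mathrm{f}$ to the class of $a \otimes b \otimes c \otimes (\euler{a}_{\mathrm{J,K,L}} \circ \mathrm{f})$, i.e.\ it only re-associates the target of $\mathrm{f}$ via the associator $\euler{a}^{\ccf{C}}$ of $\csym{C}$.

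\textbf{Step 2: Compute both paths.} Chasing $a \otimes b \otimes c \otimes \mathrm{f}$ down the left side, $\euler{c}_{Y,Z,W}\circledast \Hom{-X,Y}$ first applies $(\euler{c}_{Y,Z,W})$ to $a \otimes b$, producing (by the element-level formula of Definition~\ref{AbstractMonoid}) the map $a \circ \mathbf{M}\mathrm{J}(b) \circ \mathbf{m}_{\mathrm{J,K},Z}^{-1} \in \Hom{\mathbf{M}(\mathrm{JK})Z, W}$ tensored with $c$ and with $\mathrm{f}$; then $\euler{c}_{X,Y,W}$ postcomposes with $\mathbf{M}(\mathrm{JK})(c) \circ \mathbf{m}_{\mathrm{JK},\mathrm{L},X}^{-1} \circ (\mathbf{M}\mathrm{f})_X$. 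Chasing down the right side, we first re-associate $\mathrm{f}$ to $\euler{a}_{\mathrm{J,K,L}}\circ\mathrm{f}\colon \mathrm{F}\rightarrow \mathrm{J}(\mathrm{KL})$, then $\Hom{-Z,W}\circledast \euler{c}_{X,Y,Z}$ replaces $b \otimes c$ with $b \circ \mathbf{M}\mathrm{K}(c) \circ \mathbf{m}_{\mathrm{K,L},X}^{-1}$, and finally $\euler{c}_{X,Z,W}$ postcomposes $a$ with $\mathbf{M}\mathrm{J}(-) \circ \mathbf{m}_{\mathrm{J},\mathrm{KL},X}^{-1} \circ (\mathbf{M}(\euler{a}_{\mathrm{J,K,L}}\circ\mathrm{f}))_X$. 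The upshot is that the two resulting endomorphism expressions in $\Hom{\mathbf{M}\mathrm{F}X,W}$ differ only in how the ``change of parenthesization'' from $(\mathrm{JK})\mathrm{L}$ to $\mathrm{J}(\mathrm{KL})$ is realized: on the left via $\mathbf{m}_{\mathrm{JK},\mathrm{L},X}^{-1}$ followed by $\mathbf{m}_{\mathrm{J,K},-}^{-1}$ whiskered appropriately, on the right via $\mathbf{M}(\euler{a}_{\mathrm{J,K,L}})_X$ followed by $\mathbf{m}_{\mathrm{J},\mathrm{KL},X}^{-1}$ and $\mathbf{M}\mathrm{J}(\mathbf{m}_{\mathrm{K,L},X}^{-1})$.

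\textbf{Step 3: Close the gap using module-category coherence.} The identification of these two expressions is exactly the pentagon-type coherence axiom for the $\csym{C}$-module category $\mathbf{M}$, i.e.\ the compatibility of the multiplicativity isomorphisms $\mathbf{m}_{-,-}$ with the associator $\euler{a}^{\ccf{C}}$ of $\csym{C}$ (see \cite[Definition~7.1.1]{EGNO}), together with naturality of $\mathbf{m}$ and functoriality of the $\mathbf{M}\mathrm{J}$. I would spell this out as a single commuting hexagon whose vertices are the various objects $\mathbf{M}\mathrm{F}X$, $\mathbf{M}((\mathrm{JK})\mathrm{L})X$, $\mathbf{M}(\mathrm{J}(\mathrm{KL}))X$, $\mathbf{M}(\mathrm{JK})\mathbf{M}\mathrm{L}X$, $\mathbf{M}\mathrm{J}\mathbf{M}(\mathrm{KL})X$, $\mathbf{M}\mathrm{J}\mathbf{M}\mathrm{K}\mathbf{M}\mathrm{L}X$, and then observe that once the endomorphism ``transport'' parts are seen to coincide, the remaining parts built from $a$, $b$, $c$, $\mathbf{M}\mathrm{J}(b)$, $\mathbf{M}\mathrm{J}(\mathbf{M}\mathrm{K}(c))$ etc.\ match up by pure functoriality and the interchange/naturality of the various $\mathbf{M}\mathrm{J}(-)$.

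The main obstacle I anticipate is purely bookkeeping: correctly tracking the several coherence cells $\mathbf{m}$ and associators $\euler{a}^{\ccf{C}}$ through the nested Day-convolution coends, and in particular getting the direction and placement of each $\mathbf{m}_{-,-}^{\pm 1}$ right when one of the arguments is itself a tensor product. Since the paper's convention (stated in the introduction to Section~\ref{s8}) is \emph{not} to strictify, this is unavoidable, but it is entirely mechanical once the correct module-coherence diagram is identified; there is no conceptual difficulty beyond recognizing that \eqref{OstrikAssoc} is the ``copresheaf-level shadow'' of the associativity of the internal-hom composition~\eqref{OstrikAlgMult}, which in turn is governed by the module axioms of $\mathbf{M}$.
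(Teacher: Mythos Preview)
Your proposal is correct and follows essentially the same strategy as the paper: reduce via the coend universal property to an elementwise identity on representatives $a\otimes b\otimes c\otimes\mathrm{f}$, compute both paths, and verify they agree. The only difference is one of explicitness: the paper simply asserts that both paths yield the common formula $c \circ \mathbf{M}\mathrm{K}(b \circ \mathbf{M}\mathrm{L}a) \circ \mathbf{m}_{\mathrm{K,L}}^{-1}\mathbf{M}\mathrm{T} \circ \mathbf{m}_{\mathrm{KL,T}}^{-1} \circ (\mathbf{M}\mathrm{f})_{X}$ without spelling out the intermediate step, whereas you correctly identify that matching the two expressions requires precisely the module-category pentagon axiom relating $\mathbf{m}$ to $\mathsf{a}^{\ccf{C}}$ (together with naturality of $\mathbf{m}$), and you also make explicit that the Day-convolution associator acts on the $\csym{C}$-component by postcomposition with $\mathsf{a}^{\ccf{C}}_{\mathrm{J,K,L}}$.
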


\begin{proof}
 First, observe that on components the associator given by Yoneda lemma takes the following form:
 \[
 \begin{aligned}
  &\big((\Hom{-Z,W} \circledast \Hom{-Y,Z}) \circledast \Hom{-X,Y}\big)(\mathrm{F}) = \int^{\mathrm{S,T}} (\Hom{-Z,W} \circledast \Hom{-Y,Z})(\mathrm{S}) \kotimes \Hom{\mathbf{M}\mathrm{T}X,Y} \kotimes \csym{C}(\mathrm{F}, \mathrm{ST}) \\
  &= \int^{\mathrm{S,T}} \Big( \int^{\mathrm{K,L}} \Hom{\mathbf{M}\mathrm{K}Z,W} \kotimes \Hom{\mathbf{M}\mathrm{L}Y,Z} \kotimes \csym{C}(\mathrm{S},\mathrm{KL})\Big) \kotimes \Hom{\mathbf{M}\mathrm{T}X,Y} \kotimes \csym{C}(\mathrm{F}, \mathrm{ST}) \\
  &\xiso \int^{\mathrm{T,K,L}} \Hom{\mathbf{M}\mathrm{K}Z,W} \kotimes \Hom{\mathbf{M}\mathrm{L}Y,Z} \kotimes \Hom{\mathbf{M}\mathrm{T}X,Y} \kotimes \csym{C}(\mathrm{F}, (\mathrm{KL})\mathrm{T}) \\
  &\xleftarrow{\sim} \int^{\mathrm{K,S}} \Hom{\mathbf{M}\mathrm{K}Z,W} \kotimes \Big( \int^{\mathrm{L,T}} \Hom{\mathbf{M}\mathrm{L}Y,Z} \kotimes \Hom{\mathbf{M}\mathrm{T}X,Y} \kotimes \csym{C}(\mathrm{S},\mathrm{LT}) \Big) \kotimes \csym{C}(\mathrm{F},\mathrm{KS}) \\
  &=\big(\Hom{-Z,W} \circledast (\Hom{-Y,Z} \circledast \Hom{-X,Y})\big)(\mathrm{F})
 \end{aligned}
 \]
 Precomposing Diagram~\eqref{OstrikAssoc} with the above isomorphism to the functor
 \[
\mathrm{F} \mapsto \int^{\mathrm{T,K,L}} \Hom{\mathbf{M}\mathrm{K}Z,W} \kotimes \Hom{\mathbf{M}\mathrm{L}Y,Z} \kotimes \Hom{\mathbf{M}\mathrm{T}X,Y} \kotimes \csym{C}(\mathrm{F}, (\mathrm{KL})\mathrm{T}),
 \]
the two maps of Diagram~\eqref{OstrikAssoc} both yield the morphism corresponding to the extranatural collection sending
\[
\begin{aligned}
 &\Hom{\mathbf{M}\mathrm{K}Z,W} \kotimes \Hom{\mathbf{M}\mathrm{L}Y,Z} \kotimes \Hom{\mathbf{M}\mathrm{T}X,Y} \kotimes \csym{C}(\mathrm{F}, (\mathrm{KL})\mathrm{T}) \rightarrow \Hom{\mathbf{M}\mathrm{F}X,W} \\
 &c \otimes b \otimes a \otimes \mathrm{f} \mapsto c \circ \mathbf{M}\mathrm{K}(b \circ \mathbf{M}\mathrm{L}a) \circ \mathbf{m}_{\mathrm{K,L}}^{-1}\mathbf{M}\mathrm{T} \circ \mathbf{m}_{\mathrm{KL,T}}^{-1} \circ (\mathbf{M}\mathrm{f})_{X}
\end{aligned}
\]
\end{proof}

\begin{definition}\label{AbstractUnit}
 We define the unit transformation $\euler{u}_{X}: \csym{C}(-,\mathbb{1}) \Rightarrow \Hom{-X,X}$, by setting $(\euler{u}_{X})_{\mathrm{F}}(\mathrm{f}) = (\mathbf{m}_{\mathbb{1}}^{-1}\circ \mathbf{M}\mathrm{f})_{X}$, for any $\mathrm{F}\in \csym{C}$ and $\mathrm{f} \in \csym{C}(\mathrm{F},\mathbb{1})$.
\end{definition}
Given $\mathrm{h} \in \csym{C}(\mathrm{F,F}')$ and $\mathrm{f} \in \csym{C}(\mathrm{F}',\mathbb{1})$, we have
 \[
  (\euler{u}_{\mathrm{F}} \circ \csym{C}(\mathrm{h},\mathbb{1}))(\mathrm{f}) = \mathbf{m}_{\mathbb{1}}^{-1} \circ \mathbf{M}(\mathrm{f} \circ \mathrm{h}) = \mathbf{m}_{\mathbb{1}}^{-1} \circ \mathbf{M}(\mathrm{f}) \circ \mathbf{M}(\mathrm{h}) = (\Hom{(\mathbf{M}\mathrm{h})_{X},X}\circ \euler{u}_{\mathrm{F}'})(\mathrm{f}),
 \]
 showing naturality of $\euler{u}_{X}$.

We also give a proof of left unitality; right unitality is analogous.
\begin{proposition}\label{AbstractUnitality}
 The diagram
 \[\begin{tikzcd}[ampersand replacement=\&,column sep=scriptsize,row sep=small]
	{\csym{C}(-,\mathbb{1})\circledast \Hom{-X,X}} \& {\Hom{-X,X} \circledast \Hom{-X,X}} \\
	\& {\Hom{-X,X}}
	\arrow["{\euler{u}\circledast \Hom{-X,X}}", shift left=1, from=1-1, to=1-2]
	\arrow["{\euler{c}_{X,X,X}}", from=1-2, to=2-2]
	\arrow["{\mathsf{l}_{\Hom{-X,X}}}"', from=1-1, to=2-2]
\end{tikzcd}\]
commutes.
\end{proposition}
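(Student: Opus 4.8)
The plan is to reduce the statement to a pointwise identity in $\mathbf{Vec}_{\Bbbk}$ and verify it by an element chase, exactly as in the proof of Proposition~\ref{AbstractAssoc}. First I would unwind the left unitor $\mathsf{l}_{\Hom{-X,X}}$ for the Day convolution monoidal structure on $[\csym{C}^{\on{opp}},\mathbf{Vec}_{\Bbbk}]$. Recall that the monoidal unit for $\circledast$ is the representable presheaf $\csym{C}(-,\mathbb{1})$, and that the component of $\mathsf{l}$ at a presheaf $\euler{P}$ is the canonical isomorphism
\[
 (\csym{C}(-,\mathbb{1})\circledast \euler{P})(\mathrm{F}) = \int^{\mathrm{K,L}} \csym{C}(\mathrm{K},\mathbb{1}) \kotimes \euler{P}(\mathrm{L}) \kotimes \csym{C}(\mathrm{F},\mathrm{K}\cotimes \mathrm{L}) \xiso \euler{P}(\mathrm{F}),
\]
obtained by two applications of the Yoneda lemma (first collapsing the $\mathrm{K}$-coend against $\csym{C}(\mathrm{K},\mathbb{1})$, identifying $\mathrm{K}$ with $\mathbb{1}$, then using $\mathsf{l}^{\ccf{C}}$ to identify $\mathbb{1}\cotimes\mathrm{L}$ with $\mathrm{L}$ and collapsing the $\mathrm{L}$-coend). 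Concretely, the class of $\mathrm{k}\otimes p \otimes \mathrm{f} \in \csym{C}(\mathrm{K},\mathbb{1})\kotimes \euler{P}(\mathrm{L})\kotimes \csym{C}(\mathrm{F},\mathrm{K}\cotimes\mathrm{L})$ is sent to $\euler{P}\!\big(\mathsf{l}^{\ccf{C}}_{\mathrm{L}} \circ (\mathrm{k}\cotimes \on{id}_{\mathrm{L}}) \circ \mathrm{f}\big)(p)$.

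Next I would precompose the whole triangle with the coprojection from the coproduct $\coprod_{\mathrm{K,L}} \csym{C}(\mathrm{K},\mathbb{1}) \kotimes \Hom{\mathbf{M}\mathrm{L}X,X} \kotimes \csym{C}(\mathrm{F},\mathrm{K}\cotimes\mathrm{L})$ onto the coend defining $(\csym{C}(-,\mathbb{1})\circledast \Hom{-X,X})(\mathrm{F})$; since both composites in the triangle are determined by their precomposition with this coprojection (the coend being a quotient of the coproduct), it suffices to chase a general element $\mathrm{k}\otimes c \otimes \mathrm{f}$, with $\mathrm{k}\colon \mathrm{K}\to\mathbb{1}$, $c\colon \mathbf{M}\mathrm{L}X \to X$ and $\mathrm{f}\colon \mathrm{F}\to\mathrm{K}\cotimes\mathrm{L}$. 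Going down-then-right along $\mathsf{l}_{\Hom{-X,X}}$ produces $c \circ \big(\mathbf{M}(\mathsf{l}^{\ccf{C}}_{\mathrm{L}} \circ (\mathrm{k}\cotimes\on{id}_{\mathrm{L}}) \circ \mathrm{f})\big)_{X}$. Going right-then-down, the map $\euler{u}\circledast \Hom{-X,X}$ replaces $\mathrm{k}$ by $(\euler{u}_X)_{\mathrm{K}}(\mathrm{k}) = (\mathbf{m}_{\mathbb{1}}^{-1}\circ \mathbf{M}\mathrm{k})_{X} \in \Hom{\mathbf{M}\mathrm{K}X, X}$, and then $(\euler{c}_{X,X,X})_{\mathrm{F}}$ applied to $(\mathbf{m}_{\mathbb{1}}^{-1}\circ\mathbf{M}\mathrm{k})_{X} \otimes c \otimes \mathrm{f}$ yields, by the elementwise formula in Definition~\ref{AbstractMonoid}, the morphism $(\mathbf{m}_{\mathbb{1}}^{-1}\circ\mathbf{M}\mathrm{k})_{X} \circ \mathbf{M}\mathrm{K}(c) \circ \mathbf{m}_{\mathrm{K,L},X}^{-1} \circ (\mathbf{M}\mathrm{f})_{X}$.

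It then remains to identify these two morphisms $\mathbf{M}\mathrm{F}X \to X$. This is the calculational core of the proof, and I expect it to be the only real obstacle: it amounts to an application of the compatibility (coherence) axioms relating $\mathbf{m}_{\mathbb{1}}$, $\mathbf{m}_{\mathrm{K,L},X}$ and $\mathsf{l}^{\ccf{C}}$ in the definition of a $\csym{C}$-module category, together with the naturality of $\mathbf{m}_{-,-}$ in its arguments (applied to $\mathrm{k}\colon\mathrm{K}\to\mathbb{1}$). Explicitly, naturality of $\mathbf{m}$ gives $\mathbf{M}\mathrm{K}(c) \circ \mathbf{m}_{\mathrm{K},\mathrm{L},X}^{-1} = \mathbf{m}_{\mathrm{K},\mathrm{L}',X'}^{-1}\circ\cdots$ style rewrites moving $\mathbf{M}\mathrm{k}$ past $\mathbf{m}$, and the unitality axiom of $\mathbf{M}$ identifies $\mathbf{m}_{\mathbb{1}}^{-1}\circ (\mathbf{M}\mathsf{l}^{\ccf{C}}_{\mathrm{L}}) $ with the composite of $\mathbf{m}_{\mathbb{1},\mathrm{L}}^{-1}$ and the canonical iso $\mathbb{1}_{\mathbf{M}}\mathbf{M}\mathrm{L} \Rightarrow \mathbf{M}\mathrm{L}$; assembling these shows both composites equal $c\circ(\mathbf{M}\mathsf{l}^{\ccf{C}}_{\mathrm{L}})_{X}\circ(\mathbf{M}(\mathrm{k}\cotimes\on{id}_{\mathrm{L}}))_{X}\circ(\mathbf{M}\mathrm{f})_{X}$. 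Since the underlying profunctor morphisms agree, the natural transformations agree, and the triangle commutes. The right-unitality statement, mentioned as analogous, is proved the same way with $\mathsf{r}^{\ccf{C}}$ and $\mathbf{m}_{\mathrm{K},\mathbb{1},X}$ in place of $\mathsf{l}^{\ccf{C}}$ and $\mathbf{m}_{\mathbb{1},\mathrm{L},X}$.
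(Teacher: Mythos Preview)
Your proposal is correct and follows essentially the same approach as the paper: both reduce to chasing a generic element $\mathrm{k}\otimes c\otimes\mathrm{f}$ through the two paths, obtaining $c\circ(\mathbf{M}\mathsf{l}^{\ccf{C}}_{\mathrm{L}})_{X}\circ(\mathbf{M}(\mathrm{k}\mathrm{L}))_{X}\circ(\mathbf{M}\mathrm{f})_{X}$ along the unitor and $(\mathbf{m}_{\mathbb{1}}^{-1})_{X}\circ(\mathbf{M}\mathrm{k})_{X}\circ\mathbf{M}\mathrm{K}(c)\circ(\mathbf{m}_{\mathrm{K,L}}^{-1})_{X}\circ(\mathbf{M}\mathrm{f})_{X}$ along the other, and then invoking naturality of $\mathbf{m}$ together with the module-category coherence relating $\mathbf{m}_{\mathbb{1}}$, $\mathbf{m}_{\mathbb{1},\mathrm{L}}$ and $\mathsf{l}^{\ccf{C}}_{\mathrm{L}}$. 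The paper just makes this last step explicit as a small commutative diagram rather than a verbal description.
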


\begin{proof}
 Recall that the left unitor $\mathsf{l}_{\Hom{-X,X}}$ is given by the composition
 \[\begin{tikzcd}[ampersand replacement=\&,column sep=scriptsize,row sep=small]
	{\int^{\mathrm{K,L}}\csym{C}(\mathrm{K},\mathbb{1}) \kotimes \Hom{\mathbf{M}\mathrm{L}X,X} \kotimes \csym{C}(\mathrm{F,KL})} \& {\int^{\mathrm{L}} \Hom{\mathbf{M}\mathrm{L}X,X} \kotimes \csym{C}(\mathrm{F},\mathbb{1}\mathrm{L})} \\
	{\Hom{\mathbf{M}\mathrm{F}X,X}} \& {\int^{\mathrm{L}} \Hom{\mathbf{M}\mathrm{L}X,X} \kotimes \csym{C}(\mathrm{F},\mathrm{L})}
	\arrow["\simeq", from=1-1, to=1-2]
	\arrow["\simeq"', from=1-2, to=2-2]
	\arrow["\simeq"', from=2-2, to=2-1]
\end{tikzcd}\]
Thus, the corresponding extranatural collection $\setj{\csym{C}(\mathrm{K},\mathbb{1}) \kotimes \Hom{\mathrm{L}X,X} \kotimes \csym{C}(\mathrm{F,KL}) \rightarrow \Hom{\mathrm{F}X,X} \; | \; \mathrm{K,L}}$ consists of maps
\[
\begin{aligned}
 \csym{C}(\mathrm{K},\mathbb{1}) \kotimes \Hom{\mathbf{M}\mathrm{L}X,X} \kotimes \csym{C}(\mathrm{F,KL}) \rightarrow \Hom{\mathrm{F}X,X} \\
 \mathrm{k} \otimes b \otimes \mathrm{f} \mapsto \mathrm{b} \circ \mathbf{M}\mathsf{l}_{\mathrm{L}}^{\ccf{C}} \circ \mathbf{M}(\mathrm{kL}) \circ \mathbf{M}\mathrm{f}.
\end{aligned}
\]
On the other hand, the corresponding map in the extranatural collection giving
$\euler{c}_{X,X,X} \circ \euler{u}\circledast \Hom{-X,X}$ sends
$\mathrm{k} \otimes b \otimes \mathrm{f}$ to
$(\mathbf{m}_{\mathbb{1}}^{-1})_{X} \circ \mathbf{M}\mathrm{k} \circ \mathbf{M}\mathrm{K}b \circ \mathbf{m}_{\mathrm{K,L}}^{-1} \circ \mathbf{M}\mathrm{f}$. The sought equality
\[
 b \circ \mathbf{M}\euler{l}_{\mathrm{L}}^{\ccf{C}} \circ \mathbf{M}(\mathrm{kL}) \circ \mathbf{M}\mathrm{f} = (\mathbf{m}_{\mathbb{1}}^{-1})_{X} \circ \mathbf{M}\mathrm{k} \circ \mathbf{M}\mathrm{K}b \circ \mathbf{m}_{\mathrm{K,L}}^{-1} \circ \mathbf{M}\mathrm{f}
\]
follows from the commutativity of
\[\begin{tikzcd}[ampersand replacement=\&]
	{\mathbf{M}\mathrm{KL}X} \& {\mathbf{M}\mathrm{K}\mathbf{M}\mathrm{L}X} \& {\mathbf{M}\mathrm{K}X} \\
	{\mathbf{M}\mathbb{1}\mathrm{L}X} \& {\mathbf{M}\mathbb{1}\mathbf{M}\mathrm{L}X} \& {\mathbf{M}\mathbb{1}X} \\
	{\mathbf{M}\mathrm{L}X} \& X
	\arrow["{(\mathbf{m}_{\mathbb{1}}^{-1})_{\mathbf{M}\mathrm{L}X}}"{description}, from=2-2, to=3-1]
	\arrow["{\mathbf{m}_{\mathbb{1}}^{-1}}"', from=2-3, to=3-2]
	\arrow["b"{pos=0.6}, from=3-1, to=3-2]
	\arrow["{\mathbf{M}(\mathsf{l}_{\mathrm{L}}^{\cccsym{C}})}"', from=2-1, to=3-1]
	\arrow["{\mathbf{m}_{\mathbb{1},\mathrm{L}}^{-1}}", from=2-1, to=2-2]
	\arrow["{\mathbf{M}\mathrm{k}L}"', from=1-1, to=2-1]
	\arrow["{\mathbf{m}_{\mathrm{K,L}}^{-1}}", from=1-1, to=1-2]
	\arrow["{(\mathbf{M}\mathrm{k})_{\mathbf{M}\mathrm{L}X}}"', from=1-2, to=2-2]
	\arrow["{\mathbf{M}\mathrm{K}b}", from=1-2, to=1-3]
	\arrow["{(\mathbf{M}\mathrm{k})_{X}}"', from=1-3, to=2-3]
	\arrow["{\mathbf{M}\mathbb{1}b}", from=2-2, to=2-3]
\end{tikzcd},\]
whose faces commute by coherence axioms for $\mathbf{M}$.
\end{proof}

Proposition~\ref{AbstractAssoc} and Proposition~\ref{AbstractUnitality} together show the following:
\begin{corollary}
 $(\Hom{-X,X}, \euler{c}_{X,X,X}, \euler{u}_{X})$ is a monoid in $[\csym{C}^{\on{opp}},\mathbf{Vec}_{\Bbbk}]$.
 Similarly, the functor $\Hom{X,-X}$ can be endowed with the structure of a monoid in $[\csym{C},\mathbf{Vec}_{\Bbbk}]$.
\end{corollary}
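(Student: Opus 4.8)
The plan is to read the corollary straight off Propositions~\ref{AbstractAssoc} and~\ref{AbstractUnitality}, once we recall the shape of a monoid in a Day convolution category. For $([\csym{C}^{\on{opp}},\mathbf{Vec}_{\Bbbk}],\circledast)$ the monoidal unit is the representable presheaf $\csym{C}(-,\mathbb{1})$, since the Yoneda embedding $\csym{C}\hookrightarrow[\csym{C}^{\on{opp}},\mathbf{Vec}_{\Bbbk}]$ is strong monoidal for $\circledast$; this is precisely the object serving as the source of $\euler{u}_{X}$ in Definition~\ref{AbstractUnit}, and the Day associator is, by construction, the Fubini/Yoneda isomorphism that already appears in the proof of Proposition~\ref{AbstractAssoc}. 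A monoid in this category therefore amounts to a presheaf $\euler{P}$ together with natural transformations $\euler{P}\circledast\euler{P}\to\euler{P}$ and $\csym{C}(-,\mathbb{1})\to\euler{P}$ making the associativity square and the two unit triangles commute.

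The required data is already in place: the multiplication is $\euler{c}_{X,X,X}:\Hom{-X,X}\circledast\Hom{-X,X}\to\Hom{-X,X}$, a well-defined natural transformation by the verification in Definition~\ref{AbstractMonoid}, and the unit is $\euler{u}_{X}:\csym{C}(-,\mathbb{1})\to\Hom{-X,X}$, shown natural in Definition~\ref{AbstractUnit}. Associativity is the special case of Proposition~\ref{AbstractAssoc} obtained by setting $Y=Z=W=X$: then $\euler{c}_{Y,Z,W}$, $\euler{c}_{X,Y,Z}$, $\euler{c}_{X,Y,W}$ and $\euler{c}_{X,Z,W}$ all coincide with $\euler{c}_{X,X,X}$, and the displayed diagram becomes exactly the associativity constraint for $(\Hom{-X,X},\euler{c}_{X,X,X})$ with respect to the Day associator. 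Left unitality is Proposition~\ref{AbstractUnitality} verbatim, and right unitality is the symmetric statement indicated there, proved by the same diagram chase with the two tensor factors exchanged. This establishes that $(\Hom{-X,X},\euler{c}_{X,X,X},\euler{u}_{X})$ is a monoid in $[\csym{C}^{\on{opp}},\mathbf{Vec}_{\Bbbk}]$.

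For the copresheaf statement I would run the same construction with the direction of the free variable reversed. The category $[\csym{C},\mathbf{Vec}_{\Bbbk}]$ carries a Day convolution product with unit $\csym{C}(\mathbb{1},-)$, and one defines $\euler{c}'_{X,Y,Z}:\Hom{X,-Y}\circledast\Hom{Y,-Z}\to\Hom{X,-Z}$ and $\euler{u}'_{X}:\csym{C}(\mathbb{1},-)\to\Hom{X,-X}$ by the evident analogues of Definitions~\ref{AbstractMonoid} and~\ref{AbstractUnit}, using postcomposition in $\mathbf{M}$ together with $\mathbf{M}\mathrm{K}(-)$ and the structure isomorphisms $\mathbf{m}$, $\mathbf{m}_{\mathbb{1}}$ in the roles played before by the precomposition data. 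Since the chases proving Propositions~\ref{AbstractAssoc} and~\ref{AbstractUnitality} use only functoriality of each $\mathbf{M}\mathrm{F}$, naturality of the $\mathbf{m}$'s, and the coherence axioms of the $\csym{C}$-module category, they transfer to this setting without change. There is no real obstacle here --- the corollary is a formal consequence of the two preceding propositions; the only points that deserve an explicit line are the identification of the two Day convolution units with $\csym{C}(-,\mathbb{1})$ and $\csym{C}(\mathbb{1},-)$, so that $\euler{u}_{X}$ and $\euler{u}'_{X}$ literally have the monoidal unit as source, and, for the second half, the routine check that the mirrored formulas still define natural transformations.
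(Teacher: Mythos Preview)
Your proposal is correct and matches the paper's approach exactly: the paper gives no separate proof for this corollary, merely stating that ``Proposition~\ref{AbstractAssoc} and Proposition~\ref{AbstractUnitality} together show the following,'' and your argument is precisely the unpacking of that sentence, specializing the associativity diagram to $Y=Z=W=X$ and invoking the symmetric unit triangle for right unitality.
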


\begin{proposition}\label{RepresentableMonoids}
 If the presheaves $\Hom{-X,Y}$ and $\Hom{-Y,Z}$ are representable via natural isomorphisms
 \[
\tau_{X,Y}: \csym{C}(-,\setj{X,Y}) \rightarrow \Hom{-X,Y} \text{ and } \tau_{Y,Z}: \csym{C}(-,\setj{Y,Z}) \rightarrow \Hom{-Y,Z},
 \]
 and $\euler{z}_{X,Y,Z}: \setj{Y,Z} \cotimes \setj{X,Y} \rightarrow \setj{X,Z}$ is the composition morphism used to define the monoid structure on $\setj{X,X}$ in \cite{Os}, then the composite
 \begin{equation}\label{RecoverMonoid}
  \resizebox{.99\hsize}{!}{$
  \csym{C}(-,\setj{Y,Z}\cotimes \setj{X,Y})\xiso \csym{C}(-,\setj{Y,Z}) \circledast \csym{C}(-,\setj{X,Y}) \xrightarrow{\tau_{Y,Z} \circledast \tau_{X,Y}} \Hom{-Y,Z} \circledast \Hom{-X,Y} \xrightarrow{\euler{c}_{X,Y,Z}} \Hom{-X,Z} \xrightarrow{\tau_{X,Z}^{-1}} \csym{C}(-,\setj{X,Z})$}
 \end{equation}
 coincides with $\csym{C}(-,\euler{z}_{X,Y,Z})$.
\end{proposition}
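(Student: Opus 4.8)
The plan is to invoke the Yoneda lemma. Both the composite \eqref{RecoverMonoid} and $\csym{C}(-,\euler{z}_{X,Y,Z})$ are natural transformations between the representable presheaves $\csym{C}(-,\setj{Y,Z}\cotimes\setj{X,Y})$ and $\csym{C}(-,\setj{X,Z})$, so they coincide if and only if they agree on $\on{id}_{\setj{Y,Z}\cotimes\setj{X,Y}}$; and since $\csym{C}(-,\euler{z}_{X,Y,Z})$ sends this element to $\euler{z}_{X,Y,Z}$, it suffices to trace $\on{id}_{\setj{Y,Z}\cotimes\setj{X,Y}}$ through the four maps constituting \eqref{RecoverMonoid} and check that the end result is $\euler{z}_{X,Y,Z}$.

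First I would make explicit the structure isomorphism $\csym{C}(-,\mathrm{A})\circledast\csym{C}(-,\mathrm{B})\xiso\csym{C}(-,\mathrm{A}\cotimes\mathrm{B})$ expressing the fact that the Yoneda embedding is strong monoidal for Day convolution: it is obtained by two applications of the co-Yoneda lemma to the defining coend, and it carries the class of $p\otimes q\otimes\mathrm{f}$ to $(p\cotimes q)\circ\mathrm{f}$. Consequently its inverse, which is the first map of \eqref{RecoverMonoid}, sends $\on{id}_{\setj{Y,Z}\cotimes\setj{X,Y}}$ to the class of $\on{id}_{\setj{Y,Z}}\otimes\on{id}_{\setj{X,Y}}\otimes\on{id}_{\setj{Y,Z}\cotimes\setj{X,Y}}$, represented in the $(\mathrm{K},\mathrm{L})=(\setj{Y,Z},\setj{X,Y})$ summand of the coend. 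Writing $\on{ev}_{X,Y}:=(\tau_{X,Y})_{\setj{X,Y}}(\on{id}_{\setj{X,Y}})$ and $\on{ev}_{Y,Z}:=(\tau_{Y,Z})_{\setj{Y,Z}}(\on{id}_{\setj{Y,Z}})$ --- which are precisely the counits of the adjunctions $\mathbf{M}(-)X\dashv\setj{X,-}$ and $\mathbf{M}(-)Y\dashv\setj{Y,-}$ encoded by $\tau_{X,Y}$ and $\tau_{Y,Z}$, hence the evaluation maps used in \cite{Os} --- the whiskering $\tau_{Y,Z}\circledast\tau_{X,Y}$ acts coordinatewise on coend representatives and sends this class to the class of $\on{ev}_{Y,Z}\otimes\on{ev}_{X,Y}\otimes\on{id}_{\setj{Y,Z}\cotimes\setj{X,Y}}$ in $(\Hom{-Y,Z}\circledast\Hom{-X,Y})(\setj{Y,Z}\cotimes\setj{X,Y})$.

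Next, evaluating $\euler{c}_{X,Y,Z}$ at $\mathrm{F}=\setj{Y,Z}\cotimes\setj{X,Y}$ by means of the explicit formula of Definition~\ref{AbstractMonoid} --- with $b=\on{ev}_{Y,Z}$, $c=\on{ev}_{X,Y}$, $\mathrm{f}=\on{id}$, $\mathrm{K}=\setj{Y,Z}$, $\mathrm{L}=\setj{X,Y}$, and using $(\mathbf{M}\on{id})_{X}=\on{id}$ --- gives the composite of $\on{ev}_{Y,Z}$, $\mathbf{M}\!\setj{Y,Z}(\on{ev}_{X,Y})$ and the inverse of the multiplicativity morphism $\mathbf{m}$ of $\mathbf{M}$; this is exactly the composite \eqref{OstrikAlgMult}, now regarded as an element of $\Hom{\mathbf{M}(\setj{Y,Z}\cotimes\setj{X,Y})X,Z}$. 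Finally, $\tau_{X,Z}^{-1}$ is the transpose along the adjunction isomorphism $\Hom{\mathbf{M}(-)X,Z}\simeq\csym{C}(-,\setj{X,Z})$, and by the very definition of $\euler{z}_{X,Y,Z}$ recalled before Definition~\ref{AbstractMonoid} --- namely, $\euler{z}_{X,Y,Z}$ is the transpose of \eqref{OstrikAlgMult} --- this transpose is $\euler{z}_{X,Y,Z}$. Thus \eqref{RecoverMonoid} carries $\on{id}_{\setj{Y,Z}\cotimes\setj{X,Y}}$ to $\euler{z}_{X,Y,Z}$, and the proposition follows.

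The only real work is bookkeeping: one must confirm that the co-Yoneda structure isomorphism and the Day-convolution whiskering $\tau_{Y,Z}\circledast\tau_{X,Y}$ act on coend representatives in the naive coordinatewise fashion claimed above, and that the counit $\on{ev}_{X,Y}$ extracted from the representing isomorphism $\tau_{X,Y}$ coincides with the evaluation map of \cite{Os} --- both of which are Yoneda-lemma routine once the conventions are pinned down. No genuinely new idea is required.
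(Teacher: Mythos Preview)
Your proposal is correct and follows essentially the same approach as the paper: both invoke Yoneda to reduce to tracing $\on{id}_{\setj{Y,Z}\cotimes\setj{X,Y}}$ through the four maps of \eqref{RecoverMonoid}, identify its image under the Day-convolution structure isomorphism as the class of $\on{id}\otimes\on{id}\otimes\on{id}$, apply $\tau_{Y,Z}\circledast\tau_{X,Y}$ coordinatewise to obtain $\on{ev}_{Y,Z}\otimes\on{ev}_{X,Y}\otimes\on{id}$, and then evaluate $\euler{c}_{X,Y,Z}$ via Definition~\ref{AbstractMonoid} to land on the composite \eqref{OstrikAlgMult}. If anything, you are slightly more explicit than the paper about the final step $\tau_{X,Z}^{-1}$, correctly noting that it is the adjunction transpose which, by definition, sends \eqref{OstrikAlgMult} to $\euler{z}_{X,Y,Z}$.
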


\begin{proof}
 By Yoneda lemma, it suffices to show that the components of the respective transformations at $\setj{Y,Z} \cotimes \setj{X,Y}$ send $\on{id}_{\setj{Y,Z} \cotimes \setj{X,Y}}$ to the same morphism.

 The image of $\on{id}_{\setj{Y,Z} \cotimes \setj{X,Y}}$ under the first isomorphism in \ref{RecoverMonoid} is given by the image of $\on{id}_{\setj{Y,Z}} \otimes \on{id}_{\setj{X,Y}} \otimes \on{id}_{\setj{Y,Z}\otimes \setj{X,Y}}$ under the cocone map for the component indexed by $(\setj{Y,Z},\setj{X,Y})$:
 \[\begin{tikzcd}[ampersand replacement=\&]
	{ \csym{C}(\setj{Y,Z},\setj{Y,Z}) \kotimes \csym{C}(\setj{X,Y},\setj{X,Y}) \kotimes \csym{C}(\setj{Y,Z}\cotimes \setj{X,Y},\setj{Y,Z}\cotimes \setj{X,Y})} \\
	{\int^{\mathrm{K,L}} \csym{C}(\mathrm{K},\setj{Y,Z}) \kotimes \csym{C}(\mathrm{L},\setj{X,Y}) \kotimes \csym{C}(\setj{Y,Z}\cotimes \setj{X,Y},\mathrm{K\cotimes L})} \\
	{  (\csym{C}(-,\setj{Y,Z}) \circledast \csym{C}(-,\setj{X,Y}))(\setj{Y,Z}\cotimes \setj{X,Y})}
	\arrow[from=1-1, to=2-1]
	\arrow["{=}", from=2-1, to=3-1]
\end{tikzcd}\]
The image of $\on{id}_{\setj{Y,Z}} \otimes \on{id}_{\setj{X,Y}} \otimes \on{id}_{\setj{Y,Z}\otimes \setj{X,Y}}$ under $\tau_{Y,Z} \otimes \tau_{X,Y}$ is given by $\on{ev}_{Y,Z} \otimes \on{ev}_{X,Y} \otimes \on{id}_{\setj{Y,Z}\otimes \setj{X,Y}}$. By definition,
\[
\euler{c}_{X,Y,Z}(\on{ev}_{Y,Z} \otimes \on{ev}_{X,Y} \otimes \on{id}_{\setj{Y,Z}\otimes \setj{X,Y}}) = \on{ev}_{Y,Z} \circ \mathbf{M}\!\setj{Y,Z}\on{ev}_{X,Y} \circ \mathbf{m}_{\setj{Y,Z},\setj{X,Y},X}^{-1} \circ \mathbf{M}(\on{id}_{\setj{Y,Z}\otimes \setj{X,Y}}).
\]
Comparing to Equation~\eqref{OstrikAlgMult}, we see that this is precisely $\euler{z}_{X,Y,Z}$.
\end{proof}

One may also show a similar statement for the unitality maps, which yields the following consequence:
\begin{corollary}\label{OstrikMonoidStructure}
  If $\Hom{-X,X}$ is representable via natural isomorphism $\tau_{X,X}: \csym{C}(-,\setj{X,X}) \xiso \Hom{-X,X}$, then the monoid structure on $\Hom{-,\setj{X,X}}$ obtained by transporting from $\Hom{-X,X}$ along $\tau$ coincides with the monoid structure defined in \cite{Os}.
\end{corollary}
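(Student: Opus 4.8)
The plan is to show that the transported monoid structure and the monoid structure of \cite{Os} are literally the same pair, a multiplication together with a unit morphism, on the common underlying object $\csym{C}(-,\setj{X,X})$. Since transporting a monoid structure along an isomorphism of objects always yields a monoid for which that isomorphism becomes a morphism of monoids, it suffices to check separately that the transported multiplication coincides with Ostrik's and that the transported unit coincides with Ostrik's.

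For the multiplication this is precisely Proposition~\ref{RepresentableMonoids} specialized to $X = Y = Z$. Writing $\phi$ for the structure isomorphism $\csym{C}(-,\mathrm{A}) \circledast \csym{C}(-,\mathrm{B}) \xiso \csym{C}(-,\mathrm{A} \cotimes \mathrm{B})$ of the (strong monoidal) Yoneda embedding $\csym{C} \hookrightarrow [\csym{C}^{\on{opp}},\mathbf{Vec}_{\Bbbk}]$, the transported multiplication is $\mathtt{m}^{\mathrm{tr}} = \tau_{X,X}^{-1} \circ \euler{c}_{X,X,X} \circ (\tau_{X,X} \circledast \tau_{X,X})$, and Proposition~\ref{RepresentableMonoids} says exactly that $\mathtt{m}^{\mathrm{tr}} \circ \phi^{-1} = \csym{C}(-,\euler{z}_{X,X,X})$, equivalently $\mathtt{m}^{\mathrm{tr}} = \csym{C}(-,\euler{z}_{X,X,X}) \circ \phi$. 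As $\euler{z}_{X,X,X}$ is by construction the composition morphism Ostrik uses to make $\setj{X,X}$ into a monoid, the right-hand side is the multiplication of the image of Ostrik's monoid under the monoidal Yoneda embedding, i.e. the multiplication we transport against.

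It remains to treat the unit, for which I would establish the analogue of Proposition~\ref{RepresentableMonoids}. Let $\iota_X \colon \mathbb{1} \to \setj{X,X}$ denote Ostrik's unit morphism, obtained by passing $\on{id}_X$ under the bijection $\Hom{X,X} \simeq \Hom{\mathbb{1},\setj{X,X}}$; unwinding that bijection shows $\iota_X$ is characterized by the equation $\tau_{X,X}(\iota_X) = (\mathbf{m}_{\mathbb{1}}^{-1})_X$ at the object $\mathbb{1}$. The Day-convolution unit of $[\csym{C}^{\on{opp}},\mathbf{Vec}_{\Bbbk}]$ is the representable $\csym{C}(-,\mathbb{1})$, so the unit of the Yoneda image of Ostrik's monoid is simply $\csym{C}(-,\iota_X)$, while the transported unit is $\tau_{X,X}^{-1} \circ \euler{u}_X$. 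By the Yoneda lemma it suffices to compare the components of these two transformations at the object $\mathbb{1}$, evaluated on $\on{id}_{\mathbb{1}}$: Definition~\ref{AbstractUnit} gives $(\euler{u}_X)_{\mathbb{1}}(\on{id}_{\mathbb{1}}) = (\mathbf{m}_{\mathbb{1}}^{-1} \circ \mathbf{M}\on{id}_{\mathbb{1}})_X = (\mathbf{m}_{\mathbb{1}}^{-1})_X$, whose preimage under $\tau_{X,X}$ is $\iota_X$ by the characterization just recalled, while $\csym{C}(-,\iota_X)$ sends $\on{id}_{\mathbb{1}}$ to $\iota_X \circ \on{id}_{\mathbb{1}} = \iota_X$. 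Hence the two transformations agree, and combining this with the previous paragraph gives the statement.

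I do not anticipate a genuine obstacle here; the argument is a matter of carefully matching definitions. The two points needing attention are the bookkeeping of the coherence cells $\mathbf{m}$ of $\mathbf{M}$ — already handled inside Proposition~\ref{RepresentableMonoids} and absorbed into $\euler{z}_{X,X,X}$ — and pinning down how exactly Ostrik's unit morphism is recovered from the representability isomorphism $\tau_{X,X}$, which is the content of the short Yoneda computation above.
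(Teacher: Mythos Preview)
Your proposal is correct and matches the paper's approach exactly: the paper deduces the corollary from Proposition~\ref{RepresentableMonoids} together with the remark ``One may also show a similar statement for the unitality maps,'' and your argument is precisely that, with the unit computation spelled out in detail where the paper leaves it implicit.
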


\begin{proposition}
 The composite maps
\[\begin{tikzcd}[row sep=scriptsize, column sep = huge]
	{\int^{\mathrm{H}} \csym{C}(\mathrm{F}, \mathrm{GH}) \kotimes \Hom{\mathbf{M}\mathrm{H}X,Y}} & & {\int^{\mathrm{H}} \csym{C}(\mathrm{F}, \mathrm{GH}) \kotimes \Hom{\mathbf{M}\mathrm{G}\mathbf{M}\mathrm{H}X,\mathbf{M}\mathrm{G}Y}} \\
	{\Hom{\mathbf{M}\mathrm{F}X,\mathbf{M}\mathrm{G}Y}} & & {\int^{\mathrm{H}} \csym{C}(\mathrm{F}, \mathrm{GH}) \kotimes \Hom{\mathbf{M}\mathrm{GH}X,\mathbf{M}\mathrm{G}Y}}
	\arrow["{\ccf{C}(\mathrm{F,GH}) \otimes \mathbf{M}\mathrm{G}_{\mathbf{M}\mathrm{H}X,Y}}",  from=1-1, to=1-3]
	\arrow["{\ccf{C}(\mathrm{F,GH}) \otimes \Hom{(\mathbf{m}^{-1}_{\mathrm{G,H}})_{X},\mathbf{M}\mathrm{G}Y}}", from=1-3, to=2-3]
	\arrow["{\mathrm{f}\otimes b \mapsto b \circ \mathbf{M}\mathrm{f}_{X}}"', from=2-3, to=2-1]
\end{tikzcd}\]
given by the extranatural collections
\[
 \begin{aligned}
 \csym{C}(\mathrm{F,GH}) \kotimes \Hom{\mathbf{M}\mathrm{H}X, Y} &\rightarrow \Hom{\mathbf{M}\mathrm{F}X,\mathbf{M}\mathrm{G}Y} \\
 \mathrm{f} \otimes b &\mapsto \mathbf{M}\mathrm{G}b \circ \mathbf{m}^{-1}_{\mathrm{G,H}} \mathbf{M}\mathrm{f}_{X}
 \end{aligned}
\]
give a morphism $\mathtt{\omega}_{X,Y}: \mathbb{W}(\Hom{-X,Y}) \rightarrow [X,Y]$ in $\tam$.
Further, this morphism is an isomorphism if $\csym{C}$ is rigid.
\end{proposition}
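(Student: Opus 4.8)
The plan is to prove the two assertions in turn: first that the displayed composites assemble into an honest morphism of Tambara modules, and then that this morphism is invertible when $\csym{C}$ is rigid.

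For the first part I would argue in three steps. (a) The extranatural collection $\setj{\mathrm{f}\otimes b \mapsto \mathbf{M}\mathrm{G}b\circ(\mathbf{m}^{-1}_{\mathrm{G,H}})_{X}\circ(\mathbf{M}\mathrm{f})_{X}}$ is extranatural in $\mathrm{H}$: for $\mathrm{h}\colon\mathrm{H}\to\mathrm{H}'$, chasing $\mathrm{f}\otimes b$ along the two legs of the hexagon defining extranaturality, both produce $\mathbf{M}\mathrm{G}(b\circ(\mathbf{M}\mathrm{h})_{X})\circ(\mathbf{m}^{-1}_{\mathrm{G,H}})_{X}\circ(\mathbf{M}\mathrm{f})_{X}$ once one uses functoriality of $\mathbf{M}\mathrm{G}$ and naturality of $\mathbf{m}$ in its second index; hence the collection induces a well-defined map out of the coend $\mathbb{W}(\Hom{-X,Y})(\mathrm{F,G})$, i.e. the component $(\mathtt{\omega}_{X,Y})_{\mathrm{F,G}}$. (b) Naturality of $(\mathtt{\omega}_{X,Y})_{\mathrm{F,G}}$ in $\mathrm{F}$ and $\mathrm{G}$ is a routine chase of the same flavour as the naturality verifications in Definition~\ref{AbstractMonoid}, using again functoriality of $\mathbf{M}(-)$ and naturality of $\mathbf{m}$. (c) Compatibility with the Tambara structures: by the universal property of the coend it suffices to check that the two composites $\mathbb{W}(\Hom{-X,Y})(\mathrm{F,G})\to[X,Y](\mathrm{K}\cotimes\mathrm{F},\mathrm{K}\cotimes\mathrm{G})$ agree on an element $\mathrm{f}\otimes b$. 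The Tambara structure on $\mathbb{W}(\Hom{-X,Y})$ is the one inherited from the embedding $\csym{C}\!\on{-Mod}(\csym{C},\csym{C})\hookrightarrow\tam$, so it acts on $\mathrm{f}\otimes b$ by tensoring $\mathrm{f}$ with $\mathrm{K}$ on the left and inserting an associator of $\csym{C}$ (cf. Example~\ref{HomCK} and Equation~\eqref{TambaraCayley}); the Tambara structure on $[X,Y]$ acts by $\mathbf{M}\mathrm{K}(-)$ conjugated by the coherence cells $\mathbf{m}$ (cf. Equation~\eqref{TambaraOnRestriction}). After moving $\mathbf{M}\mathrm{K}$ past $\mathbf{M}\mathrm{G}b$ and the $\mathbf{m}$'s, both composites collapse to the single morphism built from $b$, $\mathrm{f}$, a prescribed chain of $\mathbf{m}$'s, and an $\mathbf{M}$-image of an associator, and their equality is an instance of the module-category coherence, entirely analogous to Equation~\eqref{FirstStep} and the multiplicativity diagram of Section~\ref{s8}.

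For the second part I would use that a morphism of Tambara modules is invertible as soon as its underlying morphism of profunctors is, so it is enough to show each $(\mathtt{\omega}_{X,Y})_{\mathrm{F,G}}$ is a linear isomorphism when $\csym{C}$ is rigid. Fix a dual $\mathrm{G}^{\vee}$ of $\mathrm{G}$. On the source, the chain of isomorphisms recalled in Section~\ref{s101} gives
\[
\mathbb{W}(\Hom{-X,Y})(\mathrm{F,G})=\int^{\mathrm{H}}\csym{C}(\mathrm{F},\mathrm{G}\cotimes\mathrm{H})\kotimes\Hom{\mathbf{M}\mathrm{H}X,Y}\;\xiso\;\Hom{\mathbf{M}(\mathrm{G}^{\vee}\cotimes\mathrm{F})X,Y},
\]
the composite of the duality adjunction $\csym{C}(\mathrm{F},\mathrm{G}\cotimes\mathrm{H})\cong\csym{C}(\mathrm{G}^{\vee}\cotimes\mathrm{F},\mathrm{H})$ (natural in $\mathrm{H}$) with the co-Yoneda lemma. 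On the target, strong monoidality of $\mathbf{M}$ sends the dual pair $(\mathrm{G},\mathrm{G}^{\vee})$ to an adjoint pair of endofunctors of $\mathbf{M}$, whence a natural isomorphism $[X,Y](\mathrm{F,G})=\Hom{\mathbf{M}\mathrm{F}X,\mathbf{M}\mathrm{G}Y}\cong\Hom{\mathbf{M}\mathrm{G}^{\vee}\mathbf{M}\mathrm{F}X,Y}$, which composed with $\Hom{(\mathbf{m}^{-1})_{X},Y}$ yields $[X,Y](\mathrm{F,G})\xiso\Hom{\mathbf{M}(\mathrm{G}^{\vee}\cotimes\mathrm{F})X,Y}$. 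I would then check that $(\mathtt{\omega}_{X,Y})_{\mathrm{F,G}}$ followed by the target isomorphism equals the source isomorphism; this is a finite chase using the explicit evaluation/coevaluation maps of the two adjunctions and the zigzag identities. Since both sides of $(\mathtt{\omega}_{X,Y})_{\mathrm{F,G}}$ are thereby identified with $\Hom{\mathbf{M}(\mathrm{G}^{\vee}\cotimes\mathrm{F})X,Y}$ compatibly with $\mathtt{\omega}_{X,Y}$, the latter is an isomorphism. (Alternatively, in the rigid case $\mathbb{W}\simeq\mathbb{L}\circ(-)^{\vee}\simeq\mathbb{K}\circ(-)^{\vee}$ is a monoidal equivalence, and one can deduce invertibility of $\mathtt{\omega}_{X,Y}$ after applying $\mathbb{K}^{-1}$; but the pointwise argument is shorter.)

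The main obstacle is step (c) of the first part together with the matching check in the second part. In (c) the difficulty is purely coherence bookkeeping: the Tambara structure on $\mathbb{W}(\Hom{-X,Y})$ is carried by the $\csym{C}$-factor $\csym{C}(\mathrm{F},\mathrm{G}\cotimes\mathrm{H})$ through an associator of $\csym{C}$, whereas that on $[X,Y]$ is carried by the hom-space through the module cells $\mathbf{m}$, and the two only reconcile via the compatibility between $\mathbf{m}$ and the associators — the exact analogue of Equation~\eqref{FirstStep} — so I would model the computation on Section~\ref{s8}. In the rigid case the subtle point is not that the two sides are abstractly isomorphic, which is immediate, but that $\mathtt{\omega}_{X,Y}$ is precisely the canonical comparison; this forces one to track the (co)evaluation morphisms through the definition of $\mathtt{\omega}_{X,Y}$, and a direction error there is the likeliest source of mistakes. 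I would guard against it by testing the identity morphism $\on{id}_{\mathbf{M}(\mathrm{G}^{\vee}\cotimes\mathrm{F})X}$, exactly as in the proof of Proposition~\ref{RepresentableMonoids}.
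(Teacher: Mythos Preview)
Your proposal is correct and follows essentially the same approach as the paper: an element chase for the Tambara axiom (your step (c) is exactly the diagram the paper verifies), and the duality isomorphism $\csym{C}(\mathrm{F},\mathrm{G}\cotimes\mathrm{H})\cong\csym{C}(\mathrm{G}^{\vee}\cotimes\mathrm{F},\mathrm{H})$ from Section~\ref{s101} for invertibility in the rigid case. Your second part is in fact more careful than the paper's own proof, which simply cites the chain of isomorphisms $\mathbb{W}(\Hom{-X,Y})\simeq\Hom{\mathbf{M}(\mathrm{G}^{\vee}\cotimes\mathrm{F})X,Y}\simeq[X,Y]$ without explicitly verifying that $\omega_{X,Y}$ realizes it; your insistence on tracking the (co)evaluation maps through the definition of $\omega_{X,Y}$ closes that gap.
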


\begin{proof}
 Clearly, we obtain a morphism of profunctors. To see that it is a morphism of Tambara modules, we chase $\mathrm{f} \otimes b \in \csym{C}(\mathrm{F,GH}) \kotimes \Hom{\mathbf{M}\mathrm{H}X,Y}$ in the diagram
 \[\begin{tikzcd}[row sep=scriptsize, column sep = huge]
	{\int^{\mathrm{H}} \csym{C}(\mathrm{F}, \mathrm{GH}) \kotimes \Hom{\mathbf{M}\mathrm{H}X,Y}} && {\coprod_{\mathrm{H}} \csym{C}(\mathrm{KF}, \mathrm{K(GH)}) \kotimes \Hom{\mathbf{M}\mathrm{K}\mathbf{M}\mathrm{H}X,\mathbf{M}\mathrm{K}Y}} \\
	{\Hom{\mathbf{M}\mathrm{F}X,\mathbf{M}\mathrm{G}Y}} && {\int^{\mathrm{H}} \csym{C}(\mathrm{KF}, \mathrm{(KG)H}) \kotimes \Hom{\mathbf{M}\mathrm{H}X,Y}} \\
	{\Hom{\mathbf{M}\mathrm{K}\mathbf{M}\mathrm{F}X,\mathbf{M}\mathrm{K}\mathbf{M}\mathrm{G}Y}} && {\Hom{\mathbf{M}\mathrm{KF}X,\mathbf{M}\mathrm{KG}Y}}
	\arrow["(\omega_{X,Y})_{\mathrm{F,G}}", from=1-1, to=2-1]
	\arrow["{\mathbf{M}\mathrm{K}_{\mathbf{M}\mathrm{F}X,\mathbf{M}\mathrm{G}Y}}", from=2-1, to=3-1]
	\arrow[from=1-1, to=1-3]
	\arrow[from=1-3, to=2-3]
	\arrow["\ta_{\mathrm{K;F,G}}^{\mathbb{W}(\Hom{-X,Y})}"', from=1-1, to=2-3]
	\arrow["(\omega_{X,Y})_{\mathrm{KF,KG}}"', swap, from=2-3, to=3-3]
	\arrow["{\Hom{(\mathbf{m}^{-1}_{\mathrm{K,F}}),(\mathbf{m}_{\mathrm{K,G}})_{Y}}}", swap, from=3-1, to=3-3]
	\arrow["\ta_{\mathrm{K;F,G}}^{[X,Y]}", from=2-1, to=3-3]
\end{tikzcd}\]
to show that it commutes - indeed, the commutativity of its exterior is precisely the condition for a Tambara morphism. Its upper face serves only to indicate the definition of the Tambara structure $\ta_{\mathrm{K;F,G}}^{\mathbb{W}(\Hom{-X,Y})}$, and commutes by definition.

\[\begin{tikzcd}[row sep=scriptsize]
	{\mathrm{f} \otimes b} && {\mathrm{K}\mathrm{f} \otimes \mathbf{M}\mathrm{K}b} \\
	{\mathbf{M}\mathrm{G}b \circ \mathbf{m}^{-1}_{\mathrm{G,H}} \circ \mathbf{M}\mathrm{f}_{X}} && {\mathrm{K}\mathrm{f} \otimes \mathbf{M}\mathrm{K}b \circ \mathbf{m}^{-1}_{\mathrm{K,H}}} \\
	{\mathbf{M}\mathrm{K}(\mathbf{M}\mathrm{G}b \circ \mathbf{m}^{-1}_{\mathrm{G,H}} \circ \mathbf{M}\mathrm{f}_{X})} && {\mathbf{m}_{\mathrm{K,G}}\circ \mathbf{M}\mathrm{KG}b \circ \mathbf{m}^{-1}_{\mathrm{K,GH}} \circ \mathbf{M}\mathrm{Kf}_{X} \circ \mathbf{m}_{\mathrm{K,G}}}
	\arrow[maps to, from=1-1, to=1-3]
	\arrow[maps to, from=1-3, to=2-3]
	\arrow[maps to, from=1-1, to=2-1]
	\arrow[maps to, from=2-1, to=3-1]
	\arrow[maps to, from=3-1, to=3-3]
	\arrow[maps to, from=2-3, to=3-3]
\end{tikzcd}\]

 To see that this morphism becomes an isomorphism in the rigid case, we recall from Equation~\ref{FutureReminder} in Subsection~\ref{s101} that we then have an isomorphism of Tambara modules
 \[
\mathbb{W}(\Hom{-X,Y}) \simeq \big( (\mathrm{F,G})\mapsto \Hom{\mathbf{M}(\prescript{\vee}{}{\mathrm{G}}\cotimes \mathrm{F}) X, Y}\big) \simeq [X,Y].
 \]
\end{proof}

\begin{proposition}\label{NotationTambara}
 For any $X,Y,Z \in \mathbf{M}$, the diagram
 \[\begin{tikzcd}
	{\mathbb{W}(\Hom{-X,Y}) \diamond \mathbb{W}(\Hom{-Y,Z})} & {\mathbb{W}(\Hom{-Y,Z}\circledast \Hom{-X,Y})} & {\mathbb{W}(\Hom{-X,Z})} \\
	{[X,Y]\diamond [Y,Z]} && {[X,Z]}
	\arrow["\simeq", shift left=2, from=1-1, to=1-2]
	\arrow["{\mathbb{W}(\euler{c}_{X,Y,Z})}", shift left=2, from=1-2, to=1-3]
	\arrow["{\omega_{X,Y}\circ\omega_{Y,Z}}"', shift right=2, from=1-1, to=2-1]
	\arrow["{\mathtt{a}_{X,Y,Z}}"', from=2-1, to=2-3]
	\arrow["{\omega_{X,Z}}", from=1-3, to=2-3]
\end{tikzcd},\]
where $\mathtt{a}_{X,Y,Z}$ is the Tambara morphism induced by composition, commutes.
\end{proposition}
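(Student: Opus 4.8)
The plan is to verify commutativity of the hexagon by unwinding all four of its constituent maps on elements and observing that both composites send a generating element to the same morphism in $\mathbf{M}$. Since all the objects involved are profunctors valued in $\mathbf{Vec}_{\Bbbk}$ and all the maps are induced from extranatural collections via universal properties of coends, it suffices to compare the two composites after precomposing with the appropriate coproduct cocone maps; this is the standard reduction used throughout the paper (e.g. in the proof of Proposition~\ref{AbstractAssoc}). First I would pin down the shape of the source profunctor: by definition of $\circ$ of Tambara modules and of $\mathbb{W}$, the object $(\mathbb{W}(\Hom{-X,Y}) \diamond \mathbb{W}(\Hom{-Y,Z}))(\mathrm{F,G})$ is a coend over $\mathrm{W} \in \csym{C}$ of $\mathbb{W}(\Hom{-Y,Z})(\mathrm{F,W}) \kotimes \mathbb{W}(\Hom{-X,Y})(\mathrm{W,G})$, which, expanding $\mathbb{W}$, is a further coend over $\mathrm{H,H'}$ of $\csym{C}(\mathrm{F},\mathrm{WH}) \kotimes \Hom{\mathbf{M}\mathrm{H}Z, \text{?}}$ — here I will use the Fubini rule to rewrite it so that a single Yoneda reduction in the intermediate variable collapses one of the hom-spaces, exactly as in the associator computation in Proposition~\ref{AbstractAssoc}.

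Next I would follow the top path: the first isomorphism $\mathbb{W}(\Hom{-X,Y}) \diamond \mathbb{W}(\Hom{-Y,Z}) \xiso \mathbb{W}(\Hom{-Y,Z}\circledast \Hom{-X,Y})$ is the lax (here strong) monoidality isomorphism of $\mathbb{W}$ — recall from Subsection~\ref{s101} that $\mathbb{W}$ is strong monoidal, so this is just its multiplicativity constraint, which on elements is the canonical ``merge the two coends'' map. Then $\mathbb{W}(\euler{c}_{X,Y,Z})$ applies $\mathbb{W}$ to the Day-convolution composition transformation of Definition~\ref{AbstractMonoid}, whose formula on elements is $b \otimes c \otimes \mathrm{f} \mapsto b \circ \mathbf{M}\mathrm{K}(c) \circ \mathbf{m}_{\mathrm{K,L},X}^{-1}\circ (\mathbf{M}\mathrm{f})_{X}$; and $\omega_{X,Z}$ then sends $\mathrm{g} \otimes a$ to $\mathbf{M}\mathrm{G}a \circ \mathbf{m}_{\mathrm{G},-}^{-1}\circ (\mathbf{M}\mathrm{g})_{X}$. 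Composing, and using the coherence identities $\mathbf{m}_{\mathrm{G,KL}}^{-1} = \mathbf{m}_{\mathrm{GK,L}}^{-1}\circ (\mathbf{m}_{\mathrm{G,K}}^{-1}\mathbf{M}\mathrm{L}) \circ \mathbf{M}\mathsf{a}$ together with naturality of $\mathbf{m}$, I expect to land on an extranatural collection on the common refinement sending a triple of morphisms and a structure map $\mathrm{f}$ to an explicit composite of the form $\mathbf{M}(\text{associator})\circ \mathbf{M}(\cdots)(\mathrm{ev's})\circ \mathbf{m}^{-1}\text{'s}\circ (\mathbf{M}\mathrm{f})_{X}$. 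For the bottom path, I would unwind $\omega_{X,Y}\circ \omega_{Y,Z}$ (the horizontal composite of Tambara morphisms, so induced from $\omega_{Y,Z}(\mathrm{F,W}) \otimes \omega_{X,Y}(\mathrm{W,G})$ on the coproduct over $\mathrm{W}$) followed by $\mathtt{a}_{X,Y,Z}$, the Tambara morphism induced by composition $v \otimes w \mapsto v \circ w$ in $\csym{C}\ostar$-hom-spaces; the formula here again telescopes to a composite of $\mathbf{M}\mathrm{G}$'s applied to evaluations, associators, and a single $(\mathbf{M}\mathrm{f})_{X}$.

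The main obstacle I anticipate is purely bookkeeping: keeping track of the correct coend variables (there are three ``inner'' tensor-factor variables plus the two ``composition'' variables), choosing a common refinement of the two coequalizer presentations on which to compare the collections, and managing the proliferation of coherence cells $\mathbf{m}$, $\mathsf{a}^{\ccf{C}}$, and the associators $\mathbf{m}_{\mathrm{F_n},\dots,\mathrm{F_1},X}^{r}$ hidden inside the $\mathbb{W}$'s and inside the definition of $[X,Y]$'s Tambara structure — but none of this requires any new idea. Concretely I would: (i) reduce via Fubini and one Yoneda lemma on each side to a presentation of the source as a coproduct over $\csym{C}$-morphisms and structure maps; (ii) write out the two induced extranatural collections explicitly using Definition~\ref{AbstractMonoid}, the formula for $\omega$, and the formula~\eqref{OstrikAlgMult}-style composites; (iii) show the two collections agree by a diagram chase in $\mathbf{M}$ whose individual cells are instances of naturality of $\mathbf{m}$, functoriality of the $\mathbf{M}\mathrm{F}$'s, and the pentagon/triangle coherence axioms for the $\csym{C}$-module category $\mathbf{M}$ — exactly the type of chase already carried out in the proofs of Proposition~\ref{AbstractAssoc} and Proposition~\ref{AbstractUnitality}. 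Since precomposition with the coproduct cocones is jointly monic, agreement of the collections yields commutativity of the hexagon, completing the proof.
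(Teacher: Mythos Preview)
Your proposal is correct and follows essentially the same approach as the paper: reduce the source via Fubini and Yoneda to a single coend presentation, chase a generic element along both paths, and verify the resulting equality by a coherence diagram in $\mathbf{M}$ built from naturality of $\mathbf{m}$ and functoriality of the $\mathbf{M}\mathrm{F}$'s. Two minor corrections: the right-first associators $\mathbf{m}^{r}_{\mathrm{F}_n,\ldots,\mathrm{F}_1,X}$ from Section~\ref{s81} play no role here, and your expansion of $\mathbb{W}(\Hom{-Y,Z})(\mathrm{F,W})$ should read $\int^{\mathrm{H}}\csym{C}(\mathrm{F,WH})\kotimes \Hom{\mathbf{M}\mathrm{H}Y,Z}$ rather than $\Hom{\mathbf{M}\mathrm{H}Z,\text{?}}$.
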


\begin{proof}
 We show that the diagram
 \[
  \resizebox{.99\hsize}{!}{$
 \begin{tikzcd}[ampersand replacement=\&]
	{\int^{\mathrm{L}} \big(\int^{\mathrm{H'}}\csym{C}(\mathrm{F,LH'}) \kotimes \Hom{\mathbf{M}\mathrm{H'}X,Y}\big) \kotimes \big(\int^{\mathrm{H}}\csym{C}(\mathrm{L,GH}) \kotimes \Hom{\mathbf{M}\mathrm{H}Y,Z}\big)} \& {\int^{\mathrm{H,H',L}} \Hom{\mathbf{M}\mathrm{H}Y,Z} \kotimes \Hom{\mathbf{M}\mathrm{H'}X,Y} \kotimes \csym{C}(\mathrm{L,HH'}) \kotimes \csym{C}(\mathrm{F,GL})} \& {\int^{\mathrm{K}} \Hom{\mathbf{M}\mathrm{K}X,Z} \kotimes \csym{C}(\mathrm{F,GK})} \\
	\& {\int^{\mathrm{H,H'}} \Hom{\mathbf{M}\mathrm{H}Y,Z} \kotimes \Hom{\mathbf{M}\mathrm{H'}X,Y} \kotimes \csym{C}(\mathrm{F,GHH'})} \\
	{\int^{\mathrm{L}} \Hom{\mathbf{M}\mathrm{F}X,\mathbf{M}\mathrm{L}Y} \kotimes \Hom{\mathbf{M}\mathrm{L}Y,\mathbf{M}\mathrm{G}Z}} \&\& {\Hom{\mathbf{M}\mathrm{F}X,\mathbf{M}\mathrm{G}Z}}
	\arrow[from=1-1, to=3-1]
	\arrow[from=1-1, to=1-2]
	\arrow["\simeq"{description}, from=1-1, to=2-2]
	\arrow["\simeq"', from=2-2, to=1-2]
	\arrow[from=2-2, to=1-3]
	\arrow[from=3-1, to=3-3]
	\arrow[from=1-3, to=3-3]
	\arrow[from=1-2, to=1-3]
\end{tikzcd}$}\]
commutes. Its upper two faces commute by definition of $\mathbb{W}$. Chasing an element in the diagram, we obtain
\[
  \resizebox{.99\hsize}{!}{$
\begin{tikzcd}[ampersand replacement=\&,row sep=small]
	{\mathrm{f} \otimes x \otimes \mathrm{l} \otimes y} \& {\mathrm{f} \otimes x \otimes \on{id}_{\mathrm{HH'}} \otimes \big(\mathrm{l}\mathrm{H'} \circ\mathrm{f}\big)} \& {\big( y \circ \mathbf{M}\mathrm{H}x \circ \mathbf{m}_{\mathrm{H,H'}}^{-1}\big) \otimes \big(\mathrm{l}\mathrm{H'} \circ\mathrm{f}\big)} \\
	\& {y \otimes x \otimes \big(\mathrm{l}\mathrm{H'} \circ\mathrm{f}\big)} \\
	{\big(\mathbf{M}\mathrm{L}x \circ \mathbf{m}_{\mathrm{L,H'}}^{-1} \circ \mathbf{M}\mathrm{f}_{X}\big) \otimes \big(\mathbf{M}\mathrm{G}y \circ \mathbf{m}_{\mathrm{G,H}}^{-1} \circ \mathbf{M}\mathrm{l}_{Y}\big)} \& {\big(\mathbf{M}\mathrm{G}y \circ \mathbf{m}_{\mathrm{G,H}}^{-1} \circ \mathbf{M}\mathrm{l}_{Y}\big)\circ \big(\mathbf{M}\mathrm{L}x \circ \mathbf{m}_{\mathrm{L,H'}}^{-1} \circ \mathbf{M}\mathrm{f}_{X}\big)} \& {\mathbf{M}\mathrm{G}\big( y \circ \mathbf{M}\mathrm{H}x \circ \mathbf{m}_{\mathrm{H,H'}}^{-1}\big) \circ \mathbf{m}_{\mathrm{G,HH'}}^{-1} \circ \mathbf{M}\big(\mathrm{l}\mathrm{H'} \circ\mathrm{f}\big)_{X}}
	\arrow[maps to, from=1-3, to=3-3]
	\arrow[equal, from=3-2, to=3-3]
	\arrow[maps to, from=2-2, to=1-3]
	\arrow[maps to, from=2-2, to=1-2]
	\arrow[maps to, from=1-2, to=1-3]
	\arrow[maps to, from=3-1, to=3-2]
	\arrow[maps to, from=1-1, to=2-2]
	\arrow[maps to, from=1-1, to=3-1]
	\arrow[maps to, from=1-1, to=1-2]
\end{tikzcd}$}\]
where the equality indicated on the bottom of the diagram chase holds due to the commutativity of
\[
\begin{tikzcd}[ampersand replacement=\&,column sep = huge]
	{\mathbf{M}\mathrm{F}X} \& {\mathbf{M}\mathrm{LH'}X} \& {\mathbf{M}\mathrm{GHH'}X} \& {\mathbf{M}\mathrm{G}\mathbf{M}\mathrm{HH'}X} \\
	\& {\mathbf{M}\mathrm{L}\mathbf{M}\mathrm{H}'X} \& {\mathbf{M}\mathrm{GH}\mathbf{M}\mathrm{H'}X} \& {\mathbf{M}\mathrm{G}\mathbf{M}\mathrm{H}\mathbf{M}\mathrm{H'}X} \\
	\& {\mathbf{M}\mathrm{L}Y} \& {\mathbf{M}\mathrm{GH}Y} \& {\mathbf{M}\mathrm{G}\mathbf{M}\mathrm{H}Y} \\
	\&\&\& {\mathbf{M}\mathrm{G}Z}
	\arrow["{\mathbf{M}\mathrm{G}\mathbf{m}^{-1}}", from=1-4, to=2-4]
	\arrow["{\mathbf{M}\mathrm{G}\mathbf{M}\mathrm{H}x}", from=2-4, to=3-4]
	\arrow["{\mathbf{M}\mathrm{G}y}", from=3-4, to=4-4]
	\arrow["{\mathbf{m}^{-1}}", from=3-3, to=3-4]
	\arrow["{\mathbf{m}^{-1}\mathbf{M}\mathrm{H'}}", from=2-3, to=2-4]
	\arrow["{\mathbf{M}\mathrm{GH}x}", from=2-3, to=3-3]
	\arrow["{\mathbf{m}^{-1}}", from=1-3, to=2-3]
	\arrow["{\mathbf{m}^{-1}}", from=1-3, to=1-4]
	\arrow["{\mathbf{M}(\mathrm{lH'})_{X}}", from=1-2, to=1-3]
	\arrow["{\mathbf{M}\mathrm{l}_{Y}}", from=3-2, to=3-3]
	\arrow["{\mathbf{m}^{-1}}"{description}, from=1-2, to=2-2]
	\arrow["{\mathbf{M}\mathrm{l}_{\mathbf{M}\mathrm{H}'X}}", from=2-2, to=2-3]
	\arrow["{\mathbf{M}\mathrm{L}x}", from=2-2, to=3-2]
	\arrow["{\mathbf{M}\mathrm{f}_{X}}", from=1-1, to=1-2]
\end{tikzcd},\]
which in turn commutes due to coherence of $\mathbf{M}$.
\end{proof}

\begin{corollary}\label{RigidMonoids}
 There is a canonical monoid morphism
 \[
 \omega_{X,X}: \mathbb{W}(\Hom{-X,X}) \rightarrow [X,X],
 \]
 which is an isomorphism if $\csym{C}$ is rigid.

 Similarly, for any $X,Y \in \mathbf{M}$ we obtain a morphism of $\mathbb{W}(\Hom{-X,X})$-$\mathbb{W}(\Hom{-X,X})$-bimodules
 \[
 \omega_{X,Y}: \mathbb{W}(\Hom{-X,Y}) \rightarrow [X,Y],
 \]
 which is an isomorphism if $\csym{C}$ is rigid.
\end{corollary}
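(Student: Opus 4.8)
The plan is to read off both assertions from the proposition constructing $\omega_{X,Y}$ (stated just before Proposition~\ref{NotationTambara}), from Proposition~\ref{NotationTambara} itself, and from the strong monoidality of $\mathbb{W}$. First I would recall that, by the corollary following Proposition~\ref{AbstractUnitality}, the triple $(\Hom{-X,X},\euler{c}_{X,X,X},\euler{u}_{X})$ is a monoid in $[\csym{C}^{\on{opp}},\mathbf{Vec}_{\Bbbk}]$, and that $\mathbb{W}:[\csym{C}^{\on{opp}},\mathbf{Vec}_{\Bbbk}]^{\on{rev}}\rightarrow\tam$ is strong monoidal (Section~\ref{s101}); hence $\mathbb{W}(\Hom{-X,X})$ is a monoid in $\tam$, with multiplication $\mathbb{W}(\euler{c}_{X,X,X})$ precomposed with the multiplicativity coherence cell of $\mathbb{W}$, and with unit $\mathbb{W}(\euler{u}_{X})$ precomposed with the unitality coherence cell $\csym{C}(-,-)\xiso\mathbb{W}(\csym{C}(-,\mathbb{1}))$. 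On the other side, $[X,X]$ is a monoid by Proposition~\ref{EndMonoid}, with both its multiplication $\mathtt{m}$ and its unit $\mathtt{e}$ induced by composition in $\mathbf{M}$. Since $\omega_{X,X}$ is already a morphism in $\tam$ and is an isomorphism in the rigid case by the proposition constructing $\omega_{X,Y}$ (taking $Y=X$ there), only compatibility with the monoid structures remains.

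Compatibility with multiplication is precisely the commutativity of the outer rectangle of the diagram in Proposition~\ref{NotationTambara} specialized to $X=Y=Z$: its top edge is, by definition of the monoidal structure on $\mathbb{W}$, the multiplication of $\mathbb{W}(\Hom{-X,X})$, and $\mathtt{a}_{X,X,X}$, the Tambara morphism induced by composition, is exactly $\mathtt{m}$ of Proposition~\ref{EndMonoid}. For the unit I would check directly that $\omega_{X,X}$ composed with the unit of $\mathbb{W}(\Hom{-X,X})$ equals $\mathtt{e}$: unwinding Definition~\ref{AbstractUnit}, the unitality coherence cell of $\mathbb{W}$, and the defining formula for $\omega_{X,X}$, the image of a morphism $\mathrm{f}$ is $(\mathbf{M}\mathrm{f})_{X}$, which is the formula for $\mathtt{e}$ in Proposition~\ref{EndMonoid}; this is the ``similar statement for the unitality maps'' alluded to after Proposition~\ref{RepresentableMonoids}, and the verification is a short element chase using the coherence axioms of $\mathbf{M}$. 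Together this shows $\omega_{X,X}$ is a monoid morphism, invertible when $\csym{C}$ is rigid.

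For the bimodule statement, $\Hom{-X,Y}$ carries the evident bimodule structure with actions $\euler{c}_{X,Y,Y}$ and $\euler{c}_{X,X,Y}$, the module and bimodule axioms being instances of Proposition~\ref{AbstractAssoc} and Proposition~\ref{AbstractUnitality} together with their right-handed analogues; applying the strong monoidal $\mathbb{W}$ transports this to a bimodule structure on $\mathbb{W}(\Hom{-X,Y})$ over the monoids $\mathbb{W}(\Hom{-Y,Y})$ and $\mathbb{W}(\Hom{-X,X})$. Meanwhile $[X,Y]$ is a $[Y,Y]$-$[X,X]$-bimodule by Proposition~\ref{HomBimod}, which I would regard as a bimodule over the same monoids via restriction along the monoid morphisms $\omega_{Y,Y}$ and $\omega_{X,X}$ just produced. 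That $\omega_{X,Y}$ respects these actions is then Proposition~\ref{NotationTambara} with $(X,Y,Z)=(X,Y,Y)$ for the left action and $(X,Y,Z)=(X,X,Y)$ for the right action, once $\mathtt{a}_{X,Y,Y}$ and $\mathtt{a}_{X,X,Y}$ are identified with the actions $\mathtt{la}$ and $\mathtt{ra}$ of Proposition~\ref{HomBimod} (all of them being induced by composition in $\mathbf{M}$), and the isomorphism claim in the rigid case is again immediate from the proposition constructing $\omega_{X,Y}$. I expect the only genuinely fiddly point — the main, purely bookkeeping obstacle — to be matching the two parallel descriptions of multiplications, units and actions (Day-convolution-theoretic on the source, composition-theoretic on the target) while carefully tracking the coherence cells of $\mathbb{W}$; there should be no conceptual difficulty beyond what is already packaged into Propositions~\ref{NotationTambara}, \ref{EndMonoid}, and~\ref{HomBimod}.
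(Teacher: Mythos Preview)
Your proposal is correct and matches the paper's intended argument: the corollary is stated without proof precisely because it follows by specializing Proposition~\ref{NotationTambara} (to $X=Y=Z$ for multiplicativity, and to $(X,Y,Y)$, $(X,X,Y)$ for the bimodule actions), combined with the proposition constructing $\omega_{X,Y}$ and the strong monoidality of $\mathbb{W}$, together with a direct unitality check. Your identification of $\mathtt{a}_{X,X,X}$ with $\mathtt{m}$ and of $\mathtt{a}_{X,Y,Y}$, $\mathtt{a}_{X,X,Y}$ with $\mathtt{la}$, $\mathtt{ra}$ is exactly the point, and your remark that the only work is bookkeeping the coherence cells of $\mathbb{W}$ is accurate.
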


Thus, our results are a generalization of those of \cite{Os}.

\subsection{MMMTZ coalgebras and their variants}\label{s103}

First, we give a brief summary of the setting of $2$-representation theory as studied in \cite{MMMT}, \cite{MMMTZ1}.

\begin{definition}
 A ($\Bbbk$-linear) category $\mathcal{C}$ is said to be {\it finitary} if it is Hom-finite, Cauchy complete and admits an additive generator, i.e. an object $X \in \mathcal{C}$ such that every $Y$ in $\mathcal{C}$ is a direct summand of $X^{\oplus m}$ for some $m \geq 0$.

 A category $\mathcal{A}$ is said to be {\it finite abelian} if it is equivalent to the finite cocompletion of a finitary category.
\end{definition}
As a consequence, the category of projective objects in a finite abelian category is finitary.
 If $A$ is a finite-dimensional algebra and $\mathcal{A}$ the category with one object whose endomorphism algebra is $A$, then the Cauchy completion $\mathcal{A}^{c}$ is finitary and equivalent to $\on{proj-}\!A$. Up to equivalence any finitary category $\mathcal{C}$ is of this form: we have $\mathcal{C} \simeq \mathcal{A}^{c}$ if we let $A := \on{End}_{\mathcal{C}}(X)$.
 As a consequence, every finite abelian category is equivalent to the category $\on{mod--}\!A$ of finitely generated modules over a finite-dimensional algebra $A$.

 A $\Bbbk$-linear bicategory $\csym{B}$ is {\it finitary} if it has finitely many objects, and for all $\mathtt{i,j} \in \csym{B}$, the category $\csym{B}(\mathtt{i,j})$ is finitary. A {\it finitary birepresentation} ($2$-representation in the strict setting) is a pseudofunctor $\mathbf{M}:\csym{B} \rightarrow \mathbf{Cat}_{\Bbbk}$ such that, for all $\mathtt{i}$, the category $\mathbf{M}(\mathtt{i})$ is finitary. $\csym{B}$ is said to be {\it fiab} (fiat in the strict case) if all of its $1$-morphisms have both left and right adjoints.

 We may associate to $\csym{B}$ the monoidal category $\csym{B}_{\circ} = \bigoplus_{\mathtt{i,j}} \csym{B}(\mathtt{i,j})$. If $\csym{B}$ is fiab, then $\csym{B}_{\circ}$ is rigid. A birepresentation $\mathbf{M}$ gives a $\csym{B}_{\circ}$-module category $\mathbf{M}_{\circ} = \bigoplus_{\mathtt{i}} \mathbf{M}(\mathtt{i})$. Given birepresentations $\mathbf{M,N}$, we have $\mathbf{M} \simeq \mathbf{N}$ if and only if $\mathbf{M}_{\circ} \simeq \mathbf{N}_{\circ}$. Thus, classification problems for finitary birepresentations are equivalent to classification problems for finitary module categories over finitary monoidal categories.

 An {\it ideal $\mathbf{I}$ in a birepresentation $\mathbf{M}$} is a collection of (two-sided) ideals of categories $\mathbf{I}_{\mathtt{i}} \vartriangleleft \mathbf{M}(\mathtt{i})$ such that for any $\mathtt{i,j}$ and any $\mathrm{F} \in \csym{B}(\mathtt{i,j})$, we have $\mathbf{M}\mathrm{F}(\mathbf{I}_{\mathtt{i}}) \subseteq \mathbf{I}_{\mathtt{j}}$. A birepresentation $\mathbf{M}$ is said to be {\it simple transitive} if its only ideals are the zero ideal and $\mathbf{M}$ itself.

 Under the above correspondence of birepresentations with module categories, we obtain the following notion:
 \begin{definition}
  An {\it ideal of a $\csym{C}$-module category $\mathbf{M}$} is an ideal of categories $\mathbf{I} \vartriangleleft \mathbf{M}$ such that, for any $\mathrm{F} \in \csym{C}$, we have $\mathbf{M}\mathrm{F}(\mathbf{I}) \subseteq \mathbf{I}$.

  A $\csym{C}$-module category $\mathbf{M}$ is {\it simple transitive} if its only ideals are the zero ideal and $\mathbf{M}$ itself.
 \end{definition}

 \begin{remark}
  We may also recast the notion of a {\it transitive} birepresentation in the setting of $\csym{C}$-module categories: we say that a $\csym{C}$-module category $\mathbf{M}$ is {\it transitive} if any non-zero object of $\mathbf{M}$ is a cyclic generator for $\mathbf{M}$.

  Following \cite[Lemma~4]{MM5}, any simple transitive $\csym{C}$-module category is transitive.
 \end{remark}

 Given a category $\mathcal{C}$ and an ideal $\mathcal{I}$ in $\mathcal{C}$, using matrix notation for morphisms between direct sums, we have
 \[
  \mathcal{I}\left(\bigoplus_{i=1}^{k} X_{i}, \bigoplus_{j=1}^{l} Y_{j}\right) =
  \begin{pmatrix}
   \mathcal{I}(X_{1},Y_{1}) & \mathcal{I}(X_{1},Y_{2}) & \cdots & \mathcal{I}(X_{1},Y_{l}) \\
   \mathcal{I}(X_{2},Y_{1}) & \mathcal{I}(X_{2},Y_{2}) & \cdots & \mathcal{I}(X_{2},Y_{l}) \\
   \vdots & \vdots & \ddots & \vdots \\
   \mathcal{I}(X_{k},Y_{1}) & \mathcal{I}(X_{k},Y_{2}) & \cdots & \mathcal{I}(X_{k},Y_{l})
  \end{pmatrix}
 \]
 For a proof, see e.g. \cite[Lemma~A.3.4]{ASS}. This gives a bijective correspondence between ideals in $\mathcal{C}$ and ideals in $\mathcal{C}^{c}$.
 \begin{corollary}
  There is a bijective correspondence between ideals in $\mathbf{M}$ and ideals in $\mathbf{M}^{c}$.
 \end{corollary}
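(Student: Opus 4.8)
The plan is to upgrade the bijection between ideals in $\mathbf{M}$ and ideals in $\mathbf{M}^{c}$ — which holds at the level of plain $\Bbbk$-linear categories by the matrix description recalled just above (cf. \cite[Lemma~A.3.4]{ASS}) — to a bijection between $\csym{C}$-module ideals. Recall from the discussion preceding Corollary~\ref{CauchyTrick} that the $\csym{C}$-module structure on $\mathbf{M}$ extends canonically to $\mathbf{M}^{c}$ (see \cite[Section~3.2]{Str}) in such a way that the embedding $\euler{I}\colon \mathbf{M}\hookrightarrow \mathbf{M}^{c}$ becomes a $\csym{C}$-module functor, and that on objects the extended action $\mathbf{M}^{c}\mathrm{F}$ sends a retract of a finite biproduct $\bigoplus_{i}X_{i}$ to the corresponding retract of $\bigoplus_{i}\mathbf{M}\mathrm{F}X_{i}$, while on a morphism presented by a matrix $(f_{ij})$ with $f_{ij}\in \on{Hom}_{\mathbf{M}}(X_{j},Y_{i})$ it acts entrywise, yielding the matrix $(\mathbf{M}\mathrm{F}f_{ij})$.

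First I would fix the bijection. Given a $\csym{C}$-module ideal $\mathbf{I}\vartriangleleft \mathbf{M}$, let $\mathbf{I}^{c}\vartriangleleft \mathbf{M}^{c}$ be the ideal of $\mathbf{M}^{c}$ it determines: its morphisms between biproduct objects are exactly the matrices with entries in $\mathbf{I}$, and between retracts thereof the appropriate corners obtained by composing with the defining idempotents. By the recalled result the assignment $\mathbf{I}\mapsto \mathbf{I}^{c}$ is a bijection onto the set of \emph{all} ideals of $\mathbf{M}^{c}$, with inverse given by restriction along $\euler{I}$. It remains to check that under this bijection $\csym{C}$-stability on one side is equivalent to $\csym{C}$-stability on the other. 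For the forward implication, suppose $\mathbf{I}$ is $\csym{C}$-stable, fix $\mathrm{F}\in \csym{C}$, and take a morphism $g$ of $\mathbf{I}^{c}$; writing $g$ as a matrix $(f_{ij})$ with $f_{ij}\in \mathbf{I}$ (reducing first to the biproduct case and then to retracts, which does not leave $\mathbf{I}^{c}$), the entrywise description gives $\mathbf{M}^{c}\mathrm{F}(g)=(\mathbf{M}\mathrm{F}f_{ij})$, and since $\mathbf{M}\mathrm{F}f_{ij}\in \mathbf{I}$ by hypothesis, $\mathbf{M}^{c}\mathrm{F}(g)$ again lies in $\mathbf{I}^{c}$, so $\mathbf{M}^{c}\mathrm{F}(\mathbf{I}^{c})\subseteq \mathbf{I}^{c}$.

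For the converse, if $\mathbf{I}^{c}$ is $\csym{C}$-stable, then since $\euler{I}$ is a $\csym{C}$-module functor, for $f\in \mathbf{I}$ and $\mathrm{F}\in \csym{C}$ the morphism $\euler{I}(\mathbf{M}\mathrm{F}f)$ is, up to the module coherence isomorphisms $\euler{\phi}_{\mathrm{F}}$ of $\euler{I}$, equal to $\mathbf{M}^{c}\mathrm{F}(\euler{I}f)$, which lies in $\mathbf{I}^{c}$; as $\mathbf{I}=\euler{I}^{-1}(\mathbf{I}^{c})$ is closed under pre- and post-composition by the isomorphisms $\euler{\phi}_{\mathrm{F}}$ (being a two-sided ideal) and $\euler{I}$ is faithful, $\mathbf{M}\mathrm{F}f\in \mathbf{I}$. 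Hence restriction and $(-)^{c}$ cut down to mutually inverse bijections between $\csym{C}$-module ideals of $\mathbf{M}$ and of $\mathbf{M}^{c}$, which is the assertion.

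The only mild subtlety — and the step I would be most careful about — is matching the matrix calculus of the recalled lemma with the explicit form of the extended action $\mathbf{M}^{c}\mathrm{F}$ on biproducts and on retracts; once that bookkeeping is in place, everything else follows formally from the facts that $\euler{I}$ is a $\csym{C}$-module functor and that ideals are two-sided. One could alternatively avoid choosing biproduct presentations altogether, by using that $\mathbf{M}^{c}\mathrm{F}$ is additive and that an ideal of a Cauchy complete category is determined by its ``scalar'' hom-spaces, but the matrix version is the most transparent.
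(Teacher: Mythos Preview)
Your proposal is correct and follows essentially the same approach as the paper: both use the matrix description of ideals under Cauchy completion (from \cite[Lemma~A.3.4]{ASS}) together with the fact that the extended action $\mathbf{M}^{c}\mathrm{F}$ acts entrywise on matrices, so that $\csym{C}$-stability transfers in both directions. Your write-up is somewhat more explicit than the paper's about separating the two directions of the stability check and about the role of retracts and the module-functor coherence isomorphisms of the embedding $\euler{I}$, but the underlying argument is the same.
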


 \begin{proof}
  Since (by definition of Cauchy completeness) any $\Bbbk$-linear functor preserves direct sums and idempotent splittings, in particular so does $\mathbf{M}\mathrm{F}$ for any $\mathrm{F} \in \csym{C}$. Thus, we have
 \[
  \mathbf{M}\mathrm{F}\left(\mathcal{I}\left(\bigoplus_{i=1}^{k} X_{i}, \bigoplus_{j=1}^{l} Y_{j}\right)\right) =
  \begin{pmatrix}
   \mathbf{M}\mathrm{F}(\mathcal{I}(X_{1},Y_{1})) & \mathbf{M}\mathrm{F}(\mathcal{I}(X_{1},Y_{2})) & \cdots & \mathbf{M}\mathrm{F}(\mathcal{I}(X_{1},Y_{l})) \\
   \mathbf{M}\mathrm{F}(\mathcal{I}(X_{2},Y_{1})) & \mathbf{M}\mathrm{F}(\mathcal{I}(X_{2},Y_{2})) & \cdots & \mathbf{M}\mathrm{F}(\mathcal{I}(X_{2},Y_{l})) \\
   \vdots & \vdots & \ddots & \vdots \\
   \mathbf{M}\mathrm{F}(\mathcal{I}(X_{k},Y_{1})) & \mathbf{M}\mathrm{F}(\mathcal{I}(X_{k},Y_{2})) & \cdots & \mathbf{M}\mathrm{F}(\mathcal{I}(X_{k},Y_{l}))
  \end{pmatrix}
 \]
 Thus, an ideal $\mathbf{I}$ in $\mathbf{M}^{c}$ is uniquely determined by $(\mathbf{I}(X,Y))_{X,Y \in \mathbf{M}}$, where we identify $\mathbf{M}$ with its image under the embedding $\mathbf{M} \rightarrow \mathbf{M}^{c}$. On the other hand, given an ideal $\mathbf{J}$ in $\mathbf{M}^{c}$, the collection $(\mathbf{J}(X,Y))_{X,Y \in \mathbf{M}}$ defines an ideal in $\mathbf{M}$. The mutually inverse bijections are thus given by extending to matrices and restricting to objects of $\mathbf{M}$, respectively.
 \end{proof}

 \begin{proposition}
  There is a bijective correspondence between ideals in $\mathbf{M}\star X$ and subbimodules of $[X,X]$.
 \end{proposition}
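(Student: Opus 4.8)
The plan is to factor the claimed bijection through the lattice of sub-Tambara-modules of the identity Tambara module $\Hom{-,-}_{\mathbf{M}\star X}$ and then transport it along the biequivalence $\na$. First I would identify ideals of the $\csym{C}$-module category $\mathbf{M}\star X$ with sub-Tambara-modules of $\Hom{-,-}_{\mathbf{M}\star X}$, where by the latter I mean a choice, for every pair $A,B$ of objects of $\mathbf{M}\star X$, of a subspace $\mathtt{I}(A,B)\subseteq\Hom{A,B}^{\mathbf{M}\star X}$ closed under all the structure maps of $\Hom{-,-}_{\mathbf{M}\star X}$. Unwinding the definitions, closure under the (co)profunctor structure — that is, under pre- and post-composition with arbitrary morphisms of $\mathbf{M}\star X$ — is exactly the condition that $(\mathtt{I}(A,B))_{A,B}$ be a categorical ideal, while closure under the Tambara structure $\ta^{\Hom{-,-}}_{\mathrm{H};A,B}=(\mathbf{M}\mathrm{H})_{A,B}$ is exactly the condition $\mathbf{M}\mathrm{H}\big(\mathtt{I}(A,B)\big)\subseteq\mathtt{I}(\mathbf{M}\mathrm{H}A,\mathbf{M}\mathrm{H}B)$ for all $\mathrm{H}\in\csym{C}$; together these say precisely that $(\mathtt{I}(A,B))_{A,B}$ is an ideal of $\mathbf{M}\star X$, so this identification is tautologically a bijection.

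Next I would invoke Theorem~\ref{MainThm}: since $\na$ is a biequivalence, the hom-functor $\na_{\mathbf{M}\star X,\mathbf{M}\star X}\colon\csym{C}\!\on{-Tamb}(\mathbf{M}\star X,\mathbf{M}\star X)\to\on{Bimod}(\tam)([X,X],[X,X])$ is an equivalence of categories (this being established by Proposition~\ref{MainLocEss} and Proposition~\ref{MainLocFF}). By construction $\na$ sends $\Hom{-,-}_{\mathbf{M}\star X}$ to the regular bimodule $[X,X]$ — indeed $\widehat{\na}$ is strictly unital, as observed in the proof of Theorem~\ref{LaxPseudoFunctorialityNa} — and sends a Tambara module $\mathtt{\Psi}$ to $\mathtt{\Psi}[X,X]$, whose component at $(\mathrm{F},\mathrm{G})$ is $\mathtt{\Psi}(\mathbf{M}\mathrm{F}X,\mathbf{M}\mathrm{G}X)$. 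An equivalence of categories induces an order isomorphism of subobject posets, and in both $\csym{C}\!\on{-Tamb}(\mathbf{M}\star X,\mathbf{M}\star X)$ and $\on{Bimod}(\tam)([X,X],[X,X])$ a morphism is a monomorphism if and only if all of its components are injective (the forgetful functors to the relevant functor categories valued in $\mathbf{Vec}_{\Bbbk}$ are faithful and monomorphisms there are pointwise), so every subobject has a canonical representative given by subspaces: a sub-Tambara-module on the left, a subbimodule of $[X,X]$ on the right. Composing with the bijection of the previous paragraph produces the desired correspondence, which explicitly sends an ideal $\mathbf{I}\vartriangleleft\mathbf{M}\star X$ to the subbimodule of $[X,X]$ with components $\mathbf{I}(\mathbf{M}\mathrm{F}X,\mathbf{M}\mathrm{G}X)$, $\mathrm{F},\mathrm{G}\in\csym{C}$.

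The substance of the argument is the routine unwinding in the first paragraph together with the standard fact that monomorphisms are detected on components; the point that will need the most care is the interaction between the bimodule structure of $[X,X]$ and the Tambara structure of $\Hom{-,-}_{\mathbf{M}\star X}$. A subbimodule of $[X,X]$ is a subobject \emph{in} $\on{Bimod}(\tam)$, hence in particular a sub-Tambara-module of $[X,X]$, so closure under the left and right $[X,X]$-actions (which amounts to closure under arbitrary pre- and post-composition within $\csym{C}\ostar X$) coexists with closure under $\ta^{[X,X]}$; one should explicitly remark that, transported along $\na$, these two packages of conditions together match the single package ``ideal of $\mathbf{M}\star X$ that is stable under the $\csym{C}$-action''. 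A self-contained alternative, avoiding the subobject formalism, is to construct the inverse of $\mathbf{I}\mapsto\big(\mathbf{I}(\mathbf{M}\mathrm{F}X,\mathbf{M}\mathrm{G}X)\big)_{\mathrm{F},\mathrm{G}}$ directly from the assignments $\mathtt{J}\mapsto\widehat{\mathtt{J}}$ and $\mathtt{t}\mapsto\underline{\mathtt{t}}$ of Section~\ref{s8} and the mutual-inverse property recorded in Lemma~\ref{LocalBijection}, at the cost of re-checking that these carry sub-Tambara-modules to sub-bimodules and back.
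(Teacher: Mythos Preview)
Your approach is correct and takes a genuinely different route from the paper. The paper argues directly: it lists the three axioms a subbimodule $\mathtt{\Sigma}\subseteq[X,X]$ must satisfy (subprofunctor, Tambara submodule, closed under the left and right $[X,X]$-actions), observes that closure under the $[X,X]$-actions alone already makes $(\mathtt{\Sigma}(\mathrm{F},\mathrm{G}))_{\mathrm{F},\mathrm{G}}$ a categorical ideal in $\csym{C}\ostar X$, extends this uniquely to an ideal $\widehat{\mathtt{\Sigma}}$ in $\mathbf{M}\star X$ using that every object of $\mathbf{M}\star X$ is isomorphic to one of $\csym{C}\ostar X$, and then checks by hand---using the invertibility of the coherence cells $\mathbf{m}_{\mathrm{K},\mathrm{F}}$---that the Tambara axiom for $\mathtt{\Sigma}$ is equivalent to $\csym{C}$-stability of $\widehat{\mathtt{\Sigma}}$. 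The inverse map is restriction, and the two are checked to be mutually inverse by inspection. Your main approach instead front-loads the identification of $\csym{C}$-module ideals of $\mathbf{M}\star X$ with sub-Tambara-modules of $\Hom{-,-}_{\mathbf{M}\star X}$ (which is immediate, as you say) and then obtains the bijection with subbimodules of $[X,X]$ for free from the local equivalence $\na_{\mathbf{M}\star X,\mathbf{M}\star X}$ of Theorem~\ref{MainThm}, together with the observation that monomorphisms in both categories are detected componentwise. Your route is more conceptual and avoids the coherence-cell juggling; the paper's route is independent of the biequivalence and makes the dictionary completely explicit, which is convenient for the application to simplicity in Corollary~\ref{NaSimpleSimple}. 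The ``self-contained alternative'' you sketch in your third paragraph is essentially what the paper does, though the paper handles the extension from $\csym{C}\ostar X$ to $\mathbf{M}\star X$ by a direct isomorphism argument rather than invoking the machinery of Section~\ref{s8}.
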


 \begin{proof}
  A subbimodule $\mathtt{\Sigma}$ of $[X,X]$ consists of a collection of subspaces $\mathtt{\Sigma}(\mathrm{F},\mathrm{G}) \xhookrightarrow{\euler{\iota}_{\mathrm{F,G}}} \Hom{\mathbf{M}\mathrm{F}X,\mathbf{M}\mathrm{G}X}$ such that:
  \begin{itemize}
   \item $\mathtt{\Sigma}$ yields a subprofunctor, i.e. for any morphisms $\mathrm{F \xrightarrow{f} F'}$ and $\mathrm{G \xrightarrow{g} G'}$ we have
   \[
  \Hom{\mathbf{M}\mathrm{f}_{X},\mathbf{M}\mathrm{G}X}\mathtt{\Sigma}(\mathrm{F}',\mathrm{G}) \subseteq \mathtt{\Sigma}(\mathrm{F},\mathrm{G})\text{ and }\Hom{\mathbf{M}\mathrm{F}X, \mathbf{M}\mathrm{g}_{X}}\mathtt{\Sigma}(\mathrm{F},\mathrm{G}) \subseteq \mathtt{\Sigma}(\mathrm{F},\mathrm{G}')
   \]
   \item it is a Tambara submodule, thus: $\ta^{[X,X]}_{\mathrm{K;F,G}}(\mathtt{\Sigma}(\mathrm{F,G})) \subseteq \mathtt{\Sigma}(\mathrm{KF,KG})$;
   \item it is stable under the left and right actions of $[X,X]$ on itself, i.e. the images of the maps
   \[
   \begin{aligned}
   &\begin{tikzcd}[ampersand replacement = \&]
	{\int^{\mathrm{H}} \Hom{\mathbf{M}\mathrm{F}X,\mathbf{M}\mathrm{H}X} \kotimes \mathtt{\Sigma}(\mathrm{H},\mathrm{G})} \& {\int^{\mathrm{H}} \Hom{\mathbf{M}\mathrm{F}X,\mathbf{M}\mathrm{H}X} \kotimes \Hom{\mathbf{M}\mathrm{H}X,\mathbf{M}\mathrm{G}X}} \& {\Hom{\mathbf{M}\mathrm{F}X,\mathbf{M}\mathrm{G}X}}
	\arrow["{[X,X] \kotimes \iota}", shift left=1, from=1-1, to=1-2]
	\arrow["{\mathtt{m}_{\mathrm{F,G}}}", shift left=1, from=1-2, to=1-3]
\end{tikzcd} \\
 &\begin{tikzcd}[ampersand replacement=\&]
	{\int^{\mathrm{H}}\mathtt{\Sigma}(\mathrm{F,H}) \kotimes \Hom{\mathbf{M}\mathrm{H}X,\mathbf{M}\mathrm{G}X}} \& {\int^{\mathrm{H}} \Hom{\mathbf{M}\mathrm{F}X,\mathbf{M}\mathrm{H}X} \kotimes \Hom{\mathbf{M}\mathrm{H}X,\mathbf{M}\mathrm{G}X}} \& {\Hom{\mathbf{M}\mathrm{F}X,\mathbf{M}\mathrm{G}X}}
	\arrow["{\iota \kotimes [X,X]}", shift left=1, from=1-1, to=1-2]
	\arrow["{\mathtt{m}_{\mathrm{F,G}}}", shift left=1, from=1-2, to=1-3]
\end{tikzcd}
\end{aligned}
\]
 are contained in $\mathtt{\Sigma}(\mathrm{F,G})$. Equivalently, for any $a \in \Hom{\mathbf{M}\mathrm{F}X,\mathbf{M}\mathrm{H}X}, b \in \mathtt{\Sigma}(\mathrm{H,K}), c \in \Hom{\mathbf{M}\mathrm{K}X,\mathbf{M}\mathrm{G}X}$, we have $\iota(b) \circ a \in \mathtt{\Sigma}(\mathrm{F,K})$ and $c \circ \iota(b) \in \mathtt{\Sigma}(\mathrm{H},\mathrm{G})$.
  \end{itemize}
  The last axiom subsumes the first and shows that the collection $(\mathtt{\Sigma}(\mathrm{F,G}))_{\mathrm{F,G}}$ defines an ideal in the category $\csym{C}\ostar X$. Since any object in $\mathbf{M}\star X$ is isomorphic to an object of $\csym{C}\ostar X$, an ideal $\widehat{\mathtt{\Sigma}}$ in $\csym{C}\ostar X$ uniquely determines an ideal in $\mathbf{M}\star X$, which coincides with $\mathtt{\Sigma}$ on $\csym{C}\ostar X$. It remains to show that for $a$ and $\mathrm{K}$ as above, the morphism $\mathbf{M}\mathrm{K}a$ lies in $\widehat{\mathtt{\Sigma}}(\mathbf{M}\mathrm{K}\mathbf{M}\mathrm{F},\mathbf{M}\mathrm{K}\mathbf{M}\mathrm{G})$. By the second axiom, we have
  \[
\ta^{[X,X]}_{\mathrm{K;F,G}}(a) = \mathbf{m}_{\mathrm{K,G}} \circ \mathbf{M}\mathrm{K}a \circ \mathbf{m}_{\mathrm{K,F}}^{-1} \in \mathtt{\Sigma}(\mathrm{KF,KG}) = \widehat{\mathtt{\Sigma}}(\mathbf{M}\mathrm{KF}X, \mathbf{M}\mathrm{KG}X).
  \]
  Since $\widehat{\mathtt{\Sigma}}$ is an ideal in the category $\mathbf{M}\star X$, and $\mathbf{m}_{\mathrm{K,F}}^{-1},\mathbf{m}_{\mathrm{K,G}}$ are isomorphisms, we have $\mathbf{M}\mathrm{K}a \in \widehat{\mathtt{\Sigma}}(\mathbf{M}\mathrm{K}\mathbf{M}\mathrm{F},\mathbf{M}\mathrm{K}\mathbf{M}\mathrm{G})$.

  Conversely, given an ideal $\mathbf{S}$ in the $\csym{C}$-module category $\mathbf{M}\star X$, we define a subbimodule $\underline{\mathbf{S}}$ as the collection $(\mathbf{S}(\mathrm{F,G}))_{\mathrm{F,G}}$.
  Since $\mathbf{S}$ is an ideal in the underlying category, we find that the third, and thus also the first, axiom for a subbimodule is satisfied. To see that also the second axiom is satisfied, we again use the invertibility of $\mathbf{m}_{\mathrm{K,F}}^{-1},\mathbf{m}_{\mathrm{K,G}}$ to conclude that, since $\mathbf{M}\mathrm{K}a \in \mathbf{S}(\mathbf{M}\mathrm{K}\mathbf{M}\mathrm{F},\mathbf{M}\mathrm{K}\mathbf{M}\mathrm{G})$, we also have
  \[
\ta^{[X,X]}_{\mathrm{K;F,G}}(a) = \mathbf{m}_{\mathrm{K,G}} \circ \mathbf{M}\mathrm{K}a \circ \mathbf{m}_{\mathrm{K,F}}^{-1} \in \mathbf{S}(\mathrm{KF,KG}) =: \widehat{\mathtt{\Sigma}}(\mathbf{M}\mathrm{KF}X, \mathbf{M}\mathrm{KG}X),
  \]
 showing that $\underline{\mathbf{S}}$ indeed defines a subbimodule of $[X,X]$. Clearly, $\underline{\widehat{\mathtt{\Sigma}}} = \mathtt{\Sigma}$ and $\widehat{\underline{\mathbf{S}}} = \mathbf{S}$.
 \end{proof}

 \begin{definition}
  We say that a monoid object $A$ in a monoidal category $\csym{C}$ with a zero object $0$ is a {\it simple monoid} if and only if its only subobjects in $A\!\on{-bimod-}\!A$ are $A$ itself and $0$.
 \end{definition}

 In particular, we conclude that the natural notion of simplicity of a $\csym{C}$-module category arising from the perspective of bimodules in $\tam$ coincides with the notion introduced in \cite{MM5} and extensively studied in the field of $2$-representation theory:
 \begin{corollary}
  A $\csym{C}$-module category $\mathbf{M}$ is simple transitive if and only if any non-zero object $X$ of $\mathbf{M}$ is a cyclic generator for $\mathbf{M}$ and the obtained monoid $[X,X]$ in $\tam$ is simple.
 \end{corollary}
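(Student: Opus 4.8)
The plan is to deduce the corollary from two results established just above — the bijective correspondence between the ideals of a $\csym{C}$-module category and the ideals of its Cauchy completion, and the bijective correspondence between the ideals of $\mathbf{M}\star X$ and the subbimodules of the monoid $[X,X]$ in $\tam$ — together with the remark that every simple transitive $\csym{C}$-module category is transitive. With these in hand only light bookkeeping remains.

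First I would assemble the relevant chain of bijections. Fix a non-zero object $X\in\mathbf{M}$ that is a cyclic generator, so that $\mathbf{M}\simeq(\mathbf{M}\star X)^{c}$ as $\csym{C}$-module categories. An equivalence of $\csym{C}$-module categories sends ideals to ideals and induces a bijection on them; composing this with the correspondence between ideals of $\mathbf{M}\star X$ and ideals of $(\mathbf{M}\star X)^{c}$, and then with the correspondence between ideals of $\mathbf{M}\star X$ and subbimodules of $[X,X]$, yields a bijection between the ideals of $\mathbf{M}$ and the subbimodules of $[X,X]$. Tracing through the constructions — extension of an ideal to matrices over direct sums, restriction of an ideal of $\mathbf{M}\star X$ to $\csym{C}\ostar X$, and the passage of a subbimodule $\mathtt{\Sigma}$ to the collection $(\mathtt{\Sigma}(\mathrm{F},\mathrm{G}))_{\mathrm{F},\mathrm{G}}$ — this bijection sends the zero ideal to the zero subbimodule and $\mathbf{M}$ to $[X,X]$, hence restricts to a bijection between the proper non-zero ideals of $\mathbf{M}$ and the proper non-zero subbimodules of $[X,X]$.

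The two implications are then immediate. If $\mathbf{M}$ is simple transitive, then by the remark it is transitive, so every non-zero $X\in\mathbf{M}$ is a cyclic generator, and for each such $X$ the bijection above shows that $[X,X]$ has no proper non-zero subbimodule, i.e. $[X,X]$ is a simple monoid. Conversely, if every non-zero $X\in\mathbf{M}$ is a cyclic generator and the resulting monoid $[X,X]$ is simple, then the same bijection shows $\mathbf{M}$ has no proper non-zero ideal, which together with transitivity is precisely the definition of simple transitivity.

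I do not anticipate a genuine obstacle, since the substantive content is already contained in the two preceding results; the only point needing care is the compatibility of the three bijections — in particular that the passage through $(\mathbf{M}\star X)^{c}$ and the identification $\csym{C}\ostar X\simeq\mathbf{M}\star X$ respect the trivial ideals — and the treatment of degenerate cases, namely the zero module category and ideals of $\mathbf{M}\star X$ supported off $\csym{C}\ostar X$, the latter already ruled out in the proof of the cited proposition.
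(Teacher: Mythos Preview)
Your proof is correct and follows exactly the route the paper intends: the corollary is stated without proof because it is an immediate combination of the bijection between ideals of $\mathbf{M}\star X$ and subbimodules of $[X,X]$, the bijection between ideals of a $\csym{C}$-module category and those of its Cauchy completion, and the remark that simple transitive implies transitive. Your chain of bijections and the check that trivial ideals correspond to trivial subbimodules supply precisely the bookkeeping the paper omits.
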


 \begin{corollary}\label{NaSimpleSimple}
  The pseudofunctor $\na$ induces a bijection
  \[
   \setj{\text{Simple transitive }\csym{C}\text{-module categories}}/\!\sim \; \leftrightarrow \setj{\text{Simple monoids in } \tam}/\sim_{\on{Morita}}
  \]
 \end{corollary}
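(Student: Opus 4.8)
The plan is to assemble Corollary~\ref{NaSimpleSimple} from the pieces already established in this section, so the proof is mostly a matter of combining bijections correctly and checking that they are mutually compatible. First I would recall the correspondences just proved: the bijection between ideals in $\mathbf{M}$ and ideals in $\mathbf{M}^{c}$, the bijection between ideals in $\mathbf{M}\star X$ and subbimodules of $[X,X]$, and the observation (from the remark following the definition of simple transitive module categories, together with \cite[Lemma~4]{MM5}) that a simple transitive $\csym{C}$-module category is transitive, so in particular every non-zero object is a cyclic generator. Combining the first two bijections with Definition~\ref{CyclicCauchyDefinition}, which says $\mathbf{M} \simeq (\mathbf{M}\star X)^{c}$ whenever $X$ is a cyclic generator, I obtain: for a cyclic $\csym{C}$-module category $\mathbf{M}$ with chosen cyclic generator $X$, the ideals of $\mathbf{M}$ correspond bijectively to the subbimodules of $[X,X]$, and this bijection sends the zero ideal to the zero subobject and $\mathbf{M}$ to $[X,X]$ itself. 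Hence $\mathbf{M}$ is simple transitive if and only if $X$ is a cyclic generator and $[X,X]$ is a simple monoid in $\tam$ --- this is exactly the penultimate corollary, which I would cite rather than reprove.

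Next I would address well-definedness of the purported map on equivalence classes. Given a simple transitive $\mathbf{M}$, any choice of non-zero $X \in \mathbf{M}$ is a cyclic generator, so $\na$ applied to $\mathbf{M}\star X$ gives the monoid $[X,X]$, which is simple by the above. I would note that a different choice of generator $X'$ yields a Morita equivalent monoid: indeed, Lemma~\ref{Transitive2Reps} (and the surrounding discussion in Section~\ref{s51}) shows that $[X',X] \in \on{Bimod}(\tam)([X,X],[X',X'])$ is an equivalence $1$-morphism, i.e. $[X,X]$ and $[X',X']$ are Morita equivalent. Combined with Theorem~\ref{MainConsequence}, which tells us that equivalent cyclic module categories give Morita equivalent monoids and conversely, the assignment $\mathbf{M} \mapsto [X,X]$ descends to a well-defined map from equivalence classes of simple transitive $\csym{C}$-module categories to Morita equivalence classes of monoids in $\tam$, landing in the simple ones.

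For surjectivity and injectivity I would use that $\na: \csym{C}\!\on{-Tamb}_{c} \rightarrow \on{Bimod}(\tam)$ is a biequivalence (Theorem~\ref{MainThm}), hence essentially surjective: given a simple monoid $\mathtt{T}$ in $\tam$, there is a very cyclic $\csym{C}$-module category $\mathbf{M}$ with $\na(\mathbf{M}) \simeq \mathtt{T}$; passing to $\mathbf{M}^{c}$ (which does not change the associated monoid up to Morita equivalence, by Corollary~\ref{CauchyTrick} and Theorem~\ref{MainConsequence}) and invoking simplicity of $\mathtt{T}$ via the ideal-subbimodule correspondence shows $\mathbf{M}^{c}$ is simple transitive and maps to the Morita class of $\mathtt{T}$. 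Injectivity is immediate from Theorem~\ref{MainConsequence}: if the monoids $[X,X]$ and $[Y,Y]$ associated to simple transitive $\mathbf{M},\mathbf{N}$ are Morita equivalent, then $\mathbf{M} \simeq \mathbf{N}$. The only genuine subtlety --- and the step I expect to be the main obstacle to write cleanly --- is bookkeeping the passage between $\mathbf{M}$, $\mathbf{M}\star X$ and its Cauchy completion: one must check that the simple-transitivity hypothesis survives these transitions in both directions (using the two ideal-correspondence results and the fact that a simple transitive module category is automatically Cauchy complete, as a finitary or at least idempotent-split category, or alternatively by replacing $\mathbf{M}$ with $(\mathbf{M}\star X)^{c}$ throughout). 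Once that is pinned down, the corollary follows formally.
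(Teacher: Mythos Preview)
Your proposal is correct and follows exactly the approach the paper intends: the corollary is stated without proof because it is meant to be read off directly from the preceding corollary (simple transitive $\Leftrightarrow$ every non-zero object is a cyclic generator and $[X,X]$ is simple) combined with Theorem~\ref{MainConsequence}. Your assembly of well-definedness, injectivity and surjectivity from these ingredients is precisely what is implicit in the paper.

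One caveat on the subtlety you flag at the end: your parenthetical claim that ``a simple transitive module category is automatically Cauchy complete, as a finitary or at least idempotent-split category'' is not justified in the generality of the paper --- there is no finitary hypothesis in force here, and simple transitivity alone does not force idempotent splitting or additivity. The correct resolution is the one you also mention: the statement should be read for Cauchy complete $\csym{C}$-module categories, which is consistent with the paper's standing convention that ``cyclic'' (Definition~\ref{CyclicCauchyDefinition}) presupposes Cauchy completeness, and with the finitary context motivating this section. Under that reading, Theorem~\ref{MainConsequence} applies directly and the bookkeeping you worry about disappears: $\mathbf{M} \simeq (\mathbf{M}\star X)^{c}$ for any cyclic generator $X$, and the ideal-correspondence results transfer simple transitivity across this equivalence and onto simplicity of $[X,X]$.
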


 The fundamental difference between Ostrik's approach and the MMMTZ approach is not the oppositization, resulting from \cite{Os} considering presheaves of the form $\Hom{-X,X}$ and \cite{MMMT} considering copresheaves of the form $\Hom{X,-X}$, resulting in the former giving algebras and the latter coalgebras. It is rather the fact that, due to the setting of finitary categories which severely restricts representability of functors, the coalgebras obtained in \cite{MMMT} and \cite{MMMTZ1} do lie in a cocompletion, which in the setting of $2$-representation theory is known as {\it abelianization.}

 We now show that abelianizations are in fact finite (co)completions given by finite-dimensional (co)presheaves on the studied finitary monoidal category $\csym{C}$.

 The reductions of calculations of coends presented in Lemma~\ref{FinCoend} and Corollary~\ref{GeneratorReduction} can be deduced from \cite[Proposition~5.1.7]{KL}. Our results are less general, and thus admit simpler proofs, which we present below.

 \begin{lemma}\label{FinCoend}
  Let $\mathcal{C}$ be a $\Bbbk$-linear category $\mathcal{C}$ and let $\euler{F}: \mathcal{C}^{\on{op}} \kotimes \mathcal{C} \rightarrow \mathbf{Vec}_{\Bbbk}$ be a $\Bbbk$-linear functor.
  Let $V \in \mathbf{Vec}_{\Bbbk}$ and let $\setj{\sigma_{X}: \euler{F}(X,X) \rightarrow V \; | \; X \in \on{Ob}\mathcal{C}}$ be an extranatural collection.
  For any $X,Y \in \on{Ob}\mathcal{C}$, if the biproduct $X \oplus Y$ exists then, under the decomposition $\euler{F}(X \oplus Y, X \oplus Y) \simeq \euler{F}(X,X) \oplus \euler{F}(X,Y) \oplus \euler{F}(Y,X) \oplus \euler{F}(Y,Y)$, the map $\sigma_{X \oplus Y}$ is given by the matrix $\begin{pmatrix} \sigma_{X} & 0 & 0 & \sigma_{Y} \end{pmatrix}$.
 \end{lemma}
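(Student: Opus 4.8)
The plan is to exploit the two kinds of naturality that $\sigma$ enjoys — ordinary naturality of the collection is not available, but \emph{extranaturality} is, and one combines it with the matrix description of morphisms between biproducts. First I would fix $X, Y$ with $X \oplus Y$ existing, and write $i_X, i_Y$ for the inclusions and $p_X, p_Y$ for the projections, so that $i_X p_X + i_Y p_Y = \on{id}_{X \oplus Y}$, $p_X i_X = \on{id}_X$, $p_Y i_Y = \on{id}_Y$, $p_X i_Y = 0$, $p_Y i_X = 0$. Applying the functor $\euler{F}$ to these (in the appropriate variance in each slot) gives the claimed direct sum decomposition $\euler{F}(X\oplus Y, X \oplus Y) \simeq \bigoplus \euler{F}(\pm,\pm)$ with the four ``matrix entry'' maps $\euler{F}(i_?, p_?): \euler{F}(?,?) \to \euler{F}(X \oplus Y, X \oplus Y)$ and their transposes; here I would record $\euler{F}(i_A, p_B)$ as the composite landing in the $(A,B)$ block, using $\Bbbk$-linearity of $\euler{F}$ to split the identity on $\euler{F}(X \oplus Y, X \oplus Y)$ into the four idempotents $\euler{F}(i_A \circ p_A, i_B \circ p_B)$.

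The heart of the argument is the extranaturality square for the collection $\setj{\sigma_Z}$: for any morphism $f: Z \to W$ in $\mathcal{C}$ one has $\sigma_Z \circ \euler{F}(f, Z) = \sigma_W \circ \euler{F}(W, f)$, i.e. $\sigma_Z \circ \euler{F}(f, \on{id}_Z) = \sigma_W \circ \euler{F}(\on{id}_W, f)$ as maps $\euler{F}(W,Z) \to V$. I would instantiate this with $f = i_X: X \to X \oplus Y$ and with $f = i_Y: Y \to X \oplus Y$ to compute $\sigma_{X \oplus Y} \circ \euler{F}(i_A, \on{id}) = \sigma_A \circ \euler{F}(\on{id}, p_A)$ after precomposing suitably; and similarly with $f = p_X$, $f = p_Y$ on the other side. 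Concretely, to read off the $(A,B)$-block of $\sigma_{X\oplus Y}$ one precomposes $\sigma_{X \oplus Y}$ with the block-inclusion $\euler{F}(p_B, i_A): \euler{F}(A,B) \to \euler{F}(X\oplus Y, X \oplus Y)$ (contravariant in the first slot, covariant in the second); extranaturality with $f = i_A$ turns the first factor $\euler{F}(p_B, -)$ into $\euler{F}(-, i_A)$ applied after $\sigma_A$, and then extranaturality with $f = i_B$ (or rather its interaction with $p_B$) collapses the remaining $\euler{F}(p_B, i_B) = \euler{F}(\on{id}, \on{id})$ when $A = B$ and produces $\sigma_A$, while when $A \neq B$ we get a factor $\euler{F}$ of $p_B i_A = 0$ or $p_A i_B = 0$, forcing the block to vanish. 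This yields $\sigma_{X \oplus Y} \circ \euler{F}(p_X, i_X) = \sigma_X$, $\sigma_{X \oplus Y} \circ \euler{F}(p_Y, i_Y) = \sigma_Y$, and $\sigma_{X \oplus Y} \circ \euler{F}(p_Y, i_X) = 0 = \sigma_{X \oplus Y}\circ \euler{F}(p_X, i_Y)$, which is exactly the matrix $\begin{pmatrix} \sigma_X & 0 & 0 & \sigma_Y \end{pmatrix}$ in the stated ordering of summands.

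The step I expect to be the main obstacle — or at least the one requiring the most care — is bookkeeping the variance: $\euler{F}$ is contravariant in its first argument, so the ``inclusion of the $(A,B)$-block'' into $\euler{F}(X \oplus Y, X \oplus Y)$ is $\euler{F}(p_B, i_A)$ rather than $\euler{F}(i_B, i_A)$, and the two extranaturality identities one needs are with respect to $i_X$ (resp. $i_Y$) acting through the contravariant slot and covariant slot respectively. It is easy to apply the wrong one and get the off-diagonal blocks to survive. The cleanest way to organize this is to first prove the lemma for the diagonal blocks using a single extranaturality square with $f = i_A$, then handle the off-diagonal blocks by a second extranaturality square whose composite contains $\euler{F}$ applied to a zero morphism ($p_A i_B$ or $p_B i_A$), using $\Bbbk$-linearity of $\euler{F}$ to conclude that $\euler{F}(0) = 0$. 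Everything else — existence of the decomposition, the fact that the matrix entries determine the map — is standard biproduct calculus and need not be belabored.
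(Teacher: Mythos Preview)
Your proposal is correct and takes essentially the same approach as the paper---apply extranaturality to the biproduct inclusions and projections, then use that the cross projection--inclusion composites vanish---though the paper organizes the computation by expanding $\sigma_{X\oplus Y}=\sigma_{X\oplus Y}\circ\euler{F}(e_X+e_Y,e_X+e_Y)$ into four terms and simplifying, rather than computing each matrix entry directly as you do. One small bookkeeping slip: the inclusion of the summand $\euler{F}(A,B)$ into $\euler{F}(X\oplus Y,X\oplus Y)$ is $\euler{F}(p_A,i_B)$, not $\euler{F}(p_B,i_A)$ (exactly the variance trap you flagged), though your four final displayed equations are correct and the argument goes through as written.
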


 \begin{proof}
  Since $X \oplus Y$ is the biproduct of $X$ and $Y$, we obtain split monos $\iota_{X},\iota_{Y}$ and split epis $\pi_{X}, \pi_{Y}$. Let $e_{X} = \iota_{X} \circ \pi_{X}$ and $e_{Y} = \iota_{Y} \circ \pi_{Y}$ be the resulting idempotent endomorphisms of $X \oplus Y$. Recall that $e_{X}\circ e_{Y} = 0 = e_{Y} \circ e_{X}$.

  From the extranaturality equations
\begin{multicols}{2}
\begin{enumerate}
 \item $\sigma_{X \oplus Y} \circ \euler{F}(X \oplus Y,\iota_{Y}) = \sigma_{Y} \circ \euler{F}(\iota_{Y}, Y)$
 \item $\sigma_{X \oplus Y} \circ \euler{F}(X \oplus Y,\iota_{X}) = \sigma_{X} \circ \euler{F}(\iota_{X}, Y)$
 \item $\sigma_{X \oplus Y} \circ \euler{F}(\pi_{Y}, X \oplus Y) = \sigma_{Y} \circ \euler{F}(Y, \pi_{Y})$
 \item $\sigma_{X \oplus Y} \circ \euler{F}(\pi_{X}, X \oplus Y) = \sigma_{X} \circ \euler{F}(X, \pi_{X})$
\end{enumerate}
\end{multicols}
 one easily derives the equations $\sigma_{X \oplus Y} \circ \euler{F}(X \oplus Y, e_{Y}) = \sigma_{X \oplus Y} \circ \euler{F}(e_{Y}, X \oplus Y)$ and $\sigma_{Y} = \sigma_{X \oplus Y} \circ \euler{F}(\pi_{Y}, \iota_{Y})$, and analogous equations for $X$. This yields
 \[
 \begin{aligned}
  &\sigma_{X \oplus Y} = \sigma_{X} \circ \euler{F}(\iota_{X},\pi_{X}) + \sigma_{Y} \circ \euler{F}(\iota_{Y},\pi_{Y}).
 \end{aligned}
 \]
 Observing that $\euler{F}(\iota_{X},\pi_{X}), \euler{F}(\iota_{Y},\pi_{Y})$ are the respective split epimorphisms in the decomposition of the lemma, the result follows.
 \end{proof}

 \begin{corollary}\label{GeneratorReduction}
  If $\mathcal{C}$ admits an additive generator $Z$, then any extranatural collection $\setj{\sigma_{X}: \euler{F}(X,X) \rightarrow V \; | \; X \in \mathcal{C}}$ is completely determined by $\sigma_{Z}$. As a consequence, we have
  \[
   \int^{X} \euler{F}(X,X) \simeq \on{coeq}\left(
   \begin{tikzcd}[ampersand replacement=\&]
	{\euler{F}(Z,Z) \kotimes \Hom{Z,Z}} \& {\euler{F}(Z,Z)}
	\arrow["{\euler{F}(Z,-)_{Z,Z}}", shift left=2, from=1-1, to=1-2]
	\arrow["{\euler{F}(-,Z)_{Z,Z}}"', shift right=2, from=1-1, to=1-2]
\end{tikzcd}
   \right)
  \]
 In particular, if $\mathcal{C}$ is finitary and $\euler{F}(Z,Z)$ is finite-dimensional, then the above coend is finite-dimensional.
 \end{corollary}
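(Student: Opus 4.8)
The plan is to reduce both assertions to Lemma~\ref{FinCoend} together with the defining property of an additive generator, namely that every object of $\mathcal{C}$ is a retract of $Z^{\oplus m}$ for some $m \geq 0$.

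First I would show that the whole collection is recovered from $\sigma_Z$. Fix $X \in \mathcal{C}$ and choose $m \geq 0$ together with $\iota\colon X \to Z^{\oplus m}$ and $\pi\colon Z^{\oplus m} \to X$ with $\pi \circ \iota = \on{id}_X$. Bifunctoriality of $\euler{F}$ gives $\euler{F}(\pi,\iota) = \euler{F}(Z^{\oplus m},\iota) \circ \euler{F}(\pi,X)$ and $\euler{F}(\iota,X) \circ \euler{F}(\pi,X) = \euler{F}(\pi \circ \iota,X) = \on{id}_{\euler{F}(X,X)}$, so applying the extranaturality equation for the morphism $\iota$, namely $\sigma_{Z^{\oplus m}} \circ \euler{F}(Z^{\oplus m},\iota) = \sigma_X \circ \euler{F}(\iota,X)$, yields $\sigma_{Z^{\oplus m}} \circ \euler{F}(\pi,\iota) = \sigma_X \circ \euler{F}(\iota,X) \circ \euler{F}(\pi,X) = \sigma_X$. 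Iterating Lemma~\ref{FinCoend} then identifies $\sigma_{Z^{\oplus m}}$, under the decomposition $\euler{F}(Z^{\oplus m},Z^{\oplus m}) \simeq \bigoplus_{1\le i,j\le m}\euler{F}(Z,Z)$, with the matrix whose $m$ diagonal blocks all equal $\sigma_Z$ and whose off-diagonal blocks vanish. Hence $\sigma_X = \sigma_{Z^{\oplus m}} \circ \euler{F}(\pi,\iota)$ is a fixed linear function of $\sigma_Z$ alone; in particular the map sending an extranatural collection to its $Z$-component is injective.

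Next I would identify the coend. Write $Q := \on{coeq}\big(\euler{F}(Z,Z)\kotimes\Hom{Z,Z} \rightrightarrows \euler{F}(Z,Z)\big)$ for the displayed coequalizer. By definition $Q$ corepresents the functor $V \mapsto \{\tau\colon\euler{F}(Z,Z)\to V \mid \tau\circ\euler{F}(f,Z)=\tau\circ\euler{F}(Z,f)\ \text{for all}\ f\in\Hom{Z,Z}\}$, while $\int^{X}\euler{F}(X,X)$ corepresents $V \mapsto \{\text{extranatural collections } \euler{F}(-,-) \Rightarrow V\}$. Restriction to the $Z$-component is a natural transformation between these corepresented functors; it is monic by the previous paragraph, and I would prove it is epic by constructing, from any such $\tau$, the collection $\sigma_X := \sigma_{Z^{\oplus m}}^{\tau} \circ \euler{F}(\pi_X,\iota_X)$, where $\sigma_{Z^{\oplus m}}^{\tau}$ is the block matrix with diagonal blocks $\tau$ and zero off-diagonal blocks and $(\iota_X,\pi_X)$ is a fixed choice of retraction of $X$ onto $Z^{\oplus m}$. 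The required checks --- that $\sigma_X$ is independent of the chosen retraction, that the collection is extranatural, and that $\sigma_Z = \tau$ --- all reduce, after substituting the matrix descriptions of $\sigma_{Z^{\oplus m}}^{\tau}$ and of $\euler{F}$ on biproducts, to the single relation $\tau\circ\euler{F}(f,Z)=\tau\circ\euler{F}(Z,f)$ applied to suitable composites $f\in\Hom{Z,Z}$ of the structure maps of the various retractions. Once the comparison is a natural isomorphism, Yoneda gives $\int^{X}\euler{F}(X,X) \simeq Q$.

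Finally, the finite-dimensionality statement is immediate: $Q$ is a quotient of $\euler{F}(Z,Z)$, hence finite-dimensional as soon as $\euler{F}(Z,Z)$ is. I expect the main obstacle to be the surjectivity (existence) half of the universal-property argument: producing a bona fide extranatural collection from the single datum $\tau$ forces one to check extranaturality against every morphism of $\mathcal{C}$ and independence of the auxiliary choice of retraction, and although Lemma~\ref{FinCoend} dictates the values on biproducts of copies of $Z$, the verification is a somewhat delicate bookkeeping of the two slots of $\euler{F}$, always routed through the $Z$-level coequalizing relation.
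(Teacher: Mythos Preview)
Your proposal is correct and follows the same route as the paper: identify maps out of the coequalizer with maps $\tau\colon \euler{F}(Z,Z)\to V$ satisfying the single relation $\tau\circ\euler{F}(Z,f)=\tau\circ\euler{F}(f,Z)$, identify maps out of the coend with extranatural collections, compare the two corepresented functors, and conclude by Yoneda. If anything, you are more careful than the paper, which compresses both the uniqueness and the existence direction into the one-line assertion ``by Lemma~\ref{FinCoend}, such a morphism uniquely determines an extranatural transformation''; in particular you make explicit the retract argument needed to pass from $Z^{\oplus m}$ to an arbitrary object, which Lemma~\ref{FinCoend} as stated does not cover, and you correctly flag that the existence half (well-definedness and extranaturality of the constructed collection) is where the actual bookkeeping lives.
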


 \begin{proof}
  By definition, a map from the above coequalizer to a vector space $V$ is given by a map $\sigma: \euler{F}(Z,Z) \rightarrow V$ such that, for any morphism $b \in \mathcal{C}(Z,Z)$, we have $\sigma \circ \euler{F}(Z,b) = \sigma \circ \euler{F}(b,Z)$. By Lemma~\ref{FinCoend}, such a morphism uniquely determines an extranatural transformation from $\euler{F}$ to $V$, thus a map from $\int^{X}\euler{F}(X,X)$ to $V$. This extends to a natural isomorphism
  \[
   \mathbf{Vec}_{\Bbbk}\Bigg(\int^{X} \euler{F}(X,X),-\Bigg) \simeq \mathbf{Vec}_{\Bbbk}\left(\on{coeq}\left(
   \begin{tikzcd}[ampersand replacement=\&]
	{\euler{F}(Z,Z) \kotimes \Hom{Z,Z}} \& {\euler{F}(Z,Z)}
	\arrow["{\euler{F}(Z,-)_{Z,Z}}", shift left=2, from=1-1, to=1-2]
	\arrow["{\euler{F}(-,Z)_{Z,Z}}"', shift right=2, from=1-1, to=1-2]
\end{tikzcd}
   \right),-\right).
  \]
  The claimed isomorphism follows from Yoneda lemma.
 \end{proof}

 Since the Day convolution of (co)presheaves can be defined in terms of coends of functors as above, we have the following:
 \begin{corollary}
  If $\csym{Ć}$ is a finitary monoidal category, then $[\csym{C}^{\on{opp}},\mathbf{vec}_{\Bbbk}]$ is a monoidal subcategory of $[\csym{C}^{\on{opp}},\mathbf{Vec}_{\Bbbk}]$ endowed with the Day convolution monoidal structure.
 \end{corollary}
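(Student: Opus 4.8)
The plan is to recognise $[\csym{C}^{\on{opp}},\mathbf{vec}_{\Bbbk}]$ as the full subcategory of $[\csym{C}^{\on{opp}},\mathbf{Vec}_{\Bbbk}]$ spanned by the pointwise finite-dimensional presheaves, and then to check only the two conditions needed for a full subcategory of a monoidal category to be a monoidal subcategory: that it contains the Day convolution unit, and that it is closed under $\circledast$. Since the subcategory is full, the associator and unitor isomorphisms of $\circledast$ automatically restrict, so nothing further needs to be verified.

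For the unit I would first recall that the monoidal unit of $\big([\csym{C}^{\on{opp}},\mathbf{Vec}_{\Bbbk}],\circledast\big)$ is the representable presheaf $\csym{C}(-,\mathbb{1})$: indeed $\int^{\mathrm{H}}\csym{C}(-,\mathrm{H}\cotimes\mathrm{K})\kotimes\csym{C}(\mathrm{H},\mathbb{1})\simeq\csym{C}(-,\mathrm{K})$ by the Yoneda lemma, and then $\int^{\mathrm{K}}\csym{C}(-,\mathrm{K})\kotimes\euler{Q}(\mathrm{K})\simeq\euler{Q}$ again by Yoneda. As a finitary category is in particular Hom-finite, $\csym{C}(\mathrm{F},\mathbb{1})$ is finite-dimensional for every $\mathrm{F}$, so $\csym{C}(-,\mathbb{1})$ lies in $[\csym{C}^{\on{opp}},\mathbf{vec}_{\Bbbk}]$.

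The substantive step is closure under $\circledast$. Fix $\euler{P},\euler{Q}\in[\csym{C}^{\on{opp}},\mathbf{vec}_{\Bbbk}]$ and an object $\mathrm{F}\in\csym{C}$, and let $Z$ be an additive generator of $\csym{C}$. Using the Fubini rule for coends I would write
\[
(\euler{P}\circledast\euler{Q})(\mathrm{F})=\int^{\mathrm{H}}\euler{P}(\mathrm{H})\kotimes\Big(\int^{\mathrm{K}}\csym{C}(\mathrm{F},\mathrm{H}\cotimes\mathrm{K})\kotimes\euler{Q}(\mathrm{K})\Big)
\]
and then apply the finiteness-of-coends statement following Lemma~\ref{FinCoend} twice. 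For the inner coend, which is the coend of the $\Bbbk$-linear functor $(\mathrm{K}',\mathrm{K})\mapsto\csym{C}(\mathrm{F},\mathrm{H}\cotimes\mathrm{K}')\kotimes\euler{Q}(\mathrm{K})$, the relevant diagonal value is $\csym{C}(\mathrm{F},\mathrm{H}\cotimes Z)\kotimes\euler{Q}(Z)$, which is finite-dimensional; hence $\euler{R}(\mathrm{H}):=\int^{\mathrm{K}}\csym{C}(\mathrm{F},\mathrm{H}\cotimes\mathrm{K})\kotimes\euler{Q}(\mathrm{K})$ is finite-dimensional for every $\mathrm{H}$ and assembles into a $\Bbbk$-linear functor $\csym{C}\to\mathbf{vec}_{\Bbbk}$, covariant in $\mathrm{H}$. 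The outer coend is then the coend of $(\mathrm{H},\mathrm{H}')\mapsto\euler{P}(\mathrm{H})\kotimes\euler{R}(\mathrm{H}')$, whose diagonal value at $Z$ is $\euler{P}(Z)\kotimes\euler{R}(Z)$, again finite-dimensional; so $(\euler{P}\circledast\euler{Q})(\mathrm{F})$ is finite-dimensional, for every $\mathrm{F}$, and $\euler{P}\circledast\euler{Q}\in[\csym{C}^{\on{opp}},\mathbf{vec}_{\Bbbk}]$.

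The only point requiring care is the bookkeeping of variances: one must make sure that at each stage the expression really is the coend of a single functor $\csym{C}^{\on{op}}\kotimes\csym{C}\to\mathbf{Vec}_{\Bbbk}$ to which Lemma~\ref{FinCoend} applies, and in particular that the intermediate functor $\euler{R}$ is $\Bbbk$-linear and pointwise finite-dimensional before it is fed into the outer coend. Everything else is routine, resting on the elementary facts that finite tensor products and quotients of finite-dimensional vector spaces are finite-dimensional, and that a full subcategory of a monoidal category containing the unit and closed under the tensor product inherits the monoidal structure.
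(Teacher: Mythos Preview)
Your proposal is correct and is essentially the same as the paper's argument, only spelled out in full detail: the paper states the corollary immediately after the coend-finiteness result with the single remark that ``the Day convolution of (co)presheaves can be defined in terms of coends of functors as above,'' leaving the unit check and the iterated application implicit. Your Fubini-then-twice-apply strategy is exactly the intended unpacking of that sentence.
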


 We give a slightly simplified variant of projective abelianization of monoidal categories defined in \cite{MMMT}. Our variant produces a category monoidally equivalent to the abelianization of \cite{MMMT} - the difference between the construction therein and that specified below is that if $\csym{C}$ is {\it strict monoidal}, then our version does not guarantee that the abelianization remains strict, which is the case for the abelianization of \cite{MMMT}. Briefly put, we define the monoidal structure using colimits (biproducts), while the monoidal structure of \cite{MMMT} remembers the diagram for the colimit.

 Since $\csym{C}$ is additive monoidal $\Bbbk$-linear, the category $[\mathbb{N}_{0},\csym{C}]$ of sequences
 \[
  \cdots \rightarrow \mathrm{P}_{2} \rightarrow \mathrm{P}_{1} \rightarrow \mathrm{P}_{0}
 \]
 in $\csym{C}$ is monoidal under the  tensor product $\overline{\boxtimes}$ defined analogously to the tensor product of chain complexes.
 The inclusion of posets $\setj{0,1} \hookrightarrow \mathbb{N}_{0}$ gives a {\it truncation functor} $\mathrm{T}: [\mathbb{N}, \csym{C}] \rightarrow [\setj{0,1}, \csym{C}] = \csym{C}^{\rightarrow}$ to the arrow category on $\csym{C}$.
 Further, we have the inclusion functor $\mathrm{I}: \csym{C}^{\rightarrow} \hookrightarrow [\mathbb{N}_{0},\csym{C}]$ of sequences concentrated on $\setj{0,1}$ (continuing the sequence by zero for indices greater than $1$). In fact, $\mathrm{T}$ is a retraction, i.e. we have $\mathrm{T} \circ \mathrm{I} = \mathbb{1}_{\ccf{C}^{\rightarrow}}$.

 Defining $\boxtimes = \mathrm{T} \circ \overline{\boxtimes} \circ \mathrm{I} \kotimes \mathrm{I}$, we obtain a monoidal structure on $\csym{C}^{\rightarrow}$. Explicitly, on objects we have
 \[
  \big(\mathrm{P}_{1} \xrightarrow{\mathrm{p}} \mathrm{P}_{0}\big) \boxtimes \big(\mathrm{Q}_{1} \xrightarrow{\mathrm{q}} \mathrm{Q}_{0}\big) = \mathrm{P}_{1} \cotimes \mathrm{Q}_{0} \oplus \mathrm{P}_{0} \cotimes \mathrm{Q}_{1} \xrightarrow{(\begin{smallmatrix} \mathrm{p} \otimes \mathrm{Q}_{0} & \mathrm{P}_{0}\otimes \mathrm{q} \end{smallmatrix})} \mathrm{P}_{0} \cotimes \mathrm{Q}_{0},
 \]
 and similarly on morphisms. The ideal $\mathcal{H}$ in $\csym{C}^{\rightarrow}$ given by nullhomotopic maps, i.e. morphisms given by pairs $(\tau_{1},\tau_{0}): (\mathrm{P}_{1},\mathrm{P}_{0}) \rightarrow (\mathrm{P}_{1}',\mathrm{P}_{0}')$ such that there is a morphism $\mathrm{h}: \mathrm{P}_{0} \rightarrow \mathrm{P}_{1}'$ yielding $\mathrm{p}' \circ \mathrm{h} = \tau_{0}$, is a tensor ideal. Indeed, given such a homotopy, the lower triangle in the diagram
 \[\begin{tikzcd}[ampersand replacement=\&]
	{\mathrm{P}_{1} \cotimes \mathrm{Q}_{0} \oplus \mathrm{P}_{0} \cotimes \mathrm{Q}_{1}} \&\& {\mathrm{P}_{0} \cotimes \mathrm{Q}_{0}} \\
	{\mathrm{P}_{1}' \cotimes \mathrm{Q}_{0} \oplus \mathrm{P}_{0}' \cotimes \mathrm{Q}_{1}} \&\& {\mathrm{P}_{0}' \cotimes \mathrm{Q}_{0}}
	\arrow["{\tau_{0} \otimes \mathrm{Q}_{0}}", from=1-3, to=2-3]
	\arrow["{\left(\begin{smallmatrix}\tau_{1}\otimes \mathrm{Q}_{0} & 0 \\0 & \tau_{0}\otimes \mathrm{Q}_{1}\end{smallmatrix}\right)}"', from=1-1, to=2-1]
	\arrow["{\left(\begin{smallmatrix} \mathrm{p}\otimes \mathrm{Q}_{0} & \mathrm{P}_{0}\otimes \mathrm{q} \end{smallmatrix}\right)}", from=1-1, to=1-3]
	\arrow["{\left(\begin{smallmatrix} \mathrm{p}\otimes \mathrm{Q}_{0} & \mathrm{P}_{0}\otimes \mathrm{q} \end{smallmatrix}\right)}", from=2-1, to=2-3]
	\arrow["{\left(\begin{smallmatrix} \mathrm{h} \otimes \mathrm{Q}_{0} \\ 0 \end{smallmatrix}\right)}"{description}, from=1-3, to=2-1]
\end{tikzcd}\]
commutes, showing that $\tau \otimes \mathrm{Q}$ is nullhomotopic. Similarly one can show that $\mathrm{Q} \otimes \tau$ is nullhomotopic.

The {\it projective abelianization} $\overline{\csym{C}}$ of $\csym{C}$ is defined as the monoidal category $\csym{C}^{\rightarrow}/\mathcal{H}$.

Given a finite-dimensional algebra $A$, we have the equivalences
\[\begin{tikzcd}[ampersand replacement=\&]
	{[A \!\on{--proj}^{\on{op}},\mathbf{vec}_{\Bbbk}]} \&\& {A\!\on{--mod}} \&\& {(A\!\on{--proj})^{\rightarrow}/\mathcal{H}}
	\arrow["{\on{Coker}}", from=1-5, to=1-3, shift left=1]
	\arrow["{\euler{P}}", from=1-3, to=1-5, shift left=1]
	\arrow["{\on{ev}_{A}}", shift left=1, from=1-1, to=1-3]
	\arrow["{M\mapsto\on{Hom}(-,M)}", shift left=1, from=1-3, to=1-1]
\end{tikzcd}\]
where, for $\euler{F}: A\!\on{--proj}^{\on{op}} \rightarrow \mathbf{vec}_{\Bbbk}$, the $A$-module structure on $\euler{F}(A)$ is given by
\[
A \simeq \on{End}_{A\!\on{--proj}}(A)^{\on{op}} \xrightarrow{\euler{F}_{A,A}} \Hom{\euler{F}(A),\euler{F}(A)},
\]
and $\euler{P}$ sends a module $M$ to the homotopy class of diagrams giving projective presentations of $M$.

This extends to the case of a finitary monoidal category $\csym{C}$, where we obtain an equivalence $\csym{C}/\mathcal{H} \xrightarrow{\on{Coker}} [\csym{C}^{\on{opp}},\mathbf{vec}_{\Bbbk}]$. We now show that the diagram
\begin{equation}\label{MonoidalCokernel}
\begin{tikzcd}[row sep=scriptsize]
	{\csym{C}/\mathcal{H} \kotimes \csym{C}/\mathcal{H}} && {[\csym{C}^{\on{opp}},\mathbf{vec}_{\Bbbk}] \kotimes [\csym{C}^{\on{opp}},\mathbf{vec}_{\Bbbk}]} \\
	{\csym{C}/\mathcal{H}} && {[\csym{C}^{\on{opp}},\mathbf{vec}_{\Bbbk}]}
	\arrow["\boxtimes", from=1-1, to=2-1]
	\arrow["{\on{Coker} \kotimes \on{Coker}}", from=1-1, to=1-3]
	\arrow["{\circledast}", from=1-3, to=2-3]
	\arrow["{\on{Coker}}", from=2-1, to=2-3]
\end{tikzcd}
\end{equation}
commutes up to natural isomorphism. Observe that this does not immediately show that $\on{Coker}$ is a monoidal equivalence, since we do not claim our natural isomorphism to be coherent. However, it does show that $\boxtimes$ is right exact in both variables. Thus, by universal property of Day convolution (\cite[Theorem~5.1]{IK}), the functor extending the monoidal embedding
\[
\begin{aligned}
 \csym{C} \rightarrow \csym{C}^{\rightarrow}/\mathcal{H} \\
 \mathrm{F} \mapsto (0 \rightarrow \mathrm{F})
\end{aligned}
\]
to a functor from $[\csym{C}^{\on{opp}},\mathbf{vec}_{\Bbbk}]$ is strong monoidal. In the case $\csym{C} = A\!\on{--proj}$, it is easy to verify that the latter functor is precisely $\mathrm{P}$ - thus, in the general case it is a quasi-inverse to $\on{Coker}$, which shows that $\on{Coker}$ is indeed a monoidal equivalence.

\begin{lemma}
 Diagram~\eqref{MonoidalCokernel} commutes up to natural isomorphism.
\end{lemma}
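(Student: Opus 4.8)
The plan is to unwind the definition of $\on{Coker}$ and then exploit two standard facts: that Day convolution, being a colimit, is right exact in each variable, and that the Yoneda embedding is strong monoidal for $\circledast$.

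First I would record what $\on{Coker}$ does. By construction it sends a presentation $\mathrm{P}_{\bullet} = (\mathrm{P}_{1} \xrightarrow{\mathrm{p}} \mathrm{P}_{0})$ to the cokernel of $\yo(\mathrm{p})\colon \yo(\mathrm{P}_{1}) \rightarrow \yo(\mathrm{P}_{0})$ in $[\csym{C}^{\on{opp}},\mathbf{vec}_{\Bbbk}]$, where $\yo$ is the Yoneda embedding, so there is a right-exact sequence
\[
 \yo(\mathrm{P}_{1}) \xrightarrow{\yo(\mathrm{p})} \yo(\mathrm{P}_{0}) \longrightarrow \on{Coker}(\mathrm{P}_{\bullet}) \longrightarrow 0
\]
natural in $\mathrm{P}_{\bullet}$. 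I would also note the two inputs to be used: by two applications of the co-Yoneda lemma one has a natural isomorphism $\yo(\mathrm{F}) \circledast \yo(\mathrm{G}) \simeq \yo(\mathrm{F} \cotimes \mathrm{G})$, i.e. $\yo$ is strong monoidal into $([\csym{C}^{\on{opp}},\mathbf{vec}_{\Bbbk}],\circledast)$; and since $\circledast$ is given by a coend it is cocontinuous, hence right exact, in each variable.

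The core is then a computation parallel to the one expressing the tensor product of two finitely presented modules via presentations. Fixing $\mathrm{P}_{\bullet},\mathrm{Q}_{\bullet}$, I apply $(-)\circledast \on{Coker}(\mathrm{Q}_{\bullet})$ to the right-exact sequence for $\mathrm{P}_{\bullet}$, apply $\yo(\mathrm{P}_{i}) \circledast (-)$ to that for $\mathrm{Q}_{\bullet}$, and splice the diagrams using right exactness of $\circledast$ in both variables. This produces a right-exact sequence
\[
 \yo(\mathrm{P}_{1} \cotimes \mathrm{Q}_{0}) \oplus \yo(\mathrm{P}_{0} \cotimes \mathrm{Q}_{1}) \xrightarrow{\ (\,\yo(\mathrm{p}\cotimes \mathrm{Q}_{0})\ \ \yo(\mathrm{P}_{0}\cotimes \mathrm{q})\,)\ } \yo(\mathrm{P}_{0}\cotimes \mathrm{Q}_{0}) \longrightarrow \on{Coker}(\mathrm{P}_{\bullet}) \circledast \on{Coker}(\mathrm{Q}_{\bullet}) \longrightarrow 0,
\]
where the terms and the displayed map are identified through the monoidality isomorphisms $\yo(\mathrm{P}_{i}) \circledast \yo(\mathrm{Q}_{j}) \simeq \yo(\mathrm{P}_{i}\cotimes \mathrm{Q}_{j})$. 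But the displayed map is exactly $\yo$ applied to the structure morphism of $\mathrm{P}_{\bullet} \boxtimes \mathrm{Q}_{\bullet}$ recorded just above the lemma, so its cokernel is by definition $\on{Coker}(\mathrm{P}_{\bullet}\boxtimes \mathrm{Q}_{\bullet})$. Comparing the two right-exact sequences and using uniqueness of cokernels gives an isomorphism $\on{Coker}(\mathrm{P}_{\bullet}\boxtimes \mathrm{Q}_{\bullet}) \simeq \on{Coker}(\mathrm{P}_{\bullet}) \circledast \on{Coker}(\mathrm{Q}_{\bullet})$.

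Finally I would check naturality. Every ingredient — the functorial right-exact presentations, the monoidality cells of $\yo$, and the comparison of cokernels — is natural in $\mathrm{P}_{\bullet}$ and $\mathrm{Q}_{\bullet}$, and each is compatible with homotopies since $\on{Coker}$ factors through $\csym{C}^{\rightarrow}/\mathcal{H}$ by construction; hence the composite isomorphism assembles into a natural isomorphism of the two composite bifunctors in Diagram~\eqref{MonoidalCokernel}. I expect the one genuinely delicate point to be the splicing step: one must verify that resolving the $\mathrm{P}$-variable first and the $\mathrm{Q}$-variable first yield the same middle term and the same map, which follows from the symmetry of the construction, equivalently from the interchange law for the double colimit computing $\circledast$ of two cokernels; modulo this, everything is the standard diagram chase behind right exactness of a tensor product in two variables, and I would not write it out in full.
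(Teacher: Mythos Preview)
Your proposal is correct and follows essentially the same approach as the paper: both reduce to the fact that $\yo$ is strong monoidal for Day convolution and that $\circledast$ commutes with the cokernels defining $\on{Coker}$, then identify the resulting colimit with $\on{Coker}(\mathrm{P}_{\bullet}\boxtimes \mathrm{Q}_{\bullet})$. The paper packages this more compactly as a single coend computation, pulling the cokernels out as a colimit over the product diagram $\varinjlim_{\mathrm{p,q}}$ and then recognising that colimit as the cokernel of $(\mathrm{p}\otimes \mathrm{Q}_{0}\ \ \mathrm{P}_{0}\otimes \mathrm{q})$, whereas you spell out the same manipulation in the language of splicing right-exact sequences; the underlying argument is the same.
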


\begin{proof}
 For $\mathrm{P = P_{1} \xrightarrow{p} P_{0}}$ and $\mathrm{Q = Q_{1} \xrightarrow{q} Q_{0}}$ in $\csym{C}^{\rightarrow}/\mathcal{H}$, we have
 \[
 \begin{aligned}
  &(\on{Coker}(\Hom{-,\mathrm{p}}) \circledast \on{Coker}(\Hom{-,\mathrm{q}}))(-) = \int^{\mathrm{H,K}}  \on{Coker}(\Hom{\mathrm{H},\mathrm{p}}) \kotimes \on{Coker}(\Hom{\mathrm{K,q}}) \kotimes \Hom{-,\mathrm{H} \cotimes \mathrm{K}} \\
  &\simeq \varinjlim_{\mathrm{p,q}} \left( \int^{\mathrm{H,K}} \Hom{\mathrm{H,p}} \kotimes \Hom{\mathrm{K,q}} \kotimes \Hom{-,\mathrm{H} \cotimes \mathrm{K}} \right) \simeq \varinjlim_{\mathrm{p,q}} \Hom{-,\mathrm{p} \cotimes \mathrm{q}}
 \end{aligned}
 \]
 which is precisely the colimit of the product diagram
 \[\begin{tikzcd}[row sep=scriptsize]
	& {\mathrm{P_{0} \cotimes Q_{1}}} \\
	{\mathrm{P_{1} \cotimes Q_{1}}} && {\mathrm{P_{0} \cotimes Q_{0}}} \\
	& {\mathrm{P_{1} \cotimes Q_{0}}}
	\arrow["{\mathrm{p \otimes Q_{1}}}"'{pos=0.6}, from=2-1, to=1-2]
	\arrow["{\mathrm{P_{1}\otimes q}}"{pos=0.6}, from=2-1, to=3-2]
	\arrow["{\mathrm{P_{0}\otimes q}}"', from=1-2, to=2-3]
	\arrow["{\mathrm{p \otimes Q_{0}}}", from=3-2, to=2-3]
\end{tikzcd}\]
which is isomorphic to $\on{Coker}\left( \mathrm{P_{1} \cotimes Q_{0} \oplus P_{0} \cotimes Q_{1}} \xrightarrow{\left(\begin{smallmatrix} \mathrm{p \otimes Q_{0}} & \mathrm{P_{0} \otimes q} \end{smallmatrix} \right)} \mathrm{P_{0} \cotimes Q_{0}} \right) = \on{Coker}(\mathrm{P \boxtimes Q})$. Finally, note that this isomorphism is natural in $\mathrm{P,Q}$, being obtained from universal properties of colimits.
\end{proof}

Instead of the presheaf category $[\csym{C}^{\on{opp}}, \mathbf{vec}_{\Bbbk}]$, we could also use its Isbell dual, $[\csym{C},\mathbf{vec}_{\Bbbk}]^{\on{op}}$. The category $\csym{C}$ embeds in it as the injective objects. The corresponding diagrammatic construction, expressed in terms of kernels and injective coresolutions, is defined in \cite{MMMT} as the {\it injective abelianization} $\underline{\csym{C}}$ of $\csym{C}$.
\begin{corollary}\label{Abelianizations}
 We have monoidal equivalences $\overline{\csym{C}} \xrightarrow{\on{Coker}} [\csym{C}^{\on{opp}},\mathbf{vec}_{\Bbbk}]$ and $\underline{\csym{C}} \xrightarrow{\on{Ker}} [\csym{C},\mathbf{vec}_{\Bbbk}]^{\on{op}}$.
\end{corollary}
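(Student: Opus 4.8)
The first assertion is precisely what the discussion preceding this corollary establishes: $\on{Coker}$ is an equivalence of categories by reduction to $A\!\on{-proj}$ (using $\csym{C}\simeq\mathcal{A}^{c}$ for $\mathcal{A}$ the one-object $\Bbbk$-linear category on $A=\on{End}_{\csym{C}}(Z)$ with $Z$ an additive generator, so that $[\csym{C}^{\on{opp}},\mathbf{vec}_{\Bbbk}]\simeq A\!\on{-mod}\simeq\csym{C}^{\rightarrow}/\mathcal{H}$ via projective presentations), the lemma above shows that Diagram~\eqref{MonoidalCokernel} commutes so that $\boxtimes$ is right exact in each variable, and then the universal property of Day convolution \cite[Theorem~5.1]{IK} identifies the quasi-inverse $\mathrm{P}$ as a strong monoidal functor, forcing $\on{Coker}$ itself to be a monoidal equivalence. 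So only the second assertion needs a separate argument, and the plan is to deduce it from the first by oppositization.

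Concretely, I would first set up the dictionary identifying the diagrammatic injective abelianization $\underline{\csym{C}}$ of \cite{MMMT} with the opposite of the projective abelianization of $\csym{C}^{\on{opp}}$, i.e.\ $\underline{\csym{C}}\simeq(\overline{\csym{C}^{\on{opp}}})^{\on{op}}$ as monoidal categories. On underlying categories this is the familiar exchange of injective coresolutions and kernels with projective presentations and cokernels upon passing to the opposite category; one has to check in addition that the nullhomotopy tensor ideal $\mathcal{H}$ is sent to itself and that the $\boxtimes$-tensor product on $(\csym{C}^{\on{opp}})^{\rightarrow}/\mathcal{H}$ becomes, after taking opposites, precisely the monoidal structure used to define $\underline{\csym{C}}$. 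Granting the dictionary, I would apply the first assertion (already proved) to the finitary monoidal category $\csym{C}^{\on{opp}}$, obtaining a monoidal equivalence $\on{Coker}\colon \overline{\csym{C}^{\on{opp}}}\xiso[(\csym{C}^{\on{opp}})^{\on{opp}},\mathbf{vec}_{\Bbbk}]=[\csym{C},\mathbf{vec}_{\Bbbk}]$, and then take opposite categories on both sides. On the left this produces $\underline{\csym{C}}$ and turns $\on{Coker}$ into $\on{Ker}$; on the right it produces $[\csym{C},\mathbf{vec}_{\Bbbk}]^{\on{op}}$ equipped with the opposite of Day convolution, which is the monoidal structure intended in the statement (namely the Isbell dual of $[\csym{C}^{\on{opp}},\mathbf{vec}_{\Bbbk}]$). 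Combining these gives the desired monoidal equivalence $\on{Ker}\colon\underline{\csym{C}}\xiso[\csym{C},\mathbf{vec}_{\Bbbk}]^{\on{op}}$.

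I expect the dictionary step to be the only real obstacle. The content there is not that $\underline{\csym{C}}$ and $(\overline{\csym{C}^{\on{opp}}})^{\on{op}}$ coincide as categories — that is routine homological bookkeeping — but that they coincide \emph{monoidally}, i.e.\ that forming opposite categories is compatible with Day convolution in the direction needed and that the parenthesization chosen in the definition of $\boxtimes$ for $\csym{C}^{\on{opp}}$ matches, under the opposite, the one chosen in \cite{MMMT} for $\underline{\csym{C}}$. Careful tracking of the $\on{op}$, $\on{opp}$ and $\on{rev}$ operations — in particular the identity $(\csym{C}^{\on{opp}})^{\on{opp}}=\csym{C}$ as monoidal categories, which legitimizes the substitution $\csym{C}\leadsto\csym{C}^{\on{opp}}$ — is what makes this work; once that bookkeeping is pinned down, the second equivalence follows formally from the first.
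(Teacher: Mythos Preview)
Your treatment of the first assertion is exactly the paper's: the discussion preceding the corollary, together with the lemma establishing commutativity of Diagram~\eqref{MonoidalCokernel} and the appeal to \cite[Theorem~5.1]{IK}, is the full argument, and the corollary is stated without further proof.

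For the second assertion the paper does not give an argument either; it simply observes (in the paragraph just before the corollary) that one may pass to the Isbell dual $[\csym{C},\mathbf{vec}_{\Bbbk}]^{\on{op}}$, into which $\csym{C}$ embeds as the injective objects, and that the corresponding diagrammatic construction is $\underline{\csym{C}}$. In other words, the paper treats the second equivalence as the formal dual of the first and leaves the details implicit. Your approach---substituting $\csym{C}\leadsto\csym{C}^{\on{opp}}$ in the already-proved first assertion and then applying $(-)^{\on{op}}$---is a correct way to make that duality precise, and your identification of the bookkeeping (checking $\underline{\csym{C}}\simeq(\overline{\csym{C}^{\on{opp}}})^{\on{op}}$ monoidally, and that taking opposites carries Day convolution to the intended structure on $[\csym{C},\mathbf{vec}_{\Bbbk}]^{\on{op}}$) is accurate. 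The paper's route buys brevity; yours buys an explicit reduction rather than an analogy, at the cost of the $\on{op}/\on{opp}$ tracking you flag. Both are sound.
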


The fundamental consequence of \cite[Theorem~4.7]{MMMT} and \cite[Theorem~5.1]{MMMT} is the following:
\begin{theorem}{[\cite{MMMT}]}\label{MMMTMain}
 Assume $\csym{C}$ is rigid. Let $\mathbf{M,N}$ be finitary, transitive $\csym{C}$-module categories. For any object $X \in \mathbf{M}$, the functor $\Hom{X,-X}: \csym{C} \rightarrow \mathbf{vec}_{\Bbbk}$ gives rise to a coalgebra $A^{X}$ in the injective abelianization $\underline{\csym{C}}$. Given $Y \in \mathbf{M}$, the coalgebras $A^{X},A^{Y}$ are Morita equivalent if and only if $\mathbf{M,N}$ are equivalent.
\end{theorem}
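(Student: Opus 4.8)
The plan is to deduce Theorem~\ref{MMMTMain} from Theorem~\ref{MainConsequence} by transporting the monoid $[X,X]$ across the Cayley equivalence of Section~\ref{s101} and identifying the result with the copresheaf Ostrik monoid built in Section~\ref{s102}. Since $\underline{\csym{C}}$ is defined, $\csym{C}$ is in particular finitary, and we may assume $\mathbf{M},\mathbf{N}$ are non-zero. First I would record two routine reductions: because $\mathbf{M}$ is finitary it is Cauchy complete, and because it is transitive every non-zero object is a cyclic generator in the sense of Definition~\ref{CyclicCauchyDefinition}, so $\mathbf{M}$ is cyclic with cyclic generator $X$ and, likewise, $Y$ is a cyclic generator for $\mathbf{N}$. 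Next, the copresheaf variant of the constructions of Section~\ref{s102} makes $\Hom{X,\mathbf{M}(-)X}\colon\csym{C}\to\mathbf{vec}_{\Bbbk}$ into a monoid in $[\csym{C},\mathbf{vec}_{\Bbbk}]$: finiteness of the values uses finitarity of $\mathbf{M}$, and closure of $[\csym{C},\mathbf{vec}_{\Bbbk}]$ under Day convolution uses finitarity of $\csym{C}$. Under the monoidal equivalence $\underline{\csym{C}}\simeq[\csym{C},\mathbf{vec}_{\Bbbk}]^{\on{op}}$ of Corollary~\ref{Abelianizations} this monoid corresponds exactly to the coalgebra $A^{X}$ of the statement, and similarly for $A^{Y}$; consequently $A^{X}$ and $A^{Y}$ are Morita equivalent in $\underline{\csym{C}}$ if and only if $\Hom{X,\mathbf{M}(-)X}$ and $\Hom{Y,\mathbf{N}(-)Y}$ are Morita equivalent as monoids in $[\csym{C},\mathbf{vec}_{\Bbbk}]$, the contravariance of the equivalence merely swapping the two sides of each bimodule (Morita equivalence being a self-dual notion).

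The main computation is then the following. Since $\csym{C}$ is rigid, the Cayley functor $\mathbb{K}\simeq\mathbb{L}\colon[\csym{C},\mathbf{Vec}_{\Bbbk}]\to\tam$ is a monoidal equivalence (\cite[Proposition~4.2]{PS}) with $\mathbb{K}^{-1}(\mathtt{\Psi})\simeq\mathtt{\Psi}(\mathbb{1},-)$. Applying $\mathbb{K}^{-1}$ to the monoid $[X,X]$ of Proposition~\ref{EndMonoid}, its underlying object is $[X,X](\mathbb{1},-)=\Hom{\mathbf{M}\mathbb{1}X,\mathbf{M}(-)X}\cong\Hom{X,\mathbf{M}(-)X}$; I would then check, via the copresheaf counterpart of Corollary~\ref{RigidMonoids} --- equivalently, by simply transporting the monoid structure of $[X,X]$ along the monoidal equivalence $\mathbb{K}$ --- that this is an isomorphism of monoids $\mathbb{K}^{-1}([X,X])\cong\Hom{X,\mathbf{M}(-)X}$ in $[\csym{C},\mathbf{Vec}_{\Bbbk}]$. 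Since $\mathbb{K}$ and $\mathbb{K}^{-1}$ are strong monoidal, $\on{Bimod}(\mathbb{K})$ and $\on{Bimod}(\mathbb{K}^{-1})$ are pseudofunctors (as in Section~\ref{s6}) and therefore preserve equivalence $1$-morphisms, i.e. Morita equivalences; hence $[X,X]$ and $[Y,Y]$ are Morita equivalent in $\tam$ if and only if $\Hom{X,\mathbf{M}(-)X}$ and $\Hom{Y,\mathbf{N}(-)Y}$ are Morita equivalent as monoids in $[\csym{C},\mathbf{Vec}_{\Bbbk}]$. Combining this with Theorem~\ref{MainConsequence} --- which says $\mathbf{M}\simeq\mathbf{N}$ iff $[X,X]$ and $[Y,Y]$ are Morita equivalent in $\tam$ --- and with the identification of $A^{X},A^{Y}$ with these monoids from the first paragraph, yields the asserted equivalence.

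The delicate point, and what I expect to be the main obstacle, is reconciling the ambient category for Morita equivalence: the steps above produce a Morita equivalence of $\Hom{X,\mathbf{M}(-)X}$ and $\Hom{Y,\mathbf{N}(-)Y}$ inside the large category $[\csym{C},\mathbf{Vec}_{\Bbbk}]$, whereas the statement (via $\underline{\csym{C}}$) refers to Morita equivalence inside $[\csym{C},\mathbf{vec}_{\Bbbk}]$. One inclusion is automatic; for the other I would argue that a Morita context between these monoids can be realized with levelwise finite-dimensional bimodules. Concretely, a Morita equivalence in $\tam$ between $[X,X]$ and $[Y,Y]$ corresponds, under the biequivalence $\na$ of Theorem~\ref{MainThm}, to an equivalence $\mathbf{M}\star X\simeq\mathbf{N}\star Y$ in $\csym{C}\!\on{-Tamb}$; after passing to Cauchy completions (Corollary~\ref{CauchyTrick}) this is an equivalence $\mathbf{M}\simeq\mathbf{N}$ of module categories, and the Tambara module underlying the corresponding bimodule is then of the representable form $\Hom{-,\euler{\Phi}(-)}$ for a $\csym{C}$-module functor $\euler{\Phi}$, so its values $\Hom{\mathbf{N}\mathrm{F}Y,\euler{\Phi}\mathbf{M}\mathrm{G}X}$ are finite-dimensional because $\mathbf{N}$ is finitary. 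Carrying this finiteness bookkeeping carefully through $\na$ and the Cayley functor is the one genuinely technical part; everything else is a direct assembly of results already established.
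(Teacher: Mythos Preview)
Your approach is essentially the one the paper takes in Corollary~\ref{ReproveMMMT}: identify $A^{X}$ with the copresheaf monoid $\Hom{X,\mathbf{M}(-)X}$ via Corollary~\ref{Abelianizations}, transport across the rigid-case monoidal equivalence between copresheaves and $\tam$ to match it with $[X,X]$, and then invoke Theorem~\ref{MainConsequence}. The only cosmetic difference is that the paper phrases the transport step by citing Corollary~\ref{RigidMonoids} (the isomorphism $\omega_{X,X}$, stated for $\mathbb{W}$ on presheaves, with the copresheaf analogue implicit), whereas you go through the Cayley equivalence $\mathbb{K}\simeq\mathbb{L}$ and its inverse $\mathtt{\Psi}\mapsto\mathtt{\Psi}(\mathbb{1},-)$ directly; these are the same identification viewed from opposite ends.

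Where your write-up goes beyond the paper is in flagging and resolving the $[\csym{C},\mathbf{vec}_{\Bbbk}]$ versus $[\csym{C},\mathbf{Vec}_{\Bbbk}]$ discrepancy. The paper's three-line proof silently passes between Morita equivalence in $\underline{\csym{C}}\simeq[\csym{C},\mathbf{vec}_{\Bbbk}]^{\on{op}}$ and Morita equivalence transported from $\tam$, which a priori lands in the large copresheaf category. Your argument that the witnessing bimodules can be taken levelwise finite-dimensional, by tracing them back through $\na$ to representable Tambara modules on finitary categories, is the right way to close this; it is genuinely additional care that the paper does not supply.
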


In \cite{MMMT}, the object $A^{X}$ is obtained by observing that $\underline{\csym{C}}$ is {\it finite abelian} (i.e. equivalent to $A\!\on{--mod}$ for a finite-dimensional algebra $A$) and that $\Hom{X,-X}$ is left exact, thus representable, see e.g. \cite[Lemma~2.1]{FSS}. In view of the equivalence given by Corollary~\ref{Abelianizations}, we may identify $A^{X}$ with the functor $\Hom{X,-X}$ directly. Similarly to the case of Corollary~\ref{OstrikMonoidStructure}, one may verify that the coalgebra structure on $A^{X}$ defined in \cite{MMMT} coincides with the (oppositization of) monoid structure on $\Hom{X,-X}$ given in Definition~\ref{AbstractMonoid}.

\begin{corollary}\label{ReproveMMMT}
 Theorem~\ref{MMMTMain} follows from Theorem~\ref{MainConsequence} combined with Proposition~\ref{RigidMonoids}.
\end{corollary}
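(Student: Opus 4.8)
The plan is to derive Corollary~\ref{ReproveMMMT} by translating the hypotheses of Theorem~\ref{MMMTMain} into the language of our biequivalence $\na$ and then invoking Theorem~\ref{MainConsequence} together with Proposition~\ref{RigidMonoids}. First I would observe that a transitive finitary $\csym{C}$-module category $\mathbf{M}$ is in particular cyclic in the sense of Definition~\ref{CyclicCauchyDefinition}: any non-zero object $X \in \mathbf{M}$ is a cyclic generator, since transitivity says every $Y$ is a cyclic generator, and finitariness gives Cauchy completeness, so $\mathbf{M} \simeq (\mathbf{M}\star X)^{c}$. Hence Theorem~\ref{MainConsequence} applies to $\mathbf{M},\mathbf{N}$ with chosen cyclic generators $X \in \mathbf{M}, Y \in \mathbf{N}$: the module categories are equivalent if and only if the monoids $[X,X],[Y,Y]$ in $\tam$ are Morita equivalent.

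Next I would use rigidity of $\csym{C}$. By Corollary~\ref{RigidMonoids}, the canonical monoid morphism $\omega_{X,X}: \mathbb{W}(\Hom{-X,X}) \to [X,X]$ is an isomorphism of monoids when $\csym{C}$ is rigid; applying the ``right-dual'' reformulation from Subsection~\ref{s101}, $\mathbb{W} \simeq \mathbb{L}\circ(-)^{\vee}$, and the fact that $\mathbb{K}$ (hence $\mathbb{L}$, $\mathbb{W}$) is a monoidal equivalence $[\csym{C},\mathbf{Vec}_{\Bbbk}] \simeq \tam$ in the rigid case, we conclude that the monoid $[X,X]$ corresponds under a monoidal equivalence to the monoid $\Hom{X,-X}$ in $[\csym{C},\mathbf{Vec}_{\Bbbk}]$ (equivalently its Isbell dual, a coalgebra $A^{X}$ in $\underline{\csym{C}}$ via Corollary~\ref{Abelianizations}), whose coalgebra structure is the one of \cite{MMMT} by the remark following Corollary~\ref{Abelianizations}. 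Since a monoidal equivalence sends Morita-equivalent monoids to Morita-equivalent monoids and reflects Morita equivalence, the condition ``$[X,X],[Y,Y]$ Morita equivalent in $\tam$'' is equivalent to ``$A^{X},A^{Y}$ Morita equivalent in $\underline{\csym{C}}$''.

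Putting the two equivalences together yields: $\mathbf{M} \simeq \mathbf{N}$ in $\csym{C}\!\on{-Mod}$ if and only if $A^{X}$ and $A^{Y}$ are Morita equivalent, which is exactly the statement of Theorem~\ref{MMMTMain}. I would also note that the auxiliary claim of Theorem~\ref{MMMTMain} --- that $\Hom{X,-X}$ gives rise to a coalgebra in $\underline{\csym{C}}$ --- is subsumed, since the Corollary after Proposition~\ref{AbstractUnitality} already endows $\Hom{X,-X}$ with a monoid structure in $[\csym{C},\mathbf{Vec}_{\Bbbk}]$, which restricts to $[\csym{C},\mathbf{vec}_{\Bbbk}]$ in the finitary case by the finiteness result for coends (Corollary after Lemma~\ref{FinCoend}), giving via Isbell duality the coalgebra $A^{X} \in \underline{\csym{C}}$.

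The only genuine obstacle I anticipate is bookkeeping around compatibility of the various monoidal equivalences and dualizations: one must check that the monoid structure transported along $\omega_{X,X}$ and through $\mathbb{W} \simeq \mathbb{L}\circ(-)^{\vee}$ and through the Isbell duality $[\csym{C},\mathbf{vec}_{\Bbbk}]^{\on{op}} \simeq \underline{\csym{C}}$ genuinely matches the coalgebra structure of \cite{MMMT}, rather than some twist of it. This is precisely the content already indicated in the text (``one may verify that the coalgebra structure on $A^{X}$ defined in \cite{MMMT} coincides with the (oppositization of) monoid structure on $\Hom{X,-X}$''), and in the interest of brevity I would treat that identification as established there, so the proof of the corollary itself reduces to the two short implications above.
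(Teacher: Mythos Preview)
Your proposal is correct and follows essentially the same route as the paper's proof: identify $A^{X}$ with the monoid $\Hom{X,-X}$ in the (co)presheaf category, use rigidity and Proposition~\ref{RigidMonoids} (together with the Cayley functor being a monoidal equivalence) to translate Morita equivalence of these monoids into Morita equivalence of $[X,X],[Y,Y]$ in $\tam$, and then apply Theorem~\ref{MainConsequence}. The paper's proof is terser and leaves the passage between presheaves and copresheaves, as well as the Isbell/abelianization identifications, implicit in the surrounding text, whereas you spell these out explicitly; this extra care is appropriate and the ``bookkeeping obstacle'' you flag is exactly the compatibility the paper asserts just before the corollary.
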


\begin{proof}
 The coalgebra structures $A^{X},A^{Y}$ correspond to the monoid structures on $\Hom{X,-X}, \Hom{Y,-Y}$; in particular, the coalgebras $A^{X},A^{Y}$ are Morita equivalent if and only if $\Hom{X,-X}, \Hom{Y,-Y}$ are Morita equivalent. Since $\csym{C}$ is rigid, by Proposition~\ref{RigidMonoids} we find that $\Hom{X,-X},\Hom{Y,-Y}$ are Morita equivalent if and only if the monoids $[X,X], [Y,Y]$ in $\tam$ are Morita equivalent, which, by Theorem~\ref{MainConsequence}, is equivalent to $\mathbf{M} \simeq \mathbf{N}$.
\end{proof}

In fact, we do not need the assumption of Theorem~\ref{MMMTMain} that $\mathbf{M,N}$ are transitive, and instead only need $\mathbf{M,N}$ to be cyclic. However, this is also true for the arguments used in \cite{MMMT}.

\subsection{Ostrik algebras and MMMTZ coalgebras are not complete Morita invariants}\label{s104}

We now give an example of a monoidal category $\csym{C}$ together with a family of mutually non-equivalent $\csym{C}$-module categories which yield the same Ostrik algebras and the same MMMTZ coalgebras, proving that in the non-rigid case, these invariants are not complete. We also show that the resulting monoids of the form $[X,X]$, as defined in Proposition~\ref{EndMonoid}, are not Morita equivalent, thus explicitly exemplifying Theorem~\ref{MainConsequence}.

We do so first in the setting of categories enriched in the monoidal poset $\mathbb{2}=(\setj{0,1},\land)$. We may interpret $0,1$ as truth values and $\land$ as the logical conjunction. We identify $\mathbb{2}$ with its corresponding posetal monoidal category. This category is in fact Cartesian monoidal, as $\land$ is the categorical product in $\mathbb{2}$. Further $\mathbb{2}$ is complete and cocomplete: coproducts are given by disjunctions, equalizers and coequalizers are trivially the domain and codomain of the morphisms, since the category is posetal.

Recall that a $\mathbb{2}$-enriched category $\mathcal{S}$ is the same as a preorder $S$. Given elements $s,t \in S$, we have
\[
 \on{Hom}_{\mathcal{S}}(s,t) =
 \begin{cases}
  1 \text{ if } s \leq_{s} t \\
  0 \text{ otherwise.}
 \end{cases}
\]
Similarly, a $\mathbb{2}$-enriched functor is the same as a preorder morphism, and a $\mathbb{2}$-transformation in $\mathbb{2}\!\on{-Cat}(\mathcal{S,T})(\euler{F,G})$ exists if only if $\euler{F}(s) \leq \euler{G}(s)$ for all $s \in \mathcal{S}$. In that case, it exists uniquely.

For $s,s' \in \mathcal{S}$, we write $s \sim s'$ if we have $s \leq s'$ and $s' \leq s$. This corresponds to isomorphism of objects of $\mathcal{S}$. A monoidal $\mathbb{2}$-enriched category is the same as a weakly monoidal preorder, i.e. a preorder $C$ together with a distinguished element $1_{C} \in C$ and a binary operation $- \cdot_{C} -: C \times C \rightarrow C$ which is monotone in both variables, and satisfies $a \cdot (b \cdot c) \sim (a \cdot b) \cdot c$ and $1_{C} \cdot b \sim b \sim b \cdot 1_{C}$.

Given a $\mathbb{2}$-enriched monoidal category $\csym{C}$, a $\mathbb{2}$-enriched $\csym{C}$-module category $\mathbf{S}$ consists of a preorder $S$ together with a function
$-\diamond -: C \times S \rightarrow S$ such that
 \begin{enumerate}
  \item if $c \leq c'$ and $s \leq s'$ then $c \diamond s \leq c' \diamond s'$;
  \item for all $s \in S$, we have $1_{C} \diamond s \sim s$;
  \item for all $c,c' \in C$ and $s \in S$, we have $c' \diamond (c \diamond s) \sim (c' \cdot c) \diamond s$.
 \end{enumerate}

 A morphism $\euler{\Gamma}: \mathbf{S} \rightarrow \mathbf{T}$ of $\csym{C}$-module $\mathbb{2}$-enriched categories is given by a function $\gamma: S \rightarrow T$ such that, for all $c \in C$ and all $s \in S$, we have $c \diamond \gamma(s) \sim \gamma(c \diamond s)$.

 Following \cite[Proposition~2.1]{Ros}, every $\mathbb{2}$-enriched category is $\mathbb{2}$-Cauchy complete, thus we say that a $\csym{C}$-module category $\mathbf{S}$ is cyclic if there is $s \in S$ such that for any $s' \in S$, there is $c \in C$ satisfying $c \diamond_{\mathbf{S}} s \sim s'$.

 A $\mathbb{2}$-enriched copresheaf $\phi: \mathcal{C} \rightarrow \mathbb{2}$ is a poset morphism from $\mathcal{C}$ to $\mathbb{2}$, which is uniquely determined by the downward closed subset $\phi^{-1}(\setj{0})$ of $C$; equivalently, by the upward closed subset $\phi^{-1}(\setj{1})$ of $C$. Thus, a $\mathbb{2}$-enriched profunctor $\mathcal{C} \xslashedrightarrow{} \mathcal{D}$ is a subset $P$ of $D \times C$ such that if $(d,c) \in P$ and $d' \leq_{D} d$ and $c \leq_{C} c'$, then $(d',c') \in P$.

 Given a monoidal $\mathbb{2}$-enriched category $\csym{C}$ and $\mathbb{2}$-enriched $\csym{C}$-module categories $\mathcal{S,T}$, a Tambara module $\mathcal{S} \xslashedrightarrow{} \mathcal{T}$ is a subset $P \subseteq T \times S$ which is a profunctor, and such that, for any $c \in C, s \in S$ and $t \in T$, if $(t,s) \in P$, then $(c\diamond_{\mathcal{T}} t, c\diamond_{\mathcal{S}} s) \in P$.

 The monoidal poset $(\mathbb{Z}_{\geq 0}, +)$ is a strict monoidal $\mathbb{2}$-enriched category, and, for every $k \in \mathbb{Z}_{\geq 0}$, the subposet $\setj{0,1,\ldots, k}$ of $\mathbb{Z}_{\geq 0}$ yields a strict $\mathbb{2}$-enriched $\mathbb{Z}_{\geq 0}$-module category $\mathbb{Z}_{0,k}$, by setting, for $c \in \mathbb{Z}_{\geq 0}$ and $s \in \setj{0,1,\ldots k}$:
 \[
  c \diamond s := \min\setj{c+s, k}.
 \]
 For every $k \in \mathbb{Z}_{\geq 0}$, the $\mathbb{Z}_{\geq 0}$-module category $\mathbb{Z}_{0,k}$ is cyclic, with the unique cyclic generator given by $0 \in \setj{0,1,\ldots,k}$.

 An equivalence of strict $\mathbb{2}$-enriched module categories is in particular a poset isomorphism (thus, it is automatically strict), hence, a bijection. Thus, for $k \neq m$ we have $\mathbb{Z}_{0,k} \not\simeq \mathbb{Z}_{0,m}$.

 However, for any $k \in \mathbb{Z}_{\geq 0}$, the $\mathbb{2}$-enriched presheaf $\Hom{-0,0}_{\mathbb{Z}_{0,k}}: \mathbb{Z}_{\geq 0}^{\on{op}} \rightarrow \mathbb{2}$ used to define the corresponding Ostrik algebra, is determined by $\Hom{-0,0}_{\mathbb{Z}_{0,k}}^{-1}(\setj{1}) = \setj{0}$. Thus, the Ostrik algebra does not depend on $k$. Similarly, the $\mathbb{2}$-enriched copresheaf $\Hom{0,-0}_{\mathbb{Z}_{0,k}}$ is determined by $\Hom{0,-0}_{\mathbb{Z}_{0,k}}^{-1}(\setj{0}) = \varnothing$. This shows that, in this setting, neither Ostrik algebras nor MMMTZ coalgebras give a complete invariant.

  \begin{proposition}
  The monoids $[0,0]_{0}$ and $[0,0]_{1}$ in $\mathbb{Z}_{\geq0}\!\on{-Tamb}(\mathbb{Z}_{\geq0},\mathbb{Z}_{\geq0})$ are not Morita equivalent.
 \end{proposition}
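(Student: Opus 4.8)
The plan is to obtain the proposition as a direct corollary of Theorem~\ref{MainConsequence}, with the needed non-equivalence of module categories supplied by a trivial cardinality obstruction.

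Before applying Theorem~\ref{MainConsequence} I would insert a short standing remark that the entire development leading to it --- the biequivalence $\na$ of Theorem~\ref{MainThm}, the monoid and bimodule structures of Proposition~\ref{EndMonoid} and Proposition~\ref{HomBimod}, the tameness of Proposition~\ref{Tameness}, and the Cauchy-completion comparison of Corollary~\ref{CauchyTrick} --- remains valid when the enrichment base is $\mathbb{2}$ rather than $\mathbf{Vec}_{\Bbbk}$. The point is not that one can ``check commutativity on elements'' (one cannot), but the stronger fact that $\mathbb{2}$ is posetal: any two parallel $\mathbb{2}$-morphisms coincide, so all the coherence identities verified in the proofs of Proposition~\ref{ResPseudoOneSide}, Theorem~\ref{LaxPseudoFunctorialityNa} and the local-equivalence results hold automatically. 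Furthermore $\mathbb{2}$ is complete and cocomplete, so all the coend (hence colimit) constructions used throughout go through verbatim, and by \cite[Proposition~2.1]{Ros} every $\mathbb{2}$-enriched category is Cauchy complete, so Definition~\ref{CyclicCauchyDefinition} and Corollary~\ref{CauchyTrick} impose no extra hypotheses. I expect this bookkeeping to be the only genuine work in the proof: one must confirm that no step in the long chain of lemmas secretly used additivity or idempotent-splitting in a way that $\mathbb{2}$ fails to supply. Everything that follows is immediate.

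Next I would identify the two monoids. For each $k$ one has $\mathbb{Z}_{\geq 0}\ostar 0 = \mathbb{Z}_{0,k}$ inside $\mathbb{Z}_{0,k}$, because $\mathbb{Z}_{0,k}(c)(0) = \min\setj{c,k}$ runs over all of $\setj{0,1,\ldots,k}$ as $c$ varies; hence $0$ is a cyclic generator of $\mathbb{Z}_{0,k}$ (in fact very cyclic, the inclusion $\mathbb{Z}_{\geq 0}\ostar 0 \hookrightarrow \mathbb{Z}_{0,k}$ being the identity), the category is cyclic in the sense of Definition~\ref{CyclicCauchyDefinition} (Cauchy completeness being automatic), and $[0,0]_{k} = \na(\mathbb{Z}_{0,k}\star 0)$ in the notation of Proposition~\ref{EndMonoid}. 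On the other side, as observed just above the statement, an equivalence of strict $\mathbb{2}$-enriched $\mathbb{Z}_{\geq 0}$-module categories is in particular a poset isomorphism, hence a bijection of underlying sets; since $\setj{0}$ and $\setj{0,1}$ have different cardinalities, $\mathbb{Z}_{0,0}$ and $\mathbb{Z}_{0,1}$ are not equivalent as $\mathbb{Z}_{\geq 0}$-module categories.

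Finally I would invoke the implication in Theorem~\ref{MainConsequence} stating that Morita equivalence of $[X,X]$ and $[Y,Y]$ forces $\mathbf{M}\simeq\mathbf{N}$: were $[0,0]_{0}$ and $[0,0]_{1}$ Morita equivalent in $\mathbb{Z}_{\geq 0}\!\on{-Tamb}(\mathbb{Z}_{\geq 0},\mathbb{Z}_{\geq 0})$, the module categories $\mathbb{Z}_{0,0}$ and $\mathbb{Z}_{0,1}$ would be equivalent, contradicting the previous paragraph. (If a fully self-contained argument were wanted, one could instead compute directly that $[0,0]_{0}$ is the terminal Tambara module $(a,b)\mapsto 1$ while $[0,0]_{1}$ takes the value $0$ precisely on the pairs with $a\geq 1$ and $b=0$, and then rule out Morita-equivalence bimodules by hand; but passing through Theorem~\ref{MainConsequence} is cleaner and is exactly the phenomenon this example is meant to illustrate.)
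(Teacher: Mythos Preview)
Your argument is correct but takes a different route from the paper. The paper gives a hands-on verification: it shows that $[0,0]_{0}$ is a \emph{simple} monoid (any nonzero sub-bimodule is all of $[0,0]_{0}$, by a direct computation with the action maps), whereas $[0,0]_{1}$ admits a proper nonzero ideal $\mathtt{\Sigma}_{\geq 1}$ given by $\mathtt{\Sigma}_{\geq 1}(k,m)=\Hom{(k+1)\diamond 0,\,m\diamond 0}$; since a Morita equivalence induces a monoidal equivalence of bimodule categories sending the regular bimodule to the regular bimodule, it preserves the subobject poset of the unit and hence simplicity, so the two monoids cannot be Morita equivalent. Your approach instead invokes Theorem~\ref{MainConsequence} directly, after arguing that the entire machinery transports to $\mathbb{2}$-enrichment, and then uses the trivial cardinality obstruction to rule out an equivalence $\mathbb{Z}_{0,0}\simeq\mathbb{Z}_{0,1}$.

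Both arguments are valid; the trade-off is that yours is shorter but somewhat defeats the declared purpose of the example, which is to \emph{explicitly exemplify} Theorem~\ref{MainConsequence} by an independent check rather than to deduce non-Morita-equivalence from it. Your ``standing remark'' that the whole development survives the change of base to $\mathbb{2}$ is correct (posetality makes every coherence diagram commute, and $\mathbb{2}$ is bicomplete), but it is a heavier appeal than the paper's localised computation, which uses nothing beyond the definitions of Tambara module, sub-bimodule, and the action maps. The paper's approach also yields more: it identifies the actual ideal lattice structure of $[0,0]_{1}$ (and hints at $[0,0]_{n}$), connecting with Corollary~\ref{NaSimpleSimple} on simple transitive module categories. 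Your parenthetical alternative of computing the Tambara modules explicitly and ruling out Morita bimodules by hand is closer in spirit to the paper, but the simplicity argument the paper uses is cleaner than a direct bimodule search.
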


 \begin{proof}
  Given monoid objects $A,B$ in a tame monoidal category $\csym{D}$, a Morita equivalence $A \xrightarrow{{}_{B}M_{A}} B$ induces a monoidal equivalence $\on{Bimod}(\csym{D})(A,A) \simeq \on{Bimod}(\csym{D})(B,B)$ mapping ${}_{A}A_{A}$ to ${}_{B}(M \otimes_{A} M^{-1})_{B} \simeq {}_{B}B_{B}$. In particular, it induces an isomorphism of posets between the poset of subobjects of $A$ in $\on{Bimod}(\csym{D})(A,A)$ and the poset of subobjects of $B$ in $\on{Bimod}(\csym{D})(B,B)$. In particular, $A$ is simple if and only if $B$ is.

  Thus, it suffices to show that $[0,0]_{0}$ is a simple monoid, while $[0,0]_{1}$ is not.
  First, we show that $[0,0]_{0}$ is a simple monoid. For any $k,m \in \mathbb{Z}_{\geq 0}$, we have
  \[
  k\diamond 0 = \min\setj{k+ 0,0} = \min\setj{m+0,0} = m \diamond 0,
  \]
    thus $k \diamond 0 \leq m \diamond 0$, showing that $[0,0]_{0}(k,m) = \Hom{k\diamond 0, m \diamond 0}_{\mathbb{Z}_{0,0}} = 1$. Let $\Psi$ be an ideal in $[0,0]_{0}$. If $\Psi \neq [0,0]_{0}$, then there are $k,m \in \mathbb{Z}_{\geq 0}$ such that $\Psi(k,m) = 0$.
   The left action map
  \[
   \int^{l \in \mathbb{Z}_{\geq 0}} \Hom{k\diamond 0,l\diamond 0} \otimes \Psi(l,m) = \bigvee_{l} \Hom{k\diamond 0,l\diamond 0} \land \Psi(l,m) \rightarrow \Psi(k,m) = 0
  \]
  being well-defined shows that $\Hom{k\diamond 0,l\diamond 0} \land \Psi(l,m) = 0$ for all $l$. At the same time, $\Hom{k \diamond 0, l \diamond 0} = 1$. We conclude that $\Psi(l,m) = 0$ for all $l$. Similarly to the preceding argument, for any $j \in \mathbb{Z}_{\geq 0}$, the right action map
  \[
   \int^{l} \Psi(j,l) \otimes \Hom{l\diamond 0, n \diamond 0} = \bigvee_{l} \Psi(j,l) \land \Hom{l\diamond 0, n \diamond 0} \rightarrow \Psi(j,m) = 0
  \]
  being well-defined shows that $\Psi(j,l) = 0$ for all $l \in \mathbb{Z}_{\geq 0}$. Thus, $\Psi = 0$.

  It now suffices to show that $[0,0]_{1}$ admits a proper ideal. We claim that the Tambara module $\mathtt{\Sigma}_{\geq 1}$ given by
  \[
\mathtt{\Sigma}_{\geq 1}(k,m) = \Hom{(k+1)\diamond 0, m \diamond 0},
  \]
  gives such an ideal. By definition, $\mathtt{\Sigma}_{\geq 1}(k,m) = 1$ if and only if
  \[
   \min\setj{k+1, 1} \leq \min\setj{m,1},
  \]
  in which case we also have
  \[
   \min\setj{k,1} \leq \min\setj{m,1},
  \]
  showing that also $[0,0]_{1}(k,m) = 1$. This shows that $\mathtt{\Sigma}_{\geq 1}$ gives a Tambara submodule of $[0,0]_{1}$. Since the category $\mathbb{Z}_{\geq0}\!\on{-Tamb}(\mathbb{Z}_{\geq0},\mathbb{Z}_{\geq0})$ is posetal, it suffices to show that there are maps $[0,0]_{1} \circ \Sigma_{\geq 1} \rightarrow \Sigma_{\geq 1}$ and $\Sigma_{\geq 1} \circ [0,0]_{1} \rightarrow \Sigma_{\geq 1}$, in order to conclude that $\Sigma_{\geq 1}$ is an ideal in $[0,0]_{1}$. And indeed, for any $k,m \in \mathbb{Z}_{\geq 0}$, there is a map
  \[
   \int^{l} \Hom{k \diamond 0, l \diamond 0} \otimes \Hom{(l+1)\diamond 0, m \diamond 0} \rightarrow \Hom{(k+1)\diamond 0, m \diamond 0},
  \]
  since if, for some $l \in \mathbb{Z}_{\geq 0}$, we have $k \leq l$ and $l+1 \leq m$, we also have $k+1 \leq m$. Similarly, for any $k,m$, there is a map
    \[
   \int^{l} \Hom{(k+1) \diamond 0, l \diamond 0} \otimes \Hom{l \diamond 0, m \diamond 0} \rightarrow \Hom{(k+1)\diamond 0, m \diamond 0},
  \]
  since if, for some $l \in \mathbb{Z}_{\geq 0}$, we have $k+1 \leq l$ and $l \leq m$, we also have $k+1 \leq m$.
 \end{proof}

 One can show that $\Sigma_{\geq 1}$ is the only ideal in $[0,0]_{1}$, and for any $n$ one may consider analogously defined ideals $\setj{\Sigma_{\geq k} \subseteq [0,0]_{n} \; | \; 0 \leq  k \leq n}$.

 Further, $\mathbb{Z}_{0,n}$ may itself be viewed as a monoidal $\mathbb{2}$-enriched category, in fact it is the quotient of $\mathbb{Z}_{\geq 0}$ by the congruence whose equivalence classes are $\setj{0},\setj{1},\ldots,\setj{n-1},\setj{n,n+1,\ldots}$.
 Linearizing over $\Bbbk$ (and passing to Cauchy completions) the underlying structures for the above $\mathbb{2}$-enriched example yields a $\Bbbk$-linear category $\Bbbk\mathbb{Z}_{\geq 0}$ such that
 \[
 \on{Hom}_{\Bbbk\mathbb{Z}_{\geq 0}}(k,m) =
 \begin{cases}
  \Bbbk \text{ if } k \leq m \\
  0 \text{ otherwise.}
 \end{cases}
 \]
 We of course still have $k \otimes_{\Bbbk\mathbb{Z}_{\geq 0}} m = k+m$. Also in this case we obtain non-equivalent $\Bbbk\mathbb{Z}_{\geq 0}$-module categories $\Bbbk\mathbb{Z}_{0,0}, \Bbbk\mathbb{Z}_{0,1}$ which yield equal Ostrik algebras and MMMMTZ coalgebras, and similarly to above one can explicitly show that the associated monoids in $\Bbbk\mathbb{Z}_{\geq0}\!\on{-Tamb}(\Bbbk\mathbb{Z}_{\geq0},\Bbbk\mathbb{Z}_{\geq0})$ are not Morita equivalent. This still holds if we view $\Bbbk\mathbb{Z}_{0,0}, \Bbbk\mathbb{Z}_{0,1}$ as $\Bbbk\mathbb{Z}_{0,n}$-module categories, for any $n \geq 1$. In particular, the (monoidal) category $\Bbbk\mathbb{Z}_{0,n}$ is finitary, so we obtain an example of incompleteness of Ostrik algebras and MMMTZ coalgebras in the finitary setting.

\section{Module categories as categories of modules}\label{s11}

The results of \cite{Os} and \cite{MMMTZ1} do not only show that two $\csym{C}$-module categories are equivalent if and only if their associated (co)algebra objects in (the finite completion of) $\csym{C}$ are Morita equivalent. In fact, they show that a cyclic $\csym{C}$-module category $\mathbf{M}$ with a cyclic generator $X$ is equivalent as a $\csym{C}$-module category to the category $\on{mod-}\setj{X,X}$.

In order to reconstruct $\mathbf{M}$ as the category of {\it all} $\setj{X,X}$-modules, one needs the finiteness and cocompleteness assumptions of \cite{MMMTZ1}, \cite{Os}. In order to state a general result in our setting, we instead use the notion of {\it free} $\setj{X,X}$-modules. Let $\csym{C}$ be a monoidal category, $\mathrm{A}$ a monoid object in $\csym{C}$ with multiplication map $\mathrm{m}_{\mathrm{A}}$. Let $\on{mod}_{\ccf{C}}\!\on{--}\!\mathrm{A}$ denote the category of right $\mathrm{A}$-modules. For any object $\mathrm{G}$, the object $\mathrm{G} \cotimes \mathrm{A}$ together with the morphism $\mathrm{ra}_{\mathrm{G\cotimes A}}= \mathrm{G} \otimes \mathrm{m}_{\mathrm{A}}$, is a right $\mathrm{A}$-module.
We say that a module $\mathrm{M} \in \on{mod}_{\ccf{C}}\!\on{--}\!\mathrm{A}$ is {\it free} if there is an isomorphism of right $\mathrm{A}$-modules $\mathrm{M} \simeq \mathrm{G} \cotimes \mathrm{A}$ for some $\mathrm{G} \in \csym{C}$. We denote the full subcategory of $\on{mod}_{\ccf{C}}\!\on{--}\!\mathrm{A}$ given by free modules by $\on{frmod}_{\ccf{C}}\mathrm{A}$. It is canonically equivalent to the Kleisli category of the monad given by the endofunctor $-\cotimes A$ of $\csym{C}$.
Using this observation, we find the canonical isomorphism
\begin{equation}\label{KleisliHoms}
 \on{Hom}_{\ccf{C}}(\mathrm{G},-)\xiso \on{Hom}_{\on{mod}_{\ccf{C}}\!\on{--}\!\mathrm{A}}(\mathrm{G}\cotimes \mathrm{A},-),
\end{equation}
sending a morphism $\mathrm{f} \in \csym{C}(\mathrm{G,N})$ to the morphism $\mathrm{ra}_{\mathrm{N}} \circ (\mathrm{f \otimes A})$.

Recall from Section~\ref{s6} that a strong monoidal functor $\mathbb{F}: \csym{C} \rightarrow \csym{D}$ induces a pseudofunctor
\[
\on{Bimod}(\mathbb{F}): \on{Bimod}(\csym{C}) \rightarrow \on{Bimod}(\csym{D}).
\]

Similarly, for any monoid object $\mathrm{A} \in \csym{C}$, we find a functor $\mathbb{F}_{\ast,A}: \on{mod}_{\ccf{C}}\!\on{--}\!\!\mathrm{A} \rightarrow \on{mod}_{\ccf{D}}\!\on{--}\!\mathbb{F}(\mathrm{A})$, sending an $\mathrm{A}$-module object $\mathrm{M}$ to the $\mathbb{F}(\mathrm{A})$-module object $\mathbb{F}(\mathrm{M})$.
Moreover, the category $\on{mod}_{\ccf{C}}\!\on{--}\!\mathrm{A}$ is a $\csym{C}$-module subcategory of ${}_{\ccf{C}}\csym{C}$ - given an object $\mathrm{G} \in \csym{C}$ and a right $\mathrm{A}$-module $\mathrm{M}$ with structure map $\mathrm{M \cotimes A} \xrightarrow{\mathrm{ra}_{\mathrm{M}}} \mathrm{M}$, the object $\mathrm{G}\cotimes \mathrm{M}$ is a right $\mathrm{A}$-module with structure map $\mathrm{G} \otimes \mathrm{ra}_{\mathrm{M}}$.
Similarly, $\on{mod}_{\ccf{D}}\!\on{--}\!\mathbb{F}(\mathrm{A})$ is a $\csym{D}$-module subcategory of ${}_{\ccf{D}}\csym{D}$ and thus, after restricting the action along $\mathbb{F}$, also a $\csym{C}$-module subcategory of ${}_{\ccf{C}}\csym{D}$.
The functor $\mathbb{F}_{\ast,\mathrm{A}}$ is a $\csym{C}$-module functor with respect to the above $\csym{C}$-module category structures.
Clearly, if $\mathbb{F}$ is fully faithful, so is $\mathbb{F}_{\ast, \mathrm{A}}$ for any $\mathrm{A}$. Further, $\mathbb{F}_{\ast, \mathrm{A}}$ maps $\on{frmod}_{\ccf{C}}\mathrm{A}$ to $\on{frmod}_{\ccf{D}}\mathbb{F}(\mathrm{A})$.

In particular, the Yoneda embedding $\yo_{\!\ccf{C}}$ induces the pseudofunctor $\on{Bimod}(\csym{C}) \rightarrow \on{Bimod}([\csym{C}^{\on{opp}}, \mathbf{Vec}_{\Bbbk}])$. For a monoid $\mathrm{A} \in \csym{C}$, we obtain the monoid $\yo_{\ccf{C}}(\mathrm{A}) = \csym{C}(-,\mathrm{A})$ in $[\csym{C}^{\on{opp}},\mathbf{Vec}_{\Bbbk}]$ and a full and faithful $\csym{C}$-module functor $(\yo_{\ccf{C}})_{\ast,\mathrm{A}}:\on{mod}_{\ccf{C}}\!\on{--}\!\mathrm{A} \rightarrow \on{mod}_{[\ccf{C},\mathbf{Vec}_{\Bbbk}]}\!\on{--}\!\csym{C}(-,\mathrm{A})$. It restricts to a full and faithful $\csym{C}$-module functor from $\on{frmod}_{\ccf{C}}\mathrm{A}$ to $\on{frmod}_{[\ccf{C},\mathbf{Vec}_{\Bbbk}]}\csym{C}(-,\mathrm{A})$, whose essential image is given by presheaves isomorphic to those of the form
\[
\csym{C}(-,\mathrm{G}) \circledast \csym{C}(-,\mathrm{A}) \simeq \csym{C}(-,\mathrm{GA}).
\]
\begin{definition}
 Given a monoid object $\euler{P}$ in $[\csym{C}^{\on{opp}},\mathbf{Vec}_{\Bbbk}]$, let $\cplus{\euler{P}}$ be the full subcategory of $\on{frmod}_{[\ccf{C}^{\on{opp}},\mathbf{Vec}_{\Bbbk}]}\euler{P}$ given by right $\euler{P}$-module objects isomorphic to those of the form $\csym{C}(-,\mathrm{G}) \circledast \euler{P}$, for $\mathrm{G} \in \csym{C}$.
\end{definition}

Using this definition, we reformulate our earlier observations:
\begin{lemma}\label{Yo-YoDa}
 $(\yo_{\ccf{C}})_{\ast,\mathrm{A}}$ restricts to an equivalence of $\csym{C}$-module categories $\on{frmod}_{\ccf{C}}\mathrm{A} \xiso \cplus{\csym{C}(-,\mathrm{A})}$.
\end{lemma}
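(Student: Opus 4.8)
The plan is to assemble the statement from the facts already recorded in the surrounding discussion: $(\yo_{\ccf{C}})_{\ast,\mathrm{A}}$ is a full and faithful $\csym{C}$-module functor from $\on{mod}_{\ccf{C}}\mathrm{A}$ to $\on{mod}_{[\ccf{C}^{\on{opp}},\mathbf{Vec}_{\Bbbk}]}\csym{C}(-,\mathrm{A})$ (induced by the monoidal, hence in particular fully faithful, Yoneda embedding), it restricts to a $\csym{C}$-module functor between the free-module subcategories, and the essential image of that restriction is described by the presheaves isomorphic to $\csym{C}(-,\mathrm{G}) \circledast \csym{C}(-,\mathrm{A}) \simeq \csym{C}(-,\mathrm{GA})$. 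The latter set is, by Definition~\ref{CPlus} applied to $\euler{P} = \csym{C}(-,\mathrm{A})$, exactly the object set of $\cplus{\csym{C}(-,\mathrm{A})}$. So the content of the lemma is that the corestriction of $(\yo_{\ccf{C}})_{\ast,\mathrm{A}}$ to the full subcategory $\cplus{\csym{C}(-,\mathrm{A})}$ on those objects is an equivalence of $\csym{C}$-module categories.

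First I would note that $\cplus{\csym{C}(-,\mathrm{A})}$ is a $\csym{C}$-module subcategory of $\on{frmod}_{[\ccf{C}^{\on{opp}},\mathbf{Vec}_{\Bbbk}]}\csym{C}(-,\mathrm{A})$, since the Day-convolution action $\euler{Q} \mapsto \csym{C}(-,\mathrm{H}) \circledast \euler{Q}$ sends $\csym{C}(-,\mathrm{G}) \circledast \csym{C}(-,\mathrm{A})$ to $\csym{C}(-,\mathrm{H}) \circledast \csym{C}(-,\mathrm{G}) \circledast \csym{C}(-,\mathrm{A}) \simeq \csym{C}(-,\mathrm{HG}) \circledast \csym{C}(-,\mathrm{A})$, using that $\yo_{\ccf{C}}$ is strong monoidal; this keeps us inside the prescribed object set. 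Next, full faithfulness of the corestriction is inherited verbatim from full faithfulness of $(\yo_{\ccf{C}})_{\ast,\mathrm{A}}$ — a full subcategory of the target paired with a full subcategory of the source on which the functor already acts, together with the fact that $(\yo_{\ccf{C}})_{\ast,\mathrm{A}}$ really does land in $\cplus{\csym{C}(-,\mathrm{A})}$ when restricted to $\on{frmod}_{\ccf{C}}\mathrm{A}$ (a free module $\mathrm{G}\cotimes\mathrm{A}$ maps to $\csym{C}(-,\mathrm{G}\cotimes\mathrm{A}) \simeq \csym{C}(-,\mathrm{G}) \circledast \csym{C}(-,\mathrm{A})$). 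Essential surjectivity onto $\cplus{\csym{C}(-,\mathrm{A})}$ is then immediate: every object of $\cplus{\csym{C}(-,\mathrm{A})}$ is by definition isomorphic to $\csym{C}(-,\mathrm{G}) \circledast \csym{C}(-,\mathrm{A})$ for some $\mathrm{G}$, which is isomorphic (as a $\csym{C}(-,\mathrm{A})$-module) to the image of the free module $\mathrm{G}\cotimes\mathrm{A}$.

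The one point that deserves care, and which I expect to be the main obstacle, is verifying that the module-functor coherence cells of $(\yo_{\ccf{C}})_{\ast,\mathrm{A}}$ restrict correctly, i.e. that the equivalence is genuinely one of $\csym{C}$-module categories and not merely of underlying categories. This amounts to checking that the comparison $\yo_{\ccf{C}}(\mathrm{G}\cotimes\mathrm{M}) \simeq \csym{C}(-,\mathrm{G}) \circledast \yo_{\ccf{C}}(\mathrm{M})$ supplied by the monoidal structure of the Yoneda embedding is compatible with the $\mathrm{A}$-module, respectively $\csym{C}(-,\mathrm{A})$-module, structure maps on both sides and with the associativity/unitality coherences of the two $\csym{C}$-actions. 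Since $\on{mod}_{\ccf{C}}\mathrm{A}$ is a $\csym{C}$-module subcategory of ${}_{\ccf{C}}\csym{C}$ and $\on{mod}_{[\ccf{C}^{\on{opp}},\mathbf{Vec}_{\Bbbk}]}\csym{C}(-,\mathrm{A})$ a $\csym{C}$-module subcategory of ${}_{\ccf{C}}[\csym{C}^{\on{opp}},\mathbf{Vec}_{\Bbbk}]$ with actions inherited pointwise, this reduces to the corresponding compatibilities for $\yo_{\ccf{C}}: {}_{\ccf{C}}\csym{C} \to {}_{\ccf{C}}[\csym{C}^{\on{opp}},\mathbf{Vec}_{\Bbbk}]$ as a $\csym{C}$-module functor, which in turn follow from the monoidal functoriality of $\yo_{\ccf{C}}$ together with the characterization of the Day convolution unit and associator via the universal property in \cite[Theorem~5.1]{IK}. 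Spelling this out is a diagram chase of the same flavour as those in Section~\ref{s102}; I would present it as a short verification rather than reproduce every cell, since the only non-formal input — strong monoidality of $\yo_{\ccf{C}}$ — has already been invoked.
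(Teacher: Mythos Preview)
Your proposal is correct and matches the paper's approach exactly: the paper does not give a separate proof but introduces the lemma with ``Using this definition, we reformulate our earlier observations,'' those observations being precisely the full faithfulness of $(\yo_{\ccf{C}})_{\ast,\mathrm{A}}$ and the identification of its essential image on free modules with presheaves of the form $\csym{C}(-,\mathrm{G}) \circledast \csym{C}(-,\mathrm{A})$. Your additional paragraph on the $\csym{C}$-module coherence is more than the paper writes out, but is in the same spirit and uses only the strong monoidality of $\yo_{\ccf{C}}$ already invoked.
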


Further, observe that the category $\cplus{\euler{P}}$ is defined similarly to the category $[X,X]_{+}^{\ccf{C}}$ introduced in Definition~\ref{CTPlus}.
Recall that $[X,X]_{+}^{\ccf{C}}$ is given by objects of $[X,X]\!\on{--mod}_{\ccf{C}\!\on{-Tamb}(\ccf{C},\ccf{C})}$ isomorphic to those of the form $[X,X] \diamond \csym{C}(-,-\mathrm{G})$ for $\mathrm{G} \in \csym{C}$.

\begin{lemma}\label{AuxiliaryLemma}
 Let $\mathrm{A,B}$ be monoid objects in  $\csym{C}$. If there is a monoid isomorphism $\mathrm{A} \xrightarrow[\sim]{\mathrm{f}} \mathrm{B}$, there is an isomorphism of $\csym{C}$-module categories $\mathrm{f}^{\ast}: \on{mod}_{\ccf{C}}\!\on{--}\!\mathrm{B} \xiso \on{mod}_{\ccf{C}}\!\on{--}\!\mathrm{A}$.
\end{lemma}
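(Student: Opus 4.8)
The plan is to realize $\mathrm{f}^{\ast}$ as ``twisting the module action along $\mathrm{f}$''. Given a right $\mathrm{B}$-module object $(\mathrm{M},\mathrm{ra}_{\mathrm{M}})$ with structure map $\mathrm{ra}_{\mathrm{M}}: \mathrm{M} \cotimes \mathrm{B} \rightarrow \mathrm{M}$, I would define $\mathrm{f}^{\ast}\mathrm{M}$ to be the object $\mathrm{M}$ equipped with the action
\[
\mathrm{ra}_{\mathrm{f}^{\ast}\mathrm{M}} := \mathrm{ra}_{\mathrm{M}} \circ (\mathrm{M} \cotimes \mathrm{f}): \mathrm{M} \cotimes \mathrm{A} \rightarrow \mathrm{M} \cotimes \mathrm{B} \rightarrow \mathrm{M}.
\]
First I would check that $(\mathrm{f}^{\ast}\mathrm{M},\mathrm{ra}_{\mathrm{f}^{\ast}\mathrm{M}})$ is a right $\mathrm{A}$-module: unitality follows from the unitality of $\mathrm{ra}_{\mathrm{M}}$ together with the fact that $\mathrm{f}$ preserves units, and associativity follows from the associativity of $\mathrm{ra}_{\mathrm{M}}$ together with the multiplicativity of $\mathrm{f}$, using naturality of the associator $\mathsf{a}^{\ccf{C}}$ to slide the coherence cell of $\csym{C}$ past $\mathrm{M} \cotimes \mathrm{f} \cotimes \mathrm{f}$. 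On morphisms $\mathrm{f}^{\ast}$ is the identity: if $\mathrm{g}: \mathrm{M} \rightarrow \mathrm{N}$ intertwines the $\mathrm{B}$-actions, precomposing the intertwining square with $\mathrm{M} \cotimes \mathrm{f}$ and using functoriality of $\cotimes$ shows it intertwines the twisted $\mathrm{A}$-actions, so $\mathrm{f}^{\ast}$ is a well-defined functor.

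Next I would observe that $(\mathrm{f}^{-1})^{\ast}: \on{mod}_{\ccf{C}}\mathrm{A} \rightarrow \on{mod}_{\ccf{C}}\mathrm{B}$ is a two-sided inverse of $\mathrm{f}^{\ast}$. Both composites are the identity on underlying objects and on morphisms, and on structure maps one has $\mathrm{ra}_{\mathrm{M}} \circ (\mathrm{M} \cotimes \mathrm{f}) \circ (\mathrm{M} \cotimes \mathrm{f}^{-1}) = \mathrm{ra}_{\mathrm{M}}$ by functoriality of $\cotimes$ and $\mathrm{f} \circ \mathrm{f}^{-1} = \on{id}_{\mathrm{B}}$, and symmetrically for the other composite. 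Hence $\mathrm{f}^{\ast}$ is an isomorphism of categories.

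It then remains to promote $\mathrm{f}^{\ast}$ to an isomorphism of $\csym{C}$-module categories. Recall that $\on{mod}_{\ccf{C}}\mathrm{B}$ carries the $\csym{C}$-module structure inherited from ${}_{\ccf{C}}\csym{C}$, where $\mathrm{G} \cotimes (\mathrm{M},\mathrm{ra}_{\mathrm{M}})$ has action $(\mathrm{G} \cotimes \mathrm{M}) \cotimes \mathrm{B} \xrightarrow{\mathsf{a}^{\ccf{C}}_{\mathrm{G},\mathrm{M},\mathrm{B}}} \mathrm{G} \cotimes (\mathrm{M} \cotimes \mathrm{B}) \xrightarrow{\mathrm{G} \cotimes \mathrm{ra}_{\mathrm{M}}} \mathrm{G} \cotimes \mathrm{M}$, and likewise for $\mathrm{A}$. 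Since the underlying functor of $\mathrm{f}^{\ast}$ is the identity on ${}_{\ccf{C}}\csym{C}$, it suffices to check that it carries the $\mathrm{B}$-action on $\mathrm{G} \cotimes \mathrm{M}$ to the $\mathrm{A}$-action on $\mathrm{G} \cotimes (\mathrm{f}^{\ast}\mathrm{M})$, i.e. that
\[
(\mathrm{G} \cotimes \mathrm{ra}_{\mathrm{M}}) \circ \mathsf{a}^{\ccf{C}}_{\mathrm{G},\mathrm{M},\mathrm{B}} \circ ((\mathrm{G} \cotimes \mathrm{M}) \cotimes \mathrm{f}) = \big(\mathrm{G} \cotimes (\mathrm{ra}_{\mathrm{M}} \circ (\mathrm{M} \cotimes \mathrm{f}))\big) \circ \mathsf{a}^{\ccf{C}}_{\mathrm{G},\mathrm{M},\mathrm{A}};
\]
this follows from the interchange law and naturality of $\mathsf{a}^{\ccf{C}}$. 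Thus $\mathrm{f}^{\ast}$ commutes strictly with the action functors $\mathrm{G} \cotimes (-)$, and since the $\csym{C}$-module coherence cells $\mathbf{m},\mathbf{m}_{\mathbb{1}}$ of both $\on{mod}_{\ccf{C}}\mathrm{A}$ and $\on{mod}_{\ccf{C}}\mathrm{B}$ are simply those of ${}_{\ccf{C}}\csym{C}$, it trivially respects them. Hence $\mathrm{f}^{\ast}$ is a strict $\csym{C}$-module functor and an isomorphism of $\csym{C}$-module categories with inverse $(\mathrm{f}^{-1})^{\ast}$. The argument is entirely elementary; the only point requiring some care — and the one I would flag as the main (bookkeeping) obstacle, consistent with the paper's convention of not strictifying — is the careful tracking of the associator and unitor cells of $\csym{C}$ both in verifying that $\mathrm{ra}_{\mathrm{f}^{\ast}\mathrm{M}}$ is a genuine module action and in verifying compatibility of $\mathrm{f}^{\ast}$ with the inherited $\csym{C}$-module structure.
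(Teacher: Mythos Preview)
Your proof is correct and follows essentially the same approach as the paper: define $\mathrm{f}^{\ast}$ by twisting the action along $\mathrm{f}$, observe that $(\mathrm{f}^{-1})^{\ast}$ is a strict inverse, and verify compatibility with the $\csym{C}$-action via naturality of the associator. You have simply spelled out a few details (the module axioms for $\mathrm{ra}_{\mathrm{f}^{\ast}\mathrm{M}}$, the coherence bookkeeping) that the paper leaves implicit.
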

\begin{proof}
 We let $\mathrm{f}^{\ast}(\mathrm{M}) = \mathrm{M}$, with $\mathrm{A}$-module structure $\mathrm{M \cotimes A} \xrightarrow{\mathrm{M} \otimes \mathrm{f}} \mathrm{M \cotimes B} \xrightarrow{\mathrm{ra}_{\mathrm{M}}} \mathrm{M}$. A $\mathrm{B}$-module morphism $\mathrm{t}:\mathrm{M} \rightarrow \mathrm{N}$ becomes an $\mathrm{A}$-module morphism from $\mathrm{f}^{\ast}(\mathrm{M})$ to $\mathrm{f}^{\ast}(\mathrm{N})$. Recall that given an object $\mathrm{F} \in \csym{C}$, the right $\mathrm{A}$-module structure on $\mathrm{F \cotimes M}$ is given by $\mathrm{F} \otimes \mathrm{ra}_{\mathrm{M}}$. To see that $\mathrm{f}^{\ast}$ is a $\csym{C}$-module functor, observe that the diagram
 \[\begin{tikzcd}
	{(\mathrm{F} \cotimes \mathrm{M}) \cotimes \mathrm{A}} && {(\mathrm{F} \cotimes \mathrm{M}) \cotimes \mathrm{B}} \\
	{\mathrm{F} \cotimes (\mathrm{M} \cotimes \mathrm{A})} && {\mathrm{F} \cotimes (\mathrm{M} \cotimes \mathrm{B})}
	\arrow["{\mathsf{a}_{\mathrm{F,M,A}}}", from=1-1, to=2-1]
	\arrow["{\mathsf{a}_{\mathrm{F,M,B}}}", from=1-3, to=2-3]
	\arrow["{(\mathrm{F \otimes M}) \otimes \mathrm{f}}", from=1-1, to=1-3]
	\arrow["{\mathrm{F} \otimes (\mathrm{M} \otimes \mathrm{f})}"', from=2-1, to=2-3]
\end{tikzcd}\]
commutes for all $\mathrm{F}$.
Clearly, $\mathrm{f}^{\ast} \circ (\mathrm{f}^{-1})^{\ast} = \on{id}_{\on{mod}_{\cccsym{C}}\!\on{--}\!\mathrm{A}}$ and $(\mathrm{f}^{-1})^{\ast} \circ \mathrm{f}^{\ast} = \on{id}_{\on{mod}_{\cccsym{C}}\!\on{--}\!\mathrm{B}}$.
\end{proof}

\begin{corollary}
 If $\euler{P}$ is a monoid object in $[\csym{C}^{\on{opp}},\mathbf{Vec}_{\Bbbk}]$ which is representable via $\euler{P} \simeq \csym{C}(-,\mathrm{A})$, then there is an equivalence of $\csym{C}$-module categories
 \[
 \on{frmod}_{\ccf{C}}\mathrm{A} \xiso \cplus{\euler{P}}.
 \]
\end{corollary}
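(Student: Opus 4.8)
The plan is to chain together the results that have already been set up, so that the corollary follows essentially by transport of structure. The statement concerns a monoid object $\euler{P}$ in $[\csym{C}^{\on{opp}},\mathbf{Vec}_{\Bbbk}]$ which happens to be representable, via a profunctor (presheaf) isomorphism $\euler{s}: \euler{P} \xiso \csym{C}(-,\mathrm{A})$ for some object $\mathrm{A} \in \csym{C}$. I would first promote $\euler{s}$ to a monoid isomorphism. This is exactly the monoid-theoretic analogue of Lemma~\ref{StructureTransport}: since $\euler{s}$ is invertible, there is a unique monoid structure on $\csym{C}(-,\mathrm{A})$ making $\euler{s}$ a monoid morphism, namely the one transported from $\euler{P}$. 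In particular the multiplication on $\csym{C}(-,\mathrm{A})$ obtained this way is some map $\csym{C}(-,\mathrm{A}) \circledast \csym{C}(-,\mathrm{A}) \to \csym{C}(-,\mathrm{A})$, which by Yoneda for Day convolution corresponds to a morphism $\mathrm{A} \cotimes \mathrm{A} \to \mathrm{A}$ in $\csym{C}$, i.e. endows $\mathrm{A}$ itself with a monoid structure, say $\mathrm{A}'$, such that $\yo_{\ccf{C}}(\mathrm{A}') \simeq \euler{P}$ as monoid objects; this uses that $\yo_{\ccf{C}}$ is strong monoidal (for Day convolution) and fully faithful.

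The second step is to apply Lemma~\ref{Yo-YoDa}: for the monoid $\mathrm{A}'$ in $\csym{C}$, the functor $(\yo_{\ccf{C}})_{\ast,\mathrm{A}'}$ restricts to an equivalence of $\csym{C}$-module categories $\on{frmod}_{\ccf{C}}\mathrm{A}' \xiso \cplus{\csym{C}(-,\mathrm{A}')}$. Combined with the first step, $\cplus{\csym{C}(-,\mathrm{A}')}$ is identified with $\cplus{\euler{P}}$: indeed a monoid isomorphism $\csym{C}(-,\mathrm{A}') \simeq \euler{P}$ induces an isomorphism of the associated module categories of free modules (again transporting along the monoid isomorphism; this is the $[\csym{C}^{\on{opp}},\mathbf{Vec}_{\Bbbk}]$-instance of the kind of argument in Lemma~\ref{AuxiliaryLemma}, which is stated for monoids in an arbitrary monoidal category and hence applies verbatim), and this isomorphism carries the objects of the form $\csym{C}(-,\mathrm{G}) \circledast \csym{C}(-,\mathrm{A}')$ to those of the form $\csym{C}(-,\mathrm{G}) \circledast \euler{P}$, i.e. restricts to $\cplus{\csym{C}(-,\mathrm{A}')} \simeq \cplus{\euler{P}}$ as $\csym{C}$-module categories.

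The third and last step is to connect $\mathrm{A}'$ back to $\mathrm{A}$. We have a monoid object $\mathrm{A}'$ whose Yoneda image is monoid-isomorphic to $\euler{P}$, but the statement is phrased in terms of $\on{frmod}_{\ccf{C}}\mathrm{A}$ where $\mathrm{A}$ is just an object with $\euler{P} \simeq \csym{C}(-,\mathrm{A})$ as presheaves. Two routes are available: either observe that $\on{frmod}_{\ccf{C}}$ depends only on the monoid structure carried by the underlying object (so one simply takes $\mathrm{A}$ to be $\mathrm{A}'$, the statement implicitly asserting that $\mathrm{A}$ carries a monoid structure pulled back from $\euler{P}$), or, if $\mathrm{A}$ is given an a priori monoid structure with $\euler{P} \simeq \csym{C}(-,\mathrm{A})$ as \emph{monoids}, then $\mathrm{A} \cong \mathrm{A}'$ as monoids by full faithfulness of $\yo_{\ccf{C}}$, and Lemma~\ref{AuxiliaryLemma} gives $\on{frmod}_{\ccf{C}}\mathrm{A} \simeq \on{frmod}_{\ccf{C}}\mathrm{A}'$ restricting the equivalence $\on{mod}_{\ccf{C}}\mathrm{A} \simeq \on{mod}_{\ccf{C}}\mathrm{A}'$ of that lemma, since $\mathrm{f}^{\ast}$ preserves freeness (it is the identity on underlying objects, and $\mathrm{G}\cotimes\mathrm{A}'$ goes to $\mathrm{G}\cotimes\mathrm{A}$ with the twisted but still free module structure). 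Chaining $\on{frmod}_{\ccf{C}}\mathrm{A} \simeq \on{frmod}_{\ccf{C}}\mathrm{A}' \simeq \cplus{\csym{C}(-,\mathrm{A}')} \simeq \cplus{\euler{P}}$, all as $\csym{C}$-module categories, yields the claim.

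The only genuine subtlety — and hence the main obstacle — is bookkeeping about \emph{which} monoid structures are being compared and making sure every equivalence in the chain is one of $\csym{C}$-module categories rather than merely of plain categories; concretely, one must check that the transport-of-structure isomorphisms from Lemma~\ref{StructureTransport}-type arguments are compatible with the $\csym{C}$-actions on $\on{frmod}$, which they are because those actions are given by $\csym{C}(-,\mathrm{G}) \circledast -$ (resp.\ $\mathrm{G} \cotimes -$) and the monoid isomorphisms in play are $\circledast$-linear by naturality. None of this requires new computation beyond unwinding the definitions of $\cplus{(-)}$ and of the module actions, so the corollary is really a formal consequence of Lemma~\ref{Yo-YoDa}, Lemma~\ref{AuxiliaryLemma}, and the strong monoidality and full faithfulness of the Yoneda embedding.
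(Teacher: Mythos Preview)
Your proposal is correct and follows essentially the same route as the paper's proof: apply Lemma~\ref{Yo-YoDa} to get $\on{frmod}_{\ccf{C}}\mathrm{A} \simeq \cplus{\csym{C}(-,\mathrm{A})}$, then apply Lemma~\ref{AuxiliaryLemma} (instantiated in the presheaf category) to the monoid isomorphism $\csym{C}(-,\mathrm{A}) \simeq \euler{P}$ to obtain $\cplus{\csym{C}(-,\mathrm{A})} \simeq \cplus{\euler{P}}$. The paper compresses this into two sentences and leaves implicit the transport of monoid structure to $\mathrm{A}$ that you spell out carefully in your first and third steps; your $\mathrm{A}'$ discussion is exactly the bookkeeping the paper suppresses.
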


\begin{proof}
 Lemma~\ref{Yo-YoDa} gives an equivalence of $\csym{C}$-module categories $\on{frmod}_{\ccf{C}}\mathrm{A} \xiso \cplus{\Hom{-,\mathrm{A}}}$ and Lemma~\ref{AuxiliaryLemma} yields an equivalence of $\csym{C}$-module categories $\cplus{\Hom{-,\mathrm{A}}} \simeq \cplus{\euler{P}}$.
\end{proof}

Finally, we observe that the $\csym{C}$-module functors of the form $\mathbb{F}_{\ast,\mathrm{A}}$ are compatible with the isomorphisms of Equation~\eqref{KleisliHoms}:
\begin{lemma}\label{ConjugateFreeForgetful}
 The diagram
 \[\begin{tikzcd}
	{\on{Hom}_{\ccf{C}}(\mathrm{G,N})} & {\on{Hom}_{\ccf{D}}(\mathbb{F}(\mathrm{G}),\mathbb{F}(\mathrm{N}))} & {\on{Hom}_{\on{mod}_{\ccf{D}}\!\on{--}\!\mathbb{F}(\mathrm{A})}(\mathbb{F}(\mathrm{G})\ktotimes{\cccsym{D}} \mathbb{F}(\mathrm{A}),\mathbb{F}(\mathrm{N}))} \\
	\\
	{\on{Hom}_{\on{mod}_{\ccf{C}}\!\on{--}\!\mathrm{A}}(\mathrm{G\cotimes A},\mathrm{N})} && {\on{Hom}_{\on{mod}_{\ccf{D}}\!\on{--}\!\mathbb{F}(\mathrm{A})}(\mathbb{F}(\mathrm{G\cotimes A)},\mathbb{F}(\mathrm{N}))}
	\arrow["\simeq", from=1-1, to=3-1]
	\arrow["\simeq", from=1-2, to=1-3]
	\arrow["{-\circ \mathbf{f}^{-1}_{\mathrm{G,A}}}", from=1-3, to=3-3]
	\arrow["{(\mathbb{F}_{\ast,\mathrm{A}})_{\mathrm{F\otimes A, N}}}", from=1-1, to=1-2]
	\arrow["{{(\mathbb{F})_{\mathrm{G,N}}}}"{description}, from=3-1, to=3-3]
\end{tikzcd}\]
  where $\mathbf{f}_{\mathrm{G,A}}$ is a coherence cell of $\mathbb{F}$ and the unlabelled isomorphisms are those of Equation~\eqref{KleisliHoms}, commutes.
\end{lemma}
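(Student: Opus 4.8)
The plan is to verify commutativity by a direct element chase: each of the four $\on{Hom}$-sets in the square consists of genuine morphisms rather than formal composites, so it suffices to track an arbitrary $\mathrm{f} \in \csym{C}(\mathrm{G},\mathrm{N})$ around both legs and to check that the two resulting morphisms $\mathbb{F}(\mathrm{G}) \ktotimes{\cccsym{D}} \mathbb{F}(\mathrm{A}) \to \mathbb{F}(\mathrm{N})$ coincide.

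First I would compute the ``right then down'' leg. Under $\mathbb{F}$ the morphism $\mathrm{f}$ becomes $\mathbb{F}(\mathrm{f}) \in \csym{D}(\mathbb{F}(\mathrm{G}),\mathbb{F}(\mathrm{N}))$, and the isomorphism of Equation~\eqref{KleisliHoms} applied to the monoid $\mathbb{F}(\mathrm{A})$ sends this to $\mathrm{ra}_{\mathbb{F}(\mathrm{N})} \circ (\mathbb{F}(\mathrm{f}) \ktotimes{\cccsym{D}} \mathbb{F}(\mathrm{A}))$, where $\mathrm{ra}_{\mathbb{F}(\mathrm{N})}$ denotes the right $\mathbb{F}(\mathrm{A})$-module structure on $\mathbb{F}(\mathrm{N})$. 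By definition of how a strong monoidal functor transports modules, $\mathrm{ra}_{\mathbb{F}(\mathrm{N})} = \mathbb{F}(\mathrm{ra}_{\mathrm{N}}) \circ \mathbf{f}_{\mathrm{N},\mathrm{A}}$. Precomposing with $\mathbf{f}_{\mathrm{G},\mathrm{A}}^{-1}$ then produces $\mathbb{F}(\mathrm{ra}_{\mathrm{N}}) \circ \mathbf{f}_{\mathrm{N},\mathrm{A}} \circ (\mathbb{F}(\mathrm{f}) \ktotimes{\cccsym{D}} \mathbb{F}(\mathrm{A})) \circ \mathbf{f}_{\mathrm{G},\mathrm{A}}^{-1}$. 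Next I would compute the ``down then right'' leg: Equation~\eqref{KleisliHoms} for the monoid $\mathrm{A}$ sends $\mathrm{f}$ to the $\mathrm{A}$-module morphism $\mathrm{ra}_{\mathrm{N}} \circ (\mathrm{f} \cotimes \mathrm{A})$, and $\mathbb{F}_{\ast,\mathrm{A}}$ then sends this to $\mathbb{F}\bigl(\mathrm{ra}_{\mathrm{N}} \circ (\mathrm{f} \cotimes \mathrm{A})\bigr) = \mathbb{F}(\mathrm{ra}_{\mathrm{N}}) \circ \mathbb{F}(\mathrm{f} \cotimes \mathrm{A})$.

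Comparing the two expressions, the asserted equality reduces to $\mathbf{f}_{\mathrm{N},\mathrm{A}} \circ (\mathbb{F}(\mathrm{f}) \ktotimes{\cccsym{D}} \mathbb{F}(\mathrm{A})) = \mathbb{F}(\mathrm{f} \cotimes \mathrm{A}) \circ \mathbf{f}_{\mathrm{G},\mathrm{A}}$, which is precisely the naturality square of the monoidal coherence cell $\mathbf{f}_{-,-}$ of $\mathbb{F}$, evaluated at the morphism $(\mathrm{f},\on{id}_{\mathrm{A}})$ of $\csym{C} \kotimes \csym{C}$. Hence the diagram commutes.

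The only point requiring care is bookkeeping: one must pin down that the two unlabelled isomorphisms in the square are the instances of Equation~\eqref{KleisliHoms} for $\mathrm{A}$ and for $\mathbb{F}(\mathrm{A})$ respectively, and, crucially, that the module structure $\mathrm{ra}_{\mathbb{F}(\mathrm{N})}$ implicit in the right-hand Kleisli isomorphism is the \emph{same} one used to define $\mathbb{F}_{\ast,\mathrm{A}}$, namely $\mathbb{F}(\mathrm{ra}_{\mathrm{N}}) \circ \mathbf{f}_{\mathrm{N},\mathrm{A}}$. Once these identifications are fixed there is no genuine obstacle; the whole lemma collapses to a single naturality square for $\mathbf{f}_{-,-}$ together with the definition of the induced $\mathbb{F}(\mathrm{A})$-module structure.
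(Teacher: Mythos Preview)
Your proof is correct and follows essentially the same approach as the paper: both perform a direct element chase, using the identification $\mathrm{ra}_{\mathbb{F}(\mathrm{N})} = \mathbb{F}(\mathrm{ra}_{\mathrm{N}}) \circ \mathbf{f}_{\mathrm{N},\mathrm{A}}$, functoriality of $\mathbb{F}$, and naturality of the coherence cell $\mathbf{f}_{-,-}$. Your presentation is slightly more explicit in isolating the final reduction to the naturality square of $\mathbf{f}$, but the argument is the same.
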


\begin{proof}
 By definition we have $\mathrm{ra}_{\mathbb{F}\mathrm{N}} = \mathbb{F}(\mathrm{ra}_{\mathrm{N}})\circ \mathbf{f}_{\mathrm{N,A}}$.

 Chasing a morphism $\mathrm{g}: \mathrm{G} \rightarrow \mathrm{N}$ in the above diagram we obtain
 \[\begin{tikzcd}
	{\mathrm{g}} & {\mathbb{F}\mathrm{g}} & {\mathrm{ra}_{\mathbb{F}\mathrm{N}}\circ (\mathbb{F}\mathrm{g} \otimes \mathbb{F}\mathrm{A})} & {(\mathbb{F}(\mathrm{ra}_{\mathrm{N}}) \circ \mathbf{f}_{\mathrm{N,A}})\circ (\mathbb{F}\mathrm{g} \otimes \mathbb{F}\mathrm{A})} \\
	\\
	{\mathrm{ra}_{\mathrm{N}}\circ (\mathrm{g}\otimes \mathrm{A})} & {\mathbb{F}(\mathrm{ra}_{\mathrm{N}}\circ (\mathrm{g}\otimes \mathrm{A}))} & {\mathbb{F}(\mathrm{ra}_{\mathrm{N}})\circ \mathbb{F}(\mathrm{f\otimes A})} & {\mathbb{F}(\mathrm{ra}_{\mathrm{N}}) \circ \mathbf{f}_{\mathrm{N,A}}\circ (\mathbb{F}\mathrm{g} \otimes \mathbb{F}\mathrm{A})\circ \mathbf{f}_{\mathrm{G,A}}^{-1}}
	\arrow[maps to, from=1-1, to=1-2]
	\arrow[maps to, from=1-2, to=1-3]
	\arrow[maps to, from=1-3, to=1-4]
	\arrow[maps to, from=1-4, to=3-4]
	\arrow[maps to, from=1-1, to=3-1]
	\arrow[maps to, from=3-1, to=3-2]
	\arrow[from=3-2, to=3-3,equal]
	\arrow[from=3-3, to=3-4, equal]
\end{tikzcd}\]
where the equalities follow from functoriality of $\mathbb{F}$ and naturality of $\mathbf{f}$.
\end{proof}

Let $\mathbf{N}$ be a very cyclic $\csym{C}$-module category and let $Y$ be a very cyclic generator for $\mathbf{N}$. Assume that the functor $\Hom{-Y,Y}$ considered in Section~\ref{s10} is represented by an object $\setj{Y,Y}$. We see that Proposition~\ref{RepresentableMonoids} could be reformulated as showing a monoid isomorphism $\Hom{-Y,Y} \simeq \yo_{\ccf{C}}(\setj{Y,Y})$.

Recall from Section~\ref{s101} the strong monoidal functor $\mathbb{W}: [\csym{C}^{\on{opp}},\mathbf{Vec}_{\Bbbk}]^{\otimes\!\on{opp}} \rightarrow \csym{C}\!\on{-Tamb}(\csym{C},\csym{C})$. Since the domain of $\mathbb{W}$ is $[\csym{C}^{\on{opp}},\mathbf{Vec}_{\Bbbk}]^{\otimes\!\on{opp}}$ rather than $[\csym{C}^{\on{opp}},\mathbf{Vec}_{\Bbbk}]$, we obtain the functor
\[
\mathbb{W}_{\ast, \Hom{-Y,Y}}: \on{mod}_{[\ccf{C}^{\on{op}},\mathbf{Vec}_{\Bbbk}]}\!\on{--}\!\Hom{-Y,Y} \rightarrow \mathbb{W}(\Hom{-Y,Y})\!\on{--mod}_{\ccf{C}\!\on{-Tamb}(\ccf{C},\ccf{C})},
\]
mapping right modules to left modules. For the same reason, the action of $\csym{C}$, under which $\mathrm{F} \in \csym{C}$ acts by the functor sending a $\mathbb{W}(\Hom{-Y,Y})$-module $\mathrm{M}$ to the $\mathbb{W}(\Hom{-Y,Y})$-module $\mathrm{M} \diamond \csym{C}(-,-\mathrm{F})$, is a left action.
Following the above, $\mathbb{W}_{\ast, \Hom{-Y,Y}}$ is a $\csym{C}$-module functor, which maps $\on{frmod}_{[\ccf{C}^{\on{op}},\mathbf{Vec}_{\Bbbk}]}\Hom{-Y,Y}$ to $\mathbb{W}(\Hom{-Y,Y})\!\on{-frmod}_{\ccf{C}\!\on{-Tamb}(\ccf{C},\ccf{C})}$.

Analogously to $\cplus{\Hom{-Y,Y}}$ and $[Y,Y]_{+}^{\ccf{C}}$, we define the category $\mathbb{W}(\Hom{-Y,Y})_{+}^{\ccf{C}}$ consisting of left $\mathbb{W}(\Hom{-Y,Y})$-module objects isomorphic to those of the form $\mathbb{W}(\Hom{-Y,Y}) \diamond \csym{C}(-,-\mathrm{F})$, for $\mathrm{F} \in \csym{C}$.

Observe that we have isomorphisms
\begin{equation}\label{YonedaPersists}
 \mathbb{W}(\csym{C}(-,\mathrm{F}))(\mathrm{K,L}) = \int^{\mathrm{H}} \csym{C}(\mathrm{K,L \cotimes H}) \kotimes \csym{C}(\mathrm{H,F}) \xrightarrow[\sim]{(\mathtt{q}_{\mathrm{F}})_{\mathrm{K,L}}} \csym{C}(\mathrm{K,L\cotimes F}),
\end{equation}
yielding isomorphisms $\mathbb{W}(\csym{C}(-,\mathrm{F}))\xrightarrow[\sim]{\mathtt{q}_{\mathrm{F}}} \csym{C}(-,-\mathrm{F})$, for all $\mathrm{F}$.
We conclude that $\mathbb{W}_{\ast,\Hom{-Y,Y}}$ maps $\cplus{\Hom{-Y,Y}}$ to $\mathbb{W}(\Hom{-Y,Y})_{+}^{\ccf{C}}$.

\begin{definition}
 We denote the above described restriction of $\mathbb{W}_{\ast,\Hom{-Y,Y}}$ to a $\csym{C}$-module functor from $\cplus{\Hom{-Y,Y}}$ to $\mathbb{W}(\Hom{-Y,Y})_{+}^{\ccf{C}}$ by $\overline{\mathbb{W}}_{\ast,\Hom{-Y,Y}}$.
\end{definition}

\begin{proposition}\label{YonedaFullFaithful}
 For any $\mathrm{F} \in \csym{C}$ and $\euler{P}$ in $[\csym{C}^{\on{opp}},\mathbf{Vec}_{\Bbbk}]$, the map
 $\mathbb{W}_{\ccf{C}(-,\mathrm{F}),\euler{P}}$ is an isomorphism.
\end{proposition}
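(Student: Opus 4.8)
The plan is to identify $\mathbb{W}_{\csym{C}(-,\mathrm{F}),\euler{P}}$ --- the map on hom-spaces
\[
[\csym{C}^{\on{opp}},\mathbf{Vec}_{\Bbbk}](\csym{C}(-,\mathrm{F}),\euler{P})\longrightarrow\tam(\mathbb{W}(\csym{C}(-,\mathrm{F})),\mathbb{W}(\euler{P}))
\]
--- with, up to canonical isomorphisms, the Yoneda isomorphism $[\csym{C}^{\on{opp}},\mathbf{Vec}_{\Bbbk}](\csym{C}(-,\mathrm{F}),\euler{P})\cong\euler{P}(\mathrm{F})$. First I would use the isomorphism $\mathtt{q}_{\mathrm{F}}\colon\mathbb{W}(\csym{C}(-,\mathrm{F}))\xiso\csym{C}(-,-\mathrm{F})$ of Equation~\eqref{YonedaPersists} to reduce to showing that the map
\[
\Theta_{\euler{P}}\colon[\csym{C}^{\on{opp}},\mathbf{Vec}_{\Bbbk}](\csym{C}(-,\mathrm{F}),\euler{P})\longrightarrow\tam(\csym{C}(-,-\mathrm{F}),\mathbb{W}(\euler{P})),
\]
obtained by postcomposing $\mathbb{W}_{\csym{C}(-,\mathrm{F}),\euler{P}}$ with $\tam(\mathtt{q}_{\mathrm{F}}^{-1},\mathbb{W}(\euler{P}))$, is an isomorphism; this map is visibly natural in $\euler{P}$.

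Next I would compute both sides. The source is $\euler{P}(\mathrm{F})$ by the Yoneda lemma. For the target, the universal property of $\csym{C}(-,-\mathrm{F})$ as a free Tambara module (Corollary~\ref{FreeTambara}, in the form of Equation~\eqref{UniPropCK}) gives $\tam(\csym{C}(-,-\mathrm{F}),\mathbb{W}(\euler{P}))\cong\mathbb{W}(\euler{P})(\mathrm{F},\mathbb{1})=\int^{\mathrm{H}}\csym{C}(\mathrm{F},\mathbb{1}\cotimes\mathrm{H})\kotimes\euler{P}(\mathrm{H})$, and applying the left unitor together with the density formula $\int^{\mathrm{H}}\csym{C}(\mathrm{F},\mathrm{H})\kotimes\euler{P}(\mathrm{H})\cong\euler{P}(\mathrm{F})$ yields $\euler{P}(\mathrm{F})$ again, naturally in $\euler{P}$.

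To conclude I would argue by cocontinuity rather than by an explicit diagram chase. Both functors $\euler{P}\mapsto[\csym{C}^{\on{opp}},\mathbf{Vec}_{\Bbbk}](\csym{C}(-,\mathrm{F}),\euler{P})$ and $\euler{P}\mapsto\tam(\csym{C}(-,-\mathrm{F}),\mathbb{W}(\euler{P}))$ are cocontinuous on $[\csym{C}^{\on{opp}},\mathbf{Vec}_{\Bbbk}]$: the first is evaluation at $\mathrm{F}$, which preserves colimits since those of presheaves are pointwise; the second is the cocontinuous functor $\mathbb{W}$ (being the Day-convolution extension of \cite[Theorem~5.1]{IK}) followed by $\mathtt{R}\mapsto\mathtt{R}(\mathrm{F},\mathbb{1})$, and the latter is cocontinuous because the forgetful functor $\tam\to\mathbf{Prof}(\csym{C},\csym{C})$ creates colimits --- the monad of \cite{PS} whose algebras are Tambara modules is given by a coend formula preserving colimits in its argument, compare the pointwise coequalizers of Proposition~\ref{Tameness} --- followed by evaluation of the underlying profunctor at $(\mathrm{F},\mathbb{1})$. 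A natural transformation of cocontinuous functors out of $[\csym{C}^{\on{opp}},\mathbf{Vec}_{\Bbbk}]$ is invertible as soon as it is invertible on representables, so it suffices to check that $\mathbb{W}_{\csym{C}(-,\mathrm{F}),\csym{C}(-,\mathrm{G})}$ is bijective for all $\mathrm{G}$, i.e. that $\mathbb{W}$ is fully faithful on representables. But the restriction of $\mathbb{W}$ along the Yoneda embedding is, by the construction of $\mathbb{W}$ in Section~\ref{s101}, isomorphic to the composite $\csym{C}^{\on{rev}}\xiso\csym{C}\!\on{-Mod}(\csym{C},\csym{C})\hookrightarrow\tam$ of the monoidal equivalence $\mathrm{H}\mapsto(-\cotimes\mathrm{H})$ with the pseudofunctor $\mathbb{P}$, which is locally full and faithful by Proposition~\ref{PisFF}; hence $\mathbb{W}\circ\yo$ is fully faithful, which is exactly what is needed.

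The main obstacle is not any of the individual isomorphisms, which are all routine, but the bookkeeping tying them to $\Theta_{\euler{P}}$: one must ensure that $\Theta_{\euler{P}}$ really is the canonical comparison and not merely an accidental coincidence of dimensions. The cocontinuity argument is what lets one avoid chasing $\on{id}_{\mathrm{F}}$ explicitly through Equations~\eqref{YonedaPersists} and \eqref{UniPropCK}, the unitor and the density isomorphism; it uses only that $\mathbb{W}$ is a functor and is fully faithful on representables. Should a hands-on proof be preferred, that chase is the alternative, and it is straightforward though somewhat tedious.
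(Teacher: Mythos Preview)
Your proof is correct but follows a genuinely different route from the paper. The paper constructs an explicit commutative diagram (labelled \eqref{LastTrick} in the text): it writes down the Yoneda isomorphism $[\csym{C}^{\on{opp}},\mathbf{Vec}_{\Bbbk}](\csym{C}(-,\mathrm{F}),\euler{P})\cong\euler{P}(\mathrm{F})$ on one side, the free-Tambara isomorphism $\tam(\csym{C}(-,-\mathrm{F}),\mathbb{W}(\euler{P}))\cong\mathbb{W}(\euler{P})(\mathrm{F},\mathbb{1})$ together with the unitor and density isomorphism on the other, and then chases a general $\euler{t}$ through to check the diagram commutes. This is exactly the ``hands-on'' alternative you mention in your final paragraph. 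Your argument instead observes that both sides of $\mathbb{W}_{\csym{C}(-,\mathrm{F}),-}$ are cocontinuous in $\euler{P}$ and that a natural transformation between cocontinuous functors out of a presheaf category is invertible once it is on representables; you then reduce to full faithfulness of $\mathbb{W}\circ\yo$, which you obtain from Proposition~\ref{PisFF}. The paper's approach is entirely elementary and self-contained, never invoking the density of representables or cocontinuity of $\tam(\csym{C}(-,-\mathrm{F}),-)$; your approach is cleaner and more conceptual, but it does require justifying that colimits in $\tam$ are created by the forgetful functor (which you do, via the cocontinuity of the Pastro--Street monad), and it leans on the well-known extension principle for cocontinuous functors out of a free cocompletion.
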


\begin{proof}
 Recall that by Yoneda lemma we have
 \[
 \begin{aligned}
  [\csym{C}^{\on{opp}},\mathbf{Vec}_{\Bbbk}](\csym{C}(-,\mathrm{F}),\euler{P}) &\xiso \euler{P}(\mathrm{F}); \\
  \euler{t} &\mapsto \euler{t}_{\mathrm{F}}(\on{id}_{\mathrm{F}}).
 \end{aligned}
 \]
 By Proposition~\ref{FreeTambara}, for any $\mathtt{\Psi} \in \csym{C}\!\on{-Tamb}(\csym{C},\csym{C})$ we have
 \[
  \begin{aligned}
   \csym{C}\!\on{-Tamb}(\csym{C},\csym{C})(\csym{C}(-,-\mathrm{F}), \mathtt{\Psi}) &\xiso \mathtt{\Psi}(\mathrm{F},\mathbb{1}) \\
   \mathtt{d}_{\mathrm{F},\mathbb{1}} &\mapsto \mathtt{d}_{\mathrm{F},\mathbb{1}}\left(\mathsf{l}^{-1}_{\mathrm{F}}\right).
  \end{aligned}
 \]
 We claim that the diagram
\begin{equation}\label{LastTrick}
\begin{tikzcd}
	{[\csym{C}^{\on{opp}}, \mathbf{Vec}_{\Bbbk}](\csym{C}(-,\mathrm{F}),\euler{P})} & {\csym{C}\!\on{-Tamb}(\csym{C},\csym{C})(\mathbb{W}(\csym{C}(-,\mathrm{F})),\mathbb{W}(\euler{P}))} & {\csym{C}\!\on{-Tamb}(\csym{C},\csym{C})(\csym{C}(-,-\mathrm{F}),\mathbb{W}(\euler{P}))} \\
	&& {\mathbb{W}(\euler{P})(\mathrm{F},\mathbb{1})} \\
	{\euler{P}(\mathrm{F})} & {\int^{\mathrm{H}}\euler{P}(\mathrm{H})\kotimes \csym{C}(\mathrm{F,H})} & {\int^{\mathrm{H}}\euler{P}(\mathrm{H}) \kotimes \csym{C}(\mathrm{F},\mathbb{1}\mathrm{H})}
	\arrow["\simeq", from=1-3, to=2-3]
	\arrow["\simeq", from=1-1, to=3-1]
	\arrow[from=2-3, to=3-3, equal]
	\arrow["{\mathbb{W}_{\cccsym{C}(-,\mathrm{F}),\euler{P}}}", from=1-1, to=1-2]
	\arrow["{-\circ \mathtt{q}_{\mathrm{F}}^{-1}}", from=1-2, to=1-3]
	\arrow["\yo"', from=3-2, to=3-1]
	\arrow["{\int^{\mathrm{H}}\euler{P}(\mathrm{H}) \otimes \ccf{C}(\mathrm{F},\mathsf{l})}"', from=3-3, to=3-2]
\end{tikzcd}
\end{equation}
 where the unlabelled isomorphisms are those specified above, $\yo$ indicates the isomorphism coming from Yoneda lemma, and $\mathtt{q}_{\mathrm{F}}$ is the isomorphism of Equation~\eqref{YonedaPersists}, commutes.

 Let $\euler{t}: \csym{C}(-,\mathrm{F}) \rightarrow \euler{P}$. Recall that $(\mathbb{W}_{\ccf{C}(-,\mathrm{F}),\euler{P}}(\euler{t}))_{\mathrm{K,L}} = \int^{\mathrm{H}} \euler{t}_{\mathrm{H}} \kotimes \csym{C}(\mathrm{K,LH})$. Precomposing with $\mathtt{q}_{\mathrm{F}}$, we obtain the map which, following Lemma~\ref{ProYoneda}, can be written as
 \[
  \begin{aligned}
   \csym{C}(\mathrm{K,LF}) &\rightarrow \coprod_{\mathrm{H}} \euler{P}(\mathrm{H}) \kotimes \csym{C}(\mathrm{K,LH}) \twoheadrightarrow \int^{\mathrm{H}} \euler{P}(\mathrm{H}) \kotimes \csym{C}(\mathrm{K,LH}) \\
   \mathrm{k} &\longmapsto \euler{t}_{\mathrm{F}}(\on{id}_{\mathrm{F}}) \otimes \mathrm{k} \longmapsto \euler{t}_{\mathrm{F}}(\on{id}_{\mathrm{F}}) \otimes \mathrm{k}.
  \end{aligned}
 \]
 In particular, setting $(\mathrm{K,L}) = (\mathrm{F},\mathbb{1})$ and $\mathrm{k} = \mathsf{l}_{\mathrm{F}}^{-1}$, we find the element $\euler{t}_{\mathrm{F}}(\on{id}_{\mathrm{F}}) \otimes \mathsf{l}^{-1}_{\mathrm{F}}$ in $\int^{\mathrm{H}} \euler{P}(\mathrm{H}) \kotimes \csym{C}(\mathrm{F},\mathbb{1}\mathrm{H})$. Finally, recall that $\yo(x \otimes \mathrm{f}) = \euler{P}(\mathrm{f})(x)$. Chasing $\euler{t}$ in Diagram~\eqref{LastTrick}, we obtain
 \[\begin{tikzcd}
	{\euler{t}} & {\mathbb{W}(\euler{t})} && {\mathbb{W}(\euler{t})\circ \mathtt{q}_{\mathrm{F}}} \\
	\\
	{\euler{t}(\on{id}_{\mathrm{F}})} & {\euler{P}(\on{id}_{\mathrm{F}})(\euler{t}_{\mathrm{F}}(\on{id}_{\mathrm{F}}))} & {\euler{t}(\on{id}_{\mathrm{F}}) \otimes \on{id}_{\mathrm{F}}} & {\euler{t}(\on{id}_{\mathrm{F}}) \otimes \mathsf{l}_{\mathrm{F}}^{-1}}
	\arrow[maps to, from=1-1, to=3-1]
	\arrow[maps to, from=1-1, to=1-2]
	\arrow[maps to, from=1-2, to=1-4]
	\arrow[maps to, from=1-4, to=3-4]
	\arrow[maps to, from=3-4, to=3-3]
	\arrow[maps to, from=3-3, to=3-2]
	\arrow[from=3-2, to=3-1, equal]
\end{tikzcd}\]
 showing that Diagram~\eqref{LastTrick} commutes. But we already know that all morphisms in Diagram~\eqref{LastTrick} which are not $\mathbb{W}_{\ccf{C}(-\mathrm{F}),\euler{P}}$ are isomorphisms. We conclude that also $\mathbb{W}_{\ccf{C}(-\mathrm{F}),\euler{P}}$ is an isomorphism.
\end{proof}

\begin{proposition}\label{WEquivalence}
 $\overline{\mathbb{W}}_{\ast,\Hom{-Y,Y}}$ is an equivalence of $\csym{C}$-module categories.
\end{proposition}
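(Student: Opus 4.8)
The plan is to show that $\overline{\mathbb{W}}_{\ast,\Hom{-Y,Y}}$ is full, faithful and essentially surjective as a functor of plain categories; the $\csym{C}$-module structure on it has already been exhibited above, so it suffices to upgrade the underlying functor to an equivalence. Essential surjectivity is immediate by construction: the objects of $\cplus{\Hom{-Y,Y}}$ are those isomorphic to $\csym{C}(-,\mathrm{F}) \circledast \Hom{-Y,Y}$ for $\mathrm{F} \in \csym{C}$, and $\overline{\mathbb{W}}_{\ast,\Hom{-Y,Y}}$ sends such an object to $\mathbb{W}(\csym{C}(-,\mathrm{F}) \circledast \Hom{-Y,Y}) \simeq \mathbb{W}(\csym{C}(-,\mathrm{F})) \diamond \mathbb{W}(\Hom{-Y,Y}) \simeq \csym{C}(-,-\mathrm{F}) \diamond \mathbb{W}(\Hom{-Y,Y})$, using that $\mathbb{W}$ is strong monoidal and the isomorphism $\mathtt{q}_{\mathrm{F}}$ of Equation~\eqref{YonedaPersists}; this is precisely a general object of $\mathbb{W}(\Hom{-Y,Y})_{+}^{\ccf{C}}$.

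For full faithfulness, I would first reduce to computing Hom-spaces between generating objects. Since every object of $\cplus{\Hom{-Y,Y}}$ is isomorphic to one of the form $\csym{C}(-,\mathrm{F}) \circledast \Hom{-Y,Y}$ with $\mathrm{F}\in\csym{C}$, and the functor is additive and respects these isomorphisms, it is enough to show that for all $\mathrm{F},\mathrm{G} \in \csym{C}$ the map
\[
\on{Hom}_{\cplus{\Hom{-Y,Y}}}\big(\csym{C}(-,\mathrm{F}) \circledast \Hom{-Y,Y}, \csym{C}(-,\mathrm{G}) \circledast \Hom{-Y,Y}\big) \longrightarrow \on{Hom}_{\mathbb{W}(\Hom{-Y,Y})_{+}^{\ccf{C}}}\big(\cdots\big)
\]
induced by $\overline{\mathbb{W}}_{\ast,\Hom{-Y,Y}}$ is a bijection. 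On the left, by the Kleisli/free-module description (Equation~\eqref{KleisliHoms}, applied in $[\csym{C}^{\on{opp}},\mathbf{Vec}_{\Bbbk}]$ with monoid $\Hom{-Y,Y}$), this Hom-space is isomorphic to $[\csym{C}^{\on{opp}},\mathbf{Vec}_{\Bbbk}](\csym{C}(-,\mathrm{F}), \csym{C}(-,\mathrm{G})\circledast \Hom{-Y,Y})$. On the right, similarly, it is $\csym{C}\!\on{-Tamb}(\csym{C},\csym{C})(\mathbb{W}(\csym{C}(-,\mathrm{F})), \mathbb{W}(\csym{C}(-,\mathrm{G}))\diamond \mathbb{W}(\Hom{-Y,Y}))$. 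The claim is then exactly that $\mathbb{W}_{\csym{C}(-,\mathrm{F}),\euler{P}}$ is a bijection for $\euler{P} = \csym{C}(-,\mathrm{G})\circledast\Hom{-Y,Y}$, which is Proposition~\ref{YonedaFullFaithful}. The remaining work is to check that these identifications are natural enough to glue into the statement that $\overline{\mathbb{W}}_{\ast,\Hom{-Y,Y}}$ itself realizes the bijection on Hom-spaces; this is where Lemma~\ref{ConjugateFreeForgetful} (specialized to $\mathbb{F} = \mathbb{W}$, $\mathrm{A} = \Hom{-Y,Y}$, using the coherence cells $\mathbf{w}$ of $\mathbb{W}$) enters, ensuring the triangle relating the two Kleisli isomorphisms to $(\mathbb{W}_{\ast,\Hom{-Y,Y}})$ commutes, together with the compatibility of $\mathtt{q}$ with $\circledast$.

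The main obstacle I anticipate is bookkeeping rather than conceptual: one must carefully track how the coherence isomorphisms $\mathtt{q}_{\mathrm{F}}$ of Equation~\eqref{YonedaPersists}, the monoidal structure cells of $\mathbb{W}$, and the Kleisli isomorphisms of Equation~\eqref{KleisliHoms} interact, so that the composite $\overline{\mathbb{W}}_{\ast,\Hom{-Y,Y}}$ on Hom-spaces is literally identified with $\mathbb{W}_{\csym{C}(-,\mathrm{F}),\csym{C}(-,\mathrm{G})\circledast\Hom{-Y,Y}}$ up to the isomorphisms we already know. Once that diagram chase is in place, Proposition~\ref{YonedaFullFaithful} closes the argument. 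A secondary point to verify is that the reduction to generators is legitimate, i.e. that $\overline{\mathbb{W}}_{\ast,\Hom{-Y,Y}}$ (being a restriction of $\mathbb{W}_{\ast,\Hom{-Y,Y}}$, which is additive and $\Bbbk$-linear) commutes with the chosen isomorphisms identifying an arbitrary object of $\cplus{\Hom{-Y,Y}}$ with one in the image of the free-module functor; this is routine since $\mathbb{W}$ is a functor and preserves isomorphisms.
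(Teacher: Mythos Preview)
Your proposal is correct and follows essentially the same route as the paper: essential surjectivity via the strong monoidality of $\mathbb{W}$ together with $\mathtt{q}_{\mathrm{F}}$, and full faithfulness by reducing to generating free objects, invoking Lemma~\ref{ConjugateFreeForgetful} to conjugate $(\mathbb{W}_{\ast,\Hom{-Y,Y}})$ on Hom-spaces to $\mathbb{W}_{\csym{C}(-,\mathrm{F}),\,\csym{C}(-,\mathrm{G})\circledast\Hom{-Y,Y}}$, and then applying Proposition~\ref{YonedaFullFaithful}. One small slip: since $\mathbb{W}$ is strong monoidal out of $[\csym{C}^{\on{opp}},\mathbf{Vec}_{\Bbbk}]^{\on{rev}}$, the order reverses, so $\mathbb{W}(\csym{C}(-,\mathrm{F})\circledast\Hom{-Y,Y}) \simeq \mathbb{W}(\Hom{-Y,Y})\diamond\csym{C}(-,-\mathrm{F})$, matching the left-module convention in $\mathbb{W}(\Hom{-Y,Y})_{+}^{\ccf{C}}$.
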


\begin{proof}
 Since $\mathbb{W}$ is monoidal, the isomorphisms given in Equation~\ref{YonedaPersists} yield isomorphisms
 \[
  \mathbb{W}(\csym{C}(-,\mathrm{F}) \circledast \Hom{-Y,Y}) \simeq \mathbb{W}(\Hom{-Y,Y}) \diamond \mathbb{W}(\csym{C}(-,\mathrm{F})) \simeq \mathbb{W}(\Hom{-Y,Y}) \diamond \csym{C}(-,-\mathrm{F}),
 \]
 showing that $\overline{\mathbb{W}}_{\ast,\Hom{-Y,Y}}$ is essentially surjective.

 To show that $\overline{\mathbb{W}}_{\ast,\Hom{-Y,Y}}$ is also full and faithful, we show that $(\mathbb{W}_{\ast, \Hom{-Y,Y}})_{\ccf{C}(-,\mathrm{F}) \circledast \Hom{-Y,Y},\ccf{C}(-,\mathrm{G}) \circledast \Hom{-Y,Y}}$ is an isomorphism for all $\mathrm{F,G}$. By Lemma~\ref{ConjugateFreeForgetful}, $(\mathbb{W}_{\ast, \Hom{-Y,Y}})_{\ccf{C}(-,\mathrm{F}) \circledast \Hom{-Y,Y},\ccf{C}(-,\mathrm{G}) \circledast \Hom{-Y,Y}}$ is conjugated via isomorphisms to $\mathbb{W}_{\ccf{C}(-,\mathrm{F}),\ccf{C}(-,\mathrm{G}) \circledast \Hom{-Y,Y}}$, so it suffices to show that the latter is an isomorphism. This follows from Proposition~\ref{YonedaFullFaithful}.
\end{proof}

\begin{lemma}\label{MXCPlus}
Let $\mathbf{M}$ be a Cauchy complete cyclic $\csym{C}$-module category with a cyclic generator $X$.
 There is an equivalence of $\csym{C}$-module categories $\mathbf{M} \simeq ([X,X]_{+}^{\ccf{C}})^{\mathsf{c}}$.
\end{lemma}

\begin{proof}
 By Definition~\ref{CyclicCauchyDefinition}, we have $\mathbf{M} \simeq (\mathbf{M} \star X)^{\mathsf{c}}$, where $\mathbf{M} \star X$ is very cyclic.
 Recall that, by Theorem~\ref{MainThm} the pseudofunctor $\na$ of Theorem~\ref{LaxPseudoFunctorialityNa} is a biequivalence.
 The proof of Corollary~\ref{MainEssSurj} shows that for a quasi-inverse $\na^{-1}$ of $\na$, we have $\na^{-1}([X,X]) \simeq [X,X]_{+}^{\ccf{C}}$ in $\csym{C}\!\on{-Tamb}$.
 At the same time, Theorem~\ref{LaxPseudoFunctorialityNa} gives $[X,X] \simeq \na(\mathbf{M}\star X)$. Thus
 \[
  \mathbf{M}\star X \simeq \na^{-1}(\na(\mathbf{M}\star X)) \simeq \na^{-1}([X,X]) \simeq [X,X]_{+}^{\ccf{C}} \text{ in } \csym{C}\!\on{-Tamb}.
 \]
 Passing to the Cauchy completions, we find
 \[
  \mathbf{M}\simeq (\mathbf{M}\star X)^{\mathsf{c}} \simeq ([X,X]_{+}^{\ccf{C}})^{\mathsf{c}} \text{ in } \csym{C}\!\on{-Mod}.
 \]
\end{proof}

\begin{corollary}\label{AuxiliaryCorollary}
 There is an equivalence of $\csym{C}$-module categories $(\cplus{\Hom{-X,X}})^{\mathsf{c}} \simeq \mathbf{M}$ if and only if there is an equivalence $\mathbb{W}(\Hom{-X,X})_{+}^{\ccf{C}} \simeq [X,X]_{+}^{\ccf{C}}$ in $\csym{C}\!\on{-Tamb}$, if and only if there is an equivalence $(\mathbb{W}(\Hom{-X,X})_{+}^{\ccf{C}})^{\mathsf{c}} \simeq ([X,X]_{+}^{\ccf{C}})^{\mathsf{c}}$ of $\csym{C}$-module categories.
\end{corollary}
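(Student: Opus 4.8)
The plan is to read off the corollary as a formal combination of three facts already established: the equivalence of $\csym{C}$-module categories $\overline{\mathbb{W}}_{\ast,\Hom{-X,X}}$ of Proposition~\ref{WEquivalence}, the identification $\mathbf{M} \simeq ([X,X]_{+}^{\ccf{C}})^{\mathsf{c}}$ of Lemma~\ref{MXCPlus}, and the Cauchy-completion criterion of Corollary~\ref{CauchyTrick}. I would prove ``second condition $\Leftrightarrow$ third condition'' and ``first condition $\Leftrightarrow$ third condition'' separately and then chain them.

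For ``second $\Leftrightarrow$ third'' I would apply Corollary~\ref{CauchyTrick} directly to the pair of $\csym{C}$-module categories $\mathbb{W}(\Hom{-X,X})_{+}^{\ccf{C}}$ and $[X,X]_{+}^{\ccf{C}}$ --- both are honest $\csym{C}$-module categories, being the $\csym{C}$-module subcategories defined above --- since that corollary says precisely that these two are equivalent in $\csym{C}\!\on{-Tamb}$ exactly when their Cauchy completions are equivalent in $\csym{C}\!\on{-Mod}$.

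For ``first $\Leftrightarrow$ third'' I would first invoke Proposition~\ref{WEquivalence}, applied with $\mathbf{N} := \mathbf{M}\star X$ and $Y := X$ --- noting that $X$ is a very cyclic generator for $\mathbf{M}\star X$ and that the presheaf $\Hom{-X,X}$, hence every gadget built from it, is unchanged on passing between $\mathbf{M}$ and $\mathbf{M}\star X$ --- to obtain an equivalence $\cplus{\Hom{-X,X}} \xiso \mathbb{W}(\Hom{-X,X})_{+}^{\ccf{C}}$ of $\csym{C}$-module categories. Passing to Cauchy completions, and using that $(-)^{\mathsf{c}}$ carries equivalences of $\csym{C}$-module categories to equivalences of $\csym{C}$-module categories --- the $\csym{C}$-action extends canonically to the Cauchy completion and $\iota^{\mathsf{c}}_{\mathcal{C}}$ is universal, as recalled before Corollary~\ref{CauchyTrick} --- yields $(\cplus{\Hom{-X,X}})^{\mathsf{c}} \simeq (\mathbb{W}(\Hom{-X,X})_{+}^{\ccf{C}})^{\mathsf{c}}$. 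Lemma~\ref{MXCPlus} supplies $\mathbf{M} \simeq ([X,X]_{+}^{\ccf{C}})^{\mathsf{c}}$. Substituting both identifications into the statement ``$(\cplus{\Hom{-X,X}})^{\mathsf{c}} \simeq \mathbf{M}$'' converts it into ``$(\mathbb{W}(\Hom{-X,X})_{+}^{\ccf{C}})^{\mathsf{c}} \simeq ([X,X]_{+}^{\ccf{C}})^{\mathsf{c}}$'', which is the third condition, closing the cycle.

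Since every step is an application of a previously proved result, there is no genuine obstacle; the only bookkeeping points deserving care are the two just flagged --- matching the ``very cyclic'' hypothesis of Proposition~\ref{WEquivalence} to the ``cyclic'' setting here (handled by passing to $\mathbf{M}\star X$), and confirming that Cauchy completion is functorial at the level of $\csym{C}$-module categories rather than merely of underlying $\Bbbk$-linear categories. If one prefers to avoid the latter point, one may instead transport along $\overline{\mathbb{W}}_{\ast,\Hom{-X,X}}$ already inside $\csym{C}\!\on{-Tamb}$, so that $\cplus{\Hom{-X,X}} \simeq \mathbb{W}(\Hom{-X,X})_{+}^{\ccf{C}}$ holds there, and then argue exactly as in the proof of Lemma~\ref{MXCPlus} via $\na$ and Corollary~\ref{CauchyTrick}.
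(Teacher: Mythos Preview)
Your proposal is correct and follows essentially the same route as the paper: both arguments combine Proposition~\ref{WEquivalence}, Lemma~\ref{MXCPlus}, and Corollary~\ref{CauchyTrick} to chain the three conditions together. The paper first passes to $\csym{C}\!\on{-Tamb}$ (substituting the two equivalences there to get ``$\cplus{\Hom{-X,X}} \simeq \mathbf{M}$ in $\csym{C}\!\on{-Tamb}$'' $\Leftrightarrow$ second) and then applies Corollary~\ref{CauchyTrick} on each side, whereas you substitute directly at the level of Cauchy completions; this is a purely organizational difference, and your caution about the ``very cyclic'' hypothesis of Proposition~\ref{WEquivalence} is reasonable but unnecessary, since that proposition depends only on the presheaf $\Hom{-X,X}$.
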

\begin{proof}
 Lemma~\ref{MXCPlus} gives an equivalence $\mathbf{M} \simeq ([X,X]_{+}^{\ccf{C}})^{\mathsf{c}}$ in $\csym{C}\!\on{-Mod}$, thus also an equivalence $\mathbf{M} \simeq [X,X]_{+}^{\ccf{C}}$ in $\csym{C}\!\on{-Tamb}$.
 Proposition~\ref{WEquivalence} gives an equivalence $\mathbb{W}(\Hom{-X,X})_{+}^{\ccf{C}} \simeq \cplus{\Hom{-X,X}}$ in $\csym{C}\!\on{-Mod}$, thus also an equivalence $\mathbb{W}(\Hom{-X,X})_{+}^{\ccf{C}} \simeq \cplus{\Hom{-X,X}}$ in $\csym{C}\!\on{-Tamb}$.
 Thus, in $\csym{C}\!\on{-Tamb}$, there is an equivalence $\cplus{\Hom{-X,X}} \simeq \mathbf{M}$ if and only if there is an equivalence $\mathbb{W}(\Hom{-X,X})_{+}^{\ccf{C}} \simeq [X,X]_{+}^{\ccf{C}}$. Thus, in $\csym{C}\!\on{-Mod}$, there is an equivalence $(\cplus{\Hom{-X,X}})^{\mathsf{c}} \simeq \mathbf{M}^{\mathsf{c}}$ if and only if there is an equivalence $(\mathbb{W}(\Hom{-X,X})_{+}^{\ccf{C}})^{\mathsf{c}} \simeq ([X,X]_{+}^{\ccf{C}})^{\mathsf{c}}$.

 Since $\mathbf{M}$ is Cauchy complete, we have $\mathbf{M} \simeq \mathbf{M}^{\mathsf{c}}$ in $\csym{C}\!\on{-Mod}$. The result follows.

\end{proof}

\begin{theorem}\label{Formulae}
 If the monoid morphism $\omega_{X,X}$ of Corollary~\ref{RigidMonoids} is an isomorphism, then there is an equivalence of $\csym{C}$-module categories
 \[
  \mathbf{M} \simeq (\cplus{\Hom{-X,X}})^{\mathsf{c}}.
 \]
\end{theorem}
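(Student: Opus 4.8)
The plan is to reduce the statement, via Corollary~\ref{AuxiliaryCorollary}, to producing a single equivalence $\mathbb{W}(\Hom{-X,X})_{+}^{\ccf{C}} \simeq [X,X]_{+}^{\ccf{C}}$ in $\csym{C}\!\on{-Tamb}$, and then to manufacture this equivalence directly from the monoid morphism $\omega_{X,X}$. By hypothesis $\omega_{X,X}\colon \mathbb{W}(\Hom{-X,X}) \xrightarrow{\sim} [X,X]$ is invertible, and by Corollary~\ref{RigidMonoids} (built on Proposition~\ref{NotationTambara}) it is always a morphism of monoids in the tame monoidal category $\csym{C}\!\on{-Tamb}(\csym{C},\csym{C})$; hence its inverse is again a monoid morphism and we have a genuine isomorphism of monoid objects.

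The main construction is then restriction of scalars along $\omega_{X,X}$. I would invoke the evident analogue of Lemma~\ref{AuxiliaryLemma} in the monoidal category $\csym{C}\!\on{-Tamb}(\csym{C},\csym{C})$, applied to \emph{left} modules (which is the relevant handedness here, since $\mathbb{W}$ is defined on $[\csym{C}^{\on{opp}},\mathbf{Vec}_{\Bbbk}]^{\on{rev}}$ and the actions of $\csym{C}$ on $[X,X]\!\on{-mod}$ and on $\mathbb{W}(\Hom{-X,X})\!\on{-mod}$ described in Section~\ref{s7} and Section~\ref{s11} are left actions), to obtain an isomorphism of categories $\omega_{X,X}^{\ast}\colon [X,X]\!\on{-mod} \xiso \mathbb{W}(\Hom{-X,X})\!\on{-mod}$. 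Exactly as in the proof of Lemma~\ref{AuxiliaryLemma}, I would check that $\omega_{X,X}^{\ast}$ carries a $\csym{C}$-module functor structure with respect to the action in which $\mathrm{F}\in\csym{C}$ acts by $-\diamond \csym{C}(-,-\mathrm{F})$, the only additional bookkeeping being that this action is not strict, so the structure cells of $\omega_{X,X}^{\ast}$ must be glued from the coherence isomorphisms witnessing $\mathrm{G}\diamond(\mathrm{F}\diamond\mathtt{\Psi})\simeq(\mathrm{G}\cotimes\mathrm{F})\diamond\mathtt{\Psi}$. Finally I would note that $\omega_{X,X}^{\ast}$ sends the free module $[X,X]\diamond\csym{C}(-,-\mathrm{F})$ to a left module isomorphic to $\mathbb{W}(\Hom{-X,X})\diamond\csym{C}(-,-\mathrm{F})$ (the isomorphism being $\omega_{X,X}^{-1}\diamond\csym{C}(-,-\mathrm{F})$), so that $\omega_{X,X}^{\ast}$ restricts to a $\csym{C}$-module functor $[X,X]_{+}^{\ccf{C}}\to\mathbb{W}(\Hom{-X,X})_{+}^{\ccf{C}}$; this restriction is full and faithful since $\omega_{X,X}^{\ast}$ is, and essentially surjective by the very definition of the ``$+$'' categories, hence an equivalence in $\csym{C}\!\on{-Mod}$, and therefore also in $\csym{C}\!\on{-Tamb}$ after composing with the pseudofunctor $\mathbb{P}$ of Definition~\ref{PsPDef}.

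With the equivalence $\mathbb{W}(\Hom{-X,X})_{+}^{\ccf{C}}\simeq[X,X]_{+}^{\ccf{C}}$ in $\csym{C}\!\on{-Tamb}$ in hand, Corollary~\ref{AuxiliaryCorollary} immediately yields the equivalence $(\cplus{\Hom{-X,X}})^{\mathsf{c}}\simeq\mathbf{M}$ of $\csym{C}$-module categories, which is the assertion of the theorem; in fact this is precisely the chain of equivalences already packaged in that corollary, so no further work is needed. The step I expect to be the main obstacle is the second paragraph: verifying that restriction of scalars along $\omega_{X,X}$ is a $\csym{C}$-module functor and that its restriction to the ``$+$'' subcategories is compatible with the (non-strict) $\csym{C}$-module structure cells, so that the resulting comparison is an equivalence of $\csym{C}$-module categories and not merely of underlying categories. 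Everything else is a direct application of results already established, namely Proposition~\ref{Tameness}, Lemma~\ref{AuxiliaryLemma}, Corollary~\ref{RigidMonoids} and Corollary~\ref{AuxiliaryCorollary}.
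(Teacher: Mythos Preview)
Your proposal is correct and follows essentially the same approach as the paper: reduce via Corollary~\ref{AuxiliaryCorollary} to an equivalence $\mathbb{W}(\Hom{-X,X})_{+}^{\ccf{C}} \simeq [X,X]_{+}^{\ccf{C}}$, and obtain that equivalence by restriction of scalars along the monoid isomorphism $\omega_{X,X}$, i.e.\ by the (left-module) analogue of Lemma~\ref{AuxiliaryLemma}. The paper's proof is a one-liner invoking exactly these two results; your version simply spells out the details (left vs.\ right modules, non-strictness of the $\csym{C}$-action, and restriction to the ``$+$'' subcategories) that the paper leaves implicit.
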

\begin{proof}
 Lemma~\ref{AuxiliaryLemma} applied to $\omega_{X,X}$ shows that the condition of Corollary~\ref{AuxiliaryCorollary} is satisfied.
\end{proof}

Assume that $\Hom{-X,X}$ is representable via $\Hom{-X,X} \simeq \Hom{-,\setj{X,X}}$, and let $\on{ev}_{X}$ be the counit of the adjunction $\Hom{-X,X} \simeq \csym{C}(-,\setj{X,X})$, as described in Section~\ref{s102}.
Stating the sufficient condition of Theorem~\ref{Formulae} explicitly in this case gives it a more natural, intuitive form:
\begin{corollary}\label{ExplicitEquations}
 If, for all $\mathrm{F,G}$, the map
 \begin{equation}\label{MoreFormulas}
 \begin{aligned}
  \csym{C}(\mathrm{F,G}\setj{X,X}) &\rightarrow \Hom{\mathbf{M}\mathrm{F}X,\mathbf{M}\mathrm{G}X} \\
  \mathrm{f} &\mapsto \mathbf{M}\mathrm{G}\on{ev}_{X} \circ \mathbf{m}_{\mathrm{G},\setj{X,X}} \circ \mathbf{M}\mathrm{f}_{X}
 \end{aligned}
 \end{equation}
 is an isomorphism, then there is an equivalence of $\csym{C}$-module categories
 \[
 (\on{frmod}_{\ccf{C}}\setj{X,X})^{\mathsf{c}} \simeq \mathbf{M}.
 \]
\end{corollary}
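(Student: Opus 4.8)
The plan is to reduce this corollary to Theorem~\ref{Formulae} by showing that, under the representability assumption, its hypothesis is precisely a restatement of the hypothesis of Theorem~\ref{Formulae} --- namely that the monoid morphism $\omega_{X,X}$ of Corollary~\ref{RigidMonoids} is an isomorphism --- and then rewriting the conclusion $\mathbf{M}\simeq(\cplus{\Hom{-X,X}})^{\mathsf{c}}$ of Theorem~\ref{Formulae} using the identification of $\cplus{\Hom{-X,X}}$ with $\on{frmod}_{\ccf{C}}\setj{X,X}$. Throughout I use that $\mathbf{M}$ is Cauchy complete, which is the standing assumption under which Theorem~\ref{Formulae} is proved.

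First I would fix a representing isomorphism $\tau_{X,X}\colon\csym{C}(-,\setj{X,X})\xiso\Hom{-X,X}$ with counit $\on{ev}_{X}$, so that $\tau_{X,X,\mathrm{H}}(h)=\on{ev}_{X}\circ(\mathbf{M}h)_{X}$ for $h\colon\mathrm{H}\to\setj{X,X}$; in particular $\tau_{X,X,\setj{X,X}}(\on{id}_{\setj{X,X}})=\on{ev}_{X}$. Using the isomorphism $\mathtt{q}_{\setj{X,X}}\colon\mathbb{W}(\csym{C}(-,\setj{X,X}))\xiso\csym{C}(-,-\setj{X,X})$ of Equation~\eqref{YonedaPersists} together with the morphism $\mathbb{W}(\tau_{X,X})$, we obtain for each $\mathrm{F},\mathrm{G}$ a composite isomorphism $\csym{C}(\mathrm{F},\mathrm{G}\setj{X,X})\xiso\mathbb{W}(\Hom{-X,X})(\mathrm{F},\mathrm{G})$. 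The key step is the coend computation showing that the composite
\[
\csym{C}(\mathrm{F},\mathrm{G}\setj{X,X}) \xiso \mathbb{W}(\Hom{-X,X})(\mathrm{F},\mathrm{G}) \xrightarrow{(\omega_{X,X})_{\mathrm{F},\mathrm{G}}} \Hom{\mathbf{M}\mathrm{F}X,\mathbf{M}\mathrm{G}X}
\]
agrees with the map of Equation~\eqref{MoreFormulas}: chasing $\mathrm{f}\in\csym{C}(\mathrm{F},\mathrm{G}\setj{X,X})$, the inverse of $\mathtt{q}_{\setj{X,X}}$ (Lemma~\ref{ProYoneda}) sends it to the class of $\mathrm{f}\otimes\on{id}_{\setj{X,X}}$, $\mathbb{W}(\tau_{X,X})$ sends that to the class of $\mathrm{f}\otimes\on{ev}_{X}$, and evaluating the defining extranatural family of $\omega_{X,Y}$ at $Y=X$, $\mathrm{H}=\setj{X,X}$, $b=\on{ev}_{X}$ returns exactly $\mathbf{M}\mathrm{G}\on{ev}_{X}\circ\mathbf{m}_{\mathrm{G},\setj{X,X}}\circ(\mathbf{M}\mathrm{f})_{X}$.

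Granting the key step, all three morphisms precomposed with $(\omega_{X,X})_{\mathrm{F},\mathrm{G}}$ above are isomorphisms, so $(\omega_{X,X})_{\mathrm{F},\mathrm{G}}$ is invertible for all $\mathrm{F},\mathrm{G}$ if and only if the map of Equation~\eqref{MoreFormulas} is; since a morphism of Tambara modules is an isomorphism precisely when all its components are, the hypothesis of the corollary is equivalent to $\omega_{X,X}$ being an isomorphism. Theorem~\ref{Formulae} then yields an equivalence $\mathbf{M}\simeq(\cplus{\Hom{-X,X}})^{\mathsf{c}}$ of $\csym{C}$-module categories. Finally, representability of $\Hom{-X,X}$ via $\csym{C}(-,\setj{X,X})$ and the corollary to Lemma~\ref{AuxiliaryLemma} (equivalently, Lemma~\ref{Yo-YoDa} combined with Lemma~\ref{AuxiliaryLemma}) supply an equivalence $\on{frmod}_{\ccf{C}}\setj{X,X}\xiso\cplus{\Hom{-X,X}}$ of $\csym{C}$-module categories; passing to Cauchy completions gives $(\on{frmod}_{\ccf{C}}\setj{X,X})^{\mathsf{c}}\simeq(\cplus{\Hom{-X,X}})^{\mathsf{c}}\simeq\mathbf{M}$.

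The hard part will be the coend bookkeeping in the key step: one must verify that $\mathbb{W}(\tau_{X,X})$ acts on Yoneda-reduced classes as claimed and that evaluating $\omega_{X,X}$ on $\mathrm{f}\otimes\on{ev}_{X}$ reproduces the composite of Equation~\eqref{MoreFormulas} on the nose, with no stray unitor or associator. The on-the-nose identifications already carried out in Proposition~\ref{RepresentableMonoids} and Proposition~\ref{NotationTambara}, together with the compatibility recorded in Lemma~\ref{ConjugateFreeForgetful}, are precisely the tools needed to make this manageable.
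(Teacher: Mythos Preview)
Your proposal is correct and follows essentially the same approach as the paper: identify the map of Equation~\eqref{MoreFormulas} with the composite $\csym{C}(\mathrm{F},\mathrm{G}\setj{X,X})\xiso\mathbb{W}(\csym{C}(-,\setj{X,X}))(\mathrm{F},\mathrm{G})\xiso\mathbb{W}(\Hom{-X,X})(\mathrm{F},\mathrm{G})\xrightarrow{(\omega_{X,X})_{\mathrm{F},\mathrm{G}}}[X,X](\mathrm{F},\mathrm{G})$, conclude that the hypothesis is equivalent to $\omega_{X,X}$ being invertible, and apply Theorem~\ref{Formulae}. You are slightly more explicit than the paper in spelling out the final identification $\on{frmod}_{\ccf{C}}\setj{X,X}\simeq\cplus{\Hom{-X,X}}$ via Lemma~\ref{Yo-YoDa} and Lemma~\ref{AuxiliaryLemma}, which the paper leaves implicit, but this is the same argument.
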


\begin{proof}
 The map given in Equation~\eqref{MoreFormulas} is the composite
 \[
 \begin{aligned}
  &\csym{C}(\mathrm{F,G}\setj{X,X}) \xiso \int^{\mathrm{H}} \csym{C}(\mathrm{F,GH}) \kotimes \csym{C}(\mathrm{H},\setj{X,X}) = \mathbb{W}(\csym{C}(-,\setj{X,X}))(\mathrm{F,G}) \\
  &\xiso \mathbb{W}(\Hom{-X,X})(\mathrm{F,G}) \xrightarrow{(\omega_{X,X})_{\mathrm{F,G}}} [X,X](\mathrm{F,G}) = \Hom{\mathbf{M}\mathrm{F}X, \mathbf{M}\mathrm{G}X},
 \end{aligned}
 \]
 which clearly is an isomorphim if and only if $(\omega_{X,X})_{\mathrm{F,G}}$ is an isomorphism. The result follows from Theorem~\ref{Formulae}.
\end{proof}

\begin{remark}
 If $\Hom{-X,X}$ is representable via $\Hom{-X,X} \simeq \Hom{-,\setj{X,X}}$ and $\omega_{X,X}$ is an isomorphism, then in particular, for any $\mathrm{G} \in \csym{C}$, we have a natural isomorphism
 \[
  \Hom{-X,\mathbf{M}\mathrm{G}X} \simeq \csym{C}(-,\mathrm{G}\cotimes \setj{X,X}),
 \]
 so the object $\mathrm{G}\cotimes \setj{X,X}$ also becomes an internal hom for the $\csym{C}$-module category $\mathbf{M}$ - we may write
 \[
 \mathrm{G} \otimes \setj{X,X} =: \setj{X,\mathrm{G}X}.
 \]
\end{remark}

\section{\texorpdfstring{$\tam$}{C-Tamb(C,C)}-enrichment}\label{s12}

The main focus in the presentation of the results of this document is their application to classification problems for $\csym{C}$-module categories, showing that these can be equivalently treated as Morita classification problems for monoids in a suitable monoidal category, $\tam$. This closely follows the approach taken in \cite{Os}, and even more so the approach taken in \cite{MMMT}, \cite{MMMTZ1}.

Closely related results, in a more category theoretic setting, can be found in \cite{GP}, and, formulated in more elementary terms, in \cite{JK}. To formulate its main results, let us first assume that $\csym{C}$ is closed and consider only $\csym{C}$-module categories $\mathbf{M}$ where the functors $\Hom{-X,Y}$ are representable for all $X,Y$ - in other words, the action is {\it right-closed}, admitting objects $\setj{X,Y}$ such that $\csym{C}(\mathrm{G},\setj{X,Y}) \simeq \Hom{\mathbf{M}\mathrm{G}X,Y}_{\mathbf{M}}$. In that case, we may define a {\it $\csym{C}$-enriched category} $\widecheck{\mathbf{M}}$, given by
\begin{itemize}
 \item $\on{Ob}\widecheck{\mathbf{M}} = \on{Ob}\mathbf{M}$;
 \item $\Hom{X,Y}_{\widecheck{\mathbf{M}}} = \setj{X,Y}$;
 \item The composition $\setj{Y,Z} \cotimes \setj{X,Y} \rightarrow \setj{X,Z}$ is given by the preimage under the Yoneda embedding of the transformations $\euler{c}_{X,Y,Z}$ described in Definition~\ref{AbstractMonoid};
 \item The unit $\mathbb{1} \rightarrow \setj{X,X}$ is given by the preimage under the Yoneda embedding of the transformation $\euler{u}_{X}$ described in Definition~\ref{AbstractUnit}.
 \item Associativity of the composition is given by Proposition~\ref{AbstractAssoc}, unitality by Proposition~\ref{AbstractUnitality}.
\end{itemize}

This construction extends to $\csym{C}$-module morphisms: a morphism $\Psi: \mathbf{M} \rightarrow \mathbf{N}$ yields a $\csym{C}$-transformation
\[
\widecheck{\Psi}_{X,Y}:\Hom{-X,Y}_{\mathbf{M}} \rightarrow \Hom{-\Psi X, \Psi Y}_{\mathbf{N}}
\]
for any $X,Y \in \mathbf{M}$, and these assemble to a $\csym{C}$-functor $\widecheck{\Psi}: \widecheck{\mathbf{M}} \rightarrow \widecheck{\mathbf{N}}$. This further extends to modifications, yielding a pseudofunctor $(\widecheck{-}): \csym{C}\!\on{-Mod} \rightarrow \csym{C}\!\on{-Cat}$. The following is a consequence of \cite[Theorem~3.7]{GP}:
\begin{theorem}[{\cite{GP}}]
 The pseudofunctor $(\widecheck{-})$ is a biequivalence.
\end{theorem}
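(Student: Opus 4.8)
The plan is to establish the biequivalence $(\widecheck{-})\colon \csym{C}\!\on{-Mod} \to \csym{C}\!\on{-Cat}$ by identifying it, up to pseudonatural equivalence, with the composite of two pseudofunctors whose relevant properties we have essentially already assembled: a restriction of the biequivalence $\na$ of Theorem~\ref{MainThm}, together with the representability hypothesis that turns the monoidal category $\tam$, restricted appropriately, into $\csym{C}$ itself. Concretely, in the right-closed setting where all $\Hom{-X,Y}_{\mathbf{M}}$ are representable, Proposition~\ref{RepresentableMonoids} and Corollary~\ref{OstrikMonoidStructure} show that the monoid $\Hom{-X,X}$ in $[\csym{C}^{\on{opp}},\mathbf{Vec}_{\Bbbk}]$ is representable by the Ostrik algebra $\setj{X,X}$, and that the induced monoid structure on $\csym{C}(-,\setj{X,X})$ is exactly the one defining $\widecheck{\mathbf{M}}(X,X)$; more generally $\widecheck{\mathbf{M}}(X,Y)=\setj{X,Y}$ with composition and unit given by $\euler{c}_{X,Y,Z}$ and $\euler{u}_X$ transported along the Yoneda isomorphisms. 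So the first step is to make precise that $\widecheck{\mathbf{M}}$ is, by construction, the $\csym{C}$-enriched category whose hom-objects and structure maps are the representing data for the bimodules $[Y,X]$ (equivalently, for the presheaf monoids $\Hom{-X,X}$), via the morphisms $\omega_{X,Y}$ of Corollary~\ref{RigidMonoids} — which, since $\csym{C}$ is closed, or at least since the relevant presheaves are representable, will be the key link between ``module objects in $\tam$'' and ``enriched hom-objects in $\csym{C}$''.

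The main body of the argument then runs in two directions. For \emph{essential surjectivity}: given an arbitrary $\csym{C}$-enriched category $\mathcal{A}$, one must produce a right-closed $\csym{C}$-module category $\mathbf{M}$ with $\widecheck{\mathbf{M}}\simeq \mathcal{A}$. The natural candidate is the $\csym{C}$-\emph{tensored} category generated by $\mathcal{A}$, i.e.\ the category of ``$\csym{C}$-projective'' (free) $\mathcal{A}$-modules — precisely the enriched analogue of the category $[X,X]_+^{\ccf C}$ appearing in Definition~\ref{CTPlus} and Lemma~\ref{MXCPlus}, but for a many-object $\csym{C}$-category rather than a single monoid. One should define the presheaf-enriched category $[\mathcal{A}^{\on{op}},\mathbf{Vec}_{\Bbbk}]$, or rather its full subcategory of representables-tensored-with-objects-of-$\csym{C}$, check it is a $\csym{C}$-module category under the Day-type action, verify it is right-closed, and compute its internal homs back to $\mathcal{A}(X,Y)$; this is the multi-object generalization of Proposition~\ref{WEquivalence} and Lemma~\ref{Yo-YoDa}. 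For \emph{local full faithfulness}: one shows that $(\widecheck{-})_{\mathbf{M},\mathbf{N}}$ is an equivalence of hom-categories. Here the plan is to reduce to the single-generator case by decomposing into ``cyclic pieces'' $\mathbf{M}\star X$ as in Section~\ref{DefiningNa}, so that $\csym{C}$-module functors and transformations are governed by bimodules and their morphisms over the Ostrik algebras; then invoke that $\na$ is locally full and faithful (Proposition~\ref{MainLocFF}) together with the representability identifications $\omega_{X,Y}$ to transport these statements from $\on{Bimod}(\tam)$ to $\csym{C}\!\on{-Cat}$. Combining the two directions and appealing to \cite[Theorem~2.25]{SP} (exactly as in the proof of Theorem~\ref{MainThm}) yields the biequivalence.

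I expect the main obstacle to be the \emph{essential-surjectivity / coherence bookkeeping} rather than any single deep idea: constructing the $\csym{C}$-tensored module category attached to an abstract $\csym{C}$-enriched category $\mathcal{A}$ and checking that its $\widecheck{(-)}$ recovers $\mathcal{A}$ \emph{on the nose up to the right coherence} requires carefully matching the Day-convolution action, the internal-hom adjunction isomorphisms of \eqref{KleisliHoms} and \eqref{YonedaPersists}, and the composition/unit transformations $\euler{c},\euler{u}$ of Definition~\ref{AbstractMonoid} and Definition~\ref{AbstractUnit} — all without strictifying, consistently with the paper's convention noted in Section~\ref{s81}. A secondary subtlety is verifying that the essential image of $(\widecheck{-})$ is exactly the $\csym{C}$-tensored $\csym{C}$-categories: one direction is that every $\widecheck{\mathbf{M}}$ is tensored (the tensor of $X\in\mathbf{M}$ by $\mathrm F\in\csym{C}$ being $\mathbf{M}\mathrm F X$, with the cotensor-as-internal-hom relation from the last Remark of Section~\ref{s11}), and the other is the construction above; but pinning down the precise notion of ``$\csym{C}$-tensored'' that makes both inclusions hold, and noting this is already the content of \cite[Theorem~3.7]{GP} in the closed case, lets us cite rather than re-derive it. One should also remark that the closedness hypothesis on $\csym{C}$ can be relaxed to the hypothesis that the relevant representing objects exist — which is exactly what the more robust construction via $\tam$ and $\mathbb{W}$ buys us, and which is why we route the proof through $\na$ and Corollary~\ref{RigidMonoids} rather than through $\csym{C}$-valued internal homs directly.
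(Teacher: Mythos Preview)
The paper does not prove this theorem at all: it is stated as a consequence of \cite[Theorem~3.7]{GP} and given without proof. So there is no ``paper's own proof'' to compare against; the theorem is purely a citation, used as motivation for the paper's own result Theorem~\ref{ActionViaEnrichment}, which replaces $\csym{C}$-enrichment by $\tam^{\on{rev}}$-enrichment.

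Your plan to route the argument through the biequivalence $\na$ of Theorem~\ref{MainThm} is therefore not what the paper does, and it also has structural problems. First, $\na$ is a biequivalence from \emph{very cyclic} $\csym{C}$-module categories to $\on{Bimod}(\tam)$, not to $\csym{C}\!\on{-Cat}$; to pass from $\tam$ to $\csym{C}$ you would need the Cayley functor $\mathbb{W}$ (or $\mathbb{K}$) to be a monoidal equivalence, and Section~\ref{s101} only establishes this under rigidity, not under closedness. Correspondingly, the morphisms $\omega_{X,Y}$ of Corollary~\ref{RigidMonoids} are only shown to be isomorphisms when $\csym{C}$ is rigid, so invoking them as the ``key link'' in the merely closed setting is unjustified. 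Second, the reduction to ``cyclic pieces $\mathbf{M}\star X$'' does not straightforwardly reconstruct local full faithfulness for arbitrary (non-cyclic) $\mathbf{M}$: a $\csym{C}$-module functor $\mathbf{M}\to\mathbf{N}$ is not determined by its restrictions to the various $\mathbf{M}\star X$ in any way that the bimodule description of $\na$ captures. The actual Gordon--Power argument proceeds directly via the tensored/cotensored characterization of the essential image and the adjunction between actions and enrichments, without any detour through Tambara modules.
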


Note that the monoids studied in \cite{Os} are just the endomorphism monoids in $\widecheck{\csym{C}}$. Observe also that the hom-objects and the composition maps are representations of objects and maps in $[\csym{C}^{\on{opp}},\mathbf{Vec}_{\Bbbk}]$. Thus, even when omitting the assumptions about $\csym{C}$ being closed and $\mathbf{M}$ being right-closed, we should be able to recover a $[\csym{C}^{\on{opp}}, \mathbf{Vec}_{\Bbbk}]$-enrichment on $\mathbf{M}$. And indeed, we have the following result:
\begin{theorem}[{\cite[Wood's theorem]{Ga}}]
 A $\csym{C}$-module category $\mathbf{M}$ gives rise to a $[\csym{C}^{\on{opp}}, \mathbf{Vec}_{\Bbbk}]$-enriched category $\widecheck{\mathbf{M}}$, given by
 \begin{itemize}
 \item $\on{Ob}\widecheck{\mathbf{M}} = \on{Ob}\mathbf{M}$;
 \item $\Hom{X,Y}_{\widecheck{\mathbf{M}}} = \Hom{-X,Y}$;
 \item The composition $\setj{Y,Z} \cotimes \setj{X,Y} \rightarrow \setj{X,Z}$ is given by the transformation $\euler{c}_{X,Y,Z}$ described in Definition~\ref{AbstractMonoid};
 \item The unit $\mathbb{1} \rightarrow \setj{X,X}$ is given by the transformation $\euler{u}_{X}$ described in Definition~\ref{AbstractUnit}.
 \item Associativity of the composition is given by Proposition~\ref{AbstractAssoc}, unitality by Proposition~\ref{AbstractUnitality}.
\end{itemize}
\end{theorem}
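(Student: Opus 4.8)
The plan is to read this theorem as the many-object repackaging of the fact, recorded in the Corollary stating that $(\Hom{-X,X},\euler{c}_{X,X,X},\euler{u}_{X})$ is a monoid in $[\csym{C}^{\on{opp}},\mathbf{Vec}_{\Bbbk}]$. Every piece of data and every coherence condition required of a $\mathcal{V}$-enriched category, for $\mathcal{V}=[\csym{C}^{\on{opp}},\mathbf{Vec}_{\Bbbk}]$ with its Day convolution tensor product $\circledast$, has in fact already been produced for arbitrary objects of $\mathbf{M}$ in Definition~\ref{AbstractMonoid}, Definition~\ref{AbstractUnit}, Proposition~\ref{AbstractAssoc} and Proposition~\ref{AbstractUnitality}; what remains is to match these against the definition of an enriched category and to pin down the monoidal unit of $\circledast$.

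First I would recall that the Yoneda embedding $\csym{C}\rightarrow\mathcal{V}$ is strong monoidal with respect to $\circledast$, so that the unit object of $(\mathcal{V},\circledast)$ is the representable presheaf $\csym{C}(-,\mathbb{1})$. With this in hand, the data of $\widecheck{\mathbf{M}}$ is assembled as follows. Its objects are the objects of $\mathbf{M}$. For $X,Y\in\mathbf{M}$, the hom-object $\Hom{X,Y}_{\widecheck{\mathbf{M}}}:=\Hom{-X,Y}$ is a presheaf $\csym{C}^{\on{opp}}\rightarrow\mathbf{Vec}_{\Bbbk}$, since $\mathbf{M}(-)$ is covariant and $\Hom{-,Y}$ contravariant, as already noted around \eqref{Kopreschiff}; hence it is an object of $\mathcal{V}$. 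The composition morphism $\mathsf{c}^{\widecheck{\mathbf{M}}}_{Y;X,Z}:=\euler{c}_{X,Y,Z}\colon\Hom{-Y,Z}\circledast\Hom{-X,Y}\rightarrow\Hom{-X,Z}$ is a morphism of $\mathcal{V}$ by the naturality check carried out in Definition~\ref{AbstractMonoid}. The identity morphism $\mathsf{e}^{\widecheck{\mathbf{M}}}_{X}:=\euler{u}_{X}\colon\csym{C}(-,\mathbb{1})\rightarrow\Hom{-X,X}$ is a morphism of $\mathcal{V}$ by Definition~\ref{AbstractUnit}, and its domain is exactly the unit object of $(\mathcal{V},\circledast)$ identified above.

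It then remains to verify the two axioms of an enriched category. Associativity --- compatibility of $\euler{c}$ with the associator of $\circledast$ across any four objects $X,Y,Z,W$ of $\mathbf{M}$ --- is precisely commutativity of Diagram~\eqref{OstrikAssoc}, which is Proposition~\ref{AbstractAssoc}; that proposition is already stated for general $X,Y,Z,W$, so nothing further is needed. Left and right unitality --- the identities $\euler{c}_{X,Y,Y}\circ(\euler{u}_{Y}\circledast\Hom{-X,Y})=\mathsf{l}_{\Hom{-X,Y}}$ and $\euler{c}_{X,X,Y}\circ(\Hom{-X,Y}\circledast\euler{u}_{X})=\mathsf{r}_{\Hom{-X,Y}}$, where $\mathsf{l},\mathsf{r}$ are the unitors of $\circledast$ --- follow from Proposition~\ref{AbstractUnitality} and its right-handed counterpart: although that proposition is phrased for the single object appearing in the monoid statement, its diagram chase is uniform in the objects involved (one only replaces the composition into $X$ by the composition into the relevant object, every face still commuting by the coherence axioms for $\mathbf{M}$). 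Assembling these, $\widecheck{\mathbf{M}}$ is a $[\csym{C}^{\on{opp}},\mathbf{Vec}_{\Bbbk}]$-enriched category.

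There is no substantive obstacle: all the genuine computation has already been carried out in the cited statements, and the argument is a bookkeeping assembly. The two points deserving a moment's care are the identification of the Day convolution unit with $\csym{C}(-,\mathbb{1})$, so that $\euler{u}_{X}$ has the correct domain, and the observation that the unitality chase of Proposition~\ref{AbstractUnitality} is independent of the object choices. For completeness one may further note --- although the theorem does not demand it --- that the construction is functorial: a $\csym{C}$-module functor $\euler{\Psi}\colon\mathbf{M}\rightarrow\mathbf{N}$ induces the $\mathcal{V}$-functor $\widecheck{\euler{\Psi}}$ with components $\Hom{-X,Y}\rightarrow\Hom{-\euler{\Psi}X,\euler{\Psi}Y}$ induced by $\euler{\Psi}$, and $\csym{C}$-module transformations induce $\mathcal{V}$-natural transformations, so that $(\widecheck{-})$ extends to a pseudofunctor to the $2$-category of $[\csym{C}^{\on{opp}},\mathbf{Vec}_{\Bbbk}]$-enriched categories, exactly as in the closed case recalled above.
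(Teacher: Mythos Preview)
Your proposal is correct and matches the paper's approach: the theorem is stated there with citation to \cite{Ga} and carries no separate proof, the bullet points in its statement already pointing to Proposition~\ref{AbstractAssoc} and Proposition~\ref{AbstractUnitality} for the axioms, exactly as you do. Your only addition is making explicit the identification of the Day convolution unit with $\csym{C}(-,\mathbb{1})$ and the remark that the unitality argument of Proposition~\ref{AbstractUnitality} is uniform in the objects, both of which are implicit in the paper's treatment.
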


The constructions that follow concern categories and profunctors enriched in the non-symmetric monoidal category $\tam$. In the full generality of the non-strict case, we may follow \cite{GS}, which develops a theory of bicategories enriched in monoidal bicategories. In our case, both the enriching and the enriched bicategories have only identity $2$-cells. To facilitate the presentation, we assume that $\csym{C}$ is a $2$-category and that the $\csym{C}$-module categories are strict.

\begin{definition}\label{DefiningS}
 We define a $2$-functor $\mathbb{S}: \csym{C}\!\on{-Mod} \rightarrow \tam^{\otimes\!\on{opp}}\!\on{-Cat}$ as follows:
 \begin{enumerate}[label = (\alph*)]
  \item Given a $\csym{C}$-module category $\mathbf{M}$, we let $\on{Ob}\mathbb{S}(\mathbf{M}) = \on{Ob}\mathbf{M}$. Given $X,Y \in \on{Ob}\mathbf{M}$, we let $\Hom{X,Y}_{\mathbb{S}(\mathbf{M})} := [X,Y]$, the Tambara module defined following Equation~\eqref{BicatYo}.
  The composition maps $\mathsf{c}^{\mathbb{S}(\mathbf{M})}_{X,Y,Z}$ are given by
   \[
 \begin{aligned}
  ([X,Y] \circ [Y,Z])(\mathrm{F,G}) = \int^{\mathrm{H}} \Hom{\mathbf{M}\mathrm{F}X,\mathbf{M}\mathrm{H}Y} \kotimes \Hom{\mathbf{M}\mathrm{H}Y,\mathbf{M}\mathrm{G}Z} &\rightarrow \Hom{\mathbf{M}\mathrm{F}X,\mathbf{M}\mathrm{G}Z} = [X,Z](\mathrm{F,G}) \\
  f \otimes g &\mapsto g \circ f
 \end{aligned}
 \]
 and the unit morphism is given by
 \[
 \begin{aligned}
  \csym{C}(\mathrm{F,G}) &\rightarrow [X,X](\mathrm{F,G}) \\
  \mathrm{f} &\mapsto  (\mathbf{M}\mathrm{f})_{X}
 \end{aligned}
 \]
 \item Given a $\csym{C}$-module functor $\euler{\Phi}: \mathbf{M} \rightarrow \mathbf{N}$, we let $\on{Ob}\mathbb{S}(\euler{\Phi}) := \on{Ob}\euler{\Phi}$, and, for $X,Y \in \mathbf{M}$, we let
 \[
 \mathbb{S}(\euler{\Phi})_{X,Y}: [X,Y] \rightarrow [\euler{\Phi} X, \euler{\Phi} Y]
 \]
 be given by
 \[
 \begin{aligned}
(\mathbb{S}(\euler{\Phi})_{X,Y})_{\mathrm{F,G}}: \Hom{\mathbf{M}\mathrm{F}X,\mathbf{M}\mathrm{G}Y} &\rightarrow \Hom{\euler{\Phi} \mathbf{M}\mathrm{F}X, \euler{\Phi} \mathbf{M}\mathrm{G}Y} = \Hom{\mathbf{N}\mathrm{F}\euler{\Phi} X, \mathbf{N}\mathrm{G}\euler{\Phi} Y} \\
 f &\mapsto \euler{\Phi}(f).
 \end{aligned}
 \]
 \item
 Recall that, given $\tam^{\otimes\!\on{opp}}$-functors $\Phi,\Phi': \mathcal{M} \rightarrow \mathcal{N}$, a {\it $\tam^{\otimes\!\on{opp}}$-transformation} $\tau$ from $\Phi$ to $\Phi'$ consists of a collection
 \[
\setj{\tau_{X} \in \tam^{\otimes\!\on{opp}}(\csym{C}(-,-), \mathcal{N}(\Phi X, \Phi' X)) \simeq \mathcal{N}(\Phi X, \Phi' X)(\mathbb{1}_{\ccf{C}},\mathbb{1}_{\ccf{C}}) \; | \; X \in \mathcal{M}}
 \]
 such that, on the component indexed by $(\mathrm{F,G})$, we have the commutativity of
 \[
 \resizebox{.99\hsize}{!}{$
 \begin{tikzcd}[ampersand replacement=\&,row sep=scriptsize]
	{\mathcal{M}(X,Y)(\mathrm{F,G})} \& {\int^{\mathrm{H}}\mathcal{M}(X,Y)(\mathrm{F,H})\kotimes\csym{C}(\mathrm{H,G})} \& {\int^{\mathrm{H}} \mathcal{N}(\Phi X,\Phi Y)(\mathrm{F,H}) \kotimes \mathcal{N}(\Phi Y, \Phi' Y)(\mathrm{H,G})} \\
	{\int^{\mathrm{H}} \csym{C}(\mathrm{F,H}) \kotimes \mathcal{M}(X,Y)(\mathrm{H,G})} \& {\int^{\mathrm{H}} \mathcal{N}(\Phi X,\Phi'X)(\mathrm{F,H}) \kotimes \mathcal{N}(\Phi' X, \Phi' Y)(\mathrm{H,G})} \& {\mathcal{N}(\Phi X, \Phi'Y)(\mathrm{F,G})}
	\arrow["\simeq", from=1-1, to=1-2]
	\arrow["\simeq"', from=1-1, to=2-1]
	\arrow["{\tau_{X} \circ \Phi_{X,Y}'}"', shift right=1, from=2-1, to=2-2]
	\arrow["{\Phi_{X,Y}\circ \tau_{Y}}", from=1-2, to=1-3]
	\arrow["{(-\circ-)_{\mathcal{N}}}"', from=2-2, to=2-3]
	\arrow["{(-\circ-)_{\mathcal{N}}}", from=1-3, to=2-3]
\end{tikzcd}$}\]
 Chasing an element $a \in \mathcal{M}(X,Y)(\mathrm{F,G})$, we find the equation
 \begin{equation}\label{EnrichedNatEq}
  ((-\circ -)_{\mathcal{N}})_{\mathrm{F;F,G}}\big((\tau_{X})_{\mathrm{F,F}}(\on{id}_{\mathrm{F}}) \otimes (\Phi'_{X,Y})_{\mathrm{F,G}}(a)\big) = ((-\circ-)_{\mathcal{N}})_{\mathrm{G;F,G}}\big( (\Phi_{X,Y})_{\mathrm{F,G}}(a) \otimes (\tau_{Y})_{\mathrm{G,G}}(\on{id}_{\mathrm{G}}) \big)
 \end{equation}
 Let $\euler{t} \in \csym{C}\!\on{-Mod}(\mathbf{M},\mathbf{N})(\euler{\Phi},\euler{\Phi}')$ be a modification.
 We define a $\tam^{\otimes\!\on{opp}}$-transformation
 \[
\mathbb{S}(\euler{t}): \mathbb{S}(\Phi) \rightarrow \mathbb{S}(\Phi')
 \]
 by letting $\mathbb{S}(\euler{t})_{X}$ be the image of $\euler{t}_{X}$ under the isomorphism
 \begin{equation}\label{DefByUni}
\tam^{\otimes\!\on{opp}}\Big(\csym{C}(-,-), \Hom{\mathbb{S}(\euler{\Phi}) X, \mathbb{S}(\euler{\Phi'}) X}_{\mathbb{S}(\mathbf{N})}\Big) \simeq \Hom{\mathbb{S}(\euler{\Phi}) X, \mathbb{S}(\euler{\Phi'}) X}_{\mathbb{S}(\mathbf{N})}(\mathbb{1}_{\ccf{C}},\mathbb{1}_{\ccf{C}}) \simeq \Hom{\euler{\Phi} X, \euler{\Phi'} X}_{\mathbf{N}}.
 \end{equation}
 \end{enumerate}
\end{definition}

\begin{lemma}
 $\mathbb{S}$ is a well-defined $2$-functor.
\end{lemma}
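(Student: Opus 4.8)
The plan is to verify that $\mathbb{S}$, as specified in Definition~\ref{DefiningS}, satisfies the axioms of a $2$-functor: it sends objects to $\tam^{\on{rev}}$-enriched categories, $1$-morphisms to $\tam^{\on{rev}}$-functors, $2$-morphisms to $\tam^{\on{rev}}$-transformations, and it respects identities and both horizontal and vertical composition on the nose. I would organize the proof in four parts, mirroring the four clauses of the definition plus the functoriality checks.

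First I would verify that $\mathbb{S}(\mathbf{M})$ is a genuine $\tam^{\on{rev}}$-category. The hom-objects $[X,Y]$ are Tambara modules by the construction following Equation~\eqref{BicatYo} (restriction and corestriction along the functors $\euler{\Phi}_X,\euler{\Phi}_Y$ of the bicategorical Yoneda lemma), so this is legitimate. The composition morphisms $\mathsf{c}^{\mathbb{S}(\mathbf{M})}_{X,Y,Z}$ and the unit morphisms are exactly the multiplication and unit maps appearing in Proposition~\ref{EndMonoid} and Proposition~\ref{HomBimod} (the case $\mathtt{\Psi} = \Hom{-,-}_{\mathbf{M}}$), so they are morphisms of Tambara modules by those results. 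Associativity of $\mathsf{c}^{\mathbb{S}(\mathbf{M})}$ reduces to associativity of composition in $\mathbf{M}$, and unitality reduces to the unitality arguments already given in Proposition~\ref{EndMonoid}; here one must be slightly careful that the unitors of $\tam^{\on{rev}}$ are obtained by evaluating the $\csym{C}$-action (as noted in the proofs of Proposition~\ref{EndMonoid} and Proposition~\ref{HomBimod}), so the bookkeeping is identical to what was done there. The point that the hom-object is $[X,Y]$ rather than $[Y,X]$ is why the enriching category is $\tam^{\on{rev}}$ and not $\tam$.

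Next I would check that $\mathbb{S}(\euler{\Phi})$ is a $\tam^{\on{rev}}$-functor. The component maps $(\mathbb{S}(\euler{\Phi})_{X,Y})_{\mathrm{F,G}} = \euler{\Phi}(-)$ are morphisms of Tambara modules: naturality in $\mathrm{F,G}$ is functoriality of $\euler{\Phi}$ together with the strict equality $\euler{\Phi}\mathbf{M}\mathrm{F} = \mathbf{N}\mathrm{F}\euler{\Phi}$ (we are in the strict setting, so $\euler{\phi}_{\mathrm{F}}$ is the identity), and compatibility with Tambara structure follows because $\ta^{[X,Y]}$ is built from the module-category structure maps, which $\euler{\Phi}$ strictly preserves. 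Compatibility of $\mathbb{S}(\euler{\Phi})$ with composition and units amounts to $\euler{\Phi}(g\circ f) = \euler{\Phi}(g)\circ \euler{\Phi}(f)$ and $\euler{\Phi}((\mathbf{M}\mathrm{f})_X) = (\mathbf{N}\mathrm{f})_{\euler{\Phi} X}$, the latter again by strictness. Then I would verify that $\mathbb{S}(\euler{t})$, defined via the universal isomorphism~\eqref{DefByUni} sending $\euler{t}_X$ to the element it names, is a $\tam^{\on{rev}}$-transformation; concretely this means checking Equation~\eqref{EnrichedNatEq}, which after unwinding~\eqref{DefByUni} becomes $\euler{t}_X \circ \euler{\Phi}(a) = \euler{\Phi}'(a) \circ \euler{t}_Y$ for $a \in \Hom{\mathbf{M}\mathrm{F}X,\mathbf{M}\mathrm{G}Y}$ --- but this is precisely the naturality square for the modification $\euler{t}$, so it holds.

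Finally I would check strict functoriality. On identities: $\mathbb{S}(\mathbb{1}_{\mathbf{M}})$ is the identity $\tam^{\on{rev}}$-functor since $\euler{\Phi}(-) = -$ on each hom-object, and $\mathbb{S}(\on{id}_{\euler{\Phi}})$ is the identity transformation by inspection of~\eqref{DefByUni}. On horizontal composition: $(\mathbb{S}(\euler{\Psi}) \circ \mathbb{S}(\euler{\Phi}))_{X,Y} = \euler{\Psi}(\euler{\Phi}(-)) = (\euler{\Psi}\euler{\Phi})(-) = \mathbb{S}(\euler{\Psi}\euler{\Phi})_{X,Y}$, again using strictness so that no coherence cells intervene; similarly for composites of $2$-morphisms, which reduces to the corresponding identities for modifications in $\csym{C}\!\on{-Mod}$ together with the compatibility of the isomorphism~\eqref{DefByUni} with composition in $\mathbf{N}$. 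The main obstacle, such as it is, is not conceptual but the careful matching of the $\tam^{\on{rev}}$-enriched unitors and associators (which are inherited from $\tam$ via evaluation of the $\csym{C}$-action, hence genuinely non-strict) against the on-the-nose equalities one wants; this is exactly the reason the definition assumes $\csym{C}$ to be a $2$-category and the module categories strict, and it means the only real content is the Tambara-module-morphism checks for $\mathsf{c}^{\mathbb{S}(\mathbf{M})}$ and $\mathbb{S}(\euler{\Phi})_{X,Y}$, both of which have already been carried out verbatim in Proposition~\ref{EndMonoid}, Proposition~\ref{HomBimod}, and Corollary~\ref{LocalFunctors}. Assembling these observations gives the claim.
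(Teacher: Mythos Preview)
Your overall strategy matches the paper's: verify that $\mathbb{S}(\mathbf{M})$ is a $\tam^{\on{rev}}$-category by appeal to Propositions~\ref{EndMonoid} and~\ref{HomBimod}, verify that $\mathbb{S}(\euler{\Phi})_{X,Y}$ is a Tambara morphism and respects composition/units by functoriality of $\euler{\Phi}$, and then check strict preservation of composites. This is correct.

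There is, however, a real slip in your verification that $\mathbb{S}(\euler{t})$ satisfies Equation~\eqref{EnrichedNatEq}. You claim the equation unwinds to $\euler{t}_X \circ \euler{\Phi}(a) = \euler{\Phi}'(a) \circ \euler{t}_Y$ and that this is ``precisely the naturality square for the modification $\euler{t}$''. But this equation does not even type-check: for $a \in \Hom{\mathbf{M}\mathrm{F}X,\mathbf{M}\mathrm{G}Y}$ the morphism $\euler{\Phi}(a)$ has domain $\mathbf{N}\mathrm{F}\euler{\Phi}X$, whereas $\euler{t}_X$ has codomain $\euler{\Phi}'X$. What actually appears when you evaluate $(\mathbb{S}(\euler{t})_X)_{\mathrm{F,F}}(\on{id}_{\mathrm{F}})$ via~\eqref{DefByUni} is not $\euler{t}_X$ but $\mathbf{N}\mathrm{F}\euler{t}_X$ (the Tambara structure on $[\euler{\Phi}X,\euler{\Phi}'X]$ acts by $\mathbf{N}\mathrm{F}$), so the correct unwinding is
\[
\mathbf{N}\mathrm{G}\,\euler{t}_Y \circ \euler{\Phi}(a) \;=\; \euler{\Phi}'(a) \circ \mathbf{N}\mathrm{F}\,\euler{t}_X.
\]
To establish this you need \emph{two} ingredients, not one: the modification axiom, which in the strict setting gives $\mathbf{N}\mathrm{F}\euler{t}_X = \euler{t}_{\mathbf{M}\mathrm{F}X}$ and $\mathbf{N}\mathrm{G}\euler{t}_Y = \euler{t}_{\mathbf{M}\mathrm{G}Y}$, and then naturality of the underlying transformation $\euler{t}$, which gives $\euler{t}_{\mathbf{M}\mathrm{G}Y} \circ \euler{\Phi}(a) = \euler{\Phi}'(a) \circ \euler{t}_{\mathbf{M}\mathrm{F}X}$. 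The paper makes exactly this point. Your sketch collapses both steps into ``naturality'', which hides the only place in the proof where the modification condition (as opposed to mere naturality) is actually used.
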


\begin{proof}
\begin{enumerate}
\item Associativity and unitality axioms for $\mathbb{S}(\mathbf{M})$ are verified by the proofs of Proposition~\ref{EndMonoid} and Proposition~\ref{HomBimod}.
 \item
 The Tambara axiom for $\mathbb{S}(\euler{\Phi})$ follows from the commutativity of
 \[\begin{tikzcd}[row sep = scriptsize, column sep = huge]
	{\Hom{\mathbf{M}\mathrm{F}X,\mathbf{M}\mathrm{G}Y}} & {\Hom{\mathbf{N}\mathrm{F}\euler{\Phi} X,\mathbf{N}\mathrm{G}\euler{\Phi} Y}} \\
	{\Hom{\mathbf{M}\mathrm{KF}Y,\mathbf{M}\mathrm{KG}Y}} & {\Hom{\mathbf{N}\mathrm{KF}\euler{\Phi} X,\mathbf{N}\mathrm{KG}\euler{\Phi} Y}}
	\arrow["{\mathbf{M}\mathrm{K}_{\mathbf{M}\mathrm{F}X,\mathbf{M}\mathrm{G}Y}}"', from=1-1, to=2-1]
	\arrow["{\euler{\Phi}_{\mathbf{M}\mathrm{F}X,\mathbf{M}\mathrm{G}Y}}", from=1-1, to=1-2]
	\arrow["{\euler{\Phi}_{\mathbf{M}\mathrm{KF}Y,\mathbf{M}\mathrm{KG}Y}}"', from=2-1, to=2-2]
	\arrow["{\mathbf{N}\mathrm{K}_{\mathbf{N}\mathrm{F}\euler{\Phi} X,\mathbf{N}\mathrm{G}\euler{\Phi} Y}}", from=1-2, to=2-2]
\end{tikzcd}\]
which follows from $\euler{\Phi}$ being a $\csym{C}$-module functor.
\item
 The multiplicative axiom for $\tam^{\otimes\!\on{opp}}$-functoriality of $\mathbb{S}(\euler{\Phi})$ follows from the commutativity of
 \[\begin{tikzcd}[row sep = scriptsize]
	{[X,Y] \circ [Y,Z]} && {[Y,Z]} \\
	{[\euler{\Phi} X, \euler{\Phi} Y] \circ [\euler{\Phi} Y, \euler{\Phi} Z]} && {[\euler{\Phi} X,\euler{\Phi} Z]}
	\arrow["{\mathbb{S}(\euler{\Phi})_{X,Y}\circ \mathbb{S}(\euler{\Phi})_{Y,Z}}", from=1-1, to=2-1,swap]
	\arrow["{\mathsf{c}^{\mathbb{S}(\mathbf{M})}_{X,Y,Z}}", from=1-1, to=1-3]
	\arrow["{\mathbb{S}(\euler{\Phi})_{X,Z}}", from=1-3, to=2-3]
	\arrow["{\mathsf{c}^{\mathbb{S}(\mathbf{N})}_{\euler{\Phi} X, \euler{\Phi} Y, \euler{\Phi} Z}}"', from=2-1, to=2-3]
\end{tikzcd}\]
which we may verify component-wise, and, by identifying maps from coends with extranatural collections, reduce to the commutativity of
\[\begin{tikzcd}[row sep = scriptsize, column sep = huge]
	{\Hom{\mathbf{M}\mathrm{F}X,\mathbf{M}\mathrm{H}Y} \kotimes \Hom{\mathbf{M}\mathrm{H}Y,\mathbf{M}\mathrm{G}Z}} && {\Hom{\mathbf{M}\mathrm{F}X,\mathbf{M}\mathrm{G}Z}} \\
	{\Hom{\mathbf{N}\mathrm{F}\euler{\Phi} X,\mathbf{N}\mathrm{H}\euler{\Phi} Y} \kotimes \Hom{\mathbf{N}\mathrm{H}\euler{\Phi} Y,\mathbf{N}\mathrm{G}\euler{\Phi} Z}} && {\Hom{\mathbf{N}\mathrm{F}\euler{\Phi} X, \mathbf{N}\mathrm{G}\euler{\Phi} Z}}
	\arrow["{(-\circ-)_{\mathbf{M}\mathrm{F}X,\mathbf{M}\mathrm{H}Y,\mathbf{M}\mathrm{G}Z}}", from=1-1, to=1-3]
	\arrow["{\euler{\Phi}_{\mathbf{M}\mathrm{F}X,\mathbf{M}\mathrm{H}Y}\kotimes\euler{\Phi}_{\mathbf{M}\mathrm{H}Y,\mathbf{M}\mathrm{G}Z}}"', from=1-1, to=2-1]
	\arrow["{\euler{\Phi}_{\mathbf{M}\mathrm{F}X,\mathbf{M}\mathrm{G}Z}}", from=1-3, to=2-3]
	\arrow["{(-\circ-)_{\mathbf{N}\mathrm{F}\euler{\Phi} X,\mathbf{N}\mathrm{H}\euler{\Phi} Y,\mathbf{N}\mathrm{G}\euler{\Phi} Z}}"', from=2-1, to=2-3]
\end{tikzcd}\]
 which follows from functoriality of $\euler{\Phi}$. The unitality axiom follows similarly.
 \item Given a modification $\euler{t}$ as in Definition~\ref{DefiningS}, from Equation~\eqref{DefByUni} we find that
 \[
(\mathbb{S}(\euler{t})_{X})_{\mathbb{1},\mathbb{1}} = \euler{t}_{X}
 \text{ and }
 (\mathbb{S}(\euler{t})_{X})_{\mathrm{F,F}} = \mathbf{N}\mathrm{F}\euler{t}_{X}.
 \]
 Given $a \in \Hom{\mathbf{M}\mathrm{F}X, \mathbf{M}\mathrm{G}Y} = \mathbb{S}(\mathbf{M})(X,Y)(\mathrm{F,G})$, we have
 \[
\mathbf{N}\mathrm{G}\euler{t}_{X} \circ \euler{\Phi}(a) = \euler{t}_{\mathbf{M}\mathrm{G}Y} \circ \euler{\Phi}(a) = \euler{\Phi'}(a) \circ \euler{t}_{\mathbf{M}\mathrm{F}X} = \euler{\Phi'}(a) \circ \mathbf{N}\mathrm{F}\euler{t}_{X},
 \]
 verifying Equation~\eqref{EnrichedNatEq}. Observe that the first and the last equality use the fact that $\euler{t}$ is a modification.
 \item Given $\euler{\Psi} \in \csym{C}\!\on{-Mod}(\mathbf{K},\mathbf{M})$ and $\euler{\Sigma} \in \csym{C}\!\on{-Mod}(\mathbf{M},\mathbf{N})$, we clearly have
 \[
\on{Ob}\mathbb{S}(\euler{\Sigma \circ \Psi}) = \on{Ob}(\mathbb{S}(\euler{\Sigma}) \circ \mathbb{S}(\euler{\Psi})) = \on{Ob}(\euler{\Sigma \circ \Psi}).
 \]
 For $X,Y \in \mathbf{K}$, by definition we have
 \[
((\mathbb{S}(\euler{\Sigma}) \circ \mathbb{S}(\euler{\Psi}))_{X,Y})_{\mathrm{F,G}} = \euler{\Sigma}_{\mathbf{M}\mathrm{F}\euler{\Psi} X, \mathbf{M}\mathrm{G}\euler{\Psi} Y} \circ \euler{\Psi}_{\mathbf{M}\mathrm{F}X, \mathbf{M}\mathrm{G}Y} = \mathbb{S}(\euler{\Sigma \circ \Psi})_{\mathbf{M}\mathrm{F}X,\mathbf{M}\mathrm{G}Y},
 \]
 showing that we have $\mathbb{S}(\euler{\Sigma \circ \Psi}) = \mathbb{S}(\euler{\Sigma})\circ \mathbb{S}(\euler{\Psi})$.

 Similarly, given
 \[\begin{tikzcd}
	{\mathbf{K}} & {\mathbf{M}} & {\mathbf{N}}
	\arrow[""{name=0, anchor=center, inner sep=0}, "\euler{\Psi}", shift left=2, from=1-1, to=1-2]
	\arrow[""{name=1, anchor=center, inner sep=0}, "{\euler{\Psi}'}"', shift right=2, from=1-1, to=1-2]
	\arrow[""{name=2, anchor=center, inner sep=0}, "\euler{\Sigma}", shift left=2, from=1-2, to=1-3]
	\arrow[""{name=3, anchor=center, inner sep=0}, "{\euler{\Sigma}'}"', shift right=2, from=1-2, to=1-3]
	\arrow["{\euler{s}}", shorten <=1pt, shorten >=1pt, Rightarrow, from=0, to=1]
	\arrow["{\euler{t}}", shorten <=1pt, shorten >=1pt, Rightarrow, from=2, to=3]
\end{tikzcd}\]
in $\csym{C}\!\on{-Mod}$, for any $X \in \mathbf{K}$ we have
\[
((\mathbb{S}(\euler{t}) \hcomp \mathbb{S}(\euler{s}))_{X})_{\mathbb{1},\mathbb{1}} = (\mathbb{S}(\euler{t})_{\mathbb{S}(\euler{\Psi}')X})_{\mathbb{1},\mathbb{1}} \circ (\mathbb{S}(\euler{\Sigma})_{\mathbb{S}(\euler{\Psi}') X, \mathbb{S}(\euler{\Psi}') X}(\mathbb{S}(\euler{s})_{X})_{\mathbb{1},\mathbb{1}}) = \euler{t}_{\euler{\Psi}' X} \circ \euler{\Sigma}\euler{s}_{X} = (\euler{t} \hcomp \euler{s})_{X},
\]
which, in view of Equation~\eqref{DefByUni} defining $\mathbb{S}$ on modifications, shows that $\mathbb{S}(\euler{t}) \hcomp \mathbb{S}(\euler{s}) = \mathbb{S}(\euler{t}\hcomp \euler{s})$.
 \end{enumerate}
\end{proof}

\begin{definition}\label{DefiningR}
 We define a $2$-functor $\mathbb{R}: \tam^{\otimes\!\on{opp}}\!\on{-Cat} \rightarrow \csym{C}\!\on{-Mod}$ as follows:
 \begin{enumerate}[label = (\alph*)]
   \item Given a $\tam^{\otimes\!\on{opp}}$-category $\mathcal{M}$, we let $\on{Ob}\mathbb{R}(\mathcal{M}) := \on{Ob}\mathcal{M} \times \on{Ob}\csym{C}$, and we define the hom-space $\Hom{(X,\mathrm{F}), (Y,\mathrm{G})}_{\mathbb{R}(\mathcal{M})}$ as $\Hom{X,Y}_{\mathcal{M}}\!(\mathrm{F,G})$.
   The composition
   \[
\mathsf{c}^{\mathbb{R}(\mathcal{M})}_{(X,\mathrm{F}),(Y,\mathrm{H}),(Z,\mathrm{G})}: \Hom{X,Y}_{\mathcal{M}}(\mathrm{F,H}) \kotimes \Hom{Y,Z}_{\mathcal{M}}\!(\mathrm{H,G}) \rightarrow \Hom{X,Z}(\mathrm{F,G})
   \]
is defined as the component $(\mathsf{c}^{\mathcal{M}}_{X,Y,Z})_{\mathrm{H;F,G}}$ of the composition map
   \[
    \mathsf{c}_{X,Y,Z}^{\mathcal{M}}: \int^{\mathrm{H}} \Hom{X,Y}_{\mathcal{M}}\!(\mathrm{F,H}) \kotimes \Hom{Y,Z}_{\mathcal{M}}\!(\mathrm{H,G}) \rightarrow \Hom{X,Z}_{\mathcal{M}}(\mathrm{F,G})
   \]
  Similarly, the unit $\mathsf{e}_{(X,\mathrm{F})}^{\mathbb{R}(\mathcal{M})}: \Bbbk \rightarrow \Hom{X,X}_{\mathcal{M}}\!(\mathrm{F,F})$ is the restriction of $(\mathsf{e}_{X}^{\mathcal{M}})_{\mathrm{F,F}}\!:\! \csym{C}(\mathrm{F,F}) \rightarrow \Hom{X,X}_{\mathcal{M}}\!(\mathrm{F,F})$ to $\Bbbk\setj{\on{id}_{\mathrm{F}}}$. Associativity and unitality follows from the respective axioms for $\mathcal{M}$.

  We now specify the $\csym{C}$-module category structure. For $\mathrm{H} \in \csym{C}$, we define the functor $\mathbb{R}(\mathcal{M})\mathrm{H}$ as
  \[
\mathbb{R}(\mathcal{M})\mathrm{H}\big((X,\mathrm{F}) \xrightarrow{f} (Y,\mathrm{G})\big) = (X,\mathrm{HF}) \xrightarrow{\ta^{\Hom{X,Y}_{\mathcal{M}}}_{\mathrm{H;F,G}}(f)} (Y,\mathrm{HG}).
  \]
  Given a morphism $\mathrm{H} \xrightarrow{\mathrm{h}} \mathrm{H}'$, we define the natural transformation $\mathbb{R}(\mathcal{M})\mathrm{H} \xrightarrow{\mathbb{R}(\mathcal{M})\mathrm{h}} \mathbb{R}(\mathcal{M})\mathrm{H}'$ by setting $(\mathbb{R}(\mathcal{M})\mathrm{h})_{(X,\mathrm{F})} = (\mathsf{e}_{X}^{\mathcal{M}})_{\mathrm{HF,H'F}}(\mathrm{hF})$.
  \item Given a $\tam^{\otimes\!\on{opp}}$-functor $\Phi: \mathcal{M} \rightarrow \mathcal{N}$, we let $\mathbb{R}(\Phi)_{(X,\mathrm{F}),(Y,\mathrm{G})} := (\Phi_{X,Y})_{\mathrm{F,G}}$.
  \item Given a $\tam^{\otimes\!\on{opp}}$-transformation $\tau: \Phi \Rightarrow \Phi'$, we let $\mathbb{R}(\tau)_{(X,\mathrm{F})} := (\tau_{X})_{\mathrm{F,F}}(\on{id}_{\mathrm{F}})$.
 \end{enumerate}
\end{definition}

 \begin{lemma}
  $\mathbb{R}$ is a well-defined $2$-functor.
 \end{lemma}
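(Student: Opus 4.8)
The plan is to verify, following the three clauses of Definition~\ref{DefiningR}, that $\mathbb{R}$ is well-defined on objects, $1$-morphisms and $2$-morphisms of $\tam^{\on{rev}}\!\on{-Cat}$, and then that it is strictly functorial for both composites and preserves identities, exactly paralleling the verification that $\mathbb{S}$ is a $2$-functor. Throughout I use the standing assumption that $\csym{C}$ is strict monoidal, so the coherence cells of $\csym{C}$ and of its self-action are identities; a first observation is that, consequently, $\mathbb{R}(\mathcal{M})$ will in fact be a \emph{strict} $\csym{C}$-module category (with $\mathbf{m}_{\mathrm{F,G}}$ and $\mathbf{m}_{\mathbb{1}}$ identities), every $\mathbb{R}(\Phi)$ a strict $\csym{C}$-module functor (with structure cells $\phi_{\mathrm{H}} = \on{id}$, since $\mathbb{R}(\mathcal{N})\mathrm{H}\,\mathbb{R}(\Phi)$ and $\mathbb{R}(\Phi)\,\mathbb{R}(\mathcal{M})\mathrm{H}$ agree on the nose), and $\mathbb{R}(\tau)$ a $\csym{C}$-module transformation between strict functors.

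For clause (a), I would first check that $\mathbb{R}(\mathcal{M})$ is a $\Bbbk$-linear category: associativity of $\mathsf{c}^{\mathbb{R}(\mathcal{M})}$ is the associativity axiom of the $\tam^{\on{rev}}$-category $\mathcal{M}$ read off on the component indexed by the relevant middle object, using that the associator of $\tam^{\on{rev}}$ is the Fubini rule for coends, and unitality of $\mathsf{e}^{\mathbb{R}(\mathcal{M})}$ is the unit axiom of $\mathcal{M}$ restricted to the summand $\Bbbk\{\on{id}_{\mathrm{F}}\} \subseteq \csym{C}(\mathrm{F,F})$. Next I verify that each $\mathbb{R}(\mathcal{M})\mathrm{H}$ is a functor: preservation of composition is precisely the statement that $\mathsf{c}^{\mathcal{M}}_{X,Y,Z}$ is a \emph{morphism of Tambara modules} (so it intertwines $\ta_{\mathrm{H}}\kotimes\ta_{\mathrm{H}}$ with $\ta_{\mathrm{H}}$, cf.\ Definition~\ref{CompositeTambara}), while preservation of identities follows from $\mathsf{e}^{\mathcal{M}}_X$ being a Tambara morphism together with $\ta^{\csym{C}(-,-)}_{\mathrm{H;F,F}}(\on{id}_{\mathrm{F}}) = \on{id}_{\mathrm{HF}}$. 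That each $\mathbb{R}(\mathcal{M})\mathrm{h}$ is natural, and that $\mathrm{H}\mapsto\mathbb{R}(\mathcal{M})\mathrm{H}$, $\mathrm{h}\mapsto\mathbb{R}(\mathcal{M})\mathrm{h}$ assemble into a strict monoidal functor $\csym{C}\to\on{End}_{\mathbf{Cat}_{\Bbbk}}(\mathbb{R}(\mathcal{M}))$, both reduce to the unit axioms of $\mathcal{M}$ (the relevant identity being the monoid-morphism property of $\mathsf{e}^{\mathcal{M}}_X$, which upgrades composition of the $\mathsf{e}^{\mathcal{M}}_X(\mathrm{hF})$'s to $\mathsf{e}^{\mathcal{M}}_X$ applied to a composite in $\csym{C}$) and to the extranaturality of $\ta$ in $\mathrm{H}$; strict monoidality is the multiplicativity axiom $\ta_{\mathrm{G}}\circ\ta_{\mathrm{F}} = \ta_{\mathrm{GF}}$ of each Tambara module $\Hom{X,Y}_{\mathcal{M}}$ together with its unitality axiom, which is exactly why no nontrivial $\mathbf{m}$'s appear.

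For clauses (b) and (c): functoriality of $\mathbb{R}(\Phi)$ is the $\tam^{\on{rev}}$-functoriality axioms of $\Phi$ evaluated component-wise; that $\phi_{\mathrm{H}} = \on{id}$ defines a valid $\csym{C}$-module-functor structure is the Tambara axiom for each $\Phi_{X,Y}$ — a morphism in $\tam^{\on{rev}}$, hence a Tambara morphism — together with the unitality of $\Phi$ for compatibility with the $\mathbb{R}(\mathcal{M})\mathrm{h}$. For $\mathbb{R}(\tau)$, naturality in $(X,\mathrm{F})$ is Equation~\eqref{EnrichedNatEq} read on components, and the $\csym{C}$-module-transformation condition reduces (the $\phi$'s being identities) to $\ta_{\mathrm{H;F,F}}\big((\tau_X)_{\mathrm{F,F}}(\on{id}_{\mathrm{F}})\big) = (\tau_X)_{\mathrm{HF,HF}}(\on{id}_{\mathrm{HF}})$, which is the Tambara axiom for $\tau_X$ regarded, under the identification of Remark~\ref{TambaraElements}, as a Tambara morphism $\csym{C}(-,-)\Rightarrow\Hom{\Phi X,\Phi' X}_{\mathcal{N}}$. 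Finally, strict $2$-functoriality of $\mathbb{R}$ — the equalities $\mathbb{R}(\euler{\Sigma}\circ\euler{\Psi}) = \mathbb{R}(\euler{\Sigma})\circ\mathbb{R}(\euler{\Psi})$, the analogous identities for horizontal and vertical composites of $2$-cells, and preservation of identity $1$- and $2$-cells — is immediate from the pointwise definitions, just as for $\mathbb{S}$.

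The main obstacle here is not conceptual but organizational: one must keep a precise dictionary between the axioms of a $\tam^{\on{rev}}$-category (resp.\ $\tam^{\on{rev}}$-functor, $\tam^{\on{rev}}$-transformation) and the properties of $\mathbb{R}(\mathcal{M})$ (resp.\ $\mathbb{R}(\Phi)$, $\mathbb{R}(\tau)$), and must consistently exploit that $\mathsf{c}^{\mathcal{M}}$, $\mathsf{e}^{\mathcal{M}}$, $\Phi_{X,Y}$ and $\tau_X$ are morphisms in $\tam^{\on{rev}}$ — that is, Tambara morphisms, not merely profunctor morphisms — since it is exactly the Tambara-axiom part of this data that yields the functoriality of the action endofunctors $\mathbb{R}(\mathcal{M})\mathrm{H}$ and the strict $\csym{C}$-module structures on $\mathbb{R}(\Phi)$ and $\mathbb{R}(\tau)$. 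Once this dictionary is fixed, every diagram that must commute is the component version of one already available by hypothesis, and coends enter only through their universal property and the Fubini rule.
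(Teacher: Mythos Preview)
Your plan is correct and follows essentially the same approach as the paper: each axiom for $\mathbb{R}(\mathcal{M})$, $\mathbb{R}(\Phi)$, $\mathbb{R}(\tau)$ is obtained by reading the corresponding $\tam^{\on{rev}}$-enriched axiom on components, with the Tambara-morphism condition on $\mathsf{c}^{\mathcal{M}}$, $\mathsf{e}^{\mathcal{M}}$, $\Phi_{X,Y}$, $\tau_X$ supplying exactly the compatibility with the $\csym{C}$-action. The paper makes the naturality of $\mathbb{R}(\mathcal{M})\mathrm{h}$ fully explicit via a seven-face diagram whose ingredients (unitality of composition, $\mathsf{e}_X$ and $\mathsf{c}$ being Tambara/profunctor morphisms, extranaturality of $\ta$) are precisely those you name, so your outline would unfold into the same verification.
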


 \begin{proof}
 Using the notation of Definition~\ref{DefiningR}, we find the following:
  \begin{enumerate}
   \item Functoriality of $\mathbb{R}(\mathcal{M})\mathrm{H}$ follows from the composition and unit maps in $\mathcal{M}$ being Tambara morphisms.
   \item Naturality of $\mathbb{R}(\mathcal{M})\mathrm{h}$ follows from the commutativity of
\[\resizebox{.99\hsize}{!}{$
\begin{tikzcd}[ampersand replacement=\&]
	\& {\int^{\mathrm{K}} \csym{C}(\mathrm{H'F,K}) \kotimes \Hom{X,Y}(\mathrm{K,H'G})} \& {\int^{\mathrm{K}} \csym{C}(\mathrm{HF,K}) \kotimes \Hom{X,Y}(\mathrm{K,H'G})} \\
	{\int^{\mathrm{K}} \csym{C}(\mathrm{F,K}) \kotimes \Hom{X,Y}(\mathrm{K,G})} \& {\int^{\mathrm{K}} \Hom{X,X}(\mathrm{H'F,K}) \kotimes \Hom{X,Y}(\mathrm{K,H'G})} \& {\int^{\mathrm{K}} \Hom{X,X}(\mathrm{HF,K}) \kotimes \Hom{X,Y}(\mathrm{K,H'G})} \\
	\& {\Hom{X,Y}(\mathrm{H'F,H'G})} \\
	{\Hom{X,Y}(\mathrm{F,G})} \&\& {\Hom{X,Y}(\mathrm{HF,H'G})} \\
	\& {\Hom{X,Y}(\mathrm{HF,HG})} \\
	{\int^{\mathrm{K}} \Hom{X,Y}(\mathrm{F,K}) \kotimes \csym{C}(\mathrm{K,G})} \& {\int^{\mathrm{K}}\Hom{X,Y}(\mathrm{HF,K}) \kotimes\Hom{Y,Y}(\mathrm{K,HG})} \& {\int^{\mathrm{K}}\Hom{X,Y}(\mathrm{HF,K}) \kotimes \Hom{Y,Y}(\mathrm{K,H'G})} \\
	\& {\int^{\mathrm{K}} \Hom{X,Y}(\mathrm{HF,K}) \kotimes \csym{C}(\mathrm{K,HG})} \& {\int^{\mathrm{K}} \Hom{X,Y}(\mathrm{HF,K}) \kotimes \csym{C}(\mathrm{K,H'G})}
	\arrow[""{name=0, anchor=center, inner sep=0}, "{\ccf{C}(\mathrm{hF,K}) \circ \Hom{X,Y}}", shift left=2, from=1-2, to=1-3]
	\arrow["{\mathsf{e}_{X} \circ\Hom{X,Y}}"', from=1-2, to=2-2]
	\arrow[""{name=1, anchor=center, inner sep=0}, "{\Hom{X,X}(\mathrm{hF,K})\circ \Hom{X,Y}}"', shift right=2, from=2-2, to=2-3]
	\arrow["{\mathsf{e}_{X} \circ\Hom{X,Y}}", from=1-3, to=2-3]
	\arrow["{\mathsf{c}_{\Hom{X,X},\Hom{X,Y}}}"', from=2-2, to=3-2]
	\arrow["{\mathsf{c}_{\Hom{X,X},\Hom{X,Y}}}"{description}, from=2-3, to=4-3]
	\arrow[""{name=2, anchor=center, inner sep=0}, "{\Hom{X,Y}(\mathrm{hF,H'G})}"{description}, from=3-2, to=4-3]
	\arrow["{(\euler{l}^{\Hom{X,Y}})_{\mathrm{F,G}}^{-1}}"{description}, from=4-1, to=2-1]
	\arrow[""{name=3, anchor=center, inner sep=0}, "{\ta^{\cccsym{C}\circ\Hom{X,Y}}}"{description}, from=2-1, to=1-2]
	\arrow[""{name=4, anchor=center, inner sep=0}, "{\ta^{\Hom{X,Y}\circ \cccsym{C}}}"{description}, from=6-1, to=7-2]
	\arrow[""{name=5, anchor=center, inner sep=0}, "{\Hom{X,Y} \circ \ccf{C}(\mathrm{K,hG})}"', shift right=1, from=7-2, to=7-3]
	\arrow["{\Hom{X,Y} \circ \mathsf{e}_{Y}}", from=7-2, to=6-2]
	\arrow["{\Hom{X,Y} \circ \mathsf{e}_{Y}}"', from=7-3, to=6-3]
	\arrow[""{name=6, anchor=center, inner sep=0}, "{\Hom{X,Y} \circ \Hom{Y,Y}(\mathrm{K,hG})}", shift left=2, from=6-2, to=6-3]
	\arrow["{\mathsf{c}_{\Hom{X,X},\Hom{X,Y}}}", from=6-2, to=5-2]
	\arrow["{\mathsf{c}_{\Hom{X,Y},\Hom{Y,Y}}}"{description}, from=6-3, to=4-3]
	\arrow[""{name=7, anchor=center, inner sep=0}, "{\ta^{\Hom{X,Y}}_{\mathrm{H';F,G}}}"{description}, from=4-1, to=3-2]
	\arrow[""{name=8, anchor=center, inner sep=0}, "{\ta^{\Hom{X,Y}}_{\mathrm{H;F,G}}}"{description}, from=4-1, to=5-2]
	\arrow[""{name=9, anchor=center, inner sep=0}, "{\Hom{X,Y}(\mathrm{HF,hG})}"{description}, from=5-2, to=4-3]
	\arrow["{(\euler{r}^{\Hom{X,Y}})_{\mathrm{F,G}}^{-1}}"{description}, from=4-1, to=6-1]
	\arrow["1"{description}, draw=none, from=7, to=3]
	\arrow["5"{description}, draw=none, from=4, to=8]
	\arrow["4"{description}, draw=none, from=9, to=7]
	\arrow["2"{description}, draw=none, from=1, to=0]
	\arrow["3"{description}, draw=none, from=2, to=1]
	\arrow["6"{description}, draw=none, from=6, to=9]
	\arrow["7"{description}, draw=none, from=5, to=6]
\end{tikzcd}$}\]
 where
 \begin{itemize}
  \item faces $1$ and $5$ commute due to unitality of composition;
  \item faces $2$ and $7$ commute due to units of composition being Tambara morphisms (and thus profunctor morphisms);
  \item faces $3$ and $6$ commute due to composition maps being Tambara morphisms;
  \item face $4$ commutes due to extranaturality of $\ta^{\Hom{X,Y}}$ in $\mathrm{H}$.
 \end{itemize}
 \item
 Chasing an element $f \in \Hom{X,Y}(\mathrm{F,G})$ along the exterior paths in the above diagram yields
 \[
 \begin{aligned}
 &\mathbb{R}(\mathcal{M})\mathrm{H'}(f)\circ (\mathbb{R}(\mathcal{M})\mathrm{h})_{X,\mathrm{F}} = \mathsf{c}_{\Hom{X,X},\Hom{X,Y}}\Big((\mathsf{e}_{X})_{\mathrm{HF,H'F}}(\mathrm{hF}) \otimes \ta^{\Hom{X,Y}}_{\mathrm{H';F,G}}(f)\Big) \\
  &= \mathsf{c}_{\Hom{X,Y},\Hom{Y,Y}}\Big(\ta^{\Hom{X,Y}}_{\mathrm{H;F,G}}(f) \otimes (\mathsf{e}_{Y})_{\mathrm{HG,H'G}}(\mathrm{hG}) \Big) = (\mathbb{R}(\mathcal{M})\mathrm{h})_{Y,\mathrm{G}}\circ \mathbb{R}(\mathcal{M})\mathrm{H}(f).
 \end{aligned}
 \]
 For any $f \in \Hom{X,Y}(\mathrm{F,G})$ we have
 \[
  \mathbb{R}(\mathcal{M})\mathbb{1}(f) = \ta_{\mathbb{1};\mathrm{F,G}}(f) = f, \quad (\mathbb{R}(\mathcal{M})\on{id}_{\mathrm{H}})_{(X,\mathrm{F})} = (\mathsf{e}_{X})_{\mathrm{HF,HF}}(\on{id}_{\mathrm{HF}}) = \on{id}_{(X,\mathrm{HF})},
 \]
 and
 \[
  \mathbb{R}(\mathcal{M})\mathrm{H}\mathbb{R}(\mathcal{M})\mathrm{K}(f) = \ta_{\mathrm{H;KF,KG}}\ta_{\mathrm{K;F,G}}(f) = \ta_{\mathrm{HK;F,G}}(f) = \mathbb{R}(\mathcal{M})\mathrm{HK}(f).
 \]
 Finally, for $\mathrm{h}$ as above and $\mathrm{k}: \mathrm{K} \rightarrow \mathrm{K}'$, we have
 \[
 \begin{aligned}
  &(\mathbb{R}(\mathcal{M})(\mathrm{h}) \hcomp \mathbb{R}(\mathcal{M})(\mathrm{k}))_{(X,\mathrm{F})} = (\mathbb{R}(\mathcal{M})\mathrm{h})_{(X,\mathrm{KF})} \circ (\mathbb{R}(\mathcal{M})\mathrm{H}\mathbb{R}(\mathcal{M})\mathrm{k})_{(X,\mathrm{F})} \\
  &=\mathsf{c}_{\Hom{X,X},\Hom{X,X}}\big(\ta_{\mathrm{H;KF,K'F}}((\mathsf{e}_{X})_{\mathrm{KF,K'F}}(\mathrm{kF})) \otimes (\mathsf{e}_{X})_{\mathrm{HKF,H'KF}}(\mathrm{hKF})\big) \\
  & = \mathsf{c}_{\Hom{X,X},\Hom{X,X}}\big((\mathsf{e}_{X})_{\mathrm{HKF,HK'F}}(\mathrm{HkF}) \otimes (\mathsf{e}_{X})_{\mathrm{HKF,H'KF}}(\mathrm{hKF})\big) \\
  &=(\mathsf{e}_{X})_{\mathrm{HKF,H'K'F}}(\mathrm{hkF}) = \mathbb{R}(\mathcal{M})(\mathrm{hk}).
 \end{aligned}
 \]
 The third equality holds since $\mathsf{e}_{X}$ is a Tambara morphism, and the fourth follows from unitality of composition in $\mathcal{M}$.
 This shows that the above assignments endow $\mathbb{R}(\mathcal{M})$ with the structure of a $\csym{C}$-module category.
  \item  The functoriality of $\mathbb{R}(\Phi): \mathbb{R}(\mathcal{M}) \rightarrow \mathbb{R}(\mathcal{N})$ follows directly from $\tam^{\otimes\!\on{opp}}$-functoriality of $\Phi$. Further, since $\Phi_{X,Y}: \Hom{X,Y}_{\mathcal{M}} \rightarrow \Hom{\Phi X, \Phi Y}_{\mathcal{N}}$ is a Tambara morphism, we have, for any $f \in \Hom{X,Y}(\mathrm{F,G})$, the following:
 \[
  \begin{aligned}
   \mathbb{R}(\Phi)_{(X,\mathrm{HF}),(Y,\mathrm{HG})}\mathbb{R}(\mathcal{M})\mathrm{H}(f) = (\Phi_{X,Y})_{\mathrm{HF,HG}}\ta^{\Hom{X,Y}}_{\mathrm{H;F,G}}(f) = \ta^{\Hom{\Phi X, \Phi Y}}_{\mathrm{H;F,G}}(\Phi_{X,Y})_{\mathrm{F,G}}(f) = \mathbb{R}(\mathcal{N})\mathrm{H}\mathbb{R}(\Phi)_{(X,\mathrm{F}),(Y,\mathrm{G})}(f),
  \end{aligned}
 \]
 showing that $\mathbb{R}(\Phi)\mathbb{R}(\mathcal{M}) \rightarrow \mathbb{R}(\mathcal{N})$ gives a $\csym{C}$-module functor.
 \item The fact that $\mathbb{R}(\tau)$ defines a transformation from $\mathbb{R}(\Phi)$ to $\mathbb{R}(\Phi')$ is an immediate consequence of Equation~\eqref{EnrichedNatEq}. Further, $\mathbb{R}(\tau)$ is a modification. This follows from
 \[
  (\mathbb{R}(\mathcal{N})\mathrm{H}\,\mathbb{R}(\tau))_{(X,\mathrm{F})} = \ta^{\Hom{\Phi X, \Phi' X}}_{\mathrm{H;F,F}}(\tau_{X})_{\mathrm{F,F}}(\on{id}_{\mathrm{F}}) = (\tau_{X})_{\mathrm{HF,HF}}(\on{id}_{\mathrm{HF}}) = (\mathbb{R}(\tau)\mathbb{R}(\mathcal{M})\mathrm{H})_{(X,\mathrm{F})},
 \]
 where the second equality follows from $\tau_{X}$ being a Tambara morphism. We also have that
 \[
  \mathbb{R}(\on{id}_{\Phi})_{(X,\mathrm{F})} = ((\on{id}_{\Phi})_{X})_{\mathrm{F,F}}(\on{id}_{\mathrm{F}}) = (\mathsf{e}_{\Phi X})_{\mathrm{F,F}}(\on{id}_{\mathrm{F}}) = \on{id}_{(X,\mathrm{F})}
 \]
 and, given $\sigma: \Phi' \Rightarrow \Phi''$, we have
 \[
  \mathbb{R}(\sigma \circ \tau)_{(X,\mathrm{F})} = (\sigma \circ \tau)_{X}(\on{id}_{\mathrm{F}}) = (\mathsf{c}_{\Hom{\Phi X, \Phi' X},\Hom{\Phi' X, \Phi'' X}})_{\mathrm{F;F,F}}((\tau_{X})_{\mathrm{F,F}}(\on{id}_{\mathrm{F}}) \otimes (\sigma_{X})_{\mathrm{F,F}}(\on{id}_{\mathrm{F}})) = \mathbb{R}(\sigma)_{(X,\mathrm{F})}\circ \mathbb{R}(\tau)_{(X,\mathrm{F})}.
 \]
 \item Given $\tam^{\otimes\!\on{opp}}$-functors $\Phi: \mathcal{K} \rightarrow \mathcal{M}$ and $\Psi: \mathcal{M} \rightarrow \mathcal{N}$, we have
 \[
\on{Ob}\mathbb{R}(\Psi \circ \Phi) = \on{Ob}\Psi \circ \Phi \times \on{Ob}\mathbb{1}_{\ccf{C}} = \on{Ob}(\mathbb{R}(\Psi) \circ \mathbb{R}(\Phi)).
 \]
 Further, we have
 \[
\mathbb{R}(\Psi \circ \Phi)_{(X,\mathrm{F}),(Y,\mathrm{G})} = ((\Psi \circ \Phi)_{X,Y})_{\mathrm{F,G}} = (\Psi_{\Phi X, \Phi Y})_{\mathrm{F,G}} \circ (\Phi_{X,Y})_{\mathrm{F,G}} = \mathbb{R}(\Psi)_{(\Phi X, \mathrm{F}), (\Phi Y, \mathrm{G})} \circ \mathbb{R}(\Phi)_{(X,\mathrm{F}),(Y,\mathrm{G})}
 \]
 showing that $\mathbb{R}(\Psi \circ \Phi) = \mathbb{R}(\Psi) \circ \mathbb{R}(\Phi)$. Finally, for
 \[\begin{tikzcd}
	{\mathcal{K}} & {\mathcal{M}} & {\mathcal{N}}
	\arrow[""{name=0, anchor=center, inner sep=0}, "\Phi", shift left=3, from=1-1, to=1-2]
	\arrow[""{name=1, anchor=center, inner sep=0}, "{\Phi'}"', shift right=3, from=1-1, to=1-2]
	\arrow["\Psi", shift left=3, from=1-2, to=1-3]
	\arrow["{\Psi'}"', shift right=3, from=1-2, to=1-3]
	\arrow["\tau", shorten <=2pt, shorten >=2pt, Rightarrow, from=0, to=1]
\end{tikzcd}\]
in $\tam^{\otimes\!\on{opp}}\!\on{-Cat}$, we have
 \[
  \begin{aligned}
   &\mathbb{R}(\sigma \hcomp \tau)_{(X,\mathrm{F})} = ((\sigma \hcomp \tau)_{X})_{\mathrm{F,F}}(\on{id}_{\mathrm{F}})
   =(\mathsf{c}_{\Psi\Phi X, \Psi\Phi' X, \Psi' \Phi' X})_{\mathrm{F;F,F}}\Big((\Psi_{\Phi X, \Phi' X}\circ \tau_{X})_{\mathrm{F,F}}(\on{id}_{\mathrm{F}}) \otimes (\sigma_{\Phi' X})_{\mathrm{F,F}}(\on{id}_{\mathrm{F}})\Big) \\
   &= \mathbb{R}(\sigma)_{(\Phi' X, \mathrm{F})} \circ \mathbb{R}(\Psi)_{(\Phi X,\mathrm{F}),(\Phi' X, \mathrm{F})}(\mathbb{R}(\tau_{X,\mathrm{F}})) = (\mathbb{R}(\sigma)\hcomp \mathbb{R}(\tau))_{(X,\mathrm{F})},
  \end{aligned}
 \]
 showing the $2$-functoriality.
  \end{enumerate}
 \end{proof}

\begin{proposition}\label{OtherUnit}
 There is a $2$-natural equivalence $\mathbb{u}: \mathbb{1}_{\ccf{C}\!\on{-Mod}} \rightarrow \mathbb{RS}$.
\end{proposition}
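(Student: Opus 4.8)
The plan is to build, for each strict $\csym{C}$-module category $\mathbf{M}$, a $\csym{C}$-module functor $\mathbb{u}_{\mathbf{M}}\colon \mathbf{M} \rightarrow \mathbb{R}(\mathbb{S}(\mathbf{M}))$ which is an equivalence, and then to check that these functors are strictly $2$-natural in $\mathbf{M}$. Unwinding Definition~\ref{DefiningS} and Definition~\ref{DefiningR}, the $\csym{C}$-module category $\mathbb{R}(\mathbb{S}(\mathbf{M}))$ has objects the pairs $(X,\mathrm{F})$ with $X \in \mathbf{M}$ and $\mathrm{F} \in \csym{C}$, hom-spaces $\Hom{(X,\mathrm{F}),(Y,\mathrm{G})} = [X,Y](\mathrm{F},\mathrm{G}) = \Hom{\mathbf{M}\mathrm{F}X,\mathbf{M}\mathrm{G}Y}^{\mathbf{M}}$, composition and units inherited componentwise from $\mathbf{M}$, and the action of $\mathrm{H} \in \csym{C}$ sending $(X,\mathrm{F})$ to $(X,\mathrm{HF})$ and a morphism $f$ to $\ta^{[X,Y]}_{\mathrm{H};\mathrm{F},\mathrm{G}}(f) = \mathbf{m}_{\mathrm{H},\mathrm{G}} \circ \mathbf{M}\mathrm{H}(f) \circ \mathbf{m}_{\mathrm{H},\mathrm{F}}^{-1}$, using Equation~\eqref{TambaraOnRestriction}. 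I would define $\mathbb{u}_{\mathbf{M}}$ on objects by $X \mapsto (X,\mathbb{1}_{\ccf{C}})$ and on morphisms $f\colon X \to Y$ by $f \mapsto (\mathbf{m}_{\mathbb{1}})_{Y} \circ f \circ (\mathbf{m}_{\mathbb{1}})_{X}^{-1} \in \Hom{\mathbf{M}\mathbb{1}X,\mathbf{M}\mathbb{1}Y}^{\mathbf{M}}$; in the strict setting adopted in this section this is simply $X \mapsto (X,\mathbb{1}_{\ccf{C}})$ and $f \mapsto f$. The $\csym{C}$-module structure isomorphisms $\euler{\phi}^{\mathbb{u}_{\mathbf{M}}}_{\mathrm{H}}\colon \mathbb{R}(\mathbb{S}(\mathbf{M}))\mathrm{H} \circ \mathbb{u}_{\mathbf{M}} \Rightarrow \mathbb{u}_{\mathbf{M}} \circ \mathbf{M}\mathrm{H}$ will come from the canonical identification of the object $(X,\mathrm{H}\mathbb{1})$ with $(\mathbf{M}\mathrm{H}X,\mathbb{1})$, whose hom-space $\Hom{\mathbf{M}(\mathrm{H}\mathbb{1})X,\mathbf{M}\mathbb{1}\mathbf{M}\mathrm{H}X}^{\mathbf{M}}$ contains the isomorphism assembled from $\mathbf{m}_{\mathbb{1}}$, $\mathbf{M}\mathsf{r}_{\mathrm{H}}$ and the unitor of $\csym{C}$ (the identity morphism of $\mathbf{M}\mathrm{H}X$ in the strict case).

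The first step is to verify that $\mathbb{u}_{\mathbf{M}}$ is a well-defined $\csym{C}$-module functor: functoriality is immediate, naturality of $\euler{\phi}^{\mathbb{u}_{\mathbf{M}}}$ in $X$ and in $\mathrm{H}$ follows from the fact that composition in $\mathbb{R}(\mathbb{S}(\mathbf{M}))$ is just composition in $\mathbf{M}$ together with the coherence axioms of $\mathbf{M}$, and the associativity and unitality hexagons for a module functor reduce to the module-category axioms for $\mathbf{M}$ by routine diagram chases. The second step is to see that the underlying functor of $\mathbb{u}_{\mathbf{M}}$ is an equivalence of categories, hence that $\mathbb{u}_{\mathbf{M}}$ is an equivalence in $\csym{C}\!\on{-Mod}$ (any $\csym{C}$-module functor whose underlying functor is an equivalence is one). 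Full faithfulness holds by construction, since $\Hom{\mathbb{u}_{\mathbf{M}}X,\mathbb{u}_{\mathbf{M}}Y} = \Hom{\mathbf{M}\mathbb{1}X,\mathbf{M}\mathbb{1}Y}^{\mathbf{M}} \cong \Hom{X,Y}^{\mathbf{M}}$. For essential surjectivity, every object $(X,\mathrm{F})$ of $\mathbb{R}(\mathbb{S}(\mathbf{M}))$ is isomorphic to $\mathbb{u}_{\mathbf{M}}(\mathbf{M}\mathrm{F}X) = (\mathbf{M}\mathrm{F}X,\mathbb{1})$: the hom-space $\Hom{(X,\mathrm{F}),(\mathbf{M}\mathrm{F}X,\mathbb{1})}$ equals $\Hom{\mathbf{M}\mathrm{F}X,\mathbf{M}\mathbb{1}\mathbf{M}\mathrm{F}X}^{\mathbf{M}}$ and contains $(\mathbf{m}_{\mathbb{1}})_{\mathbf{M}\mathrm{F}X}$, and one checks directly from the explicit composition map $\mathsf{c}^{\mathbb{S}(\mathbf{M})}$ of Definition~\ref{DefiningS} that this morphism is invertible in $\mathbb{R}(\mathbb{S}(\mathbf{M}))$, with inverse $(\mathbf{m}_{\mathbb{1}})_{\mathbf{M}\mathrm{F}X}^{-1}$ sitting in the opposite hom-space.

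The third step is $2$-naturality. For a $\csym{C}$-module functor $\euler{\Phi}\colon \mathbf{M} \to \mathbf{N}$ one reads off from Definition~\ref{DefiningS} and Definition~\ref{DefiningR} that $\mathbb{R}(\mathbb{S}(\euler{\Phi}))(\mathbb{u}_{\mathbf{M}}X) = (\euler{\Phi}X,\mathbb{1}) = \mathbb{u}_{\mathbf{N}}(\euler{\Phi}X)$, that both composites act as $f \mapsto \euler{\Phi}(f)$ on morphisms (using $(\mathbb{S}(\euler{\Phi})_{X,Y})_{\mathbb{1},\mathbb{1}} = \euler{\Phi}$), and that the module-functor structure cells match; thus the naturality square for $\mathbb{u}$ commutes strictly. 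For a modification $\euler{t}\colon \euler{\Phi} \Rightarrow \euler{\Phi}'$, the two whiskerings agree because $\mathbb{R}(\mathbb{S}(\euler{t}))_{(X,\mathbb{1})} = (\mathbb{S}(\euler{t})_{X})_{\mathbb{1},\mathbb{1}}(\on{id}_{\mathbb{1}}) = \euler{t}_{X}$, which is precisely $\mathbb{u}_{\mathbf{N}}$ applied to $\euler{t}_{X}$. Since each $\mathbb{u}_{\mathbf{M}}$ is an equivalence, $\mathbb{u}$ is the asserted $2$-natural equivalence.

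I expect the main obstacle to be purely organizational rather than conceptual: one must keep careful track of the redundant second coordinate $\mathrm{F}$ in the objects of $\mathbb{R}(\mathbb{S}(\mathbf{M}))$ and of exactly which hom-space $\Hom{\mathbf{M}\mathrm{F}X,\mathbf{M}\mathrm{G}Y}^{\mathbf{M}}$ each structure morphism inhabits, so that the module-functor coherence axioms for $\mathbb{u}_{\mathbf{M}}$, the invertibility check in essential surjectivity, and the compatibility of structure cells in the $2$-naturality square are each formulated and verified in the correct hom-spaces; beyond the coherence of $\mathbf{M}$ and the already-established $2$-functoriality of $\mathbb{R}$ and $\mathbb{S}$, no new input is needed.
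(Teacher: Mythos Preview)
Your proposal is correct and follows essentially the same approach as the paper: define $\mathbb{u}_{\mathbf{M}}$ by $X \mapsto (X,\mathbb{1})$, observe full faithfulness from the equality of hom-spaces, obtain essential surjectivity via the isomorphism $(X,\mathrm{F}) \simeq (\mathbf{M}\mathrm{F}X,\mathbb{1})$ given by $\on{id}_{\mathbf{M}\mathrm{F}X}$, and check strict $2$-naturality directly from the definitions of $\mathbb{R}$ and $\mathbb{S}$. The paper exploits the strict setting slightly more aggressively (noting $\mathbb{RS}(\mathbf{M})\mathrm{H} = \mathbf{M}\mathrm{H}$ on the nose, so $\mathbb{u}_{\mathbf{M}}$ is a strict $\csym{C}$-module functor with identity coherence cells) and omits the explicit check for $2$-cells, but your additional care there is harmless.
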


\begin{proof}
 By definition we have $\Hom{(X,\mathbb{1}),(Y,\mathbb{1})}_{\mathbb{RS}(\mathbf{M})} = \Hom{X,Y}_{\mathbf{M}}$, which yields a full, faithful functor $\mathbb{u}_{\mathbf{M}}: \mathbf{M} \rightarrow \mathbb{RS}$, sending $X$ to $(X,\mathbb{1})$. Further, for any $\mathrm{H} \in \csym{C}$, we have $\mathbb{RS}(\mathbf{M})\mathrm{H} = \ta^{\mathbb{S}(\mathbf{M})}_{H;-,-} = \mathbf{M}\mathrm{H}$, which shows that the above embedding is a $\csym{C}$-module functor.

 Further, we have
 \[
\Hom{(X,\mathrm{F}),(\mathbf{M}\mathrm{F}X,\mathbb{1})}_{\mathbb{RS}(\mathbf{M})} = [X,\mathbf{M}\mathrm{F}X](\mathrm{F},\mathbb{1}) = \Hom{\mathbf{M}\mathrm{F}X,\mathbf{M}\mathrm{F}X}_{\mathbf{M}}
 \]
 and similarly, $\Hom{(\mathbf{M}\mathrm{F}X,\mathbb{1}),(X,\mathrm{F})}_{\mathbb{RS}(\mathbf{M})} = \Hom{\mathbf{M}\mathrm{F}X,\mathbf{M}\mathrm{F}X}_{\mathbf{M}}$. It is easy to show that composing the images of $\on{id}_{\mathbf{M}\mathrm{F}X}$ in the respective Hom-spaces yields the respective identity morphisms, thus $(X,\mathrm{F}) \simeq (\mathbf{M}\mathrm{F}X,\mathbb{1})$, showing that the embedding $\mathbb{u}_{\mathbf{M}}$ is essentially surjective and thus an equivalence.

  Let $\euler{\Phi} \in \csym{C}\!\on{-Mod}(\mathbf{M},\mathbf{N})$. To establish $2$-naturality of $\mathbb{u}$, we show that $\mathbb{RS}(\euler{\Phi}) \circ \mathbb{u}_{\mathbf{M}} = \mathbb{u}_{\mathbf{N}} \circ \euler{\Phi}$. On objects, we have
  \[
   (\mathbb{RS}(\euler{\Phi}) \circ \mathbb{u}_{\mathbf{M}})(X) = \mathbb{RS}(\euler{\Phi})(X,\mathbb{1}) = (\euler{\Phi} X, \mathbb{1}) = (\mathbb{u}_{\mathbf{N}}\circ \euler{\Phi})(X).
  \]
  Given any $X,Y \in \mathbf{M}$, we have
  \[
   (\mathbb{RS}\euler{\Phi})_{(X,\mathbb{1}), (Y, \mathbb{1})} = ((\mathbb{S}\euler{\Phi})_{X,Y})_{\mathbb{1},\mathbb{1}} = \euler{\Phi}_{\mathbf{M}\mathbb{1}X, \mathbf{M}\mathbb{1}Y} = \euler{\Phi}_{X,Y},
  \]
  concluding the proof.
\end{proof}

\begin{proposition}\label{UnitProposition}
 There is a $2$-transformation $\mathbb{n}: \mathbb{1}_{\ccf{C}\!\on{-Tamb}(\ccf{C},\ccf{C})^{\otimes\!\on{opp}}} \rightarrow \mathbb{SR}$
 such that $\mathbb{n}_{\mathcal{M}}$ is full and faithful for all $\mathcal{M} \in {\csym{C}\!\on{-Tamb}(\csym{C},\csym{C})^{\otimes\!\on{opp}}}\!\on{-Cat}$.
\end{proposition}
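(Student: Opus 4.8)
The plan is to take $\mathbb{n}_{\mathcal{M}}\colon \mathcal{M}\to \mathbb{SR}(\mathcal{M})$ to be the ``canonical inclusion'' which on objects sends $X$ to $(X,\mathbb{1}_{\ccf{C}})$, and to argue that, because $\csym{C}$ is a $2$-category and the module categories are strict, this is in fact an \emph{identity} on hom-objects. First I would unwind the round trip. By Definition~\ref{DefiningR}, $\mathbb{R}(\mathcal{M})$ has $\Hom{(X,\mathrm{F}),(Y,\mathrm{G})}_{\mathbb{R}(\mathcal{M})}=\Hom{X,Y}_{\mathcal{M}}(\mathrm{F,G})$, with $\mathbb{R}(\mathcal{M})\mathrm{H}$ acting on objects by $(X,\mathrm{F})\mapsto(X,\mathrm{HF})$ and on morphisms by the Tambara structure maps $\ta^{\Hom{X,Y}_{\mathcal{M}}}_{\mathrm{H};\mathrm{F},\mathrm{G}}$; strictness makes this $\csym{C}$-module structure strict, so $\mathbb{R}(\mathcal{M})\mathrm{F}(X,\mathbb{1})=(X,\mathrm{F})$. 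Feeding this into Definition~\ref{DefiningS}(a), the Tambara module $\Hom{(X,\mathbb{1}),(Y,\mathbb{1})}_{\mathbb{SR}(\mathcal{M})}=[(X,\mathbb{1}),(Y,\mathbb{1})]$ has underlying profunctor $(\mathrm{F,G})\mapsto \Hom{(X,\mathrm{F}),(Y,\mathrm{G})}_{\mathbb{R}(\mathcal{M})}=\Hom{X,Y}_{\mathcal{M}}(\mathrm{F,G})$, and, using the Example computing the Tambara structure of a $\on{Hom}$-profunctor together with the fact that the coherence cells $\mathbf{m},\mathbf{n}$ in Equation~\eqref{TambaraOnRestriction} are identities, its Tambara structure is again $\ta^{\Hom{X,Y}_{\mathcal{M}}}$. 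Hence $\Hom{(X,\mathbb{1}),(Y,\mathbb{1})}_{\mathbb{SR}(\mathcal{M})}=\Hom{X,Y}_{\mathcal{M}}$ on the nose, and I define $(\mathbb{n}_{\mathcal{M}})_{X,Y}$ to be the identity Tambara morphism.

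Next I would verify that $\mathbb{n}_{\mathcal{M}}$ is a $\tam^{\on{rev}}$-functor. Unwinding the composition of $\mathbb{S}(\mathbb{R}(\mathcal{M}))$ from Definition~\ref{DefiningS}(a), its component at $(\mathrm{F,G})$ sends $f\otimes g$ to $g\circ_{\mathbb{R}(\mathcal{M})}f$; by the description of composition in $\mathbb{R}(\mathcal{M})$ in Definition~\ref{DefiningR}(a), the resulting coend-induced map is precisely the component of $\mathsf{c}^{\mathcal{M}}_{X,Y,Z}$. Similarly the unit $\mathsf{e}^{\mathbb{SR}(\mathcal{M})}_{(X,\mathbb{1})}$ sends $\mathrm{f}$ to $(\mathbb{R}(\mathcal{M})\mathrm{f})_{(X,\mathbb{1})}=(\mathsf{e}^{\mathcal{M}}_{X})_{\mathrm{F},\mathrm{G}}(\mathrm{f})$, so it equals $\mathsf{e}^{\mathcal{M}}_{X}$. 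Thus $\mathbb{n}_{\mathcal{M}}$ preserves composition and units strictly, hence is a $\tam^{\on{rev}}$-functor, and being an identity on each hom-object it is automatically full and faithful. (It is not essentially surjective: for $\mathrm{F}\ne\mathbb{1}$ the object $(X,\mathrm{F})$ is generally not isomorphic to any $(Y,\mathbb{1})$, which is exactly why $\mathbb{n}$ is merely full and faithful, in contrast to the $2$-natural equivalence $\mathbb{u}$ of Proposition~\ref{OtherUnit}, and why $\mathbb{S}$ is only a biequivalence onto its essential image.)

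It then remains to check $2$-naturality. For a $\tam^{\on{rev}}$-functor $\Phi\colon\mathcal{M}\to\mathcal{N}$, both $\mathbb{SR}(\Phi)\circ\mathbb{n}_{\mathcal{M}}$ and $\mathbb{n}_{\mathcal{N}}\circ\Phi$ send $X$ to $(\Phi X,\mathbb{1})$, and on the hom-object from $X$ to $Y$ both act by $\Phi_{X,Y}$: indeed $\mathbb{SR}(\Phi)$ at $(X,\mathbb{1}),(Y,\mathbb{1})$ is $\mathbb{S}(\mathbb{R}(\Phi))_{(X,\mathbb{1}),(Y,\mathbb{1})}$, whose component at $(\mathrm{F,G})$ is $f\mapsto\mathbb{R}(\Phi)(f)=(\Phi_{X,Y})_{\mathrm{F,G}}(f)$, while $\mathbb{n}_{\mathcal{N}}$ is an identity on homs, so the naturality square commutes strictly. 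For a $\tam^{\on{rev}}$-transformation $\tau\colon\Phi\Rightarrow\Phi'$, the required equality of whiskerings reduces, via $\mathbb{R}(\tau)_{(X,\mathrm{F})}=(\tau_{X})_{\mathrm{F},\mathrm{F}}(\on{id}_{\mathrm{F}})$ and the corresponding description of $\mathbb{S}$ on $2$-cells, to the equality $(\mathbb{SR}(\tau)_{X})_{\mathbb{1},\mathbb{1}}=(\tau_{X})_{\mathbb{1},\mathbb{1}}$, which holds by inspection. Assembling these checks gives the desired $2$-transformation $\mathbb{n}$.

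The main obstacle is purely bookkeeping: one must track carefully the $(-)^{\on{rev}}$ convention on the enriching bicategory $\tam$ and the variance conventions for the underlying profunctors of Tambara modules, so that the coend in the $\mathbb{S}$-composition is correctly identified with the $\mathbb{R}$-composition, and one must use repeatedly that strictness of $\csym{C}$ trivialises all associativity and unit coherence cells appearing in Equation~\eqref{TambaraOnRestriction} and in Definitions~\ref{DefiningS} and~\ref{DefiningR}. No new idea is needed beyond those already used to prove that $\mathbb{S}$ and $\mathbb{R}$ are well-defined $2$-functors and in Proposition~\ref{OtherUnit}.
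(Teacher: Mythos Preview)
Your proposal is correct and follows essentially the same route as the paper: define $\mathbb{n}_{\mathcal{M}}$ by $X\mapsto (X,\mathbb{1})$, observe that under strictness the hom-objects, composition, and units of $\mathbb{SR}(\mathcal{M})$ at objects of the form $(X,\mathbb{1})$ literally coincide with those of $\mathcal{M}$, and then check $2$-naturality by unwinding $\mathbb{SR}(\Phi)$ on such objects. Your treatment is in fact slightly more explicit than the paper's, in that you verify that the Tambara structures (not just the underlying profunctors) agree and you spell out the $2$-cell part of $2$-naturality; the paper leaves both implicit.
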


\begin{proof}
 By definition we have
 \begin{equation}\label{UnitActionEnrichment}
  \Hom{(X,\mathrm{F}), (Y,\mathrm{G})}_{\mathbb{SR}(\mathcal{M})}\!(\mathrm{K,L})\!=\!\Hom{\mathbb{R}(\mathcal{M})\mathrm{K} (X,\mathrm{F}), \mathbb{R}(\mathcal{M})\mathrm{L}(Y,\mathrm{G})}_{\mathbb{R}(\mathcal{M})}\!=\!\Hom{(X,\mathrm{KF}), (Y,\mathrm{LG})}_{\mathbb{R}(\mathcal{M})}\!=\!\Hom{X,Y}_{\mathcal{M}}\!(\mathrm{KF,LG})
 \end{equation}
 and
 \[
  (\mathsf{c}^{\mathbb{SR}(\mathcal{M})}_{(X,\mathbb{1}),(Y,\mathbb{1}),(Z,\mathbb{1})})_{\mathrm{F,G}} = \setj{\mathsf{c}^{\mathbb{R}(\mathcal{M})}_{(X,\mathrm{F}),(Y,\mathrm{H}), (Z,\mathrm{G})} \; | \; \mathrm{H} \in \csym{C}} = \setj{(\mathsf{c}^{\mathcal{M}}_{X,Y,Z})_{\mathrm{H;F,G}} \; | \; \mathrm{H} \in \csym{C}} = (\mathsf{c}_{X,Y,Z}^{\mathcal{M}})_{\mathrm{F,G}},
 \]
 where the middle two terms give the extranatural collections which specify maps from the coend that is indicated by giving the variable in which the collection is extranatural.
 Similarly one shows that $\mathsf{e}^{\mathbb{SR}(\mathcal{M})}_{(X,\mathbb{1})} = \mathsf{e}^{\mathcal{M}}_{X}$, which proves that the assignment $X \mapsto (X, \mathbb{1})$ on the level of objects, together with the identification of Equation~\eqref{UnitActionEnrichment} on the level of morphisms, define a full and faithful $\tam^{\otimes\!\on{opp}}$-functor $\mathbb{n}_{\mathcal{M}}$.

 To show $2$-naturality, we verify that for any $\tam^{\otimes\!\on{opp}}$-functor $\Phi: \mathcal{M} \rightarrow \mathcal{N}$, we have $\mathbb{SR}(\Phi) \circ \mathbb{n}_{\mathcal{M}} = \mathbb{n}_{\mathcal{N}} \circ \Phi$. On the level of objects, for any $X \in \mathcal{M}$, we have
 \[
  (\mathbb{SR}(\Phi) \circ \mathbb{n}_{\mathcal{M}})(X) = (\Phi X, \mathbb{1}) = (\mathbb{n}_{\mathcal{N}} \circ \Phi)(X)
 \]
 On the level of morphisms, we have
 \begin{equation}\label{CompositionIdentification}
  ((\mathbb{SR}\Phi \circ \mathbb{n}_{\mathcal{M}})_{X,Y})_{\mathrm{F,G}} = ((\mathbb{SR}\Phi )_{X,Y})_{\mathrm{F,G}} = (\mathbb{R}\Phi)_{(X,\mathrm{F}),(Y,\mathrm{G})} = (\Phi_{X,Y})_{\mathrm{F,G}} = (\mathbb{n}_{\mathcal{N}} \circ \Phi)_{X,Y},
 \end{equation}
 where the first and the last equality are the identification made in Equation~\eqref{UnitActionEnrichment}.
\end{proof}

\begin{proposition}\label{LastStep}
 For any $\mathbf{M} \in \csym{C}\!\on{-Mod}$, the $\tam^{\otimes\!\on{opp}}$-functor $\mathbb{n}_{\mathbb{S}(\mathbf{M})}$ is an equivalence.
\end{proposition}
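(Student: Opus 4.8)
The plan is to reduce the statement to essential surjectivity on objects and then exhibit the missing isomorphisms explicitly. By Proposition~\ref{UnitProposition}, the $\tam^{\on{rev}}$-functor $\mathbb{n}_{\mathbb{S}(\mathbf{M})}\colon \mathbb{S}(\mathbf{M})\to \mathbb{S}\mathbb{R}\mathbb{S}(\mathbf{M})$ is fully faithful, so it remains only to check that it is essentially surjective, i.e. that every object of $\mathbb{S}\mathbb{R}\mathbb{S}(\mathbf{M})$ is isomorphic, in the underlying ordinary category, to an object in the image of $\mathbb{n}_{\mathbb{S}(\mathbf{M})}$; a $\tam^{\on{rev}}$-functor which is $\tam^{\on{rev}}$-fully faithful and essentially surjective on objects is an equivalence of $\tam^{\on{rev}}$-categories, by the standard argument constructing the quasi-inverse through conjugation by the chosen isomorphisms (this works verbatim over the non-symmetric monoidal category $\tam^{\on{rev}}$). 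Recall that $\mathbb{S}$ is the identity on objects and that $\on{Ob}\mathbb{R}(\mathcal{M})=\on{Ob}\mathcal{M}\times \on{Ob}\csym{C}$, so the objects of $\mathbb{S}\mathbb{R}\mathbb{S}(\mathbf{M})$ are the pairs $(X,\mathrm{F})$ with $X\in\mathbf{M}$, $\mathrm{F}\in\csym{C}$, while $\mathbb{n}_{\mathbb{S}(\mathbf{M})}$ sends $X$ to $(X,\mathbb{1})$.

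First I would unwind the $\tam^{\on{rev}}$-category $\mathbb{S}\mathbb{R}\mathbb{S}(\mathbf{M})$ from Definitions~\ref{DefiningS} and~\ref{DefiningR}. Applying the hom-computation of Equation~\eqref{UnitActionEnrichment} with $\mathcal{N}=\mathbb{S}(\mathbf{M})$ gives $\Hom{(X,\mathrm{F}),(Y,\mathrm{G})}_{\mathbb{S}\mathbb{R}\mathbb{S}(\mathbf{M})}(\mathrm{K,L})=\Hom{X,Y}_{\mathbb{S}(\mathbf{M})}(\mathrm{KF},\mathrm{LG})=[X,Y](\mathrm{KF},\mathrm{LG})=\Hom{\mathbf{M}\mathrm{KF}X,\mathbf{M}\mathrm{LG}Y}_{\mathbf{M}}$, with composition inherited from composition in $\mathbf{M}$. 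The underlying ordinary category is obtained by evaluating at $(\mathrm{K},\mathrm{L})=(\mathbb{1},\mathbb{1})$, using Remark~\ref{TambaraElements} (namely $\tam(\csym{C}(-,-),\mathtt{R})\simeq \mathtt{R}(\mathbb{1},\mathbb{1})$) and strictness ($\mathbf{M}\mathbb{1}=\on{id}_{\mathbf{M}}$ and $\mathbf{M}\mathrm{K}\mathbf{M}\mathrm{F}=\mathbf{M}\mathrm{KF}$): it has objects $(X,\mathrm{F})$ and hom-sets $\Hom{\mathbf{M}\mathrm{F}X,\mathbf{M}\mathrm{G}Y}_{\mathbf{M}}$, with composition that of $\mathbf{M}$.

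With this identification in hand, $\on{id}_{\mathbf{M}\mathrm{F}X}\in\Hom{\mathbf{M}\mathrm{F}X,\mathbf{M}\mathrm{F}X}_{\mathbf{M}}=\Hom{(X,\mathrm{F}),(\mathbf{M}\mathrm{F}X,\mathbb{1})}_{\mathbb{S}\mathbb{R}\mathbb{S}(\mathbf{M}),0}$ defines a morphism $(X,\mathrm{F})\to(\mathbf{M}\mathrm{F}X,\mathbb{1})$ in the underlying category whose evident inverse is $\on{id}_{\mathbf{M}\mathrm{F}X}$ read the other way, so $(X,\mathrm{F})\cong(\mathbf{M}\mathrm{F}X,\mathbb{1})=\mathbb{n}_{\mathbb{S}(\mathbf{M})}(\mathbf{M}\mathrm{F}X)$; hence $\mathbb{n}_{\mathbb{S}(\mathbf{M})}$ is essentially surjective, and being fully faithful it is an equivalence. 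Alternatively, one may note that $\mathbb{S}(\mathbb{u}_{\mathbf{M}})\colon \mathbb{S}(\mathbf{M})\to\mathbb{S}\mathbb{R}\mathbb{S}(\mathbf{M})$ is an equivalence — because $\mathbb{u}_{\mathbf{M}}$ is one by Proposition~\ref{OtherUnit} and the $2$-functor $\mathbb{S}$ preserves equivalences — and that it coincides with $\mathbb{n}_{\mathbb{S}(\mathbf{M})}$ on objects, both sending $X$ to $(X,\mathbb{1})$, which again forces $\mathbb{n}_{\mathbb{S}(\mathbf{M})}$ to be essentially surjective. The only non-formal part of the argument is the bookkeeping identifying the underlying category of the triple composite $\mathbb{S}\mathbb{R}\mathbb{S}(\mathbf{M})$; everything needed for it is already contained in the computations for Definitions~\ref{DefiningS}–\ref{DefiningR} and Proposition~\ref{UnitProposition}, and the invocation of the enriched ``fully faithful $+$ essentially surjective $\Rightarrow$ equivalence'' criterion is routine.
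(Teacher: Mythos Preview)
Your proposal is correct and follows essentially the same approach as the paper: reduce to essential surjectivity via Proposition~\ref{UnitProposition}, unwind the hom-objects of $\mathbb{S}\mathbb{R}\mathbb{S}(\mathbf{M})$ using Equation~\eqref{UnitActionEnrichment}, and exhibit $\on{id}_{\mathbf{M}\mathrm{F}X}$ as the required isomorphism $(X,\mathrm{F})\cong(\mathbf{M}\mathrm{F}X,\mathbb{1})$. The paper is more explicit in verifying that the composition and unit maps of $\mathbb{S}\mathbb{R}\mathbb{S}(\mathbf{M})$ really coincide with those of $[\mathbf{M}\mathrm{F}X,\mathbf{M}\mathrm{F}X]$ under the identification (your ``bookkeeping''), but this is the routine check you flag. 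Your alternative argument via $\mathbb{S}(\mathbb{u}_{\mathbf{M}})$ is a pleasant shortcut not in the paper: it bypasses the explicit identification of composition by leveraging that $\mathbb{S}$ preserves equivalences and that $\mathbb{S}(\mathbb{u}_{\mathbf{M}})$ and $\mathbb{n}_{\mathbb{S}(\mathbf{M})}$ agree on objects.
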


\begin{proof}
 In view of Proposition~\ref{UnitProposition}, it suffices to show that $\mathbb{n}_{\mathbb{S}(\mathbf{M})}$ is essentially surjective. To this end, we show that, for any $X \in \mathbf{M}$ and $\mathrm{F} \in \csym{C}$, there is an isomorphism $(X,\mathrm{F}) \simeq (\mathbf{M}\mathrm{F}X,\mathbb{1})$.

 Following Equation~\eqref{UnitActionEnrichment}, we have the equality
 \[
  \Hom{(X,\mathrm{F}),(Y,\mathrm{G})}_{\mathbb{SRS}(\mathbf{M})}(\mathrm{K,L}) = \Hom{X,Y}_{\mathbb{S}(\mathbf{M})}(\mathrm{KF,LG}) = \Hom{\mathbf{M}\mathrm{KF}X, \mathbf{M}\mathrm{LG}Y} = [\mathbf{M}\mathrm{F}X, \mathbf{M}\mathrm{G}Y](\mathrm{K,L}),
 \]
 and thus the equality
  $\Hom{(X,\mathrm{F}),(Y,\mathrm{G})}_{\mathbb{SRS}(\mathbf{M})} = [\mathbf{M}\mathrm{F}X, \mathbf{M}\mathrm{G}Y]$ in $\tam^{\otimes\!\on{opp}}$. Similarly to Equation~\eqref{CompositionIdentification}, we show that the diagrams
  \begin{equation}\label{RealIdentification}
  \begin{tikzcd}[ampersand replacement=\&]
	{[\mathbf{M}\mathrm{F}X,\mathbf{M}\mathrm{F}X] \circ [\mathbf{M}\mathrm{F}X,\mathbf{M}\mathrm{F}X]} \& {\Hom{(X,\mathrm{F}),(\mathbf{M}\mathrm{F}X,\mathbb{1})}_{\mathbb{SRS}(\mathbf{M})}\circ \Hom{(\mathbf{M}\mathrm{F}X,\mathbb{1}),(X,\mathrm{F})}_{\mathbb{SRS}(\mathbf{M})}} \\
	{[\mathbf{M}\mathrm{F}X,\mathbf{M}\mathrm{F}X]} \& {\Hom{(X,\mathrm{F}),(X,\mathrm{F})}_{\mathbb{SRS}(\mathbf{M})}}
	\arrow["{\mathtt{m}^{[\mathbf{M}\mathrm{F}X,\mathbf{M}\mathrm{F}X]}}", from=1-1, to=2-1, swap]
	\arrow[from=1-1, to=1-2, equal]
	\arrow[from=2-1, to=2-2, equal]
	\arrow["{\mathsf{c}^{\mathbb{SRS}(\mathbf{M})}_{(X,\mathrm{F}), (\mathbf{M}\mathrm{F}X,\mathbb{1}),(X,\mathrm{F})}}"', from=1-2, to=2-2, swap]
\end{tikzcd}
\end{equation}
and
\[\begin{tikzcd}[ampersand replacement=\&]
	{[\mathbf{M}\mathrm{F}X,\mathbf{M}\mathrm{F}X] \circ [\mathbf{M}\mathrm{F}X,\mathbf{M}\mathrm{F}X]} \& {\Hom{(\mathbf{M}\mathrm{F}X,\mathbb{1}),(X,\mathrm{F})}_{\mathbb{SRS}(\mathbf{M})}\circ \Hom{(X,\mathrm{F}),(\mathbf{M}\mathrm{F}X,\mathbb{1})}_{\mathbb{SRS}(\mathbf{M})}} \\
	{[\mathbf{M}\mathrm{F}X,\mathbf{M}\mathrm{F}X]} \& {\Hom{(\mathbf{M}\mathrm{F}X,\mathbb{1}),(\mathbf{M}\mathrm{F}X,\mathbb{1})}_{\mathbb{SRS}(\mathbf{M})}}
	\arrow["{\mathtt{m}^{[\mathbf{M}\mathrm{F}X,\mathbf{M}\mathrm{F}X]}}", from=1-1, to=2-1,swap]
	\arrow[from=1-1, to=1-2]
	\arrow[from=2-1, to=2-2]
	\arrow["{\mathsf{c}^{\mathbb{SRS}(\mathbf{M})}_{(\mathbf{M}\mathrm{F}X,\mathbb{1}),(X,\mathrm{F}),(\mathbf{M}\mathrm{F}X,\mathbb{1})}}"', from=1-2, to=2-2,swap]
\end{tikzcd}\]
commute.

Indeed, we have
 \[
 \begin{aligned}
  &\left(\mathsf{c}^{\mathbb{SRS}(\mathbf{M})}_{(X,\mathrm{F}),(\mathbf{M}\mathrm{F}X,\mathbb{1}), (X,\mathrm{F})}\right)_{\mathrm{K,L}} = \setj{\mathsf{c}^{\mathbb{RS}(\mathbf{M})}_{(X,\mathrm{KF}),(\mathbf{M}\mathrm{F}X,\mathrm{H}),(X,\mathrm{LF})} \; | \; \mathrm{H}} = \setj{\left(\mathsf{c}^{\mathbb{S}(\mathbf{M})}_{X,\mathbf{M}\mathrm{F},X}\right)_{\mathrm{H;KF,LF}} \; | \; \mathrm{H}} \\
  &= \setj{\mathsf{c}^{\mathbf{M}}_{\mathbf{M}\mathrm{KF}X,\mathbf{M}\mathrm{H}\mathbf{M}\mathrm{F}X,\mathbf{M}\mathrm{LF}X} \; | \; \mathrm{H}}
  = \setj{\mathsf{c}^{\mathbf{M}}_{\mathbf{M}\mathrm{K}\mathbf{M}\mathrm{F}X,\mathbf{M}\mathrm{H}\mathbf{M}\mathrm{F}X,\mathbf{M}\mathrm{L}\mathbf{M}\mathrm{F}X} \; | \; \mathrm{H}} = \setj{\mathtt{m}^{[\mathbf{M}\mathrm{F}X,\mathbf{M}\mathrm{F}X]}_{\mathrm{H;K,L}} \; | \; \mathrm{H}} = \left(\mathtt{m}^{[\mathbf{M}\mathrm{F}X,\mathbf{M}\mathrm{F}X]}\right)_{\mathrm{K,L}}
 \end{aligned}
 \]
 and
 \[
 \begin{aligned}
  &\left(\mathsf{c}^{\mathbb{SRS}(\mathbf{M})}_{(\mathbf{M}\mathrm{F}X,\mathbb{1}),(X,\mathrm{F}),(\mathbf{M}\mathrm{F}X,\mathbb{1})}\right)_{\mathrm{K,L}} = \setj{\mathsf{c}^{\mathbb{RS}(\mathbf{M})}_{(\mathbf{M}\mathrm{F}X,\mathrm{K}),(X,\mathrm{HF}),(\mathbf{M}\mathrm{F}X,\mathrm{L})} \; | \; \mathrm{H}} = \setj{\left(\mathsf{c}_{\mathbf{M}\mathrm{F}X,X,\mathbf{M}\mathrm{F}X}\right)_{\mathrm{HF;K,L}}\; | \; \mathrm{H}} \\
  & = \setj{\mathsf{c}^{\mathbf{M}}_{\mathbf{M}\mathrm{KF}X,\mathbf{M}\mathrm{HF}X,\mathbf{M}\mathrm{LF}X}}
   = \left(\mathtt{m}^{[\mathbf{M}\mathrm{F}X,\mathbf{M}\mathrm{F}X]}\right)_{\mathrm{K,L}}.
 \end{aligned}
 \]
 Next, we show that the diagram
 \begin{equation}\label{UnitIdentification}
 \begin{tikzcd}[ampersand replacement=\&,row sep=scriptsize]
	{\csym{C}} \& {\Hom{(X,\mathrm{F}),(X,\mathrm{F})}_{\mathbb{SRS}(\mathbf{M})}} \\
	\& {[\mathbf{M}\mathrm{F}X,\mathbf{M}\mathrm{F}X]}
	\arrow["{\mathsf{e}^{\mathbb{SRS}(\mathbf{M})}_{(X,\mathrm{F})}}", from=1-1, to=1-2]
	\arrow[from=1-2, to=2-2]
	\arrow["{\mathtt{e}^{[\mathbf{M}\mathrm{F}X,\mathbf{M}\mathrm{F}X]}}"', from=1-1, to=2-2]
\end{tikzcd}
 \end{equation}
commutes. Indeed, for any $\mathrm{k} \in \csym{C}(\mathrm{K,L})$, we have
\[
 \begin{aligned}
  &(\mathsf{e}^{\mathbb{SRS}(\mathbf{M})}_{(X,\mathrm{F})})_{\mathrm{K,L}}(\mathrm{k}) = (\mathbb{RS}(\mathbf{M})\mathrm{k})_{(X,\mathrm{F})} = (\mathsf{e}^{\mathbb{S}(\mathbf{M})}_{X})_{\mathrm{KF,LF}}(\mathrm{kF}) \\
  &= (\mathtt{e}^{[X,X]})_{\mathrm{KF,LF}}(\mathrm{kF}) = (\mathbf{M}\mathrm{kF})_{X} = (\mathbf{M}\mathrm{k})_{\mathbf{M}\mathrm{F}X} = (\mathtt{e}^{[\mathbf{M}\mathrm{F}X,\mathbf{M}\mathrm{F}X]})_{\mathrm{K,L}}(\mathrm{k}).
 \end{aligned}
\]
Consider the following diagram:
\[
 \resizebox{.99\hsize}{!}{$
\begin{tikzcd}[ampersand replacement=\&,row sep=scriptsize]
	{[\mathbf{M}\mathrm{F}X,\mathbf{M}\mathrm{F}X] \circ [\mathbf{M}\mathrm{F}X,\mathbf{M}\mathrm{F}X]} \&\& {\Hom{(X,\mathrm{F}),(\mathbf{M}\mathrm{F}X,\mathbb{1})}_{\mathbb{SRS}(\mathbf{M})} \circ \Hom{(\mathbf{M}\mathrm{F}X,\mathbb{1}),(X,\mathrm{F})}_{\mathbb{SRS}(\mathbf{M})}} \\
	\& {\csym{C}(-,-)\circ \csym{C}(-,-)} \\
	\& {\csym{C}(-,-)} \\
	{[\mathbf{M}\mathrm{F}X,\mathbf{M}\mathrm{F}X]} \&\& {\Hom{(X,\mathrm{F}),(X,\mathrm{F})}_{_{\mathbb{SRS}(\mathbf{M})}}}
	\arrow[""{name=0, anchor=center, inner sep=0}, "{\mathsf{c}^{\mathbb{SRS}(\mathbf{M})}_{(X,\mathrm{F}),(\mathbf{M}\mathrm{F}X,\mathbb{1}), (X,\mathrm{F})}}"', swap, from=1-3, to=4-3]
	\arrow[from=1-1, to=1-3, equal]
	\arrow["{\mathsf{c}^{\cccsym{C}}}"', from=2-2, to=3-2]
	\arrow[""{name=1, anchor=center, inner sep=0}, "{\mathtt{e}^{[\mathbf{M}\mathrm{F}X,\mathbf{M}\mathrm{F}X]}}"{pos=0.6}, from=2-2, to=1-1]
	\arrow[""{name=2, anchor=center, inner sep=0}, "{\mathtt{m}^{[\mathbf{M}\mathrm{F}X,\mathbf{M}\mathrm{F}X]}}"', from=1-1, to=4-1]
	\arrow["{\mathtt{e}^{[\mathbf{M}\mathrm{F}X,\mathbf{M}\mathrm{F}X]}}"', from=3-2, to=4-1]
	\arrow[""{name=3, anchor=center, inner sep=0}, from=4-1, to=4-3,equal]
	\arrow["{\mathsf{e}^{\mathbb{SRS}(\mathbf{M})}_{(X,\mathrm{F})}}", from=3-2, to=4-3]
	\arrow["1", shift left=3, draw=none, from=1, to=2]
	\arrow["2", draw=none, from=2-2, to=0]
	\arrow["3", draw=none, from=3-2, to=3]
\end{tikzcd}$}
\]
Its outer face commutes due to the commutativity of Diagram~\eqref{RealIdentification}. Face 1 commutes due to unitality of multiplication in $[\mathbf{M}\mathrm{F}X,\mathbf{M}\mathrm{F}X]$, and face 3 commutes due to the commutativity of Diagram~\eqref{UnitIdentification}. Chasing the element $\on{id}_{\mathbb{1}} \otimes \on{id}_{\mathbb{1}}$ in $(\csym{C}(-,-) \circ \csym{C}(-,-))(\mathbb{1,1})$, we find that ${\mathsf{c}^{\mathbb{SRS}(\mathbf{M})}_{(X,\mathrm{F}),(\mathbf{M}\mathrm{F}X,\mathbb{1}), (X,\mathrm{F})}}(\on{id}_{\mathbf{M}\mathrm{F}X} \otimes \on{id}_{\mathbf{M}\mathrm{F}X}) = \mathsf{e}^{\mathbb{SRS}(\mathbf{M})}_{(X,\mathrm{F})}(\on{id}_{\mathbb{1}}) = \on{id}_{(X,\mathrm{F})}$.

Similarly one shows that ${\mathsf{c}^{\mathbb{SRS}(\mathbf{M})}_{(\mathbf{M}\mathrm{F}X,\mathbb{1}), (X,\mathrm{F}),(\mathbf{M}\mathrm{F}X,\mathbb{1})}}(\on{id}_{\mathbf{M}\mathrm{F}X} \otimes \on{id}_{\mathbf{M}\mathrm{F}X}) = \mathsf{e}^{\mathbb{SRS}(\mathbf{M})}_{(X,\mathrm{F})}(\on{id}_{\mathbb{1}}) = \on{id}_{(\mathbf{M}\mathrm{F}X,\mathbb{1})}$. Thus, $\on{id}_{\mathbf{M}\mathrm{F}X}$ yields mutually inverse morphisms between $(X,\mathrm{F})$ and $(\mathbf{M}\mathrm{F}X,\mathbb{1})$ in $\mathbb{SRS}(\mathbf{M})$.
\end{proof}

Combining Proposition~\ref{OtherUnit}, Proposition~\ref{UnitProposition} and Proposition~\ref{LastStep}, we obtain the following:
\begin{theorem}\label{ActionViaEnrichment}
The $2$-functor $\mathbb{S}: \csym{C}\!\on{-Mod} \rightarrow \tam^{\otimes\!\on{opp}}\!\on{-Cat}$ is a $2$-equivalence onto its essential image.
\end{theorem}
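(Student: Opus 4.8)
The plan is to read the statement off from Proposition~\ref{OtherUnit}, Proposition~\ref{UnitProposition} and Proposition~\ref{LastStep} by exhibiting $\mathbb{R}$ as a pseudoinverse of $\mathbb{S}$ after a harmless corestriction. First I would fix notation: let $\mathcal{E} \subseteq \tam^{\on{rev}}\!\on{-Cat}$ be the essential image of $\mathbb{S}$, i.e.\ the $2$-full sub-$2$-category on those $\tam^{\on{rev}}$-categories $\mathcal{M}$ for which there is a $\tam^{\on{rev}}$-equivalence $\mathcal{M} \simeq \mathbb{S}(\mathbf{M})$ for some $\mathbf{M} \in \csym{C}\!\on{-Mod}$. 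By construction $\mathbb{S}$ corestricts to a $2$-functor $\mathbb{S}\colon \csym{C}\!\on{-Mod} \to \mathcal{E}$ which is essentially surjective on objects, and $\mathbb{R}$ of Definition~\ref{DefiningR} restricts to a $2$-functor $\mathbb{R}\colon \mathcal{E} \to \csym{C}\!\on{-Mod}$ (note that $\mathbb{S}$ lands in $\mathcal{E}$, so $\mathbb{S}\mathbb{R}$ maps $\mathcal{E}$ to $\mathcal{E}$). With this setup, proving the theorem amounts to producing $2$-natural equivalences $\mathbb{1}_{\ccf{C}\!\on{-Mod}} \simeq \mathbb{R}\mathbb{S}$ and $\mathbb{1}_{\mathcal{E}} \simeq \mathbb{S}\mathbb{R}$.

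The first equivalence is supplied directly by $\mathbb{u}$ of Proposition~\ref{OtherUnit}. For the second, I would restrict the $2$-transformation $\mathbb{n}$ of Proposition~\ref{UnitProposition} to $\mathcal{E}$ and check that it is pointwise an equivalence. On objects of the form $\mathbb{S}(\mathbf{M})$ this is exactly Proposition~\ref{LastStep}. For a general $\mathcal{M} \in \mathcal{E}$, pick a $\tam^{\on{rev}}$-equivalence $\euler{e}\colon \mathcal{M} \xiso \mathbb{S}(\mathbf{M})$; $2$-naturality of $\mathbb{n}$ makes the square relating $\mathbb{n}_{\mathcal{M}}$ to $\mathbb{n}_{\mathbb{S}(\mathbf{M})}$ commute, and since $\euler{e}$, $\mathbb{S}\mathbb{R}(\euler{e})$ and $\mathbb{n}_{\mathbb{S}(\mathbf{M})}$ are all equivalences, so is $\mathbb{n}_{\mathcal{M}}$. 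A $2$-natural transformation all of whose components are equivalences gives an equivalence in the relevant $2$-category of $2$-functors, so $\mathbb{1}_{\mathcal{E}} \simeq (\mathbb{SR})|_{\mathcal{E}}$. Then $\mathbb{S}$ and $\mathbb{R}|_{\mathcal{E}}$ are mutually pseudoinverse $2$-functors, which is precisely the assertion that $\mathbb{S}$ is a $2$-equivalence onto its essential image.

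Alternatively --- and this is how I would phrase it to stay close to the proof of Theorem~\ref{MainThm} --- one can argue locally: by $\mathbb{u}$, the composite $\mathbb{R}_{\mathbb{S}\mathbf{M},\mathbb{S}\mathbf{N}} \circ \mathbb{S}_{\mathbf{M},\mathbf{N}}$ is an equivalence of categories, being conjugate to the identity via $\mathbb{u}_{\mathbf{M}}, \mathbb{u}_{\mathbf{N}}$; and by $\mathbb{n}$ together with Proposition~\ref{LastStep}, the composite $\mathbb{S}_{\mathbb{R}\mathbb{S}\mathbf{M},\mathbb{R}\mathbb{S}\mathbf{N}} \circ \mathbb{R}_{\mathbb{S}\mathbf{M},\mathbb{S}\mathbf{N}}$ is also an equivalence of categories. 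From the first composite, $\mathbb{R}_{\mathbb{S}\mathbf{M},\mathbb{S}\mathbf{N}}$ is full and essentially surjective; from the second it is faithful and conservative; hence it is an equivalence, and cancelling it from the first composite shows that each $\mathbb{S}_{\mathbf{M},\mathbf{N}}$ is an equivalence. Thus $\mathbb{S}$ is locally an equivalence and (tautologically) essentially surjective onto $\mathcal{E}$, hence a biequivalence onto $\mathcal{E}$ by \cite[Theorem~2.25]{SP}.

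There is no genuine obstacle remaining: the substantive work lives in the three cited propositions and in the already-completed verifications that $\mathbb{S}, \mathbb{R}$ are $2$-functors and that $\mathbb{u}, \mathbb{n}$ are $2$-natural. The one point requiring care is that $\mathbb{n}$ is merely a $2$-transformation on all of $\tam^{\on{rev}}\!\on{-Cat}$ and only becomes an equivalence after restriction to the essential image of $\mathbb{S}$ --- so one must not claim a global equivalence $\mathbb{1} \simeq \mathbb{S}\mathbb{R}$, and must perform the small $2$-naturality transport above to upgrade Proposition~\ref{LastStep} from the objects $\mathbb{S}(\mathbf{M})$ to arbitrary objects of $\mathcal{E}$.
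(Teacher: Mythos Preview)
Your proposal is correct and takes essentially the same approach as the paper, which simply states that the theorem follows by combining Proposition~\ref{OtherUnit}, Proposition~\ref{UnitProposition} and Proposition~\ref{LastStep}. You have in fact supplied considerably more detail than the paper's one-line proof, including the careful observation that Proposition~\ref{LastStep} only treats objects of the form $\mathbb{S}(\mathbf{M})$ and must be transported along $2$-naturality of $\mathbb{n}$ to arbitrary objects of the essential image.
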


\vspace{5mm}

\noindent
Department of Mathematics, Uppsala University, Box. 480, SE-75106, Uppsala, SWEDEN, \\
email: {\tt mateusz.stroinski\symbol{64}math.uu.se}
\end{document}